\definecolor{myviolet}{RGB}{160, 32, 240}
\renewcommand\subsection{\@startsection{subsection}{2}%
  \z@{-.5\linespacing\@plus-.7\linespacing}{.5\linespacing}%
  {\normalfont\scshape}}
\newcommand\blfootnote[1]{%
  \begingroup
  \renewcommand\thefootnote{}\footnote{#1}%
  \addtocounter{footnote}{-1}%
  \endgroup
}
\newtheorem{thm}{Theorem}[section]
\newtheorem{lemma}[thm]{Lemma}
\newtheorem{prop}[thm]{Proposition}
\newtheorem{cor}[thm]{Corollary}
\newtheorem{conj}[thm]{Conjecture}
\newtheorem{defn}[thm]{Definition}
\theoremstyle{remark}
\newtheorem{remark}[thm]{Remark}
\numberwithin{equation}{section}
\def\P{\mathbb{P}}
\def\E{\mathbb{E}}
\def\R{\mathbb{R}}
\def\H{\mathbb{H}}
\def\C{\mathbb{C}}
\def\N{\mathbb{N}}
\def\Z{\mathbb{Z}}
\def\D{\mathbb{D}}
\def\Q{\mathbb{Q}}
\def\cQ{\mathcal{Q}}
\def\cL{\mathcal{L}}
\def\cI{\mathcal{I}}
\def\cF{\mathcal{F}}
\def\cD{\mathcal{D}}
\def\cA{\mathcal{A}}
\newcommand{\z}{{\bm{z}}}
\newcommand{\w}{{\bm{w}}}
\newcommand{\sigmab}{\bm{\sigma}}
\newcommand{\op}[1]{\operatorname{#1}}
\newcommand{\M}{\mathrm{M}}
\newcommand{\ML}{\mathrm{ML}}
\newcommand{\MC}{\mathrm{MC}}
\def\SLE{\operatorname{SLE}}
\def\CLE{\operatorname{CLE}}
\def\Var{\operatorname{Var}}
\newcommand{\ol}{\overline}
\newcommand{\wt}{\widetilde}
\newcommand{\wh}{\widehat}
\newcommand{\frk}{\mathfrak}
\begin{document}

\title[LQG weighted by CLE nesting statistics]{Liouville quantum gravity weighted by conformal loop ensemble nesting statistics}

\author{Nina Holden and Matthis Lehmkuehler}
\date{August 29, 2024}
\begin{abstract} 
	We study Liouville quantum gravity (LQG) surfaces whose law has been reweighted according to nesting statistics for a conformal loop ensemble (CLE) relative to $n\in \mathbb{N}_0$ marked points $z_1,\dots,z_n$. The idea is to consider a reweighting by $\prod_{B\subseteq \{1,\dots,n\}} e^{\sigma_B N_B}$, where $\sigma_B\in\mathbb{R}$ and $N_B$ is the number of CLE loops surrounding the points $z_i$ for $i\in B$. This is made precise via an approximation procedure where as part of the proof we derive strong spatial independence results for CLE. The reweighting induces logarithmic singularities for the Liouville field at $z_1,\dots,z_n$ with a magnitude depending explicitly on $\sigma_1,\dots,\sigma_n$. We define the partition function of the surface, compute it for $n\in\{0,1\}$, and derive a recursive formula expressing the $n>1$ point partition function in terms of lower-order partition functions. The proof of the latter result is based on a continuum peeling process previously studied by Miller, Sheffield and Werner in the case $n=0$, and we derive an explicit formula for the generator of a boundary length process that can be associated with the exploration for general $n$. We use the recursive formula to partly characterize for which values of $(\sigma_B\colon B\subseteq \{1,\dots,n\})$ the partition function is finite. Finally, we give a new proof for the law of the conformal radius of CLE, which was originally established by Schramm, Sheffield, and Wilson.
\end{abstract}
\blfootnote{N.\,H., M.\,L. -- ETH Zürich, Rämistrasse 101, 8092 Zürich, Switzerland}

\maketitle
\setcounter{tocdepth}{1}
\tableofcontents
\newpage

\section{Introduction}

Liouville quantum gravity (LQG) is a theory of random fractal surfaces with conformal symmetries which arise as the scaling limit of discrete surfaces known as random planar maps. It has its origin in theoretical physics \cite{polyakov-qg1} and has been an active field of study in probability theory for the past fifteen years.

A particularly fruitful direction in the study of LQG has been the discovery of couplings between LQG surfaces and conformally invariant random fractal curves called  Schramm-Loewner evolutions (SLE) \cite{schramm0}. An extensive theory of conformal welding for LQG surfaces, where SLE curves arise as interfaces, has been developed \cite{shef-zipper,wedges} (see also \cite{hp-welding,ag-disk,ahs-disk,ahs-sle}). These results have had a number of applications for both LQG, SLE, and random planar maps, see \cite{ghs-mating-survey} for a survey.

In recent works, Miller, Sheffield and Werner \cite{msw-simple,msw-non-simple} study closely related couplings between LQG surfaces and so-called conformal loop ensembles (CLE) \cite{shef-cle,shef-werner-cle}, which are loop versions of SLE. They prove that a CLE drawn on top of an independent LQG disk (which is arguably the most natural LQG surface with a disk topology) breaks the disk into independent smaller LQG disks and explicitly describe an exploration of the CLE decorated LQG disk called the continuum percolation interface (CPI), which is the continuum counterpart of the peeling process for random planar maps. They consider both the regular LQG disk, which has disk topology, and the generalized LQG disk, which has pinch points, where by a pinch point we mean a point whose removal disconnects the generalized disk into two disjoint components. See Figure \ref{fig:cle-lqg-warmup}. Their approach allows them to obtain several new results about SLE and CLE.

In our paper we study a model which generalizes the one in \cite{msw-simple,msw-non-simple}. Our LQG disks will have a finite collection of marked points $\bm{z}=(z_i\colon i\in A)$ (for $A\subseteq\N$ finite) where the Liouville field (which is a distribution in the unit disk that describes the distortion of the Euclidean metric) has logarithmic singularities. These surfaces arise in a natural probabilistic construction where the law of an LQG disk with a CLE has been reweighted according to CLE loop nesting statistics around the points $\bm{z}$. In the remaining introduction we will first introduce the variants of CLE and LQG disks which will be studied in the paper (Sections \ref{sec:intro-def} to \ref{sss:gen-disk}) and then we state our main results (Section \ref{sec:intro-res}), namely exploration results for these reweighted loop decorated LQG disks, and recursive formulas as well as finiteness conditions for their partition functions.

\begin{figure}
	\centering
	\def\svgwidth{0.8\columnwidth}
	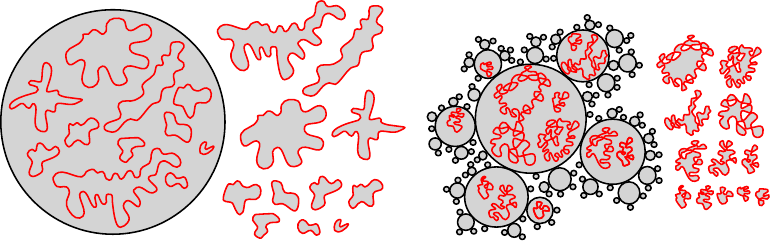
	\caption{\emph{Left.} The quantum surfaces obtained by cutting an LQG disk with parameter $\gamma$ with a (simple) non-nested CLE with parameter $\kappa=\gamma^2\in (8/3,4)$ are again independent quantum disks conditionally on all of their boundary lengths. \emph{Right.} The analogous statement holds when we decorate a generalized quantum disk of parameter $\gamma$ with a (non-simple) non-nested CLE of parameter $\kappa'=16/\gamma^2\in (4,8)$.}
	\label{fig:cle-lqg-warmup}
\end{figure}

\subsection{Motivation and parameters}
\label{sec:intro-def}

Let us first give some motivation for our model, and let $\gamma\in(0,2)$. The $\gamma$-LQG disk is a particularly natural $\gamma$-LQG surface, for example since it arises as scaling limits of random planar maps with disk topology \cite{bet-mier-disk,lqg-tbm3}. For concreteness, let us focus on the regular (not generalized) disk. As demonstrated by Miller, Sheffield, and Werner, it is natural to decorate this disk with an independent $\CLE_\kappa$ with $\kappa=\gamma^2$, which is a random countable collection of non-crossing nested loops whose law is conformally invariant. Let $h$ be the random distribution in $\D$ describing the unit boundary length $\gamma$-LQG disk and let $\Gamma$ denote the independent $\CLE_\kappa$ in $\D$. The field $h$ induces an area measure in $\D$ called the $\gamma$-LQG area measure which heuristically speaking has density $e^{\gamma h}$ relative to the Lebesgue area measure. It also induces a $\gamma$-LQG length measure along the loops of $\Gamma$.

For finite and non-empty $A\subseteq\N$, parameters $(\sigma_B\in\R\colon B\subseteq A)$, and points $\z=(z_i\in\D\,:\,i\in A)$ sampled independently according to the $\gamma$-LQG area measure, we are interested in reweighting the law of $(h,\z,\Gamma)$ by
\begin{align}
	\prod_{\emptyset\neq B\subseteq A} e^{\sigma_B N_B},
	\label{eq:reweight-nesting}
\end{align}
where $N_B$ is the number of CLE loops surrounding the points $z_i$ for $i\in B$ and not surrounding the points $z_i$ for $i\not\in B$. We often write $\sigma_i,N_i$ instead of $\sigma_{ \{i \} },N_{ \{i \} }$ for $i\in A$ to simplify notation. Since any fixed point is a.s.\ surrounded by a loop of a non-nested CLE, the iterative construction of a nested CLE from non-nested CLEs implies that $N_{i}=\infty$ a.s.\ for any $i\in A$ and therefore this reweighting does not make rigorous sense. When trying to define it rigorously, it is natural to apply an approximation procedure where one only considers the CLE loops of size above a certain threshold $\delta$. When defining the size of a CLE loop one wants to choose an \emph{intrinsic} definition, namely one which does not depend on the embedding $h$ of the $\gamma$-LQG disk into $\D$. For example, the Euclidean diameter of the loop would not be an appropriate measure, while the $\gamma$-LQG area surrounded by the loop or the $\gamma$-LQG length of the loop would be appropriate. Therefore, if $\sigma_i>0$ (resp.\ $\sigma_i<0$) then the reweighting we apply will both favor $\Gamma$ with a high (resp.\ low) density of loops around $z_i$ and $h$ which is particularly large (resp.\ small) near $z_i$. This explains why the reweighting \eqref{eq:reweight-nesting} affects the law of \emph{both} $\Gamma$ and $h$. Furthermore, one can argue at least heuristically that the effect of the reweighting should factorize in the sense that $\Gamma$ and $h$ are still independent (given $\z$) after the reweighting. In the rest of this subsection we will give precise definitions of the loop ensemble $\Gamma$ and field $h$ that we expect to get when applying the reweighting \eqref{eq:reweight-nesting}. Section \ref{sss:regular-disk} contains an explanation on how the heuristics above match the definitions made in this paper.

The $\gamma$-LQG disk decorated with an independent $\CLE_\kappa$ for $\kappa=\gamma^2\in(8/3,4)$ is believed to arise as the scaling limit of random planar maps decorated by a loop $O(n)$ model. Similarly, we conjecture that the variant of the $\gamma$-LQG disk obtained via the reweighting \eqref{eq:reweight-nesting} describes the scaling limit of random planar maps with a loop $O(n)$ model whose law has been reweighted according to nesting statistics relative to $\# A$ marked vertices. We refer to Section \ref{sec:discrete} for precise conjectures.

Throughout the text, several parameters will have to be tuned in a very particular way for the results to hold. Below, we first associate to each CLE parameter $\kappa$ the corresponding LQG parameter $\gamma$ and define how the CLE reweighting parameter $\sigma$ relates to the strength $\alpha^\kappa_\sigma$ of the corresponding logarithmic singularity of the Liouville field. The parameter $\rho^\kappa_\sigma$ appears in the renormalization term in the definition of reweighted CLE. The relationships between the various parameters will be justified later. The reader might wish to skip this definition and refer back to it whenever necessary. 
\begin{defn}
	\label{def:main-params}
	Fix $\kappa\in (8/3,8)$ and let $\gamma=\sqrt{\kappa}\wedge 4/\sqrt{\kappa}\,$. We will write $Q_\beta = \beta/2 + 2/\beta$ for $\beta>0$. Let $\sigma\mapsto \alpha^\kappa_\sigma$ be the strictly increasing bijection from $(-\infty,-\log(-\cos(4\pi/\kappa)))$ to $(Q_\gamma-\sqrt{\kappa}/4,Q_\gamma)$ given via
	\begin{align*}
		e^{-\sigma} = \frac{-\cos(4\pi/\kappa)}{-\cos(4\pi/\kappa - \pi\cdot 2\alpha^\kappa_\sigma/\sqrt{\kappa})}
	\end{align*}
	and let $\sigma\mapsto \rho^\kappa_\sigma$ the strictly increasing bijection from $\R$ to $(-1 + 2/\kappa + 3\kappa/32,\infty)$ given via
	\begin{align*}
		e^{-\sigma} = \frac{-\cos(4\pi/\kappa)}{\cos\left( \pi\sqrt{(1-4/\kappa)^2 - 8\rho^\kappa_\sigma/\kappa} \right)}\;.
	\end{align*}
	Note that $ \rho^\kappa_\sigma=\alpha^\kappa_\sigma(Q_\gamma - Q_{\alpha^\kappa_\sigma})$ when $\sigma < -\log(-\cos(4\pi/\kappa))$. Whenever $A\subseteq \N$ is finite, we let	
	\begin{align*}
		\mathfrak{S}_\kappa^A = \left\{\sigmab\in \R^{\{ B\colon B\subseteq A\}}\colon 
		\sigma_i\in(-\infty,-\log(-\cos(4\pi/\kappa))) \text{\ for\ }i\in A \;,\ \sigma_\emptyset=0\right\}\;.
	\end{align*}
	We also write $\frk S_\kappa = \cup_A \frk S^A_\kappa$ where the union is over all finite subsets $A$ of $\N$. To simplify notation, we will frequently write $\alpha_i$ instead of $\alpha^\kappa_{\sigma_i}$ for $i\in A$ when $\sigmab\in \frk S_\kappa^A$ is clear from the context. For $B\subseteq A$ we let $\sigmab|_B=(\sigma_C\colon C\subseteq B)$.
\end{defn}

\subsection{CLE weighted by nesting statistics}
\label{ss:cle-weighted}

We will now define the variant of CLE that we expect to get when performing the reweighting \eqref{eq:reweight-nesting}. First we state Theorem \ref{thm:loopexpconv}, which motivates our definition and is crucial for our weighted CLE to be well-defined. The result should be viewed as making sense of the joint moment generating function of the nesting statistics of a nested CLE, where a nesting statistic counts the number of CLE loops surrounding some points. The difficulty lies in the fact that single points are surrounded by an infinite number of loops, which implies that a regularization procedure is necessary to make sense of this moment generating function.

In the theorem statement and throughout the text, whenever $U$ is a disjoint union of proper simply connected domains and $z\in U$, we write $R(z,U)$ for the conformal radius of $z$ in the connected component of $U$ containing $z$. Furthermore given a closed curve $\eta$ and a point $z$, we say that $\eta$ surrounds $z$ if the index of $\eta$ around $z$ is $\pm 1$. We let $\eta^o$ denote the points  surrounded by the curve $\eta$.

\begin{thm}
	\label{thm:loopexpconv}
	Let $\kappa\in (8/3,8)$, $A\subseteq \N$ finite, and $D\subsetneq \C$ be simply connected. Moreover, consider distinct points $z_i\in D$ for $i\in A$ and constants $\sigma_B\in\R$ for all non-empty $B\subseteq A$. Let $\Gamma$ be a nested $\CLE_\kappa$ in $D$ and define $N^\epsilon_B(\z)$ to be the number of loops $\eta\in \Gamma$ such that $\eta$ surrounds $z_i$ if and only if $i\in B$ and such that $R(z_i,\eta^o)\ge \epsilon$ for all $i\in B$. For all $i\in A$, let $\eta_i\in \Gamma$ be the outermost loop surrounding $z_i$ but not $z_j$ for each $j\neq i$. Then
	\begin{align}
		\begin{split}
		&\prod_{i\in A} \frac{\epsilon^{\rho_{\sigma_i}^\kappa}}{c^\kappa_{\sigma_i}}\, \E\left(\exp\left( \sum_{B\subseteq A\colon \#B\ge 1} \sigma_B N^\epsilon_B(\z) \right) \right) \\
		&\quad \to \E\left(\prod_{i\in A} e^{\sigma_i}R(z_i,\eta_i^o)^{\rho_{\sigma_i}^\kappa} \exp\left( \sum_{B\subseteq A\colon \#B> 1} \sigma_B N^0_B(\z) \right) \right) < \infty\quad\text{as $\epsilon \downarrow 0$}
		\end{split}
		\label{eq:phi-limit}
	\end{align}
	for some (explicit) constants $c^\kappa_{\sigma_i}$ for $i\in A$.\footnote{\,Note that the sum in the expression for the limit does not range over singletons, i.e.\ $B$ with $\#B=1$, so that $N^0_B$ is a.s.\ finite for the considered sets $B$.} We define $\Phi^{\sigmab,\kappa}_D(\z)$ to be the limit on the right-hand side of the display above where $\z=(z_i\,:\,i\in A)$ and $\sigmab=(\sigma_B\,:\,B\subseteq A)$ (with $\sigma_\emptyset=0$). Moreover, for $\delta>0$ and $K\subseteq D$ compact, it holds that
	\begin{align*}
		\sup\{\Phi^{\sigmab,\kappa}_D(\z)\colon \z=(z_i\,:\,i\in A)\in K^A \text{ and } |z_i-z_j|\ge \delta\,\, \forall i\neq j \} &< \infty,\\
		\inf\{\Phi^{\sigmab,\kappa}_D(\z)\colon \z=(z_i\,:\,i\in A)\in K^A \text{ and } |z_i-z_j|\ge \delta\,\, \forall i\neq j \} &> 0.
	\end{align*}
\end{thm}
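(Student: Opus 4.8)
The plan is to prove \eqref{eq:phi-limit} and the two bounds together, by induction on $\#A$: the case $\#A=1$ is pure renewal theory, and the inductive step uses an exploration of $\Gamma$ from the outside that records how the loops separate the marked points, the key probabilistic input being the conformal Markov (spatial independence) property of $\CLE_\kappa$. The two bounds are needed along the way to justify interchanging $\lim_{\epsilon\downarrow0}$ with expectations, so they are not really a separate statement.

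\textbf{Base case.} Let $A=\{i\}$ and let $\eta_1\supset\eta_2\supset\cdots$ be the loops of $\Gamma$ surrounding $z_i$, from outside in, with $\eta_0:=D$. By the conformal Markov property of $\CLE_\kappa$ the increments $\xi_k:=\log R(z_i,\eta_{k-1}^o)-\log R(z_i,\eta_k^o)$ are i.i.d., distributed as $-\log$ of the conformal radius of the outermost $\CLE_\kappa$ loop around $0$ in $\D$, whose law was computed by Schramm--Sheffield--Wilson; their formula says exactly that the number $\rho^\kappa_{\sigma_i}$ of Definition~\ref{def:main-params} is characterised by $e^{\sigma_i}\,\E\bigl[R(0,\eta_1^o)^{\rho^\kappa_{\sigma_i}}\bigr]=1$, equivalently $e^{\sigma_i}\E[e^{-\rho^\kappa_{\sigma_i}\xi_1}]=1$. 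Since $N^\epsilon_{\{i\}}(z)=\#\{k:\xi_1+\cdots+\xi_k\le\log(R(z_i,D)/\epsilon)\}$ is a renewal counting function, $\E[\exp(\sigma_i N^\epsilon_{\{i\}}(z))]$ is expressed through the $e^{\sigma_i}$-weighted renewal measure of $(\xi_k)$; an Esscher transform by $\rho^\kappa_{\sigma_i}$ (well defined precisely by the identity above) makes this an ordinary renewal measure, and the key renewal theorem gives $\E[\exp(\sigma_i N^\epsilon_{\{i\}}(z))]\sim c^\kappa_{\sigma_i}\,\epsilon^{-\rho^\kappa_{\sigma_i}}R(z_i,D)^{\rho^\kappa_{\sigma_i}}$ with $c^\kappa_{\sigma_i}$ the explicit constant it produces (for $\sigma_i<0$ one uses the complementary tail sum; $\sigma_i=0$ is trivial). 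As $\E[e^{\sigma_i}R(z_i,\eta_i^o)^{\rho^\kappa_{\sigma_i}}]=e^{\sigma_i}R(z_i,D)^{\rho^\kappa_{\sigma_i}}\E[R(0,\eta_1^o)^{\rho^\kappa_{\sigma_i}}]=R(z_i,D)^{\rho^\kappa_{\sigma_i}}$ by the same identity, this is \eqref{eq:phi-limit}, and the bounds hold because $R(z_i,D)\asymp\mathrm{dist}(z_i,\partial D)$ is bounded above and below on $K$.

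\textbf{Inductive step.} For $\#S\ge1$, simply connected $W\subsetneq\C$ and distinct $\w=(w_i:i\in S)\in W^S$, set $G^\epsilon_W(\w):=\E\bigl[\exp\bigl(\sum_{\emptyset\neq B\subseteq S}\sigma_B N^\epsilon_B(\w)\bigr)\bigr]$ for a nested $\CLE_\kappa$ in $W$, and assume \eqref{eq:phi-limit} and the two bounds hold for every such $W$ and every $S$ with $\#S<\#A$, so that $\bigl(\prod_{i\in S}c^\kappa_{\sigma_i}\bigr)^{-1}\epsilon^{\sum_{i\in S}\rho^\kappa_{\sigma_i}}G^\epsilon_W(\w)\to\Phi^{\sigmab|_S,\kappa}_W(\w)\in(0,\infty)$. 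Explore $\Gamma$ in $D$ from the outside: a.s.\ only finitely many loops $\mu_1\supset\cdots\supset\mu_\tau$ surround all of $\z$ (distinct points being eventually separated), inside $\mu_\tau^o$ the outermost loops $\ell_1,\dots,\ell_q$ around the $z_i$ have pairwise disjoint interiors $V_1,\dots,V_q$ and surround exactly the blocks $S_1,\dots,S_q$ of a partition of $A$ with $q\ge2$, and --- by the conformal Markov property --- conditionally on this outer data the $\Gamma|_{V_j}$ are independent nested $\CLE_\kappa$'s. Classifying each loop by which block it surrounds gives, for every $\epsilon>0$,
\begin{align*}
	G^\epsilon_D(\z)=\E\Bigl(e^{\,\sigma_A\,\#\{k\le\tau\,:\,R(z_i,\mu_k^o)\ge\epsilon\ \forall i\in A\}}\,\prod_{j=1}^{q}e^{\,\sigma_{S_j}\,\mathbf{1}\{R(z_i,\ell_j^o)\ge\epsilon\ \forall i\in S_j\}}\,G^\epsilon_{V_j}(\z_{S_j})\Bigr),
\end{align*}
the $\tau$ common loops contributing $\sigma_A$ each, $\ell_j$ contributing $\sigma_{S_j}$, and every other loop lying inside exactly one $V_j$. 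Multiply by $\bigl(\prod_{i\in A}c^\kappa_{\sigma_i}\bigr)^{-1}\epsilon^{\sum_{i\in A}\rho^\kappa_{\sigma_i}}$ and let $\epsilon\downarrow0$: the indicators tend to $1$, the cardinality to $\tau$, each factor $\bigl(\prod_{i\in S_j}c^\kappa_{\sigma_i}\bigr)^{-1}\epsilon^{\sum_{i\in S_j}\rho^\kappa_{\sigma_i}}G^\epsilon_{V_j}(\z_{S_j})$ to $\Phi^{\sigmab|_{S_j},\kappa}_{V_j}(\z_{S_j})$, and --- granting domination --- one gets $\Phi^{\sigmab,\kappa}_D(\z)=\E\bigl(e^{\sigma_A\tau}\prod_{j}e^{\sigma_{S_j}}\Phi^{\sigmab|_{S_j},\kappa}_{V_j}(\z_{S_j})\bigr)$. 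Unrolling this recursion down to singletons identifies the right-hand side with that of \eqref{eq:phi-limit}: $\tau=N^0_A$; for $\#B>1$ the prefactor $e^{\sigma_B}$ (the contribution of the block-$B$ loop) merges with the $e^{\sigma_B N^0_B}$ produced deeper to give $e^{\sigma_B N^0_B}$ as counted in $D$ (and $N^0_B=0$ if $B$ straddles two blocks); and a singleton $\{i\}$ contributes $e^{\sigma_i}\,\Phi^{\sigma_i,\kappa}_{\eta_i^o}(z_i)=e^{\sigma_i}R(z_i,\eta_i^o)^{\rho^\kappa_{\sigma_i}}$, since the loop splitting off $\{i\}$ is the outermost loop surrounding $z_i$ but no other $z_j$. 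For the lower bound, restrict to the event (of probability bounded below uniformly, by spatial quasi-independence near the $\delta$-separated points) that the points are immediately separated, $N^0_B=0$ for $\#B>1$, and each $\eta_i^o$ has $R(z_i,\eta_i^o)\ge c\,R(z_i,D)$; the upper bound follows from the renewal/large-deviation tail estimates of the base case applied to the common loops together with the inductive upper bounds on the $V_j$ pieces.

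\textbf{Main obstacle.} The real work is making the dominated-convergence and the upper-bound steps uniform over the random subdomains $V_j$ thrown up by the exploration --- whose geometry is uncontrolled, and inside which the points $\z_{S_j}$ may lie very close to each other or to $\partial V_j$. This needs up-to-constants estimates for $\CLE_\kappa$ nesting probabilities (the probability that a prescribed number of loops separates prescribed points) that are uniform over all simply connected domains, together with Koebe distortion to transport them to a reference domain while keeping track of the mutual and boundary distances of the transported points; equivalently, one must control $\Phi^{\sigmab,\kappa}_\D(\w)$ as the $w_i$ approach each other or $\partial\D$. Establishing exactly this --- the strong spatial independence for $\CLE_\kappa$ announced in the introduction --- is where the difficulty lies; once available, the induction closes and yields the asserted finiteness and positivity on $\{\z\in K^A:|z_i-z_j|\ge\delta\ \forall i\neq j\}$.
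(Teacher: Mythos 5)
Your outline captures the two ingredients the paper actually uses --- a key-renewal (Tauberian) asymptotic for the singleton case and a spatial quasi-independence estimate to dominate the multi-point expectation --- and your base case is essentially the paper's Lemma~\ref{lem:clelocalasymptotic} (proved via Proposition~\ref{prop:renewalresult}). The unrolled form of your recursion is also the same identity the paper records later as Lemma~\ref{lem:phi-axioms}. But there are two places where the proposal diverges from what will actually close.

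First, the paper does \emph{not} induct one nesting level at a time. It explores $\Gamma$ in one shot until all of $\eta_1,\dots,\eta_n$ are discovered, and conditions on that data. This puts the only $\epsilon$-dependent quantities inside the single-point domains $\eta_i^o$, where the renewal lemma gives the explicit uniform-in-$\epsilon$ bound
\[
\epsilon^{\rho_i}\,\E\bigl(e^{\sigma_i N^{\epsilon_i}}\bigr)\big|_{\epsilon_i=\epsilon/R(z_i,\eta_i^o)}\;\le\;2c\,R(z_i,\eta_i^o)^{\rho_i\wedge 0}.
\]
This yields a clean dominating function $\prod_i 2c\,e^{\sigma_i\vee 0}R(z_i,\eta_i^o)^{\rho_i\wedge 0}\cdot\prod_{\#B>1}e^{|\sigma_B|N^0_B}$, whose integrability is Theorem~\ref{thm:cle-indep} plus Lemma~\ref{lem:superexp-cle-tail} combined via H\"older. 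Your level-by-level induction, by contrast, asks for a uniform-in-$\epsilon$ bound on $\epsilon^{\sum_{i\in S_j}\rho_i}G^\epsilon_{V_j}(\z_{S_j})$ over the random subdomains $V_j$; the inductive hypothesis only controls this for points $\delta$-separated inside a fixed compact, which after conformal transport to a reference domain does not control the random $\delta'$ of the transported configuration. You have correctly flagged this, but the cure you propose --- estimating $\Phi$ as the points collide or approach $\partial\D$ --- is genuinely harder than what is required, and is not what the paper does: the paper sidesteps it entirely by descending directly to singletons, where the graceful $R(z_i,\eta_i^o)^{\rho_i\wedge 0}$ degradation is available for free from renewal theory. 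So the induction as stated does not close with the stated inductive hypothesis; you would have to unroll it inside the expectation before taking $\epsilon\to 0$, at which point you have reproduced the one-shot argument.

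Second, the bulk of the work --- which you explicitly defer --- is precisely Theorem~\ref{thm:cle-indep}, the uniform bound on $\E\bigl[\prod_i R(z_i,\eta_i^o)^{\rho_i}\bigr]$ for $\delta$-separated points, proved in the paper via the Brownian loop soup coupling for $\kappa\in(8/3,4]$ and via the imaginary geometry coupling with a GFF for $\kappa'\in(4,8)$. Alongside it you also implicitly need the superexponential tail for the number of loops nesting two distinct fixed points (Lemma~\ref{lem:superexp-cle-tail}), which your ``large-deviation tail estimates'' remark gestures at but does not isolate. Until both are supplied, the proposal is an outline rather than a proof.
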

The proof of the theorem is found in Section \ref{sec:cle}. It will in particular rely on a strong spatial independence result for CLE (Theorem \ref{thm:cle-indep}). Two key properties of the function $\Phi^{\sigmab,\kappa}_D$ are its conformal covariance and a type of recursion relation which is closely related to the Markov property of CLE. These properties are inherited from the associated properties of $\CLE_\kappa$, and are stated precisely in Lemmas \ref{lem:phi-axioms} and \ref{lem:xi-law}. We refer to the end of Section \ref{sss:regular-disk} for a discussion of a possible relationship between $\Phi_D^{\sigmab,\kappa}$ and a conformal field theory (CFT) for CLE.

In light of Theorem \ref{thm:loopexpconv}, it is natural to expect that we get the following variant of CLE when performing the reweighting \eqref{eq:reweight-nesting}. Before giving the definition, we recall that a loop ensemble is a countable collection of (non-oriented) loops. For $\kappa\in(8/3,8)$ a $\CLE_\kappa$ is a random loop ensemble which comes in two variants, nested and non-nested, where a non-nested $\CLE_\kappa$ is obtained by sampling a nested $\CLE_\kappa$ and only keeping the loops not surrounded by any other loop, where we use the word surrounded as explained above Theorem \ref{thm:loopexpconv}. If $\Gamma$ is a loop ensemble in a domain $D\subseteq\C$ then we define the complementary components of $\Gamma$ to be the open connected components of the set $\cup_{\eta\in\Gamma\,}\eta^o$. The reader may want to come back to part (iii) of the definition below after reading Section \ref{sec:cle-explorations}, as CPIs are precisely defined there.

\begin{defn}
	Let $\kappa\in(8/3,8)$, $D\subsetneq\C$ be simply connected, $A\subseteq\N$ be finite, $\sigmab=(\sigma_B\colon B\subseteq A)$ with $\sigma_\emptyset=0$, and $\z=(z_i\,:\,i\in A)$ be as in Theorem \ref{thm:loopexpconv}.
	\begin{enumerate}[(i)]
		\item A non-nested $\CLE_\kappa^{\sigmab}(\z)$ in $D$ is a loop ensemble whose law has the following Radon-Nikodym derivative with respect to a non-nested $\CLE_\kappa$ $\Gamma$ in $D$
		\begin{align*}
			\frac{1}{\Phi_{D}^{\sigmab,\kappa}(\z)}
			\prod_{\eta\in\Gamma} e^{\sigma_{\cI(\eta^o)}} \prod_{U\in \mathfrak{U}(\eta^o)}
			\Phi_{U}^{\sigmab|_{\mathcal{I}(U)},\kappa}
			( \z|_{\mathcal{I}(U)} )\;,
		\end{align*}
		where $\mathfrak{U}(V)$ denotes the connected components of an open set $V$, $\cI(U):=\{i\in A\colon z_i\in U \}$ for $U\subseteq D$, and $\z|_{\cI(U)}=(z_i\colon i\in \cI(U))$.
		\item A nested $\CLE_\kappa^{\sigmab}(\z)$ is a loop ensemble with the law of $\cup_{k\ge 0} \Gamma_{k}$ where $\Gamma_k$ for $k\in\N_0$ are loop ensembles sampled iteratively as follows:
		\vskip5pt
		\begin{itemize}
			\item Let $\Gamma_0$ be a non-nested $\CLE_\kappa^{\sigmab}(\z)$.
			\item Given $\Gamma_0,\dots,\Gamma_{k-1}$, sample an independent non-nested $\CLE_\kappa^{\sigmab|_{\cI(U)}}(\z|_{\cI(U)})$ in $U$ for each complementary component $U$ of $\Gamma_{k-1}$. Let $\Gamma_k$ be the union of these loop ensembles.
		\end{itemize}
		\item Consider two prime ends $w_0$ and $w_\infty$ of $D$, and a parameter $\beta\in [-1,1]$. We say that $\lambda$ is a CPI from $w_0$ to $w_\infty$ with asymmetry parameter $\beta$ within nested $\CLE_\kappa^{\bm{\sigma}}(\bm{z})$ $\Gamma$ if the conditional law of $\lambda$ given the outermost loops of $\Gamma$ is the same as that of a CPI from $w_0$ to $w_\infty$ with asymmetry parameter $\beta$ within a $\CLE_\kappa$ given the outermost loops of this $\CLE_\kappa$ (see Section \ref{sec:cle-explorations}).
	\end{enumerate}
	\label{def:cle}
\end{defn}
If we do not specify where a $\CLE_\kappa^{\sigmab}(\z)$ is nested or non-nested we refer to the nested variant. We point out that the construction in (iii) is well-defined since the law of non-nested $\CLE_\kappa^{\bm{\sigma}}(\bm{z})$ is absolutely continuous with respect to the law of a non-nested $\CLE_\kappa$. Note however that the nested $\CLE_\kappa^{\sigmab}(\z)$ is singular with respect to a nested $\CLE_\kappa$ except if $\sigma_i=0$ for all $i\in A$, in which case it is absolutely continuous. Indeed, in both the case of a CLE and a reweighted CLE, one can see that $\lim_{n\rightarrow\infty}\log (R(z_i,(\eta_i^n)^o))/n$ exists a.s.\ and is a deterministic constant if $\eta^i_n$ denotes the $n$th CLE loop surrounding $z_i$, and these limits differ when one considers different $\sigma_i$ values.

\subsection{Regular LQG disks weighted by CLE nesting statistics}
\label{sss:regular-disk}

We will now introduce the LQG surface we expect to obtain upon applying the reweighting \eqref{eq:reweight-nesting}.

In fact, when we define the surface here in the introduction we focus on the even smaller parameter range where $\alpha_i\in(Q_\gamma-\sqrt{\kappa}/4,2)$ for all $i\in A$ since the formulas can be written in a simpler form in this case due to the existence of the $\alpha_i$-LQG area measure. The general case $\alpha_i\in(Q_\gamma-\sqrt{\kappa}/4,Q_\gamma)$ can be treated via a direct generalization (see Definition \ref{def:disk} and Proposition \ref{prop:limit-eps}). For $\Lambda\geq 0$, $\ell>0$, $\kappa\in(8/3,8)\setminus\{4\}$, $A\subseteq\N$ finite, and $\sigmab\in\frk S^A_\kappa$ such that $\alpha_i=\alpha^\kappa_{\sigma_i}\in(Q_\gamma-\sqrt{\kappa}/4,2)$ for all $i\in A$, we define a measure $\op{M}^{\sigmab,\kappa}_{\Lambda,\ell}$ on tuples $(h,\z)\in H^{-1}(\D)\times\D^A$ by setting
\begin{align}
	\op{M}^{\sigmab,\kappa}_{\Lambda,\ell}(dh,d\bm{z})=
	e^{-\Lambda \mu_h^\gamma(\D)}
	\Phi_\D^{\sigmab,\kappa}(\z)\prod_{i\in A} \mu^{\alpha_i}_h(dz_i)\,P(dh)\;,
	\label{eq5}
\end{align}
where $P$ is the probability measure describing the field of an embedding of a $\gamma$-LQG disk without marked points and with boundary length $\ell$, and $\mu^{\alpha_i}_h$ is the $\alpha_i$-LQG measure associated to the field $h$.

Let us explain how this definition relates to the motivation \eqref{eq:reweight-nesting} by performing the following back of the envelope computation. We present it in the singleton case of $A= \{i\}$, writing $z=z_i$ and $\alpha = \alpha_i$, but the generalization to more points is immediate. Recall that we let $h$ be an embedding of a $\gamma$-LQG disk and $\Gamma$ denotes an independent $\CLE_\kappa$ in the unit disk $\D$. Let us fix $\epsilon > 0$ and consider the $\gamma$-LQG metric ball with $\gamma$-LQG area $\epsilon$ centered at the point $z\in \D$. At a heuristic level, we approximate this metric ball by a Euclidean ball $B_r(z)$ with $r>0$ implicitly given by
\begin{align*}
	\epsilon = r^{2+\gamma^2/2} e^{\gamma h_r(z)} \;.
\end{align*}
Here $h_r(z)$ denotes the circle average of the field on $\partial B_r(z)$ and the right-hand side is an approximation of $\mu_h^\gamma(B_r(z))$. Let $N_\epsilon$ be the number of loops in $\Gamma$ surrounding $B_r(z)$, which we view as an intrinsic way of regularizing the loop count around the point $z$ as alluded to in Section \ref{sec:intro-def}. In accordance with \eqref{eq:reweight-nesting} we consider the reweighting
\begin{align*}
	\epsilon^{\alpha/\gamma - 1} e^{\sigma N_\epsilon} \cdot r^{\gamma^2/2} e^{\gamma h_r(z)} &= r^{\alpha(Q_\gamma - Q_\alpha)} e^{\sigma N_\epsilon}  r^{\alpha^2/2} e^{\alpha h_r(z)},
\end{align*}
where the prefactor involving $\epsilon$ is chosen precisely so that the right-hand side has a limit. We now see that the expression on the right-hand side matches with \eqref{eq5} in the $\epsilon\to 0$ limit by the definition of $\alpha$-LQG measures and by the definition made in Theorem \ref{thm:loopexpconv}. Needless to say, this is only a heuristic argument but it explains the origin of the definitions made here.

Note that we are excluding the case $\kappa = 4$ throughout this paper since this corresponds to $\gamma = 2$ and therefore a critical Liouville quantum gravity measure, which we do not consider in this work.

The reason for introducing the exponential term involving the parameter $\Lambda$ in the definition above is only that we will be able to show that choosing $\Lambda$ sufficiently large will make the measure above finite. By Girsanov's theorem for the $\alpha_i$-LQG area measure (see \cite[Proposition 3.4]{shef-kpz}), it is not hard to see that the measure $\op{M}^{\sigmab,\kappa}_{\Lambda,\ell}$ is supported on fields $(h,\z)$ that have an $\alpha_i$ logarithmic singularity at $z_i$ for each $i\in A$.

A (regular) $\gamma$-LQG surface is defined to be an equivalence class of tuples $(D,h,\z)$ (with $D\subseteq\C$ simply connected, $h$ a distribution on $D$, and $\z=(z_i\,:\,i\in A)$ a tuple of points in $D$ for $A\subseteq\N$ finite) where $(D,h,\z)\sim(D',h',\bm{z'})$ if there is a conformal map $\phi:D\to D'$ such that
\begin{align}
	h=h'\circ\phi+Q_\gamma\log|\phi'|\;,\qquad
	\phi(z_i)= z_i'\text{ for all }i\in A\;.
	\label{eq:cc}
\end{align}
We let $[(D,h,\z)]$ denote the $\gamma$-LQG surface given by the equivalence class of $(D,h,\z)$ and call $h$ an embedding of $[(D,h,\z)]$ into $D$.
See Section \ref{sec:gff-lqg} for further details and motivation.

For $\kappa\in(8/3,8)\setminus\{4 \}$ we consider the measure given by taking the pushforward of $\smash{\op{M}^{\sigmab,\kappa}_{\Lambda,\ell}}$ by the function which maps $(h,\z)$ to $[(\D,h,\z)]$. Using the conformal covariance of the function $\Phi_\D^{\sigmab,\kappa}$ and of the $\alpha_i$-LQG area measure for all $i\in A$ one can check that this measure on $\gamma$-LQG surfaces does not depend on the embedding $h$ that we chose for the $\gamma$-LQG disk above. This follows from the change of coordinates formula for $\mu^{\alpha_i}_h$ in combination with Lemma \ref{lem:phi-axioms}, and the precise values of $\alpha_i$ appearing in Definition \ref{def:main-params} are the only ones which make this property true. When $\kappa<4$ we write $\bar{\op{M}}^{\sigmab,\kappa}_{\Lambda,\ell}$ for this measure on $\gamma$-LQG surfaces.

We now specialize to the case $\kappa\in(8/3,4)$; the case $\kappa\in(4,8)$ will be discussed in relation with the generalized LQG disk below. For $\kappa\in(8/3,4)$ we define the \emph{weight} $\smash{W^{\sigmab,\kappa}_{\Lambda,\ell}}$ and the \emph{partition function} $Z^{\sigmab,\kappa}_{\Lambda,\ell}$ of the disk as follows
\begin{align}
	\begin{split}
		W^{\sigmab,\kappa}_{\Lambda,\ell}
		=|\op{M}^{\sigmab,\kappa}_{\Lambda,\ell}|
		\qquad\text{and}\qquad
		Z^{\sigmab,\kappa}_{\Lambda,\ell}
		= \ell^{-1-4/\kappa} \,W^{\sigmab,\kappa}_{\Lambda,\ell}\;,
	\end{split}
\label{eq:part-fcn}
\end{align}
where $|\cdot|$ denotes the total mass of a measure.\footnote{\,The scaling $\ell^{1+4/\kappa}$ we do when defining the partition function corresponds to having a disk with a marked point on the boundary whose conditional law given the field is that of a point sampled from the $\gamma$-LQG boundary measure. This is the natural convention in our setting since the disks we study will come naturally equipped with such a marked boundary point. We mention this since in the random planar map literature one frequently sees a scaling factor $\ell^{4/\kappa}$ which does not take into account the $1/\ell$ normalization factor arising from sampling a boundary point according to the $\gamma$-LQG measure.} Note that $W^{-,\kappa}_{0,\ell}=1$. The following scaling property is immediate
\begin{align}
    Z^{\sigmab,\kappa}_{\Lambda,\ell}=\ell^{-1-\frac{4}{\kappa}+\frac{2}{\sqrt{\kappa}}\sum_{i\in A}\alpha_i}\, Z^{\sigmab,\kappa}_{\Lambda\ell^2,1} \;.
    \label{eq22}
\end{align}
We will see in Theorem \ref{thm:markov-simple} and Remark \ref{rmk:jump-rates} that $\smash{Z^{\sigmab,\kappa}_{\Lambda,\ell}}$ is the natural definition of the partition function for the disk measure introduced above and we conjecture in Section \ref{sec:discrete} that $Z^{\sigmab,\kappa}_{\Lambda,\ell}$ describes the scaling limit of the associated random planar map partition function.

\subsection{Relationships with conformal field theory}
In this subsection we will briefly comment on relationships between the objects defined above and conformal field theory (CFT). First we remark that if $(h,\z)$ is sampled from $\smash{\op{M}^{\sigmab,\kappa}_{\Lambda,\ell}}$ (normalized to be a probability measure, provided this is possible) and we condition on the location of the marked points $\z$, then the conditional law of the field $h$ is the following: It is the field with singularities $((z_i,\alpha_i)\colon i\in A)$, total boundary length $\ell$, and cosmological constant $\Lambda\geq 0$ that appears in Liouville conformal field theory (LCFT) \cite{hrv-disk}, see Section \ref{sec:disk}. At the end of Section \ref{sec:disk} we explain the relationship between our partition function and correlation functions appearing in LCFT. In particular, we will see that for $\kappa\in(8/3,4)$
\begin{align}
	W^{{\sigmab},\kappa}_{\Lambda,\ell}
	& = \wh C \ell^{-2 +2Q_\gamma/\gamma} \int_{\D^A} \Phi^{{\sigmab},\kappa}_\D(\bm{z}) Z^{\op{LCFT}}_{\Lambda, \ell,  (\bm{\alpha},\z),( \bm{\beta},\w )}\,d\bm{z}\;,
	\label{eq-total-mass}
\end{align}
where $\wh C>0$ is an explicit constant and $Z^{\op{LCFT}}_{\Lambda, \ell, (\bm{\alpha},\z),( \bm{\beta},\w )}$ is a so-called fixed boundary length LCFT partition function.  The tuple $(\bm{\alpha},\z)$ indicates the logarithmic singularities within the domain while $(\bm{\beta},\w)$ are three logarithmic $\gamma$ singularities on the boundary of the domain. Choosing the location of the latter three singularities corresponds to fixing the embedding of the LQG surface. These LCFT partition functions are closely related to LCFT correlation functions. 

One can hope that \eqref{eq-total-mass} uniquely characterizes $\Phi^{{\sigmab},\kappa}_\D(\bm{z})$, which would provide a potential path for computing this interesting CLE observable if the other unknowns in the formula (namely, $W^{{\sigmab},\kappa}_{\Lambda,\ell}$ and $Z^{\op{LCFT}}_{\Lambda, \ell, (\bm{\alpha},\z),( \bm{\beta},\w )}$) were to be  identified. See also Remark \ref{rmk:bootstrap} and Section \ref{sec:disk} for more details on this.

The functions $\Phi^{{\sigmab},\kappa}_\D$ are of particular interest due to their potential relevance for the construction of a CFT for CLE. It is expected that one can define a CFT for CLE where the $n$-point function is related to connection probabilities for $n$ points in discrete loop models converging to CLE \cite{ijs16,prs16,js19,hjs20}. Such connection probabilities are further related to functions of the form of the right-hand side of \eqref{eq:phi-limit}, see e.g.\ \cite[Section 1.4]{as-cle}. The latter work considers the variant of $\Phi^{{\sigmab},\kappa}_\D$ on a sphere with three marked points and shows that this function is given by an imaginary counterpart of the three point structure constant in Liouville CFT (known as the DOZZ formula, see \cite{krv-dozz}). In particular, this variant of $\Phi^{{\sigmab},\kappa}_\D$ should define the three point structure constant of the CFT for CLE on a sphere and we expect that $\Phi^{{\sigmab},\kappa}_\D$ with more points is related to the higher order correlation functions of the theory. The field of such a theory is a variant of the CLE nesting field constructed in \cite{mww-extremes}.

Finally we remark that we expect  $\Phi^{{\sigmab},\kappa}_\D$ to satisfy the following asymptotic property as $z_i\to z_j$ for $i,j\in A$ distinct: 
\begin{align*}
    \Phi^{\sigmab,\kappa}_{\D}(\z) \sim \wh c^{\,\kappa}_{\sigmab} |z_i-z_j|^{\rho^\kappa_{\sigma_{i}}+\rho^\kappa_{\sigma_{j}}-\rho^\kappa_{\sigma_{i,j}} }
    \Phi^{\sigmab',\kappa}_{\D}(\z')\;,\qquad \z'=(z_k\,:\,k\neq i)\;,
\end{align*}
where $\wh c^{\,\kappa}_{\sigmab}$ is a (non-explicit) constant and $\sigmab'=(\sigma'_B \colon B\subseteq A\setminus\{i \} )$ is given by $\sigma'_B=\sigma_B$ for $ B\not\ni j$ and $\sigma'_B=\sigma_{\{i \}\cup B}$ for $B\ni j$.\footnote{\,We do not give a proof of this property but it follows immediately via a heuristic argument from Theorem \ref{thm:loopexpconv} and the Markov property of CLE if we approximate the number of CLE loops $\smash{N^0_{\{i,j\}}}$ around $z_i$ and $z_j$ by the number of CLE loops around $z_j$ with conformal radius larger than $|z_i-z_j|$.} Such a property for a correlation function  $\Phi^{{\sigmab},\kappa}_\D$ is related to fusion rules and operator product expansion.

\subsection{Generalized LQG disks weighted by CLE nesting statistics}
\label{sss:gen-disk}

The reader can skip definitions and results related to the generalized LQG disk on a first reading if  they wish, but we will still provide a very brief introduction here.

For $\gamma\in(\sqrt{2},2)$, the generalized $\gamma$-LQG disk can, roughly speaking, be described as a tree of regular $\gamma$-LQG disks. See Figure \ref{fig:cle-lqg-warmup}, right. The tree structure is encoded by the time reversal of a spectrally positive Lévy excursion $E:[0,\ell]\to[0,\infty)$ with exponent $4/\gamma^2$ and we call the duration $\ell$ of the excursion the generalized boundary length of the disk. Each jump of the excursion is associated with an independent regular $\gamma$-LQG disk of boundary length equal to the magnitude of the jump. We refer to Section \ref{sec:normal-lqg-disks} for more details.

In our setting, we will consider a variant of the generalized $\gamma$-LQG disk with additional marked points indexed by a finite set $A\subseteq\N$. Fix $\kappa'=16/\gamma^2\in(4,8)$, $\sigmab\in\frk S^A_{\kappa'}$, $\Lambda\geq 0$, and $\ell>0$.\footnote{\,We write $\kappa'$ instead of $\kappa$ in most of the paper when this parameter takes its value in $(4,8)$.} The measure on surfaces we consider is defined as follows 
\begin{align}
	\bar{\op{M}}^{{\sigmab},\kappa'}_{\Lambda,\ell} =
	\sum_{Q\in \Pi(e,A)} \prod_{t<\ell\colon \Delta e_t\neq 0} \op{M}^{{\sigmab}|_{\Delta Q_{t}},\kappa'}_{\Lambda,|\Delta e_t|} \,P(de)\;,
	\label{eq:gen-disk}
\end{align}
where
$P$ is the law of $E$ and $\Pi(e,A)$ is the set of non-increasing (in the sense of inclusion) càdlàg functions $Q\colon [0,\ell]\to \mathcal{P}(A)$ such that $Q_0=A$, $Q_{\ell-}=\emptyset$ and $\Delta Q_t := Q_{t-}\setminus Q_t = \emptyset$ whenever $\Delta e_t := e_t-e_{t-}= 0$. The interpretation of $Q_t$ is the collection of indices of points in the unexplored part of the generalized $\gamma$-LQG disk at time $t$, so in particular $\Delta Q_t$ is the collection of indices of points which are in the regular $\gamma$-LQG disk with boundary length $\Delta e_t$.
A surface sampled from $\bar{\op{M}}^{{\sigmab},\kappa'}_{\Lambda,\ell}$ should be viewed as the gluing of surfaces sampled from $\op{M}^{{\sigmab}|_{\Delta Q_{t}},\kappa'}_{\Lambda,|\Delta e_t|}$ into a tree structure according to the excursion $e$.

The weight $W^{\sigmab,\kappa}_{\Lambda,\ell}$ and the partition function $Z^{\sigmab,\kappa}_{\Lambda,\ell}$ of the disk are defined as follows
\begin{align}
\begin{split}
	W^{\sigmab,\kappa'}_{\Lambda,\ell}
	=|\bar{\op{M}}^{\sigmab,\kappa'}_{\Lambda,\ell}|
	\qquad\text{and}\qquad
	Z^{\sigmab,\kappa'}_{\Lambda,\ell}
	= \ell^{-1-4/\kappa'} \,W^{\sigmab,\kappa'}_{\Lambda,\ell},
\end{split}
\label{eq:part-fcn-gen}
\end{align}
and \eqref{eq22} still holds with $\Lambda \ell^{8/\kappa'}$ instead of $\Lambda \ell^2$ in the subscript on the right-hand side and with $\kappa'$ instead of $\kappa$. The exponent $8/\kappa'$ describes how the LQG area scales with the generalized boundary length of the disk.

\subsection{Main results on partition functions and the law of the exploration}
\label{sec:intro-res}

For $\kappa\in(8/3,4)$ consider a non-nested $\CLE_\kappa$ $\Gamma$ in $\D$ and fix distinct points $w_0,w_\infty\in \partial \D$. In \cite{msw-simple,cle-percolations}, Miller, Sheffield, and Werner consider a natural exploration of $\Gamma$. The exploration is a continuous curve $\lambda$ known as the continuum percolation interface (CPI) which starts at $w_0$ and ends at $w_\infty$. For a parameter $\beta\in [-1,1]$, first assign independently to each loop in $\Gamma$ a counterclockwise or clockwise orientation with probability $(1+\beta)/2$ and $(1-\beta)/2$, respectively. Then the CPI $\lambda$ is an $\SLE_{\kappa'}$-type curve ($\kappa'=16/\kappa$) that stays in the carpet and leaves the counterclockwise (resp.\ clockwise) loops to its right (resp.\ left). See Section \ref{sec:cle-explorations} for further details.

We consider an embedding $h$ of a (regular) $\gamma$-LQG disk of boundary length $\ell>0$ into the unit disk $\D$ with the property that a point sampled from the boundary measure of the disk is mapped to $w_0$ under the embedding and such that the clockwise (resp.\ counterclockwise) boundary arc from $w_0$ to $w_\infty$ has length $\ell_L$ (resp.\ $\ell_R$) and where $\ell_L,\ell_R>0$ are such that $\ell_L+\ell_R=\ell$. We suppose that the $\CLE_\kappa$ and CPI have been sampled independently of $h$. As explained later, by conformal invariance of CLEs and CPIs, we can view the CLE and CPI as being drawn on top of the $\gamma$-LQG disk.

There is a natural way to measure the length of SLE-type curves using the $\gamma$-LQG measure, which we refer to as $\gamma$-LQG or quantum natural lengths (see Section \ref{sec:gff-lqg}) Here, we suppose $\lambda$ has been parametrized by its quantum natural length and denote its total duration by $\zeta$.

For $t<\zeta$, let $\xi(t)$ denote the union of $\lambda([0,t])$ and the loops touching $\lambda([0,t])$, namely
\begin{align}
	\label{eq:xi0}
	\xi(t) := \lambda([0,t])\cup \bigcup_{\eta\in \Gamma\colon \eta\cap \lambda([0,t])\neq\emptyset} \eta\;. 
\end{align}
Let $D_t$ be the complementary component of $\D\setminus\xi(t)$ which has $w_\infty$ on its boundary. For $t\in [0, \zeta)$, the two points $\lambda(t)$ and $w_\infty$ divide the boundary $\partial D_t$ of $D_t$ into a left boundary arc and a right boundary arc. Denote the quantum boundary lengths of these by $L_t$ and $R_t$, respectively. We set $L_t=R_t=0$ for $t\ge \zeta$.

If the CPI $\lambda$ hits the left boundary of $D_{t-}$ at time $t$ then $\Delta L_t:=L_t-L_{t-}<0$ and the domain which is separated from $w_\infty$ by $\lambda$ at time $t$ has quantum boundary length equal to $|\Delta L_t|$. Furthermore, if the CPI $\lambda$ hits a clockwise oriented loop $\eta$ at time $t$ then $\Delta L_t:=L_t-L_{t-}>0$ and the quantum length of the loop is equal to $\Delta L_t$. The same statements hold with right, $R$, and counterclockwise instead of left, $L$, and clockwise, respectively. See Figure \ref{fig:CPI-LR}.

\begin{figure}
	\centering
	\def\svgwidth{0.9\columnwidth}
	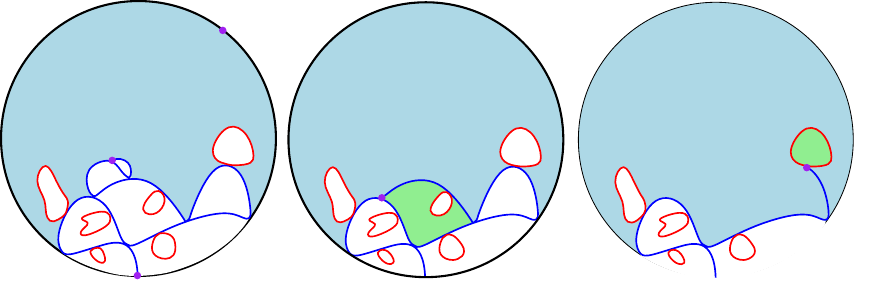
	\caption{\emph{Left}. Illustration of the CPI $\lambda$ (blue curve) until some time $t>0$ and the loops attached to $\lambda|_{[0,t]}$ (red). We have that $\xi(t)$ is the union of the blue curve and the red loops, while $D_t$ is the domain on blue background.
		\emph{Center}. We have $\Delta L_t<0$ since $\lambda$ hits the left boundary of the domain at time $t$. The domain which is cut off at time $t$ is shown in green.
		\emph{Right}. We have $\Delta R_t>0$ since $\lambda$ hits a counterclockwise oriented loop on its right side at time $t$. The domain which is cut off at time $t$ is shown in green.}
\label{fig:CPI-LR}
\end{figure}

The objects defined in the previous three paragraphs also make sense in the setting of LQG disks and CLE with additional marked points, as defined in Sections \ref{ss:cle-weighted} and \ref{sss:regular-disk}, and we will describe this case in more detail in this paragraph. Assume $\kappa\in(8/3,4 )$, $\sigmab\in\frk S_\kappa$, $\Lambda\geq 0$, and $\ell>0$ are such that $\smash{W^{\sigmab,\kappa}_{\Lambda,\ell}<\infty}$ (we address the admissibility question in Theorem \ref{thm:part-fcn-finite} below). First sample $(h,\z)$ according to $\op{M}^{\sigmab,\kappa}_{\Lambda,\ell}$ normalized to be a probability measure and then sample $\Gamma\sim{\CLE}_\kappa^{\sigmab}(\z)$ conditionally independently given $\z$. Let $\lambda$ be a CPI within $\Gamma$ as in Definition \ref{def:cle}, define $L$, $R$ and $\zeta$ as in the previous two paragraphs and for $t<\zeta$ let $P_t\subseteq A$ denote the indices of the points contained in $D_t$ (with $P_t=\emptyset$ for $t\ge \zeta$). We summarize all these processes in a tuple
\begin{align*}
	{\bf Y}=(L,R,P) \;.
\end{align*}
Above we considered the case $\kappa\in(8/3,4)$ but all of the objects can be defined in an analogous way for $\kappa'\in(4,8)$. Indeed, if $\kappa'\in (4,8)$, we consider a generalized LQG surface sampled from $\bar{\op{M}}^{{\sigmab},\kappa'}_{\Lambda,\ell}$, assuming the parameters are such that this measure has finite mass so that it can be renormalized to be a probability measure. Decorating the LQG surface with a $\CLE^{\sigmab}_{\kappa'}(\z)$ and a CPI, we can define $\zeta>0$ and a process ${\bf Y}=(L,R,P)$ as before. We refer to Section \ref{sec:background-msw} for a precise description of these objects in the case of no marked points; the construction is analogous in the case with marked points.

\begin{thm}\label{thm:markov-simple}
  Let $\kappa\in(8/3,4)$ (resp.\ $\kappa\in(4,8)$) and consider the surfaces and processes introduced above. The process ${\bf Y}$ is a strong Markov process and the rate at which the process jumps from $(\ell_L,\ell_R,B)$ to $(\ell_L+s,\ell_R,C)$ and $(\ell_L-s,\ell_R,C)$ for $C\subseteq B$, respectively, is given by the following for a constant $v^\kappa_\beta>0$
	\begin{align}
	-v^\kappa_\beta\cos(4\pi/\kappa)(1-\beta)\,\frac{
		Z^{\sigmab|_{C},\kappa}_{\Lambda,\ell_L+\ell_R+s}\,
		Z^{\sigmab|_{B\setminus C},\kappa}_{\Lambda,s}}
		{Z^{\sigmab|_{B},\kappa}_{\Lambda,\ell_L+\ell_R}}
	\qquad \op{and}\qquad
	v^\kappa_\beta\,\frac{
		Z^{\sigmab|_{C},\kappa}_{\Lambda,\ell_L+\ell_R-s}\,
		Z^{\sigmab|_{B\setminus C},\kappa}_{\Lambda,s}}
		{Z^{\sigmab|_{B},\kappa}_{\Lambda,\ell_L+\ell_R}} \;.
		\label{eq56}
	\end{align}
	The same holds for jumps from $(\ell_L,\ell_R,B)$ to $(\ell_L,\ell_R+s,C)$ and $(\ell_L,\ell_R-s,C)$, respectively, except that the multiplicative factor of $1-\beta$ is replaced by $1+\beta$ in the first formula.

	Moreover, conditionally on the process $\bf{Y}$ the quantum surfaces which are cut out by the CPI $\lambda$ (corresponding to the negative jumps of $L$ and $R$) and the quantum surfaces given by the inside of the discovered loops (corresponding to the positive jumps of $L$ and $R$) are independent regular (resp.\ generalized) quantum disks with marked points and boundary length as given by $\bf{Y}$.
\end{thm}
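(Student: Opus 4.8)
\emph{Proof strategy.} The plan is to deduce the theorem from the corresponding unweighted ($\sigmab=0$, no marked points) statements of Miller--Sheffield--Werner \cite{msw-simple,cle-percolations} by a change of measure whose density factorizes over the quantum surfaces created by the exploration. I would treat $\kappa\in(8/3,4)$ and the regular disk; the generalized-disk case $\kappa'\in(4,8)$ should follow identically, replacing the regular disk and simple CLE by the generalized disk and non-simple CLE and using the tree-of-disks description \eqref{eq:gen-disk}, so I only describe the simple case.

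First I would set up a reference law $P_0$: a plain $\gamma$-LQG disk $(\D,h)$ of boundary length $\ell$, decorated by an independent $\CLE_\kappa$ $\Gamma$, a CPI $\lambda$ with asymmetry parameter $\beta$, and a boundary point $w_0$ sampled from the $\gamma$-LQG boundary measure. Under $P_0$, \cite{msw-simple,cle-percolations} provide exactly what is needed: $(L,R)$ is a strong Markov process with explicit jump rates, conditionally on $(L,R)$ the cut-off surfaces are independent $\CLE_\kappa$-decorated $\gamma$-LQG disks of the boundary lengths recorded by the jumps, and conditionally on the exploration up to time $t$ the unexplored domain $D_t$ carries a fresh such decorated disk of boundary lengths $(L_t,R_t)$ with a CPI continuation from $\lambda(t)$ to $w_\infty$. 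Comparing \eqref{eq5} with Definition \ref{def:cle}, the weighted law $\widetilde P$ of $(h,\z,\Gamma,\lambda,w_0)$ arises from $P_0$ by inserting the marked points via $\prod_{i\in A}\mu_h^{\alpha_i}(dz_i)$, tilting the field by $e^{-\Lambda\mu_h^\gamma(\D)}$, reweighting the outermost loops by the absolutely continuous density of the non-nested $\CLE_\kappa^{\sigmab}(\z)$ with respect to the non-nested $\CLE_\kappa$, filling the nested structure in each complementary component $U$ of the outermost loops by an independent $\CLE_\kappa^{\sigmab|_{\cI(U)}}(\z|_{\cI(U)})$, and normalizing by $W_{\Lambda,\ell}^{\sigmab,\kappa}$. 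I would perform this comparison level by level in the nesting, since the nested $\CLE_\kappa^{\sigmab}(\z)$ is singular with respect to the nested $\CLE_\kappa$ and there is no global Radon--Nikodym derivative: the CPI only interacts with the outermost loops, where the comparison is absolutely continuous, and the inner structure can be filled in afterwards.

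The heart of the argument would be to check that this change of measure factorizes over the quantum surfaces of the MSW decomposition. Using that $\Phi_D^{\sigmab,\kappa}\equiv 1$ when there are no marked points, the prefactor $\Phi_\D^{\sigmab,\kappa}(\z)$ from \eqref{eq5} cancels the normalization of the non-nested $\CLE_\kappa^{\sigmab}(\z)$-density; and the Markov property of $\Phi$ (Lemma \ref{lem:xi-law}), which expresses $\Phi_D^{\sigmab,\kappa}(\z)$ as an average over the outermost loops $\eta$ of a $\CLE_\kappa$ in $D$ of a product of the values $\Phi_{\eta^o}^{\sigmab|_{\cI(\eta^o)},\kappa}(\z|_{\cI(\eta^o)})$ and factors $e^{\sigma_{\cI(\eta^o)}}$, lets one identify the weight attached to a cut-off surface $U$ carrying the points $\z|_{\cI(U)}$ as precisely the density turning a plain $\CLE_\kappa$-decorated $\gamma$-LQG disk into a sample of $\op{M}_{\Lambda,\ell_U}^{\sigmab|_{\cI(U)},\kappa}$ decorated by $\CLE_\kappa^{\sigmab|_{\cI(U)}}(\z|_{\cI(U)})$, times a constant depending only on $\mathbf{Y}$ (namely $W_{\Lambda,\ell_U}^{\sigmab|_{\cI(U)},\kappa}$, with an extra factor $e^{\sigma_{\cI(\eta^o)}}$ when $U=\eta^o$ is the interior of a discovered loop); here one also uses that restricting a $\CLE_\kappa^{\sigmab}(\z)$ to a complementary component $U$ of a partial exploration again gives a $\CLE_\kappa^{\sigmab|_{\cI(U)}}(\z|_{\cI(U)})$, which is immediate from Definition \ref{def:cle}. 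Granting the factorization, the three assertions follow: the cut-off surfaces remain conditionally independent given $\mathbf{Y}$ — reweighting by a product of per-surface factors preserves conditional independence given the boundary data — and have the stated law, which is the final assertion of the theorem (and at time $\zeta$ the unexplored domain has zero boundary length, so all of the surface is accounted for); moreover conditionally on the exploration up to time $t$ the future is a fresh sample of $\op{M}_{\Lambda,L_t+R_t}^{\sigmab|_{P_t},\kappa}$ decorated by $\CLE_\kappa^{\sigmab|_{P_t}}(\z|_{P_t})$ and a CPI, depending only on $(L_t,R_t,P_t)$, so $\mathbf{Y}$ is Markov, and strong Markov by the usual regeneration argument from right-continuity of the trajectories.

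For the jump rates, a negative jump of $L$ from $(\ell_L,\ell_R,B)$ to $(\ell_L-s,\ell_R,C)$ corresponds to the CPI cutting off on the left a surface of boundary length $s$ carrying the points $B\setminus C$; by the factorization its $\widetilde P$-rate is the plain ($\Lambda=0$, no marked points) rate of \cite{msw-simple,cle-percolations}, which I would record as $v_\beta^\kappa\,(\ell_L+\ell_R)^{1+4/\kappa}(\ell_L+\ell_R-s)^{-1-4/\kappa}s^{-1-4/\kappa}\,ds$, multiplied by $W_{\Lambda,s}^{\sigmab|_{B\setminus C},\kappa}\,W_{\Lambda,\ell_L+\ell_R-s}^{\sigmab|_{C},\kappa}/W_{\Lambda,\ell_L+\ell_R}^{\sigmab|_{B},\kappa}$, and substituting $W_{\Lambda,\ell}^{\sigmab,\kappa}=\ell^{1+4/\kappa}Z_{\Lambda,\ell}^{\sigmab,\kappa}$ from \eqref{eq:part-fcn} cancels all powers of $\ell$ and produces the second expression of \eqref{eq56}. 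Positive jumps are handled the same way; the only differences are the orientation-bias thinning by $1-\beta$ (left) or $1+\beta$ (right) of the loop-discovery rate, the universal factor $-\cos(4\pi/\kappa)$ relating the loop-discovery and boundary-hitting rates, and the extra constant $e^{\sigma_{\cI(\eta^o)}}$ above, which is absorbed using the relation between $e^{\sigma}$ and the conformal-radius moment of the outermost CLE loop built into the parameters $\rho^\kappa_\sigma$ of Definition \ref{def:main-params}. I expect the two main obstacles to be carrying out the level-by-level comparison rigorously in spite of the global singularity of the nested weighted CLE — this is where Theorem \ref{thm:loopexpconv} and the conformal covariance and Markov property of $\Phi$ (Lemmas \ref{lem:phi-axioms} and \ref{lem:xi-law}) do the real work — and the bookkeeping of the $e^{\sigma}$-factors and powers of $\ell$ in the last step, which is exactly what pins down the normalization in the definition \eqref{eq:part-fcn} of $Z_{\Lambda,\ell}^{\sigmab,\kappa}$.
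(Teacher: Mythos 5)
Your high-level strategy — change of measure from the Miller--Sheffield--Werner unweighted case, factorization over the cut-out surfaces via Lemma \ref{lem:xi-law}, then reading off the jump rates as the unweighted rate multiplied by a ratio of partition functions — is indeed the route the paper takes. The two places where the proposal has genuine gaps are exactly the places the paper has to work hardest, and you have somewhat misdiagnosed where the real difficulty lies.

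First, you treat the factorization (what the paper establishes as Proposition \ref{prop:chordal-zipper} / Lemma \ref{prop:chordal-zipper0}) as a bookkeeping exercise with the densities from Definitions \ref{def:disk} and \ref{def:cle}, with the ``level-by-level'' issue from the singularity of the nested weighted CLE singled out as the main obstruction. In fact that singularity is resolved quickly in the paper (the CPI only interacts with outermost loops, and Lemma \ref{lem:xi-law} lets one reinsert the inner structure afterwards). The actual technical obstacle is that $\op{M}^{\sigmab,\kappa}_{\Lambda,\ell}$ is only rigorously constructed as a limit of $\epsilon$-regularized insertions: one must show that weighting the unmarked disk by $\prod_i \epsilon^{\alpha_i^2/2}e^{\alpha_i h(\theta^{z_i}_\epsilon)}$ and passing to the limit commutes with the CPI exploration and with the change of coordinates to the cut-out components. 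This is Proposition \ref{prop:limit-eps}, which is formulated with a probability kernel $\mu$ from $H^{-1}(\D)$ precisely because the mollifiers, the target point $w_\infty$, and the marked boundary points $\w_t$ of the cut-out domains all depend on the field in a way that must be decoupled by conditioning on $h$ away from $\cup_i B_r(z_i)$ before Girsanov's theorem can be applied. Your proposal of ``inserting the marked points via $\prod_i\mu_h^{\alpha_i}(dz_i)$'' only makes sense for $\alpha_i<2$ (when the $\alpha_i$-LQG area measure exists) and, even in that range, does not by itself justify that the law of the decorated cut-out surfaces factorizes in the claimed way. Proposition \ref{prop:limit-eps} (plus the conformal covariance of $\Phi$ in Lemma \ref{lem:phi-axioms}) is what does this, and you do not have a substitute for it.

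Second, deducing the jump rates and the (strong) Markov property from the factorization is not a one-line step. The paper devotes Section \ref{sec:fragmentation-main} to the abstract theory of the reweighted process $\bm{Y}^W$: Palm-formula identities (Lemma \ref{lem:msw-palm-result}), a truncation estimate (Lemma \ref{lem:generator-truncation}) showing that the oscillation of the process near its start contributes $O(t)$ when the endpoint is in a compact set, and a Feller-type continuity of $(l,r)\mapsto\E_{(l,r,B)}(f(\bm{Y}_t^W))$ (Lemma \ref{lem:smp-fragmentation}) used to pass from the simple Markov property to the strong one. Your claim that strong Markov follows by ``the usual regeneration argument from right-continuity'' is not correct in general: right-continuity alone does not yield the strong Markov property, and some form of Feller continuity (which is what the paper verifies via stable scaling and the assumption \eqref{eq:conditions-simple}) is needed. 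Similarly, ``multiply the plain rate by the ratio of partition functions'' is the correct answer but not a proof; without Lemma \ref{lem:generator-truncation} one cannot conclude that the limit defining the generator exists and has the claimed form, since a priori one has no control over the weights $W^B$ near $0$ and $\infty$.

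A smaller point: your claim that the extra factor $e^{\sigma_{\cI(\eta^o)}}$ from the loop weight is ``absorbed using the relation between $e^\sigma$ and the conformal-radius moment built into $\rho^\kappa_\sigma$'' is not how the bookkeeping actually closes. That factor is carried through explicitly: it appears in the statement of Proposition \ref{prop:chordal-zipper} and in the generator formula of Theorem \ref{thm:weighted-levy} as $e^{\sigma_{B\setminus C}1(h>0)}$. The conformal-radius moment identity of Definition \ref{def:main-params} is what makes $\Phi^{\sigmab,\kappa}_D$ well-defined and conformally covariant (Theorem \ref{thm:loopexpconv} and Lemma \ref{lem:phi-axioms}); it does not cancel the per-loop weight $e^{\sigma}$.
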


A more precise version of the theorem which also includes an explicit formula for the generator of the process $\bf Y$ is found in Theorem \ref{thm:markov-full}.

\begin{remark}
  The CPI is the continuum counterpart of the peeling process on planar maps, and the jump rates appearing in the theorem are reminiscent of those encountered in the peeling process, see \cite[Equation (14)]{budd-On-peeling} and Section \ref{sec:discrete}. Namely, the probability of a particular event is proportional to the product of the partition function of the surfaces we encounter on this event, see the numerators in \eqref{eq56}.
	\label{rmk:jump-rates}
\end{remark}

\begin{remark}
	Variants of the above theorem can be proved in a number of related settings. For example, instead of letting the CPI go towards a target point $w_\infty$ on the boundary of the disk, one can for instance explore towards one of the marked points $z_i$ or always go into the component with largest boundary length when the domain of exploration is cut in two. The latter exploration in the setting with no marked points is considered in \cite[Theorem 1.2]{msw-simple}. The transition rates in this case are obtained from the transition rates in the boundary-to-boundary exploration, and this proof carries through without changes to the setting of marked points.
\end{remark}

The special case of the theorem where $n=0$ was proved in \cite{msw-simple}. Our proof takes that result as a starting point and obtains the general result by applying a reweighting via Girsanov's theorem (see Proposition \ref{prop:chordal-zipper}).

From the proof leading to the above theorem one can prove a recursive formula for the partition function of the disk with $\#A\ge 2$ marked points in terms of the partition function of the disks with strictly fewer marked points. We remark that the process $(L,R)$ appearing in the theorem below can be described explicitly (up to a rescaling of time which does not affect the values of the expectations below) via a simple reweighting of a Lévy process, see Section \ref{sec:fragmentation-main}. This result arises from decomposing LQG surfaces using CPI curves as explained in the paragraph below the theorem statement.

\begin{thm}
	Let $\kappa\in(8/3,8)\setminus\{4 \}$, $A\subseteq\N$ finite, $\sigmab \in\frk S_\kappa^A$, $\Lambda\geq 0$, and $\ell>0$, and suppose $\#A\ge 2$. Define $X=L+R$, where $(L,R)$ is as in Theorem \ref{thm:markov-simple} for $A=\emptyset$ and $\Lambda=0$. Then
	\begin{align}
		W^{{\sigmab},\kappa}_{\Lambda,\ell} =\sum_{m\in\N_0} V_{\Lambda,\ell}^m\;,
	\end{align}
	where
	\begin{align*}
		V_{\Lambda,\ell}^0 &= \sum_{\emptyset\neq B\subsetneq A} \E_{(\ell/2,\ell/2)}\left( \sum_{t<\zeta} e^{\sigma_B 1(\Delta X_t>0)}\prod_{s<t}
		W^{-,\kappa}_{\Lambda,|\Delta X_s|}
		W^{{\sigmab}|_B,\kappa}_{\Lambda,|\Delta X_t|}
		W^{{\sigmab}|_{B\setminus A},\kappa}_{\Lambda,X_t} \right), \\
		V_{\Lambda,\ell}^{m+1}
		&= \E_{(\ell/2,\ell/2)}\left( \sum_{t<\zeta} \prod_{s< t} W^{-,\kappa}_{\Lambda,|\Delta X_s|}e^{\sigma_A 1(\Delta X_t>0)} V_{\Lambda,|\Delta X_t|}^m W^{-,\kappa}_{\Lambda,X_t} \right),\qquad m\in\N_0\;.
	\end{align*}
\label{thm:bootstrap}
\end{thm}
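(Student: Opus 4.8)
The plan is to run the CPI exploration $\lambda$ within $\CLE^{\sigmab}_\kappa(\z)$ decorating the LQG disk sampled from $\op{M}^{\sigmab,\kappa}_{\Lambda,\ell}$, targeted at a boundary point $w_\infty$, and decompose the total mass $W^{\sigmab,\kappa}_{\Lambda,\ell}$ according to how the marked points $\z$ are distributed among the surfaces cut off by the exploration. Concretely, I would track the process ${\bf Y}=(L,R,P)$ from Theorem~\ref{thm:markov-simple}. At each jump time $t<\zeta$ the exploration either closes off a loop (positive jump) or cuts the domain along the CPI (negative jump); in either case a quantum surface of boundary length $|\Delta X_t|$ is separated off, and by the last assertion of Theorem~\ref{thm:markov-simple} these surfaces are conditionally independent regular quantum disks decorated (in the case with marked points) by the appropriate $\CLE^{\sigmab|_{\cdot}}_\kappa$. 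Summing the mass contributed by each such surface, weighted by the jump rates in \eqref{eq56}, is the source of the products of $W$'s and the factors $e^{\sigma_B 1(\Delta X_t>0)}$ (the latter coming from the Radon--Nikodym weight $\prod_\eta e^{\sigma_{\cI(\eta^o)}}$ in Definition~\ref{def:cle}(i), which pays $e^{\sigma_B}$ precisely when a loop enclosing exactly the points indexed by $B$ is discovered).

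The key bookkeeping step is to classify the moment at which the marked points get separated from the target $w_\infty$. Since $\#A\ge 2$, consider the last jump time $T$ before which $P_{t-}=A$, i.e.\ the first time some marked point is cut off from $w_\infty$. Two cases occur. Either at time $T$ the separated surface receives a strict nonempty subset $B=P_{T-}\setminus P_T$ with $\emptyset\neq B\subsetneq A$ and the remaining points $A\setminus B$ stay on the $w_\infty$-side: this is the $V^0$ term, where the surface cut off carries $\sigmab|_B$ (mass $W^{\sigmab|_B,\kappa}_{\Lambda,|\Delta X_T|}$, times $e^{\sigma_B}$ if it was a discovered loop), the $w_\infty$-component carries $\sigmab|_{A\setminus B}$ (mass $W^{\sigmab|_{A\setminus B},\kappa}_{\Lambda,X_T}$), and every surface separated \emph{strictly before} $T$ carries no marked points, contributing $\prod_{s<T}W^{-,\kappa}_{\Lambda,|\Delta X_s|}$. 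Or else at time $T$ the separated surface gets \emph{all} of $A$ (so $P_T=\emptyset$, $B=A$), with mass governed by a nested copy of the same problem on a disk of boundary length $|\Delta X_T|$: this is the recursive term, where $V^m_{\Lambda,|\Delta X_T|}$ accounts for the $m$ levels of further nesting inside that surface, $e^{\sigma_A}$ is paid if it was a loop, and again the earlier surfaces and the $w_\infty$-component are unmarked, giving $\prod_{s<T}W^{-,\kappa}_{\Lambda,|\Delta X_s|}$ and $W^{-,\kappa}_{\Lambda,X_T}$. Iterating the second case $m$ times before falling into the first yields $W^{\sigmab,\kappa}_{\Lambda,\ell}=\sum_{m\ge0}V^m_{\Lambda,\ell}$, and the index $(\ell/2,\ell/2)$ arises because the CPI is targeted at the antipodal boundary point so $L_0=R_0=\ell/2$ after normalising; one should also explain the reduction to $\Lambda=0$, $A=\emptyset$ in the law of $(L,R)$, which is exactly the time-change and Girsanov reweighting underlying Theorem~\ref{thm:markov-simple}.

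Making this rigorous requires expressing $W^{\sigmab,\kappa}_{\Lambda,\ell}$ as an integral over the trajectory of ${\bf Y}$ via the compensation formula (Lévy-type excursion/jump decomposition) for the Markov process of Theorem~\ref{thm:markov-simple}: the jump rate \eqref{eq56} inserted into the master formula produces, after cancelling the denominator $Z^{\sigmab|_B,\kappa}$ against the normalisation, precisely the numerator products $Z^{\sigmab|_C}Z^{\sigmab|_{B\setminus C}}$, which convert to $W$'s via \eqref{eq:part-fcn}; the $\ell^{-1-4/\kappa}$ prefactors are bundled consistently into the passage between $Z$ and $W$. The main obstacle, I expect, is the interchange of the infinite sum over $m$ (levels of nesting) with the expectations and the justification of absolute convergence / Fubini: one must control $\sum_m V^m_{\Lambda,\ell}$ using that each surface separated before $T$ is unmarked (so contributes $W^{-,\kappa}_{\Lambda,|\Delta X_s|}$, and $\prod_s W^{-,\kappa}_{\Lambda,|\Delta X_s|}$ is integrable against the law of the $(L,R)$-process since it corresponds to the mass of an \emph{undecorated} explored disk, which is finite), and that the nesting depth is a.s.\ finite because each level strictly decreases a suitable quantity (the conformal radius, or the number of enclosing loops of bounded size, is a.s.\ finite). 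Handling the boundary cases $\kappa\in(4,8)$ with the generalized disk requires the analogous excursion-of-excursion structure from \eqref{eq:gen-disk}, but the combinatorial decomposition is identical once one works with generalized boundary lengths.
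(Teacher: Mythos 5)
Your proposal reproduces the intuitive explanation the paper itself gives just below the theorem statement, and your decomposition into the $V^0$ term and the recursive terms is correct as far as it goes. The genuine gap sits at the step you yourself flag as ``the main obstacle'': showing that $\sum_{m\ge 0}V^m_{\Lambda,\ell}$ actually \emph{equals} $W^{\sigmab,\kappa}_{\Lambda,\ell}$, i.e.\ that the event where infinitely many consecutive CPI explorations are needed before the marked points separate carries zero mass. Your justification --- ``the nesting depth is a.s.\ finite because each level strictly decreases a suitable quantity (the conformal radius, or the number of enclosing loops of bounded size, is a.s.\ finite)'' --- does not close this: a CPI iteration need not cross any CLE loop, since the exploration can hit its own past hull and cut off a trunk component containing \emph{all} marked points without discovering a single loop around them, and in that case the CLE nesting depth around the marked points does not decrease. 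So you cannot bound the number of CPI iterations by the (finite) CLE nesting depth without further argument, and as written you only get the inequality $W^{\sigmab,\kappa}_{\Lambda,\ell}\geq\sum_{m\ge 0}V^m_{\Lambda,\ell}$.

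The paper sidesteps this by decomposing first along an auxiliary, manifestly exhaustive index. It decorates $\bar{\op{M}}^{\sigmab,\kappa}_{\Lambda,\ell}$ with an independent nested $\CLE_\kappa^{\sigmab}$ and splits according to the first CLE nesting level $j+1$ at which not all marked points lie in the same loop; since the measure gives zero mass to configurations where marked points coincide, this $j$ is a.s.\ finite, yielding the exact identity $W^{\sigmab,\kappa}_{\Lambda,\ell}=\sum_{j\ge 0}g_j(\ell)$ via Proposition~\ref{prop:full-cle-marked-cut}. Then Corollary~\ref{cor:msw-simple-full-more} together with Proposition~\ref{prop:chordal-zipper} rewrite each $g_j$ in terms of operators $\mathcal{A}_\pm$ encoding single CPI steps, giving $g_j(\ell)=\sum_{k_j\ge 0}e^{\sigma_A}\mathcal{A}_-^{k_j}\mathcal{A}_+\,g_{j-1}(\ell)$, and the resulting non-negative multi-index sum $\sum_{j\ge 0}\sum_{k_0,\dots,k_j\ge 0}$ is rearranged by the total CPI count $m=k_0+\cdots+k_j+j$ to produce $\sum_{m\ge 0}V^m_{\Lambda,\ell}$. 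The rearrangement is the trivial part; the auxiliary nesting-level decomposition is what guarantees no mass is lost, and that step is missing from your proposal. (You also correctly read $A\setminus B$ where the printed $V^0$ formula has the typo $B\setminus A$, which is a small plus, but orthogonal to the gap.)
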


We now explain the proof idea and the intuition underlying the result. When we explore the CLE decorated LQG disk with the CPI as in Theorem \ref{thm:markov-simple} there is a positive chance that all the marked points are separated from the terminal point $w_\infty$ of the CPI at the same time, i.e., that they lie in the same complementary component of $\xi$. On the event that this happens, we can do a similar exploration with a new CPI in this complementary component, and we can iterate this procedure until not all the marked points lie in the same complementary component of $\xi$. Let $m\in\N_0$ denote the number of explorations we need to do before the first exploration where the points are separated. In our definition of $W^{\sigmab,\kappa}_{\Lambda,\ell}$ we can split the integral into a sum of integrals according to the value of $m$, and this gives the decomposition appearing in Theorem \ref{thm:bootstrap}. The proof of the theorem (found in Section \ref{sec:explorations}) builds on this intuition but also needs to rule out certain pathological behaviors of the CPI.

In the case $\# A\in\{0,1 \}$ the partition function has the following explicit closed formula. The function $\bar K_\nu$ (with $\nu>0$) appearing in the statement is given by the following formula
\begin{equation}
	\bar{K}_\nu(x):=\frac{2(x/2)^\nu}{\Gamma(\nu)}\,K_\nu(x)\;,\,\,x>0\;,\qquad\qquad
	\bar{K}_\nu(0)=1\;,
	\label{eq:Kbar}
\end{equation}
where $K_\nu$ is the modified Bessel function of the second kind (see the beginning of Section \ref{sec:levy-exc}).
\begin{thm}
	\label{thm:singlepoint-finite}
	Let $\kappa\in (8/3,8)\setminus \{4\}$, $A=\{i\}$, $\sigmab\in\frk S_\kappa^A$, $\Lambda\geq 0$, and $\ell>0$, and set $\alpha=\alpha^\kappa_{\sigma_i}$. Then $W^{-,\kappa}_{\Lambda,\ell},W^{{\sigmab},\kappa}_{\Lambda,\ell}<\infty$ and
	\begin{align*}
		W^{-,\kappa}_{\Lambda,\ell} &= \bar{K}_{4/\kappa}\left( 2\ell\left(\frac{\Lambda}{4\sin(\pi\gamma^2/4)}\right)^{(\kappa/8)\vee (1/2)}\, \right)\;, \\
		\frac{W^{{\sigmab},\kappa}_{\Lambda,\ell}}{W^{{\sigmab},\kappa}_{0,1}} &= \ell^{2\alpha/\sqrt{\kappa}} \bar{K}_{2/\sqrt{\kappa}\cdot(Q_\gamma-\alpha)}\left(2\ell\left(\frac{\Lambda}{4\sin(\pi\gamma^2/4)}\right)^{(\kappa/8)\vee (1/2)}\,\right) \;.
	\end{align*}
\end{thm}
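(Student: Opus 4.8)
The plan is to treat the two formulas together via the peeling (CPI) description of the disk and the recursive structure encoded in Theorem~\ref{thm:markov-simple}. For the no-marked-point case $W^{-,\kappa}_{\Lambda,\ell}$, I would first reduce to the one-dimensional problem: by the last clause of Theorem~\ref{thm:markov-simple}, the process $X=L+R$ started from $\ell$ is a Markov process whose jumps cut off independent disks, and tracking the total mass of $\bar{\op M}^{-,\kappa}_{\Lambda,\ell}$ through the exploration turns the quantity $W^{-,\kappa}_{\Lambda,\ell}$ into the solution of a renewal-type identity in $\ell$. Concretely, using the explicit jump rates from \eqref{eq56} (with $B=C=\emptyset$) one writes a first-passage/excursion decomposition for the Lévy-type process $X$ before it hits $0$, weighted by $e^{-\Lambda\mu}$ on the swallowed area; this should be organized exactly as in Miller--Sheffield--Werner's treatment of the $n=0$ case, to which the excerpt explicitly defers. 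The outcome is that $\ell\mapsto W^{-,\kappa}_{\Lambda,\ell}$ satisfies a linear ODE or integro-differential equation in $\ell$ whose solution, subject to $W^{-,\kappa}_{0,\ell}=1$ and the scaling \eqref{eq22}, is $\bar K_{4/\kappa}$ of the stated argument. The precise constant $2(\Lambda/4\sin(\pi\gamma^2/4))^{(\kappa/8)\vee(1/2)}$ is then pinned down by matching the small-$\Lambda$ expansion against the area moments of the unit-boundary-length disk, i.e.\ by computing $\partial_\Lambda|_{\Lambda=0} W^{-,\kappa}_{\Lambda,1} = -\E_P(\mu^\gamma_h(\D))$ and using the known first moment of the quantum area of the disk.

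For the one-point formula, I would use the defining integral \eqref{eq5}: for $A=\{i\}$ with $\alpha=\alpha^\kappa_{\sigma_i}$,
\[
W^{\sigmab,\kappa}_{\Lambda,\ell}=\int_{\D} e^{-\Lambda\mu^\gamma_h(\D)}\,\Phi^{\sigmab,\kappa}_\D(z)\,\mu^{\alpha}_h(dz)\,P(dh),
\]
and the key input is that $\Phi^{\sigmab,\kappa}_\D(z)$ with a single marked point is constant in $z$ (there are no nontrivial $N^0_B$ with $\#B>1$, so by conformal invariance of $\CLE_\kappa$ under the rotations and scalings fixing $\D$ it can only depend on $R(z,\D)=1-|z|^2$; but actually the clean route is to observe that $W^{\sigmab,\kappa}_{\Lambda,\ell}$ is an embedding-independent quantity, hence equals a constant times the $\alpha$-insertion). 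This reduces $W^{\sigmab,\kappa}_{\Lambda,\ell}$ to a constant multiple of the total mass of the $\gamma$-LQG disk with one bulk insertion of weight $\alpha$ and cosmological constant $\Lambda$, which one again analyzes through the CPI: now the process $X=L+R$ carries the label $P\in\{\emptyset,\{i\}\}$, and by Theorem~\ref{thm:markov-simple} the exploration either swallows the marked point into a cut-off disk (a negative jump weighted by $W^{\sigmab,\kappa}_{\Lambda,|\Delta X_t|}$ times $W^{-,\kappa}$ factors) or never does, forcing the point into the component adjacent to $w_\infty$. Unwinding this gives a linear recursion for $W^{\sigmab,\kappa}_{\Lambda,\ell}$ driven by the already-known $W^{-,\kappa}_{\Lambda,\cdot}$, whose solution — again using the scaling \eqref{eq22}, which fixes the power of $\ell$ to be $2\alpha/\sqrt\kappa$, and the boundary behavior as $\Lambda\downarrow0$ — is $\ell^{2\alpha/\sqrt\kappa}\bar K_{(2/\sqrt\kappa)(Q_\gamma-\alpha)}$ of the same argument, up to the normalizing constant $W^{\sigmab,\kappa}_{0,1}$.

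An alternative, perhaps cleaner, derivation of the second formula is to invoke the relationship \eqref{eq-total-mass} with the fixed-boundary-length Liouville CFT partition function on the disk with one bulk insertion: the quantity $Z^{\op{LCFT}}_{\Lambda,\ell,(\alpha,z),(\beta,w)}$ for the one-insertion disk is computed in \cite{hrv-disk} and the structure constants are explicit, and the index $\nu=(2/\sqrt\kappa)(Q_\gamma-\alpha)$ is precisely $2/\gamma$ times the "Seiberg-type" discrepancy $Q_\gamma-\alpha$, which is exactly the parameter governing the Bessel-function form of fixed-length Liouville amplitudes. I would cross-check the two approaches against each other. Finiteness of $W^{-,\kappa}_{\Lambda,\ell}$ and $W^{\sigmab,\kappa}_{\Lambda,\ell}$ is automatic from the closed formula once established, since $\bar K_\nu$ is finite (and positive) on $[0,\infty)$ for $\nu>0$, and the constraint $\alpha<Q_\gamma$ built into $\sigmab\in\frk S^A_\kappa$ guarantees $\nu>0$.

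The main obstacle I anticipate is the bookkeeping in the CPI decomposition: one must justify rigorously that summing the total mass of $\op M^{\sigmab,\kappa}_{\Lambda,\ell}$ along the exploration converges and yields a closed renewal equation, i.e.\ controlling the contribution of the (possibly infinitely many) small jumps and ruling out pathological behavior of the process $X$ near $0$ — the same technical point flagged in the paper's discussion of Theorem~\ref{thm:bootstrap}. Equivalently, if one goes the ODE route, the obstacle is identifying the correct generator of $X$ (the explicit form promised in Theorem~\ref{thm:markov-full}) and verifying that $\bar K_\nu$ of the prescribed argument is the unique solution in the relevant function class with the right $\ell\to0$ and $\ell\to\infty$ asymptotics; the modified Bessel equation has a second, exponentially growing solution that must be excluded by a growth bound on $W^{\sigmab,\kappa}_{\Lambda,\ell}$ coming from $e^{-\Lambda\mu}\le 1$.
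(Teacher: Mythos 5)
Your primary route is genuinely different from the paper's, and I think it would run into serious difficulties. The paper does \emph{not} derive these formulas from the CPI exploration or any renewal/ODE argument; instead, Lemma~\ref{lem:measure-finite-import} reduces both formulas via Proposition~\ref{prop:resample-lcft-point} to the law of the total quantum area of a (one-bulk-insertion) LQG disk, which by the FZZ-type results of \cite{ars-fzz} and \cite{ag-disk} is an inverse Gamma distribution with explicit parameters; the Laplace transform of an inverse Gamma variable is a modified Bessel function, giving the $\kappa<4$ case immediately. For $\kappa\in(4,8)$, because the generalized disk is a Lévy-excursion tree of regular disks, one more ingredient is needed: Theorem~\ref{thm:levy-excursions} (proved via Lemma~\ref{lem:residue-bessel-computation}) computes the relevant multiplicative and ``tilted'' functionals of the excursion jumps, and again returns Bessel functions of the same index and argument. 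Your proposal never distinguishes the $\kappa<4$ and $\kappa>4$ cases and omits this Lévy-excursion computation entirely, which is the only nontrivial new analytic input in the $\kappa>4$ case.

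Two more concrete problems. First, your CPI route is circular as stated: Theorem~\ref{thm:markov-simple}, the explicit jump rates~\eqref{eq56}, and Theorem~\ref{thm:markov-full} are all stated under the hypothesis $W^{\sigmab,\kappa}_{\Lambda,\ell}<\infty$, which (for one marked point) is precisely part (i) of Theorem~\ref{thm:part-fcn-finite}, and in the paper that finiteness is itself \emph{deduced} from Theorem~\ref{thm:singlepoint-finite}; you cannot also obtain the closed formula from those jump rates and then use the formula to conclude finiteness. One would have to carefully rework the exploration identity so that both sides are allowed to be $+\infty$ and then establish uniqueness for the resulting renewal equation in a suitable function class, all of which the paper sidesteps by going directly through \cite{ars-fzz}. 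Second, your claim that ``$\Phi^{\sigmab,\kappa}_\D(z)$ with a single marked point is constant in $z$'' is false: by Theorem~\ref{thm:loopexpconv} and conformal invariance it equals $R(z,\D)^{\rho^\kappa_{\sigma_i}}$, and that conformal-radius factor is exactly what makes the change of variables in Proposition~\ref{prop:resample-lcft-point} (and hence the reduction to a one-insertion LCFT amplitude) work. Your alternative LCFT route is therefore the right idea in spirit, but you should cite \cite{ars-fzz} and \cite{ag-disk} (not \cite{hrv-disk}) for the fixed-boundary-length amplitude, identify the inverse Gamma law as the key explicit input, and add the Lévy-excursion step for $\kappa>4$.
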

We remark that the formulas in Theorem \ref{thm:singlepoint-finite} are an almost immediate consequence of \cite{ars-fzz} in the case $\kappa\in(8/3,4)$ (indeed, we will relate the weights appearing in the theorem to the partition functions in the cited paper), while they follow by combining the cited paper with an explicit computation for Lévy excursions in the case $\kappa\in(4,8)$. 

Note that the formulas in Theorem \ref{thm:singlepoint-finite} take the same form for $\kappa\in(8/3,4)$ and $\kappa\in(4,8)$ (the exponent $(\kappa/8)\vee(1/2)$ may appear at first to take a different form, but for both parameter ranges it represents how areas scale with boundary lengths). It is interesting that the formulas take the same form since the proofs are quite different for the two parameter ranges. The fact that the formulas in the two parameter ranges coincide supports the statement that the generalized disk should be viewed as the natural extension of the regular disk for $\kappa\in(4,8)$.

\begin{remark}
	When defining the weights $W^{{\sigmab},\kappa}_{\Lambda,\ell}$ and partition functions $Z^{{\sigmab},\kappa}_{\Lambda,\ell}$ we had to make some arbitrary choices for multiplicative constants involved in their definition. Conceptually, the partition function should give the \emph{relative} weight of different configurations, so, for example, multiplying all partition functions by a constant leaves the physical model unchanged. The degree of freedom we have lies precisely in the multiplicative constant for the zero-point and one-point partition functions, so if we fix these then the remaining partition functions are given. One can in particular observe that in Theorem \ref{thm:markov-simple}, replacing
	\begin{align*}
	    Z^{{\sigmab|_B},\kappa}_{\Lambda,\ell}\quad\text{by} \quad c\cdot\prod_{i\in B} c_i\cdot Z^{{\sigmab|_B},\kappa}_{\Lambda,\ell}
	\end{align*}
	for constants $c,c_i>0$, $i\in A$, and each $B\subseteq A$ leaves the jump rates unchanged.  Due to the arbitrariness in the multiplicative constant for the one-point partition function, we only consider the \emph{ratio} $W^{{\sigmab},\kappa}_{\Lambda,\ell}/W^{{\sigmab},\kappa}_{0,1}$ in Theorem \ref{thm:singlepoint-finite} but we remark that the denominator $W^{{\sigmab},\kappa}_{0,1}$ can be computed explicitly using results of \cite{remy-zhu-boundary}. For the zero-point partition function we fixed the multiplicative constant by requiring that $W^{-,\kappa}_{\Lambda,\ell}=1$.
	\label{rmk:arb-mult-const}
\end{remark}

\begin{remark}
	\label{rmk:bootstrap}
	Combining Theorems \ref{thm:bootstrap} and \ref{thm:singlepoint-finite}, one can in principle compute the weights $W^{\sigmab,\kappa}_{\Lambda,\ell}$ for an arbitrary number of marked points and any choice of $\sigmab$. Such recursive computation of partition functions is closely related to topological recursion, enumeration of planar maps and the loop equation in matrix models, see e.g.\ \cite{eynard-counting-surfaces,eynard-orantin-top-rec}.
	On the contrary, our approach is quite different from \cite{bpz-conformal,segal-cft,gkrv-bootstrap,gkrv-segal-axioms,wu-bootstrap-annulus}, where higher-order correlation functions in LCFT (which, like our weights $W^{\sigmab,\kappa}_{\Lambda,\ell}$, are closely related to partition functions) are expressed in terms of lower-order correlation functions via so-called conformal bootstrap; this recursive computation is of a different nature since the modulus is fixed, while we integrate over the modulus.
\end{remark}

A key question is when we have admissibility, namely for what parameters the partition function is finite. For $\kappa\in(8/3,8)\setminus\{ 4\}$ define the set $\cA_\kappa$ of admissible parameters by
\begin{align*}
    \cA_\kappa = \{({\sigmab},\Lambda)\in \frk S_\kappa\times [0,\infty)\,:\,Z^{\sigmab,\kappa}_{\Lambda,1}<\infty \}\;.
\end{align*}
We will not entirely identify the set $\cA_\kappa$ but the following theorem gives some of its properties.  
For example, assertion (ii) says that for more than one marked point it is necessary to have a positive cosmological constant $\Lambda >0$ in order to ensure admissibility. Another feature that is worth emphasizing is assertion (iv), which says that for an arbitrary number of marked points (and, more generally, for arbitrary values of $\sigma_i\in(-\infty,-\log(-\cos(4\pi/\kappa)))$) we can find $({\sigmab},\Lambda)\in\cA_\kappa$. Conversely, from assertion (vii) we see that if the number of points is at least two then we can find $({\sigmab},\Lambda)$ with $\Lambda>0$ such that $({\sigmab},\Lambda)\not\in\cA_\kappa$; combined with the previous sentence this implies that the set $\cA_\kappa$ is non-trivial in the setting of at least two points. Precisely identifying the set $\cA_\kappa$ is an interesting open problem.

\begin{thm}\label{thm:part-fcn-finite}
	Let $A\subseteq\N$ be finite and non-empty, ${\sigmab}\in\frk S^A_\kappa$, and $\Lambda\geq 0$.
	\begin{itemize}
		\item[(i)] If $\#A=1$ then $({\sigmab},\Lambda)\in\cA_\kappa$.
		\item[(ii)] If $\#A\geq 2$ and $\Lambda=0$ then $({\sigmab},\Lambda)\not\in\cA_\kappa$.
		\item[(iii)] If $({\sigmab},\Lambda)\in\cA_\kappa$ and  $B\subseteq A$ then $({\sigmab}|_B,\Lambda)\in\cA_\kappa$.
		\item[(iv)] If $\sigma_B<\sigma_i$ whenever $i\in B\subseteq A$ and $\#B\ge 2$ then $({\sigmab},\Lambda)\in\cA_\kappa$ for all $\Lambda>0$.
		\item[(v)] If $({\sigmab},\Lambda)\in\cA_\kappa$ for $\Lambda>0$ then $({\sigmab},\Lambda')\in\cA_\kappa$ for all $\Lambda'>0$.
		\item[(vi)] Suppose ${\sigmab}'$ is such that $\sigma'_{i}=\sigma_i$ for all $i\in A$ and $\sigma'_B\leq\sigma_B$ for all $B\subseteq A$ such that $\# B\geq 2$. Then if $({\sigmab},\Lambda)\in\cA_\kappa$ we have $({\sigmab}',\Lambda)\in\cA_\kappa$.
		\item[(vii)] Suppose ${\sigmab}'$ and $B'\subseteq A$ are such that $\#B'\geq 2$ and $\sigma'_B=\sigma_B$ for all $B\subseteq A,B'\neq B$. For $\sigma'_{B'}$ sufficiently large (depending on $\sigma_B$ for $B\neq B'$) we have $({\sigmab}',\Lambda)\not\in\cA_\kappa$.
	\end{itemize}
\end{thm}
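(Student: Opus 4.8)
The plan is to establish the seven assertions separately, drawing on the explicit one-point formula of Theorem \ref{thm:singlepoint-finite}, the recursion of Theorem \ref{thm:bootstrap}, the exploration of Theorem \ref{thm:markov-simple}, the monotonicity of $\Phi^{\sigmab,\kappa}_\D$ in the parameters $\sigma_B$ with $\#B\ge 2$ (evident from the formula in Theorem \ref{thm:loopexpconv}, where these parameters enter only through the factor $\exp(\sum_{\#B>1}\sigma_B N^0_B(\z))$ and $N^0_B\ge 0$), and the scaling relation \eqref{eq22} and its generalised-disk analogue. The soft assertions come first. Assertion (i) is immediate from Theorem \ref{thm:singlepoint-finite}. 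For (vi), decreasing the $\sigma_B$ with $\#B\ge 2$ leaves the exponents $\alpha_i=\alpha^\kappa_{\sigma_i}$, hence the measures $\mu^{\alpha_i}_h$ and the weight $e^{-\Lambda\mu^\gamma_h(\D)}P(dh)$, unchanged while only decreasing $\Phi^{\sigmab,\kappa}_\D$ pointwise, so the defining integrals \eqref{eq5} (resp.\ \eqref{eq:gen-disk}) only decrease and $W^{\sigmab',\kappa}_{\Lambda,\ell}\le W^{\sigmab,\kappa}_{\Lambda,\ell}$. The same integrand comparison, now in $\Lambda$, gives the easy half of (v): $W^{\sigmab,\kappa}_{\Lambda',\ell}\le W^{\sigmab,\kappa}_{\Lambda,\ell}$ for $\Lambda'\ge\Lambda$.

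For the remaining half of (v) (vacuous when $\#A=1$ by Theorem \ref{thm:singlepoint-finite}), I would sum the recursion of Theorem \ref{thm:bootstrap}: since $\sum_{m\ge0}V^m_{\Lambda,\ell}=W^{\sigmab,\kappa}_{\Lambda,\ell}$, the formula for $V^{m+1}$ gives, by monotone convergence,
\[
W^{\sigmab,\kappa}_{\Lambda,\ell}=V^0_{\Lambda,\ell}+\E_{(\ell/2,\ell/2)}\Big(\sum_{t<\zeta}\Big(\prod_{s<t}W^{-,\kappa}_{\Lambda,|\Delta X_s|}\Big)\,e^{\sigma_A 1(\Delta X_t>0)}\,W^{\sigmab,\kappa}_{\Lambda,|\Delta X_t|}\,W^{-,\kappa}_{\Lambda,X_t}\Big).
\]
By the easy half of (v) together with \eqref{eq22}, the set of $\ell$ with $W^{\sigmab,\kappa}_{\Lambda,\ell}<\infty$ is of the form $(\ell_0,\infty)$ or $[\ell_0,\infty)$ for some $\ell_0\in[0,\infty]$; if $\ell_0>0$ then, on the positive-probability event that $X$ makes a negative jump of size $<\ell_0$ at an early time where the preceding product is $\ge\tfrac12$ and $W^{-,\kappa}_{\Lambda,X_t}$ is bounded below, the term $W^{\sigmab,\kappa}_{\Lambda,|\Delta X_t|}=+\infty$ makes the right-hand side infinite, contradicting $W^{\sigmab,\kappa}_{\Lambda,1}<\infty$; hence $\ell_0=0$, which by \eqref{eq22} is exactly $(\sigmab,\Lambda')\in\cA_\kappa$ for all $\Lambda'\in(0,\Lambda)$. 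Assertion (iii) is similar: $W^{\sigmab,\kappa}_{\Lambda,\ell}\ge V^0_{\Lambda,\ell}$, and for each $\emptyset\neq B\subsetneq A$, were $W^{\sigmab|_B,\kappa}_{\Lambda,\cdot}$ identically $+\infty$, then (the other factors being a.s.\ positive) every term of the corresponding non-negative sum over jump times of $X$ in $V^0_{\Lambda,\ell}$ would be $+\infty$, forcing $V^0_{\Lambda,\ell}=\infty$ and contradicting admissibility of $(\sigmab,\Lambda)$; by (v) and \eqref{eq22} the weight $W^{\sigmab|_B,\kappa}_{\Lambda,\cdot}$ is either identically $+\infty$ or finite everywhere, so $(\sigmab|_B,\Lambda)\in\cA_\kappa$ for all $\emptyset\neq B\subsetneq A$ (the cases $B=\emptyset$, $B=A$ being trivial).

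For (iv) I would induct on $\#A$, the base case being (i). If $\#A=n\ge2$ and $\sigma_B<\sigma_i$ whenever $i\in B\subseteq A$, $\#B\ge2$, then each restriction $\sigmab|_B$ with $B\subsetneq A$ obeys the same hypothesis, so by the induction hypothesis, (v) and \eqref{eq22} every weight $W^{\sigmab|_B,\kappa}_{\Lambda,\cdot}$ (and $W^{-,\kappa}_{\Lambda,\cdot}$) appearing in the recursion is finite, and — using the exponential (Bessel) decay of $W^{-,\kappa}_{\Lambda,\cdot}$ from Theorem \ref{thm:singlepoint-finite} and the boundedness properties of $\Phi$ from Theorem \ref{thm:loopexpconv} — one checks $V^0_{\Lambda,\ell}<\infty$, with decay in $\ell$. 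The displayed identity above then reads $W=V^0+\mathcal{T}W$ for a positive linear operator $\mathcal{T}$ (integration over the jumps of $X$ against $\prod_{s<t}W^{-,\kappa}_{\Lambda,|\Delta X_s|}\,e^{\sigma_A 1(\Delta X_t>0)}(\cdot)\,W^{-,\kappa}_{\Lambda,X_t}$), whence $W^{\sigmab,\kappa}_{\Lambda,\cdot}=\sum_{m\ge0}\mathcal{T}^m V^0<\infty$ once the norm of $\mathcal{T}$ on a suitable space of functions of $\ell$ is $<1$. This is where the hypothesis enters: the relationship between $\alpha^\kappa_\sigma$, $\rho^\kappa_\sigma$ and the conformal-radius law (Theorem \ref{thm:confradlaw}) identifies $-\log(-\cos(4\pi/\kappa))$ as the critical reweighting for a single marked point, so that $\sigma_A<\sigma_i<-\log(-\cos(4\pi/\kappa))$ makes the ``all marked points stay together'' branching strictly subcritical; that the relevant integrals converge at all is precisely what requires $\Lambda>0$ (through the decay of $W^{-,\kappa}_{\Lambda,\cdot}$).

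Finally the negative assertions. For (ii) ($\#A\ge2$, $\Lambda=0$), reduce to $\#A=2$ via (iii) and \eqref{eq22}: with $\Lambda=0$ one has $W^{-,\kappa}_{0,\cdot}\equiv1$ and the construction is scale invariant, so by \eqref{eq22}, $V^0_{0,\ell}=\ell^D V^0_{0,1}$ and $V^{m+1}_{0,\ell}=r\,V^m_{0,\ell}$ with $D=\tfrac{2}{\sqrt\kappa}\sum_{i\in A}\alpha_i$ and $r=\E_{(1/2,1/2)}(\sum_{t<\zeta}e^{\sigma_A 1(\Delta X_t>0)}|\Delta X_t|^D)$; thus $W^{\sigmab,\kappa}_{0,1}$ is a positive multiple of $\sum_{m\ge0}r^m$ (or $V^0_{0,1}=\infty$, and we are done), and $(\sigmab,0)\notin\cA_\kappa$ follows from $r\ge1$, which I would verify by a direct computation with the boundary-length process $(L,R)$ for $A=\emptyset$, $\Lambda=0$ (e.g.\ dropping the positive-jump terms, $r\ge\E(\sum_{t:\Delta X_t<0}|\Delta X_t|^D)$, and the negative jumps of $X$ carry at least the boundary length consumed during the exploration). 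For $\#A\ge3$ one picks $\emptyset\neq B\subsetneq A$ with $\#B\ge2$; then $W^{\sigmab|_B,\kappa}_{0,\cdot}\equiv\infty$ by the previous case, so $V^0_{0,1}=\infty$. For (vii), fix $B'\subseteq A$ with $\#B'\ge2$ and let $(z_i:i\in B')$ collapse to a cluster of small diameter $r$ with the other points fixed. By Theorem \ref{thm:loopexpconv} applied on a disk concentric with the cluster, together with the Markov property of $\CLE_\kappa$, the number of loops surrounding exactly $B'$ is, with overwhelming probability, at least a constant times $\log(1/r)$, so $\Phi^{\sigmab',\kappa}_\D(\z)$ is at least a positive constant times $r^{-c(\sigma'_{B'})}$ with $c(\sigma'_{B'})\to\infty$ as $\sigma'_{B'}\to\infty$ (the remaining factors contributing only a fixed power of $r$). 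For $\sigma'_{B'}$ large enough this forces $\int_{\D^A}\Phi^{\sigmab',\kappa}_\D(\z)\prod_{i\in A}\mu^{\alpha_i}_h(dz_i)=\infty$ near the diagonal $\{z_i=z_j:\,i,j\in B'\}$ (the $\alpha_i$-LQG measures having full support), hence $W^{\sigmab',\kappa}_{\Lambda,\ell}=\infty$. The main obstacle is exactly the Lévy-process input behind (ii) — the inequality $r\ge1$ — and the quantitative bound on the norm of $\mathcal{T}$ behind (iv): unlike the other assertions, which are monotonicity, scaling and Radon--Nikodym bookkeeping, these require genuine control of the jump structure of the boundary-length process $(L,R)$.
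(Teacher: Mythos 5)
Your approaches to (i), (iii), (v), (vi) are essentially the paper's. For (iv) you are also close in spirit: the paper inducts on $\#B$ and shows $W^{B}_\Lambda(\ell)\le\rho_{B,\Lambda}\sum_{i\in B}W^{\{i\}}_{\Lambda'}(\ell)$ by comparing the $V^{m+1}=\mathcal{T}V^m$ iterates with the geometric sequence $U^{B,i}_n=\alpha_{B,i}^n\,W^{\{i\}}_0$, where the crucial ratio $\alpha_{B,i}=W^{\{i\}}_0(1)/U^{B,i}_1(1)\in(0,1)$ is a concrete realisation of your ``$\|\mathcal T\|<1$''; the hypothesis $\sigma_B<\sigma_i$ enters precisely to make this strict. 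So (iv) is fine as an outline.

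There is a genuine gap in your argument for (ii). You reduce to $\#A=2$ and want to show $r:=\E_{(1/2,1/2)}\bigl(\sum_{t<\zeta}e^{\sigma_A1(\Delta X_t>0)}|\Delta X_t|^D\bigr)\ge1$ (where $D=\tfrac{2}{\sqrt\kappa}\sum_i\alpha_i$) by dropping the positive jumps and appealing to ``the negative jumps carry at least the boundary length consumed.'' That heuristic is a statement about $\sum_t|\Delta X_t|$, not $\sum_t|\Delta X_t|^D$ with $D>1$: since each $|\Delta X_t|<1$, raising to the power $D>1$ only shrinks each term, and one has no reason to expect $\E\bigl(\sum_{\Delta X_t<0}|\Delta X_t|^D\bigr)\ge1$. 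In fact the paper's route goes the other way and is both simpler and already available: from Definition~\ref{def:main-params}, each $\alpha_i\cdot 2/\sqrt\kappa>4/\kappa+1/2$, so for $\#A\ge2$ one has $D>1+8/\kappa$, and by the final assertion of Proposition~\ref{prop:power-theta} (resp.\ \ref{prop:nonsimple-power-theta}) the positive-jump expectation $\E\bigl(\sum_{\Delta X_t>0}|\Delta X_t|^D\bigr)$ is already \emph{infinite}. Keeping only the $B=A$ term in \eqref{eq85} (with $W^{-,\kappa}_{0,\cdot}\equiv1$) then gives $W^A_0(1)>e^{\sigma_A}\cdot\infty\cdot W^A_0(1)$, forcing $W^A_0(1)=\infty$. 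You do not need, and should not try to prove, $r\ge1$; you need $r=\infty$, and the Lévy-process input for this is not something new to control but is already Proposition~\ref{prop:power-theta}.

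Your argument for (vii) is a genuinely different route. You argue directly from CLE geometry: for a cluster of $\#B'$ marked points of diameter $r$ the nesting statistic $\Phi^{\sigmab',\kappa}_\D(\z)$ blows up like $r^{-c(\sigma'_{B'})}$, and for $\sigma'_{B'}$ large this overwhelms the integrability of the $\alpha_i$-LQG measures near the diagonal. This matches the paper's own discrete intuition (Section~\ref{sec:discrete}: a ``long thin tube'' separating a pair of nearby marked points), but as a continuum proof it is rather delicate --- one has to control the interaction between the diverging deterministic prefactor and the random $\alpha_i$-LQG masses near the diagonal, together with the remaining $\rho^\kappa_{\sigma_i}$ exponents. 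The paper avoids this entirely by again exploiting the recursion: keeping only the $B=A$ term in \eqref{eq85} and only the jumps $\Delta X_t\in(1/2,1)$, the scaling $W^{\sigmab',\kappa}_{\Lambda,\ell}\ge 2^{-\tfrac{2}{\sqrt\kappa}\sum\alpha_i}W^{\sigmab',\kappa}_{\Lambda,1}$ for $\ell\in(1/2,1)$ yields a self-comparison $W^{\sigmab',\kappa}_{\Lambda,1}>c\,e^{\sigma'_A}W^{\sigmab',\kappa}_{\Lambda,1}$ with $c>0$ independent of $\sigma'_A$, so that $\sigma'_A$ large forces $W^{\sigmab',\kappa}_{\Lambda,1}=\infty$. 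Once Theorem~\ref{thm:bootstrap} is in hand, this is strictly easier than estimating $\Phi$ near the diagonal, and it requires no additional CLE input.
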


We give an alternative proof of the following theorem, which was first proved in \cite{ssw-radii} via Itô calculus.\footnote{\,The expression for the density in Theorem \ref{thm:confradlaw} can be found just below \cite[Equation (2.22)]{brownian-first-passage}.} The proof is based on ideas from this paper in the $\kappa\neq 4$ case and the level line coupling of $\CLE_4$ with the GFF for $\kappa=4$. In the case $\kappa\neq 4$ we will use the explicit law of the boundary length process associated with the CPI for the case of $\#A=0$ marked points (as derived in \cite{msw-simple,msw-non-simple}) along with an exact computation for this process. See Section \ref{sec:ssw}. The theorem plays an important role in our proof of Theorem \ref{thm:loopexpconv} and we want to emphasize that our proof of Theorem \ref{thm:confradlaw} does not rely on Theorem \ref{thm:loopexpconv}.

\begin{thm}[\cite{ssw-radii}]
	\label{thm:confradlaw}
	Let $\Gamma$ be a non-nested $\CLE_\kappa$ in $\D$ for $\kappa\in (8/3,8)$. We write $\eta$ for the loop surrounding $0$ and recall that $\eta^o$ denotes the set of points surrounded by $\eta$. Then for all $\rho > -1 + 2/\kappa + 3\kappa/32$,
	\begin{align*}
		\E\left( R(0,\eta^o)^\rho \right) = \frac{-\cos(4\pi/\kappa)}{\cos\left( \pi\sqrt{(1-4/\kappa)^2 - 8\rho/\kappa} \right)}\;,
	\end{align*}
	while the expectation is infinite for $\rho \le -1 + 2/\kappa + 3\kappa/32$. Moreover, the density of $-\log R(0,\eta^o)$ on $(0,\infty)$ is
	\begin{align*}
		x\mapsto &\frac{-\cos(4\pi/\kappa)}{4\pi/\kappa}\sum_{n\ge 0} (-1)^n (n+1/2) \exp\left( \frac{(1-4/\kappa)^2}{8/\kappa}\,x- \frac{(n+1/2)^2}{8/\kappa}\,x\right)\\
		&= -\frac{4\sqrt{2\pi/\kappa}\,\cos(4\pi/\kappa)}{x^{3/2}} \sum_{n\ge 0} (-1)^n (n+1/2) \exp\left( \frac{(1-4/\kappa)^2}{8/\kappa}\,x - \frac{8\pi^2}{\kappa}(n+1/2)^2 x^{-1} \right)\;.
	\end{align*}
\end{thm}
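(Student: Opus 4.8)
The plan is to exploit the $\#A = 0$ case of the boundary-length process $(L,R)$ from Theorem \ref{thm:markov-simple} (and its precursor in \cite{msw-simple,msw-non-simple}) when $\kappa \neq 4$, and the $\CLE_4$--GFF level-line coupling when $\kappa = 4$. The underlying mechanism is that the CPI, run in a $\gamma$-LQG disk decorated by a non-nested $\CLE_\kappa$, discovers the outermost loop surrounding a fixed target exactly when a certain excursion of the boundary-length process terminates, and the conformal radius of that loop is tied to the quantum length that is cut off. Concretely, I would first recall from the earlier sections the explicit description of $(L,R)$: after the appropriate time change it is a (reweighted) two-dimensional Lévy-type process, and its jumps record either loops attached to the CPI (positive jumps) or domains cut off (negative jumps). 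Running the exploration toward a bulk target point, or equivalently tracking the component containing a fixed interior point, reduces the statement about the conformal radius to a first-passage / excursion-theoretic computation for this Markov process, since on the LQG side the conformal radius of the discovered loop around the target, weighted suitably, is governed by the law of the boundary length of the component containing the target at the discovery time.

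The key steps, in order, would be: (1) Set up the exploration of a $\gamma$-LQG disk decorated by a non-nested $\CLE_\kappa$ with one marked bulk point $z$ (sampled, say, from LQG area), run a CPI, and identify the stopping time $T$ at which $z$ is separated into a component not containing $w_\infty$; the boundary of that component carries both a $\gamma$-LQG length and, via the conformal structure, a conformal radius of the surrounding loop. (2) Translate, via the conformal covariance of LQG surfaces \eqref{eq:cc} and the known laws of quantum disks, the quantity $\E(R(0,\eta^o)^\rho)$ into an expectation over the boundary-length process $(L,R)$ — this is where $W^{-,\kappa}_{\Lambda,\ell}$ and its explicit Bessel form from Theorem \ref{thm:singlepoint-finite} enter, because $\rho$ corresponds (through the bijection $\sigma \mapsto \rho^\kappa_\sigma$ of Definition \ref{def:main-params}) to a reweighting of the field by an $\alpha$-singularity at the target. (3) Perform the explicit computation for the Lévy-type process: using the generator formula (Theorem \ref{thm:markov-full}) and the explicit jump rates \eqref{eq56} with $A = \{i\}$, compute the Laplace transform / moment $\E(R(0,\eta^o)^\rho)$ by solving the associated eigenvalue problem, which yields $-\cos(4\pi/\kappa)/\cos(\pi\sqrt{(1-4/\kappa)^2 - 8\rho/\kappa})$. (4) Invert this Mellin-type transform to obtain the series for the density of $-\log R(0,\eta^o)$; the two displayed forms are related by a Jacobi theta-function / Poisson summation identity. (5) Handle $\kappa = 4$ separately: here the outermost $\CLE_4$ loop around $0$ is a level line of the GFF, and $-\log R(0,\eta^o)$ is (a multiple of) the exit time of a Brownian motion from a strip, giving the same formula by a classical computation; one checks the $\kappa \to 4$ limits of the $\kappa \neq 4$ formula agree.

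The main obstacle I expect is step (3): extracting a clean closed-form moment from the boundary-length Markov process. The process is not a plain Lévy process — it is a reweighting of one, with rates involving the very partition functions $Z^{\sigmab,\kappa}_{\Lambda,\ell}$ we are also trying to understand, so one must carefully disentangle which parts of the computation are self-referential. The trick, I believe, is to work at $\Lambda = 0$ (or pass to $\Lambda = 0$ after using part (v) of Theorem \ref{thm:part-fcn-finite} and scaling \eqref{eq22}), where $W^{-,\kappa}_{0,\ell} = 1$ kills the spurious factors, so the exploration of the one-pointed disk becomes a genuine first-passage problem for an explicit self-similar Markov process (a Lamperti-type process after the time change), whose positive and negative jump intensities are power laws read off from \eqref{eq56} with the Bessel functions set to $1$. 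The eigenfunction equation for the fractional-type generator of this process then has trigonometric solutions $\ell \mapsto \ell^{\alpha}$, and matching the eigenvalue to $0$ (the process must be a martingale after the $\rho$-reweighting) forces exactly the relation in Definition \ref{def:main-params} between $\sigma$, $\alpha$, and $\rho$, which is the same algebraic identity that produces the stated formula. Care is needed to verify that the exploration actually terminates (i.e. that $T < \infty$ a.s., equivalently that all moments below the threshold $-1 + 2/\kappa + 3\kappa/32$ are finite and the boundary one is not), and that no circularity with Theorem \ref{thm:loopexpconv} is introduced — but as the excerpt emphasizes, the arguments are arranged so that this proof uses only \cite{msw-simple,msw-non-simple} and Theorem \ref{thm:singlepoint-finite}, both of which are independent of Theorem \ref{thm:loopexpconv}.
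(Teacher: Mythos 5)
Your proposal has the right overall philosophy (compute the conformal radius moment by running a CPI on a CLE-decorated LQG disk, and handle $\kappa=4$ via the GFF level-line coupling), and your observation that the power functions $\ell\mapsto\ell^{\alpha}$ are eigenfunctions of the relevant fractional generator is essentially the trick that makes the computation go through. However, there is a circularity problem in the route you propose, and it is precisely the circularity that the paper is careful to avoid.

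You want to work with the $\#A=1$ machinery: the jump rates \eqref{eq56} for $A=\{i\}$, Theorem \ref{thm:markov-full}, and the Bessel-function formulas of Theorem \ref{thm:singlepoint-finite}, claiming these are ``independent of Theorem \ref{thm:loopexpconv}.'' They are not. Theorem \ref{thm:markov-full} with $\#A\geq 1$ is proved via Proposition \ref{prop:chordal-zipper}, which uses Proposition \ref{prop:limit-eps}, and the proof of that proposition uses Theorem \ref{thm:loopexpconv} explicitly (to bound $\Phi^{\sigmab,\kappa}_\D(\z)$ deterministically). Theorem \ref{thm:loopexpconv} in turn rests on Lemma \ref{lem:clelocalasymptotic}, whose proof invokes Theorem \ref{thm:confradlaw} itself. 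Even more directly: the proof of Theorem \ref{thm:singlepoint-finite} (via Lemma \ref{lem:measure-finite-import}) relies on the identity $\Phi^{\sigmab,\kappa}_\D(\z)=R(z_i,\D)^{\rho^\kappa_{\sigma_i}}$ for $\#A=1$, and unwinding the definition of $\Phi^{\sigmab,\kappa}_\D$, this identity is \emph{equivalent} to the statement $e^{\sigma_i}\E\bigl(R(0,\eta^o)^{\rho^\kappa_{\sigma_i}}\bigr)=1$, which is the moment formula you are trying to prove. Working at $\Lambda=0$ and scaling does not break this loop, since the issue is not the Bessel factors but the definition of the one-point weighted disk itself.

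What the paper does instead, in Section \ref{sec:ssw}, stays entirely within the \emph{unweighted} $A=\emptyset$ framework. One defines the functional $\Psi([(D,k)])=\int_D R(z,D)^{\alpha(Q_\gamma-Q_\alpha)}\mu^\alpha_k(dz)$, whose finiteness on an LQG disk follows from the pure-LQG Proposition \ref{prop:resample-lcft-point}. Conformal invariance of CLE gives $\E(R(z,\eta_z^o)^\rho)=c\,R(z,\D)^\rho$ for an unknown constant $c$, so
\begin{align*}
c\cdot\E\bigl(\Psi([(\D,h)])\bigr)=\E\Bigl(\sum_n\Psi(S_n)\Bigr),
\end{align*}
where the $S_n$ are the surfaces cut out by the CLE. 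The right side is then evaluated via Corollary \ref{cor:msw-simple-full} (resp.\ Corollary \ref{cor:msw-nonsimple-full}), which only requires the $A=\emptyset$ exploration of \cite{msw-simple,msw-non-simple}, together with the explicit Proposition \ref{prop:power-theta} (resp.\ Proposition \ref{prop:nonsimple-power-theta}), yielding $c=\cos(4\pi/\kappa)/\cos(4\pi/\kappa-\pi\cdot 2\alpha/\gamma)$. This is what you should be targeting: the key computational lemma is Proposition \ref{prop:power-theta} for the unweighted process $(L',R')$, not the generator of the $A=\{i\}$ process. Finally, the paper identifies the density via the moment generating function of a Brownian first-passage time (rather than by Mellin inversion), and treats $\kappa=4$ by a direct argument with the Dirichlet GFF level-line coupling rather than by taking a $\kappa\to 4$ limit.
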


\subsection{Outline}

In Section \ref{sec:discrete} we will describe some discrete counterparts of our results in the setting of random planar maps and formulate scaling limit conjectures. In Section \ref{sec:fragmentation-main} we study certain spatially inhomogeneous jump Markov processes (for which the process $\bf{Y}$ appearing in Theorem \ref{thm:markov-simple} is a special case), derive a formula for their generator, and do an explicit computation that is used later in the proof of Theorem \ref{thm:confradlaw}. In Section \ref{sec:levy-exc} we do computations for stable Lévy excursions which will be used when computing the partition function of the generalized LQG disk with zero or one marked points. We emphasize that the two latter sections can be read without any knowledge of LQG and CLE and only build on the theory of stochastic processes with values in $\R$ or in $\R^2$.

In Section \ref{sec:background} we present relevant background material on LQG and CLE, and in particular we give a precise description of the results on CLE explorations of \cite{msw-simple,msw-non-simple} that are reviewed briefly above. We then conclude the proof of Theorem \ref{thm:confradlaw} in Section \ref{sec:ssw} and give the proof of Theorem \ref{thm:loopexpconv} in Section \ref{sec:cle}. The technical bulk of the latter section is devoted to a spatial independence result for CLE. 

In Section \ref{sec:disk-cle-nesting} we give the precise definition of the LQG surface with marked points that appears in Theorem \ref{thm:markov-simple} and conclude the proof of Theorem \ref{thm:singlepoint-finite}. We also argue that the disk with marked points can be obtained via a limiting argument involving the disk with no marked points and we use the latter result to prove part of Theorem \ref{thm:markov-simple}. Finally, in Section \ref{sec:explorations} we use inputs from the previous sections to conclude the proofs of Theorems \ref{thm:markov-simple}, \ref{thm:bootstrap}, and \ref{thm:part-fcn-finite}.

The appendices contain various results and computations that are needed for the remainder of the paper. We mention in particular a result on renewal processes of independent interest in Appendix \ref{app:renewal}.\\

{\bf Notation.} We let $\N$ denote the positive integers $\N=\{1,2,\dots \}$ and let $\N_0$ denote the non-negative integers $\N_0 = \{0,1,2,\dots \}$. For $z\in\C$ and $r>0$ let $B_r(z)=\{w\in\C\,:\,|z-w|<r \}$ denote the open ball of radius $r$ centered at $z$. For an interval $I\subseteq\R$ we let $D(I)$ denote the set of real-valued càdlàg functions on $I$. We will often identify a curve with its trace.

For functions $f,g:S\to(0,\infty)$ and some set $S$ we write $f(x)\lesssim g(x)$ if there is a constant $C\in (0,\infty)$ such that for all $x\in S$ we have $f(x)\leq Cg(x)$. The dependence of $C$ on other parameters will be either clear from the context or stated explicitly.\\

{\bf Acknowledgments.} We thank the anonymous referees for their extensive comments and careful reading which helped us to greatly improve the manuscript. We also thank Wendelin Werner for several useful conversations, and we thank Jean Bertoin for providing the references \cite{kyprianou-levy-lamperti,silverstein-coharmonic-class}, Linxiao Chen for helpful discussions on random planar maps, and Xin Sun for helpful comments on the draft. Both authors were supported by grant 175505 of the Swiss National Science Foundation and both were part of SwissMAP. N.\,H.\ was also partially supported by Dr.\ Max Rössler, the Walter Haefner Foundation and the ETH Zürich Foundation and grant DMS-2246820 of the National Science Foundation.
\vskip10pt

\section{Discrete motivation}
\label{sec:discrete}

In this section, we will elaborate on the discrete counterparts of our results in the setting of random planar maps. For concreteness, we will describe all things in the setting of triangulations, but analogous constructions and conjectures can be stated in the setting of other planar maps (quadrangulations, general maps, etc.) as well. We call a planar map a triangulation with boundary if all faces have exactly three edges except for one distinguished face, called the exterior face, with arbitrary degree. The number of edges along the boundary of the exterior face is called the perimeter of the planar map. We allow the boundary of the exterior face to be non-simple.

Fix parameters
\begin{align*}
	A\subseteq \N\text{ finite}\;,\quad n\in (0,2)\;,\quad g,h,\Lambda\ge 0 \quad\text{and}\quad \sigma_B\in \R\text{ for all } B\subseteq A\text{ with $\sigma_\emptyset=0$}\;.
\end{align*}
Let $T=(V_T,E_T,F_T,e)$ be a finite planar triangulation with boundary of perimeter $\ell$, where $e$ is a distinguished (root) edge on the boundary. We consider a collection $\Gamma$ of vertex-disjoint (possibly nested) loops where the vertices of the loops are on the faces of $T$.  Figure \ref{fig:onmodel} illustrates the above definitions. For distinct points $x_i\in V_T$ with $i\in A$ and $\emptyset \neq B \subseteq A$, we let $N^\Gamma_B(\bm{x})$ denote the number of loops surrounding $x_i$ for each $i\in B$ but not surrounding the point $x_i$ whenever $i\notin B$. Moreover, we let $L(\Gamma)$ be the total length of all the loops in $\Gamma$. If $A=\emptyset $ we write ${\sigmab}=-$.

To each such triangulation $T$, configuration of loops $\Gamma$ on $T$, and $\bm{x}=(x_i\colon i\in A)$ where $x_i\in V_T$ with $i\in A$ are distinct, we associate the weight
\begin{align*}
	m_{\Lambda,\ell}^{{\sigmab},n,g,h}(T,\Gamma,\bm{x}) = g^{\#F_T - L(\Gamma)} h^{L(\Gamma)} n^{\#\Gamma}e^{-\Lambda \#V_T}\cdot \prod_{\emptyset \neq B\subseteq A} e^{\sigma_B N^\Gamma_B(\bm{x})}\;.
\end{align*}
We say that $(n,g,h,\Lambda,{\sigmab})$ is admissible if the total mass 
\begin{align*}
	|m_{\Lambda,\ell}^{{\sigmab},n,g,h}| := \sum_{(T,\Gamma,\bm{x})} m_{\Lambda,\ell}^{{\sigmab},n,g,h}(T,\Gamma,\bm{x})
\end{align*}
is finite for each $\ell\ge 1$, so that we can define a probability measure
\begin{align}
p_{\Lambda,\ell}^{{\sigmab},n,g,h}(T,\Gamma,\bm{x}) = \frac{m_{\Lambda,\ell}^{{\sigmab},n,g,h}(T,\Gamma,\bm{x})}{|m_{\Lambda,\ell}^{{\sigmab},n,g,h}|}\;. 
\label{eq:rpm-prob-measure}
\end{align}

When $A=\emptyset$ and $\Lambda=0$ this model is the classical loop $O(n)$ model on a planar map, which has been extensively studied using combinatorial and probabilistic techniques, see e.g.\ \cite{bbg-recursive-approach,budd-On-peeling,on-perimeter-cascade} and the references therein.
The model for non-zero ${\sigmab}$ is much less studied; however the asymptotics of the law of the nesting statistic $N^\Gamma_A(x)$ when $\#A=1$ have been investigated in \cite{on-nesting-statistics}. The paper \cite{ijs16} studies a loop model on a lattice where the weight of a configuration depends on the topological configuration of loops around a collection of marked points. We call the parameter $\Lambda$ the cosmological constant, as in, e.g.\ \cite{dkrv-lqg-sphere,hrv-disk}. 

The model defined above satisfies a very simple Markov property. Consider
\begin{align*}
	P=(T,\Gamma,\bm{x})\sim p_{\Lambda,\ell}^{{\sigmab},n,g,h}
\end{align*}
and define $\Gamma_*$ to be the outermost loops of $\Gamma$. For each loop $\eta\in \Gamma_*$ we let $P_\eta$ be the planar map enclosed by the loop $\eta$ (see Figure \ref{fig:onmodel}) and let ${\sigmab}|_{B_\eta}=(\sigma_C\colon C\subseteq B_\eta)$ where $B_\eta$ is the collection of $i\in A$ such that $x_i$ is surrounded by the loop $\eta$. We also write $\ell_\eta$ for the perimeter of $P_\eta$. Then conditionally on $(\ell_\eta\colon \eta\in \Gamma_*)$ and $(B_\eta\colon \eta\in\Gamma_*)$, the maps $(P_\eta\colon \eta\in \Gamma_*)$ are independent and have the laws
\begin{align*}
	P_\eta \sim p_{\Lambda,\ell_\eta}^{{\sigmab}|_{B_\eta},n,g,h} \quad\text{for all $\eta\in \Gamma_*$}\;.
\end{align*}
One can explore the planar map via a peeling process where at each step one peels through an edge, which means the following. Consider the part of the planar map that has been explored already, sample one of its boundary edges in a Markovian way, and consider the unexplored face on the other side of this edge. We then discover one of the following three scenarios: (i) We discover a face through which no loop passes, (ii) we discover (at once) an outermost loop (i.e.\ an element of $\Gamma_*$) together with all the faces intersecting it and surrounded by it, or 
(iii) we discover a face for which all three vertices lie on the boundary of the undiscovered part of the map. See \cite{budd-On-peeling} for a description in the setting of quadrangulations. One can write down the probabilities for each of these scenarios in terms of the partition function of the surfaces we get after doing the peeling step. The continuum results in this paper can be viewed as the continuum analog of this story: the case (ii) will correspond to the discovery of CLE loops by a CPI, the case (iii) to boundary intersections of the CPI and (i) results in the Lévy compensation in the $\kappa<4$ case (whereas in the $\kappa>4$ case, the scenario (i) disappears in the scaling limit).

\begin{figure}
	\centering
	\def\svgwidth{0.8\columnwidth}
	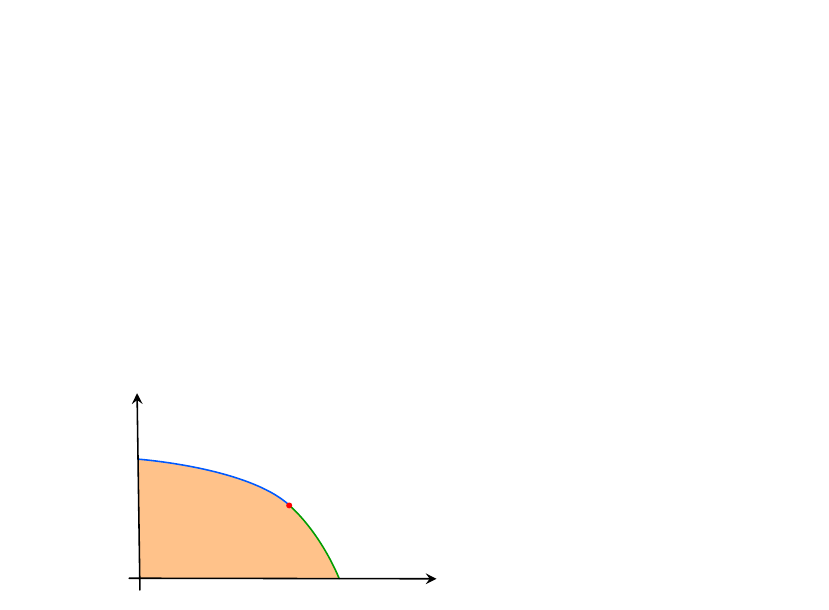
	\caption{\emph{Top.} An illustration of the spatial Markov property of the $O(n)$ planar map model weighted by nesting statistics: conditionally on the boundary lengths of the outermost loops and conditionally on the points surrounded by the outermost loops, the planar maps surrounded by these outermost loops are independent and have the law of planar maps with a loop $O(n)$ model and marked points. \emph{Bottom.} The orange area is the subcritical range where convergence to the continuum random tree is expected. The generic critical range where we conjecture convergence to the $\sqrt{8/3}\,$-LQG disk (also known as the Brownian disk) without loops is colored in green. Finally, the expected scaling limits of the blue range and the red point are indicated in the figure. All other parameters are inadmissible.}
	\label{fig:onmodel}
\end{figure}

\subsection{Planar maps without marked points}
\label{sec:planarnomarked}

In \cite{bbg-recursive-approach,budd-On-peeling} an asymptotic expression for the partition function of planar maps with the loop $O(n)$ model (i.e., the case $A=\emptyset$ and $\Lambda=0$) was established in the setting of quadrangulations when the loops were required to be rigid, i.e.\ a loop leaves and enters a quadrangle through opposite sides of a face. The scaling exponent they derived took a different form in four different phases: subcritical, generic critical, non-generic critical dense, and non-generic critical dilute. In our paper we focus only on what corresponds under scaling limit conjectures to the latter two phases, and we write $\mathcal{D}\subseteq (0,\infty)^2$ and $\{(g_*,h_*)\}\subseteq (0,\infty)^2$ for the respective parameter ranges. The analogous result of \cite{bbg-recursive-approach,budd-On-peeling} in the setting of triangulations would be the following for some constant $C(n,g,h)>0$ and with $\kappa=\kappa(n,g,h)$ and $\gamma=\gamma(n,g,h)$ as in Figure \ref{fig:onmodel},
\begin{align}
	\label{eq:pf-asymp}
	|m_{\ell,0}^{-,n,g,h}| \sim C(n,g,h)\ell^{-1-4/\kappa}\quad\text{as $\ell\to\infty$}\;.
\end{align}

It is expected that a random planar map sampled according to $(T_\ell,\Gamma_\ell)\sim p^{{\sigmab},n,g,h}_{0,\ell}$ converges in the scaling limit to a $\gamma$-LQG surface decorated by a $\CLE_\kappa$. To be slightly more precise, $T_\ell$ defines a metric measure space by equipping $T_\ell$ with its graph distance and giving each vertex unit mass, and $(T_\ell,\Gamma_\ell)$ defines a loop-decorated metric measure space. It is believed that if we rescale distances and mass appropriately as $\ell\to\infty$ then $(T_\ell,\Gamma_\ell)$ converges in the scaling limit to the $\gamma$-LQG disk decorated by an independent $\CLE_\kappa$. The convergence here is for some natural generalization of the Gromov-Hausdorff-Prokhorov topology \cite{adh-ghp} for loop-decorated metric measure spaces, e.g.\ the Gromov-Hausdorff-Prokhorov-Loop (GHPL) topology described in \cite{ghs-metric-peano}. We formulate the following precise conjecture.

\begin{conj}
	Let $n\in(0,2)$, $(g,h)\in\mathcal{D}\cup\{(g_*,h_*) \}$ and let $(T_\ell,\Gamma_\ell)\sim p^{-,n,g,h}_{0,\ell}$. Let $\kappa=\kappa(n,g,h)$ and $a(\kappa):=2\wedge (8/\kappa)$. Then there exist unique constants $d(\kappa)>0$ and $c,c'>0$ such that $(T_\ell,\Gamma_\ell)$ viewed as a loop-decorated metric measure space, where the metric is given by the graph distance multiplied by $c\ell^{-2/d(\kappa)}$ and the measure by the counting measure on the vertices multiplied by $c'\ell^{-a(\kappa)}$, converges in law to $\bar{\op{M}}_{0,1}^{-,\kappa}$ (normalized to be a probability measure) as $\ell\to\infty$ in the GHPL topology. This is a $\gamma$-LQG surface with $\gamma=\gamma(n,g,h)$.
	\label{conj0}
\end{conj}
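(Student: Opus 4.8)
This is a conjecture and a complete proof is beyond current techniques; here is the route I would take. The plan is to split the problem into (a) joint tightness of the rescaled loop-decorated metric measure spaces $(T_\ell,\Gamma_\ell)$ in the GHPL topology, and (b) identification of every subsequential limit with $\bar{\op{M}}^{-,\kappa}_{0,1}$ (normalized to be a probability measure).

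For (a), I would extract a priori estimates from the peeling process on the loop $O(n)$ model. First I would control the metric measure space underlying $T_\ell$ — tightness of the rescaled diameter, concentration of the volume measure, absence of small bottlenecks — which in the generic critical phase reduces to the estimates behind the Brownian disk \cite{bet-mier-disk} and in the non-generic phases should follow from peeling estimates in the spirit of \cite{budd-On-peeling,on-perimeter-cascade}. Then I would establish tightness of the loop ensemble via the perimeter cascade: after the rescaling by $\ell^{-2/d(\kappa)}$ in distance and $\ell^{-a(\kappa)}$ in mass, only finitely many loops carry macroscopic quantum length, the total quantum length of loops whose length lies in a fixed dyadic band is tight, and each loop is metrically thin; these are exactly the inputs needed for $\Gamma_\ell$ to be tight in the loop component of the GHPL topology. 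The exponents $2/d(\kappa)$ and $a(\kappa)$ are forced by \eqref{eq:pf-asymp} together with the (conjectural) LQG dimension $d(\kappa)$ and the known volume exponent.

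For (b), I would use the peeling exploration of Section \ref{sec:discrete}: its transition probabilities — discovering a new face (scenario (i)), an outermost loop with everything it surrounds (scenario (ii)), or a triangle whose three vertices lie on the boundary of the undiscovered region (scenario (iii)) — are ratios of the partition functions $|m^{-,n,g,h}_{0,\ell}|$, so feeding in \eqref{eq:pf-asymp} and the scaling limit of the associated perimeter process identifies the limiting exploration with a CPI within a $\CLE_\kappa$ on the LQG disk and the limiting boundary-length process with the Markov process ${\bf Y}$ of Theorem \ref{thm:markov-simple} in the case $A=\emptyset$, whose law is that computed in \cite{msw-simple,msw-non-simple}. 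Combining the spatial Markov property of the discrete model (Figure \ref{fig:onmodel}) with that of $\CLE_\kappa$ on the LQG disk (Figure \ref{fig:cle-lqg-warmup}), I would show that any subsequential limit is a loop-decorated metric measure space carrying an exploration with these transition rates and with the conditionally independent complementary pieces described in Theorem \ref{thm:markov-simple}, and then invoke a \emph{characterization} statement: the $\CLE_\kappa$-decorated $\gamma$-LQG disk is the unique such object. Proving that uniqueness — the loop-decorated analogue of the axiomatic characterizations of the Brownian map and Brownian disk — is the crux of (b).

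The main obstacle, however, lies upstream of all of this: outside $\gamma=\sqrt{8/3}$ one does not currently have convergence of the underlying random planar maps to the $\gamma$-LQG metric even in the absence of loops, so a proof of the conjecture in full generality must first resolve that, presumably via a mating-of-trees or Cardy-embedding argument adapted to loop $O(n)$ maps, and only then add the loop structure on top. The one regime that is essentially understood is the dense phase with $n=1$, corresponding to site-percolation-decorated triangulations with $\kappa=6$ and $\gamma=\sqrt{8/3}$, where the map together with its percolation interfaces is known to converge to $\CLE_6$-decorated $\sqrt{8/3}$-LQG in closely related topologies. Finally, even granting metric convergence, the joint convergence of the metric and the loop ensemble is delicate: I would need the rescaled discrete loop lengths to converge to the embedding-independent $\gamma$-LQG length of the limiting $\CLE_\kappa$ loops, i.e.\ to recover the LQG length measure along the interfaces as a scaling limit — already a subtle point.
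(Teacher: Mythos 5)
The statement you are addressing is labeled \emph{Conjecture} \ref{conj0} in the paper, and the paper offers no proof of it; it is stated as an open problem, supported only by informal discussion of the peeling process, the partition function asymptotics \eqref{eq:pf-asymp}, and the consistency of scaling exponents. You correctly recognize this, and your two-part program (tightness in GHPL, then identification of subsequential limits via the peeling exploration and a uniqueness/characterization statement for the $\CLE_\kappa$-decorated LQG disk) is consistent with what the paper's discussion suggests. Your identification of the exponents $2/d(\kappa)$ and $a(\kappa)$ from \eqref{eq:pf-asymp}, the role of the three peeling scenarios, and the matching with the continuum process ${\bf Y}$ from Theorem \ref{thm:markov-simple} all line up with the paper's remarks. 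There is no paper argument to compare against, so no gap in the usual sense.

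One point in your discussion is, however, slightly too optimistic and is worth correcting: you describe the dense phase with $n=1$ (so $\kappa'=6$, $\gamma=\sqrt{8/3}$) as ``essentially understood'' because of the known convergence of site-percolation-decorated triangulations to $\CLE_6$-decorated $\sqrt{8/3}$-LQG. But the paper is careful to distinguish this from the conjecture's actual setting: the dense $n=1$ loop $O(1)$ model with a non-simple boundary corresponds to percolation with \emph{monochromatic} boundary conditions, and the conjecture predicts convergence to the \emph{generalized} $\sqrt{8/3}$-LQG disk, which has pinch points. The established $\CLE_6$--LQG convergence results concern free boundary conditions and the \emph{regular} (pinch-point-free) Brownian disk. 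The paper explicitly emphasizes that the monochromatic conditioning is what produces pinch points in the scaling limit, so the existing results do not resolve Conjecture \ref{conj0} even at $n=1$; proving the conjecture there would still require handling the boundary conditioning and the appearance of the generalized disk.
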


In fact, the constant $d(\kappa)$ should be the almost sure Hausdorff dimension of the $\gamma$-LQG disk (see \cite{gm-uniqueness,dddf-lfpp} for further information on this). Conjectures of this kind can be found in much earlier literature, see e.g.\ the papers on the loop $O(n)$ model on planar maps referenced above. However, unlike in the cited papers, we construct a candidate limiting surface in the dense regime explicitly as a generalized (rather than regular) $\gamma$-LQG disk. The limiting surface should be a generalized (not regular) disk in the dense regime since, as explained above when describing the Markov property, the planar map has the same law as the map surrounded by an $O(n)$ loop conditioned on boundary length, and the latter map converges in the scaling limit to a generalized disk (i.e., a disk with pinch points corresponding to the pivotals of the $O(n)$ loop model in the dense regime). One can also verify that the scaling properties of the partition function \eqref{eq:pf-asymp} in the dilute and dense regimes are consistent with the respective continuum scaling properties of the regular and generalized disks (see Sections \ref{sss:regular-disk} and \ref{sss:gen-disk}, in particular \eqref{eq22}).

We conjectured above that in the dense regime, $(T_\ell,\Gamma_\ell)\sim p^{-,n,g,h}_{0,\ell}$ converges to a generalized (rather than regular) disk in the scaling limit. However, if we condition on the event that the boundary of $(T_\ell,\Gamma_\ell)$ is simple, we believe that we instead get a regular (not generalized) disk. In particular, the limiting behavior of the planar map depends on whether we require the boundary to be simple or not. In private communication, Linxiao Chen has provided evidence for this belief (in the case of quadrangulations with a rigid $O(n)$ decoration) by showing that the partition function for maps with simple boundary (in the dense regime) grows like $\ell^{-1-4/\gamma^2}$, namely similarly to the regular $\gamma$-LQG disk.

Note that the case $n=1$ and $g=h$ with $(g,h)\in \mathcal{D}$ describes percolation-decorated triangulations with monochromatic (vertex) boundary and percolation parameter $1/2$. Indeed, a percolation configuration with monochromatic boundary can be mapped to a collection of loops in the bulk of the planar map by sending each percolation configuration to the collection of interfaces between clusters of opposite colors. Uniformly sampled triangulations (which can also be seen as percolation-decorated triangulations with free boundary conditions and percolation parameter $1/2$ after forgetting the percolation decoration) do not have pinch points in the scaling limit (even when the boundary of the planar map is allowed to be non-simple). At first sight this may seem to contradict Conjecture \ref{conj0}, where we predict that we get a generalized disk (i.e., a disk with pinch points) in the case $n=1$, $g=h$, and $(g,h)\in\cD$. However, there is no contradiction since the boundary data are different in the two settings (monochromatic versus free). The crucial point to take away from this discussion is that conditioning  the vertex boundary to be monochromatic is expected to result in the appearance of pinch-points in the scaling limit while percolation-decorated random triangulations without this conditioning do not have pinch points in the scaling limit.

\subsection{Planar maps with marked points}

Motivated by Conjecture \ref{conj0} and the results stated in the introduction, we formulate the following conjecture. The topology of convergence is for a variant of the GHPL topology which accommodates the appearance of marked points. A key takeaway from the conjecture is the relationship between the $\sigma_i$'s and the $\alpha^\kappa_{\sigma_i}$'s, where the latter parameters describe the magnitude of the logarithmic singularities of the surface (see Definition \ref{def:main-params}).

\begin{conj}
	\label{conj1}
	Let $n\in(0,2)$ and let $(g,h)\in\mathcal{D}\cup\{(g_*,h_*)\}$. Moreover, let $\kappa=\kappa(n,g,h)$, $a(\kappa)$, $d(\kappa)$ and $c,c'>0$ be as in Conjecture \ref{conj0} and let $({\sigmab},\Lambda)\in\cA_\kappa$. Then $(n,g,h,\Lambda,{\sigmab})$ is admissible and we may sample
	\begin{align*}
		(T_\ell,\Gamma_\ell,\bm{x_\ell})\sim p^{{\sigmab},n,g,h}_{c'\Lambda \ell^{-a(\kappa)},\ell}\;.
	\end{align*}
	By rescaling the counting measure by $c'\ell^{-a(\kappa)}$ and the graph metric by $c\ell^{-2/d(\kappa)}$ we obtain a loop-decorated metric measure space which converges in law to $\bar{\op{M}}_{\Lambda,1}^{{\sigmab},\kappa}$ (normalized to be a probability measure) as $\ell\to\infty$ for the GHPL topology. Furthermore, if $({\sigmab},\Lambda)\in \cA_\kappa$ then for some function $C$, we have
	\begin{align}
		|m_{c'\Lambda \ell^{-a(\kappa)},\ell}^{{\sigmab},n,g,h}| \sim C(n,g,h,{\sigmab},\Lambda)\, \ell^{-1-4/\kappa+\sum_{i=1}^{n} 2\alpha_i/\sqrt{\kappa}}\qquad\text{as}\,\,\ell\to\infty
		\label{eq11}
	\end{align}
	and 
	\begin{align}
		\frac{C( n,g,h,{\sigmab},\Lambda )\cdot C( n,g,h,-,\Lambda )^{\#A-1}	}
		{\prod_{i\in A} C( n,g,h,\sigma_i,\Lambda)}
		=
		\frac{Z^{{\sigmab},\kappa}_{\Lambda,1}\cdot 
		(Z^{-,\kappa}_{\Lambda,1})^{\#A-1}}
		{\prod_{i\in A} Z^{\sigma_i,\kappa}_{\Lambda,1}}\;. 
		\label{eq9}
	\end{align}
\end{conj}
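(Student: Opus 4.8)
\emph{Strategy.} The plan is to deduce Conjecture~\ref{conj1} from the $\#A=0$ case, Conjecture~\ref{conj0}, which we take as the starting point, via a discrete reweighting argument running in parallel with the continuum construction of $\op{M}^{\sigmab,\kappa}_{\Lambda,\ell}$ and $\bar{\op{M}}^{\sigmab,\kappa}_{\Lambda,\ell}$. Discretely, the marked-point model is obtained from the undecorated loop $O(n)$ model by reweighting the map law by $\sum_{\bm x}\prod_{\emptyset\neq B\subseteq A}e^{\sigma_B N^{\Gamma_\ell}_B(\bm x)}$ and, given the map, choosing the $\#A$ distinct marked vertices proportionally to $\prod_B e^{\sigma_B N^{\Gamma_\ell}_B(\bm x)}$. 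On the continuum side the analogue is: reweight the field by $\int_{\D^A}\Phi^{\sigmab,\kappa}_\D(\z)\prod_i\mu^{\alpha_i}_h(dz_i)$ and, given the field, choose $\z$ proportionally to $\Phi^{\sigmab,\kappa}_\D(\z)\prod_i\mu^{\alpha_i}_h(dz_i)$; by Theorem~\ref{thm:loopexpconv} and the construction of $\op{M}^{\sigmab,\kappa}_{\Lambda,\ell}$ the net effect is to produce a field with an $\alpha^\kappa_{\sigma_i}$-logarithmic singularity at $z_i$ together with the $\Phi^{\sigmab,\kappa}_\D$-reweighting. So the central task is to show that, under the rescaling of Conjecture~\ref{conj0}, the discrete nesting reweighting converges -- after subtracting the appropriate deterministic renormalization, compare the factor $\prod_i\epsilon^{\rho^\kappa_{\sigma_i}}/c^\kappa_{\sigma_i}$ in Theorem~\ref{thm:loopexpconv} -- to the continuum $\Phi$-reweighting, and that the rescaled vertex counting measure biased by it converges to $\Phi^{\sigmab,\kappa}_\D(\z)\prod_i\mu^{\alpha_i}_h(dz_i)$. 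This is a discrete counterpart of Theorem~\ref{thm:loopexpconv}, and I would attack it along the same route: first establish a discrete strong spatial independence estimate for the loop $O(n)$ model (the analogue of Theorem~\ref{thm:cle-indep}), use it to control the number $N^\delta_B$ of macroscopic loops, and show that truncating at loop-size $\delta$ and then letting $\delta\downarrow0$ commutes with the scaling limit; the CLT-type results for nesting statistics in \cite{on-nesting-statistics,ijs16} provide a first foothold.

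\emph{Partition function asymptotics.} For \eqref{eq11} I would run the discrete peeling process sketched at the end of Section~\ref{sec:discrete}: peeling an edge produces one of the three scenarios -- (i) a new face, (ii) discovery of an outermost loop together with everything it encloses, (iii) a face whose three vertices all lie on the boundary -- and tracking the perimeters of the pieces and which marked points land in each gives a Markov chain on tuples $(\ell_L,\ell_R,B)$ whose transition weights are ratios of the discrete partition functions $|m^{\sigmab|_C,n,g,h}_{\Lambda,\cdot}|$, exactly as in Theorem~\ref{thm:markov-simple}. Decomposing $|m^{\sigmab,n,g,h}_{\Lambda,\ell}|$ according to the number of successive explorations needed before the marked points get separated yields a discrete analogue of the recursion in Theorem~\ref{thm:bootstrap}, expressing the $\#A$-point weight through weights with strictly fewer marked points. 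Feeding in the base cases -- the $\#A=0$ asymptotics \eqref{eq:pf-asymp} (assumed) and the $\#A=1$ asymptotics, in which the exponent shift by $2\alpha^\kappa_{\sigma_i}/\sqrt\kappa$ per marked point is produced by the one-point recursion and should be extractable from \cite{on-nesting-statistics} -- and analysing the recursion asymptotically gives simultaneously the finiteness assertion, the exponent $-1-4/\kappa+\sum_i 2\alpha_i/\sqrt\kappa$, and existence of the constant $C(n,g,h,\sigmab,\Lambda)$.

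\emph{The constant identity.} Equation \eqref{eq9} is then a matching statement between the discrete and continuum recursions. Its left-hand side is precisely the combination of the constants $C$ that is invariant under the normalization freedom $C(n,g,h,\sigmab|_B,\Lambda)\mapsto c\prod_{i\in B}c_i\,C(n,g,h,\sigmab|_B,\Lambda)$ -- the exact discrete shadow of the freedom noted for the continuum partition functions in Remark~\ref{rmk:arb-mult-const} -- so it is a genuine invariant of the model, and the limiting constant $C(n,g,h,\sigmab,\Lambda)$ should agree with $Z^{\sigmab,\kappa}_{c'\Lambda,1}$ up to that freedom. Dividing the discrete peeling recursion for $|m^{\sigmab,n,g,h}_{c'\Lambda\ell^{-a(\kappa)},\ell}|$ by $C(n,g,h,\sigmab,\Lambda)\,\ell^{-1-4/\kappa+\sum_i 2\alpha_i/\sqrt\kappa}$, letting $\ell\to\infty$, and using \eqref{eq11} for every sub-collection of marked points together with convergence of the rescaled peeling chain to the CPI-driven process ${\bf Y}$ of Theorem~\ref{thm:markov-simple} -- scenario (ii) $\to$ discovery of $\CLE_\kappa$ loops, (iii) $\to$ boundary hits of the CPI, (i) $\to$ L\'evy compensation -- one recovers the continuum recursion of Theorem~\ref{thm:bootstrap} with the renormalized limits $\lim_\ell |m^{\sigmab|_B,n,g,h}_{\cdot}|/(C\ell^{\cdots})$ in place of the $W^{\sigmab|_B,\kappa}_{\Lambda,\cdot}$. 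Since that recursion is uniquely solvable given its zero- and one-point inputs, the two solutions coincide up to the above freedom, and fixing the zero- and one-point normalizations on both sides yields \eqref{eq9}.

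\emph{Main obstacle.} By far the hardest input is Conjecture~\ref{conj0} itself: GHPL convergence of loop-$O(n)$-decorated random triangulations to $\CLE_\kappa$-decorated $\gamma$-LQG is open for the full range of $(n,g,h)$, and even the precise polynomial asymptotics \eqref{eq:pf-asymp} for triangulations -- as opposed to rigid-loop quadrangulations, where \cite{bbg-recursive-approach,budd-On-peeling} apply -- is a substantial combinatorial task. Granting Conjecture~\ref{conj0}, the remaining genuine obstacle is the discrete analogue of Theorem~\ref{thm:loopexpconv}: one needs a discrete strong spatial independence estimate for the loop $O(n)$ model sharp enough to show that $\prod_B e^{\sigma_B N^{\Gamma_\ell}_B}$ is uniformly integrable along the scaling limit and that its renormalization converges to $\Phi^{\sigmab,\kappa}_\D$ -- estimates matching the CLE spatial independence of Theorem~\ref{thm:cle-indep} are not available at the required precision -- while simultaneously controlling the joint convergence of the loop ensemble, the marked points, and the reweighted measure, with the logarithmic singularities emerging in the limit. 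That joint-convergence step is where the real work lies.
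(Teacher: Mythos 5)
This statement is a \emph{conjecture}, not a theorem: the paper offers no proof, only the one-paragraph heuristic immediately after the statement (``Given Conjecture~\ref{conj0} this makes the scaling limit result\dots plausible, and the asymptotics\dots are reasonable to expect in analogy with the continuum'') plus the discussion of the normalization freedom in the constants. Your proposal is therefore being compared against a heuristic, not a proof, and on that basis it is consistent with and somewhat more detailed than what the paper says.

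Your outline matches the paper's intended picture at every step: the discrete reweighting by $\prod_B e^{\sigma_B N^\Gamma_B}$ as the analogue of the continuum $\Phi^{\sigmab,\kappa}_\D$-reweighting (this is precisely the heuristic the paper states); the discrete peeling chain on $(\ell_L,\ell_R,B)$ as the analogue of ${\bf Y}$ in Theorem~\ref{thm:markov-simple} with the three scenarios (i)--(iii) mapping onto L\'evy compensation, loop discovery, and CPI boundary hits (this is exactly the correspondence described at the end of the paper's Section~\ref{sec:discrete}); and the reading of \eqref{eq9} as an identity of normalization-invariant combinations, which is the content of Remark~\ref{rmk:arb-mult-const} and the paragraph after Conjecture~\ref{conj1}. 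You are also right to flag the two genuine obstacles: Conjecture~\ref{conj0} itself, and a discrete counterpart of Theorem~\ref{thm:loopexpconv} requiring a discrete spatial-independence estimate sharp enough to control uniform integrability of the nesting weights along the scaling limit. The paper does not supply either, and neither do you; this is not a gap in your argument relative to the paper but an accurate assessment of why the statement remains a conjecture.

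One small caution: in \eqref{eq9}, the $\sum_{i=1}^n$ in \eqref{eq11} and the appearance of $\#A$ use slightly different indexing conventions (the paper elsewhere identifies $n$ with $\#A$), and when you invoke the recursion of Theorem~\ref{thm:bootstrap} to ``uniquely solve'' for the constants you should make explicit that the recursion involves the \emph{weights} $W$, while \eqref{eq9} is stated for the \emph{partition functions} $Z^{\sigmab,\kappa}_{\Lambda,1}=W^{\sigmab,\kappa}_{\Lambda,1}$ (since $\ell=1$, the extra $\ell^{-1-4/\kappa}$ factor is trivial, but the $\ell$-dependent prefactors in \eqref{eq22} do matter when you take the $\ell\to\infty$ limit of the discrete recursion, so the bookkeeping of exponents in your ``dividing by $C\cdot\ell^{\cdots}$'' step needs care).
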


The measures $p^{{\sigmab},n,g,h}_{\Lambda,\ell}$ and $\bar{\op{M}}^{{\sigmab},\kappa}_{\Lambda,\ell}$ are (at least heuristically) obtained from $p^{-,n,g,h}_{\Lambda,\ell}$ and $\bar{\op{M}}^{-,\kappa}_{\Lambda,\ell}$ respectively, by performing a reweighting according to the number of loops surrounding the marked points on the surface. Given Conjecture \ref{conj0} this makes the scaling limit result of Conjecture \ref{conj1} plausible, and the asymptotics of the partition functions in \eqref{eq11} are reasonable to expect in analogy with the continuum.

In \eqref{eq9} notice that we only claim equality of certain \emph{ratios} of the constants $C( n,g,h,{\sigmab},\Lambda)$ rather than statements for the constants $C( n,g,h,{\sigmab},\Lambda)$ themselves. When defining the partition functions in both the discrete and the continuum there is some arbitrariness in the normalizing constants. Indeed, the transition probabilities of the peeling process in the discrete and the transition rates of the continuum process $\bf Y$ are all functions of
\begin{align*}
	\frac{|m^{{\sigmab},n,g,h}_{\Lambda,\ell}|\cdot |m^{-,n,g,h}_{\Lambda,\ell}|^{\#A-1}}{\prod_{i\in A} |m^{\sigma_i,n,g,h}_{\Lambda,\ell}|} \quad\text{and}\quad \frac{Z^{{\sigmab},\kappa}_{\Lambda,1}\cdot 
	(Z^{-,\kappa}_{\Lambda,1})^{\#A-1}}
	{\prod_{i\in A} Z^{\sigma_i,\kappa}_{\Lambda,1}}
\end{align*}
respectively, modulo the degree of freedom in the continuum corresponding the speed of the process. See Remark \ref{rmk:arb-mult-const}.

Conjecture \ref{conj1} concerns the case when $({\sigmab}, \Lambda) \in \cA_\kappa$. One can ask about properties of the discrete model for $({\sigmab}, \Lambda) \not\in \cA_\kappa$. We will not give a complete picture of this case, but remark that when $\sigma_i \gg 1$ then the model is not admissible since we can make $m^{{\sigmab},n,g,h}_{0,\ell}(T,\Gamma,\bm{x})$ arbitrarily large by considering a map $T$ with a long and thin ``tube'' with many loops wrapping around the tube, such that the tube separates $x_i$ and the boundary of $T$; by increasing the length of the tube and adding more loops (which increases $N^\Gamma_{\{i \}}(\bm{x})$), the weight $m^{{\sigmab},n,g,h}_{0,\ell}(T,\Gamma,\bm{x})$ increases. We have non-admissibility when $\sigma_{ij} \gg 1$ for a similar reason, namely it can be proved by letting $x_i$ and $x_j$ be close and considering a long and thin tube separating $x_i$ and $x_j$ from the boundary of the map $T$. Even if $({\sigmab},\Lambda) \not \in \cA_\kappa$ it might be possible to understand the asymptotic geometry of the planar map, e.g.\ by considering infinite measures instead of probability measures and/or allowing infinite planar maps.

\subsection{Other discrete models} 

We remark that $\gamma$-LQG disks with $\alpha$ singularities are also expected to arise as the scaling limit of planar maps with other statistical physics models than the loop $O(n)$ model. In the more general setting one considers tuples $(M,P,\bm{x})$ with $M=(V_M,E_M,F_M)$ a planar map, $P$ a statistical physics model on $M$, and $\bm{x}=(x_i\,:\,i\in A)$, $x_i\in V_M$. One associates a weight $w(M,P,\bm{x})$ to each such tuple. For certain choices of $w$ we expect that one still gets a surface defined as in \eqref{eq5} or \eqref{eq:gen-disk} in the scaling limit, but typically with the function $\Phi$ defined differently when $n>1$. One still needs $\Phi$ to satisfy the conformal covariance property of Lemma \ref{lem:phi-axioms} in order for the reweighting one does in the continuum to be intrinsic to the surface (i.e., independent of the chosen embedding), which in particular gives a unique choice for $\Phi$ (modulo multiplication by a constant) when $n=1$. One example of a way to define $w$ is to let it be the indicator function for some rare event for $P$ at the vertices $\bm{x}$ (e.g.\ an arm event). We believe that rigorous scaling limit results could be established in the special case of critical percolation on triangulations with one uniformly chosen vertex conditioned to have a certain configuration of arms to the boundary of the triangulation, building on \cite{bhs-site-perc,ghs-metric-peano}.

\section{Reweighted Lévy processes: generator and explicit computation}
\label{sec:fragmentation-main}

In this section we study certain spatially inhomogeneous jump Markov processes. The processes are constructed by applying reweightings to stable Lévy processes. We derive a formula for their generator (Theorem \ref{thm:weighted-levy}) and do an explicit computation (Proposition \ref{prop:power-theta}).

Determining the generator is  technically challenging and builds in particular on new techniques that exploit the scale invariance of the process to control its fluctuations close to its starting point, as well as more standard tools like Palm's formula for Poisson point processes. We expect that variants of our techniques also work for a large class of other Markov processes that are given by reweighted Lévy processes. The explicit calculation is done via a martingale argument involving the generator of a simple variant of the Markov process.

The Markov processes we consider would typically arise in settings similar to our paper, namely in the setting of growth-fragmentation processes where
the jumps represent splitting/fragmentation events,
the rates of the various jumps can be expressed in terms of the respective partition functions,
and we keep track of a fixed number of marked points.
In this section we consider jumps rates (or partition functions) of a more general form than the rest of the paper. We emphasize that the proofs in this section do not build on properties of LQG and CLE but are done via direct analysis of the Markov processes. 

Let us start by recalling the notion of a generator of a Markov process. Let $E$ be a metric space and let $X=(X_t\colon t\ge 0)$ be a time-homogeneous càdlàg Markov process on $E$ so $X$ is a random variable in the space $D([0,\infty),E)$ of càdlàg functions. Then for $f\colon E\to \R$ let
\begin{align}
	\label{eq:generator-def}
	\mathcal{G}f(x) = \lim_{t\downarrow 0} \frac{1}{t}\,\E_x\left( f(X_t)-f(x) \right)
\end{align}
provided the limit exists for all $x\in E$. We call the operator $\mathcal{G}$ the generator of $X$. Sometimes, uniform convergence in $x$ is required in \eqref{eq:generator-def} but here it is more convenient to work with the definition given above. Throughout this section, we will not determine the whole set of functions $f$ for which \eqref{eq:generator-def} exists but instead show the existence of and give the expression for $\mathcal{G}f$ for certain nice (namely, smooth and compactly supported) functions $f$.

The section is divided into two subsections where we consider the cases with and without, respectively, Lévy compensation. These correspond to the cases where the LQG disks and the CLE are simple ($\kappa\in(8/3,4)$) and non-simple ($\kappa'\in(4,8)$), respectively. At a heuristic level, the Lévy compensation of the boundary length process of a CPI occurs while the CPI is moving in between CLE loops (see also the discrete intuition right before the start of Section \ref{sec:planarnomarked}). It is not so surprising that a compensation is needed in the simple case and not in the non-simple case since in the former case the CLE loops do not touch and the trunk of the CPI ``spends more time'' in between the loops.

\subsection{With Lévy compensation}
\label{subsec:simple-fragmentation}

Let $\kappa\in (8/3,4)$ and $\beta\in [-1,1]$. We consider independent compensated Lévy processes $L$ and $R$ such that the generator of the process $(L,R)$ is given by
\begin{align}
	\begin{split}
	\mathcal{L}_{\kappa,\beta} f(l,r) &= \int_\R \left( f(l+h,r) - f(l,r) - h\,\frac{\partial f(l,r)}{\partial l}\right)\,\mu_L(dh) \\
	&\qquad + \int_\R \left( f(l,r+h) - f(l,r) -h\,\frac{\partial f(l,r)}{\partial r}\right)\,\mu_R(dh)
	\end{split}
	\label{eq:Lgen}
\end{align}
for $f\in C^\infty_c(\R^2)$ where $\mu_L$ and $\mu_R$ (the jump measures of $L$ and $R$) are given by
\begin{align*}
	\mu_L(dh)/dh &= \frac{-\cos(4\pi/\kappa)(1-\beta)/2}{h^{1+4/\kappa}}\,1_{(0,\infty)}(h) + \frac{1/2}{|h|^{1+4/\kappa}}\,1_{(-\infty,0)}(h)\;,\\
	\mu_R(dh)/dh &= \frac{-\cos(4\pi/\kappa)(1+\beta)/2}{h^{1+4/\kappa}}\,1_{(0,\infty)}(h) + \frac{1/2}{|h|^{1+4/\kappa}}\,1_{(-\infty,0)}(h)\;.
\end{align*}
See \cite[Theorem 19.10]{kallenberg}. More informally, $L$ is a jump process which transitions from $l$ to $l+h$ at rate  $-\cos(4\pi/\kappa)(1-\beta)/2\cdot h^{-1-4/\kappa}$ and from $l$ to $l-h$ at rate $1/2\cdot h^{-1-4/\kappa}$. Note that this is exactly a stable Lévy process of index $4/\kappa \in (1,2)$ and therefore we need compensation, which is reflected in the generator expression via the compensation term $-h\partial f(l,r)/\partial l$. The definition is of course chosen completely analogously in the case of $R$.

Note that the compensation term in \eqref{eq:Lgen} is so that $(L,R)$ satisfies stable scaling with exponent $4/\kappa$, i.e.\ $((L_{\lambda^{4/\kappa}t}/\lambda,R_{\lambda^{4/\kappa}t}/\lambda)\colon t\ge 0)$ has the same law as $(L,R)$ for any $\lambda>0$. From the definition, we see that $\mathcal{L}_{\kappa,\beta}f$ is a bounded function for $f\in C^\infty_c(\R^2)$. Also, as stated in \cite[Lemma 19.21]{kallenberg}, for $f\in C^\infty_c(\R^2)$ the process
\begin{align}
	t\mapsto f(L_t,R_t)-f(L_0,R_0) - \int_0^{t} \mathcal{L}_{\kappa,\beta}f(L_s,R_s)\,ds
	\label{eq:L-martingale}
\end{align}
is a martingale. We also let $X=L+R$.

In \cite{msw-simple} a certain reweighting is performed which we now explain. We remark that besides the existence of the reweighted process $(L',R')$ considered below, the results of Proposition \ref{prop:simple-msw-process} are new. For $l,r>0$ and $f\in C^\infty_c((0,\infty)^2)$ we let
\begin{align}
	w_\kappa(l,r) = (l+r)^{-1-4/\kappa}1(l,r>0)\quad \text{and }\quad \mathcal{G}_{\kappa,\beta} f(l,r)=\frac{1}{w_\kappa(l,r)}\,\mathcal{L}_{\kappa,\beta}(w_\kappa f)(l,r) \;.
	\label{eq:w}
\end{align}
By convention, we also write $\mathcal{G}_{\kappa,\beta}f(0,0)=0$. From the definition, one can check that $\mathcal{G}_{\kappa,\beta}f$ is a bounded function.

\begin{prop}
	\label{prop:simple-msw-process}
	For each $l,r>0$ there is a strong Markov càdlàg process $((L'_t,R'_t)\colon t\geq 0)$ starting at $(l,r)$, taking values in $(0,\infty)^2\cup\{(0,0)\}$ and absorbed at $(0,0)$ (i.e., after the process $(L',R')$ hits $(0,0)$ it stays at this point) such that
	\begin{align}
		\label{eq:compensated-weighting}
		\E_{(l,r)} \left(g\left(L'|_{[0,t]},R'|_{[0,t]}\right);t<\zeta'\right) = \E_{(l,r)} \left(\frac{w_\kappa(L_t,R_t)}{w_\kappa(l,r)} g\left(L|_{[0,t]},R|_{[0,t]}\right);t<\tau_0\right)
	\end{align}
	whenever $g\colon D([0,t])^2\to [0,\infty]$ is measurable on the space of real-valued càdlàg functions, $t\ge 0$, $\zeta'=\inf\{t\ge 0\colon L'_t=R'_t=0\}$ and $\tau_0 = \inf\{ t\ge 0\colon L_t\le 0 \text{ or } R_t \le 0\}$. Moreover, if we define $X'=L'+R'$, then $X'_t\to 0$ as $t\to \infty$ almost surely. Let $f\in C^\infty_c((0,\infty)^2)$. Then
    \begin{align}
		\label{eq:simple-msw-process}
		t\mapsto f(L'_t,R'_t) - f(L'_0,R'_0)-\int_0^{t} \mathcal{G}_{\kappa,\beta}f(L'_s,R'_s)\,ds
    \end{align}
	is a (càdlàg) martingale. Finally, $\mathcal{G}_{\kappa,\beta}$ is the generator of $(L',R')$ on $C^\infty_c((0,\infty)^2)$.
\end{prop}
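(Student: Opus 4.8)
\emph{Proof proposal.} The existence of a process $(L',R')$ satisfying \eqref{eq:compensated-weighting}, together with its strong Markov property, comes from \cite{msw-simple}, so it remains only to prove the martingale property \eqref{eq:simple-msw-process}, the identification of $\mathcal{G}_{\kappa,\beta}$ as the generator, and the assertion $X'_t\to 0$. The starting point is to read off from \eqref{eq:compensated-weighting} (using that $w_\kappa$ vanishes on the boundary of the quadrant, hence at $(L_{\tau_0},R_{\tau_0})$, with $\tau_0$ as in the statement) that $M_t:=w_\kappa(L_{t\wedge\tau_0},R_{t\wedge\tau_0})/w_\kappa(l,r)$ is a nonnegative $\mathbb{P}_{(l,r)}$-martingale that equals $0$ for $t\ge\tau_0$, that \eqref{eq:compensated-weighting} says precisely that the law of $(L',R')|_{[0,t]}$ under $\mathbb{P}'_{(l,r)}$ is the $M_t$-tilt of the law of $(L,R)|_{[0,t]}$ under $\mathbb{P}_{(l,r)}$, and that $\zeta'=\infty$ $\mathbb{P}'_{(l,r)}$-a.s., since $\mathbb{E}_{(l,r)}[M_t]=1$ for every $t$. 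In other words, $(L',R')$ is the Doob $h$-transform of $(L,R)$ by $w_\kappa$, killed at the boundary of $(0,\infty)^2$; the remaining three assertions all flow from this.

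To establish \eqref{eq:simple-msw-process}, fix $f\in C^\infty_c((0,\infty)^2)$. Since $w_\kappa$ is smooth on a neighbourhood of the compact set $\operatorname{supp}f\subseteq(0,\infty)^2$, the function $w_\kappa f$, extended by $0$, lies in $C^\infty_c(\mathbb{R}^2)$, so by \eqref{eq:L-martingale} both $(w_\kappa f)(L_t,R_t)-\int_0^t\mathcal{L}_{\kappa,\beta}(w_\kappa f)(L_s,R_s)\,ds$ and its stopping at $\tau_0$ are $\mathbb{P}_{(l,r)}$-martingales. Using the identity $\mathcal{L}_{\kappa,\beta}(w_\kappa f)=w_\kappa\,\mathcal{G}_{\kappa,\beta}f$ on $(0,\infty)^2$ (which is exactly the definition \eqref{eq:w} of $\mathcal{G}_{\kappa,\beta}$), the vanishing of $w_\kappa f$ and of $w_\kappa$ at $(L_{\tau_0},R_{\tau_0})$, and an integration by parts against the martingale $W_t:=w_\kappa(L_{t\wedge\tau_0},R_{t\wedge\tau_0})$ (whose finite-variation partner is continuous), I would verify that $A_tM_t$ is a $\mathbb{P}_{(l,r)}$-local martingale, where $A_t:=f(L'_t,R'_t)-f(L'_0,R'_0)-\int_0^t\mathcal{G}_{\kappa,\beta}f(L'_s,R'_s)\,ds$. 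By the change-of-measure rule (with density martingale $M$), $A_t$ is then a $\mathbb{P}'_{(l,r)}$-local martingale; as $|A_t|\le 2\|f\|_\infty+t\|\mathcal{G}_{\kappa,\beta}f\|_\infty$ is bounded on every $[0,t]$ --- recall $\mathcal{G}_{\kappa,\beta}f$ is a bounded function --- it is a genuine $\mathbb{P}'_{(l,r)}$-martingale, which is \eqref{eq:simple-msw-process}. The generator claim then follows immediately: from $\mathbb{E}_{(l,r)}[f(L'_t,R'_t)]-f(l,r)=\mathbb{E}_{(l,r)}\big[\int_0^t\mathcal{G}_{\kappa,\beta}f(L'_s,R'_s)\,ds\big]$, dividing by $t$ and letting $t\downarrow 0$ shows the limit in \eqref{eq:generator-def} exists and equals $\mathcal{G}_{\kappa,\beta}f(l,r)$, by right-continuity of $(L',R')$ at $0$ and the boundedness and continuity on $(0,\infty)^2$ of $\mathcal{G}_{\kappa,\beta}f$ (continuity being immediate from \eqref{eq:w} and dominated convergence in the integral defining $\mathcal{L}_{\kappa,\beta}(w_\kappa f)$).

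For $X'_t\to 0$ I would argue in two steps. First, $\tau_0<\infty$ $\mathbb{P}_{(l,r)}$-a.s.: the coordinate $L$ is a compensated $4/\kappa$-stable Lévy process with index $4/\kappa\in(1,2)$, hence has mean zero and is therefore recurrent, so it a.s.\ enters $(-\infty,0]$ and $\tau_0\le\inf\{t\colon L_t\le 0\}<\infty$. Consequently, by \eqref{eq:compensated-weighting} and $w_\kappa(L_t,R_t)=(L_t+R_t)^{-1-4/\kappa}\le M^{-1-4/\kappa}$ on $\{X_t\ge M\}$, one has $\mathbb{P}'_{(l,r)}(X'_t\ge M)\le((l+r)/M)^{1+4/\kappa}\,\mathbb{P}_{(l,r)}(t<\tau_0)\to 0$ as $t\to\infty$, so $X'_t\to 0$ in $\mathbb{P}'_{(l,r)}$-probability. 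Second, for $\epsilon>0$, optional stopping of $M$ at the first time $X=L+R$ reaches $[\epsilon,\infty)$, fed into \eqref{eq:compensated-weighting}, gives the uniform hitting estimate $\mathbb{P}'_{(l,r)}(\sup_{t\ge 0}X'_t\ge\epsilon)\le((l+r)/\epsilon)^{1+4/\kappa}$ (here the explicit value of $w_\kappa$ and its monotonicity in $l+r$ are used). Combining the two, for any $t_n\uparrow\infty$ the Markov property at time $t_n$ yields $\mathbb{P}'_{(l,r)}(\sup_{t\ge t_n}X'_t\ge\epsilon)\le\mathbb{E}'_{(l,r)}\big[\min\big(1,(X'_{t_n}/\epsilon)^{1+4/\kappa}\big)\big]$, which tends to $0$ because $X'_{t_n}\to 0$ in probability and the integrand is bounded by $1$. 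Passing to a subsequence along which these probabilities are summable and applying Borel--Cantelli gives $\limsup_{t\to\infty}X'_t\le\epsilon$ a.s., and letting $\epsilon\downarrow 0$ along $\epsilon=1/m$ completes the proof.

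The part I expect to require the most care is the derivation of \eqref{eq:simple-msw-process} --- specifically the bookkeeping around the absorption time $\tau_0$ (the identifications between the various stopped processes, and the vanishing of $w_\kappa f$ and $w_\kappa$ on $\partial((0,\infty)^2)$), together with checking that $\mathcal{G}_{\kappa,\beta}f$ is bounded and continuous on the open quadrant (needed for the generator limit). Conceptually there is nothing unexpected here --- it is just the Doob $h$-transform of a Lévy process by a boundary-harmonic function --- but this is where the technical effort lies; by contrast, $X'_t\to 0$ reduces cleanly to recurrence of a mean-zero stable Lévy process plus a one-line optional-stopping bound.
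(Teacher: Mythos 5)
There is a genuine error at the very start of your argument. You assert that $\E_{(l,r)}[M_t]=1$ for every $t$, where $M_t=w_\kappa(L_{t\wedge\tau_0},R_{t\wedge\tau_0})/w_\kappa(l,r)$, and conclude from this that $\zeta'=\infty$ $\P'$-a.s. This is false: although $\mathcal{L}_{\kappa,\beta}w_\kappa=0$ (Lemma \ref{lem:simple-integrals-comp}), $w_\kappa$ is unbounded near the boundary of the quadrant and the resulting local martingale is only a supermartingale, not a true martingale. The paper's proof devotes an entire paragraph to establishing the bound $\E_{(l,r)}[M_t]\le 1$ via truncation by bump functions $h_n$, stopping at exit times $\tau_m$ of compact boxes, and Fatou's lemma — precisely because equality need not and does not hold. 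Indeed, if $\E[M_t]$ were identically $1$, the coefficient of $\delta_{(0,0)}$ in the transition kernel constructed in the paper would vanish and $(L',R')$ would never be absorbed, whereas the proof later shows $\E'_{(l,r)}[(X'_t)^{1+4/\kappa}]=(l+r)^{1+4/\kappa}\P_{(l,r)}(t<\tau_0)\to 0$. This error propagates: your ``change-of-measure rule'' for passing from ``$A_tM_t$ is a $\P$-local martingale'' to ``$A_t$ is a $\P'$-local martingale'' is not the usual Girsanov correspondence, because $\P'$ is \emph{not} the absolutely-continuous tilt of $\P$ by the density martingale $M$ — there is an atom at $(0,0)$, $M$ is strictly a supermartingale, and $A_t$ does not vanish on $\{t\ge\zeta'\}$ (it freezes at a random constant). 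These facts can be handled, but only by working directly with \eqref{eq:compensated-weighting} restricted to $\{t<\zeta'\}$, which is exactly what the paper's optional-stopping computation of $\E_{(l,r)}[(w_\kappa f)(L_{t\wedge\tau_0},R_{t\wedge\tau_0})]$ does; the Doob-$h$-transform framing adds no shortcut here and, as written, is circular in the place where it matters.

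A secondary issue is scope: you attribute the existence, càdlàg regularity, and strong Markov property of $(L',R')$ to \cite{msw-simple} and treat them as given. The paper credits only the existence of the reweighted process to that reference; the construction of the transition kernels (including the delicate nonnegativity of the mass at $(0,0)$), the existence of a càdlàg version (obtained from the supermartingales $(X')^{1+4/\kappa}$ and $(L')^{\hat\rho_L\cdot 4/\kappa}(X')^{1+4/\kappa}$), and the strong Markov property (a Feller-type continuity argument) are part of the content of this proposition and are proved there from scratch — consistent with the stated aim that Section \ref{sec:fragmentation-main} be self-contained and free of LQG/CLE input. Your argument for $X'_t\to 0$ (recurrence of the mean-zero stable coordinate, a maximal inequality from optional stopping of the supermartingale $M$, then Borel--Cantelli along a subsequence) is different from the paper's supermartingale-convergence route but is correct, provided you replace ``martingale'' by ``supermartingale'' where you stop $M$: the inequality direction of optional stopping is all that is used there, so this part of your proof survives the correction.
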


\begin{proof}
	We begin by defining a Markov process $(L',R')$ with values in $(0,\infty)^2\cup \{(0,0)\}$ by specifying its transition kernel; for $t\ge 0$ and $l,r>0$ let
	\begin{align*}
		p^t_{(l,r)}(dl',dr') &= \left( 1- \E_{(l,r)}\left(\frac{w_\kappa(L_t,R_t)}{w_\kappa(l,r)};t<\tau_0\right)\right) \delta_{(0,0)}(dl',dr') \\
		&\quad + \frac{w_\kappa(l',r')}{w_\kappa(l,r)}\,\P_{(l,r)}(L_t\in dl', R_t\in dr', t<\tau_0)\;,\\
		p^t_{(0,0)} &= \delta_{(0,0)}\;.
	\end{align*}
	The semigroup property of these transition kernels is straightforward to verify and to see that $(p^t\colon t\ge 0)$ is a family of Markov kernels, it therefore suffices to show that the coefficient in front of $\delta_{(0,0)}$ above is non-negative. To this end, we consider smooth $h_n\in C^\infty_c(\R)$ such that $h_n\ge 0$, $h_n|_{(1/n,n)}=1$ and $h_n|_{(1/(2n),2n)^c}=0$.
	Let $\tau_m=\inf\{t\ge 0\colon L_t\notin (1/m,m)\text{ or } R_t\notin (1/m,m) \}$. Then we observe that
	\begin{align*}
		&\E_{(l,r)}\left(h_n(L_{t\wedge \tau_m})h_n(R_{t\wedge \tau_m})w_\kappa(L_{t\wedge \tau_m},R_{t\wedge \tau_m})\right) \\
		&\quad = w_\kappa(l,r) + \int_0^t \E_{(l,r)}\left(\mathcal{L}_{\kappa,\beta}((u,v)\mapsto h_n(u)h_n(v)w_\kappa(u,v))(L_s,R_s);s<\tau_m\right)\,ds
	\end{align*}
	for all $n$ with $l,r\in (1/n,n)$. The key is that by Lemma \ref{lem:simple-integrals-comp}, we have $\mathcal{L}_{\kappa,\beta} w_\kappa = 0$. Since $L_s,R_s\in [1/m,m]$ when $s<\tau_m$ it is thus easy to see that the integral term above goes to $0$ as $n\to \infty$ for $m$ fixed. The claim then follows by applying Fatou's lemma by first letting $n\to \infty$ and then taking $m\to \infty$.

	As mentioned above, $(p_t\colon t\ge 0)$ is a family of Markov kernels and hence a process $(L',R')$ with these transition kernels can be constructed. To see that $(L',R')$ has a càdlàg version, we will construct several supermartingales and argue that these processes having càdlàg versions implies the existence of a càdlàg version for $(L',R')$.

	Let $\rho_L=\P(L_1>0)$ and $\rho_R = \P(R_1>0)$ be the positivity parameters of the Lévy processes $L$ and $R$. We also let $\widehat{\rho}_L=1-\rho_L$ and $\widehat{\rho}_R=1-\rho_R$. By the definition of $L$ and $R$ we cannot have $\widehat{\rho}_L,\widehat{\rho}_R>0$. We show that
	\begin{align*}
		(X')^{1+4/\kappa}\quad\text{and}\quad (L')^{\wh{\rho}_L\cdot 4/\kappa}(X')^{1+4/\kappa}
	\end{align*}
	are supermartingales and so (by Doob's regularization theorem for martingales) there is a version of $(L',R')$ such that these two processes are càdlàg and we will switch to such a version. Working with this version, we see that $X'$ is càdlàg and hence $L'$ is also càdlàg and, finally, the process $R'=X'-L'$ is càdlàg.

	We next establish the supermartingale property. By \eqref{eq:compensated-weighting} applied with $g(u,v)=(u_t+v_t)^{1+4/\kappa}$ we have
	\begin{align*}
		\E_{(l,r)}\left( (X'_t)^{1+4/\kappa} \right) &= (l+r)^{1+4/\kappa}\,\P_{(l,r)}( t<\tau_0)\le (l+r)^{1+4/\kappa}\;.
	\end{align*}
	Then applying \eqref{eq:compensated-weighting} applied with $g(u,v)=(u_t)^{\wh\rho_L\cdot 4/\kappa}(u_t+v_t)^{1+4/\kappa}$,
	\begin{align*}
		\E_{(l,r)}\left( (L'_t)^{\wh{\rho}_L\cdot 4/\kappa}(X'_t)^{1+4/\kappa} \right) &\le (l+r)^{1+4/\kappa} \,\E_{(l,r)}\left((L_t)^{\wh{\rho}_L\cdot 4/\kappa}; \inf_{[0,t]}L>0\right) \\
		&= (l+r)^{1+4/\kappa} l^{\wh{\rho}_L\cdot 4/\kappa}\;.
	\end{align*}
	The final equality is established in \cite[Section 5.4]{kyprianou-levy-lamperti} (this martingale was first identified in \cite[Theorem 2]{silverstein-coharmonic-class}, see the remarks below \cite[(1.15)]{silverstein-coharmonic-class}). The Markov property of $(L',R')$ implies the supermartingale property. By supermartingale convergence, we have that $\lim_{t\to \infty} (X'_t)^{1+4/\kappa}$ exists almost surely and since
	\begin{align*}
		\E_{(l,r)}\left( (X'_t)^{1+4/\kappa} \right) &= (l+r)^{1+4/\kappa}\,\P_{(l,r)}( t<\tau_0 ) \to 0\quad\text{as $t\to \infty$}
	\end{align*}
	we in fact have $(X'_t)^{1+4/\kappa}\to 0$ as $t\to \infty$ a.s.\ and thus $X'_t\to 0$ as $t\to \infty$ a.s.

	The claim \eqref{eq:compensated-weighting} can now be checked using the definition of the transition kernels by first considering functions $g$ that only depend on finitely many marginals and then using a monotone class argument. To get the strong Markov property, note that for $f\in C_c^\infty((0,\infty)^2)$ the map
	\begin{align*}
		(l,r)&\mapsto \E_{(l,r)}(f(L'_t,R'_t))
		= \frac{1}{w_\kappa(l,r)}\,\E_{(l,r)}\left((w_\kappa f)(L_t,R_t);\inf_{[0,t]} L>0,\inf_{[0,t]}R>0\right)\\
		&= \E_{(0,0)}\left(\frac{(w_\kappa f)(l+t^{\kappa/4}L_1,r+t^{\kappa/4} R_1)}{w_\kappa(l,r)};\inf_{[0,1]} L>-lt^{-\kappa/4},\inf_{[0,1]}R>-rt^{-\kappa/4}\right)
	\end{align*}
	from $(0,\infty)^2\cup\{(0,0)\}$ to $\R$ is continuous and bounded. The strong Markov property can be deduced from this using the same reasoning that shows that Feller processes are strong Markov processes, see for instance \cite[Theorem 19.17]{kallenberg}.

	Finally, we need to show that the process defined in \eqref{eq:simple-msw-process} is a martingale. By the Markov property of $(L',R')$ it suffices to prove that
	\begin{align}
		\label{eq:result-simple-gen}
		\E_{(l,r)}(f(L'_t,R'_t))=f(l,r) + \int_0^t \E_{(l,r)}(\mathcal{G}_{\kappa,\beta}f(L_s,R_s))\,ds
	\end{align}
	for all $t\ge 0$. To see this, note that by \eqref{eq:L-martingale}
\begin{align*}
		t\mapsto(w_\kappa f)(L_t,R_t)-(w_\kappa f)(l,r) - \int_0^{t} \mathcal{L}_{\kappa,\beta}(w_\kappa f)(L_s,R_s)\,ds
\end{align*}
	is a martingale starting from $0$ and hence we can use optional stopping at the stopping time $t\wedge \tau_0$ to obtain
	\begin{align*}
		\E_{(l,r)}((w_\kappa f)(L_{t\wedge\tau_0},R_{t\wedge \tau_0}))= (w_\kappa f)(l,r) + \E_{(l,r)}\left( \int_0^{t} \mathcal{L}_{\kappa,\beta}(w_\kappa f)(L_s,R_s)1(s<\tau_0)\,ds \right)\;.
	\end{align*}
	The claim \eqref{eq:result-simple-gen} now follows by first interchanging the integral and the expectation and then using the definition of the process $(L',R')$. To obtain the generator, note that by right-continuity of $(L,R)$ and the fact that $\mathcal{L}_{\kappa,\beta}(w_\kappa f)$ is a bounded continuous function, we get
	\begin{align*}
		\frac{1}{t}\int_0^{t} \mathcal{L}_{\kappa,\beta}(w_\kappa f)(L_s,R_s)1(s<\tau_0)\,ds \to \mathcal{L}_{\kappa,\beta}(w_\kappa f)(l,r)\quad\text{as $t\downarrow 0$}
	\end{align*}
	provided $(L,R)$ starts from $(l,r)$. The generator result now follows from dominated convergence since $\mathcal{L}_{\kappa,\beta}(w_\kappa f)$ is a bounded function and hence the terms in the limit above are bounded by a deterministic constant.
\end{proof}

\begin{remark}
	In fact, the process $(L')^{\wh{\rho}_L\cdot 4/\kappa}(R')^{\wh{\rho}_R\cdot 4/\kappa}(X')^{1+4/\kappa}$ is a martingale where we define $\widehat{\rho}_L = \P(L_1<0)$ and $\widehat{\rho}_R = \P(R_1<0)$. Indeed,
	\begin{align*}
		&\E_{(l,r)}\left( (L'_t)^{\wh{\rho}_L\cdot 4/\kappa} (R'_t)^{\wh{\rho}_R\cdot 4/\kappa} (X'_t)^{1+4/\kappa} \right) \\
		&= (l+r)^{1+4/\kappa} \,\E_{(l,r)}\left((L_t)^{\wh{\rho}_L\cdot 4/\kappa}(R_t)^{\wh{\rho}_R\cdot 4/\kappa}; \inf_{[0,t]}L>0, \inf_{[0,t]}R>0\right) \\
		&= (l+r)^{1+4/\kappa} \,\E_{(l,r)}\left((L_t)^{\wh{\rho}_L\cdot 4/\kappa}; \inf_{[0,t]}L>0\right)  \E_{(l,r)}\left((R_t)^{\wh{\rho}_R\cdot 4/\kappa}; \inf_{[0,t]}R>0\right) \\
		&= (l+r)^{1+4/\kappa} r^{\wh{\rho}_R\cdot 4/\kappa} l^{\wh{\rho}_L\cdot 4/\kappa}
	\end{align*}
	and the Markov property of $(L',R')$ yields the martingale property. While we are not making use of this martingale anywhere in this paper, we believe it to be potentially useful in understanding the dependence of some functionals of $(L',R')$ on $\beta$.
\end{remark}

The main results of this section will be about reweighting the law of $(L',R')$. We begin by performing a reweighting of the Lévy process $(L,R)$, where we need the following result on Poisson point processes.
\begin{lemma}
	\label{lem:ppp-reweighting}
	Let $\xi$ be a Poisson random measure with intensity $\mu$ on a measure space $(E,\mathcal{E})$. Also, let $f\colon E\to \R$ be measurable such that
	\begin{align*}
		\int \mu(dx) |e^{f(x)}-1|<\infty \quad\text{and let}\quad  Z := \exp\left(\, \int \xi(dx) f(x) - \int \mu(dx) (e^{f(x)}-1) \right)\;.
	\end{align*}
	Then $\E(Z)=1$. Now reweight the law of $\xi$ by $Z$. Then under the reweighted measure, $\xi$ is a Poisson random measure with intensity measure $e^f\mu$.
\end{lemma}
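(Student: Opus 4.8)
The whole statement reduces to the single identity $\E(Z)=1$. First note that $\int\mu(dx)\,|e^{f(x)}-1|<\infty$ forces $\mu(\{|e^f-1|>\varepsilon\})<\infty$ for every $\varepsilon>0$, so $\{f\neq 0\}$ is $\sigma$-finite; since $\xi$ restricted to $\{f=0\}$ changes neither $Z$ nor the conclusion, we may assume $\mu$ is $\sigma$-finite. The same hypothesis, combined with $|f|\lesssim|e^f-1|$ on $\{|f|\le 1\}$ and $\mu(\{|f|>1\})<\infty$, gives $\int\min(|f|,1)\,d\mu<\infty$, hence (a standard property of Poisson integrals) $\int|f|\,d\xi<\infty$ a.s.; thus $\int\xi(dx)f(x)$ is a finite random variable and $Z\in(0,\infty)$ a.s.

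Granting $\E(Z)=1$, the rest follows quickly. Let $g\colon E\to[0,\infty)$ be bounded with $\mu(\{g>0\})<\infty$; such test functions determine the law of a random measure that is a.s.\ finite on sets of finite $\mu$-measure (see \cite{kallenberg}), and both $\xi$ under the reweighted law (reweighting by an a.s.\ positive density preserves this from $\P$) and a Poisson random measure with intensity $e^f\mu$ (since $\int_A e^f\,d\mu=\mu(A)+\int_A(e^f-1)\,d\mu<\infty$ whenever $\mu(A)<\infty$) have this property. Put $f'=f-g$. One checks $\int|e^{f'}-1|\,d\mu<\infty$: indeed $(e^{f'}-1)_+\le(e^f-1)_+$ pointwise, while $(1-e^{f'})_+\le|e^f-1|\,1_{\{g=0\}}+1_{\{g>0\}}$, both integrable. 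A rearrangement of exponents then gives
\[
Z\,e^{-\int g\,d\xi}=e^{-\int(1-e^{-g})\,e^f\,d\mu}\;Z'\,,\qquad Z':=\exp\!\Big(\textstyle\int\xi(dx)f'(x)-\int\mu(dx)(e^{f'(x)}-1)\Big),
\]
where the prefactor is a genuine constant in $(0,1]$ because $\int(1-e^{-g})e^f\,d\mu\le\int_{\{g>0\}}e^f\,d\mu<\infty$. Taking expectations and using $\E(Z')=1$ (the identity $\E(Z)=1$ applied to $f'$), the Laplace functional of $\xi$ under the reweighted measure equals $\exp(-\int(1-e^{-g})\,d(e^f\mu))$, which is exactly that of a Poisson random measure of intensity $e^f\mu$; this identifies the reweighted law, and $\E(Z)=1$ says the reweighting is by a probability density.

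It remains to prove $\E(Z)=1$. When $f$ is supported on a set $A$ with $\mu(A)<\infty$ this is elementary: writing $\xi|_A=\sum_{i\le N}\delta_{x_i}$ with $N\sim\mathrm{Poisson}(\mu(A))$ and $x_i$ i.i.d.\ $\propto\mu|_A$, and conditioning on $N$, one gets $\E\big(\prod_{x\in\xi}e^{f(x)}\big)=\exp(\int_A(e^f-1)\,d\mu)$ (using $\int_A e^f\,d\mu<\infty$), hence $\E(Z)=1$. For general $\sigma$-finite $\mu$, set $A_n=\{|e^f-1|\ge 1/n\}$ (so $\mu(A_n)<\infty$, $A_n\uparrow\{f\neq 0\}$) and $B_n=\{0<|e^f-1|<1/n\}$, and factorise $Z=Z_nW_n$, where $Z_n$ is the functional $Z$ built from $f1_{A_n}$ and $W_n$ the one built from $f1_{B_n}$. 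Then $Z_n\perp W_n$ (disjoint supports), $\E(Z_n)=1$ by the finite-measure case, and $W_n\to 1$ in $L^1$: indeed $\E\big|\int_{B_n}f\,d\xi\big|\le\int_{B_n}|f|\,d\mu\to 0$ and $\int_{B_n}(e^f-1)\,d\mu\to 0$ by dominated convergence (dominating $|f|1_{B_n}\lesssim|e^f-1|$ for $n$ large). Since $W_n>0$ and $Z<\infty$ a.s., $Z_n=Z/W_n\to Z$ in probability, so Fatou gives $\E(Z)\le\liminf_n\E(Z_n)=1$; then $\E(Z)=\E(Z_n)\E(W_n)=\E(W_n)$ for every $n$, and a second application of Fatou along $W_n\to 1$ gives $1\le\E(Z)$. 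Hence $\E(Z)=1$.

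The step needing the most care is this last limiting argument: one cannot pass from a finite-measure $\mu$ to a $\sigma$-finite one by naively multiplying the mean-one factors, since an infinite product of independent mean-one random variables need not have mean one (there is no uniform integrability in general). The resolution above isolates the ``small'' part of the intensity in the factor $W_n$, proves $W_n\to 1$ in $L^1$ directly from $\int|e^f-1|\,d\mu<\infty$, and combines this with the exact identity $\E(Z_n)=1$ via two applications of Fatou's lemma. Everything else---the elementary Poisson moment computation, the dominated-convergence estimates for the intensity integrals, and the fact that the chosen test class determines the law---is routine.
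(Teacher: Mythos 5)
Your proof is correct, and it takes a genuinely different route from the paper's. The paper's proof is a two-line computation: apply the exponential-moment version of Campbell's formula directly to the exponent $f+g$ with $g\ge 0$, simplify, and recognize the Laplace functional of a Poisson random measure with intensity $e^f\mu$; the case $g\equiv 0$ gives $\E(Z)=1$, and a monotone class argument finishes. You instead split the task: you prove $\E(Z)=1$ separately via an approximation scheme, and you identify the Laplace functional via an exponent-shift factorisation $Z\,e^{-\int g\,d\xi}=e^{-\int(1-e^{-g})e^f\,d\mu}\,Z'$, so that the one identity $\E(\cdot)=1$ does all the work. For $\E(Z)=1$ you reduce to the finite-intensity case (elementary Poisson conditioning) and then pass to the $\sigma$-finite case by the independence factorisation $Z=Z_nW_n$ over $A_n=\{|e^f-1|\ge 1/n\}$ and $B_n=\{0<|e^f-1|<1/n\}$, with Fatou on both sides. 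The trade-off: the paper's argument is shorter but leans on Campbell's exponential identity for signed integrands (where one should really check the appropriate moment conditions or argue at the level of possibly-infinite quantities); your argument only uses the finite-intensity computation as input and is more explicit about convergence of the approximating integrals, at the cost of length. One small inaccuracy in your write-up: you assert that $W_n\to 1$ \emph{in $L^1$}, but your estimates only show $\int_{B_n}f\,d\xi\to 0$ in $L^1$ and $\int_{B_n}(e^f-1)\,d\mu\to 0$, which gives $W_n\to 1$ in probability, not in $L^1$ (an $L^1$ statement for $\exp(\cdot)$ would need uniform integrability). Fortunately your two Fatou applications only use a.s.\ convergence along a subsequence, so this overstated claim does not affect the validity of the argument; I would simply replace ``in $L^1$'' with ``in probability''.
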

\begin{proof}
	Let $\xi'$ be a Poisson random measure of intensity $e^f\mu$. For $g\colon E\to [0,\infty)$ measurable we have (the second and final equalities follow from Campbell's formula)
	\begin{align*}
		\E\left(Z\cdot e^{\,\int \xi(dx) g(x)} \right) &= \E\left( e^{\,\int \xi(dx) (f(x)+g(x))} \right) e^{\,-\int \mu(dx) (e^{f(x)}-1)} \\
		&= e^{\,\int \mu(dx) (e^{f(x)+g(x)}-1)}e^{\,-\int \mu(dx) (e^{f(x)}-1)}\\
		&= e^{\,\int e^{f(x)}\mu(dx)\, (e^{g(x)}-1)} = \E\left(e^{\,\int \xi'(dx) g(x)}\right)
	\end{align*}
	which implies the lemma by a monotone class argument.
\end{proof}

The following lemma is now an immediate consequence of Lemma \ref{lem:ppp-reweighting}. We would like to stress that the Lévy compensation term appearing in the generator of $(L,R)$ is unaffected by the reweighting below. We again refer to \cite[Theorem 19.10]{kallenberg} for information on generators of Lévy processes. The assumption $1-W^\emptyset(h)\lesssim h^2$ ensures the absolute continuity between $(L^\emptyset,R^\emptyset)$ and $(L,R)$ below.

\begin{lemma}
	\label{lem:twisted-jump-simple}
	Suppose  $W^\emptyset\colon [0,\infty)\to (0,1]$ is measurable with $1-W^\emptyset(h)\lesssim h^2$. Let $ (L^\emptyset,R^\emptyset)$ be two independent Lévy processes with generator defined for $l,r\in \R$ and $f\in C^\infty_c(\R^2)$ by
	\begin{align*}
		\mathcal{L}^\emptyset_{\kappa,\beta}f(l,r) &= \int_\R\left( W^\emptyset(|h|) (f(l+h,r) - f(l,r)) - h\,\frac{\partial f(l,r)}{\partial l}\right)\, \mu_L(dh)\\
		&\quad + \int_\R \left( W^\emptyset(|h|) (f(l,r+h) - f(l,r)) -h\,\frac{\partial f(l,r)}{\partial r}\right)\,\mu_R(dh) \;.
	\end{align*}
	Then whenever $f\in C^\infty_c(\R^2)$, the process
	\begin{align*}
		t\mapsto f(L^\emptyset_t,R^\emptyset_t)-f(L^\emptyset_0,R^\emptyset_0) - \int_0^{t} \mathcal{L}^\emptyset_{\kappa,\beta}f(L^\emptyset_s,R^\emptyset_s)\,ds
	\end{align*}
	is a martingale. We let $X^\emptyset = L^\emptyset + R^\emptyset$. Then for all $g\colon D([0,t])^2\to [0,\infty]$ we have
	\begin{align*}
		\E_{(l,r)}\left(g(L^\emptyset|_{[0,t]},R^\emptyset|_{[0,t]})\right) = \frac{ \E_{(l,r)}\left(\,\prod_{s<t} W^\emptyset(|\Delta X_s|)\cdot g(L|_{[0,t]},R|_{[0,t]})\right) }{\exp\left(t\int (\mu_L+\mu_R)(dh)(W^\emptyset(|h|)-1)\right) } \;.
	\end{align*}
	We define $\tau^\emptyset_0=\inf\{t\ge 0\colon L^\emptyset_t \le 0\text{ or } R^\emptyset_t\le 0\}$ and $\alpha = \int (\mu_L+\mu_R)(dh)(W^\emptyset(|h|)-1)$.
\end{lemma}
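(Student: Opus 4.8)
The plan is to realize $(L^\emptyset,R^\emptyset)|_{[0,t]}$ as a reweighting of $(L,R)|_{[0,t]}$ carried out at the level of the Poisson random measure of jumps, so that both displayed conclusions follow from Lemma~\ref{lem:ppp-reweighting}. First I would recall, via \cite[Theorem~19.10]{kallenberg}, that the process $(L,R)$ from \eqref{eq:Lgen} is the pure-jump Lévy process whose jumps together form, on $E:=[0,t]\times(\R\setminus\{0\})\times\{\mathrm L,\mathrm R\}$ (the last coordinate recording which of $L,R$ jumps), a Poisson random measure $\xi$ of intensity $\nu:=\mathrm{Leb}|_{[0,t]}\otimes(\mu_L\otimes\delta_{\mathrm L}+\mu_R\otimes\delta_{\mathrm R})$, and that $(L,R)|_{[0,t]}$ is a fixed measurable functional of $\xi$ (the jumps being compensated by the truncation function $h\mapsto h$, which is admissible since $\int(|h|^2\wedge|h|)\,(\mu_L+\mu_R)(dh)<\infty$ for $\kappa\in(8/3,4)$). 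Because $\mu_L$ and $\mu_R$ are non-atomic, almost surely no two jumps occur simultaneously, so $\prod_{s<t}W^\emptyset(|\Delta X_s|)=\exp\bigl(\int_E\log W^\emptyset(|h|)\,\xi(ds,dh,d\bullet)\bigr)$.

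Next I would apply Lemma~\ref{lem:ppp-reweighting} to $\xi$ with $f(s,h,\bullet)=\log W^\emptyset(|h|)$. Its integrability hypothesis is $t\int_\R|W^\emptyset(|h|)-1|\,(\mu_L+\mu_R)(dh)<\infty$, which holds because $0<W^\emptyset\le 1$ and $1-W^\emptyset(|h|)\lesssim h^2$ give $|W^\emptyset(|h|)-1|\,|h|^{-1-4/\kappa}\lesssim|h|^{1-4/\kappa}$ near $0$ (integrable, as $4/\kappa<2$), while near infinity $|W^\emptyset(|h|)-1|\le1$ meets the integrable tail $|h|^{-1-4/\kappa}$. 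The lemma then gives $\E_{(l,r)}(Z)=1$ for $Z=\exp\bigl(\int f\,d\xi-\int(e^f-1)\,d\nu\bigr)=e^{-t\alpha}\prod_{s<t}W^\emptyset(|\Delta X_s|)$, where $\alpha=\int(\mu_L+\mu_R)(dh)(W^\emptyset(|h|)-1)$; taking $g\equiv1$ this already yields the normalising factor $\exp\bigl(t\int(\mu_L+\mu_R)(dh)(W^\emptyset(|h|)-1)\bigr)$ in the claimed identity. Moreover, under $Z\,d\P_{(l,r)}$ the measure $\xi$ becomes Poisson of intensity $e^f\nu$, i.e.\ with $\mu_L,\mu_R$ replaced by $W^\emptyset(|\cdot|)\mu_L,W^\emptyset(|\cdot|)\mu_R$; feeding this into the same functional of $\xi$ as before produces precisely the Lévy process with those jump measures, still compensated by $h$ against $\mu_L,\mu_R$, which is $(L^\emptyset,R^\emptyset)|_{[0,t]}$. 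This gives the displayed reweighting formula first for $g$ depending on finitely many coordinates and then in full by a monotone class argument; consistency in $t$ (equivalently, that $(Z_t)_t$ is a mean-one martingale) makes $(L^\emptyset,R^\emptyset)$ well defined on all of $[0,\infty)$.

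It remains to match the generator. Adding and subtracting $W^\emptyset(|h|)h\,\partial_l f$ inside the $\mu_L$-integral of $\mathcal{L}^\emptyset_{\kappa,\beta}$ splits it as $\int\bigl(f(l+h,r)-f(l,r)-h\,\partial_l f\bigr)W^\emptyset(|h|)\,\mu_L(dh)+\partial_l f\int h(W^\emptyset(|h|)-1)\,\mu_L(dh)$, both integrals being absolutely convergent for $f\in C^\infty_c(\R^2)$ by the estimate above together with $|f(l+h,r)-f(l,r)-h\,\partial_l f|\lesssim|h|^2\wedge1$; doing likewise in the $r$-variable shows that $\mathcal{L}^\emptyset_{\kappa,\beta}$ is exactly the generator of the Lévy process with jump measures $W^\emptyset(|\cdot|)\mu_L,W^\emptyset(|\cdot|)\mu_R$ and drifts $\int h(W^\emptyset(|h|)-1)\,\mu_L(dh)$, resp.\ $\int h(W^\emptyset(|h|)-1)\,\mu_R(dh)$, and that $\mathcal{L}^\emptyset_{\kappa,\beta}f$ is bounded just as $\mathcal{L}_{\kappa,\beta}f$ is. The martingale assertion is then \cite[Lemma~19.21]{kallenberg} applied to $(L^\emptyset,R^\emptyset)$, exactly as \eqref{eq:L-martingale} is for $(L,R)$.

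Since $\mathcal{L}^\emptyset_{\kappa,\beta}$ is of Lévy type, there is no real obstacle here; the result is essentially an immediate corollary of Lemma~\ref{lem:ppp-reweighting}. The only points needing care are the integrability bookkeeping near $h=0$, where the quadratic bound $1-W^\emptyset\lesssim h^2$ must be weighed against the $|h|^{-1-4/\kappa}$ singularity of the Lévy measures (with $4/\kappa\in(1,3/2)$ for $\kappa\in(8/3,4)$), and the elementary observation that jumps of $L$ and $R$ never coincide, which is what lets us write $\prod_{s<t}W^\emptyset(|\Delta X_s|)$ as the exponential of a single integral against the combined jump Poisson random measure to which the reweighting lemma applies verbatim.
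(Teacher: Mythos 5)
Your proposal is correct and follows essentially the same route as the paper: both realize the claim as a direct application of Lemma~\ref{lem:ppp-reweighting} to the Poisson jump measure(s) of $(L,R)$, using the observation that $(L^\emptyset,R^\emptyset)$ is constructed from the reweighted jump intensity with the \emph{same} compensation term as $(L,R)$. The only cosmetic difference is that the paper applies Lemma~\ref{lem:ppp-reweighting} separately to the two independent jump PPPs of $L$ and $R$, whereas you apply it once to the combined marked PPP on $[0,t]\times(\R\setminus\{0\})\times\{\mathrm L,\mathrm R\}$; your write-up additionally records the integrability and generator-matching bookkeeping that the paper leaves implicit.
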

\begin{proof}
	The martingale property of the process defined in the statement follows from standard results on Lévy processes, see for instance \cite[Theorem 19.10 and Lemma 19.21]{kallenberg}, so it only remains to prove the second part. Note that by definition of the Lévy processes $(L,R)$ and $(L^\emptyset,R^\emptyset)$ we have
	\begin{align*}
		\left(t\mapsto \sum_{s\le t\colon |\Delta X_s|>1/n} \Delta L_s - t\mu_L(\R\setminus [-1/n,1/n])\right) &\to L \\
		\left(t\mapsto \sum_{s\le t\colon |\Delta X_s|>1/n} \Delta R_s - t\mu_R(\R\setminus [-1/n,1/n])\right) &\to R \\
 		\left(t\mapsto \sum_{s\le t\colon |\Delta X_s|>1/n} \Delta L^\emptyset_s - t\mu_L(\R\setminus [-1/n,1/n])\right) &\to L^\emptyset \\
		\left(t\mapsto \sum_{s\le t\colon |\Delta X_s|>1/n} \Delta R^\emptyset_s - t\mu_R(\R\setminus [-1/n,1/n])\right) &\to R^\emptyset
	\end{align*}
	uniformly on compact sets in probability as $n\to \infty$, where we note in particular that the processes $L^\emptyset$ and $R^\emptyset$ were defined via the generator in such a way that the compensation term is the same as for $L$ and $R$, respectively. We now apply Lemma \ref{lem:ppp-reweighting} (individually) to the Poisson point processes $\sum_{t\ge 0\colon \Delta L_t\neq 0} \delta_{(t,\Delta L_t)}$ and $\sum_{t\ge 0\colon \Delta R_t\neq 0} \delta_{(t,\Delta R_t)}$.
\end{proof}

Throughout this paper, we will make frequent use of Palm's formula for Poisson point processes. The theory of Palm distributions for general point processes appears for instance as a special case in \cite[Chapter 6]{kallenberg-measures} but let us state the classical case for Poisson point processes (without proof) here for future reference.

\begin{lemma}
	\label{lem:palm-abstract}
	Let $\xi$ be a Poisson random measure with intensity $\mu$ on a measurable space $(E,\mathcal{E})$. Moreover, consider a measurable function $f\colon E\times \mathcal{M}(E)\to [0,\infty]$ where $\mathcal{M}(E)$ is the collection of measures on $E$ endowed with the $\sigma$-algebra generated by the evaluation maps $\nu\mapsto \nu(A)$ for $A\in \mathcal{E}$. Then
	\begin{align*}
		\E\left(\int f(x,\xi)\,\xi(dx)\right) = \int \E(f(x,\xi+\delta_x))\,\mu(dx).
	\end{align*}
	The result extends to $f$ taking values in $\R$ provided either side of the equality above is finite when $f$ is replaced by $|f|$.
\end{lemma}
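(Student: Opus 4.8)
The plan is to recognize the identity as the Mecke equation and prove it by first treating the case of a finite intensity measure and then bootstrapping to the $\sigma$-finite case (the only case relevant in this paper), deducing the $\R$-valued version at the end. Throughout we may assume $f\ge 0$, since the general statement follows by applying the nonnegative case to $f^+$, $f^-$ and $|f|$ and invoking the stated integrability hypothesis. If one wished, one could also reduce to functions of the form $f(x,\nu)=g(x)\exp(-\int u\,d\nu)$ with $g,u\ge 0$ measurable, since these form a multiplicative class generating the relevant $\sigma$-algebra on $\mathcal{M}(E)$ and a functional monotone class argument then applies; but the direct computation below does not need this and I would skip it.

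Assume first that $\mu(E)<\infty$. Then $\xi$ has the representation $\xi=\sum_{i=1}^{N}\delta_{X_i}$ where $N$ is Poisson with mean $\mu(E)$ and $X_1,X_2,\dots$ are i.i.d.\ with law $\mu(\cdot)/\mu(E)$, independent of $N$. Expanding the left-hand side and using the exchangeability of the $X_i$,
\[
\E\Big(\int f(x,\xi)\,\xi(dx)\Big)=\sum_{n\ge 1}\frac{e^{-\mu(E)}\mu(E)^n}{n!}\; n\;\E\Big(f\big(X_1,\sum_{j=1}^{n}\delta_{X_j}\big)\Big).
\]
Now split $X_1$ off from $(X_2,\dots,X_n)$: conditionally $X_1\sim\mu/\mu(E)$ independently of the rest, so $\E(f(X_1,\sum_{j=1}^n\delta_{X_j}))=\mu(E)^{-1}\int\mu(dx)\,\E(f(x,\delta_x+\sum_{j=2}^n\delta_{X_j}))$. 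The factor $n$ cancels one power of $\mu(E)$ and turns $n!$ into $(n-1)!$; re-indexing $m=n-1$ the right-hand side becomes $\sum_{m\ge0}\frac{e^{-\mu(E)}\mu(E)^m}{m!}\int\mu(dx)\,\E(f(x,\delta_x+\sum_{j=1}^{m}\delta_{X_j}))$, and $\sum_{j=1}^m\delta_{X_j}$ with this Poisson number of points is again a Poisson random measure of intensity $\mu$. Hence the right-hand side equals $\int\E(f(x,\xi+\delta_x))\,\mu(dx)$, which is the claim.

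For $\sigma$-finite $\mu$, write $E$ as a disjoint union $\bigcup_k E_k$ with $\mu(E_k)<\infty$ and correspondingly $\xi=\sum_k\xi_k$ with $\xi_k$ independent Poisson random measures of intensity $\mu(\cdot\cap E_k)$. Fix $k$; for $x\in E_k$ one has $f(x,\xi)=f(x,\xi_k+\xi^{(k)})$ where $\xi^{(k)}:=\sum_{j\ne k}\xi_j$ is independent of $\xi_k$. Conditioning on $\xi^{(k)}$ and applying the finite-intensity case to $\xi_k$ with the function $x\mapsto f(x,\,\cdot+\xi^{(k)})$ gives $\E\big(\int_{E_k}f(x,\xi)\,\xi(dx)\big)=\int_{E_k}\E(f(x,\xi+\delta_x))\,\mu(dx)$, and summing over $k$ by monotone convergence (using $f\ge 0$) yields the formula; the $\R$-valued extension is then as noted above.

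There is no serious obstacle here. The only points needing mild care are measurability — that $(x,\nu)\mapsto f(x,\nu+\delta_x)$ is jointly measurable and that $\nu\mapsto\int\E(f(x,\nu+\delta_x))\,\mu(dx)$ is measurable — which is dispatched by the usual monotone class reduction to indicators $f=1_{A\times M}$. The only genuine content is the bookkeeping of Poisson weights in the finite case, where the combinatorial factor $n$ produced by exchangeability is precisely what is needed to shift the law of the number of points by one. (An alternative route, with the side benefit of showing the identity characterizes the Poisson law, verifies the case $f(x,\nu)=g(x)e^{-\int u\,d\nu}$ directly from Campbell's formula $\E\exp(-\int u\,d\xi)=\exp(-\int(1-e^{-u})\,d\mu)$ by perturbing in $u$, but the representation-based argument above is shorter.)
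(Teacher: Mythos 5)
The paper states this lemma explicitly without proof, referring the reader to Kallenberg, so there is no argument of its own to compare against; your proof is the standard derivation of the Mecke equation and is correct. The finite-intensity computation via the i.i.d.\ representation and exchangeability, the reduction of the $\sigma$-finite case by partitioning $E$ (the right level of generality here, since existence of the Poisson random measure already presupposes $\sigma$-finite $\mu$), and the monotone-class handling of joint measurability of $(x,\nu)\mapsto f(x,\nu+\delta_x)$ are all in order.
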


Let us return to the study of the processes $(L,R)$, $(L',R')$ and $(L^\emptyset,R^\emptyset)$. A key input is the following Palm's formula type result that allows us to replace the expected sum of the jumps of a process by an integral over time and over space with respect to some intensity jump measure.

This lemma will be used in the following context: If we consider a random surface and, using a peeling-type process, cut it into smaller surfaces with boundary lengths given by the jumps of a Markov process, then we can express the weight of the whole surface in terms of an expectation of a sum over the jumps of the Markov process. The lemma allows us to rewrite this expectation as an expectation of an integral with respect to some intensity measure of the jumps.

\begin{lemma}
	\label{lem:msw-palm-result}
	Consider the setting of Lemma \ref{lem:twisted-jump-simple}.
	Suppose that $f\colon D([0,\infty))^2\times [0,\infty)\times \R\to [0,\infty]$ is measurable with $f(\cdot,\cdot,\cdot,0)=0$. Then
	\begin{align*}
		&\E_{(l,r)}\left(\sum_{s<t} f(L^\emptyset,R^\emptyset,s,\Delta X^\emptyset_s);t<\tau^\emptyset_0\right) \\
		&=\E_{(l,r)}\left( \int_0^{t\wedge \tau^\emptyset_0}\,ds\int W^\emptyset(|h|)\mu_L(dh)\, a_s^t(L^\emptyset|_{[0,s)},R^\emptyset|_{[0,s)},L_s^\emptyset+h,R^\emptyset_s) \right)\\
		&\quad + \E_{(l,r)}\left( \int_0^{t\wedge \tau^\emptyset_0}\,ds\int W^\emptyset(|h|)\mu_R(dh)\, a_s^t(L^\emptyset|_{[0,s)},R^\emptyset|_{[0,s)},L_s^\emptyset,R^\emptyset_s+h) \right)
	\end{align*}
	where $a_s^t(u_L,u_R,l',r')= \E_{(l',r')}( f(u_L\oplus L^\emptyset,u_R\oplus R^\emptyset,s,h);t-s<\tau^\emptyset_0 )\,1(l',r'>0)$ and where $\oplus$ denotes the concatenation of two functions. Moreover
	\begin{align*}
		&\E_{(l,r)}\left(\sum_{
			s<\zeta'
		}\prod_{s'<s} W^\emptyset(|\Delta X'_{s'}|) \cdot g(\Delta X'_s,L'_{\cdot+s},R'_{\cdot +s})\right) \\
		&= \E_{(l,r)}\left(\int_0^{\zeta'} \prod_{s'<s} W^\emptyset(|\Delta X'_{s'}|)\int \frac{w_\kappa(L_s'+h,R_s')\,\mu_L(dh)}{w_\kappa(L_s',R_s')}\, \E_{(L'_s+h,R'_s)}\left( g(h,L',R') \right) \,ds\right) \\
		&\quad + \E_{(l,r)}\left(\int_0^{\zeta'} \prod_{s'<s} W^\emptyset(|\Delta X'_{s'}|)\int \frac{w_\kappa(L_s',R_s'+h)\,\mu_R(dh)}{w_\kappa(L_s',R_s')}\, \E_{(L'_s,R'_s+h)}\left( g(h,L',R') \right) \,ds\right)
	\end{align*}
	whenever $g\colon \R\times D([0,\infty))^2\to [0,\infty]$ is measurable with $g(0,\cdot,\cdot)=0$.
\end{lemma}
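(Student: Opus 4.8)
The plan is to prove each identity as an instance of Palm's formula (Lemma~\ref{lem:palm-abstract}) for the jumps of the relevant process, combined with the Markov property to deal with the continuation after the selected jump. The first step in either case is to record the jump structure. By the construction underlying Lemma~\ref{lem:twisted-jump-simple} (via Lemma~\ref{lem:ppp-reweighting}), $(L^\emptyset,R^\emptyset)$ is a L\'evy process whose jumps form two independent Poisson random measures $\xi^L=\sum_{s\,:\,\Delta L^\emptyset_s\neq 0}\delta_{(s,\Delta L^\emptyset_s)}$ and $\xi^R=\sum_{s\,:\,\Delta R^\emptyset_s\neq 0}\delta_{(s,\Delta R^\emptyset_s)}$ with intensities $ds\otimes W^\emptyset(|h|)\mu_L(dh)$ and $ds\otimes W^\emptyset(|h|)\mu_R(dh)$, and $(L^\emptyset,R^\emptyset)$ is a measurable functional of $(\xi^L,\xi^R)$ reconstructed from its jumps by compensated summation (the compensator being unchanged by the reweighting). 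For $(L',R')$ the analogue is a L\'evy system: its $L$-jumps (resp.\ $R$-jumps) up to the absorption time $\zeta'$ have compensator $w_\kappa(L'_{s-},R'_{s-})^{-1}w_\kappa(L'_{s-}+h,R'_{s-})\,\mu_L(dh)\,ds$ (resp.\ $w_\kappa(L'_{s-},R'_{s-})^{-1}w_\kappa(L'_{s-},R'_{s-}+h)\,\mu_R(dh)\,ds$), in the sense that $\E_{(l,r)}\big(\sum_{s<\zeta',\,\Delta L'_s\neq 0}H(s,\Delta L'_s)\big)=\E_{(l,r)}\big(\int_0^{\zeta'}\!\int H(s,h)\,w_\kappa(L'_{s-},R'_{s-})^{-1}w_\kappa(L'_{s-}+h,R'_{s-})\,\mu_L(dh)\,ds\big)$ for nonnegative predictable $H$ with $H(s,0)=0$; this can be obtained from the martingale property \eqref{eq:simple-msw-process} together with the identity $\mathcal L_{\kappa,\beta}w_\kappa=0$ (as in the proof of Proposition~\ref{prop:simple-msw-process}) by the usual identification of the jump kernel of a Markov process from its generator, or alternatively from \eqref{eq:compensated-weighting} and the $W^\emptyset\equiv 1$ case of the first identity applied to $(L,R)$, since reweighting $(L,R)$ by the martingale $w_\kappa(L_t,R_t)/w_\kappa(l,r)$ multiplies the jump intensity by the ratio of the two $w_\kappa$-values.

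For the first identity I would use that a.s.\ $L^\emptyset$ and $R^\emptyset$ have no common jumps, so $\sum_{s<t}f(L^\emptyset,R^\emptyset,s,\Delta X^\emptyset_s)=\int_{[0,t)\times\R}f(L^\emptyset,R^\emptyset,s,h)\,(\xi^L+\xi^R)(ds,dh)$ while $1(t<\tau^\emptyset_0)$ is a functional of $(\xi^L,\xi^R)$. Applying Palm's formula to $\xi^L$, inserting an atom $\delta_{(s,h)}$ amounts to adding to $L^\emptyset$ one extra jump of size $h$ at time $s$; then for Lebesgue-a.e.\ $s$ (which is not a jump time of $(L^\emptyset,R^\emptyset)$) the Markov property of the L\'evy process $(L^\emptyset,R^\emptyset)$ at the deterministic time $s$, together with spatial homogeneity, splits the augmented path into its restriction to $[0,s)$ and an independent fresh copy started from $(L^\emptyset_s+h,R^\emptyset_s)$, and the survival event $\{t<\tau^\emptyset_0\}$ of the augmented path factors exactly as $\{s<\tau^\emptyset_0\}\cap\{L^\emptyset_s+h>0,\,R^\emptyset_s>0\}\cap\{\text{the fresh copy survives for time }t-s\}$, which is precisely what is encoded in $a_s^t$. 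Taking expectations and moving the $ds$-integral outside by Tonelli gives the first term of the claim; the $\xi^R$-contribution gives the second by the same argument.

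For the second identity I would proceed in two steps. First, applying the strong Markov property of $(L',R')$ (Proposition~\ref{prop:simple-msw-process}) at the jump times, I would replace the factor $g(\Delta X'_s,L'_{\cdot+s},R'_{\cdot+s})$ inside the expectation by $\E_{(L'_s,R'_s)}(g(\Delta X'_s,L',R'))$, the weight $\prod_{s'<s}W^\emptyset(|\Delta X'_{s'}|)$, being measurable with respect to $(L',R')|_{[0,s)}$, passing through; this is made rigorous by first restricting the sum to the finitely many jumps of size exceeding $\varepsilon$ in $[0,T\wedge\zeta']$, which are stopping times, applying the strong Markov property at each, and then letting $\varepsilon\downarrow 0$ and $T\to\infty$ by monotone convergence. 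It then remains to evaluate $\E_{(l,r)}\big(\sum_{s<\zeta'}\prod_{s'<s}W^\emptyset(|\Delta X'_{s'}|)\,\Xi(\Delta X'_s,L'_{s-},R'_{s-})\big)$ for a nonnegative $\Xi$ with $\Xi(0,\cdot,\cdot)=0$; splitting into $L$- and $R$-jumps and applying the L\'evy system above with $H(s,h)=\prod_{s'<s}W^\emptyset(|\Delta X'_{s'}|)\,\Xi(h,L'_{s-},R'_{s-})$, and using that on an $L$-jump one has $L'_s=L'_{s-}+h$ and $R'_s=R'_{s-}$ (so that $\Xi(h,L'_{s-},R'_{s-})=\E_{(L'_s+h,R'_s)}(g(h,L',R'))$), produces the first claimed integral, and the $R$-jumps produce the second.

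The routine content here is bookkeeping; the care is concentrated in two places. In the first identity one must verify the claimed factorisation of the killing event across the inserted jump and that Lebesgue-a.e.\ time $s$ avoids the jump times, so that the Markov property applies cleanly at $s$. In the second identity the delicate point --- and what I expect to be the main obstacle --- is reconciling the reweighting \eqref{eq:compensated-weighting}, which is stated only for functionals of the path on a fixed interval $[0,t]$, with statements running up to the possibly infinite absorption time $\zeta'$; this forces one to localise (for instance with the exit-time approximation $\tau_m$ used in the proof of Proposition~\ref{prop:simple-msw-process}), to use $X'_t\to 0$ a.s., and to pass to the limit by monotone convergence, both in order to establish the L\'evy system for $(L',R')$ rigorously and to control the tails of the sums over jumps.
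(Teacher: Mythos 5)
Your treatment of the first identity is essentially the paper's: apply Palm's formula to the jump point processes of $(L^\emptyset,R^\emptyset)$, then use the Markov property of the Lévy process at the (deterministic, Lebesgue-a.e.\ non-jump) time $s$ to factor the inserted-atom expectation into the past and an independent fresh copy. That matches.

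For the second identity you take a genuinely different route from the paper. The paper dyadically discretises time: it writes $\sum_{s<\zeta'} = \lim_n \sum_k \sum_{s<k2^{-n}} 1(\zeta'\in(k2^{-n},(k+1)2^{-n}])$, applies the Markov property of $(L',R')$ at the deterministic time $k2^{-n}$, converts back to $(L^\emptyset,R^\emptyset)$ via the explicit reweighting in the construction, invokes the first part of the lemma on each fixed time window, and then passes to the limit by monotone convergence. You instead first use the strong Markov property at the jump times (approximating by the $\varepsilon$-jumps, which are stopping times) to replace $g(\Delta X'_s, L'_{\cdot+s}, R'_{\cdot+s})$ by its conditional expectation, and then appeal to a Lévy system for $(L',R')$. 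Both are valid; yours is cleaner if the Lévy system is in hand, while the paper's stays entirely within the primitives it has set up (Palm for $(L^\emptyset,R^\emptyset)$ over a fixed time horizon plus the reweighting formulas). Of the two routes you suggest for establishing the Lévy system, the one via the reweighting \eqref{eq:compensated-weighting} and the $W^\emptyset\equiv 1$ case of the first part is closer to the paper and, once written out with the localisation you flag, does close. The generator route is thinner than you make it sound: \eqref{eq:simple-msw-process} holds only for $f\in C^\infty_c((0,\infty)^2)$, so extracting the jump compensator requires isolating the jump part with a sequence of test functions and a limiting argument that is not ``the usual identification'' out of the box. You have correctly identified that the real obstacle is extending fixed-horizon statements to $[0,\zeta')$; the paper's dyadic split is precisely its way of doing that.

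One minor slip: in your final parenthetical you write ``on an $L$-jump $L'_s=L'_{s-}+h$ ... so that $\Xi(h,L'_{s-},R'_{s-})=\E_{(L'_s+h,R'_s)}(\cdots)$'', which as written would be $\E_{(L'_{s-}+2h,R'_{s-})}$. What you mean, and what the paper's display means, is that in the time integral $\int_0^{\zeta'}\!\cdots\,ds$ the evaluation $L'_s$ is taken at a Lebesgue-typical (hence continuity) time, so $L'_s=L'_{s-}$ there, and $\Xi(h,l',r')=\E_{(l'+h,r')}(g(h,L',R'))$.
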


\begin{proof}
	Since the jumps of $(L^\emptyset,R^\emptyset)$ form a Poisson point process with an explicit intensity measure, by applying Palm's formula (see Lemma \ref{lem:palm-abstract}), we get
	\begin{align*}
		&\E_{(l,r)}\left(\sum_{s<t} f(L^\emptyset,R^\emptyset,s,\Delta X^\emptyset_s);t<\tau^\emptyset_0\right) \\
		&\quad = \int_0^t ds\int W^\emptyset(|h|)\mu_L(dh)\, \E_{(l,r)}\left(f(L^\emptyset+h\,1_{[s,\infty)},R^\emptyset,s,h);B^L_{h,t,s}\right) \\
		&\qquad + \int_0^t ds\int W^\emptyset(|h|)\mu_R(dh)\, \E_{(l,r)}\left(f(L^\emptyset,R^\emptyset+h\,1_{[s,\infty)},s,h);B^R_{h,t,s}\right)
	\end{align*}
	where
	\begin{align*}
		B^L_{h,t,s}=\{\inf\nolimits_{\,[0,t]} L^\emptyset, \inf\nolimits_{\,[0,t]} R^\emptyset, \inf\nolimits_{\,[t,s]} L^\emptyset+h, \inf\nolimits_{\,[t,s]} R^\emptyset>0\} \;,\\
		B^R_{h,t,s}=\{\inf\nolimits_{\,[0,t]} L^\emptyset, \inf\nolimits_{\,[0,t]} R^\emptyset, \inf\nolimits_{\,[t,s]} L^\emptyset, \inf\nolimits_{\,[t,s]} R^\emptyset+h>0\}\;.
	\end{align*}
	By conditioning $(L^\emptyset,R^\emptyset)$ on its values up to time $s$ and using the Markov property of $(L^\emptyset,R^\emptyset)$ we get
	\begin{align*}
		&\E_{(l,r)}\left(f(L^\emptyset+h\,1_{[s,\infty)},R^\emptyset,s,h);B^L_{h,t,s}\right) = \E_{(l,r)}\left( a_s^t(L^\emptyset_{[0,s)},R^\emptyset_{[0,s)},L^\emptyset_s+h,R^\emptyset_s) ;s<\tau^\emptyset_s \right)\;
	\end{align*}
	We analogously reexpress the second term to deduce the result.
	For the second part, we begin by observing that monotone convergence implies
	\begin{align*}
		&\E_{(l,r)}\left(\sum_{s<\zeta'}\prod_{s'<s} W^\emptyset(|\Delta X'_{s'}|) \cdot g(\Delta X'_s,L'_{\cdot+s},R'_{\cdot +s})\right) \\
		&= \lim_{n\to \infty}\sum_{k\ge 0} \E_{(l,r)}\left( \sum_{s<k2^{-n}}\prod_{s'<s} W^\emptyset(|\Delta X'_{s'}|) \cdot g(\Delta X'_s,L'_{\cdot+s},R'_{\cdot +s})1(\zeta'\in (k2^{-n},(k+1)2^{-n}] ) \right)\;.
	\end{align*}
	By the Markov property of $(L',R')$ at time $k2^{-n}$ and the construction of $(L',R')$ we get
	\begin{align*}
		&\E_{(l,r)}\left( \sum_{s<k2^{-n}}\prod_{s'<s} W^\emptyset(|\Delta X'_{s'}|) \cdot g(\Delta X'_s,L'_{\cdot+s},R'_{\cdot +s})1(\zeta'\in (k2^{-n},(k+1)2^{-n}] ) \right) \\
		&= \E_{(l,r)}\Biggl(\, \sum_{s<k2^{-n}} \left.\E_{(L'_{k2^{-n}},R'_{k2^{-n}})}\left(g(h,u_L\oplus L',u_R\oplus R'); \zeta'\le 2^{-n}\right)\right|_{\substack{h=\Delta X'_s\\u_L=L'_{s+\cdot}|_{[0,k2^{-n}]}\\u_R=R'_{s+\cdot}|_{[0,k2^{-n}]}}}\\
		&\qquad\qquad\qquad\qquad \cdot \prod_{s'<s} W^\emptyset(|\Delta X'_{s'}|) \cdot 1(k2^{-n}<\zeta')  \,\Biggr)\;.
	\end{align*}
	By the construction of $(L',R')$ and the definition of $(L^\emptyset, R^\emptyset)$ the expression above equals
	\begin{align*}
		&e^{\alpha k2^{-n}} \, \E_{(l,r)}\Biggl(\, \sum_{s<k2^{-n}} \, \left.\E_{(L^\emptyset_{k2^{-n}},R^\emptyset_{k2^{-n}})}\left(g(h,u_L\oplus L',u_R\oplus R'); \zeta'\le 2^{-n}\right)\right|_{\substack{h=\Delta X^\emptyset_s\\u_L=L^\emptyset_{s+\cdot}|_{[0,k2^{-n}]}\\u_R=R^\emptyset_{s+\cdot}|_{[0,k2^{-n}]}}}\\
		&\qquad\qquad\qquad\qquad\cdot  \prod_{s'\in [s,k2^{-n})} W^\emptyset(|\Delta X^\emptyset_{s'}|)^{-1} \,\frac{w_\kappa(L^\emptyset_{k2^{-n}},R^\emptyset_{k2^{-n}})}{w_\kappa(l,r)}\, 1(k2^{-n}<\tau^\emptyset_0) \Biggr)
	\end{align*}
	We now apply the first part of the lemma, rewrite the expression and take the limit as $n\to \infty$ (again using monotone convergence) to deduce the result.
\end{proof}

In the remainder of this section, we consider a finite subset $A\subseteq \N$ and smooth functions $W^B\colon (0,\infty)\to (0,\infty)$ for each $B\subseteq A$ such that
\begin{gather}
	\label{eq:conditions-simple}
	\begin{split}
		W^\emptyset\text{ is non-increasing}\;,\quad W^\emptyset(h)\le 1\;,\quad 1-W^\emptyset(h)\lesssim h^2 \quad\text{for $h>0$}\;,\\
		\text{and}\quad W^B(x)\lesssim W^B(y)\quad\text{for all $B\subseteq A$ and whenever $0<y\le x\le 2y$}\;.
	\end{split}
\end{gather}
By convention we will write $W^\emptyset(0)=1$ and $W^B(0)=0$ if $B\neq \emptyset$. We also consider constants $\sigma_B\in \R$ for $B\subseteq A$ such that $\sigma_\emptyset = 0$.

For $u\colon [0,\infty)\to \R$ càdlàg we let $\Pi(u,B)$ be the set of non-increasing (in the sense of inclusion) functions $Q\colon [0,\infty)\to \mathcal{P}(A)$ such that $Q_0=B$, $Q_t=\emptyset$ for $t$ sufficiently large and $\Delta Q_t := Q_{t-}\setminus Q_t = \emptyset$ whenever $\Delta u_t := u_t-u_{t-}= 0$.

\begin{thm}
	\label{thm:weighted-levy}
	For each $B\subseteq A$ and $l,r>0$, we now define a process $\bm{Y}^W=(L^W,R^W,P^W)$ taking values in $(0,\infty)^2\times \mathcal{P}(A)$, starting at $(l,r,B)$ and absorbed at $(0,0,\emptyset)$ by
	\begin{equation}
		\begin{split}
			&\E_{(l,r,B)}\left(g(L^W,R^W,P^W)\right)\\
			&=\frac{1}{W^B(l+r)}\,\E_{(l,r)}\left( \,\sum_{Q\in \Pi(X',B)} g(L',R',Q)\prod_{s<\zeta'\colon \Delta X'_s\neq 0} e^{\sigma_{Q_{s-}\setminus Q_s}1(\Delta X'_s>0)}\, W^{Q_{s-}\setminus Q_s}(\Delta X'_s) \right)\;.
		\end{split}
		\label{eq:weighted-levy}
	\end{equation}
	For this to be well-defined, we also assume that the right-hand side of the equation is $1$ for $g=1$ and for all $B\subseteq A$ and $l,r>0$. We also let $\zeta^W = \inf\{t\ge 0\colon \bm{Y}_t^W = (0,0,\emptyset)\}$. Then $\bm{Y}^W$ is a strong Markov process and the following is well-defined
	\begin{align*}
		&\mathcal{G}_{\kappa,\beta}^W f(l,r,B) \\
		&\quad:= f(l,r,B)\int (\mu_L+\mu_R)(dh) (W^\emptyset(|h|)-1) \\
		&\qquad + \frac{1}{w_\kappa(l,r)W^B(l+r)}\, \mathcal{L}^\emptyset_{\kappa,\beta}((u,v)\mapsto w_\kappa(u,v) W^B(u+v) f(u,v,B))(l,r) \\
		&\qquad + \sum_{C\subsetneq B}\int \mu_L(dh)\, \frac{W^{B\setminus C}(|h|)w_\kappa(l+h,r)W^C(l+r+h)}{w_\kappa(l,r)W^B(l+r)}\,e^{\sigma_{B\setminus C}1(h>0)}\,f(l+h,r,C) \\
		&\qquad + \sum_{C\subsetneq B}\int \mu_R(dh)\, \frac{W^{B\setminus C}(|h|)w_\kappa(l,r+h)W^C(l+r+h)}{w_\kappa(l,r)W^B(l+r)}\,\,e^{\sigma_{B\setminus C}1(h>0)}\,f(l,r+h,C)
	\end{align*}
	for $f\colon (0,\infty)^2\times \mathcal{P}(A)\to \R$ with $f(\cdot,B)\in C^\infty_c((0,\infty)^2)$ for all $B\subseteq A$. Furthermore,
	\begin{align*}
		\lim_{t\downarrow 0}\frac{1}{t}\, \E_{(l,r,B)}\left(f(\bm{Y}^W_t) - f(l,r,B) \right)  = \mathcal{G}_{\kappa,\beta}^W f(l,r,B)
	\end{align*}
	i.e.\ $\mathcal{G}_{\kappa,\beta}^W$ is the generator of $\bm{Y}^W$ on the set of functions $f$ considered above.
\end{thm}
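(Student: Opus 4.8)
The plan is to first establish the probabilistic content of the statement (that the transition kernels implicit in \eqref{eq:weighted-levy} form a semigroup of Markov kernels, admit a càdlàg version, and enjoy the strong Markov property) along the exact lines of the proof of Proposition \ref{prop:simple-msw-process}, and then to compute the generator by expanding $\E_{(l,r,B)}(f(\bm{Y}^W_t))$ to first order in $t$ after decomposing according to how many coordinates of the $\mathcal{P}(A)$-component $P^W$ are killed during $[0,t]$. The first reduction is: by the Markov property of $(L',R')$ together with the assumed normalization (that the right side of \eqref{eq:weighted-levy} equals $1$ for $g=1$), conditioning the sum over $Q\in\Pi(X',B)$ on $(L',R')|_{[0,t]}$ and on $Q|_{[0,t]}$ and then summing over the extensions of $Q$ to $[t,\zeta')$ replaces the tail of the product by the factor $W^{Q_t}(X'_t)$. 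Writing $q$ for a non-increasing càdlàg $\mathcal{P}(A)$-valued path on $[0,t]$ with $q_0=B$ and $\Delta q_s=\emptyset$ whenever $\Delta X'_s=0$, this gives
\begin{align*}
	\E_{(l,r,B)}\!\left(f(\bm{Y}^W_t)\right)
	=\frac{1}{W^B(l+r)}\,\E_{(l,r)}\!\left(\sum_{q}f(L'_t,R'_t,q_t)\,W^{q_t}(X'_t)\!\!\prod_{s\le t\colon\Delta X'_s\neq 0}\!\!e^{\sigma_{\Delta q_s}1(\Delta X'_s>0)}\,W^{\Delta q_s}(|\Delta X'_s|)\right),
\end{align*}
a finite sum of finite products. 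The semigroup property of the kernels follows from the same identity; a càdlàg version and the strong Markov property are obtained as in Proposition \ref{prop:simple-msw-process} (on each maximal interval on which $P^W$ is constant the $(L^W,R^W)$-part is of the same type as the process $(L',R')$, further reweighted by $\prod W^\emptyset(|\Delta X'_s|)$, while $P^W$ is piecewise constant and decreasing; strong Markov then follows from continuity in $(l,r)$ of $(l,r)\mapsto\E_{(l,r,B)}(f(\bm{Y}^W_t))$ for each of the finitely many $B$).

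\textbf{The term with no killing.} The contribution of $q\equiv B$ is $\tfrac{1}{W^B(l+r)}\E_{(l,r)}\big(W^B(X'_t)f(L'_t,R'_t,B)\prod_{s\le t}W^\emptyset(|\Delta X'_s|)\big)$. Put $\phi(u,v):=w_\kappa(u,v)W^B(u+v)f(u,v,B)\in C^\infty_c((0,\infty)^2)$. Using \eqref{eq:compensated-weighting} to pass from $(L',R')$ to $(L,R)$, then Lemma \ref{lem:twisted-jump-simple} to pass from $(L,R)$ reweighted by $\prod W^\emptyset(|\Delta X_s|)$ to $(L^\emptyset,R^\emptyset)$ (and using that $\phi$ vanishes near the boundary of the quadrant), this equals
\begin{align*}
	\frac{e^{\alpha t}}{w_\kappa(l,r)\,W^B(l+r)}\,\E_{(l,r)}\!\left(\phi(L^\emptyset_{t\wedge\tau^\emptyset_0},R^\emptyset_{t\wedge\tau^\emptyset_0})\right),\qquad \alpha=\int(\mu_L+\mu_R)(dh)\,(W^\emptyset(|h|)-1).
\end{align*}
Applying the martingale of Lemma \ref{lem:twisted-jump-simple} with $\phi$ and optional stopping at $t\wedge\tau^\emptyset_0$ gives $\E_{(l,r)}(\phi(L^\emptyset_{t\wedge\tau^\emptyset_0},R^\emptyset_{t\wedge\tau^\emptyset_0}))=\phi(l,r)+\E_{(l,r)}(\int_0^t\mathcal{L}^\emptyset_{\kappa,\beta}\phi(L^\emptyset_s,R^\emptyset_s)\,1(s<\tau^\emptyset_0)\,ds)$. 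Since $\phi(l,r)=w_\kappa(l,r)W^B(l+r)f(l,r,B)$, subtracting $f(l,r,B)$, dividing by $t$ and letting $t\downarrow 0$ (using right-continuity of $(L^\emptyset,R^\emptyset)$ at the interior point $(l,r)$, boundedness and continuity of $\mathcal{L}^\emptyset_{\kappa,\beta}\phi$, and $\tfrac1t(e^{\alpha t}-1)\to\alpha$) produces exactly the first two summands of $\mathcal{G}^W_{\kappa,\beta}f(l,r,B)$ — note that the $\tfrac1t(e^{\alpha t}-1)$ term is what generates the summand $f(l,r,B)\int(\mu_L+\mu_R)(dh)(W^\emptyset(|h|)-1)$.

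\textbf{The term with one killing, and the remainder.} The contribution of $q$ having exactly one jump in $[0,t]$, say $B\rightsquigarrow C$ with $C\subsetneq B$, is $\tfrac{1}{W^B(l+r)}\sum_{C\subsetneq B}\E_{(l,r)}\big(\sum_{s\le t\colon\Delta X'_s\neq0}e^{\sigma_{B\setminus C}1(\Delta X'_s>0)}W^{B\setminus C}(|\Delta X'_s|)\,f(L'_t,R'_t,C)\,W^C(X'_t)\prod_{s'\le t,\,s'\neq s}W^\emptyset(|\Delta X'_{s'}|)\big)$. I would pull $(L',R')$ back to $(L,R)$ via the $w_\kappa$-$h$-transform \eqref{eq:compensated-weighting} and apply Palm's formula (Lemma \ref{lem:palm-abstract}) to the Poisson point process of jumps of $L$ (resp.\ $R$): inserting an atom of size $h$ at time $s$ shifts the path by $h\,1_{[s,\infty)}$, so the sum over $s$ becomes $\int_0^t ds\int\mu_L(dh)(\cdots)$, and similarly for $R$. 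Letting $t\downarrow0$ (so $s\downarrow0$ too, $(L_\cdot,R_\cdot)$ converges uniformly on $[0,t]$ to $(l,r)$, $\prod W^\emptyset\to1$, and the survival constraint reduces to $l+h>0$, automatic on the support of $f(\cdot,\cdot,C)$), and dividing by $t$, one obtains precisely the last two summands of $\mathcal{G}^W_{\kappa,\beta}f(l,r,B)$; in particular the factor $w_\kappa(l+h,r)/w_\kappa(l,r)$ in the generator is the limit of the Radon–Nikodym ratio $w_\kappa(L_t+h,R_t)/w_\kappa(l,r)$ coming from the $h$-transform. Finally, the contribution of $q$ with at least two jumps in $[0,t]$ is $O(t^2)=o(t)$, and only the constant-$q$ and single-jump pieces survive.

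\textbf{Main obstacles.} Two points require real work. First, interchanging $t\downarrow0$ with the integrals $\int_0^t ds$ and $\int\mu_{L/R}(dh)$ in the single-killing term, and bounding both the $\ge2$-killing remainder and the deviation of $(L',R')$ from its starting point, rests on quantitative oscillation estimates for the reweighted stable process $(L',R')$ near time $0$ (the ``estimate for how much the process can oscillate close to its starting point'' announced in the introduction of this section), together with the integrability $\int_0^1 W^{B\setminus C}(h)\,h^{-1-4/\kappa}\,dh<\infty$ for every $\emptyset\neq B\setminus C\subseteq A$; the latter is also exactly what makes the integral terms in $\mathcal{G}^W_{\kappa,\beta}f$ absolutely convergent, hence well-defined. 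This integrability I would deduce from the assumed normalization: the sub-sum over those $Q\in\Pi(X',B)$ that perform the killing $B\rightsquigarrow C$ at one of the jumps of $X'$ is, by non-negativity and Palm's formula (Lemma \ref{lem:msw-palm-result}), bounded below by a positive multiple of $\int_0^1 W^{B\setminus C}(h)h^{-1-4/\kappa}\,dh$ and above by $W^B(l+r)<\infty$. Second, the bookkeeping in the $h$-transform/Palm step — tracking how inserting a jump of size $h$ modifies both the Radon–Nikodym factor and the survival event $\{t<\tau_0\}$ — must be carried out carefully (this is precisely the role of the auxiliary functions $a^t_s$ in Lemma \ref{lem:msw-palm-result}). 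Everything else is a routine dominated-convergence argument, using that $\mathcal{L}^\emptyset_{\kappa,\beta}\phi$ and $\mathcal{G}^W_{\kappa,\beta}f$ are bounded.
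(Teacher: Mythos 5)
Your proposal is correct and follows the same route as the paper: the probabilistic preliminaries (semigroup property, càdlàg version, strong Markov property via continuity in $(l,r)$) are handled exactly as in Lemma~\ref{lem:smp-fragmentation}, the expansion of $\E_{(l,r,B)}(f(\bm{Y}^W_t))$ by the number of killings of $P^W$ on $[0,t]$ corresponds to the $k=0,1,\ge 2$ split of Lemma~\ref{lem:weighted-levy-palmstep}, the no-killing piece is treated with the martingale of Lemma~\ref{lem:twisted-jump-simple} and optional stopping at $t\wedge\tau^\emptyset_0$, the single-killing piece uses the $h$-transform plus Palm's formula, and you also derive the needed integrability $\int_0^1 W^{B\setminus C}(h)h^{-1-4/\kappa}\,dh<\infty$ from the normalization in exactly the way Lemma~\ref{lem:generator-finite} does.

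The one ingredient you flag but leave imprecise is the ``oscillation estimate,'' and I want to warn you that the form it actually takes is somewhat more specific than a modulus-of-continuity bound on $(L',R')$. The paper's Lemma~\ref{lem:generator-truncation} states: for every $\epsilon>0$ and every $M$ large enough to contain the support of $f$, there is $M'>M$ such that
\[
\P_{(l,r,B)}\bigl(L^W([0,t])\not\subseteq(1/M',M'),\;(L^W,R^W)_t\in(1/M,M)^2\bigr)\le\epsilon\,t\qquad\text{for all }t\ge 0\,,
\]
uniformly in $t$. This is used to insert the truncation indicator $1(E^{M'}_t(L^W,R^W))$ inside the expectation before passing to the limit $t\downarrow 0$, at cost $\le\epsilon\,t\,\|f\|_\infty$. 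The truncation is genuinely needed, not just convenient: the hypothesis \eqref{eq:conditions-simple} controls only the local variation of $W^C$, not its global size, so without the truncation the measures $\mu^{C_i}$ in the iterated Palm formula~\eqref{eq:iterated-palm-simple} need not have finite mass — the support of $f$ constrains the terminal position but not the individual jump sizes or the intermediate positions of the walk. For that reason your statement that ``$(L_\cdot,R_\cdot)$ converges uniformly on $[0,t]$ to $(l,r)$'' cannot be used to dominate the single-killing integrand, and the $O(t^2)$ bound for $\ge 2$ killings is not automatic; both become routine once the truncation gives deterministic bounds on all factors inside the expectation. You put your finger on the right spot — this is where Lemma~\ref{lem:generator-truncation} enters — but be aware that the lemma is a conditional escape-probability estimate (escape from $(1/M',M')$ conditioned to return to $(1/M,M)^2$), and its proof is a non-trivial Markov-chain argument using stable scaling and \eqref{eq:conditions-simple}, not a direct pathwise continuity estimate for $(L',R')$.
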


The proof is a quite technical and we will need some preparatory lemmas. The first one given below is the simple ingredient, namely a Palm's formula result.

\begin{lemma}
	\label{lem:weighted-levy-palmstep}
	Let $\bm{Y}^W$ be as in Theorem \ref{thm:weighted-levy}. Consider $f\colon (0,\infty)^2\times \mathcal{P}(A)\to [0,\infty]$ measurable and fix $M'\in [0,\infty]$. For $B\subseteq A$, $l,r>0$ and $t\ge 0$ define
	\begin{align*}
		E^{M'}_t(u,v)&:=\{u([0,t]),v([0,t])\subseteq (1/M',M') \}\;,\\
		a^t_{(l,r)}(B,C,C_0,\dots,C_{k-1})&:=\E_{(l,r)}\Biggl(\, \prod_{s<t} W^\emptyset(|\Delta X'_s|)\sum_{t_0<\cdots<t_{k-1}<\,t} \prod_{i=0}^{k-1} e^{\sigma_{C_i}1(\Delta X'_{t_i}>0)}\frac{W^{C_i}(|\Delta X'_{t_i}|)}{W^\emptyset(|\Delta X'_{t_i}|)}\\
		&\qquad\qquad\qquad\cdot W^C(X'_t)f(L'_t,R'_t,C);E^{M'}_t(L',R')\Biggr)\;,\\
		\mu^U_{(l,r)}(dx,dy) &:= e^{\sigma_U 1(x+y>0)}\,\frac{W^U(|x+y|)}{W^\emptyset(|x+y|)}\,\left(\mu_L(dx)\delta_0(dy)  +\delta_0(dx)\mu_R(dy)\right)\;,\\
		\alpha &= \int (\mu_L+\mu_R)(dh)(W^\emptyset(|h|)-1)
	\end{align*}
	when $u,v\in D([0,\infty))$, $C\subseteq B$ and $B\setminus C = C_0\cup \cdots \cup C_{k-1}$ is a union of disjoint sets. With these definitions we can write
	\begin{align*}
		\E_{(l,r,B)}(f(\bm{Y}^W_t);E^{M'}_t(L^W,R^W)) &=\frac{1}{W^B(l+r)}\,\sum a^t_{(l,r)}(B,C,C_0,\dots,C_{k-1})
	\end{align*}
	where the sum ranges over all $C\subseteq B$ and all (ordered) partitions $B\setminus C=C_0\cup\cdots \cup C_{k-1}$ into disjoint non-empty sets. Moreover $a^t_{(l,r)}(B,C,C_0,\dots,C_{k-1})$ equals
	\begin{align}
		\label{eq:iterated-palm-simple}
		\begin{split}
		&\frac{e^{\alpha t}}{w_\kappa(l,r)}\int_{0<t_0<\cdots<t_{k-1}<t}dt_0 \cdots dt_{k-1}\,\\
		&\quad  \E_{(l,r)}\Biggl( \int_{(\R^2)^k}\prod_{i=1}^{k-1} \mu^{C_i}_{(L^\emptyset_{t_i}+x_0+\cdots+x_{i-1},R^\emptyset_{t_i}+y_0+\cdots+y_{i-1})}(dx_i,dy_i)\\
		&\qquad\qquad\quad \cdot w_\kappa(L^\emptyset_{t}+x_0+\cdots+x_{k-1},R^\emptyset_{t}+y_0+\cdots+y_{k-1})\\
		&\qquad\qquad\quad \cdot f(L^\emptyset_{t}+x_0+\cdots+x_{k-1},R^\emptyset_{t}+y_0+\cdots+y_{k-1},C)\\
		&\qquad\qquad\quad \cdot W^C(L^\emptyset_{t}+x_0+\cdots+x_{k-1}+R^\emptyset_{t}+y_0+\cdots+y_{k-1})\\
		&\qquad\qquad\quad \cdot 1\left(E^{M'}_t\left(L^\emptyset + \sum_{i=0}^{k-1} x_i 1_{[t_i,\infty)}, R^\emptyset + \sum_{i=0}^{k-1} y_i 1_{[t_i,\infty)}\right)\right) \Biggr)\;.
		\end{split}
	\end{align}
\end{lemma}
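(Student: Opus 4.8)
\emph{Overview.} The plan is to prove the two displayed identities in turn: first the combinatorial decomposition of $\E_{(l,r,B)}(f(\bm{Y}^W_t);E^{M'}_t(L^W,R^W))$ as a sum of the quantities $a^t_{(l,r)}(B,C,C_0,\dots,C_{k-1})$, and then the iterated Palm representation \eqref{eq:iterated-palm-simple} of each of these. Since $f\ge 0$, the $W^\bullet$'s are strictly positive and $w_\kappa>0$, all integrands below are nonnegative, so every interchange of sums and integrals (Tonelli, Palm's formula for $[0,\infty]$-valued integrands, and the monotone-convergence argument treating $M'=\infty$ as the limit of $M'<\infty$, via $E^{M'}_t\uparrow\{t<\zeta'\}$) is automatic and I will not comment on it again.

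\emph{The decomposition.} I apply \eqref{eq:weighted-levy} with $g(L^W,R^W,P^W)=f(L^W_t,R^W_t,P^W_t)\,1(E^{M'}_t(L^W,R^W))$; the right side becomes $\tfrac{1}{W^B(l+r)}\E_{(l,r)}\big(\sum_{Q\in\Pi(X',B)}f(L'_t,R'_t,Q_t)\,1(E^{M'}_t(L',R'))\prod_{s<\zeta':\Delta X'_s\neq 0}e^{\sigma_{\Delta Q_s}1(\Delta X'_s>0)}W^{\Delta Q_s}(|\Delta X'_s|)\big)$. A given $Q\in\Pi(X',B)$ is encoded by its finite set of decrease times $s_0<s_1<\cdots$ (each a jump time of $X'$) together with the decrements $\Delta Q_{s_j}$, which form an ordered partition of $B$ into nonempty blocks; I write $C_0,\dots,C_{k-1}$ for the blocks removed before time $t$, at times $t_0<\cdots<t_{k-1}<t$, and $C=Q_t=B\setminus(C_0\cup\cdots\cup C_{k-1})$ for the set still present at time $t$ (a.s.\ $t$ is not a jump time of $X'$). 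Since $\Delta Q_s=\emptyset$ contributes the factor $W^\emptyset(|\Delta X'_s|)$, the product over $s<\zeta'$ factorises as $\prod_{s<\zeta'}W^\emptyset(|\Delta X'_s|)$ times $\prod_i\tfrac{e^{\sigma_{C_i}1(\Delta X'_{t_i}>0)}W^{C_i}(|\Delta X'_{t_i}|)}{W^\emptyset(|\Delta X'_{t_i}|)}$ times the analogous product over the decrease times lying in $[t,\zeta')$. Now I condition on $\mathcal{F}_t$ and use the strong Markov property of $(L',R')$ at time $t$: the sum over ordered partitions of $C$ removed after time $t$, weighted by $\prod_{t\le s<\zeta'}W^\emptyset(|\Delta X'_s|)$ and the corresponding $\tfrac{e^{\sigma}W^\bullet}{W^\emptyset}$-factors, is exactly the right side of \eqref{eq:weighted-levy} with $g\equiv 1$, with $B$ replaced by $C$, and with starting point $(L'_t,R'_t)$; by the normalisation hypothesis in Theorem \ref{thm:weighted-levy} this equals $W^C(X'_t)$. (On $E^{M'}_t$ with $M'<\infty$ one has $t<\zeta'$, so the conditioning is legitimate; the case $M'=\infty$ follows by letting $M'\uparrow\infty$.) Splitting $\prod_{s<\zeta'}W^\emptyset=\prod_{s<t}W^\emptyset\cdot\prod_{t\le s<\zeta'}W^\emptyset$ and summing over $C$ and over the ordered partitions $B\setminus C=C_0\cup\cdots\cup C_{k-1}$ into nonempty blocks then yields precisely $\tfrac{1}{W^B(l+r)}\sum a^t_{(l,r)}(B,C,C_0,\dots,C_{k-1})$.

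\emph{The iterated Palm formula.} Because $(L',R')$ is not a Lévy process, I first pass to the Lévy process $(L,R)$: by \eqref{eq:compensated-weighting}, and using $E^{M'}_t\subseteq\{t<\zeta'\}$, the $(L',R')$-expectation defining $a^t_{(l,r)}$ equals $\tfrac{1}{w_\kappa(l,r)}$ times the $(L,R)$-expectation of $w_\kappa(L_t,R_t)\prod_{s<t}W^\emptyset(|\Delta X_s|)$ times the same sum-over-jumps integrand (now with $X,L,R$ in place of $X',L',R'$) restricted to $\{t<\tau_0\}$. Next I absorb $\prod_{s<t}W^\emptyset(|\Delta X_s|)$: by Lemma \ref{lem:twisted-jump-simple} (an instance of Lemma \ref{lem:ppp-reweighting} applied to the jump point measures of $L$ and of $R$ on $[0,t)$) this product turns into the factor $e^{\alpha t}$ and replaces $(L,R)|_{[0,t]}$ by $(L^\emptyset,R^\emptyset)|_{[0,t]}$, whose jumps form a Poisson point process on $[0,t)\times\R$ of intensity $ds\,W^\emptyset(|h|)(\mu_L+\mu_R)(dh)$ (sorted according to which coordinate jumps). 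Finally I apply the $k$-fold Palm (Mecke) equation to this Poisson point process — equivalently, iterate $k$ times the single-jump identity in the first part of Lemma \ref{lem:msw-palm-result} — inserting one point $(t_i,h_i)$ at each of the decrease times: the sum $\sum_{t_0<\cdots<t_{k-1}<t}$ becomes $\int_{0<t_0<\cdots<t_{k-1}<t}dt_0\cdots dt_{k-1}$, each inserted point contributes an integration over its size against $W^\emptyset(|h_i|)\mu_L(dh_i)$ or $W^\emptyset(|h_i|)\mu_R(dh_i)$, which together with the accompanying factor $e^{\sigma_{C_i}1(h_i>0)}\tfrac{W^{C_i}(|h_i|)}{W^\emptyset(|h_i|)}$ becomes the measure $\mu^{C_i}_{(\,\cdot\,)}(dx_i,dy_i)$ in the parametrisation $(x_i,y_i)=(h_i,0)$ resp.\ $(0,h_i)$, and the path appearing inside $w_\kappa$, $W^C$, $f$ and inside the indicator of $E^{M'}_t$ is replaced by $\big(L^\emptyset+\sum_i x_i1_{[t_i,\infty)},\,R^\emptyset+\sum_i y_i1_{[t_i,\infty)}\big)$, whose value at time $t$ is $\big(L^\emptyset_t+\sum_i x_i,\,R^\emptyset_t+\sum_i y_i\big)$ since all $t_i<t$. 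Reading off the result gives \eqref{eq:iterated-palm-simple}.

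\emph{Main obstacle.} The delicate step is the multivariate Palm computation: the sum over ordered $k$-tuples of jump times and the path-functional inside the expectation both depend on the same jump configuration, so all $k$ marks must be inserted simultaneously via the iterated Mecke equation rather than one at a time, the ordering constraint $t_0<\cdots<t_{k-1}$ must be carried through, and the precise weight attached to each inserted jump — how the Poisson intensity $W^\emptyset(|h_i|)\mu_{L/R}(dh_i)$, the factor $\tfrac{W^{C_i}}{W^\emptyset}$, and the contribution of $\prod_{s<t}W^\emptyset(|\Delta X_s|)$ evaluated at the augmented configuration combine into the measure $\mu^{C_i}_{(\,\cdot\,)}$ — must be tracked carefully. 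A secondary but genuine technical point is verifying that in the decomposition the strong-Markov/normalisation argument is applied on the correct event, so that the post-$t$ tail of $Q$ contributes exactly $W^C(X'_t)$.
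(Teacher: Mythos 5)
Your proposal follows exactly the paper's (tersely stated) route: for the first identity, unwind \eqref{eq:weighted-levy}, encode $Q\in\Pi(X',B)$ by its decrement times and blocks, factor out $\prod_{s<\zeta'}W^\emptyset$, and collapse the post-$t$ tail via the strong Markov property plus the normalisation hypothesis to $W^C(X'_t)$; for the second, pass to $(L,R)$ via \eqref{eq:compensated-weighting}, then to $(L^\emptyset,R^\emptyset)$ via Lemma \ref{lem:twisted-jump-simple}, then apply the $k$-fold Mecke/Palm formula. This is correct and is what the paper intends.

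One substantive remark is in order. Your Palm step yields, per inserted jump, the measure
\[
e^{\sigma_{C_i}1(x_i+y_i>0)}\,W^{C_i}(|x_i+y_i|)\bigl(\mu_L(dx_i)\delta_0(dy_i)+\delta_0(dx_i)\mu_R(dy_i)\bigr),
\]
obtained as the product of the $(L^\emptyset,R^\emptyset)$ jump intensity $W^\emptyset(|h_i|)\mu_{L/R}(dh_i)$ with the residual factor $e^{\sigma_{C_i}}W^{C_i}(|h_i|)/W^\emptyset(|h_i|)$ — the two $W^\emptyset$'s cancel, as you note. You then call this $\mu^{C_i}$, but the paper's printed $\mu^{U}_{(l,r)}$ carries an extra $1/W^\emptyset(|x+y|)$, which your cancellation does not produce. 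Your version is the one that is actually consistent with the later use of \eqref{eq:iterated-palm-simple} in the proof of Theorem \ref{thm:weighted-levy} (the $k=1$ term of the generator has $W^{B\setminus C}(|h|)\mu_L(dh)$, not $W^{B\setminus C}(|h|)\mu_L(dh)/W^\emptyset(|h|)$), so the extra $1/W^\emptyset$ in the paper's definition of $\mu^U$ is evidently a typo; the same applies to the product index $\prod_{i=1}^{k-1}$ in \eqref{eq:iterated-palm-simple}, which must be $\prod_{i=0}^{k-1}$ (otherwise $(dx_0,dy_0)$ is never integrated). In a final write-up you should flag this rather than silently identify your (correct) measure with the paper's $\mu^{C_i}$.
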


\begin{proof}
	The first claim is easy to see from the definitions and the second claim follows by applying Lemma \ref{lem:twisted-jump-simple} and then the first part of Lemma \ref{lem:msw-palm-result} several times.
\end{proof}

\begin{lemma}
	\label{lem:smp-fragmentation}
	Let $\bm{Y}^W$ be as in Theorem \ref{thm:weighted-levy}. Then $\bm{Y}^W$ satisfies the strong Markov property.
\end{lemma}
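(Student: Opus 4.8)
\emph{Overall strategy.} The plan is to deduce the simple Markov property of $\bm{Y}^W$ directly from the defining identity \eqref{eq:weighted-levy} together with the strong Markov property of $(L',R')$ from Proposition \ref{prop:simple-msw-process}, and then to upgrade it to the strong Markov property along the same lines as in the proof of Proposition \ref{prop:simple-msw-process}: exhibit a càdlàg version of $\bm{Y}^W$, check that its transition semigroup $(P_t)$ maps bounded continuous functions to bounded continuous functions, and apply the standard dyadic stopping-time approximation (as in \cite[Theorem 19.17]{kallenberg}). We use only the definition \eqref{eq:weighted-levy}, together with the normalization that its right-hand side equals $1$ when $g\equiv 1$, and not the conclusions of Theorem \ref{thm:weighted-levy}.

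\emph{Simple Markov property.} The combinatorial input is a decomposition of $\Pi(u,B)$: for càdlàg $u$ and $s\ge 0$, restriction and time-shift give a bijection
\begin{align*}
	\Pi(u,B)\ \cong\ \bigsqcup_{C\subseteq B}\Pi^{[0,s]}(u,B\to C)\times\Pi(\theta_s u,C),
\end{align*}
where $\theta_s$ is the time shift and $\Pi^{[0,s]}(u,B\to C)$ is the set of non-increasing càdlàg paths on $[0,s]$ from $B$ to $C$ whose jumps occur among those of $u|_{[0,s]}$; moreover, writing $(\cdots)$ for the per-jump weight in \eqref{eq:weighted-levy}, the product $\prod_{v}(\cdots)$ factorizes as a factor depending only on $(u|_{[0,s]},Q|_{[0,s]})$ times a factor depending only on $(\theta_s u,\theta_s Q)$ (a fixed time $s$ is almost surely not a jump time of $X'$, so splitting at $s$ is harmless). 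Substituting this into \eqref{eq:weighted-levy}, conditioning on $(L',R')|_{[0,s]}$, and using the Markov property of $(L',R')$, the ``after time $s$'' sum over $\Pi(\theta_s X',C)$ is evaluated by re-applying \eqref{eq:weighted-levy} to $\theta_s(L',R')$; in particular for $g\equiv 1$ its conditional expectation equals $W^C(X'_s)$. Comparing the two sides yields, for all $s\ge 0$ and bounded measurable $g_1,g_2$,
\begin{align*}
	\E_{(l,r,B)}\!\big(g_1(\bm{Y}^W|_{[0,s]})\,g_2(\theta_s\bm{Y}^W)\big)=\E_{(l,r,B)}\!\big(g_1(\bm{Y}^W|_{[0,s]})\,\E_{\bm{Y}^W_s}(g_2(\bm{Y}^W))\big),
\end{align*}
which is the simple Markov property. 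The same computation identifies the transition kernel: for bounded measurable $f$,
\begin{align*}
	\E_{(l,r,B)}(f(\bm{Y}^W_t))=\frac{1}{W^B(l+r)}\,\E_{(l,r)}\!\Big(\sum_{C\subseteq B}W^C(X'_t)\,f(L'_t,R'_t,C)\,\zeta^{[0,t]}_C\Big),
\end{align*}
where the outer expectation is over $(L',R')$ started at $(l,r)$ and $\zeta^{[0,t]}_C:=\sum_{Q\in\Pi^{[0,t]}(X',B\to C)}\prod_{v\le t}(\cdots)$ is a functional of $(L',R')|_{[0,t]}$ with $\E_{(l,r)}(\sum_C W^C(X'_t)\zeta^{[0,t]}_C)=W^B(l+r)<\infty$.

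\emph{Càdlàg version and strong Markov property.} By \eqref{eq:weighted-levy} the law of $(L^W,R^W)$ is that of $(L',R')$ reweighted by a non-negative, mean-one functional (the total mass of the weight on $\Pi(X',B)$, divided by $W^B(l+r)$), which is absolutely continuous; hence $(L^W,R^W)$ inherits a càdlàg version from $(L',R')$, and $P^W$ is a non-increasing $\mathcal{P}(A)$-valued process with at most $\#A$ jumps, all occurring among jump times of $X^W$, hence càdlàg by construction. For the continuity of $P_tf$ with $f\in C_c((0,\infty)^2\times\mathcal{P}(A))$: continuity in the discrete variable $B$ is automatic, and for continuity in $(l,r)\in(0,\infty)^2$ I would push the displayed formula for $\E_{(l,r,B)}(f(\bm{Y}^W_t))$ back to the stable process $(L,R)$ via \eqref{eq:compensated-weighting} and then use stable scaling to rewrite it as an expectation over a fixed-horizon process started from $(0,0)$, with $(l,r)$ entering only through $w_\kappa$, through the arguments of $f$, $W^C$ and $\zeta^{[0,t]}_C$, and through the positivity-until-time-$t$ indicator; dominated convergence then applies, with majorant $\|f\|_\infty\sum_C W^C(X'_t)\zeta^{[0,t]}_C$ (integrable by the above), provided one knows that $\zeta^{[0,t]}_C$ is continuous in the driving path at almost every path and that the boundary of the positivity event has measure zero. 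Continuity at the absorbing point $(0,0,\emptyset)$ is handled as in Proposition \ref{prop:simple-msw-process}, since the process started near $(0,0,\emptyset)$ is absorbed with probability tending to $1$. Finally, approximating a stopping time $T$ by $T_n=2^{-n}\lceil 2^nT\rceil$, applying the simple Markov property at the deterministic times $k2^{-n}$, and passing to the limit using right-continuity of the paths together with the continuity of $P_tf$ yields the strong Markov property.

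\emph{Main obstacle.} I expect the delicate step to be the Feller-type continuity of $(P_t)$, and within it the control of the path functional $\zeta^{[0,t]}_C$: one needs it to be almost surely continuous, for the Skorokhod topology, under the perturbation of the driving path produced by moving the starting point, while retaining an integrable majorant. The hypotheses \eqref{eq:conditions-simple} — the quadratic bound $1-W^\emptyset(h)\lesssim h^2$, the monotonicity of $W^\emptyset$, and the doubling comparison $W^B(x)\lesssim W^B(y)$ for $y\le x\le 2y$ — together with the normalization $\E_{(l,r)}(\sum_C W^C(X'_t)\zeta^{[0,t]}_C)=W^B(l+r)<\infty$ are exactly what should make this estimate work; the remainder is routine bookkeeping with \eqref{eq:weighted-levy} and the established strong Markov property of $(L',R')$.
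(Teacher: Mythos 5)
Your overall strategy matches the paper's: deduce the weak Markov property directly from the defining identity \eqref{eq:weighted-levy} and the Markov property of $(L',R')$, establish continuity of $(l,r)\mapsto\E_{(l,r,B)}(f(\bm{Y}^W_t))$ for nice $f$, and then invoke the standard Feller-type upgrading to strong Markov as in \cite[Theorem 19.17]{kallenberg}.

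The one place where the paper and your sketch diverge is in how the continuity of the transition semigroup is established, and this is exactly the step you flag as delicate. You attempt to analyze the path functional $\zeta^{[0,t]}_C$ (a sum over the set $\Pi^{[0,t]}$ of partition paths) directly, and acknowledge that you would need a.s.\ continuity of this functional under perturbation of the driving path together with an integrable majorant. The paper avoids this difficulty by appealing to Lemma~\ref{lem:weighted-levy-palmstep}, the iterated Palm formula, which converts the sum over $Q\in\Pi(X',B)$ into the explicit nested integral \eqref{eq:iterated-palm-simple} over jump times and jump sizes. In that representation, moving the starting point $(l,r)$ affects only the integrand via $w_\kappa$, the weight functions $W^C$ evaluated at shifted arguments, and the positivity indicator; stable scaling and dominated convergence then apply directly, with the doubling bound in \eqref{eq:conditions-simple} and the finiteness $\E_{(l,r)}(\cdot)=W^B(l+r)<\infty$ providing the majorant — no separate continuity analysis of $\zeta^{[0,t]}_C$ is needed. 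So you have correctly identified both the route and its one non-routine step; the missing ingredient is to pass through the Palm representation rather than the raw sum over $\Pi$, which is what makes the continuity argument close cleanly.
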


\begin{proof}
	The weak Markov property of $\bm{Y}^W$ is a simple consequence of the definition and the weak Markov property of $(L',R')$. Consider now $f\colon (0,\infty)^2\times \mathcal{P}(A)\to \R$ with $f(\cdot,B)\in C^\infty_c((0,\infty)^2)$ for all $B\subseteq A$. Using the first part of Lemma \ref{lem:weighted-levy-palmstep} with $M'=\infty$, the definition of $(L',R')$ and stable scaling of $(L,R)$ it is not difficult to see  that
	\begin{align*}
		(l,r)\mapsto \E_{(l,r,B)}(f(\bm{Y}^W_t))
	\end{align*}
	is continuous from $(0,\infty)^2$ to $\R$ for each $B\subseteq A$ (here one needs to make use of the continuity of $W^C$ and the fact that $f(\cdot,\cdot,C)$ has compact support for each $C\subseteq B$). Using standard ideas from the theory of Feller processes (see e.g.\ \cite[Theorem 19.17]{kallenberg}) it is then classical to deduce that the process $\bm{Y}^W$ has the strong Markov property.
\end{proof}

A key insight which allows analytical tools to be applied in the proof of Theorem \ref{thm:weighted-levy} without having detailed information about the functions $W^B$ near $0$ and $\infty$ for $B\subseteq A$ is established with the following lemma. The main idea of the argument applies to general Markov chains and requires only some continuity properties of the process in the starting point; in our case we get these via stable scaling and the properties \eqref{eq:conditions-simple} of the functions $W^B$.

\begin{lemma}
	\label{lem:generator-truncation}
	Let $\bm{Y}^W$ be as in Theorem \ref{thm:weighted-levy}. Fix $l,r>0$, $B\subseteq A$, $\epsilon>0$ and $M\in (0,\infty)$. Then there exists $\infty>M'>M$ such that
	\begin{align}
		\P_{(l,r,B)}( L^W([0,t])\not\subseteq (1/M',M') , (L^W,R^W)_t\in (1/M,M)^2) \le \epsilon\cdot t\quad\text{for all $t\ge 0$}\;.
		\label{eq:analytical}
	\end{align}
	By symmetry the analogous expression with $L^W$ and $R^W$ interchanged also holds.
\end{lemma}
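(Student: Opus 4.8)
The plan is to reduce the statement, via the strong Markov property of $\bm Y^W$ (Lemma \ref{lem:smp-fragmentation}) and a bounded-time-horizon argument, to two ``non-return'' estimates for the coordinate $L^W$, which can then be transferred to the underlying stable Lévy process $(L,R)$ and settled by stable scaling. First I would cut off a bounded time horizon: since $\bm Y^W$ is absorbed at $(0,0,\emptyset)$ with $\zeta^W<\infty$ almost surely — because $\zeta'<\infty$ a.s., the latter since $\E_{(l,r)}((X'_t)^{1+4/\kappa})=(l+r)^{1+4/\kappa}\P_{(l,r)}(t<\tau_0)\to0$ as $t\to\infty$ as in the proof of Proposition \ref{prop:simple-msw-process}, so that \eqref{eq:weighted-levy} with $g\equiv1$ and monotone convergence give $\P_{(l,r,B)}(t<\zeta^W)\to0$ — we have $\P_{(l,r,B)}((L^W,R^W)_t\in(1/M,M)^2)\le\P_{(l,r,B)}(t<\zeta^W)$, which is non-increasing in $t$ and tends to $0$. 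Fixing $T<\infty$ with $\P_{(l,r,B)}(T<\zeta^W)\le\epsilon T$, the left side of \eqref{eq:analytical} is then at most $\epsilon t$ for every $t\ge T$, irrespective of $M'$; so it remains to treat $t\in[0,T]$.

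Next I would enlarge $M'$ so that $1/M'<l,r<M'$ and $M'\ge2M$, and split the event in \eqref{eq:analytical} into
\[
\big(\{\sup\nolimits_{s\le t}L^W_s\ge M'\}\cup\{\inf\nolimits_{s\le t}L^W_s\le1/M'\}\big)\cap\{(L^W,R^W)_t\in(1/M,M)^2\}.
\]
For each of the two pieces I would apply \eqref{eq:weighted-levy} with the corresponding indicator for $(L',R')$, using $\{\sup_{s\le t}L'_s\ge M'\}\subseteq\{\sup_{s\le t}X'_s\ge M'\}$ (as $R'\ge0$) and $\{(L'_t,R'_t)\in(1/M,M)^2\}\subseteq\{X'_t\in(2/M,2M)\}$, and then condition the inner sum on $X'|_{[0,t]}$: each $Q\in\Pi(X',B)$ splits into its restriction to $[0,t]$ (shedding $B\setminus C$ with $C:=Q_t$) and to $[t,\zeta')$ (shedding $C$), and by the Markov property of $(L',R')$ together with the normalisation built into \eqref{eq:weighted-levy} the second part contributes in conditional expectation exactly $W^C(X'_t)$, which on $\{X'_t\in(2/M,2M)\}$ is at most $\max_{C\subseteq B}\sup_{[2/M,2M]}W^C<\infty$; the jump factors coming from non-shedding jumps are $\le W^\emptyset\le1$. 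Passing from $(L',R')$ back to $(L,R)$ via \eqref{eq:compensated-weighting} — where $w_\kappa(L_t,R_t)=X_t^{-1-4/\kappa}$ is bounded by $(2/M)^{-1-4/\kappa}$ on $\{X_t\in(2/M,2M)\}$ — and expanding the remaining (at most $\#B$) shedding weights with the Palm formula of Lemma \ref{lem:msw-palm-result} (equivalently Lemma \ref{lem:weighted-levy-palmstep}) reduces everything to estimates for the Lévy process $(L,R)$ with at most $\#B$ prescribed inserted jumps.

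Two Lévy estimates then finish the argument. For the first piece one needs $\P_l(\sup_{s\le t}L_s\ge M')$, which by stable scaling with exponent $4/\kappa$ and the tail bound $\P_0(\sup_{s\le1}L_s\ge R)\lesssim R^{-4/\kappa}$ is $\lesssim(M')^{-4/\kappa}t$ for $M'\ge2l$, uniformly over $t\in[0,T]$; the inserted jumps of size $\le\sqrt{M'}$ are harmless (a single such jump cannot produce the excursion) and contribute a finite factor $\int_0^\infty W^C(h)\,h^{-1-4/\kappa}\,dh$, while those of size $>\sqrt{M'}$ contribute $\lesssim t\int_{\sqrt{M'}}^\infty W^C(h)\,h^{-1-4/\kappa}\,dh\to0$. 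For the second piece the point is that $(L^W,R^W)_t\in(1/M,M)^2$ forces $L^W_t>1/M>0$, so $L^W$ must reach a small \emph{positive} level without being absorbed; in the Lévy picture this is the event that $L$ enters $(0,1/M']$ while remaining positive, and since $L$ does not creep downward it can do so only by a downward jump into a window of width $\lesssim1/M'$, whence probability $\lesssim_{l,T}(M')^{-1}t$ uniformly over $t\in[0,T]$. Collecting the finitely many ($B$-, $M$-, $l$-, $r$-, $T$-dependent) constants and taking $M'$ large yields the bound $\le\epsilon t$ for all $t\in[0,T]$; the version with $L^W$ and $R^W$ interchanged is identical.

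The hard part will be the transfer just described: the weights $W^B$ with $B\neq\emptyset$ only satisfy the local doubling-type bound in \eqref{eq:conditions-simple} and may be unbounded near $0$ and near $\infty$, so they cannot be controlled pointwise over all jump sizes. What makes the argument go through is the interplay of (i) there being at most $\#B$ shedding events on the whole trajectory, (ii) the terminal constraint $(L^W,R^W)_t\in(1/M,M)^2$, which simultaneously bounds the $w_\kappa$-factor and confines the relevant stretch of the path to bounded boundary lengths, and (iii) the normalisation identity (the right side of \eqref{eq:weighted-levy} equals $1$ when $g\equiv1$), which forces $\int_0^\infty W^C(h)\,h^{-1-4/\kappa}\,dh<\infty$; turning this into explicit inequalities is the routine but lengthy part, via the iterated Palm formulas of Lemmas \ref{lem:msw-palm-result}--\ref{lem:weighted-levy-palmstep}.
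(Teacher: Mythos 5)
Your approach is genuinely different from the paper's, and as written it has a concrete gap. The paper's proof produces no explicit Lévy estimates at all: it fixes a nearby starting rectangle $(l_-,l)\times(r,r_+)$, divides the time interval $(0,c)$ into $n$ sub-intervals, uses the strong Markov property of $\bm{Y}^W$ and stable scaling of the underlying Lévy process together with the doubling condition on the $W^B$ in \eqref{eq:conditions-simple} to show that the single probability $\P_{(l_+,r_-,B)}(L^W([0,1])\not\subseteq(1/M'',M''),\ 1<\zeta^W)$ dominates (up to a uniform constant) $n$ copies of the probability in \eqref{eq:analytical} started at $(l,r,B)$ with $t=c/n$, and then simply observes that this single probability vanishes as $M''\to\infty$. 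No rate, no Palm expansion, and no integrability of the $W^B$ against the jump measure is ever needed; this is precisely what makes the lemma hold in the stated generality.

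The key gap in your route is the assertion that $\int_0^\infty W^C(h)\,h^{-1-4/\kappa}\,dh<\infty$ is forced by the normalisation in \eqref{eq:weighted-levy}. In the Palm expansion (Lemma \ref{lem:msw-palm-result}), the jump-size integrand against $\mu_L+\mu_R$ carries the factor $w_\kappa(L'_t+h,R'_t)/w_\kappa(L'_t,R'_t)=\bigl(X'_t/(X'_t+h)\bigr)^{1+4/\kappa}$, which decays like $h^{-1-4/\kappa}$ as $h\to\infty$, as well as the factor $W^\emptyset(X'_t+h)\le 1$ which may itself vanish at infinity. The normalisation therefore only forces the local statement $\int_0^x W^C(h)\,h^{-1-4/\kappa}\,dh<\infty$ for each fixed $x>0$, not the global integral; the paper itself records only the still weaker $\int_0^1 W^B<\infty$ (Lemma \ref{lem:generator-finite}). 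Without the global integral your control on the inserted shedding jumps is unsupported. Independently, the entrance bound $\lesssim(M')^{-1}t$ you invoke for $L$ reaching $(0,1/M']$ while staying positive needs care: the decisive downward jump can originate from positions $L_s\in(1/M',2/M']$, where the jump measure of the window $(L_s-1/M',L_s)$ is not uniformly $O(1/M')$ but of order $\int_{L_s-1/M'}^{L_s}h^{-1-4/\kappa}\,dh$, which blows up as $L_s\downarrow 1/M'$. The paper's soft self-comparison argument is structured precisely so that neither of these issues ever has to be confronted.
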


\begin{proof}
	Fix $\epsilon_0\in(0,1)$ sufficiently small and $M_0$ sufficiently large such that $l/(1+\epsilon_0),l,r,r(1+\epsilon_0) \in (1/M_0,M_0)$ and
	\begin{align*}
		[1/M,M]\subseteq \big((1+\epsilon_0)^{-1}/M_0,r(1-(1+\epsilon_0)^{-2}+M_0)\big)\;.
	\end{align*}
	Let $l_-=l/(1+\epsilon_0)$ and $r_+=r(1+\epsilon_0)$ and fix some $l_+\in (l_-,l)$ and $r_-\in (r,r_+)$. The significance of these conditions will become apparent towards the end of the proof.

	Note that for any $n\ge 1$, $c\in (0,1)$ and any $M'>M_0$ we have the following lower bound (where throughout, we set $M''=2M'$)
	\begin{equation}
	\begin{split}
		&\P_{(l_+,r_-,B)}\left( L^W([0,1])\not\subseteq (1/M'',M'') , 1<\zeta^W \right)\ge \sum_{i=0}^{n-1} \P_{(l_+,r_-,B)}(A_i)\quad\text{where} \\
		&\quad A_i= \{ L^W([0,ci/n])\subseteq (1/M'',M''),L^W([ci/n,c(i+1)/n])\not\subseteq (1/M'',M''),\\
		&\quad\qquad\qquad (L^W,R^W)_{c(i+1)/n}\in (1/M_0,M_0)^2,1<\zeta^W \}\;.
	\end{split}
	\label{eq:split-event}
    \end{equation}
	The idea of the proof is to show that each of the probabilities $\P_{(l_+,r_-,B)}(A_i)$ on the right-hand side of the displayed inequality are (modulo multiplication by a constant) at least as large as the left-hand side of \eqref{eq:analytical} with $t=c/n$. We will prove this by using the Markov property of $(L',R')$ and that the properties of $(L',R')$ do not depend too strongly on its starting point. From this we get that the left-hand side of \eqref{eq:analytical} is (modulo multiplication by a constant) bounded above by the left-hand side of \eqref{eq:split-event} times $t$. Observing that the left-hand side of \eqref{eq:split-event} converges to 0 as $M''\to\infty$ we conclude the proof. In order to carry out this strategy, we first observe that we have the inclusion $A_i'\subseteq A_i$ where
	\begin{align*}
		&A_i'= \{  L^W([0,ci/n])\subseteq (l_-,l), R^W([0,ci/n])\subseteq (r,r_+), P^W_{ci/n}=B,\\ &\qquad\quad L^W([ci/n,c(i+1)/n])\not\subseteq (1/M'',M''), (L^W,R^W)_{c(i+1)/n}\in (1/M_0,M_0)^2,1<\zeta^W \}\;.
	\end{align*}
	The Markov property of $\bm{Y}^W$ as stated in Lemma \ref{lem:smp-fragmentation} at times $i/n$ and $(i+1)/n$ implies that
	\begin{equation}
		\begin{split}
		\P_{(l_+,r_-,B)}(A_i')&\ge \P_{(l_+,r_-,B)} ( L^W([0,ci/n])\subseteq (l_-,l), R^W([0,ci/n])\subseteq (r,r_+), P^W_{ci/n}=B ) \\
		&\quad \cdot \inf_{\substack{l'\in (l_-,l)\\r'\in (r,r_+)}} \P_{(l',r',B)}( L^W([0,c/n])\not\subseteq (1/M'',M''),(L^W,R^W)_{c/n}\in (1/M_0,M_0)^2 )\\
		&\quad \cdot \inf_{\substack{l',r'\in (1/M_0,M_0)\\ C\subseteq B}} \P_{(l',r',C)}(1-c(i+1)/n<\zeta^W)\;.
	\end{split}
	\label{eq2}
	\end{equation}
	From the definitions we can see that the first and third terms in the product above are lower bounded by a positive constant which does not depend on $M''$, $c$, $i$ or $n$. Combining this with \eqref{eq:split-event}, \eqref{eq2}, and the fact that $A'_i\subseteq A_i$ yields that
	\begin{align}
		\label{eq:infimum-comparison-step}
		\begin{split}
		&\frac{1}{n}\,\P_{(l_+,r_-,B)}\left( L^W([0,1])\not\subseteq (1/M'',M'') , 1<\zeta^W \right) \\
		&\qquad\gtrsim \inf_{\substack{l'\in (l_-,l)\\r'\in (r,r_+)}} \P_{(l',r',B)}( L^W([0,c/n])\not\subseteq (1/M'',M''),(L^W,R^W)_{c/n}\in (1/M_0,M_0)^2 )
		\end{split}
	\end{align}
	for all $n\ge 1$ and all $M'>M_0$. It remains to understand the dependence of the terms in the infimum on the right-hand side on $(l',r')$; more precisely we must show that (modulo multiplication by a constant) it is lower bounded by the considered probability with $(l',r')=(l,r)$ and the considered intervals slightly changed.

	For $l'\in (l_-,l]$, $r'\in [r,r_+)$ and $x'=l'+r'$ by Lemma \ref{lem:weighted-levy-palmstep} and by unpacking the definitions we get
	\begin{align}
		\label{eq:huge-upper-bound}
		\begin{split}
		&\P_{(l',r',B)}( L^W([0,c/n])\not\subseteq (1/M'',M''),(L^W,R^W)_{c/n}\in (1/M_0,M_0)^2 ) \\
		&= \frac{1}{W^B(x')(x')^{-1-4/\kappa}}\\
		&\quad \cdot\sum\E_{(0,0)}\Biggl( W^C(x'+X_{c/n})(x'+X_{c/n})^{-1-4/\kappa}\,1(l'+L_{c/n},r'+R_{c/n}\in (1/M_0,M_0))\\
		&\qquad \cdot \sum_{0<t_0<\cdots<t_{k-1}<\,c/n} \prod_{i=0}^{k-1} e^{\sigma_{C_i}1(\Delta X_{t_i}>0)} W^{C_i}(|\Delta X_{t_i}|) \prod_{s\in (0,c/n)\setminus\{t_0,\dots,t_{k-1}\}} W^\emptyset(|\Delta X_s|) \\
		&\qquad \cdot 1\left(l'+\inf_{[0,c/n]}L>0,r'+\inf_{[0,c/n]}R>0,l'+L([0,c/n])\not\subseteq (1/M',M') \right) \Biggr)
		\end{split}
	\end{align}
	where the first sum ranges over all $C\subseteq B$ and all (ordered) partitions $B\setminus C=C_0\cup\cdots \cup C_{k-1}$ into disjoint non-empty sets. We now analyze the dependence on $(l',r')$ for each of the product terms in the expectation The integrand in the expectation is not monotonic in $l'$ but is (up to multiplicative factors) monotonic in $r'$. To deal with the $l'$ dependence, we observe that, by stable scaling when $L_0=R_0=0$,
	\begin{align}
		\label{eq:stable-rescale-law}
		(L,R) \stackrel{d}{=} (l'/l)\cdot ((L,R)_{(l/l')^{4/\kappa}t}\colon t\ge 0) =: (\widetilde{L},\widetilde{R})\;.
	\end{align}
	Also let $\widetilde{X}=\widetilde{L}+\widetilde{R}$ and write $x=l+r$. Since $(L,R)\stackrel{d}{=} (\wt L,\wt R)$, the right-hand side of \eqref{eq:huge-upper-bound} is unchanged if we replace $(L,R)$ by $(\wt L,\wt R)$ and we will do this when bounding it from below. For the first term in the expectation in the display \eqref{eq:huge-upper-bound}, we note that
	\begin{align*}
		&\{ l'+\widetilde{L}_{c/n}\in (1/M_0,M_0),r'+\widetilde{R}_{c/n}\in (1/M_0,M_0) \} \\
		&\supseteq\{ l + L_{c(l/l')^{4/\kappa}/n}\in (l/l')(1/M_0,M_0), r +  R_{c(l/l')^{4/\kappa}/n}\in r-r'l/l'+(l/l') (1/M_0,M_0)\} \\
		&\supseteq\{ l + L_{c(l/l')^{4/\kappa}/n}\in ((1+\epsilon_0)^{-1}/M_0,M_0) ,\\
		&\qquad  r +  R_{c(l/l')^{4/\kappa}/n} \in ((1+\epsilon_0)^{-1}/M_0,r(1-(1+\epsilon_0)^2)+M_0) \}
	\end{align*}
	and remark that $x\mapsto W^C(x)x^{-1-4/\kappa}$ is upper and lower bounded by finite and positive constants on each compact set contained in $(0,\infty)$. For the second term in the expectation in \eqref{eq:huge-upper-bound}, we use \eqref{eq:conditions-simple}, to lower bound
	\begin{align*}
		&\sum_{0<t_0<\cdots<t_{k-1}<\,c/n,} \prod_{i=0}^{k-1} e^{\sigma_{C_i}1(\Delta \widetilde{X}_{t_i}>0)} W^{C_i}(|\Delta \widetilde{X}_{t_i}|) \prod_{
			s\in (0,c/n)\setminus\{t_0,\dots,t_{k-1}\}
		} W^\emptyset(|\Delta \widetilde{X}_s|)\\
		&\gtrsim \sum_{0<t_0<\cdots<t_{k-1}<\,c(l/l')^{4/\kappa}/n} \prod_{i=0}^{k-1} e^{\sigma_{C_i}1(\Delta X_{t_i}>0)} W^{C_i}(|\Delta X_{t_i}|)
		\prod_{
			s\in (0,c(l/l')^{4/\kappa}/n)\setminus\{t_0,\dots,t_{k-1}\}
		} W^\emptyset(|\Delta X_s|)\;.
	\end{align*}
	Note that in the display above, only times where the respective process in question jumps contribute to the sums and products.
	Finally, for the third term in the expectation in \eqref{eq:huge-upper-bound},
	\begin{align*}
		&\{ l'+\inf\nolimits_{[0,c/n]}\widetilde{L}>0,r'+\inf\nolimits_{[0,c/n]}\widetilde{R}>0,l'+\widetilde{L}([0,c/n])\not\subseteq (1/M'',M'') \} \\
		&\supseteq \{ l+\inf\nolimits_{[0,c(l/l')^{4/\kappa}/n]}L>0,r+\inf\nolimits_{[0,c(l/l')^{4/\kappa}/n]}R>0,\\
		&\qquad\quad l+ L([0,c/n])\not\subseteq (1/M'',(1+\epsilon_0)M'') \}.
	\end{align*}
	We now use the three last displays to bound the right-hand side of \eqref{eq:huge-upper-bound} (with $(\wt L,\wt R)$ instead of $(L,R)$) and get
	\begin{align*}
		&\P_{(l',r',B)}( L^W([0,c/n])\not\subseteq (1/M'',M''),(L^W,R^W)_{c/n}\in (1/M_0,M_0)^2 ) \\
		&\qquad\gtrsim  \P_{(l,r,B)}( L^W([0,c/n])\not\subseteq (1/M'',(1+\epsilon_0)M''),(L^W,R^W)_{c(l/l')^{4/\kappa}/n}\in (a,b)^2 )
	\end{align*}
	for all $l'\in (l_-,l)$, $r'\in (r,r_+)$ where $(a,b)=((1+\epsilon_0)^{-1}/M_0,r(1-(1+\epsilon_0)^{-2})+M_0)$. We can further lower bound this using the Markov property to obtain
	\begin{align*}
		&\P_{(l',r',B)}( L^W([0,c/n])\not\subseteq (1/M'',M''),(L^W,R^W)_{c/n}\in (1/M_0,M_0)^2 ) \\
		&\gtrsim \P_{(l,r,B)} (L^W([0,c/n])\not\subseteq (1/M'',(1+\epsilon_0)M''), (L^W,R^W)_{c/n}\in (1/M,M)^2)\\
		&\quad\cdot \inf_{\substack{l'',r''\in (1/M,M)\\C\subseteq B}} \P_{(l'',r'',C)}((L^W,R^W)_{c(l/l')^{4/\kappa}/n-c/n}\in (a,b)^2  )
	\end{align*}
	By construction, $[1/M,M]\subseteq (a,b)$ and one can see from the definitions that the infimum in the display above is bounded below by a positive constant uniformly in $l'\in (l_-,l)$. Combining this with \eqref{eq:infimum-comparison-step} yields
	\begin{align*}
		&\P_{(l,r,B)}(L^W([0,c/n])\not\subseteq (1/M'',(1+\epsilon_0)M''), (L^W,R^W)_{c/n}\in (1/M,M)^2) \\
		&\qquad\lesssim \frac{1}{n} \,\P_{(l_+,r_-,B)}\left( L^W([0,1])\not\subseteq (1/M'',M'') , 1<\zeta^W \right)
	\end{align*}
	for all $n\ge 1$ and $c\in (0,1)$. Using that $\epsilon_0\in(0,1)$ this further gives that for some constant $C_0>0$ and all $t\in(0,1)$ and $M'>M_0$ (recalling that $M''=2M'$),
	\begin{align*}
		&\P_{(l,r,B)}(L^W([0,t])\not\subseteq (1/M',M'), (L^W,R^W)_{t}\in (1/M,M)^2) \\
		&\qquad\leq C_0t \,\P_{(l_+,r_-,B)}\left( L^W([0,1])\not\subseteq (1/M'',M'') , 1<\zeta^W \right).
	\end{align*}
	The lemma now follows from this inequality and
	\begin{align*}
		\label{eq:first-conv-to-zero}
		\P_{(l_+,r_-,B)}\left( L^W([0,1])\not\subseteq (1/M'',M'') , 1<\zeta^W \right) \to 0
	\end{align*}
	as $M''\to \infty$.
\end{proof}

\begin{lemma}
	\label{lem:generator-finite}
	Consider the setting as in Theorem \ref{thm:weighted-levy}. Then $\int_0^1 W^B(x)\,dx<\infty$ for all $B\subseteq A$.
\end{lemma}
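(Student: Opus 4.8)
The plan is to combine the well-definedness of $\bm Y^W$ assumed in Theorem~\ref{thm:weighted-levy} with the Palm-type identity of Lemma~\ref{lem:msw-palm-result} to produce, for one convenient starting point, a lower bound of the shape
\[
 W^B(l+r)\ \gtrsim\ \int_0^{1/4}W^B(u)\,du .
\]
Since each $W^B$ is smooth and $(0,\infty)$-valued, this forces $\int_0^{1/4}W^B<\infty$ and hence (by continuity of $W^B$ on $[1/4,1]$) $\int_0^1W^B<\infty$. For $B=\emptyset$ there is nothing to prove, since $W^\emptyset\le 1$ by \eqref{eq:conditions-simple}; so fix $B\neq\emptyset$. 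Setting $g\equiv 1$ in \eqref{eq:weighted-levy} and using that the right-hand side then equals $1$ gives, for every $C\subseteq A$ and all $l,r>0$,
\[
 W^C(l+r)=\E_{(l,r)}\Big(\textstyle\sum_{Q\in\Pi(X',C)}\prod_{s<\zeta'\colon\Delta X'_s\neq0}e^{\sigma_{Q_{s-}\setminus Q_s}1(\Delta X'_s>0)}W^{Q_{s-}\setminus Q_s}(|\Delta X'_s|)\Big),
\]
which for $C=\emptyset$ reads $W^\emptyset(l+r)=\E_{(l,r)}\big(\prod_{s<\zeta'}W^\emptyset(|\Delta X'_s|)\big)$.

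Next I would keep in the sum over $\Pi(X',B)$ only, for each jump time $s<\zeta'$ of $X'$, the path $Q$ equal to $B$ on $[0,s)$ and to $\emptyset$ afterwards; all summands are nonnegative, and rewriting the product over the jumps after $s$ through the shifted process $(L'_{\cdot+s},R'_{\cdot+s})$ yields
\[
 W^B(l+r)\ \ge\ \E_{(l,r)}\Big(\sum_{s<\zeta'}\Big(\prod_{s'<s}W^\emptyset(|\Delta X'_{s'}|)\Big)\,g_B\big(\Delta X'_s,L'_{\cdot+s},R'_{\cdot+s}\big)\Big),
\]
where $g_B(h,u,v)$ is $e^{\sigma_B1(h>0)}W^B(|h|)$ times the product of $W^\emptyset$ over the jumps of $u+v$ before its absorption time (and $g_B(0,\cdot,\cdot)=0$). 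Applying the second identity of Lemma~\ref{lem:msw-palm-result} to $g_B$, discarding the nonnegative $\mu_R$-term, and using $\E_{(a,b)}(g_B(h,L',R'))=e^{\sigma_B1(h>0)}W^B(|h|)\,W^\emptyset(a+b)$ (the $C=\emptyset$ identity above), I obtain
\[
 W^B(l+r)\ \ge\ \E_{(l,r)}\Big(\int_0^{\zeta'}\!\prod_{s'<s}W^\emptyset(|\Delta X'_{s'}|)\int\frac{w_\kappa(L'_s+h,R'_s)}{w_\kappa(L'_s,R'_s)}\,\mu_L(dh)\,e^{\sigma_B1(h>0)}W^B(|h|)\,W^\emptyset(L'_s+h+R'_s)\,ds\Big).
\]

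Now take $(l,r)=(7/8,7/8)$, restrict the inner integral to $h\in(-\tfrac14,0)$ (so $e^{\sigma_B1(h>0)}=1$), substitute $u=-h$, and restrict the outer integral to the event $\{s<\zeta',\ (L'_s,R'_s)\in(3/4,1)^2\}$. On this range $w_\kappa(L'_s+h,R'_s)$, $w_\kappa(L'_s,R'_s)^{-1}$, $\mu_L(dh)/dh=\tfrac12|h|^{-1-4/\kappa}$ and $W^\emptyset(L'_s+h+R'_s)$ (here using $W^\emptyset$ non-increasing with argument $\le 2$) are bounded below by positive constants uniformly in $(L'_s,R'_s)\in(3/4,1)^2$ and $u\in(0,1/4)$, and $|h|=u$. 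This gives a constant $c_1>0$ with
\[
 W^B(7/4)\ \ge\ c_1\Big(\int_0^{1/4}W^B(u)\,du\Big)\int_0^\infty\E_{(7/8,7/8)}\Big(\prod_{s'<s}W^\emptyset(|\Delta X'_{s'}|)\,1\big((L'_s,R'_s)\in(3/4,1)^2\big)\Big)\,ds .
\]
It remains to show the time-integral is strictly positive, which follows once the integrand tends to $1$ as $s\downarrow0$. Via \eqref{eq:compensated-weighting} this reduces to statements about the stable Lévy process $(L,R)$: $(L_s,R_s)\to(7/8,7/8)$, $\P_{(7/8,7/8)}(s<\tau_0)\to1$, and $\prod_{s'<s}W^\emptyset(|\Delta X_{s'}|)\to1$ in probability. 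The last one follows from $1-W^\emptyset(h)\lesssim h^2$ together with $\int_{(0,1]}h^2(\mu_L+\mu_R)(dh)<\infty$ (valid since $\kappa>8/3>2$) and the fact that $(L,R)$ has no jump of modulus $>1$ before time $s$ with probability tending to $1$. Hence $\int_0^{1/4}W^B(u)\,du<\infty$ and thus $\int_0^1W^B<\infty$, and the argument is uniform in $B\neq\emptyset$ (no induction on $\#B$ is needed). The one genuinely delicate point is this last step — controlling $\prod_{s'<s}W^\emptyset(|\Delta X'_{s'}|)$ near $s=0$ with no information on $W^\emptyset$ away from the origin — but it is routine given the quadratic bound on $1-W^\emptyset$ in \eqref{eq:conditions-simple}.
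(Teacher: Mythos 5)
Your proof is correct and follows essentially the same route as the paper: keep in the sum over $\Pi(X',B)$ only the paths that jump from $B$ directly to $\emptyset$, apply the second Palm identity from Lemma~\ref{lem:msw-palm-result}, and read off from the lower bound that $\int_0^1 W^B = \infty$ would contradict finiteness of $W^B$ at a fixed argument. The paper is slightly more terse at the final step (it simply observes that if $\int_0^1 W^B = \infty$ then the inner $h$-integral, and hence the outer time-integrand, is almost surely infinite, so the expectation is $+\infty$), whereas you instead isolate an explicit spatial window for $(L'_s,R'_s)$ and show separately that the resulting time-integral is strictly positive; the small extra work you invest in verifying $\prod_{s'<s}W^\emptyset(|\Delta X'_{s'}|)\to 1$ as $s\downarrow 0$ is exactly what the paper's "a.s.\ infinite integrand" statement implicitly absorbs.
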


\begin{proof}
	The case $B=\emptyset$ is clear by \eqref{eq:conditions-simple}, so assume that $B\neq \emptyset$ and consider $l=r=1/2$. By the assumptions of Theorem \ref{thm:weighted-levy} we get that
	\begin{align*}
		\infty &> W^B(1) \ge \E_{(l,r)}\left(\sum_{t<\zeta'} e^{\sigma_B 1(\Delta X'_t>0)}\prod_{s\neq t} W^\emptyset(|\Delta X'_s|)\cdot W^B(|\Delta X'_t|) \right) \\
		&= \E_{(l,r)}\Biggl(\int_0^{\zeta'} \prod_{s<t}W^\emptyset(|\Delta X'_s|)\int \frac{w_\kappa(L'_s+h,R'_s)W^B(|h|)e^{\sigma_B 1(h>0)}\mu_L(dh)}{w_\kappa(L'_t,R'_t)}\,W^\emptyset(X'_t+h) \,dt \\
		&\quad + \int_0^{\zeta'} \prod_{s<t}W^\emptyset(|\Delta X'_s|)\int \frac{w_\kappa(L'_s,R'_s+h)W^B(|h|)e^{\sigma_B 1(h>0)}\mu_R(dh)}{w_\kappa(L'_t,R'_t)}\,W^\emptyset(X'_t+h) \,dt\Biggr)\;.
	\end{align*}
	where to go to the second line we use the second statement in Lemma \ref{lem:msw-palm-result} and that, by assumption, the right-hand side of \eqref{eq:weighted-levy} is equal to 1 for $g=1$. If we had that $\int_0^1 W^B(x)\,dx=\infty$ then the integrand in the final expectation above would be almost surely infinite resulting in a contradiction.
\end{proof}

\begin{proof}[Proof of Theorem \ref{thm:weighted-levy}]
	The strong Markov property appears as Lemma \ref{lem:smp-fragmentation}. We will now prove the main claim of the theorem. Let us remark that all expressions in the definition of $\mathcal{G}_{\kappa,\beta}^W f$ are well-defined by Lemma \ref{lem:generator-finite}. By Lemma \ref{lem:weighted-levy-palmstep} with $M'=\infty$ and applied to the function $(u,v,C)\mapsto f(u,v,B)1(B=C)$, and by the expression of the generator of $(L^\emptyset,R^\emptyset)$ as given in Lemma \ref{lem:twisted-jump-simple} we get by the definitions and by Lemma \ref{lem:twisted-jump-simple} that
	\begin{align*}
		&\E_{(l,r,B)}\left(f(\bm{Y}^W_t);P^W_t=B\right) \\
		&= \frac{e^{\alpha t}}{w_\kappa(l,r)W^B(l+r)}\E_{(l,r)}\left(f(L^\emptyset_{t\wedge \tau_0^\emptyset},R^\emptyset_{t\wedge \tau_0^\emptyset},B)W^B(X^\emptyset_{t\wedge \tau_0^\emptyset})w_\kappa(L^\emptyset_{t\wedge \tau_0^\emptyset},R^\emptyset_{t\wedge \tau_0^\emptyset}) \right)
	\end{align*}
	Now by the martingale statement in Lemma \ref{lem:twisted-jump-simple} and optional stopping at time $t\wedge \tau_0^\emptyset$ we obtain that
	\begin{align*}
		&\E_{(l,r)}\left(f(L^\emptyset_{t\wedge \tau_0^\emptyset},R^\emptyset_{t\wedge \tau_0^\emptyset},B)W^B(X^\emptyset_{t\wedge \tau_0^\emptyset})w_\kappa(L^\emptyset_{t\wedge \tau_0^\emptyset},R^\emptyset_{t\wedge \tau_0^\emptyset}) \right) \\
		&= f(l,r,B)W^B(l+r)w_\kappa(l,r) + \E_{(l,r)}\left(\int_0^{t} \mathcal{L}^\emptyset_{\kappa,\beta}g (L^\emptyset_s,R^\emptyset_s)1(s<\tau_0^\emptyset)\,ds\right)
	\end{align*}
	where $g(l',r'):=f(l',r',B)W^B(l'+r')w_\kappa(l',r')$. Since $\mathcal{L}^\emptyset_{\kappa,\beta} g$ is a bounded function and $(L^\emptyset,R^\emptyset)$ is right-continuous we obtain (as in Proposition \ref{prop:simple-msw-process}) by dominated convergence that
	\begin{align*}
		\frac{1}{t} \,\E_{(l,r)}\left(\int_0^{t} \mathcal{L}^\emptyset_{\kappa,\beta}g (L^\emptyset_s,R^\emptyset_s)1(s<\tau_0^\emptyset)\,ds\right) \to \mathcal{L}^\emptyset_{\kappa,\beta}g(l,r)
	\end{align*}
	as $t\downarrow 0$. Combining the above observations yields that
	\begin{align*}
		&\lim_{t\downarrow 0} \frac{1}{t}\left(\E_{(l,r,B)}\left(f(\bm{Y}^W_t);P^W_t=B\right) - f(l,r,B)\right) \\
		&= f(l,r,B)\int (\mu_L+\mu_R)(dh) (W^\emptyset(|h|)-1) \\
		&\qquad + \frac{1}{w_\kappa(l,r)W^B(l+r)}\, \mathcal{L}^\emptyset_{\kappa,\beta}((u,v)\mapsto w_\kappa(u,v) W^B(u+v) f(u,v,B))(l,r)\;.
	\end{align*}
	This in particular implies the result when $B=\emptyset$.

	Let $M$ be such that $f(\cdot,\cdot,C)$ has support contained in $(1/M,M)^2$ for all $C\subseteq A$. We will now show that
	\begin{align*}
		 &\lim_{t\downarrow 0}\frac{1}{t}\,\E_{(l,r,B)}( f(\bm{Y}^W_t);E^{M'}_t(L^W,R^W),P^W_t\neq B) \\
		 &= \sum_{\emptyset\neq C_0\subseteq B}\int \mu_L(dh)\, \frac{W^{C_0}(|h|)w_\kappa(l+h,r)W^{B\setminus C_0}(l+r+h)}{w_\kappa(l,r)W^B(l+r)}\,e^{\sigma_{B\setminus C}1(h>0)}\,f(l+h,r,B\setminus C_0) \\
		 &\quad + \sum_{\emptyset\neq C_0\subseteq B}\int \mu_R(dh)\, \frac{W^{ C_0}(|h|)w_\kappa(l,r+h)W^{B\setminus C_0}(l+r+h)}{w_\kappa(l,r)W^B(l+r)}\,\,e^{\sigma_{B\setminus C}1(h>0)}\,f(l,r+h,B\setminus C_0)
	\end{align*}
	for all $M'>M$ where $E^{M'}_t(u,v)=\{u([0,t]),v([0,t])\subseteq (1/M',M')\}$ whenever $u,v\in D([0,\infty))$. Lemma \ref{lem:generator-truncation} (with the constant $M$ appearing in the statement of this lemma having been defined above) then implies the theorem. The display above is an immediate consequence of \eqref{eq:iterated-palm-simple} within Lemma \ref{lem:weighted-levy-palmstep}; the key is that the indicator in the final line of \eqref{eq:iterated-palm-simple} yields deterministic bounds on the other quantities in the integral and we then see that if $k>1$ then the integral is $O(t^2)$ and if $k=1$ we obtain exactly the limit above.
\end{proof}

For the computation of the law of the conformal radius of a simple conformal loop ensemble, we will need to determine a particular functional associated to the process $(L',R')$. Thus we conclude this section with the following explicit computation. The proof starts by expressing $A_\pm$ (as defined in the proposition below) in terms of an integral over time, using the Palm's formula type result of Lemma \ref{lem:msw-palm-result}. Then the key trick is to observe that the integrand can be expressed in terms of $\mathcal{G}_{\kappa,\beta} f$ for an appropriate function $f$, which allows us to use a martingale argument with \eqref{eq:simple-msw-process} to compute the integral. The fact that we can find such an $f$ explicitly is very particular to the setting considered here and does not generalize to arbitrary functionals.
\begin{prop}
	\label{prop:power-theta}
	Suppose that $l,r>0$ such that $l+r=1$ and let
	\begin{align*}
		A_+ = \E_{(l,r)}\left(\sum_{t<\zeta'\colon \Delta X'_t>0} |\Delta X'_t|^\theta \right)\quad\text{and}\quad A_- = \E_{(l,r)}\left(\sum_{t<\zeta'\colon \Delta X'_t<0} |\Delta X'_t|^\theta \right)\;.
	\end{align*}
	Then $A_+<\infty$, $A_-<1$ and
	\begin{align*}
		\frac{A_+}{1-A_-} = \frac{\cos(4\pi/\kappa)}{\cos(4\pi/\kappa-\pi\theta)}
	\end{align*}
	whenever we have $\theta\in (4/\kappa+1/2,4/\kappa+3/2)$. Also, if $\theta\notin (4/\kappa, 1+8/\kappa)$ then $A_+=\infty$. Recall that the subscript $(l,r)$ in the display above indicates that $(L',R')$ starts from $(l,r)$.
\end{prop}

\begin{proof}
	By the second part of Lemma \ref{lem:msw-palm-result} with $W^\emptyset\equiv 1$ and $g(x,l,r)=x^\theta1(x>0)$ we get
	\begin{align*}
		A_+ = \int_0^\infty\E_{(l,r)}\left( \int_0^\infty \mu_L(dh)\, \frac{h^\theta w_\kappa(L'_t+h,R'_t)}{w_\kappa(L'_t,R'_t)} + \int_0^\infty \mu_R(dh)\, \frac{h^\theta w_\kappa(L'_t,R'_t+h)}{w_\kappa(L'_t,R'_t)};t<\zeta'\right)\,dt\;.
	\end{align*}
	Note that if $\theta\notin (4/\kappa, 1+8/\kappa)$ then
	\begin{align*}
		&\int_0^\infty \mu_L(dh)\, \frac{h^\theta w_\kappa(L'_t+h,R'_t)}{w_\kappa(L'_t,R'_t)} + \int_0^\infty \mu_R(dh)\, \frac{h^\theta w_\kappa(L'_t,R'_t+h)}{w_\kappa(L'_t,R'_t)} \\
		&\quad = -\cos(4\pi/\kappa)\int_0^\infty \frac{h^\theta\,dh}{h^{1+4/\kappa}}\, \frac{(L'_t + R'_t)^{1+4/\kappa}}{(L'_t+R'_t+h)^{1+4/\kappa}}=\infty\quad\text{for all $t<\zeta'$ a.s.}
	\end{align*}
	This in particular implies the final assertion of the proposition.

	Suppose henceforth that $\theta\in (4/\kappa+1/2,4/\kappa+3/2)$ and define $\sigma$ by $e^{-\sigma}=\cos(4\pi/\kappa)/\cos(4\pi/\kappa-\pi\theta)$. Applying Lemma \ref{lem:msw-palm-result} to $A_-$, similarly as it was applied to $A_+$ above, we obtain
	\begin{gather*}
		e^\sigma A_++A_-= \int_{0}^\infty\E_{(l,r)}\left( g(L'_t,R'_t);t<\zeta'\right)\,dt\quad\text{where}\\
		g(l',r')=\int_{-\infty}^\infty \mu_L(dh)\, \frac{e^{\sigma 1(h>0)}|h|^\theta w_\kappa(l'+h,r')}{w_\kappa(l',r')} + \int_{-\infty}^\infty \mu_R(dh)\, \frac{e^{\sigma 1(h>0)}|h|^\theta w_\kappa(l',r'+h)}{w_\kappa(l',r')}\;.
	\end{gather*}
	Note that $g$ is well-defined and finite since $\theta\in (4/\kappa,1+8/\kappa)$.

	The key is now that if we let $f(l',r')=(l'+r')^\theta 1(l',r'>0)$ then $\mathcal{G}_{\kappa,\beta}f(l',r') = -g(l',r')$ for all $l',r'>0$,  where we recall the definition of the generator $\mathcal{G}_{\kappa,\beta}$ appearing in \eqref{eq:w}. To see this, by scaling, it suffices to consider $l'+r'=1$. Let us write $\theta'=\theta-1-4/\kappa$. Then
	\begin{align*}
		\mathcal{G}_{\kappa,\beta}f(l',r') + g(l',r') &= \frac{1}{w_\kappa(l',r')}\,\mathcal{L}_{\kappa,\beta}(fw_\kappa)(l',r') + g(l',r')\\
		&= -\cos(4\pi/\kappa)\int_0^\infty \frac{dh}{h^{1+4/\kappa}}\left((1+h)^{\theta'}-1-\theta'h \right) \\
		&\quad + \int_0^\infty \frac{dh}{2h^{1+4/\kappa}}\left( (1-h)^{\theta'}1(h<l') -1+\theta'h \right) + \int_0^{l'} \frac{h^\theta\,dh}{2(1-h)^{1+4/\kappa}} \\
		&\quad + \int_0^\infty \frac{dh}{2h^{1+4/\kappa}}\left( (1-h)^{\theta'}1(h<r') -1+\theta'h \right) + \int_0^{r'} \frac{h^\theta\,dh}{2(1-h)^{1+4/\kappa}} \\
		&\quad -\cos(4\pi/\kappa-\pi\theta) \int_0^\infty \frac{h^{\theta'}\,dh}{(1+h)^{1+4/\kappa}}\\
		&= 0
	\end{align*}
	by Lemmas \ref{lem:simple-integrals-comp} and \ref{lem:beta-results}. Putting the results obtained so far together, we deduce that
	\begin{align}
		\label{eq:prefinal-step-ssw-computation}
		e^{\sigma}A_+ + A_- = -\int_0^\infty \E_{(l,r)}\left( \mathcal{G}_{\kappa,\beta} f(L'_t,R'_t);t<\zeta'\right)\,dt\;.
	\end{align}
	In order to conclude the proof it is sufficient to show that the right-hand side of \eqref{eq:prefinal-step-ssw-computation} is equal to $1$.

	We consider functions $h_n\in C_c^\infty(\R)$ with $h_n\ge 0$, $h_n|_{(2^{-n},2^n)}=1$ and $h_n|_{(2^{-n-1},2^{n+1})^c}=0$. Also let $\zeta'_m =\inf\{t\ge 0\colon L'_t\notin (2^{-m},2^m)\text{ or } R'_t\notin(2^{-m},2^m)\}$. By Proposition \ref{prop:simple-msw-process} (in particular, \eqref{eq:simple-msw-process}) and the optional stopping theorem at time $t\wedge\zeta'_m$, for $t\ge 0$
	\begin{align}
		\label{eq:intermediate-ssw-generator}
		\E_{(l,r)}(u_n(L'_{t\wedge\zeta'_m},R'_{t\wedge \zeta'_m}))= 1 + \int_0^t \E_{(l,r)} \left( \mathcal{G}_{\kappa,\beta}u_n(L'_s,R'_s) ; s<\zeta'_m \right)\,ds
	\end{align}
	where $u_n(l',r')=h_n(l')h_n(r')f(l',r')$ and provided that $l,r\in (2^{-n},2^n)$. We now consider the limits $n\to\infty$, $m\to\infty$ and then $t\to \infty$ in \eqref{eq:intermediate-ssw-generator} on both sides.

	We begin with the right-hand side. For each $m\ge 1$ there exists a constant $C_m$ such that $|\mathcal{G}_{\kappa,\beta}u_n| \le C_m$ on $[2^{-m},2^m]^2$ and for all $n\ge 1$. Thus for the $n\to\infty$ limit, we can use dominated convergence to get
	\begin{align}
		\label{eq:ssw-comp-dom-conv}
		\int_0^t \E_{(l,r)} \left( \mathcal{G}_{\kappa,\beta}u_n(L'_s,R'_s) ; s<\zeta'_m \right)\,ds \to \int_0^t \E_{(l,r)} \left( \mathcal{G}_{\kappa,\beta}f(L'_s,R'_s) ; s<\zeta'_m \right)\,ds\le 0
	\end{align}
	as $n\to \infty$ for all $m\ge 1$ and $t>0$.
	Observe now that since $\mathcal{G}_{\kappa,\beta}f=-g\le 0$ we can use monotone convergence for the $m\to \infty$ and $t\to \infty$ limit to get
	\begin{align*}
		\lim_{t\to \infty}\lim_{m\to \infty}\int_0^t \E_{(l,r)} \left( \mathcal{G}_{\kappa,\beta}f(L'_s,R'_s) ; s<\zeta'_m \right)\,ds = \int_0^\infty  \E_{(l,r)}\left( \mathcal{G}_{\kappa,\beta} f(L'_s,R'_s);s<\zeta'\right)\,ds\;.
	\end{align*}
	Therefore, in order to conclude the proof it is sufficient to show that the left-hand side of \eqref{eq:intermediate-ssw-generator} goes to zero as we send $n\to\infty$, then $m\to\infty$, and finally $t\to\infty$.

	Fatou's lemma and \eqref{eq:intermediate-ssw-generator} together with \eqref{eq:ssw-comp-dom-conv} yield
	\begin{align*}
		\E_{(l,r)}((X'_{t\wedge\zeta'_m})^\theta) \le \liminf_{n\to \infty} \E_{(l,r)}(u_n(L'_{t\wedge\zeta'_m},R'_{t\wedge \zeta'_m}))\le 1\;.
	\end{align*}
	Let $\theta'>\theta$ such that $\theta'\in (4/\kappa+1/2,4/\kappa+3/2)$. Then by the same argument the above display also holds with $\theta$ replaced by $\theta'$ and therefore the family $((X'_{t\wedge\zeta'_m})^\theta\colon m\ge 1, t>0)$ is uniformly integrable. Consequently since $X'_{t\wedge\zeta'_m} \to X'_t$ a.s. as $m\to \infty$ and $X'_t\to 0$ a.s. as $t\to \infty$ we deduce that (using monotone convergence for the first equality)
	\begin{align}
		\label{eq:error-term-to-zero}
		\lim_{t\to\infty}\lim_{m\to\infty} \lim_{n\to\infty}\E_{(l,r)}(u_n(L'_{t\wedge\zeta'_m},R'_{t\wedge \zeta'_m}))=\lim_{t\to\infty}\lim_{m\to\infty} \E_{(l,r)}((X'_{t\wedge\zeta'_m})^\theta) = 0,
	\end{align}
	which concludes the proof.
\end{proof}

\subsection{Without compensation}
\label{subsec:nonsimple-fragmentations}

Let $\kappa'\in (4,8)$ and $\beta\in [-1,1]$. Consider two independent Lévy processes $L$ and $R$ (this time without compensation) where this time the generator of $(L,R)$ is
\begin{align*}
	\mathcal{L}_{\kappa',\beta}f(l,r) &= \int_\R (f(l+h,r) - f(l,r))\,\mu_L(dh)  + \int_\R (f(l,r+h) - f(l,r))\,\mu_R(dh)
\end{align*}
for $f\in C^\infty_c(\R^2)$ and where the jump measures $\mu_L$ and $\mu_R$ of $L$ and $R$ are given by
\begin{align*}
	\mu_L(dh)/dh &= \frac{-\cos(4\pi/\kappa')(1-\beta)/2}{h^{1+4/\kappa'}}\,1_{(0,\infty)}(h) + \frac{1/2}{|h|^{1+4/\kappa'}}\,1_{(-\infty,0)}(h)\;,\\
	\mu_R(dh)/dh &= \frac{-\cos(4\pi/\kappa')(1+\beta)/2}{h^{1+4/\kappa'}}\,1_{(0,\infty)}(h) + \frac{1/2}{|h|^{1+4/\kappa'}}\,1_{(-\infty,0)}(h)\;.
\end{align*}
By definition, $\mathcal{L}_{\kappa',\beta}f$ is a bounded function whenever $f\in C^\infty_c(\R^2)$ and in this case (see again for instance \cite[Theorem 19.10 and Lemma 19.21]{kallenberg}), the process
\begin{align*}
	t\mapsto f(L_t,R_t) - f(L_0,R_0) - \int_0^t \mathcal{L}_{\kappa',\beta}f(L_s,R_s)\,ds
\end{align*}
is a martingale.

Let $X=L+R$. Just like in the previous section, we now want to consider a reweighting construction. For $l,r>0$ and $f\in C^\infty_c(\R^2)$ we let
\begin{align*}
	w_{\kappa'}(l,r) = (l+r)^{-1-4/\kappa'}1(l,r>0)\quad \text{and }\quad \mathcal{G}_{\kappa',\beta} f(l,r)=\frac{1}{w_{\kappa'}(l,r)}\,\mathcal{L}_{\kappa',\beta}(w_{\kappa'} f)(l,r) \;.
\end{align*}
By convention, we write $\mathcal{G}_{\kappa',\beta} f(0,0)=0$ and one can check that $\mathcal{G}_{\kappa',\beta}f$ is a bounded function. Let us remark that from Lemma \ref{lem:nonsimple-integrals-comp} we get that for $f\in C^\infty_c((0,\infty)^2)$ and $l,r>0$
\begin{align*}
	\mathcal{G}_{\kappa',\beta} f(l,r) &= \int_\R (f(l+h,r) - f(l,r))\,\frac{w_{\kappa'}(l+h,r)\mu_L(dh)}{w_{\kappa'}(l,r)} \\
	&\quad + \int_\R (f(l,r+h) - f(l,r))\,\frac{w_{\kappa'}(l,r+h)\mu_R(dh)}{w_{\kappa'}(l,r)}\;.
\end{align*}
Throughout this section we will not give the proofs since they are all essentially identical to the ones in the previous section but only highlight the small differences (the intermediate lemmas needed for some of the proofs are also exactly analogous).

The only differing ingredient in the proof of the following proposition is that Lemma \ref{lem:nonsimple-integrals-comp} is used instead of \ref{lem:simple-integrals-comp} as was the case in the proof of Proposition \ref{prop:simple-msw-process}.

\begin{prop}
	\label{prop:nonsimple-msw-process}
	For each $l,r>0$ there is a strong Markov càdlàg process $(L',R')$ starting at $(l,r)$, taking values in $(0,\infty)^2\cup\{(0,0)\}$ and absorbed at $(0,0)$ such that
	\begin{align*}
		\E_{(l,r)} \left(g\left(L'|_{[0,t]},R'|_{[0,t]}\right);t<\zeta'\right) = \E_{(l,r)} \left(\frac{w_{\kappa'}(L_t,R_t)}{w_{\kappa'}(l,r)} g\left(L|_{[0,t]},R|_{[0,t]}\right);t<\tau_0\right)
	\end{align*}
	whenever the function $g\colon D([0,t])^2\to [0,\infty]$ is measurable on the space of càdlàg functions, $t\ge 0$, $\zeta'=\inf\{t\ge 0\colon L'_t=R'_t=0\}$ and where $\tau_0 = \inf\{ t\ge 0\colon L_t\le 0 \text{ or } R_t \le 0\}$. Moreover, $X'_t\to 0$ as $t\to \infty$ almost surely where $X'=L'+R'$. Furthermore for $f\in C^\infty_c((0,\infty)^2)$ the process
	\begin{align*}
		t\mapsto f(L'_t,R'_t) - \int_0^{t\wedge \zeta'} \mathcal{G}_{\kappa',\beta}f(L'_s,R'_s)\,ds
	\end{align*}
	is a (càdlàg) martingale. Finally, $\mathcal{G}_{\kappa',\beta}$ is the generator of $(L',R')$ on $C^\infty_c((0,\infty)^2)$.
\end{prop}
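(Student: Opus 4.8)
The plan is to repeat the proof of Proposition \ref{prop:simple-msw-process} essentially verbatim, replacing $\kappa$ by $\kappa'$ throughout and using Lemma \ref{lem:nonsimple-integrals-comp} in place of Lemma \ref{lem:simple-integrals-comp} wherever the identity $\mathcal{L}_{\kappa,\beta}w_\kappa=0$ was invoked. The absence of a compensation term changes nothing structurally: the only facts about $(L,R)$ that enter are stable scaling with exponent $4/\kappa'$, right-continuity, and the martingale $t\mapsto f(L_t,R_t)-f(L_0,R_0)-\int_0^t\mathcal{L}_{\kappa',\beta}f(L_s,R_s)\,ds$ for $f\in C^\infty_c(\R^2)$ recorded above from \cite[Lemma 19.21]{kallenberg}, all of which are available here. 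Note also that, since $f(L'_0,R'_0)$ is constant and $\mathcal{G}_{\kappa',\beta}f(0,0)=0$ by convention, the martingale displayed in the proposition is the same object as the one in Proposition \ref{prop:simple-msw-process}.

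First I would define the candidate transition kernels exactly as in the proof of Proposition \ref{prop:simple-msw-process}: $p^t_{(0,0)}=\delta_{(0,0)}$, and for $l,r>0$ the kernel $p^t_{(l,r)}$ puts mass $w_{\kappa'}(l',r')/w_{\kappa'}(l,r)$ against $\P_{(l,r)}(L_t\in dl',R_t\in dr',\,t<\tau_0)$ together with the remaining mass $1-\E_{(l,r)}(w_{\kappa'}(L_t,R_t)/w_{\kappa'}(l,r);\,t<\tau_0)$ on $\delta_{(0,0)}$. The semigroup property is immediate; the only point needing an argument is that the mass assigned to $\delta_{(0,0)}$ is non-negative. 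For this I would take smooth cutoffs $h_n$ with $h_n\ge 0$, $h_n|_{(1/n,n)}=1$, $h_n|_{(1/(2n),2n)^c}=0$, and the stopping times $\tau_m=\inf\{t:L_t\notin(1/m,m)\text{ or }R_t\notin(1/m,m)\}$, apply the martingale above to $(u,v)\mapsto h_n(u)h_n(v)w_{\kappa'}(u,v)$, invoke $\mathcal{L}_{\kappa',\beta}w_{\kappa'}=0$ (Lemma \ref{lem:nonsimple-integrals-comp}) so that the residual integrand is supported on $\{h_n(u)h_n(v)\ne 1\}$ and hence tends to $0$ as $n\to\infty$ with $m$ fixed, and conclude with Fatou, first in $n$ then in $m$. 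This produces a Markov process $(L',R')$ on $(0,\infty)^2\cup\{(0,0)\}$ absorbed at $(0,0)$.

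Next I would build a càdlàg modification exactly as in the simple case: the reweighting identity together with $\E_{(l,r)}((X_t)^{1+4/\kappa'};\inf_{[0,t]}L,\inf_{[0,t]}R>0)\le(l+r)^{1+4/\kappa'}$ and $\E_{(l,r)}((L_t)^{\widehat\rho_L\cdot 4/\kappa'};\inf_{[0,t]}L>0)=l^{\widehat\rho_L\cdot 4/\kappa'}$ --- the latter the coharmonic martingale of \cite{silverstein-coharmonic-class,kyprianou-levy-lamperti} for the stable Lévy process conditioned to stay positive, valid for every stability index and in particular for $4/\kappa'\in(1/2,1)$ --- show that $(X')^{1+4/\kappa'}$ and $(L')^{\widehat\rho_L\cdot 4/\kappa'}(X')^{1+4/\kappa'}$ are supermartingales (using, as in the simple case, that $\widehat\rho_L$ and $\widehat\rho_R$ cannot both be positive). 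Doob regularization then yields a version for which these are càdlàg, whence $X'$, then $L'$, then $R'=X'-L'$ are càdlàg. Supermartingale convergence together with $\E_{(l,r)}((X'_t)^{1+4/\kappa'})=(l+r)^{1+4/\kappa'}\P_{(l,r)}(t<\tau_0)\to 0$ gives $X'_t\to 0$ a.s. I would then obtain the reweighting identity for $(L',R')$ from the definition of the kernels by a monotone class argument, and the strong Markov property from the boundedness and continuity of $(l,r)\mapsto\E_{(l,r)}(f(L'_t,R'_t))$ for $f\in C^\infty_c((0,\infty)^2)$ --- itself a consequence of stable scaling, as in Proposition \ref{prop:simple-msw-process} --- via the standard argument that Feller processes are strong Markov.

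Finally, for the martingale and generator statements I would apply optional stopping at $t\wedge\tau_0$ to the martingale $t\mapsto(w_{\kappa'}f)(L_t,R_t)-(w_{\kappa'}f)(L_0,R_0)-\int_0^t\mathcal{L}_{\kappa',\beta}(w_{\kappa'}f)(L_s,R_s)\,ds$, divide by $w_{\kappa'}$ and rewrite through the reweighting and the definition of $\mathcal{G}_{\kappa',\beta}$ to get $\E_{(l,r)}(f(L'_t,R'_t))=f(l,r)+\int_0^t\E_{(l,r)}(\mathcal{G}_{\kappa',\beta}f(L'_s,R'_s))\,ds$, whence the martingale property follows from the Markov property; dividing by $t$ and letting $t\downarrow 0$, dominated convergence (using boundedness and continuity of $\mathcal{L}_{\kappa',\beta}(w_{\kappa'}f)$ and right-continuity of $(L,R)$) identifies $\mathcal{G}_{\kappa',\beta}$ as the generator on $C^\infty_c((0,\infty)^2)$. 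I do not expect a genuinely new obstacle: the whole interaction with the parameter range $\kappa'\in(4,8)$ is the vanishing $\mathcal{L}_{\kappa',\beta}w_{\kappa'}=0$ of Lemma \ref{lem:nonsimple-integrals-comp}, and the only mild point worth double-checking is that the classical stable-process inputs --- the coharmonic martingale and the stable-scaling estimates --- remain valid in that range, which they do.
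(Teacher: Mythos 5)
Your proposal is correct and matches the paper's own treatment: the authors state explicitly that the proofs in this subsection are essentially identical to those in the simple case, with Lemma \ref{lem:nonsimple-integrals-comp} substituted for Lemma \ref{lem:simple-integrals-comp}, which is precisely what you do. Your observations that the displayed martingale is equivalent (since $f(L'_0,R'_0)$ is a constant and $\mathcal{G}_{\kappa',\beta}f(0,0)=0$ by convention) and that the coharmonic martingale remains valid for $4/\kappa'\in(1/2,1)$ are correct and address the only points that might have warranted a second look.
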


Using Lemma \ref{lem:ppp-reweighting} we immediately obtain the following result.

\begin{lemma}
	\label{lem:twisted-jump-nonsimple}
	Suppose  $W^\emptyset\colon [0,\infty)\to (0,1]$ is measurable with $1-W^\emptyset(h)\lesssim h^{8/\kappa'}$. Let $ (L^\emptyset,R^\emptyset)$ be two independent Lévy processes with generator defined for $l,r>0$ by
	\begin{align*}
		\mathcal{L}^\emptyset_{\kappa',\beta}f(l,r) &= \int_\R  (f(l+h,r) - f(l,r))\, W^\emptyset(h)\mu_L(dh) \\
		&\quad + \int_\R  (f(l,r+h) - f(l,r))\, W^\emptyset(h)\mu_R(dh) \;.
	\end{align*}
	In fact, whenever $f\in C^\infty_c(\R^2)$, the process
	\begin{align*}
		t\mapsto f(L^\emptyset_t, R^\emptyset_t)-f(L^\emptyset_0,R^\emptyset_0) - \int_0^{t} \mathcal{L}^\emptyset_{\kappa',\beta}f(L^\emptyset_s,R^\emptyset_s)\,ds
	\end{align*}
	is a martingale. We let $X^\emptyset = L^\emptyset + R^\emptyset$. Then for all $g\colon D([0,t])^2\to [0,\infty]$ we have
	\begin{align*}
		\E_{(l,r)}\left(g(L^\emptyset|_{[0,t]},R^\emptyset|_{[0,t]})\right) = \frac{ \E_{(l,r)}\left(\,\prod_{s<t} W^\emptyset(|\Delta X_s|)\cdot g(L|_{[0,t]},R|_{[0,t]})\right) }{\exp\left(t\int (\mu_L+\mu_R)(dh)(W^\emptyset(|h|)-1)\right) } \;.
	\end{align*}
	We define $\tau^\emptyset_0=\inf\{t\ge 0\colon L^\emptyset_t \le 0\text{ or } R^\emptyset_t\le 0\}$ and $\alpha = \int (\mu_L+\mu_R)(dh)(W^\emptyset(|h|)-1)$.
\end{lemma}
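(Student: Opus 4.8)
The plan is to follow the proof of Lemma~\ref{lem:twisted-jump-simple} essentially verbatim; the one simplification is that since $\kappa'\in(4,8)$ one has $4/\kappa'<1$, so $L$ and $R$ are finite-variation pure-jump Lévy processes and no compensation term is present, which makes the bookkeeping strictly shorter than in the compensated case.

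First I would invoke the Lévy--Itô decomposition to identify each of $L$, $R$, $L^\emptyset$, $R^\emptyset$ with the sum of its jumps: the jump measures $\sum_{s\colon\Delta L_s\neq0}\delta_{(s,\Delta L_s)}$ and $\sum_{s\colon\Delta R_s\neq0}\delta_{(s,\Delta R_s)}$ are independent Poisson random measures on $[0,\infty)\times\R$ with intensities $ds\,\mu_L(dh)$ and $ds\,\mu_R(dh)$, and the analogous statements hold for $L^\emptyset$, $R^\emptyset$ with intensities $W^\emptyset(|h|)\mu_L(dh)$ and $W^\emptyset(|h|)\mu_R(dh)$; these are again of finite variation since $\int_0^1|h|\,W^\emptyset(|h|)\,\mu_L(dh)\le\int_0^1|h|\,\mu_L(dh)<\infty$, using $4/\kappa'<1$.

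The key step is then to apply Lemma~\ref{lem:ppp-reweighting} to each of these two jump measures restricted to the time window $[0,t]$, with the test function $f(s,h)=\log W^\emptyset(|h|)$. The hypothesis of that lemma is the integrability bound $\int|W^\emptyset(|h|)-1|\,\mu_L(dh)<\infty$ (and likewise for $\mu_R$), which I would verify by splitting at $h=1$: near $0$, the assumption $1-W^\emptyset(h)\lesssim h^{8/\kappa'}$ gives $\int_0^1(1-W^\emptyset(h))h^{-1-4/\kappa'}\,dh\lesssim\int_0^1 h^{4/\kappa'-1}\,dh<\infty$, while near $\infty$, $0<W^\emptyset\le1$ together with $\int_1^\infty h^{-1-4/\kappa'}\,dh<\infty$ suffices. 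Lemma~\ref{lem:ppp-reweighting} then shows that reweighting the law of the $L$-jumps by $Z_L=\prod_{s\le t}W^\emptyset(|\Delta L_s|)\exp(-t\int(W^\emptyset(|h|)-1)\mu_L(dh))$, which satisfies $\E(Z_L)=1$, changes the intensity to $W^\emptyset(|h|)\mu_L(dh)$; the same holds for $R$ with a factor $Z_R$. Since $L$ and $R$ are independent and a.s.\ never jump simultaneously, reweighting by $Z_LZ_R$ turns $(L,R)$ into $(L^\emptyset,R^\emptyset)$, and unwinding this change of measure with $\alpha=\int(W^\emptyset(|h|)-1)(\mu_L+\mu_R)(dh)$ yields the asserted identity first for bounded $g$ and then for all measurable $g\colon D([0,t])^2\to[0,\infty]$ by monotone convergence.

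The martingale statement is then immediate: $\mathcal{L}^\emptyset_{\kappa',\beta}f$ is a bounded function for $f\in C^\infty_c(\R^2)$ because $0<W^\emptyset\le1$ and $\mathcal{L}_{\kappa',\beta}f$ is bounded, so $t\mapsto f(L^\emptyset_t,R^\emptyset_t)-f(L^\emptyset_0,R^\emptyset_0)-\int_0^t\mathcal{L}^\emptyset_{\kappa',\beta}f(L^\emptyset_s,R^\emptyset_s)\,ds$ is a martingale by the generic Dynkin formula for Lévy processes, \cite[Theorem 19.10 and Lemma 19.21]{kallenberg}. I do not expect any genuine obstacle here; the only place calling for care is the integrability check $\int|W^\emptyset(|h|)-1|\,\mu_L(dh)<\infty$, which is precisely where the hypothesis $1-W^\emptyset(h)\lesssim h^{8/\kappa'}$ and the inequality $\kappa'>4$ enter, and which also explains why the present argument is simpler than that of Lemma~\ref{lem:twisted-jump-simple}, where a compensation term had to be carried through the reweighting.
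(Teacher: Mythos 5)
Your proposal is correct and follows the same strategy the paper uses: identify the jump measures as Poisson random measures, apply Lemma~\ref{lem:ppp-reweighting} to each (individually, using independence of $L$ and $R$ and the fact that they a.s.\ never jump simultaneously), verify the integrability hypothesis $\int|W^\emptyset(|h|)-1|\,\mu_L(dh)<\infty$ from $1-W^\emptyset(h)\lesssim h^{8/\kappa'}$ and $4/\kappa'<1$, and obtain the martingale claim from the generic Lévy generator result in \cite{kallenberg}. The paper gives no explicit proof for this lemma, stating only that it follows immediately from Lemma~\ref{lem:ppp-reweighting}, and your write-up is exactly the natural elaboration of that remark; you also correctly observe that the finite-variation structure ($4/\kappa'<1$) lets you bypass the truncation-and-limit step that appears in the proof of the compensated analogue Lemma~\ref{lem:twisted-jump-simple}.
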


As in the previous section, we consider a finite subset $A\subseteq \N$ and smooth functions $W^B\colon (0,\infty)\to (0,\infty)$ for each $B\subseteq A$ such that
\begin{gather}
\begin{split}
	W^\emptyset\text{ is non-increasing}\;,\quad W^\emptyset(h)\le 1\;,\quad 1-W^\emptyset(h)\lesssim h^{8/\kappa'} \quad\text{for $h>0$} \\
	\text{and}\quad W^B(x)\lesssim W^B(y)\quad\text{for all $B\subseteq A$ and whenever $0<y\le x\le 2y$}.
	\end{split}
	\label{eq:conditions-nonsimple}
\end{gather}
By convention we will write $W^\emptyset(0)=1$ and $W^B(0)=0$ if $B\neq \emptyset$. We also consider constants $\sigma_B\in \R$ for $B\subseteq A$ with $\sigma_\emptyset = 0$.

Recall also the following definition: for $u\colon [0,\infty)\to \R$ càdlàg we let $\Pi(u,B)$ be the set of càdlàg non-increasing (in the sense of inclusion) functions $Q\colon [0,\infty)\to \mathcal{P}(A)$ such that $Q_0=B$, $Q_t=\emptyset$ for $t$ sufficiently large and $\Delta Q_t = Q_{t-}\setminus Q_t = \emptyset$ whenever $\Delta u_t = 0$.

\begin{thm}
	\label{thm:nonsimple-weighted-levy}
	For each $B\subseteq A$ and $l,r>0$, we now define a new process $\bm{Y}^W=(L^W,R^W,P^W)$ taking values in $(0,\infty)^2\times \mathcal{P}(A)$, starting at $(l,r,B)$ and absorbed at $(0,0,\emptyset)$ by
	\begin{align*}
		&\E_{(l,r,B)}\left(g(L^W,R^W,P^W)\right)\\
		&=\frac{1}{W^B(l+r)}\,\E_{(l,r)}\left( \,\sum_{Q\in \Pi(X',B)} g(L',R',Q)\prod_{s<\zeta'\colon \Delta X'_s\neq 0} e^{\sigma_{Q_{s-}\setminus Q_s}1(\Delta X'_s>0)}\, W^{Q_{s-}\setminus Q_s}(\Delta X'_s) \right)\;.
	\end{align*}
	For this to make sense, we also assume that the right-hand side of the equation is $1$ for $g=1$ and for all $B\subseteq A$ and $l,r>0$. Moreover, we let $\zeta^W = \inf\{t\ge 0\colon \bm{Y}^W = (0,0,\emptyset)\}$. Then $\bm{Y}^W$ is a strong Markov process and the following expression is well-defined:
	\begin{align*}
		&\mathcal{G}_{\kappa',\beta}^W f(l,r,B) \\
		&\quad:= f(l,r,B)\int (\mu_L+\mu_R)(dh) (W^\emptyset(h)-1) \\
		&\qquad + \frac{1}{w_{\kappa'}(l,r)W^B(l+r)}\, \mathcal{L}^\emptyset_{\kappa',\beta}((u,v)\mapsto w_{\kappa'}(u,v) W^B(u+v) f(u,v,B))(l,r) \\
		&\qquad + \sum_{C\subsetneq B}\int \mu_L(dh)\, \frac{W^{B\setminus C}(|h|)w_{\kappa'}(l+h,r)W^C(l+r+h)}{w_{\kappa'}(l,r)W^B(l+r)}\,e^{\sigma_{B\setminus C}1(h>0)}\,f(l+h,r,C) \\
		&\qquad + \sum_{C\subsetneq B}\int \mu_R(dh)\, \frac{W^{B\setminus C}(|h|)w_{\kappa'}(l,r+h)W^C(l+r+h)}{w_{\kappa'}(l,r)W^B(l+r)}\,\,e^{\sigma_{B\setminus C}1(h>0)}\,f(l,r+h,C)
	\end{align*}
	for $f\colon (0,\infty)^2\times \mathcal{P}(A)\to \R$ with $f(\cdot,B)\in C^\infty_c((0,\infty)^2)$ for all $B\subseteq A$. Then
	\begin{align*}
		\lim_{t\downarrow 0} \frac{1}{t}\, \E_{(l,r,B)}\left(f(\bm{Y}^W_t) - f(l,r,B)\right) = \mathcal{G}_{\kappa',\beta}^W f(l,r,B)
	\end{align*}
	i.e.\ $\mathcal{G}_{\kappa',\beta}^W$ is the generator of $\bm{Y}^W$ on the set of functions $f$ considered above.
\end{thm}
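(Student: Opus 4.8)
The plan is to follow the exact same strategy as in the proof of Theorem~\ref{thm:weighted-levy}, since the theorem is stated in the previous subsection to be the ``without compensation'' analogue and the author has indicated that all proofs carry over with only minor modifications. Concretely, the strong Markov property will be established exactly as in Lemma~\ref{lem:smp-fragmentation}: the weak Markov property is immediate from the definition of $\bm Y^W$ via the reweighting of $(L',R')$, and the continuity of $(l,r)\mapsto\E_{(l,r,B)}(f(\bm Y^W_t))$ follows from the analogue of Lemma~\ref{lem:weighted-levy-palmstep} with $M'=\infty$ together with stable scaling of $(L,R)$ and the continuity hypotheses in \eqref{eq:conditions-nonsimple}, after which one invokes the Feller-type argument \cite[Theorem~19.17]{kallenberg}. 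The statements of Lemmas~\ref{lem:msw-palm-result}, \ref{lem:weighted-levy-palmstep}, \ref{lem:generator-truncation} and \ref{lem:generator-finite} all have verbatim analogues with $\kappa$ replaced by $\kappa'$, with the only genuine change being that the Lévy-compensation term $-h\,\partial f/\partial l$ disappears from $\mathcal L^\emptyset_{\kappa',\beta}$ and that the hypothesis $1-W^\emptyset(h)\lesssim h^2$ is replaced by $1-W^\emptyset(h)\lesssim h^{8/\kappa'}$ (so that the jump measure still integrates $W^\emptyset-1$, using $8/\kappa'>1$ since $\kappa'<8$); Lemma~\ref{lem:ppp-reweighting} is model-independent and applies directly.

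With these lemmas in hand, the generator computation proceeds in two pieces, mirroring the proof of Theorem~\ref{thm:weighted-levy}. First I would compute $\lim_{t\downarrow0}\tfrac1t(\E_{(l,r,B)}(f(\bm Y^W_t);P^W_t=B)-f(l,r,B))$: by the analogue of Lemma~\ref{lem:weighted-levy-palmstep} applied to $(u,v,C)\mapsto f(u,v,B)1(B=C)$ and the martingale statement in Lemma~\ref{lem:twisted-jump-nonsimple} together with optional stopping at $t\wedge\tau^\emptyset_0$, this equals $f(l,r,B)\int(\mu_L+\mu_R)(dh)(W^\emptyset(|h|)-1)+\frac{1}{w_{\kappa'}(l,r)W^B(l+r)}\mathcal L^\emptyset_{\kappa',\beta}((u,v)\mapsto w_{\kappa'}(u,v)W^B(u+v)f(u,v,B))(l,r)$, using dominated convergence and boundedness of $\mathcal L^\emptyset_{\kappa',\beta}g$ exactly as in Proposition~\ref{prop:nonsimple-msw-process}. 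This already settles the case $B=\emptyset$. Second, for $B\neq\emptyset$ I would analyse the contribution $\E_{(l,r,B)}(f(\bm Y^W_t);E^{M'}_t(L^W,R^W),P^W_t\neq B)$ using the iterated Palm formula \eqref{eq:iterated-palm-simple} (in its $\kappa'$ version): the key observation is that the indicator $1(E^{M'}_t(\cdots))$ forces deterministic two-sided bounds on $w_{\kappa'}$, $W^C$ and $f$ inside the integral, so the $k\ge2$ terms (two or more fragmentation events in time $t$) contribute $O(t^2)$, while the $k=1$ term, after dividing by $t$ and letting $t\downarrow0$, converges by right-continuity of $(L^\emptyset,R^\emptyset)$ to exactly the two sums over $C\subsetneq B$ appearing in $\mathcal G^W_{\kappa',\beta}f$. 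Finally, the truncation Lemma~\ref{lem:generator-truncation} (in its $\kappa'$ form) lets me remove the restriction to $E^{M'}_t$: choosing $M$ so that all $f(\cdot,\cdot,C)$ are supported in $(1/M,M)^2$ and then $M'>M$ from that lemma, the error $\P_{(l,r,B)}(L^W([0,t])\not\subseteq(1/M',M'),(L^W,R^W)_t\in(1/M,M)^2)=O(\epsilon t)$ is negligible after dividing by $t$ and sending $\epsilon\downarrow0$; combining the two pieces gives the claimed formula for $\mathcal G^W_{\kappa',\beta}f$.

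I expect the main obstacle — or rather the main point where care is needed — to be the verification that the truncation Lemma~\ref{lem:generator-truncation} genuinely carries over to the non-compensated setting. Its proof in the compensated case hinges on stable scaling of $(L,R)$ and on the monotonicity and polynomial-decay properties \eqref{eq:conditions-simple} of $W^\emptyset$, and on the fact that $x\mapsto W^C(x)x^{-1-4/\kappa}$ is bounded above and below on compacts; in the non-compensated case one replaces $4/\kappa$ by $4/\kappa'$, uses \eqref{eq:conditions-nonsimple}, and must check that the stable-rescaling identity $(L,R)\stackrel d=(l'/l)\cdot((L,R)_{(l/l')^{4/\kappa'}t})$ and the inclusion-of-events manipulations still go through without the drift term (they do, since dropping the compensation only simplifies the Lévy process). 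The rest of the argument is genuinely routine: every lemma in Section~\ref{subsec:simple-fragmentation} has a one-to-one non-compensated counterpart, the combinatorics of the ordered partitions $B\setminus C=C_0\cup\cdots\cup C_{k-1}$ is unchanged, and the Palm-formula bookkeeping is identical. Hence I would write the proof as: ``The strong Markov property is the analogue of Lemma~\ref{lem:smp-fragmentation}. The rest of the proof is identical to that of Theorem~\ref{thm:weighted-levy}, using the non-compensated analogues of Lemmas~\ref{lem:msw-palm-result}--\ref{lem:generator-finite} (whose proofs are in turn identical to the compensated ones), the only change being the disappearance of the Lévy-compensation term in $\mathcal L^\emptyset_{\kappa',\beta}$ and the replacement of the exponent $2$ by $8/\kappa'$ in the decay hypothesis on $W^\emptyset$.''
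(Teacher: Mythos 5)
Your proposal is correct and takes exactly the same approach as the paper: the authors explicitly state at the start of Section~\ref{subsec:nonsimple-fragmentations} that all proofs in this subsection are essentially identical to those of Section~\ref{subsec:simple-fragmentation} and are therefore omitted, and your sketch — carrying over Lemmas~\ref{lem:msw-palm-result}–\ref{lem:generator-finite} verbatim with $\kappa\to\kappa'$, dropping the Lévy compensation term, and replacing the decay exponent $h^2$ by $h^{8/\kappa'}$ — is precisely what they intend. Your flagged concern about the truncation Lemma~\ref{lem:generator-truncation} is well placed but, as you note, dropping the drift term only simplifies the argument, so nothing breaks.
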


The proof of the following proposition is exactly analogous to the one of Proposition \ref{prop:power-theta} except that Lemma \ref{lem:nonsimple-integrals-comp} is used instead of Lemma \ref{lem:simple-integrals-comp} in the computations.

\begin{prop}
	\label{prop:nonsimple-power-theta}
	Suppose that $l,r>0$ such that $l+r=1$ and let
	\begin{align*}
		A_+ = \E_{(l,r)}\left(\sum_{t<\zeta'\colon \Delta X'_t>0} |\Delta X'_t|^\theta \right)\quad\text{and}\quad A_- = \E_{(l,r)}\left(\sum_{t<\zeta'\colon \Delta X'_t<0} |\Delta X'_t|^\theta \right)\;.
	\end{align*}
	Then $A_+<\infty$, $A_-<1$ and $A_+/(1-A_-) = \cos(4\pi/\kappa')/\cos(4\pi/\kappa'-\pi\theta)$ whenever we have $\theta\in (4/\kappa'+1/2,4/\kappa'+3/2)$. Also, if $\theta\notin (4/\kappa', 1+8/\kappa')$ then $A_+=\infty$.
\end{prop}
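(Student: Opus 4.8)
The plan is to follow the proof of Proposition~\ref{prop:power-theta} line by line, with the compensated objects replaced by their non-compensated counterparts and with Lemma~\ref{lem:nonsimple-integrals-comp} used in place of Lemma~\ref{lem:simple-integrals-comp}. First I would apply the second (fragmentation) identity of the non-compensated analogue of Lemma~\ref{lem:msw-palm-result} with $W^\emptyset\equiv 1$ and $g(x,l,r)=x^\theta 1(x>0)$ to write
\[
A_+=\int_0^\infty\E_{(l,r)}\!\left(\int_0^\infty\frac{h^\theta w_{\kappa'}(L'_t+h,R'_t)}{w_{\kappa'}(L'_t,R'_t)}\,\mu_L(dh)+\int_0^\infty\frac{h^\theta w_{\kappa'}(L'_t,R'_t+h)}{w_{\kappa'}(L'_t,R'_t)}\,\mu_R(dh);\,t<\zeta'\right)dt .
\]
Since $w_{\kappa'}(l+h,r)/w_{\kappa'}(l,r)=((l+r)/(l+r+h))^{1+4/\kappa'}$, the inner $h$-integrand is comparable to $h^{\theta-1-4/\kappa'}$ near $0$ and to $h^{\theta-2-8/\kappa'}$ near $\infty$; hence, whenever $\theta\notin(4/\kappa',1+8/\kappa')$, the inner integral equals $+\infty$ for every $t<\zeta'$ a.s., which already gives the last assertion $A_+=\infty$.

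From now on fix $\theta\in(4/\kappa'+1/2,4/\kappa'+3/2)$, which is contained in $(4/\kappa',1+8/\kappa')$ because $\kappa'<8$, and define $\sigma$ by $e^{-\sigma}=\cos(4\pi/\kappa')/\cos(4\pi/\kappa'-\pi\theta)$; on this range of $\theta$ one has $4\pi/\kappa'-\pi\theta\in(-3\pi/2,-\pi/2)$, so both cosines are negative and $e^{\sigma}>0$. Applying the same Palm identity to $A_-$ and combining, one gets $e^\sigma A_++A_-=\int_0^\infty\E_{(l,r)}(g(L'_t,R'_t);\,t<\zeta')\,dt$, where $g$ is defined exactly as the function $g$ in the proof of Proposition~\ref{prop:power-theta} but with $\kappa'$ in place of $\kappa$, and is finite because $\theta\in(4/\kappa',1+8/\kappa')$. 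The crucial step is that $f(l',r'):=(l'+r')^\theta 1(l',r'>0)$ satisfies $\mathcal{G}_{\kappa',\beta}f=-g$ on $(0,\infty)^2$. By stable scaling this reduces to $l'+r'=1$, where, using the non-compensated generator formula displayed just before the proposition, the identity $\mathcal{G}_{\kappa',\beta}f(l',r')+g(l',r')=0$ becomes a finite linear combination of Mellin-type integrals of $h^s(1+h)^{-1-4/\kappa'}$ and $h^s(1-h)^{-1-4/\kappa'}$, which are evaluated by Lemma~\ref{lem:nonsimple-integrals-comp} together with the Beta-function identities of Lemma~\ref{lem:beta-results}; the particular value of $\sigma$ is precisely what makes the positive- and negative-jump contributions cancel. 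This is the only place where the argument genuinely differs from the compensated case: there is no drift/compensation term to carry along, and the cancellation uses the non-compensated integral evaluations.

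It then remains to show that $-\int_0^\infty\E_{(l,r)}(\mathcal{G}_{\kappa',\beta}f(L'_t,R'_t);t<\zeta')\,dt=1$, which I would do exactly as in Proposition~\ref{prop:power-theta}: choose cutoffs $h_n\in C_c^\infty(\R)$ equal to $1$ on $(2^{-n},2^n)$ with support in $(2^{-n-1},2^{n+1})$, set $u_n(l',r')=h_n(l')h_n(r')f(l',r')\in C_c^\infty((0,\infty)^2)$, stop the martingale of Proposition~\ref{prop:nonsimple-msw-process} at $t\wedge\zeta'_m$ with $\zeta'_m=\inf\{t:L'_t\notin(2^{-m},2^m)\text{ or }R'_t\notin(2^{-m},2^m)\}$, and use optional stopping to get
\[
\E_{(l,r)}\bigl(u_n(L'_{t\wedge\zeta'_m},R'_{t\wedge\zeta'_m})\bigr)=1+\int_0^t\E_{(l,r)}\bigl(\mathcal{G}_{\kappa',\beta}u_n(L'_s,R'_s);\,s<\zeta'_m\bigr)\,ds .
\]
Sending $n\to\infty$ (dominated convergence, using $|\mathcal{G}_{\kappa',\beta}u_n|\le C_m$ on $[2^{-m},2^m]^2$), then $m\to\infty$ and $t\to\infty$ (monotone convergence, since $\mathcal{G}_{\kappa',\beta}f=-g\le 0$), turns the right side into $1+\int_0^\infty\E_{(l,r)}(\mathcal{G}_{\kappa',\beta}f(L'_s,R'_s);s<\zeta')\,ds$. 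For the left side, Fatou combined with the bound $\E_{(l,r)}(u_n(\cdot))\le 1$ valid for $\theta$ and for some $\theta'\in(\theta,4/\kappa'+3/2)$ shows that $((X'_{t\wedge\zeta'_m})^\theta:m,t)$ is uniformly integrable, and since $X'_{t\wedge\zeta'_m}\to X'_t$ a.s.\ and $X'_t\to 0$ a.s.\ by Proposition~\ref{prop:nonsimple-msw-process}, the left side tends to $0$. Hence $e^\sigma A_++A_-=1$; since positive jumps of $X'$ occur before $\zeta'$ with positive probability we have $A_+>0$, so $A_+<\infty$, $A_-<1$, and $A_+/(1-A_-)=e^{-\sigma}=\cos(4\pi/\kappa')/\cos(4\pi/\kappa'-\pi\theta)$. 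The main obstacle is this last step: justifying the interchange of the three limits and running the uniform-integrability argument that pins the constant to exactly $1$ rather than merely $\le 1$; once that is in place the generator identity $\mathcal{G}_{\kappa',\beta}f=-g$ is just bookkeeping with the Beta-type integrals of Lemma~\ref{lem:nonsimple-integrals-comp}.
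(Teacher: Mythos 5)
Your proposal is correct and follows exactly the route the paper intends: the paper's own proof of Proposition \ref{prop:nonsimple-power-theta} is the single sentence ``exactly analogous to the one of Proposition \ref{prop:power-theta} except that Lemma \ref{lem:nonsimple-integrals-comp} is used instead of Lemma \ref{lem:simple-integrals-comp},'' and what you have written out is a faithful, detailed expansion of that analogy, correctly substituting Proposition \ref{prop:nonsimple-msw-process} and Lemma \ref{lem:twisted-jump-nonsimple} for their compensated counterparts and correctly noting that the only substantive change in the generator computation is the absence of the Lévy compensation term.
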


\section{Some functionals of Lévy excursions}
\label{sec:levy-exc}

In this section we will carry out computations for stable Lévy excursions. These will be used when finding the partition function for the generalized LQG disk with zero marked points or one marked point in Section \ref{sec:disk}.

Suppose that $\nu \in (1,2)$. Throughout this section, let $(B_t)_{t\geq 0}$ be a stable Lévy process with exponent $\nu$ and no negative jumps started at $0$. We fix the normalization as in \cite{curien-kortchemski-looptree-def}, that is by requiring that
\begin{align*}
	\E(e^{-\lambda B_t}) = e^{-t\lambda^\nu}\quad\text{for $\lambda>0$ and $t\ge 0$}\;.
\end{align*}
The jump measure of the Lévy process $B$ is $\pi(dh)=\Gamma(-\nu)^{-1} h^{-1-\nu}1_{(0,\infty)}(h)\,dh$. Let us define $\tau_y = \inf\{t\ge 0\colon B_t<-y\}$ and $\tau_{y-}=\inf\{t\ge 0\colon B_t\le -y\}$ for $y\ge 0$. Note that we have $\tau_y\neq\tau_{y-}$ precisely if $B|_{[\tau_{y-},\tau_{y}]}$ is an excursion from $-y$ to $-y$ staying strictly above level $-y$ on the interval $(\tau_{y-},\tau_{y})$.

Then by excursion theory (see for instance \cite[Chapter 22]{kallenberg}) there exists a random non-negative process $(b_t)$ starting at $0$ and absorbed again at $0$ at time $1$ with the following property. Let $\wt{n}_\ell$ be the law of $(\ell^{\,1/\nu}b_{t/\ell}\colon t\ge 0)$, i.e., $\widetilde{n}_\ell$ is a probability measure on excursions of duration $\ell$ which is obtained by rescaling $b$ appropriately. Define
\begin{align*}
	\wt{n} = \frac{1}{\nu\cdot \Gamma(1-1/\nu)} \int_0^\infty d\ell\,\ell^{-1-1/\nu}\,\wt{n}_\ell\;,
\end{align*}
so that $\wt n$ is an infinite measure on excursions of non-fixed duration obtained by combining the measures $\wt n_\ell$ (called the excursion measure of the Lévy process). Then
\begin{align*}
	\xi:=\sum_{y\ge 0\colon \tau_y > \tau_{y-}} \delta_{(y,B_{(\tau_{y-}+\cdot)\wedge \tau_y}-B_{\tau_{y-}})}\quad\text{is a PPP with intensity}\quad \lambda|_{(0,\infty)}\otimes \wt{n}
\end{align*}
on $(0,\infty)\times D([0,\infty))$ where $\lambda$ is the Lebesgue measure on $\R$ and $D([0,\infty))$ is the space of càdlàg functions. In other words, the excursions made by $B$ above its running infimum indexed by the value of the infimum have law of a Poisson point process (PPP) with intensity $\lambda|_{(0,\infty)}\otimes \wt{n}$. The process $(\ell^{\,1/\nu}b_{t/\ell}\colon t\ge 0)$ is called the spectrally positive stable Lévy excursion with exponent $\nu$ and length $\ell$. We refer to \cite[Section 3.1.1]{curien-kortchemski-looptree-def} for more information on these constructions.

We will now compute some explicit functionals associated to such Lévy excursions. Recall the definition of the modified Bessel functions of the second kind:
\begin{align*}
	K_\nu(x)= \frac{1}{2(x/2)^\nu} \int_0^\infty e^{-(x/2)^2 y-1/y}\,\frac{dy}{y^{1+\nu}}\;.
\end{align*}
We will now prove the following theorem; as the proof will reveal, it is quite remarkable that the expectations of the functionals below have a completely explicit form.

\begin{thm}
	\label{thm:levy-excursions}
	Consider $\nu\in (1,2)$, $c>0$, $\theta \in (1,1+1/\nu)$ and let
	\begin{align*}
		g(x) &= \frac{2(cx/2)^\nu}{\Gamma(\nu)}\,K_\nu(cx)\quad\text{for $x>0$}\quad\text{and}\quad g(0)=1\;, \\
		f(x) &= x^{1+\nu} K_{1+\nu - \theta\nu}(cx)\quad\text{for $x>0$}\quad\text{and}\quad f(0)=0 \;.
	\end{align*}
	Let $c'=c^\nu 2^{1-\nu}$. Then for all $\ell>0$,
	\begin{align*}
		\E\left( \prod_{t\le 1} g(\ell^{1/\nu}\Delta b_t)\right) &= \frac{2(c'\ell/2)^{1/\nu}}{\Gamma(1/\nu)}\,K_{1/\nu}(c'\ell)\;,\\
		\E\left( \sum_{t\le 1} f(\ell^{1/\nu}\Delta b_t)\,\prod_{s\neq t} g(\ell^{1/\nu}\Delta b_s)\right) &= \frac{\sqrt{1/\nu}\,\Gamma(-1/\nu)\sin(\pi(1+1/\nu-\theta))}{\sqrt{\nu}\,\Gamma(-\nu)\sin(\pi(1+\nu-\theta\nu))} \,\ell^{1+1/\nu} K_{1+1/\nu-\theta}(c'\ell)\;.
	\end{align*}
\end{thm}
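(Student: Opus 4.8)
The plan is to compute both functionals by "unfolding" the excursion into the full spectrally positive Lévy process $B$ via excursion theory, applying a Poisson point process reweighting (Lemma \ref{lem:ppp-reweighting}), and then recognising the resulting Laplace-type transforms as integrals producing modified Bessel functions. First I would set up notation: by the description above Theorem \ref{thm:levy-excursions}, the excursions of $B$ above its running infimum form a PPP $\xi$ with intensity $\lambda|_{(0,\infty)}\otimes\wt n$ on $(0,\infty)\times D([0,\infty))$, and each excursion of duration $\ell$ decomposes as $\ell^{1/\nu}b_{\cdot/\ell}$ under $\wt n_\ell$. The key observation is that $\wt n = \frac{1}{\nu\Gamma(1-1/\nu)}\int_0^\infty d\ell\,\ell^{-1-1/\nu}\wt n_\ell$, so an expectation over $\ell$-indexed functionals of $b$ weighted by $\ell^{-1-1/\nu}$ reconstructs an integral against $\wt n$; conversely, a computation against $\wt n$ of a functional of the form (something depending on the duration) can be decomposed back.

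For the first identity, I would argue as follows. Consider the reweighting of the law of $B$ (equivalently of the driving PPP $\xi$, via the exponential-type functional $\prod_{t} g(\ell^{1/\nu}\Delta b_t)$ applied to each excursion). Since $g(x) = \bar K_\nu(cx)$ in the notation \eqref{eq:Kbar} and $1 - g(x) = O(x^{2\nu})$... actually $1-\bar K_\nu(x)\sim \text{const}\cdot x^2$ near $0$, one checks the integrability condition $\int \wt n(d\mathbf e)|{\prod_t g(\cdots)} - 1| < \infty$ needed to apply the PPP reweighting. The crucial input is that reweighting $B$ by $\prod_{\text{jumps } s} g(\Delta B_s\cdot(\text{appropriate scaling}))$ turns the jump measure $\pi(dh) = \Gamma(-\nu)^{-1}h^{-1-\nu}dh$ into $g$-times that measure, which changes the Laplace exponent of $B$ from $\lambda^\nu$ to something explicitly computable. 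Concretely I would compute $\int_0^\infty (e^{-\lambda h}-1)\bar K_\nu(ch)\,\pi(dh)$ or the analogous compensated quantity using the integral representation $K_\nu(x) = \tfrac12(x/2)^{-\nu}\int_0^\infty e^{-(x/2)^2 y - 1/y}y^{-1-\nu}dy$ and Fubini, which reduces everything to elementary Gamma integrals. From the new Laplace exponent one reads off the new excursion measure of the reweighted process, and in particular the law of the excursion length, which is governed precisely by $\bar K_{1/\nu}(c'\ell)$ with $c' = c^\nu 2^{1-\nu}$; then $\E(\prod_{t\le1}g(\ell^{1/\nu}\Delta b_t))$ is extracted from this length density by the decomposition $\wt n_\ell = \text{(density)}^{-1}\times(\text{reweighted }\wt n\text{ restricted to length }\ell)$ — equivalently, by comparing the two representations of the reweighted excursion measure.

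For the second identity, the plan is to apply Palm's formula (Lemma \ref{lem:palm-abstract}) to the PPP of jumps: $\E(\sum_{t\le1}f(\ell^{1/\nu}\Delta b_t)\prod_{s\ne t}g(\ell^{1/\nu}\Delta b_s))$ becomes an integral over the size $h$ of a distinguished jump against $\pi(dh)$, of $f(\ell^{1/\nu}h)$ times the expectation of $\prod_{\text{other jumps}}g$ of a process with one extra jump of size $h$ inserted. Working on the level of the full process $B$ and using the spatial homogeneity/scaling, the "extra jump" contribution factorises, and the remaining expectation is again of the type handled in the first identity. Matching the powers of $\ell$ and collecting the Gamma/sine factors — the sine factors $\sin(\pi(1+1/\nu-\theta))$ and $\sin(\pi(1+\nu-\theta\nu))$ arise from the reflection formula $\Gamma(z)\Gamma(1-z) = \pi/\sin(\pi z)$ applied when converting between the $f$-integral $\int h^{1+\nu}K_{1+\nu-\theta\nu}(ch)h^{-1-\nu}dh$-type expressions and their Bessel form — yields the stated constant. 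The constraint $\theta\in(1,1+1/\nu)$ is exactly what makes these integrals (and the order $1+\nu-\theta\nu$ of the Bessel function, which must lie in a suitable range) convergent.

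The main obstacle I anticipate is the bookkeeping in the second identity: correctly tracking how the scaling $b\mapsto\ell^{1/\nu}b_{\cdot/\ell}$ interacts with both the Palm insertion of the distinguished jump and the length-reweighting, so that the powers of $\ell$ and $c$ versus $c'$ come out right, and justifying the interchange of the Palm formula with the infinite excursion measure $\wt n$ (this requires the integrability estimates from the first part, plus care that $f\ge 0$ is not automatic — one works with $|f|$ and checks finiteness). The computation of the Laplace transform $\int(e^{-\lambda h}-1)g(ch)\pi(dh)$ and its identification with the Laplace exponent of a stable-type process whose excursion length has a $\bar K_{1/\nu}$-density is the analytic heart, but it is a direct Gamma-integral computation once the Bessel integral representation is inserted.
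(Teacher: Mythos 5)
Your plan is essentially the route the paper takes: unfold the excursion functional into one of the full spectrally positive process $B$ stopped at $\tau_1$ via excursion theory, apply Palm's formula (both on the excursion PPP and on the jump PPP of $B$) for the second identity, and compute an explicit Bessel-type integral. Two technical points your sketch glosses over and which the paper's Lemma~\ref{lem:palm-levy-excursion} is designed to handle cleanly. First, ``reweighting $B$ by $\prod_s g(\Delta B_s)$'' does not by itself give a probability law up to the random time $\tau_1$: the normaliser $\exp(t\int(g-1)\,d\pi)$ is a function of deterministic time $t$, not of $\tau_1$, and the jump reweighting also introduces a drift. The paper resolves both issues at once with a single exponential martingale $M_T = \exp(-\rho_\lambda B_T - \lambda T - \sum_{t\le T}G(\Delta B_t))$ (Esscher tilt $\times$ jump reweighting $\times$ discount), with $\rho_\lambda$ defined \emph{implicitly} by the requirement that $M$ be a martingale, and applies optional stopping at $\tau_1$; this is the precise mechanism producing $\E(e^{-\lambda\tau_1 - \sum G(\Delta B_t)}) = e^{-\rho_\lambda}$, i.e.\ the inverse of the reweighted Laplace exponent evaluated at $\lambda$, without having to invoke and verify the abstract ``$\Phi=\psi^{-1}$'' dictionary for the tilted process. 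Second, the key integral $\Gamma(-\nu)^{-1}\int_0^\infty (e^{-\rho h}g(h) - 1 + \rho h)\,h^{-1-\nu}\,dh$ does \emph{not} reduce to elementary Gamma integrals by a direct Fubini with the representation $K_\nu(x)=\tfrac12(x/2)^{-\nu}\int_0^\infty e^{-(x/2)^2 y-1/y}\,y^{-1-\nu}\,dy$: the compensating $-1+\rho h$ obstructs the naive interchange. Lemma~\ref{lem:residue-bessel-computation} instead integrates by parts twice to remove the compensation, substitutes $K_\nu(x)=\int_0^\infty e^{-x\cosh t}\cosh(\nu t)\,dt$, and evaluates the resulting rational-in-$\cosh$ integrals by a contour/residue argument. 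With these two points filled in, your outline matches the paper's proof.
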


To prove the theorem we need two lemmas, one on Lévy processes and the other on the computation of some integrals involving the modified Bessel functions of the second kind. The latter is Lemma \ref{lem:residue-bessel-computation}.

\begin{lemma}
	\label{lem:palm-levy-excursion}
	Consider $\nu\in (1,2)$. Let $G\colon [0,\infty)\to [0,\infty)$ be twice continuously differentiable with $G(0)=G'(0)=0$. Suppose that  $\lambda>0$ and $\rho_\lambda>0$ are such that
	\begin{align}
		\label{eq:palm-key-statement}
		\lambda = \frac{1}{\Gamma(-\nu)} \int_0^\infty \frac{dh}{h^{1+\nu}}\,(e^{-\rho_\lambda h - G(h)}-1+\rho_\lambda h) \;.
	\end{align}
	Note that the integral in the display above is finite for any $\rho_\lambda>0$ because of the assumption $G(0)=G'(0)=0$. Then with $B$ as above,
	\begin{align}
		\label{eq44}
		\begin{split}
		-\rho_\lambda &= \log \E\left( e^{-\lambda \tau_1-\sum_{t< \tau_1} G(\Delta B_t)} \right) \\
		&= \frac{1}{\nu \Gamma(1-1/\nu)} \int_0^\infty \frac{d\ell}{\ell^{1+1/\nu}}\left( e^{-\lambda \ell}\,\E\left( e^{-\sum_{t\le 1} G(\ell^{1/\nu}\Delta b_t) } \right) - 1\right)\;.
		\end{split}
	\end{align}
	Let $F\colon [0,\infty)\to [0,\infty)$ be twice continuously differentiable with $F(0)=F'(0)=0$. Then  
	\begin{align}
		\label{eq45}
		\begin{split}
		&-\frac{d}{d\lambda}(e^{-\rho_\lambda})\,\frac{1}{\Gamma(-\nu)} \int_0^\infty \frac{dh}{h^{1+\nu}}\,e^{-\rho_\lambda h -G(h)}F(h) = \E\left( \sum_{t<\tau_1} F(\Delta B_t)\, e^{-\lambda \tau_1-\sum_{s< \tau_1} G(\Delta B_s)} \right)\\
		&\qquad = \frac{e^{-\rho_\lambda}}{\nu \Gamma(1-1/\nu)} \int_0^\infty \frac{d\ell}{\ell^{1+1/\nu}}\,\E\left( \sum_{t\le 1} F(\ell^{1/\nu}\Delta b_t)\, e^{-\lambda\ell - \sum_{s\le 1} G(\ell^{1/\nu}\Delta b_s)} \right),
		\end{split}
	\end{align}
	where now we need to assume that a $\rho_\lambda>0$ satisfying \eqref{eq:palm-key-statement} is defined on some open set of $\lambda>0$ values in order for the derivative to be well-defined.
\end{lemma}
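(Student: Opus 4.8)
The plan is to prove \eqref{eq44} by an exponential martingale together with optional stopping, using the excursion decomposition of $B$ for the second identity, and then to deduce \eqref{eq45} by differentiating \eqref{eq44} in an auxiliary parameter obtained by perturbing $G$ in the direction $F$. Throughout set
\[
\Psi_G(\rho):=\frac{1}{\Gamma(-\nu)}\int_0^\infty\bigl(e^{-\rho h-G(h)}-1+\rho h\bigr)h^{-1-\nu}\,dh ,
\]
so that the hypothesis \eqref{eq:palm-key-statement} reads $\Psi_G(\rho_\lambda)=\lambda$. Since $G(0)=G'(0)=0$ and $G\in C^2$, we have $G(h)=O(h^2)$ near $0$; together with $\nu\in(1,2)$ this makes $\Psi_G$ finite and smooth on $(0,\infty)$, with $\Psi_G'(\rho)=\Gamma(-\nu)^{-1}\int_0^\infty h^{-\nu}(1-e^{-\rho h-G(h)})\,dh\in(0,\infty)$ (here $\Gamma(-\nu)>0$ for $\nu\in(1,2)$ and $e^{-\rho h-G(h)}<1$ for $h>0$, $\rho>0$). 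In particular $\lambda\mapsto\rho_\lambda$ is well-defined, strictly increasing and $C^1$.

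The analytic key step is that, for every $\rho>0$, the process $M_t:=\exp(-\rho B_t-t\Psi_G(\rho))\prod_{s\le t}e^{-G(\Delta B_s)}$ is a martingale. I would prove this by approximating $B$ by the compound-Poisson-with-drift processes $B^\epsilon$ retaining only jumps of size $\ge\epsilon$: for $B^\epsilon$, Campbell's formula gives $\E_{}[e^{-\rho B^\epsilon_t}\prod_{s\le t,\,\Delta B_s\ge\epsilon}e^{-G(\Delta B_s)}]=e^{t\Psi_G^\epsilon(\rho)}$ directly, and letting $\epsilon\downarrow0$ (the product converges monotonically, and $\{e^{-\rho B^\epsilon_t}\}_\epsilon$ is bounded in $L^{1+\delta}$ since $\Psi_0((1+\delta)\rho)=((1+\delta)\rho)^\nu<\infty$) yields $\E[e^{-\rho B_t}\prod_{s\le t}e^{-G(\Delta B_s)}]=e^{t\Psi_G(\rho)}$; the martingale property then follows from the stationary independent increments of $B$. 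Taking $\rho=\rho_\lambda$, so $t\Psi_G(\rho_\lambda)=\lambda t$, I apply optional stopping at $\tau_1$. As $B$ has no negative jumps, $B_{\tau_1}=-1$, there is no jump at time $\tau_1$, and $B_{t\wedge\tau_1}\ge-1$, hence $M_{t\wedge\tau_1}\le e^{\rho_\lambda}$ is bounded; since $B$ oscillates, $\tau_1<\infty$ a.s., so dominated convergence gives $\E[M_{\tau_1}]=M_0=1$, i.e. $e^{\rho_\lambda}\E[e^{-\lambda\tau_1-\sum_{t<\tau_1}G(\Delta B_t)}]=1$, which is the first equality in \eqref{eq44}. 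For the second equality I use the excursion decomposition recalled before the lemma: the excursions of $B$ above its running infimum, indexed by the infimum level, form a PPP $\xi$ of intensity $\lambda|_{(0,\infty)}\otimes\widetilde n$; because $B$ spends Lebesgue-zero time at its infimum and every jump of $B$ occurs strictly inside an excursion, $\tau_1=\int_{(0,1)\times D([0,\infty))}\zeta(e)\,\xi(dy,de)$ and $\sum_{t<\tau_1}G(\Delta B_t)=\int_{(0,1)\times D([0,\infty))}H(e)\,\xi(dy,de)$ with $\zeta(e)$ the duration of $e$ and $H(e)=\sum_sG(\Delta e(s))$. The exponential formula for PPPs then gives $\E[e^{-\lambda\tau_1-\sum_{t<\tau_1}G(\Delta B_t)}]=\exp(-\int_{D([0,\infty))}(1-e^{-\lambda\zeta(e)-H(e)})\,\widetilde n(de))$, and unfolding $\widetilde n=(\nu\Gamma(1-1/\nu))^{-1}\int_0^\infty\ell^{-1-1/\nu}\widetilde n_\ell\,d\ell$ together with the scaling $e\stackrel d=(\ell^{1/\nu}b_{\cdot/\ell})$ under $\widetilde n_\ell$ rewrites the exponent as the integral on the right of \eqref{eq44}.

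For \eqref{eq45} the plan is to apply \eqref{eq44} with $G$ replaced by $G+uF$ for small $u\ge0$. This perturbed $G+uF$ still satisfies the hypotheses, and $\rho^{(u)}:=\Psi_{G+uF}^{-1}(\lambda)$ exists for small $u$ by the implicit function theorem since $\Psi_G'(\rho_\lambda)>0$. From \eqref{eq44} for $G+uF$ we get $-\rho^{(u)}=\log\E[e^{-\lambda\tau_1-\sum_{t<\tau_1}(G+uF)(\Delta B_t)}]$ and the matching excursion formula. Differentiating both identities at $u=0$ (justified by domination, using $F\ge0$, $F(h)=O(h^2)$ near $0$, and the bounded range of the jumps that matter) turns the left sides into $-\partial_u\rho^{(u)}|_{u=0}$, which equals $\frac{d\rho_\lambda}{d\lambda}\,\Gamma(-\nu)^{-1}\int_0^\infty h^{-1-\nu}e^{-\rho_\lambda h-G(h)}F(h)\,dh$ after differentiating the constraint $\Psi_{G+uF}(\rho^{(u)})=\lambda$ and using $\Psi_G'(\rho_\lambda)=d\lambda/d\rho_\lambda$; and it turns the right sides into $e^{\rho_\lambda}$ times, respectively, the middle and right expectations in \eqref{eq45}. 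Multiplying through by $e^{-\rho_\lambda}$ and using $e^{-\rho_\lambda}\,\frac{d\rho_\lambda}{d\lambda}=-\frac{d}{d\lambda}(e^{-\rho_\lambda})$ produces \eqref{eq45} exactly.

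The main obstacle is the control of the several limit interchanges: establishing the martingale $M$ through the $\epsilon\downarrow0$ approximation with the correct uniform integrability, and differentiating the two identities in $u$ under the expectation and under the $\ell$-integral. All of these reduce to the $O(h^2)$ bounds near $0$ coming from $G(0)=G'(0)=0$ (and $F(0)=F'(0)=0$) and to integrability of $h^{-1-\nu}$ near $\infty$ for $\nu>1$, but they must be carried out carefully. A secondary technical point, needed for the PPP decomposition of $\tau_1$ and of $\sum_{t<\tau_1}G(\Delta B_t)$ to be exact, is to check that $B$ spends Lebesgue-zero time at its running infimum and that each jump before $\tau_1$ lies inside a genuine excursion above the infimum.
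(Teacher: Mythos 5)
For \eqref{eq44}, your argument (exponential martingale with Lévy exponent $\Psi_G$, optional stopping at $\tau_1$, then the excursion decomposition of $\tau_1$ and of the jump sum) is essentially the same as the paper's; no meaningful difference there.

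For \eqref{eq45}, you take a genuinely different route. The paper applies Palm's formula twice: once to the Poisson point process of excursions to get the right-hand equality, and once to the Poisson point process of jumps of $B$ (plus the strong Markov property at $\tau_h$ and the $\lambda$-derivative of \eqref{eq44}) to get the left-hand equality. You instead replace $G$ by $G+uF$, observe the hypotheses are preserved, use the implicit function theorem to differentiate the constraint $\Psi_{G+uF}(\rho^{(u)})=\lambda$ at $u=0$, and then differentiate both expressions in \eqref{eq44} (for $G+uF$) at $u=0$. This is a nice conceptual unification — it shows \eqref{eq45} is literally the Gâteaux derivative of \eqref{eq44} in the direction $F$ — and it avoids invoking Palm's formula for this lemma at all (the paper does define Palm's formula as Lemma \ref{lem:palm-abstract}, which it uses elsewhere, but your proof shows this lemma doesn't need it). Your approach also bundles the $\lambda$-derivative $\E(\tau_1 e^{-\lambda\tau_1-\cdots})=-d(e^{-\rho_\lambda})/d\lambda$ into the implicit-function-theorem calculation, which the paper handles as a separate small step.

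There is, however, a real gap in the justification of the differentiation under the expectation. You claim it is ``justified by domination'' and refer to ``the bounded range of the jumps that matter,'' but the jumps of $B$ before $\tau_1$ (and of the scaled excursion $\ell^{1/\nu}\Delta b_t$) are \emph{not} bounded, and the natural dominating candidate is precisely the quantity $\sum_{t<\tau_1}F(\Delta B_t)\,e^{-\lambda\tau_1-\sum_{s<\tau_1}G(\Delta B_s)}$ whose finiteness is what you are trying to prove — so a plain dominated convergence argument is circular. The correct fix is elementary but different: since $F\ge 0$, for each fixed sample path $u\mapsto e^{-\lambda\tau_1-\sum_{t<\tau_1}(G+uF)(\Delta B_t)}$ is a decreasing, convex function of $u\ge 0$, so the difference quotients $\bigl(e^{-\lambda\tau_1-\sum G}-e^{-\lambda\tau_1-\sum(G+uF)}\bigr)/u$ are nonnegative and increase monotonically as $u\downarrow 0$ to $\sum F(\Delta B_t)\,e^{-\lambda\tau_1-\sum G(\Delta B_s)}$. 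Monotone convergence then gives
\[
\lim_{u\downarrow 0}\frac{\E\bigl[e^{-\lambda\tau_1-\sum_{t<\tau_1}G(\Delta B_t)}\bigr]-\E\bigl[e^{-\lambda\tau_1-\sum_{t<\tau_1}(G+uF)(\Delta B_t)}\bigr]}{u}
= \E\Bigl[\sum_{t<\tau_1}F(\Delta B_t)\,e^{-\lambda\tau_1-\sum_{s<\tau_1}G(\Delta B_s)}\Bigr]\in[0,\infty],
\]
and since the left side converges to the finite quantity $e^{-\rho_\lambda}\partial_u\rho^{(u)}|_{u=0}$ (because $u\mapsto\rho^{(u)}$ is $C^1$ by the implicit function theorem — here domination under the $dh$-integral \emph{is} legitimate since $F(h)=O(h^2)$, $0\le e^{-uF}\le 1$), the right side is finite and the interchange is justified a posteriori. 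The same monotone convergence argument (not domination) is what you need for the $\ell$-integral in the second equality of \eqref{eq45}. With that replacement your proof is correct.

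Finally, you should also verify explicitly that the hypothesis ``for all $\lambda>0$ there exists $\rho_\lambda>0$ solving \eqref{eq:palm-key-statement}'' holds for $G+uF$; this follows because $\Psi_{G+uF}$ is continuous, strictly increasing with $\Psi_{G+uF}(0)\le 0$ (as $e^{-G-uF}\le 1$ and $\Gamma(-\nu)>0$) and $\Psi_{G+uF}(\rho)\sim\rho^\nu\to\infty$ as $\rho\to\infty$, but it is worth stating rather than asserting.
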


\begin{proof}
	Let $M$ be the following càdlàg process
	\begin{align*}
		M_T = e^{-\rho_\lambda B_T -\lambda T - \sum_{t\le T} G(\Delta B_t)}\;\quad T\ge 0\;.
	\end{align*}
	We will first argue that $\E(M_T)=1$ for any $T\ge 0$. Since $B$ has stationary and independent increments, it will follow that $M$ is a martingale. By the construction of the compensated Lévy process $B$ as a compensated sum over its jumps (see e.g.\ \cite[Chapter I]{bertoin-book}), we have
	\begin{align*}
		M^n_T:=e^{-\rho_\lambda \left(\sum_{t\le T\colon \Delta B_t>1/n} \Delta B_t - T\,\Gamma(-\nu)^{-1}\int_{1/n}^\infty h\cdot h^{-1-\nu}\,dh\right) - \sum_{t\le T\colon \Delta B_t>1/n} G(\Delta B_t)-\lambda T} \to M_T
	\end{align*}
	in probability as $n\to \infty$. Campbell's formula and the definition of $\lambda$ imply that
	\begin{align}
		\log \E(M^n_T) = -\lambda T+\frac{T}{\Gamma(-\nu)} \int_{1/n}^\infty (e^{-\rho_\lambda h - G(h)}-1+\rho_\lambda h) \frac{dh}{h^{1+\nu}} \to 0\quad\text{as $n\to \infty$}\;.
		\label{eq43}
	\end{align}
	A similar computation shows that $(M^n_T\colon n\ge 1)$ is bounded in $L^2$ and in particular uniformly integrable. Therefore $\E(M^n_T)\to \E(M_T)$ as $n\to \infty$. Using this, \eqref{eq:palm-key-statement}, and \eqref{eq43}, we get that $\E(M_T)=1$ as required.
	The fact that $M$ is a martingale and optional stopping now implies that for any $T\ge 0$,
	\begin{align*}
		1&=\E(M_{T\wedge \tau_1}) = \E\left(e^{-\rho_\lambda B_{T\wedge \tau_1} -\lambda (T\wedge \tau_1) - \sum_{t\le T\wedge \tau_1} G(\Delta B_t)} \right)\;.
	\end{align*}
	Sending $T\to\infty$ and using dominated convergence with dominating function $e^{\rho_\lambda}$ we get the first equality of \eqref{eq44}. 
	For the second equality of \eqref{eq44}, we observe that
	\begin{align}
		\label{eq:exponent-ppp-excursion}
		\lambda \tau_1 +\sum_{t<\tau_1} G(\Delta B_t) = \int_{(0,1)\times D([0,\infty))}\xi(dy,de) \left(\lambda \zeta_e + \sum_{t<\zeta_e} G(\Delta e_t)\right)\quad\text{a.s.}
	\end{align}
	where $\zeta_e=\inf\{t>0\colon e_t=0\}$. Hence since $n$ is the intensity measure of the Poisson point process $\xi$, we obtain
	\begin{align*}
		\log \E\left( e^{-\lambda \tau_1-\sum_{t< \tau_1} G(\Delta B_t)} \right) = \int_{(0,1)\times D([0,\infty))} n(dy,de) \left( e^{\lambda \zeta_e + \sum_{t<\zeta_e} G(\Delta e_t)} - 1 \right)\;.
	\end{align*}
	The result follows from the definition of $n$.
	
	The proof of \eqref{eq45}
	will make use of Palm's formula in several places (see Lemma \ref{lem:palm-abstract}). Let us start with the excursion theory perspective as this is somewhat simpler. Indeed, we write
	\begin{align*}
		&X:=\sum_{t<\tau_1} F(\Delta B_t)\, e^{-\lambda \tau_1-\sum_{s< \tau_1} G(\Delta B_s)} \\
		&\quad = \int_{(0,1)\times D([0,\infty))}\xi(dy,de) \left(\sum_{t<\zeta_e} F(\Delta e_t) \right) e^{- \int_{(0,1)\times D([0,\infty))}\xi(dy',de') \left(\lambda \zeta_{e'} + \sum_{s<\zeta_{e'}} G(\Delta e'_s)\right)}\;.
	\end{align*}
	By Palm's formula we obtain
	\begin{align*}
		\E(X) &=  \int_{(0,1)\times D([0,\infty))}n(dy,de) \left(\sum_{t<\zeta_e} F(\Delta e_t) \right) \\
		&\qquad \cdot \E\left(e^{- \int_{(0,1)\times D([0,\infty))}(\xi+\delta_{(y,e)})(dy',de') \left(\lambda \zeta_{e'} + \sum_{s<\zeta_{e'}} G(\Delta e'_s)\right)}\right)\\
		&= \int_{(0,1)\times D([0,\infty))}n(dy,de) \left(\sum_{t<\zeta_e} F(\Delta e_t) e^{-\lambda \zeta_e - \sum_{s<\zeta_e} G(\Delta e_s)}\right) \\
		&\qquad \cdot \E\left(e^{- \int_{(0,1)\times D([0,\infty))}\xi(dy',de') \left(\lambda \zeta_{e'} + \sum_{s<\zeta_{e'}} G(\Delta e'_s)\right)}\right) \;.
	\end{align*}
	Note that by \eqref{eq:exponent-ppp-excursion}, the final expectation in the display above equals $e^{-\rho_\lambda}$ and the second equality of \eqref{eq45}
	 follows again from the definition of $n$. 
	 
	Lastly, we compute $\E(X)$ using the Poisson point process structure of the jumps of $B$. We will need the notation
	\begin{align*}
		B^{(t,h)} = B + h\cdot 1_{[t,\infty)}\quad\text{and}\quad \tau_1^{(t,h)} = \inf\{s\ge 0\colon B^{(t,h)}_s<-1\}\;.
	\end{align*}
	Again by Palm's formula we get
	\begin{align*}
		\E(X) &= \E\left( \sum_{t\ge 0} F(\Delta B_t)1(t<\tau_1)\, e^{-\lambda \tau_1-\sum_{s< \tau_1} G(\Delta B_s)} \right) \\
		&= \frac{1}{\Gamma(-\nu)}\int_0^\infty dt\int_0^\infty \frac{dh}{h^{1+\nu}}\,F(h)\,\E\left( e^{-\lambda \tau^{(t,h)}_1-\sum_{s< \tau_1} G(\Delta B^{(t,h)}_s)}; t<\tau^{(t,h)}_1 \right)\;.
	\end{align*}
	Note that $\{t<\tau_1^{(t,h)}\}=\{t<\tau_1\}$ a.s. and by using the strong Markov property of $B$ at time $\tau_1$ we get
	\begin{align*}
		&\E\left( e^{-\lambda \tau^{(t,h)}_1-\sum_{s< \tau_1} G(\Delta B^{(t,h)}_s)}; t<\tau^{(t,h)}_1 \right) = \E\left( e^{-\lambda \tau^{(t,h)}_1-\sum_{s< \tau_1} G(\Delta B^{(t,h)}_s)}; t< \tau_1 \right) \\
		&\qquad = e^{-G(h)}\,\E\left( e^{-\lambda \tau_1-\sum_{s< \tau_1} G(\Delta B_s)}; t<\tau_1 \right)\E\left( e^{-\lambda \tau_h-\sum_{s< \tau_h} G(\Delta B_s)}\right) \\
		&\qquad = e^{-G(h)}\,\E\left( e^{-\lambda \tau_1-\sum_{s< \tau_1} G(\Delta B_s)}; t<\tau_1 \right) e^{-\rho_\lambda h}
	\end{align*}
	Plugging this into the expression for $\E(X)$ derived above yields
	\begin{align*}
		\E(X)=  \frac{1}{\Gamma(-\nu)}\int_0^\infty \frac{dh}{h^{1+\nu}}\,F(h)e^{-G(h)-\rho_\lambda h}\,\E\left( e^{-\lambda \tau_1-\sum_{s< \tau_1} G(\Delta B_s)}\tau_1 \right)\;.
	\end{align*}
	The first equality of \eqref{eq45} now follows by using that the expectation on the right-hand side of the last display is equal to $d(e^{-\rho_\lambda})/d\lambda$. 
\end{proof}

\begin{proof}[Proof of Theorem \ref{thm:levy-excursions}]
	Let $F(\ell) = f(\ell)/g(\ell)$ and $G(\ell)=-\log g(\ell)$. Let us first compute $\rho_\lambda$ for $\lambda>0$ in the setting of Lemma \ref{lem:palm-levy-excursion}. We have
	\begin{align*}
		\lambda &= \frac{1}{\Gamma(-\nu)}\int_0^\infty \frac{dh}{h^{1+\nu}} \left(e^{-\rho_\lambda h}\frac{2(ch/2)^\nu}{\Gamma(\nu)}\,K_\nu(ch) -1 +\rho_\lambda h\right) \\
		&= c^\nu 2^{1-\nu} \begin{cases}
			\cos(\nu\theta_\lambda )&\colon \text{if }\rho_\lambda\in (0,c], \text{ where } \theta_\lambda \in [0,\pi/2)\text{ is defined by } \rho_\lambda = c \cos(\theta_\lambda)\;, \\
			\cosh(\nu\theta_\lambda) &\colon \text{if } \rho_\lambda>c, \text{ where } \theta_\lambda > 0 \text{ is defined by } \rho_\lambda=c\cosh(\theta_\lambda)\;,
		\end{cases}
	\end{align*}
	by Lemma \ref{lem:residue-bessel-computation} and Euler's reflection formula. This can be inverted:
	\begin{align*}
		\rho_\lambda = c \begin{cases}
			\cos(\psi_\lambda/\nu) &\colon \text{if } \lambda\in(0,c^\nu 2^{1-\nu}],\text{ where } \psi_\lambda \in [0,\pi/2) \text{ and } \lambda= c^\nu 2^{1-\nu}\cos(\psi_\lambda) \;,\\
			\cosh(\psi_\lambda/\nu) &\colon \text{if } \lambda >c^\nu 2^{1-\nu},\text{ where } \psi_\lambda  >0 \text{ and }\lambda= c^\nu 2^{1-\nu}\cosh(\psi_\lambda)\;.
		\end{cases}
	\end{align*}
	Moreover, again by Lemma \ref{lem:residue-bessel-computation} and Euler's reflection formula
	\begin{align*}
		&\frac{1}{\nu\Gamma(1-1/\nu)} \int_0^\infty \frac{d\ell}{\ell^{1+1/\nu}}\left( e^{-\lambda\ell}\,\frac{2(c'\ell/2)^{1/\nu}}{\Gamma(1/\nu)}\,K_{1/\nu}(c'\ell)-1 \right) \\
		&\qquad = - (c')^{1/\nu}\, 2^{1-1/\nu}\cdot \begin{cases}
			\cos(\psi_\lambda/\nu) &\colon \text{if } \lambda\in(0,c'],\text{ where } \psi_\lambda \in [0,\pi/2) \text{ and } \lambda= c'\cos(\psi_\lambda) \;,\\
			\cosh(\psi_\lambda/\nu) &\colon \text{if } \lambda >c',\text{ where } \psi_\lambda  >0 \text{ and }\lambda= c'\cosh(\psi_\lambda)\;.
		\end{cases}\\
		&\qquad = -\rho_\lambda
	\end{align*}
	by the definition of $c'$. The first part of the theorem then follows from the injectivity of the Laplace transform and the first part of Lemma \ref{lem:palm-levy-excursion}. For the second part of the theorem, again by a Laplace inversion argument and the now the second part of Lemma \ref{lem:palm-levy-excursion}, it suffices to show that
	\begin{gather*}
		-\frac{d(e^{-\rho_\lambda})/d\lambda}{\Gamma(-\nu)}\int_0^\infty \frac{e^{-\rho_\lambda h}\, dh}{h^{1+\nu}} \, f(h) = \frac{e^{-\rho_\lambda}}{\nu\Gamma(1-1/\nu)}\int_0^\infty \frac{e^{-\lambda\ell}\, d\ell}{\ell^{1+1/\nu}}\, C \ell^{1+1/\nu} K_{1+1/\nu-\theta}(c'\ell)\\
		\text{where}\quad C = \frac{\sqrt{1/\nu}\,\Gamma(-1/\nu)\sin(\pi(1+1/\nu-\theta))}{\sqrt{\nu}\,\Gamma(-\nu)\sin(\pi(1+\nu-\theta\nu))}\;.
	\end{gather*}
	This can be verified directly via Lemma \ref{lem:residue-bessel-computation}. In the computation it is useful to note that we have $\psi_\lambda = \nu\theta_\lambda$.
\end{proof}

\section{Background on CLE and LQG}
\label{sec:background}

In this section we present various background materials which are needed in the remainder of the paper and in order to give precise statements of our main results. In Section \ref{sec:cle-explorations} we review results on CLE explorations from \cite{cle-percolations,msw-simple,msw-non-simple}. In Section \ref{sec:gff-lqg} we introduce the Gaussian free field and the notions of regular and generalized LQG surfaces. In Section \ref{sec:normal-lqg-disks} we define regular and generalized LQG disks. In the case of the regular disk we allow the disk to have a given number of marked points at given locations. Our definition coincides with the one given in \cite{hrv-disk} and we will consider this disk in both the regular and the generalized cases in Section \ref{sec:disk-cle-nesting}. Finally, in Section \ref{sec:background-msw} we review results from \cite{msw-simple,msw-non-simple} on explorations of CLE decorated LQG disks and explain some consequences of these results.

\subsection{CLE explorations}
\label{sec:cle-explorations}
In this text, we assume basic familiarity with SLE \cite{werner-notes,lawler-book} and CLE \cite{shef-cle,shef-werner-cle}. We will however present one of the outcomes from \cite{cle-percolations} that will be used in this paper. The ideas below are illustrated in Figure \ref{fig:looptrunk}.

\begin{figure}
	\centering
	\def\svgwidth{0.8\columnwidth}
\begingroup%
  \makeatletter%
  \providecommand\color[2][]{%
    \errmessage{(Inkscape) Color is used for the text in Inkscape, but the package 'color.sty' is not loaded}%
    \renewcommand\color[2][]{}%
  }%
  \providecommand\transparent[1]{%
    \errmessage{(Inkscape) Transparency is used (non-zero) for the text in Inkscape, but the package 'transparent.sty' is not loaded}%
    \renewcommand\transparent[1]{}%
  }%
  \providecommand\rotatebox[2]{#2}%
  \newcommand*\fsize{\dimexpr\f@size pt\relax}%
  \newcommand*\lineheight[1]{\fontsize{\fsize}{#1\fsize}\selectfont}%
  \ifx\svgwidth\undefined%
    \setlength{\unitlength}{269.22171653bp}%
    \ifx\svgscale\undefined%
      \relax%
    \else%
      \setlength{\unitlength}{\unitlength * \real{\svgscale}}%
    \fi%
  \else%
    \setlength{\unitlength}{\svgwidth}%
  \fi%
  \global\let\svgwidth\undefined%
  \global\let\svgscale\undefined%
  \makeatother%
  \begin{picture}(1,0.27933575)%
    \lineheight{1}%
    \setlength\tabcolsep{0pt}%
    \put(0,0){\includegraphics[width=\unitlength,page=1]{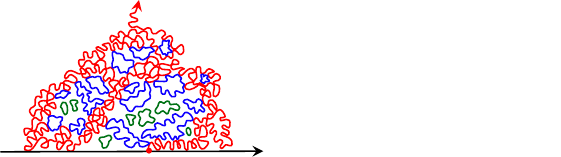}}%
    \put(0.34622414,0.24847061){\color[rgb]{0,0,0}\makebox(0,0)[lt]{\lineheight{1.25}\smash{\begin{tabular}[t]{l}$\SLE_\kappa^\beta(\kappa-6)$\end{tabular}}}}%
    \put(0.34622414,0.20946928){\color[rgb]{0,0,0}\makebox(0,0)[lt]{\lineheight{1.25}\smash{\begin{tabular}[t]{l}\textcolor{red}{$\SLE_{\kappa'}(\rho',\kappa'-6-\rho')$}\end{tabular}}}}%
    \put(0.83083655,0.21106049){\color[rgb]{0,0,0}\makebox(0,0)[lt]{\lineheight{1.25}\smash{\begin{tabular}[t]{l}Full $\SLE_{\kappa'}^\beta(\kappa'-6)$\end{tabular}}}}%
    \put(0.83083655,0.17205916){\color[rgb]{0,0,0}\makebox(0,0)[lt]{\lineheight{1.25}\smash{\begin{tabular}[t]{l}\textcolor{red}{$\SLE_{\kappa}(\rho,\kappa-6-\rho)$}\end{tabular}}}}%
    \put(0,0){\includegraphics[width=\unitlength,page=2]{loop-trunk.pdf}}%
  \end{picture}%
\endgroup%

	\caption{\emph{Left.} A CPI (red) in a $\CLE_\kappa$ with $\kappa\in (8/3,4)$ and with parameter $\beta$ has $\SLE_{\kappa'}(\rho',\kappa'-6-\rho')$ as its marginal law and the exploration path is a $\SLE_\kappa^\beta(\kappa-6)$ curve. \emph{Right.} When $\kappa\in (4,8)$, then a the marginal law of a CPI (red) in a $\CLE_{\kappa'}$ and with parameter $\beta$ is a $\SLE_\kappa(\rho,\kappa-6-\rho)$ curve and the obtained exploration path is then a full $\SLE_{\kappa'}^\beta(\kappa'-6)$ curve. Here $\kappa$ and $\kappa'$ are related by $\kappa'=16/\kappa$. The blue loops are those discovered by the CPI while the green ones are not discovered by it.}
	\label{fig:looptrunk}
\end{figure}

The concept that is important is that of a continuum percolation interface (CPI). We begin with the simple case $\kappa < 4$ (the $\kappa=4$ case is slightly different \cite{cle-percolations,lehmkuehler-cle4} and will not be needed here). Fix a parameter $\beta\in [-1,1]$ and let $\Gamma$ be a non-nested $\CLE_\kappa$ in $\H$. Conditionally on $\Gamma$, we independently assign to each loop a counterclockwise orientation with probability $(1+\beta)/2$ and a clockwise orientation with probability $(1-\beta)/2$. This way, we obtain a collection $\Gamma_\beta$ of oriented loops. A CPI $\lambda$ from $0$ to $\infty$ in $\Gamma_\beta$ is a (random) continuous curve in $\overline{\H}$ that is generated by a Loewner chain, does not intersect the interior of any loops in $\Gamma_\beta$ and obeys the following properties:
\begin{enumerate}[(i)]
	\item The law of $(\Gamma_\beta,\lambda)$ satisfies scale invariance.
	\item All counterclockwise (resp.\ clockwise) loops in $\Gamma_\beta$ intersecting $\lambda$ are on the right (resp.\ on the left) of the curve $\lambda$.
	\item Fix $t\ge 0$ and let $K_t$ be the union of $\lambda([0,t])$ and the filling of all loops in $\Gamma$ intersecting $\lambda([0,t])$. We condition on $\lambda|_{[0,t]}$ and all the loops in $\Gamma_\beta$ intersecting $\lambda([0,t])$.  Given this information, the conditional law of $(\Gamma_\beta,\lambda)$ is given by sampling independent non-nested $\CLE_\kappa$ (with i.i.d. orientations as before) in the bounded complementary components of $K_t$ and an independent copy of $(\Gamma_\beta,\lambda)$ mapped conformally into the unbounded complementary component of $K_t$ where we concatenate $\lambda|_{[0,t]}$ with the curve in the mapped in domain.
\end{enumerate}
Details of this definition can be found in \cite[Section 2.3]{cle-percolations}. By \cite[Section 4]{cle-percolations} this uniquely characterizes the law of $(\Gamma_\beta,\lambda)$, however almost surely the conditional law of $\lambda$ given $\Gamma_\beta$ is non-atomic as established in \cite{msw-sle-range}. Informally this means that almost surely, we cannot deterministically obtain $\lambda$ from $\Gamma_\beta$. By \cite[Theorem 1.6]{msw-simple}, the law of $\lambda$ is a $\SLE_{\kappa'}(\rho',\kappa'-6-\rho')$ curve where $\kappa'=16/\kappa$ and
\begin{align*}
	\rho' = \frac{2}{\pi}\,\arctan\left( \frac{\sin(\pi \kappa'/2)}{1+\cos(\pi\kappa'/2)-2/(1-\beta)}\right) \in [\kappa'-6,0]
\end{align*}
where the branch of $\arctan(\cdot)$ is chosen for each value $\beta$ such that the value on the right lies in the specified interval. Moreover, the curve obtained by following $\lambda$ and tracing each loop that $\lambda$ intersects according to its orientation and in chronological order, is a so-called $\SLE_\kappa^\beta(\kappa-6)$ curve (see \cite{shef-cle} and also \cite{werner-wu-explorations}).

When $\kappa'\in (4,8)$ there is an analogous concept of a CLE percolation interface which we will also abbreviate by CPI. Again, we fix $\beta\in [-1,1]$, let $\Gamma$ be a non-nested $\CLE_{\kappa'}$ in $\H$ and obtain a CLE with orientations $\Gamma_\beta$ in precisely the same way as in the simple case. It turns out that a.s. there exists a unique simple curve $\lambda$ from $0$ to $\infty$ in $\overline{\H}$ which does not intersect the interior of any loop in $\Gamma_\beta$ and which has the property that each counterclockwise (resp.\ clockwise) loop in $\Gamma_\beta$ that intersects $\lambda$ is right (resp.\ left) of $\lambda$. When $\beta=1$ (resp.\ $\beta=-1$), $\lambda$ simply traces the negative (resp.\ positive) real line. If $\beta\in (-1,1)$ then $\lambda$ has the law of a $\SLE_\kappa(\rho,\kappa-6-\rho)$ curve where $\kappa=16/\kappa'$ and
\begin{align*}
	\rho = \frac{2}{\pi}\,\arctan\left( \frac{\sin(\pi\kappa/2)}{1+\cos(\pi\kappa/2)-2/(1-\beta)} \right) - 2\in (-2,\kappa-4)
\end{align*}
as established in \cite[Theorem 1.3]{msw-non-simple}. As in the simple case, the conditional law of $\Gamma_\beta$ given $\lambda$ and all loops in $\Gamma_\beta$ that $\lambda$ intersects is obtained by sampling independent non-nested $\CLE_{\kappa'}$ (with i.i.d. orientations as before) within each complementary component of the union of $\lambda$ and the filling of all the loops it touches (see \cite[Section 6]{cle-percolations}). Lastly, one can obtain a curve by moving along $\lambda$ and tracing a loop whenever it is first hit by $\lambda$ according to its orientation. This can be defined rigorously and the curve so obtained is a full $\SLE_{\kappa'}^\beta(\kappa'-6)$; see \cite[Section 6]{cle-percolations} for definitions and details on these results.

In the above discussion, we explained the background on CPIs in the upper half plane starting at $0$ and ending at $\infty$. If we consider an arbitrary simply connected domain with two distinguished prime ends, then by mapping the CPI and CLE to this domain along a conformal transformation which sends $0$ (resp.\ $\infty$) to the first (resp.\ second) distinguished prime end, then this allows us to generalize the above definition to the setting of arbitrary simply connected domains. We call the tuple $(\Gamma,\lambda)$ a CLE coupled with a CPI with asymmetry parameter $\beta$.

\subsection{The Gaussian free field and Liouville quantum gravity}
\label{sec:gff-lqg}

For any domain $D\subseteq \C$, let $C^\infty_c(D)'$ denote the space of distributions (also known as generalized functions) on the domain $D$ with its usual topology; the $\sigma$-algebra is generated by the evaluation maps $h\to h(f)$ for $f\in C^\infty_c(D)$. Note that this space is not metrizable and hence not Polish. For technical reasons, it will therefore be advantageous to also work with a subspace of $C^\infty_c(D)'$ on some occasions (the disadvantage of this subspace is that it is not invariant under precompositions with conformal transformations). If $D$ has harmonically non-trivial boundary, we let $H_0^1(D)$ be the Sobolev space which is the Hilbert space closure of $C^\infty_c(D)$ with respect to the inner product
\begin{align*}
	(f,g)_\nabla = \frac{1}{2\pi} \int_D \nabla f(z)\cdot \nabla g(z) \,dz
\end{align*}
for $f,g\in C_c^\infty(D)$ (the $2\pi$ factor makes the explicit Green's function expressions below simpler). Its Hilbert space dual $H^{-1}(D):= H^1_0(D)'$ embeds continuously into $C^\infty_c(D)'$. The space $H^{-1}(D)$ is a metric space and it is also separable and hence Polish.

Let $G_\text{D}\colon \D\times \D\to (0,\infty]$ be the Dirichlet Green's function on $\D$
\begin{align*}
	G_\text{D}(z,w) = -\log(|z-w|)+\log(|1-z\bar{w}|)
\end{align*}
The Dirichlet Gaussian free field on $\D$ (see \cite{shef-gff, powell-werner-gff}) is the random element $h_\text{D}$ of $H^{-1}(\D)$ such that $h_\text{D}(f)$ is a centered Gaussian random variable for each $f\in C^\infty_c(\D)$ and such that
\begin{align*}
	\E( h_\text{D}(f)h_\text{D}(g)) = \int_{\D\times \D} G_\text{D}(z,w)f(z)g(w)\,dz dw \;.
\end{align*}
One can deterministically associate to $h_\text{D}$ its circle average approximation which is a collection of random variables $((h_\text{D})_\epsilon(z)\colon \epsilon>0, z\in (1-\epsilon)\D)$ which is continuous in its parameters, see \cite{shef-kpz}. Then
\begin{align}
	\Var( (h_\text{D})_\epsilon(z) ) = \log(1/\epsilon) + R(z,\D) + o(1)\quad\text{as $\epsilon\to 0$ for all $z\in \D$}\;.
\end{align}
If $\gamma\in (0,2)$ then there is a measure $\mu^\gamma_{h_\text{D}}$ (called the $\gamma$-LQG area measure) supported in $\D$ such that whenever $f\in C_b(\C)$ (i.e.\ $f$ is continuous and bounded), we have
\begin{align*}
	\int_{(1-\epsilon)\D} f(z)\epsilon^{\gamma^2/2}e^{\gamma (h_\text{D})_\epsilon(z)}\,dz \to \int_\D f(z)\,\mu^\gamma_{h_\text{D}}(dz)
\end{align*}
as $\epsilon \to 0$ in $L^1$ and
\begin{align}
	\label{eq:gff-annealed}
	\E\left(\,\int_\D f(z)\,\mu^\gamma_{h_\text{D}}(dz)\right) = \int_\D f(z)R(z,\D)^{\gamma^2/2}\,dz\;.
\end{align}
This type of result is classical and due to Kahane \cite{kahane}, see also \cite{shef-kpz} and \cite{berestycki-gmt-elementary} (the conformal radius factor appears in order to make this definition agree with the one involving circle averages). Note that if $\mathfrak{h}$ is a random function inducing a distribution in $H^{-1}(\D)$, then (in view of the circle average definition) it is natural to define
\begin{align}
	\label{eq:extend-area-gff}
	\mu^\gamma_{h_\text{D}+\mathfrak{h}}(dz) := e^{\gamma \mathfrak{h}(z)}\,\mu^\gamma_{h_\text{D}}(dz)\;.
\end{align}
There is an analogous concept of an LQG boundary measure which we introduce in the particular case of a Neumann GFF on the unit disk. Throughout this paper, $G\colon\D\times\D\to (0,\infty]$ denotes the Green's function for the Neumann GFF in $\D$ with mean zero on $\partial\D$, namely
\begin{align*}
	G(z,w) = -\log( |z-w||1-z\bar{w}| )\;.
\end{align*}
An important property of $G$ and $G_\text{D}$ which underlies conformal covariance properties of several measures appearing in this paper is that for any Möbius transformation $f\colon \D\to \D$ we have
\begin{align*}
	G(f(z),f(w)) = G(z,w) - \log |f'(z)| - \log |f'(w)|\quad\text{and}\quad G_\text{D}(f(z),f(w)) = G_\text{D}(z,w)\;.
\end{align*}
The Neumann GFF $h^0$ on $\D$ with mean zero on $\partial \D$ is then the random element of $H^{-1}(\D)$ such that $h^0(f)$ is a centered Gaussian random variable for all $f\in C^\infty_c(\D)$ and
\begin{align*}
	\E( h^0(f)h^0(g)) = \int_{\D\times \D} G(z,w)f(z)g(w)\,dz dw \;.
\end{align*}

One key property of $h^0$ is the following spatial Markov property that it satisfies: Consider disjoint balls $B_{r_i}(z_i)\subseteq \D$ for $i=1,\dots,n$. Then there are random harmonic functions $\mathfrak{h}_i$ on $B_{r_i}(z_i)$ and i.i.d.\ Dirichlet Gaussian free fields $h_i$ in $\D$ for $i\le n$ such that $(\mathfrak{h}_i\colon i\le n)$ and $(h_i\colon i\le n)$ are independent, and
\begin{align*}
	h^0|_{\cup_i B_{r_i}(z_i)} = \sum_{i=1}^n h_i\left(\frac{\cdot - z_i}{r_i}\right) 1_{B_{r_i}(z_i)} + \sum_{i=1}^n \mathfrak{h}_i1_{B_{r_i}(z_i)}\;.
\end{align*}
We call the first summand the projection of $h^0$ onto $H^{-1}(\cup_{i} B_{r_i}(z_i))$ and the second summand the harmonic extension of $h^0$ restricted to the complement of $\cup_i B_{r_i}(z_i)$. Note that the same is true for any continuous function $\mathfrak{g}$ on $\D$: We can uniquely decompose $\mathfrak{g}$ into a function supported on $\cup_{i} B_{r_i}(z_i)$ and a continuous function which agrees with $\mathfrak{g}$ on $\D \setminus \cup_{i} B_{r_i}(z_i)$ and is harmonic on $\cup_{i} B_{r_i}(z_i)$. Thus this projection decomposition extends to the case when $h^0$ is replaced by a field which is the sum of a continuous function and an independent field which is absolutely continuous with respect to $h^0$.

To understand the LQG area and boundary measure associated to $h^0$, let us observe that
\begin{align}
	\label{eq:nm-dirichlet-gff}
	G(z,w)=G_\text{D}(z,w) -2\log(|1-z\bar{w}|)\;.
\end{align}
By the Markov property explained above, we can construct a random continuous function $\mathfrak{h}$ on $\D$ (extended to be $0$ on $\C\setminus\D$) independent of a Dirichlet GFF $h_\text{D}$ on $\D$ such that $\mathfrak{h}$ is a centered Gaussian process with
\begin{align*}
	\E(\mathfrak{h}(z)\mathfrak{h}(w))= -2\log(|1-z\bar{w}|)\quad\text{for all $z,w\in \D$}\;.
\end{align*}
By \eqref{eq:nm-dirichlet-gff} $h^0 = h_\text{D}+ \mathfrak{h}$ is then a Neumann GFF on $\D$ with mean zero on the boundary. Thus $\mu_{h^0}^\gamma$ is defined via \eqref{eq:extend-area-gff} and it follows that for each $f\in C_b(\C)$ and all $\gamma\in (0,2)$,
\begin{align*}
	\int_{(1-\epsilon)\D} f(z)\epsilon^{\gamma^2/2} e^{\gamma h^0_\epsilon(z)}\,dz \to \int_\D f(z)\,\mu^\gamma_{h^0}(dz)\quad\text{in $L^1$ as $\epsilon\to 0$}\;.
\end{align*}
We can now define the semicircle approximations $(h^0_\epsilon(z)\colon \epsilon\in (0,2), z\in \partial \D)$ to $h^0$ and $(\mathfrak{h}_\epsilon(z)\colon \epsilon\in (0,2), z\in \partial \D)$ to $\mathfrak{h}$, see again \cite{shef-kpz}. The $\gamma$-LQG boundary measure associated to $h^0$ when $\gamma\in (0,2)$ is the measure $\nu^\gamma_{h^0}$ supported on $\partial \D$ such that for all $f\in C_b(\partial \D)$,
\begin{align}
	\int_{\partial\D} f(w)\epsilon^{\gamma^2/4}e^{\gamma h^0_\epsilon(w)/2}\,dw\to \int_{\partial\D} f(w)\,\nu^\gamma_{h^0}(dw)\quad\text{in $L^1$ as $\epsilon\to 0$}\;.
	\label{eq:lqg-bdy}
\end{align}
Note that $\Var(h^0_\epsilon(z))=2\log(1/\epsilon)+ o(1)$ as $\epsilon \to 0$ and hence the expected total mass of $\nu^\gamma_{h^0}$ is $2\pi$.
It is not hard to deduce from this, that the above convergence also holds when $h^0_\epsilon$ is replaced by $\mathfrak{h}_\epsilon$ and it follows that $\nu^\gamma_{h^0}$ has a version which is measurable with respect to $\mathfrak{h}$. Again, when $\mathfrak{g}$ is a random function inducing a distribution in $H^{-1}(D)$, we set
\begin{align*}
	\nu^\gamma_{h^0+\mathfrak{g}}(dw) = e^{\gamma \mathfrak{g}(w)/2}\,\nu^\gamma_{h^0}(dw)\;.
\end{align*}

This construction can be generalized by replacing the Lebesgue measure (as the background measure) by more general measures, in particular those induced by the Minkowski content of (random) curves. This way one can associate to segments of SLE-type curves a $\gamma$-LQG length. We refer the reader to \cite{benoist-lqg-chaos} for details on this.

Let us recall the definition of a decorated $\gamma$-LQG surface (the labeling of the marked points by arbitrary index sets will simplify the notation at a later stage in the paper). Below $\partial D$ is the set of prime ends of a domain $D$ and $\bar{D}=D\cup \partial D$. In the definition below, we will also use the notation $\phi^{-1}(\bm{z}) = (\phi^{-1}(z_i)\colon i\in A)$, $\phi^{-1}(\bm{w}) = (\phi^{-1}(w_j)\colon j\in B)$ and $\phi^{-1}\circ \Gamma = \{\phi^{-1}\circ \eta'\colon \eta'\in \Gamma\}$.

\begin{defn}
	For $\gamma\in(0,2)$ we define an equivalence relation on tuples $(D,h,\z,\w,\eta,\Gamma)$, where $D\subseteq\C$ is simply connected, $h\in C^\infty_c(D)'$, $\bm{z}=(z_i\colon i\in A)$ with $z_i \in D$ for $i\in A\subseteq \N$, $\bm{w}=(w_j\colon j\in B)$ with $w_j \in \partial D$ for $j\in B\subseteq \N$, $\eta$ is either a placeholder value $-$ or a continuous curve in $\bar{D}$ viewed up to reparametrization, and $\Gamma$ is a collection of loops (each loop viewed up to reparametrization). We let
	\begin{align*}
		(D,h,\bm{z},\bm{w},\eta,\Gamma)\sim_\gamma(D',h\circ\phi+Q_\gamma\log|\phi'|, \phi^{-1}(\bm{z}), \phi^{-1}(\bm{w}), \phi^{-1}\circ \eta, \phi^{-1}\circ \Gamma)
	\end{align*}
	if $\phi:D'\to D$ is a conformal map. A regular decorated $\gamma$-LQG surface is an equivalence class of tuples $(D,h,\bm{z},\bm{w},\eta,\Gamma)$. We let $[(D,h,\bm{z},\bm{w},\eta,\Gamma)]$ denote the equivalence class associated to $(D,h,\bm{z},\bm{w},\eta,\Gamma)$. We write $-$ for the empty tuple and the empty collection and use the shorthand notation $(D,h,\bm{z},\bm{w},-,-)=(D,h,\bm{z},\bm{w})$. We call $[(D,h,\bm{z},\bm{w})]$ a regular $\gamma$-LQG surface.
	\label{def:lqg-surface}
\end{defn}

\begin{figure}
	\centering
	\def\svgwidth{0.7\columnwidth}
\begingroup%
  \makeatletter%
  \providecommand\color[2][]{%
    \errmessage{(Inkscape) Color is used for the text in Inkscape, but the package 'color.sty' is not loaded}%
    \renewcommand\color[2][]{}%
  }%
  \providecommand\transparent[1]{%
    \errmessage{(Inkscape) Transparency is used (non-zero) for the text in Inkscape, but the package 'transparent.sty' is not loaded}%
    \renewcommand\transparent[1]{}%
  }%
  \providecommand\rotatebox[2]{#2}%
  \newcommand*\fsize{\dimexpr\f@size pt\relax}%
  \newcommand*\lineheight[1]{\fontsize{\fsize}{#1\fsize}\selectfont}%
  \ifx\svgwidth\undefined%
    \setlength{\unitlength}{248.43375821bp}%
    \ifx\svgscale\undefined%
      \relax%
    \else%
      \setlength{\unitlength}{\unitlength * \real{\svgscale}}%
    \fi%
  \else%
    \setlength{\unitlength}{\svgwidth}%
  \fi%
  \global\let\svgwidth\undefined%
  \global\let\svgscale\undefined%
  \makeatother%
  \begin{picture}(1,0.28920079)%
    \lineheight{1}%
    \setlength\tabcolsep{0pt}%
    \put(0,0){\includegraphics[width=\unitlength,page=1]{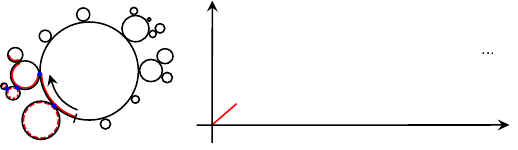}}%
    \put(0.96816971,0.00295799){\color[rgb]{0,0,0}\makebox(0,0)[lt]{\lineheight{1.25}\smash{\begin{tabular}[t]{l}$t$\end{tabular}}}}%
    \put(0.36122394,0.27390967){\color[rgb]{0,0,0}\makebox(0,0)[lt]{\lineheight{1.25}\smash{\begin{tabular}[t]{l}$e_t$\end{tabular}}}}%
    \put(0,0){\includegraphics[width=\unitlength,page=2]{loop-tree-def.pdf}}%
  \end{picture}%
\endgroup%

	\caption{We illustrate the encoding of a looptree with a distinguished point (marked with a line segment) when the total length of all the loops in the looptree is finite. We trace the loops in clockwise order starting at the distinguished point. In this simple example, the process $e$ increases linearly with slope $1$ except when a loop is completed in which case it jumps down by the height of the loop. The dotted lines represent the completed loops at this point of the exploration and the blue points correspond the start of tracing a new loop.}
	\label{fig:looptree-def}
\end{figure}
The motivation behind the above definition (in particular, for the transformation rule for the field $h$) is that the $\gamma$-LQG area measure and $\gamma$-LQG boundary measure are invariant under the map $\phi$ in the following sense 
\begin{align}
	\mu^\gamma_h = \phi_*\mu^\gamma_{h\circ\phi+Q_\gamma\log|\phi'|}\;,
	\qquad 
	\nu^\gamma_h = \phi_*\nu^\gamma_{h\circ\phi+Q_\gamma\log|\phi'|}\;. 
	\label{eq:meas-inv} 
\end{align}
In fact, above we defined $\nu^\gamma_h$ only for the unit disk, and by using that \eqref{eq:meas-inv} holds for the unit disk we get a natural definition of $\nu^\gamma_h$ for a general simply connected domain $D$ by defining
\begin{align*}
	\nu^\gamma_h := \phi_*\nu^\gamma_{h\circ\phi+Q_\gamma\log|\phi'|}\;,
\end{align*}
where $\phi:\D\to D$ is conformal. See  \cite{shef-wang-lqg-coord} for a proof that \eqref{eq:meas-inv} holds simultaneously for all conformal transformations $\phi$.

What the definition of $\gamma$-LQG surface above excludes is the possibility of pinch points that the surface might have. The definition of such generalized $\gamma$-LQG surfaces relies on the notion of looptrees as introduced in \cite{curien-kortchemski-looptree-def} (see also \cite[Section 10]{wedges} where they appear in the context of forested lines). We will recall the motivation for the following definition below.

\begin{defn}
	\label{def:general-lqg-surface}
	A generalized (decorated) $\gamma$-LQG surface is a pair
	\begin{align*}
		S=(e,\{(t,S_t)\colon t\ge 0, \Delta e_t \neq 0\})
	\end{align*}
	where $e\colon [0,\infty)\to [0,\infty)$ is a non-constant càdlàg function with only negative jumps such that $e_0=0$ and $e_t=0$ for all $t\ge \zeta_e=\inf\{s>0\colon e_s=0\}$, and $S_t$ are regular (decorated) $\gamma$-LQG surfaces whenever $\Delta e_t=e_t-e_{t-} \neq 0$. We call $\zeta_e$ the boundary length of $S$.
\end{defn}

We will now explain how $e$ encodes a looptree. The construction is illustrated in Figure \ref{fig:looptree-def}. We define an equivalence relation $\sim_e$ on the interval $[0,\zeta_e]$ by specifying
\begin{align*}
	t\sim_e \sup\{s<t\colon e_s\le e_t\}\quad\text{when}\quad \Delta e_t\neq 0\;.
\end{align*}
The quotient topological space $[0,\zeta_e]/\!\!\sim_e$ is still a metric space (i.e., the quotient pseudometric is a metric) and is called the looptree associated to the function $e$. Moreover, this topological space has a distinguished point, namely the equivalence class $[0]$. Also, each time $t\in [0,\zeta_e]$ with $\Delta e_t \neq 0$ corresponds to one particular loop in the looptree and each of these loops has a distinguished point $[t]$ on it. The intuitive picture of $S$ is now that we \enquote{glue} each $S_t$ when $\Delta e_t\neq 0$ into the loop corresponding to time $t$ according to the boundary length of the quantum surface; to fix this gluing uniquely, it is performed so that the first marked boundary point of each quantum surface is glued to the distinguished point on the respective loop (needless to say, this assumes that each quantum surface has at least one marked boundary point). Moreover, if some of the decorated quantum surfaces have a continuous curve as part of its decoration, the picture is that we concatenate these continuous curve segments so that the curve starts at $[0]$ (provided this is possible).

\subsection{Regular and generalized Liouville quantum gravity disks}
\label{sec:normal-lqg-disks}

The Liouville quantum gravity (LQG) disk is an LQG surface of particular importance since it arises as the scaling limit of random planar maps with disk topology, see Section \ref{sec:discrete}. As we will see, when this disk is coupled to an independent conformal loop ensemble (CLE) and reweighted according to the CLE nesting statistics around $n$ marked points, the field of the disk will get logarithmic singularities at the marked points. It turns out that the latter field is precisely equal to a field which arises in Liouville conformal field theory (LCFT) on the disk as defined by Huang, Rhodes, and Vargas \cite{hrv-disk}. In Definition \ref{def:lcft-field} right below we give their definition of the boundary length $\ell$ Liouville field with marked points.
\begin{defn}
	Suppose that $\alpha_i\in\R$ for $i\in A$ and $\beta_j\in\R$ for $j\in B$ for some finite possibly empty index sets $A,B\subseteq \N$ satisfy
	\begin{align}
		\alpha_i<Q_\gamma\,\forall i,\qquad
		\beta_j<Q_\gamma\,\forall j,\qquad Q_\gamma-\sum_{i\in A} \alpha_i-\frac{1}{2} \sum_{j\in B}\beta_j < \frac{2}{\gamma}\wedge\min\{Q_\gamma -\beta_j\colon j\in B\}\;.
		\label{eq:seiberg}
	\end{align}
	Let $h^0$ denote a Neumann GFF in $\D$ with mean zero on $\partial\D$. For distinct points $z_i\in\D$ where $i\in A$ and $w_j\in\partial\D$ for $j\in B$ define
	\begin{align*}
	h = h^0
	+ \sum_{i\in A} \alpha_i G(\cdot,z_i)
	+ \sum_{j\in B}\frac{\beta_j}{2} G(\cdot,w_j),
	\qquad
	 h_*= h-\frac{2}{\gamma}\log\nu_{h}^\gamma(\partial\D) + \frac{2}{\gamma}\log \ell\;.
	\end{align*}
	For $\Lambda \ge0$ and $\ell>0$ we define a probability measure by
	\begin{align}
		P^{(\bm{\alpha},\bm{z}),(\bm{\beta},\bm{w})}_{\Lambda,\ell}(X) := \frac{C^{(\bm{\alpha},\bm{z})}_{(\bm{\beta},\bm{w})}\E\left( 1_X\left(h_*\right)e^{-\Lambda \mu_{h_*}^\gamma(\D)}\nu_h^\gamma(\partial \D)^{ 2Q_\gamma/\gamma-\sum_{i\in A}2\alpha_i/\gamma-\sum_{j\in B}\beta_j/\gamma} \right) }{Z^{(\bm{\alpha},\bm{z}),(\bm{\beta},\bm{w})}_{\Lambda,\ell}}
		\label{eq:reweighted-measure}
	\end{align}
	for $X\subseteq C^\infty_c(D)'$ measurable where
	\begin{align*}
		C^{(\bm{\alpha},\bm{z})}_{(\bm{\beta},\bm{w})} = \prod_{i\in A} R(z_i,\D)^{-\alpha_i^2/2}\prod_{\substack{i,i'\in A\\ i< i'}} e^{\alpha_i\alpha_{i'} G(z_i,z_{i'})}\prod_{\substack{j,j'\in B\\ j< j'}} e^{\beta_j/2\cdot \beta_{j'}/2\cdot G(w_j,w_{j'})} \prod_{\substack{i\in A\\ j\in B}} e^{\alpha_i \beta_j/2\cdot G(z_i,w_j)}
	\end{align*}
	and where the denominator in \eqref{eq:reweighted-measure} is the normalizing constant which ensures that \eqref{eq:reweighted-measure} defines a probability measure. The measure is called the Liouville (disk) field with marked points $((z_i,\alpha_i)\colon i\in A)$, $((w_j,\beta_j)\colon j\in B)$, cosmological constant $\Lambda$ and boundary length $\ell$.
	\label{def:lcft-field}
\end{defn}

\begin{remark}
	It follows from the proof of \cite[Corollary 3.10]{hrv-disk} that if \eqref{eq:seiberg} holds, then the boundary length $\nu_h(\partial \D)$ has finite moments of sufficiently high order so that \eqref{eq:reweighted-measure} is well-defined as a probability measure.\footnote{\,There is a typo in the statement of \cite[Proposition 3.2]{hrv-disk} where there is an incorrect minus sign in the exponent of the Poincaré metric at an interior point; in our setting the correct definition to make things consistent with the remainder of \cite{hrv-disk} is given above where the exponent on the conformal radius is negative.} The cited corollary is for the case $\Lambda=0$ but having $\Lambda>0$ is only making the expectation smaller; however we require $\alpha_i<Q_\gamma$ such that $\mu_{h_*}^\gamma(\D)<\infty$. We also remark that in the setting of no bulk points ($A=\emptyset$) and $\beta_j=\gamma$ for all $j\in B$ we need to have $\#B\geq 3$ in order for \eqref{eq:seiberg} to be satisfied.
	\label{rmk:seiberg}
\end{remark}

One natural question is whether there are relations between disks with different singularities. The proposition below resolves this question in the special case where we show how to construct a Liouville field with one $\alpha$ bulk singularity and one $\gamma$ boundary singularity by starting with a Liouville field with no bulk singularities and three $\gamma$ boundary singularities, forgetting two of the boundary marked points and sampling a bulk singularity  essentially from the $\alpha$-LQG measure. In Section \ref{sec:ssw} and in Section \ref{sec:disk}, this proposition will be key as it allows us to explicitly compute some partition functions and in particular establish their finiteness.

\begin{prop}
	\label{prop:resample-lcft-point}
	Consider distinct and counterclockwise oriented points $w_a,w_b,w_c\in \partial \D$. Whenever $w_a=\widetilde{w}_a,\widetilde{w}_b,\widetilde{w}_c$ are distinct, let $\psi_{\bm{\wt{w}}}\colon \D\to \D$ be the unique conformal or anticonformal transformation mapping $\bm{\wt{w}} = (\widetilde{w}_a,\widetilde{w}_b,\widetilde{w}_c)$ to $\bm{w} = (w_a,w_b,w_c)$. Then
	\begin{align*}
		&Z^{(\alpha,0),(\gamma,1)}_{\Lambda,\ell} \int P^{(\alpha,0),(\gamma,1)}_{\Lambda,\ell}(dk) \int_{\partial \D} \nu_{k}^\gamma(d\wt{w}_b)\int_{\partial\D} \nu_{k}^\gamma(d\widetilde{w}_c)\, f(k\circ \psi_{\bm{\wt{w}}}^{-1}+Q_\gamma \log |(\psi_{\bm{\wt{w}}}^{-1})'|,\psi_{\bm{\wt{w}}}(0)) \\
		&\qquad = 2c_{\bm{w}}\ell^2\int_{\D}\,dz\, R(z,\D)^{\alpha(Q_\gamma - Q_\alpha)} Z^{(\alpha,z),((\gamma,\gamma,\gamma),\bm{w})}_{\Lambda,\ell} \int P^{(\alpha,z),((\gamma,\gamma,\gamma),\bm{w})}_{\Lambda,\ell}(dk) f(k,z)
	\end{align*}
	where $c_{\bm{w}}=\exp(-(G(w_a,w_b)+G(w_b,w_c)+G(w_c,w_a))/2)$. Moreover, if $\alpha<2$ we also have
	\begin{align*}
		&\int_{\D}\,dz\, R(z,\D)^{\alpha(Q_\gamma - Q_\alpha)} Z^{(\alpha,z),((\gamma,\gamma,\gamma),\bm{w})}_{\Lambda,\ell} \int P^{(\alpha,z),((\gamma,\gamma,\gamma),\bm{w})}_{\Lambda,\ell}(dk) f(k,z) \\
		&\quad = Z^{(-,-),((\gamma,\gamma,\gamma),\bm{w})}_{\Lambda,\ell}\int P^{(-,-),((\gamma,\gamma,\gamma),\bm{w})}_{\Lambda,\ell}(dk) \int_\D \mu_k^\alpha(dz)\,R(z,\D)^{\alpha(Q_\gamma - Q_\alpha)} f(k,z)\;.
	\end{align*}
\end{prop}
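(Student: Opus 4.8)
The plan is to prove the two identities in Proposition~\ref{prop:resample-lcft-point} by unfolding both sides using the explicit description of the Liouville field from Definition~\ref{def:lcft-field}, tracking carefully how the various conformal anomaly factors ($R(z,\D)$-powers, $e^{\alpha_i\alpha_j G}$ cross terms, and the $Q_\gamma\log|\varphi'|$ transformation rule) combine. For the first identity, the key observation is that adding an $\alpha$ bulk singularity at a \emph{random} location sampled from $\mu^\gamma$ is the same (after accounting for the Girsanov shift $\alpha G(\cdot,z)$ on the Neumann GFF) as sampling $z$ from Lebesgue measure weighted by $R(z,\D)^{\alpha^2/2}$ times an area-measure density, and that the Möbius map $\psi_{\bm{\wt w}}$ sending $(\wt w_a,\wt w_b,\wt w_c)$ to $(w_a,w_b,w_c)$ converts two boundary points sampled from $\nu^\gamma$ into two $\gamma$ boundary insertions. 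The constant $c_{\bm w}$ and the factor $2$ (for the conformal/anticonformal choice, i.e.\ orientation) will emerge from the Jacobian of this change of marked points together with the Green's-function cross terms in $C^{(\bm\alpha,\bm z)}_{(\bm\beta,\bm w)}$.

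Concretely, first I would expand the left side of the first identity: write out $P^{(\alpha,0),(\gamma,1)}_{\Lambda,\ell}$ via \eqref{eq:reweighted-measure}, so that the GFF decomposition is $h = h^0 + \alpha G(\cdot,0) + (\gamma/2)G(\cdot,w_a)$, then push the two $\nu^\gamma_k$ integrals over $\wt w_b,\wt w_c$ through the conformal map $\psi_{\bm{\wt w}}$ using the boundary-measure covariance \eqref{eq:meas-inv}. The map $\psi_{\bm{\wt w}}$ depends on $\wt w_b,\wt w_c$, so I would change variables to make $(\wt w_b,\wt w_c)\mapsto \psi_{\bm{\wt w}}$ disappear from the domain of integration and land on the standard configuration $w_b,w_c$; the Jacobian of $(\wt w_b,\wt w_c)\mapsto (\psi_{\bm{\wt w}}(\cdot))$ restricted to the boundary, combined with the $Q_\gamma$-transformation of the field and the $\nu^\gamma$-covariance, produces exactly the factor $R(z,\D)^{\alpha(Q_\gamma-Q_\alpha)}$ (here $z = \psi_{\bm{\wt w}}(0)$) together with the $\gamma$-insertion Green's functions and the constant $c_{\bm w}$. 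This is essentially a computation of the same flavor as the change-of-coordinates computations underlying the conformal covariance of the Liouville field in \cite{hrv-disk}; one must be scrupulous about the fact that $\psi_{\bm{\wt w}}$ may be anticonformal, which is where the factor $2$ comes from (integrating $\wt w_c$ over both arcs of $\partial\D$ relative to $\wt w_b$).

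For the second identity (valid when $\alpha<2$, so the $\alpha$-LQG area measure exists), the approach is a direct Girsanov argument: the law $P^{(\alpha,z),((\gamma,\gamma,\gamma),\bm w)}_{\Lambda,\ell}$ differs from $P^{(-,-),((\gamma,\gamma,\gamma),\bm w)}_{\Lambda,\ell}$ by the insertion $\alpha G(\cdot,z)$, which by the Cameron--Martin theorem for the Neumann GFF reweights by $e^{\alpha h^0(z) - (\alpha^2/2)\Var(h^0_\epsilon(z))}$ in the $\epsilon\to0$ limit; after the $\nu^\gamma$-normalization in \eqref{eq:reweighted-measure} this is precisely the density appearing in the definition of $\mu^\alpha_{h^0}$ (cf.\ the circle-average construction and \eqref{eq:gff-annealed}), so integrating $z$ against $\mu^\alpha_k(dz)$ on the right reproduces the Lebesgue integral against $R(z,\D)^{\gamma^2/2}$-type density on the left, with the extra $R(z,\D)^{\alpha(Q_\gamma-Q_\alpha)}$ factor matching the normalization constants $C^{(\bm\alpha,\bm z)}_{(\bm\beta,\bm w)}$ and $Z$ on the two sides. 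The partition-function ratios $Z^{(\alpha,z),\dots}/Z^{(-,-),\dots}$ must be tracked so that the two sides agree not just up to constant but exactly; this bookkeeping, rather than any conceptual difficulty, is where the work lies.

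The main obstacle I expect is the first identity: correctly computing the Jacobian of the map $(\wt w_b,\wt w_c)\mapsto \psi_{\bm{\wt w}}$ on $(\partial\D)^2$ and verifying that all the $R(z,\D)$, $|\psi'|$, and Green's-function factors telescope into exactly $R(z,\D)^{\alpha(Q_\gamma-Q_\alpha)}$ times $2c_{\bm w}\ell^2$. This is a somewhat delicate real-analytic computation because $\psi_{\bm{\wt w}}$ is only defined implicitly by its action on three boundary points, and one needs the classical formula relating $|\psi'|$ on the boundary to the positions of the marked points, together with the identity $\rho^\kappa_\sigma = \alpha(Q_\gamma-Q_\alpha)$ from Definition~\ref{def:main-params} to recognize the exponent. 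Once this Jacobian computation is in hand, the rest follows by matching terms.
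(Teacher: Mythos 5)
Your proposal follows essentially the same route as the paper's proof: (i) use Girsanov/Cameron--Martin to convert the two $\nu^\gamma_k$ boundary integrals into Lebesgue integrals over $(\widetilde w_b,\widetilde w_c)$ with two added $\gamma$-boundary insertions, (ii) apply the LCFT change-of-coordinates formula of \cite[Theorem 3.5]{hrv-disk} to move the marked points from $(1,\widetilde w_b,\widetilde w_c)$ and the bulk point from $0$ to the standard configuration $(w_a,w_b,w_c)$ and $\psi_{\bm{\widetilde w}}(0)$, (iii) do the real-analytic Jacobian computation for the change of variables $(\widetilde w_b,\widetilde w_c)\mapsto\psi_{\bm{\widetilde w}}(0)$ on $(\partial\D)^2$ (this is precisely Lemma~\ref{lem:simple-integral-change}, where the factor $2$ and $c_{\bm w}$ emerge), and (iv) prove the second identity by a direct Girsanov argument against the Neumann GFF, identifying the reweighting with the density of $\mu^\alpha_k$. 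The only piece you leave unproved is the Jacobian lemma, which you correctly flag as the main obstacle, and you slightly misattribute the origin of $c_{\bm w}$ (it comes entirely from the Jacobian, not from cross terms in $C^{(\bm\alpha,\bm z)}_{(\bm\beta,\bm w)}$), but neither of these affects the validity of the approach.
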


The key in the proof of the proposition above is the following elementary change of variables result. It will allow us in the proof of the proposition to switch from an integral over an interior point to an integral over two boundary points.

\begin{lemma}
	\label{lem:simple-integral-change}
	Let $g\colon \D\to [0,\infty]$ be measurable. Let $I$ be the set of distinct $(u,v)\in(\partial\D\setminus\{1\})^2$ Fix $(w_-,w_+)\in I$ such that $(1,w_-,w_+)$ is ordered counterclockwise. Then
	\begin{align*}
		&\int_I g(\psi_{\wt{w}_\pm}(0))|\psi_{\wt{w}_\pm}'(1)\psi_{\wt{w}_\pm}'(\widetilde{w}_b)\psi_{\wt{w}_\pm}'(\widetilde{w}_c)| \cdot |\psi_{\wt{w}_\pm}'(0)|^2 \,d\widetilde{w}_-\,d\widetilde{w}_+ \\
		&\qquad = 2e^{-(G(1,w_-)+G(1,w_+)+G(w_-,w_+))/2}\int_\D g(z)\,dz
	\end{align*}
	where $\psi_{\wt{w}_\pm}\colon \D\to\D$ is the unique conformal or anticonformal transformation mapping $(1,\widetilde{w}_-,\widetilde{w}_+)$ to $(1,w_-,w_+)$ whenever $(\widetilde{w}_-,\widetilde{w}_+)\in I$.
\end{lemma}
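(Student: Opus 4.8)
The plan is to reduce the claimed identity to a genuinely elementary change-of-variables computation on the boundary circle, taking advantage of the conformal symmetry group of the disk. The key point is that the family of maps $\psi_{\wt w_\pm}$ is a reparametrization of $\mathrm{Aut}(\D)$ (the group of conformal and anticonformal automorphisms): fixing the image triple $(1,w_-,w_+)$, the map $(\wt w_-,\wt w_+)\mapsto \psi_{\wt w_\pm}$ is a bijection between $I$ and a natural chart of $\mathrm{Aut}(\D)$, with the remaining degree of freedom being $\psi_{\wt w_\pm}(0)\in\D$. So I would first switch viewpoint: parametrize the relevant automorphisms $\phi\in\mathrm{Aut}(\D)$ by the pair $(z,\theta)=(\phi(0), \arg \phi'(0))\in\D\times(\R/2\pi\Z)$ — this is the standard global coordinate on $\mathrm{PSU}(1,1)$ (together with a $\Z/2$ factor for the orientation-reversing component, which contributes the factor of $2$). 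For such $\phi$ one has the classical formulas $|\phi'(0)| = 1-|z|^2$ and, for $u\in\partial\D$, $|\phi'(u)| = \dfrac{|1-\bar z \phi(u)|^2}{1-|z|^2} = \dfrac{1-|z|^2}{|1-\bar z\, u|^2}$ (the two expressions being related by $\phi(u)=$ the image point). I would record these as the first step.

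Next I would express everything in the integrand of the left-hand side in terms of $(z,\theta)$. Writing $\phi=\psi_{\wt w_\pm}$, we have $z=\phi(0)$ and the three boundary points $1=\phi^{-1}(w_-)$-type relations; more precisely $\wt w_a = \phi^{-1}(1)$... let me instead keep $w_a=1$ fixed under the constraint and note $1, \wt w_-,\wt w_+$ are the $\phi$-preimages of $1,w_-,w_+$. Then $|\phi'(1)\phi'(\wt w_-)\phi'(\wt w_+)|\cdot|\phi'(0)|^2$ becomes, after substituting the formula above at each of the three boundary points and at $0$,
\[
|\phi'(\wt w_a)\phi'(\wt w_-)\phi'(\wt w_+)|\,|\phi'(0)|^2
= \frac{(1-|z|^2)^3}{|1-\bar z|^2\,|1-\bar z w_-|^2\,|1-\bar z w_+|^2}\cdot(1-|z|^2)^2,
\]
using that $\phi(\wt w_a)=1$, $\phi(\wt w_-)=w_-$, $\phi(\wt w_+)=w_+$. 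Meanwhile the Jacobian of the change of variables $(\wt w_-,\wt w_+)\mapsto$ (the corresponding $(z,\theta)$) — equivalently, pushing the Lebesgue arc-length measure $d\wt w_-\, d\wt w_+$ forward — is where the matching $|\phi'|$ factors at $\wt w_-,\wt w_+$ will cancel, leaving $dz\, d\theta$ up to the standard constant; this is the Haar-measure computation for $\mathrm{Aut}(\D)$ and is routine once set up. I expect a clean cancellation reducing the LHS to $2\int_\D g(z)\,F(z)\,dz$ for an explicit density $F$.

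The final step is to identify $F(z)$ with $e^{-(G(1,w_-)+G(1,w_+)+G(w_-,w_+))/2}$. Using $G(u,v) = -\log(|u-v|\,|1-u\bar v|)$, for $u=1$ and $v\in\partial\D$ we get $G(1,v) = -\log(|1-v|^2)$, and similarly $G(w_-,w_+) = -\log(|w_--w_+|\,|1-w_-\bar w_+|)$; so the target constant is $|1-w_-|\,|1-w_+|\,|w_--w_+|^{1/2}|1-w_-\bar w_+|^{1/2}$, a fixed number independent of $z$. Since the whole LHS manifestly produces a $z$-independent multiple of $\int_\D g$, one only needs to evaluate both sides at one convenient test function (e.g.\ $g\equiv 1$), or equivalently compare the two sides by noting that $\int_I |\phi'(\wt w_a)\phi'(\wt w_-)\phi'(\wt w_+)|\, d\wt w_-\, d\wt w_+$ and $\int_\D |\phi'(0)|^{-2}\,dz$ are both standard: the area integral $\int_\D (1-|z|^2)^{-2}|1-\bar z|^{-2}|1-\bar z w_-|^{-2}|1-\bar z w_+|^{-2}(1-|z|^2)^{2}\,dz$ has an explicit value that one recognizes, via the conformal covariance of $G$, as $e^{-(G(1,w_-)+G(1,w_+)+G(w_-,w_+))/2}\cdot(\text{the analogous symmetric integral})$. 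In fact the slickest route is to \emph{avoid computing $F(z)$ pointwise}: apply the already-known change of variables with $(w_-,w_+)$ replaced by a second reference pair and take the ratio, using conformal covariance of $G$ under $\mathrm{Aut}(\D)$, i.e.\ $G(f(u),f(v)) = G(u,v)-\log|f'(u)|-\log|f'(v)|$, to see that the constant must transform exactly like $\exp(-\tfrac12(G(1,w_-)+G(1,w_+)+G(w_-,w_+)))$; pinning down the overall numerical constant (the $2$) then follows from the orientation-reversing $\Z/2$ and one explicit normalization.

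The main obstacle, I expect, is purely bookkeeping: correctly setting up the pushforward of $d\wt w_-\,d\wt w_+$ under the $\mathrm{Aut}(\D)$-parametrization and making sure the $|\phi'(\wt w_-)|,|\phi'(\wt w_+)|$ factors cancel against the Jacobian rather than appearing with the wrong power, while tracking the orientation-reversing component so that the factor $2$ comes out exactly. Conceptually nothing is deep here — it is the statement that, transported by the automorphism group, arc-length$\times$arc-length on $(\partial\D)^2$ corresponds to area measure on $\D$ weighted by the appropriate $|\phi'|$ factors, with the constant fixed by the covariance formula for $G$ — but the computation needs to be done carefully.
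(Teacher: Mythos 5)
Your basic strategy — reparametrizing the family $\{\psi_{\wt w_\pm}\colon (\wt w_-,\wt w_+)\in I\}$ by the image $z=\psi_{\wt w_\pm}(0)$ and then performing a change of variables — is exactly the paper's approach, except that the paper carries out this reparametrization after first conjugating by the Cayley map $f(u)=(u-i)/(u+i)\colon\H\to\D$. That detour is not cosmetic: in $\H$ the automorphisms fixing $\infty$ (the image of $1$) are affine, so the Jacobian of $(\wt x_-,\wt x_+)\mapsto\psi^*_{\wt x_\pm}(i)$ is the elementary rational expression $(x_+-x_-)^2/(\wt x_+-\wt x_-)^3$. Working directly in $\D$ as you propose is possible in principle, but runs into precisely the difficulties your sketch glosses over, and your intermediate formulas contain concrete errors that would propagate to a wrong final answer.

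First, the second of your two formulas for $|\phi'(u)|$ on the boundary is false in this setting. If you insist on the normalization $\phi(1)=1$, then $\phi(w)=e^{i\alpha}(w-b)/(1-\bar b w)$ with $b=-e^{-i\alpha}z$ and $\alpha=\arg\phi'(0)=-2\arg(1-b)$, and consequently
\begin{align*}
|\phi'(u)| = \frac{1-|z|^2}{\bigl|1+e^{i\alpha}\bar z\,u\bigr|^2},
\end{align*}
with $\alpha$ a nontrivial (implicit, $z$-dependent) rotation for non-real $z$; your formula $|\phi'(u)|=(1-|z|^2)/|1-\bar z u|^2$ drops the $e^{i\alpha}$ and is incorrect (it would be valid for the involution $\phi_z$ but not for automorphisms fixing $1$). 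Your first formula $|\phi'(u)|=|1-\bar z\,\phi(u)|^2/(1-|z|^2)$ \emph{is} correct, and has the virtue of being rotation-independent — so that is what you should use throughout.

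Second, substituting that correct formula at $\wt w_a,\wt w_-,\wt w_+,0$ gives
\begin{align*}
|\phi'(\wt w_a)\phi'(\wt w_-)\phi'(\wt w_+)|\,|\phi'(0)|^2 = \frac{|1-\bar z|^2\,|1-\bar z w_-|^2\,|1-\bar z w_+|^2}{1-|z|^2},
\end{align*}
which is (up to powers of $1-|z|^2$) the \emph{reciprocal} of the expression you displayed. As written your computation would yield a density with the wrong sign in the exponent of $(1-|z|^2)$.

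Third, the Jacobian of $(\wt w_-,\wt w_+)\mapsto\phi(0)$ is asserted to be "routine" but never computed, and in $\D$ it is genuinely awkward because the constraint $\phi(1)=1$ makes $\alpha(z)$ an implicit function whose derivative enters the Jacobian. Finally, the claim that the resulting density is automatically a constant multiple of Lebesgue measure, and that the constant is pinned down by evaluating on $g\equiv 1$ together with conformal covariance of $G$, is exactly the right idea but is left as a sketch; as it stands your argument would at most establish the identity up to an undetermined multiplicative constant. To repair the proposal along your lines, keep only the rotation-independent formula $|\phi'(u)|=|1-\bar z\phi(u)|^2/(1-|z|^2)$, actually compute the Jacobian (which is where passing to $\H$ pays off), and verify the constant by a direct comparison rather than by appeal to covariance alone.
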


\begin{proof}
	Let $f\colon \H\to \D$ given by $f(u)=(u-i)/(u+i)$ be the Möbius transformation mapping $(\infty,i)$ to $(1,0)$ with inverse $f^{-1}(z) = i(1+z)/(1-z)$. By performing a change of coordinates, for any measurable function $\widetilde{g}\colon \H\to [0,\infty]$ and $x_-<x_+$ we have
	\begin{align}
		\label{eq:first-parm-coord-change}
		\begin{split}
		&\int_\H \widetilde{g}(u)\,du = \int_{\R^2} \widetilde{g}\left(\psi^*_{\wt{x}_\pm}(i)\right) \frac{(x_+-x_-)^2}{(\widetilde{x}_+-\widetilde{x}_-)^3}\,1(\widetilde{x}_-<\widetilde{x}_+)\,d\widetilde{x}_- d\widetilde{x}_+\\
		&\quad\text{where}\quad \psi^*_{\wt{x}_\pm} = \frac{(x_+-x_-)(\cdot)+x_-\widetilde{x}_+ - x_+\widetilde{x}_-}{\widetilde{x}_+ - \widetilde{x}_-}\;.
		\end{split}
	\end{align}
	Let $x_\pm = f^{-1}(w_\pm)$. If $\widetilde{x}_\pm = f^{-1}(\widetilde{w}_\pm)$ then $f\circ \psi^*_{\wt{x}_\pm}=\psi_{\wt{w}_\pm}\circ f$ and hence
	\begin{align}
		\label{eq:derivative-relations}
		\begin{split}
		&\psi'_{\wt{w}_\pm}(\widetilde{w}_+) = \frac{f'(x_+)}{f'(\widetilde{x}_+)}\,\frac{x_+-x_-}{\widetilde{x}_+-\widetilde{x}_-}\;,\quad \psi'_{\wt{w}_\pm}(\widetilde{w}_-) = \frac{f'(x_-)}{f'(\widetilde{x}_-)}\,\frac{x_+-x_-}{\widetilde{x}_+-\widetilde{x}_-}\;,\\
		&\psi'_{\wt{w}_\pm}(0) = \frac{f'(\psi^*_{\wt{x}_\pm}(i))}{f'(i)}\,\frac{x_+-x_-}{\widetilde{x}_+-\widetilde{x}_-}\;,\quad \psi'_{\wt{w}_\pm}(1)= \frac{\widetilde{x}_+-\widetilde{x}_-}{x_+-x_-}\;.
		\end{split}
	\end{align}
	We now consider $\widetilde{g}=(g\circ f)\cdot|f'|^2$ and let $I'$ be the set of $(u,v)\in I$ such that $(1,u,v)$ is counterclockwise ordered. Then by two changes of coordinates and \eqref{eq:first-parm-coord-change} we get
	\begin{align*}
		&\int_\D g(z)\,dz = \int_\H g(f(u))|f'(u)|^2\,du \\
		&= \int_{\R^2} g((f\circ \psi^*_{\wt{x}_\pm})(i)) |f'(\psi^*_{\wt{x}_\pm}(i))|^2 \,\frac{(x_+-x_-)^2}{(\widetilde{x}_+-\widetilde{x}_-)^3}\,1(\widetilde{x}_-<\widetilde{x}_+)\,d\widetilde{x}_- d\widetilde{x}_+ \\
		&= \int_{I'} g((f\circ \psi^*_{f^{-1}(\wt{w}_\pm)})(i))\,|f'(\psi^*_{f^{-1}(\wt{w}_\pm)}(i))|^2\,\frac{(x_+-x_-)^2 |(f^{-1})'(\widetilde{w}_-)|\cdot |(f^{-1})'(\widetilde{w}_+)|}{(f^{-1}(\widetilde{w}_+)-f^{-1}(\widetilde{w}_-))^3}\,d\widetilde{w}_-\,d\widetilde{w}_+ \;.
	\end{align*}
	Note that $(f\circ \psi^*_{f^{-1}(\wt{w}_\pm)})(i)=\psi_{\wt{w}_\pm}(0)$ and if $\widetilde{x}_\pm = f^{-1}(\widetilde{w}_\pm)$ then by \eqref{eq:derivative-relations},
	\begin{align*}
		&|f'(\psi^*_{\wt{x}_\pm}(i))|^2\,\frac{(x_+-x_-)^2 |(f^{-1})'(\widetilde{w}_-)|\cdot |(f^{-1})'(\widetilde{w}_+)|}{(\widetilde{x}_+-\widetilde{x}_-)^3} \\
		&= |\psi'_{\wt{w}_\pm}(\widetilde{w}_+)|\cdot |\psi'_{\wt{w}_\pm}(\widetilde{w}_-)|\cdot |\psi'_{\wt{w}_\pm}(1)|\cdot |\psi'_{\wt{w}_\pm}(0)|^2\,\frac{ |f'(i)|}{|f'(x_-)|\cdot |f'(x_+)|(x_+-x_-)}\;.
	\end{align*}
	Finally
	\begin{align*}
		\frac{ |f'(i)|}{|f'(x_-)|\, |f'(x_+)|(x_+-x_-)} &= \frac{|(f^{-1})'(w_-)|\cdot |(f^{-1})'(w_+)|}{2(f^{-1}(w_+)-f^{-1}(w_-))} = \frac{1}{|1-w_-|\,|1-w_+|\, |w_--w_+|} \\
		&= e^{(G(1,w_-)+G(1,w_+)+G(w_-,w_+))/2}\;.
	\end{align*}
	Putting everything together yields the claim. Note that the extra factor of $2$ in the statement arises since on the right-hand side of \eqref{eq:first-parm-coord-change} we only consider tuples $(\widetilde{x}_-,\widetilde{x}_+)$ that give rise to a conformal (rather than anticonformal) transformation $\psi_{\wt{x}_\pm}^*$.
\end{proof}

\begin{proof}[Proof of Proposition \ref{prop:resample-lcft-point}]
	Without loss of generality we may assume that $\Lambda =0$ and $\ell=1$; the general case follows by scaling and the fact that $\mu_k^\gamma(\D)$ is a measurable function of the field $k$. Also, without loss of generality $w_a=1$. In the notation of Definition \ref{def:lcft-field} and by Girsanov's theorem we get
	\begin{align*}
		&\E\left(\int_{\partial \D}\int_{\partial \D}\nu_{h^0}^\gamma(d\widetilde{w}_b)\nu_{h^0}^\gamma(d\widetilde{w}_c) g(\widetilde{w}_b,\widetilde{w}_c,h^0)\right) \\
		&\quad = \int_{\partial \D}\int_{\partial \D}e^{(\gamma/2)^2G(\wt{w}_b,\wt{w}_c)}\E\left( g(\widetilde{w}_b,\widetilde{w}_c,h+\gamma/2\cdot G(\cdot,\widetilde{w}_b)+\gamma/2\cdot G(\cdot,\widetilde{w}_c))\right) \,d\widetilde{w}_b\,d\widetilde{w}_c
	\end{align*}
	for any measurable function $g\colon (\partial \D)^2\times C^\infty_c(D)'\to [0,\infty]$; this in particular yields
	\begin{align*}
		&Z^{(\alpha,0),(\gamma,1)}_{0,1} \int P^{(\alpha,0),(\gamma,1)}_{0,1}(dk) \int_{\partial \D} \nu_{k}^\gamma(d\wt{w}_b)\int_{\partial\D} \nu_{k}^\gamma(d\widetilde{w}_c)\, f(k\circ \psi_{\bm{\wt{w}}}^{-1}+Q_\gamma \log |(\psi_{\bm{\wt{w}}}^{-1})'|,\psi_{\bm{\wt{w}}}(0)) \\
		& = \int_{(\partial \D\setminus\{1\})^2} \,d\widetilde{w}_b\,d\widetilde{w}_c\, Z^{(\alpha,0),((\gamma,\gamma,\gamma),\bm{\wt{w}})}_{0,1} \int P^{(\alpha,0),((\gamma,\gamma,\gamma),\bm{\wt{w}})}_{0,1}(dk) f(k\circ \psi_{\bm{\wt{w}}}^{-1}+Q_\gamma \log |(\psi_{\bm{\wt{w}}}^{-1})'|,\psi_{\bm{\wt{w}}}(0)) \\
		& = \int_{(\partial \D\setminus\{1\})^2} \,d\widetilde{w}_b\,d\widetilde{w}_c\, \prod_i |\psi_{\bm{\wt{w}}}'(\widetilde{w}_i)|\cdot |\psi_{\bm{\wt{w}}}'(0)|^{\alpha(Q_\gamma-\alpha/2)}\\
		&\qquad\qquad\cdot Z^{(\alpha,\psi_{\bm{\wt{w}}}(0)),((\gamma,\gamma,\gamma),\bm{w})}_{0,1} \int P^{(\alpha,\psi_{\bm{\wt{w}}}(0)),((\gamma,\gamma,\gamma),\bm{w})}_{0,1}(dk) f(k,\psi_{\bm{\wt{w}}}(0))
	\end{align*}
	where the second equality follows from the change of coordinates formula \cite[Theorem 3.5]{hrv-disk}. Note that
	\begin{align*}
	|\psi_{\bm{\wt{w}}}'(0)|^{\alpha(Q_\gamma-\alpha/2)} = R(\psi_{\bm{\wt{w}}}(0),\D)^{\alpha(Q_\gamma-Q_\alpha)}|\psi_{\bm{\wt{w}}}'(0)|^2
	\end{align*}
	The first claim now follows from Lemma \ref{lem:simple-integral-change}. The second claim follows from another application of Girsanov's theorem.
\end{proof}

\begin{defn}
	\label{def:normal-unit-disk}
	Let $k$ be the Liouville field with $\#A=0$, $\#B=3$, $\beta_j=\gamma$ for all $j\in B$, where the singularities are located at distinct points $w_j\in \partial \D$ for $j\in B$ and we have boundary length $\ell$ and cosmological constant $0$. Then the $\gamma$-LQG surface $[(\D,k,\w)]$ is called the (regular) $\gamma$-LQG disk with boundary length $\ell$ and three marked boundary points. We get the (regular) $\gamma$-LQG disk with boundary length $\ell$ and zero (resp.\ one, two) marked points by keeping zero (resp.\ one, two) of the marked points.
\end{defn}

It follows from \cite[Theorem 3.5]{hrv-disk} that the law of $[(\D,k,\bm{w})]$ does not depend on the locations of the marked points $(w_j\colon j\in B)$. The paper \cite{cercle-quantum-disk} shows that this definition agrees with the one given in \cite[Definition 4.21]{wedges}.

We will now define the generalized $\gamma$-LQG disk with (generalized) boundary length $\ell>0$. This will be a generalized $\gamma$-LQG surface in the sense of Definition \ref{def:general-lqg-surface}. Recall the definition of spectrally positive stable Lévy excursions from Section \ref{sec:levy-exc}.

\begin{defn}
	\label{def:gen-disk-no-points}
	Suppose that $\sqrt{2}<\gamma<2$ and $\ell>0$. Let $(E_t\colon t\in[0,\ell])$ be the time reversal of a spectrally positive stable Lévy excursion with exponent $4/\gamma^2$ and duration $\ell$. Conditionally on $E$, let $S_t$ be independent $\gamma$-LQG disks with one marked point sampled from the $\gamma$-LQG boundary measure and with boundary lengths $|\Delta E_t|$ whenever $t\in(0,\ell)$ is such that $\Delta E_t\neq 0$. Then
	\begin{align*}
		(E,\{ (t, S_t) \colon t<\ell, \Delta E_t\neq 0 \})
	\end{align*}
	is called the generalized $\gamma$-LQG disk with boundary length $\ell$.
\end{defn}

We emphasize the time reversal of the excursion in the definition. Indeed, a positive excursion with only positive jumps is easier to construct, but the process with only negative jumps is more natural from the perspective of the looptree.

\subsection{Exploring CLE decorated LQG disks}
\label{sec:background-msw}

\begin{figure}
	\centering
	\def\svgwidth{0.8\columnwidth}
\begingroup%
  \makeatletter%
  \providecommand\color[2][]{%
    \errmessage{(Inkscape) Color is used for the text in Inkscape, but the package 'color.sty' is not loaded}%
    \renewcommand\color[2][]{}%
  }%
  \providecommand\transparent[1]{%
    \errmessage{(Inkscape) Transparency is used (non-zero) for the text in Inkscape, but the package 'transparent.sty' is not loaded}%
    \renewcommand\transparent[1]{}%
  }%
  \providecommand\rotatebox[2]{#2}%
  \newcommand*\fsize{\dimexpr\f@size pt\relax}%
  \newcommand*\lineheight[1]{\fontsize{\fsize}{#1\fsize}\selectfont}%
  \ifx\svgwidth\undefined%
    \setlength{\unitlength}{272.36705619bp}%
    \ifx\svgscale\undefined%
      \relax%
    \else%
      \setlength{\unitlength}{\unitlength * \real{\svgscale}}%
    \fi%
  \else%
    \setlength{\unitlength}{\svgwidth}%
  \fi%
  \global\let\svgwidth\undefined%
  \global\let\svgscale\undefined%
  \makeatother%
  \begin{picture}(1,0.480149)%
    \lineheight{1}%
    \setlength\tabcolsep{0pt}%
    \put(0,0){\includegraphics[width=\unitlength,page=1]{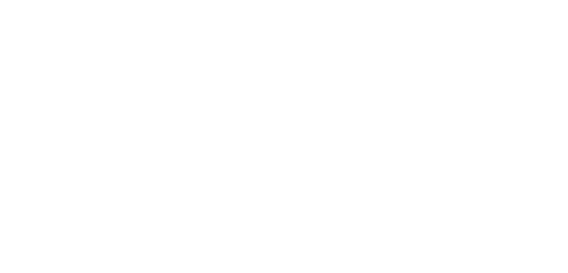}}%
    \put(0.01155359,0.38232325){\color[rgb]{0,0,0}\makebox(0,0)[lt]{\lineheight{1.25}\smash{\begin{tabular}[t]{l}$l$\end{tabular}}}}%
    \put(0.39339941,0.0337841){\color[rgb]{0,0,0}\makebox(0,0)[lt]{\lineheight{1.25}\smash{\begin{tabular}[t]{l}$r$\end{tabular}}}}%
    \put(0.83840569,0.03673929){\color[rgb]{0,0,0}\makebox(0,0)[lt]{\lineheight{1.25}\smash{\begin{tabular}[t]{l}$r$\end{tabular}}}}%
    \put(0.56673009,0.33791575){\color[rgb]{0,0,0}\makebox(0,0)[lt]{\lineheight{1.25}\smash{\begin{tabular}[t]{l}$l$\end{tabular}}}}%
    \put(0,0){\includegraphics[width=\unitlength,page=2]{msw-review.pdf}}%
  \end{picture}%
\endgroup%

	\caption{\emph{Left.} We consider a regular $\gamma$-LQG disk decorated by a $\CLE_\kappa$ (red loops) and a CPI with asymmetry parameter $\beta$; the CPI up to some time $t$ is colored in blue and the remaining part of the CPI is colored in green. The future part of the disk at time $t$ (which is the surface in which the exploration continues) is shown in light blue. Note that the disks colored in white are precisely the disks that touch the CPI up to time $t$. \emph{Right.} The figure illustrates a generalized $\gamma$-LQG disk decorated by a $\CLE_{\kappa'}$ and a CPI with asymmetry parameter $\beta$ with the same coloring convention as in the simple case.}
	\label{fig:msw-review}
\end{figure}

In this section, we briefly review the results by Miller, Sheffield and Werner on exploring LQG surfaces decorated by independent CLE. The case where the quantum disk is decorated by simple CLE appears in their paper \cite{msw-simple} and the case where the generalized quantum disk is decorated by a non-simple CLE appears in \cite{msw-non-simple}.

We begin by explaining the simple case. See the left-hand side of Figure \ref{fig:msw-review} for an illustration of the ideas below. We consider parameters
\begin{align*}
	\sqrt{8/3}<\gamma<2\;,\quad \kappa=\gamma^2\;,\quad \beta\in [-1,1]\quad\text{and}\quad \ell_L,\ell_R>0 \;.
\end{align*}
Let $(\D,h)$ be any embedding of a regular $\gamma$-LQG disk $S=[(\D,h)]$ with boundary length $\ell=\ell_L+\ell_R$ into $\D$. Let $w_0\in \partial \D$ be chosen uniformly according to the $\gamma$-LQG boundary measure and let $w_\infty\in \partial D$ be such that the clockwise boundary arc from $w_0$ to $w_\infty$ has length $\ell_L$. Then the counterclockwise arc from $w_0$ to $w_\infty$ has length $\ell_R$.

Conditionally on $h$ and on $(w_0,w_\infty)$, let $(\Gamma,\lambda)$ be a non-nested $\CLE_\kappa$ in $\D$ coupled with a CPI from $w_0$ to $w_\infty$ with asymmetry parameter $\beta$ (see Section \ref{sec:cle-explorations}). By the conformal invariance of the law of $(\Gamma,\lambda)$, the law of the decorated regular $\gamma$-LQG surface $[(\D,h,w_0,w_\infty,\lambda,\Gamma)]$ does not depend on the embedding $(\D,h)$ of the quantum disk. We may assume that $\lambda$ is parametrized according to $\gamma$-LQG length and call its total duration $\zeta$. Moreover, for each $t< \zeta$ we write $\xi(t)$ for the union of $\lambda([0,t])$ and all the loops in $\Gamma$ touching it (see \eqref{eq:xi0}). Denote by $D_t$ the complementary component of $\D\setminus \xi(t)$ containing the boundary point $w_\infty$.

Let $L_t$ (resp.\ $R_t$) denote the $\gamma$-LQG length of the clockwise (resp.\ counterclockwise) boundary arc of $D_t$ from $\lambda_t$ to $b$. By convention, we set $L_t=R_t=0$ for $t\ge \zeta$. It turns out that this process has a càdlàg version $((L_t,R_t)\colon t\ge 0)$ which takes values in $(0,\infty)^2\cup\{(0,0)\}$. Let $X=L+R$. To help the reader compare the construction here to the one in the non-simple case presented below, let us also define the regular $\gamma$-LQG surface $S_t := [(D_t,h|_{D_t})]$.

When $\Delta X_t:=X_t-X_{t-}\neq 0$ we let $\Delta D_t = (\cap_{t'<t} D_{t'})\setminus D_t$ and we call the quantum surface $\Delta S_t := [(\Delta D_t,h|_{\Delta D_t})]$ the cut out quantum surface at time $t$.  The surface $\Delta S_t$ has boundary length $|\Delta X_t|$.
Note that $L$ (resp.\ $R$) has a positive jump at time $t$ if $\lambda$ hits a clockwise (resp.\ counterclockwise) oriented loop $\eta$ for the first time at time $t$, and we have $\Delta D_t=\eta^o$ and hence $\Delta S_t = [(\eta^o,h|_{\eta^o})]$ in this case. On the other hand, $L$ (resp.\ $R$) has a negative jump at time $t$ if $\lambda$ hits the boundary of its past hull at time $t$ on the left (resp.\ right) side. In the latter case, we call $\Delta D_t$ a trunk component.

\begin{thm}[\cite{msw-simple}]
	\label{thm:msw-simple}
	There exists a constant $v_\kappa^\beta \in (0,\infty)$ such that
	\begin{align*}
		(L,R) \stackrel{d}{=} ((L_{v_\kappa^\beta t}',R'_{v_\kappa^\beta t})\colon t\ge 0)
	\end{align*}
	where $(L',R')$ is defined in Proposition \ref{prop:simple-msw-process}. Moreover, conditionally on $(L,R)$ the cut out quantum surfaces which correspond to jump times $t\ge 0$ with $\Delta X_t\neq 0$ are independent regular $\gamma$-LQG disks with boundary lengths $|\Delta X_t|$.
\end{thm}

\begin{remark}
	In Section \ref{sec:fragmentation-main} the variables $L$ and $R$ were used to denote two independent Lévy processes. In all other parts of the paper, including this section, $L$ and $R$ denote boundary length processes within LQG disks.
\end{remark}

In order to use LQG tools to understand CLE, it is useful to split a regular $\gamma$-LQG disk decorated by a CLE into the components obtained by restricting the surface to all the individual CLE loops. The following corollary explains how one can evaluate functionals depending on the restrictions to the CLE loops.

\begin{cor}
	\label{cor:msw-simple-full}
	Let $\Psi$ be a measurable function on the space of regular $\gamma$-LQG surfaces with values in $[0,\infty]$. Consider a regular $\gamma$-LQG disk $S$ with boundary length $\ell>0$ and decorated with an independent non-nested $\CLE_\kappa$. Let $(S_n\colon n\ge 1)$ be the $\gamma$-LQG surfaces cut out by the CLE loops with some arbitrary ordering. Then
	\begin{gather*}
		\E_\ell\left(\sum_{n\ge 1} \Psi(S_n)\right)=\sum_{k\ge 0} f_k(\ell)\quad\text{where}\quad  f_0(\ell) :=  \E_{(\ell/2,\ell/2)}\left(\sum_{t\in(0,\zeta)\colon \Delta X_t>0} \Psi(\Delta S_t) \right)
		 \\
		 \text{and}\quad f_k(\ell) := \E_{(\ell/2,\ell/2)}\left( \sum_{t\in (0,\zeta)\colon \Delta X_t<0} f_{k-1}(|\Delta X_t|) \right)
	\end{gather*}
	for $k\ge 1$. The subscript $\ell$ on the left-hand side of the statement indicates the total boundary length of the $\gamma$-LQG disk $S$ and a subscript of the form $(\ell_L,\ell_R)$ indicates that the CPI exploring the LQG disk is started with boundary lengths $(\ell_L,\ell_R)$.
\end{cor}

\begin{proof}
	The proof is based on an exploration procedure for iteratively discovering all the $\gamma$-LQG surfaces cut out by the CLE loops on $S$.
	
	We first pick a boundary point from $S$ uniformly according to the $\gamma$-LQG boundary measure (independently of the CLE) and consider the unique point at distance $\ell/2$ along the boundary away from it. We then sample a CPI with asymmetry parameter $\beta$ within the CLE and let $(L,R)$, $X$, and $\Delta S_t$ be defined as in the discussion above. Then the $\gamma$-LQG surfaces $\Delta S_t$ in the case $\Delta X_t>0$ correspond to the case of CLE loops intersecting the CPI being cut out.
	
	Within each of the trunk components $\Delta S_t$ (i.e., $\Delta X_t<0$) we now independently sample a boundary point uniformly from its $\gamma$-LQG boundary measure and subsequently also consider the unique point at distance $|\Delta X_t|/2$ (along the boundary of the trunk component) away from this point. We can now again consider a CPI with asymmetry parameter $\beta$ from the first to the second of these boundary points in each of the trunk components. Again, the CLE loops intersecting the newly constructed CPI cut out a collection of $\gamma$-LQG surfaces and we also obtain a collection of trunk components for each of the CPIs.
	
	By iterating this procedure within each of the trunk components, we obtain a collection of $\gamma$-LQG surfaces $(S_n'\colon n\ge 1)$. Moreover, by construction and Theorem \ref{thm:msw-simple} we have
	\begin{align}
		\label{eq:explored-explicit}
		\E_\ell\left(\sum_{n\ge 1} \Psi(S'_n)\right)&=\sum_{k\ge 1} f_k(\ell)\;.
	\end{align}
	Clearly, $(S_n'\colon n\ge 1)$ forms a subcollection of $(S_n\colon n\ge 1)$, so to establish the corollary, it suffices to show that these collections are identical. 
	To prove this, it is sufficient to establish the following identity
	\begin{align}
		\label{eq:explored-vs-unexplored}
		\E_\ell\left(\sum_{n\ge 1} A(S'_n)\right) = \E_\ell\left(\sum_{n\ge 1} A(S_n)\right)
	\end{align}
	where $A([(D,h)]) = \mu^\gamma_h(D)$ denotes the total area of $[(D,h)]$. We will now verify this identity. 
	The right-hand side of \eqref{eq:explored-vs-unexplored} equals $\E_\ell(A(S))=\E_1(A(S))\,\ell^2$ since (modulo boundary curves with zero LQG area) $S$ can be viewed as a disjoint union of the surfaces $S_n$. To prove that the left-hand side of \eqref{eq:explored-vs-unexplored} is also equal to $\E_1(A(S))\,\ell^2$ we will use the explicit formula \eqref{eq:explored-explicit} and Proposition \ref{prop:power-theta} with $\theta=2$. Indeed, in the notation of the proposition just mentioned we have $f_0(\ell) = A_+ \E_1(A(S))\,\ell^2$ and by induction we see that $f_k(\ell) = A_+A_-^k \E_1(A(S))\, \ell^2$. Therefore by Proposition \ref{prop:power-theta} we obtain
	\begin{align*}
		\E_\ell\left(\sum_{n\ge 1} \Psi(S'_n)\right)&=\sum_{k\ge 1} f_k(\ell) = \frac{A_+}{1-A_-} \,\E_1(A(S))\,\ell^2 = \E_1(A(S))\, \ell^2\;.
	\end{align*}
	The claim follows.
\end{proof}

We will also need a more general version of the previous corollary in order to prove Theorem \ref{thm:bootstrap}. In the following, recall the definition of $\Pi(X,B)$ just before the statement of Theorem \ref{thm:weighted-levy}. The next result will be used in the context where $\Psi'(S)$ is the weight of a $\gamma$-LQG surface with no marked points and $\Psi_i(S)$ is the weight of a $\gamma$-LQG disk with one marked point, indexed by $i \in A$ (see \eqref{eq:part-fcn}). Applied in this context, the corollary says that the weight $f^B(\ell)$ of an LQG disk with the marked points contained in distinct outermost CLE loops can be computed using an iterative CPI exploration. The requirement that the marked points are in distinct outermost CLE loops corresponds to the fact that in the definition of $f^B(\ell)$ we only sum over $m\in\N_0^B$ with distinct components.

\begin{cor}
	\label{cor:msw-simple-full-more}
	Let $(\Psi_i\colon i\in A)$ and $\Psi'$ be measurable functions on the space of regular $\gamma$-LQG surfaces with values in $[0,\infty]$. Consider the same setup as in Corollary \ref{cor:msw-simple-full}. We assume that
	\begin{align}
		\label{eq:psi0-condition}
		\E_\ell(\Psi'(S))=\E_\ell\left( \prod_{n\ge 1} \Psi'(S_n)\right)\;.
	\end{align}
	For $B\subseteq A$ let
	\begin{align*}
		f^B(\ell) &= \E_\ell\left(\sum_{m\in \N_0^B} \prod_{i\in B} \Psi_i(S_{m_i}) \prod_{n'\notin \{m_i\colon i\in B\}} \Psi'(S_{n'})\right)\;.
	\end{align*}
	where the sum is only over tuples with distinct components. Then $f^B(\ell)=\sum_{k\ge 0} f^B_k(\ell)$ where
	\begin{align*}
		f^B_0(\ell) &= \E_{(\ell/2,\ell/2)}\Biggl(\,\sum_{Q\in \Pi^+(X,B)}\ \prod_{t\in (0,\zeta)\colon \Delta X_t< 0,\, \Delta Q_t\neq \emptyset} f^{\Delta Q_t}(|\Delta X_t|)  \\
		&\qquad\qquad\qquad \cdot \prod_{t\in (0,\zeta)\colon \Delta Q_t=\emptyset} \Psi'(\Delta S_t) \prod_{t\in (0,\zeta)\colon \Delta X_t>0,\, \Delta Q_t=\{i\}} \Psi_i(\Delta S_t)\Biggr)\;, \\
		f^B_{k+1}(\ell) &= \E_{(\ell/2,\ell/2)}\left( \sum_{t<\zeta\colon \Delta X_t < 0} f^B_k(|\Delta X_t|) \prod_{s\in (0,\zeta)\setminus\{t\}} \Psi'(\Delta S_s) \right)\quad\text{for $k\ge 0$}
	\end{align*}
	where we write $\Pi^+(X,B)$ for the set of $Q\in \Pi(X,B)$ such that $\#\Delta Q_t\le 1$ whenever $\Delta X_t>0$ and we require that $\Delta Q_t\neq B$ whenever $\Delta X_t<0$.
\end{cor}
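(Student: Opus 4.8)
The plan is to mimic the proof of Corollary \ref{cor:msw-simple-full} but keep track of the combinatorial data describing which marked points lie in which cut-out surface. As in that proof, we run an iterative exploration of the CLE-decorated disk $S$: sample a uniform boundary point according to the $\gamma$-LQG boundary measure, take the antipodal point at distance $\ell/2$, sample a CPI with asymmetry parameter $\beta$, and record the boundary length process $(L,R)$, $X=L+R$, the cut-out surfaces $\Delta S_t$ at jump times, and — this is the new ingredient — which of the indices in $A$ are separated from $w_\infty$ at each jump time. On a positive jump of $X$ (a CLE loop hit by the CPI) at most one of the surfaces in the collection $(S_n)$ is affected, so at most one index can be separated; on a negative jump (a trunk component) several indices can be separated together. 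Inside each trunk component we iterate the same procedure. The function $Q\in\Pi^+(X,B)$ encodes exactly which subset of the (still unexplored) indices is separated at each jump time, with the constraint $\#\Delta Q_t\le 1$ for positive jumps and $\Delta Q_t\ne B$ for negative jumps (the latter because if all remaining points are separated at once we have not yet ``resolved'' them and must recurse — this is the role of the outer sum over $k$, counting how many full explorations we perform before the points genuinely split).

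First I would set up the exploration and argue, exactly as in Corollary \ref{cor:msw-simple-full}, that iterating the CPI exploration inside trunk components eventually discovers the same collection of cut-out surfaces $(S_n)$ (up to $\gamma$-LQG-null boundary curves). The argument that the explored subcollection $(S_n')$ coincides with $(S_n)$ is identical: it suffices to check that the expected total area matches, which follows from Proposition \ref{prop:power-theta} with $\theta=2$ and the fact that $A_+/(1-A_-)=\cos(4\pi/\kappa)/\cos(4\pi/\kappa-2\pi)=1$, so the geometric series in $k$ telescopes correctly. Next I would unwind the definition of $f^B(\ell)$: it is an expectation over tuples $m\in\N_0^B$ with distinct components of $\prod_{i\in B}\Psi_i(S_{m_i})$ times $\prod_{n'}\Psi'(S_{n'})$ over the remaining surfaces, where assumption \eqref{eq:psi0-condition} is precisely what lets us freely insert or remove factors of $\Psi'$ over entire sub-collections of surfaces (it says $\Psi'$ applied to a surface equals the product of $\Psi'$ applied to its cut-out pieces — a multiplicativity that propagates through the recursion). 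Using \eqref{eq:psi0-condition}, at the top level of the exploration I would split the tuple $m$ according to which of the $\Psi_i$-marked surfaces are discovered on this first CPI (as loops, i.e.\ positive jumps, contributing $\Psi_i(\Delta S_t)$ with $\Delta Q_t=\{i\}$) versus which lie inside a trunk component (negative jump $t$ with $i\in\Delta Q_t$, recursing into $f^{\Delta Q_t}(|\Delta X_t|)$), versus the degenerate case where all of $B$ falls into a single trunk component, which is exactly the case excluded from $\Pi^+$ and handled by $f^B_{k+1}$ in terms of $f^B_k$.

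The bulk of the work is the bookkeeping: carefully matching the sum over $Q\in\Pi^+(X,B)$ and the products over jump times in the claimed formula for $f^B_0(\ell)$ against the combinatorial decomposition of the tuple $m$ obtained from the exploration, and verifying that Theorem \ref{thm:msw-simple} (conditional independence of the cut-out surfaces given $(L,R)$, each a regular $\gamma$-LQG disk of the appropriate boundary length) lets the expectation factor into the product form displayed, with the recursion $f^{\Delta Q_t}(|\Delta X_t|)$ appearing by applying the same identity inside each trunk component. For the $f^B_{k+1}$ recursion I would argue that if all of $B$ is separated into a single trunk component on some CPI exploration, then conditionally on this the surface inside that trunk component is again a regular $\gamma$-LQG disk of boundary length $|\Delta X_t|$, decorated by an independent CLE, so the contribution is $f^B_k(|\Delta X_t|)$, while all the other cut-out surfaces on this exploration contribute only $\Psi'$ factors by \eqref{eq:psi0-condition}; summing over which jump time $t$ carries the whole of $B$ and over $k$ gives the stated series. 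I expect the main obstacle to be purely notational — organizing the multi-index sum over distinct tuples $m\in\N_0^B$ and its refinement by $Q$ without double-counting, and making sure the constraint ``$\Delta Q_t\ne B$ on negative jumps'' is imposed in exactly the right place so that the outer sum over $k$ is neither missing terms nor overcounting the degenerate exploration steps; there is no analytic difficulty beyond what Corollary \ref{cor:msw-simple-full} and Theorem \ref{thm:msw-simple} already supply.
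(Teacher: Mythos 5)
Your proposal follows essentially the same route as the paper's (very terse) proof: iterate the CPI exploration inside trunk components, and observe that $f^B_k$ is the restriction of the defining expectation for $f^B$ to the event that exactly $k+1$ chordal explorations are needed before the marked surfaces split. Your version fills in the bookkeeping the paper elides — the role of \eqref{eq:psi0-condition} in factoring $\Psi'$ across sub-collections, the conditional independence from Theorem \ref{thm:msw-simple}, and the positive-jump constraint $\#\Delta Q_t\le 1$ — but it is the same argument.
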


\begin{proof}
	Let us condition on both the CLE and the LQG surface, and fix $m \in \N_0^B$ with $(m_i \colon i \in B)$ distinct. The idea is to define an iterative CPI exploration until we have discovered all loops $\eta_i$ corresponding to the surfaces $(S_{m_i})$, i.e., each such loop $\eta_i$ intersects at least one of the CPIs. Suppose we are exploring the CLE decorated LQG disk with a CPI. If all the $\eta_i$ are in the same cut out component (and none intersect the CPI), we do another CPI exploration in the one cut out component containing all the $\eta_i$. We repeat this until not all of the $\eta_i$ are in the same cut out component of the CPI. This procedure terminates by the argument in the previous corollary showing that the iterative CPI exploration discovers all CLE loops.
	The corollary follows since $f^B_k$ is equal to the expectation defining $f^B$ restricted to the event that we need exactly $k+1$ such chordal explorations in order for not all $\eta_i$ to be in the same complementary component of the CPI.
	Indeed, the terms in the sum in the definition of $f_0^B(\ell)$ correspond precisely to the event that not all $\eta_i$ are in the same complementary component in CPI exploration.
	The iterative definition for $f^B_{k+1}(\ell)$ precisely captures repeating the construction in one of the cut out components.
\end{proof}

The situation is analogous in the setting of a generalized $\gamma$-LQG disk decorated by a non-simple CLE. See now the right side of Figure \ref{fig:msw-review} for a graphical depiction of the following definitions and results. We fix parameters
\begin{align*}
	\sqrt{2} < \gamma < 2\;,\quad \kappa = 16/\gamma^2\;,\quad \beta\in [-1,1] \quad\text{and}\quad \ell_L,\ell_R>0\;.
\end{align*}
The detailed definitions are a bit more involved than in the simple case considered above and we refer to \cite{msw-non-simple} for more details. Consider a generalized $\gamma$-LQG disk with boundary length $\ell=\ell_L+\ell_R$, that is,
\begin{align*}
	S=(E,\{(s,[(\D,h_s,w_0^s)])\colon s<l+r,\Delta E_s\neq 0\})
\end{align*}
where $E$ is the time reversal of a spectrally positive stable Lévy excursion with exponent $4/\gamma^2$ and duration $\ell$, and conditionally on $E$, we have that $(\D,h_s)$ is an embedding of an independent regular $\gamma$-LQG disk of boundary lengths $|\Delta E_s|$ and $w_0^s$ is independently picked according to the $\gamma$-LQG boundary measures for each $s<\ell$ with $\Delta E_s\neq 0$. As explained in Section \ref{sec:gff-lqg} we view the LQG surfaces as ``glued into'' the loops of the looptree defined by $E$, and we do this so that the point $w_0^s$ is identified with the point on the loop that is closest to the root of the looptree in the metric of the looptree.

The times $0$ and $\ell_L$ (in the parametrization of the excursion $E$) correspond to two points on the looptree associated to $E$ (see Section \ref{sec:gff-lqg}) and there exists a unique curve of minimal length from the first to the second of the points above with the property that it only traces loops of the looptree in clockwise order -- we call this curve a clockwise geodesic. Recall that each $s<\ell$ for which $\Delta E_s\neq 0$ is associated with a loop in the looptree and let $I(E,\ell_L)$ denote the collection of such times corresponding to loops that intersect the clockwise geodesic mentioned above. Furthermore, for each $s\in I(E,\ell_L)$, we let $L(E,\ell_L)_s$ be the length of the intersection of the clockwise geodesic with the loop corresponding to the time $s$.

Now, for each regular $\gamma$-LQG disk of the generalized $\gamma$-LQG disk we sample an independent $\CLE_{\kappa'}$ (on $\D$). Moreover, after sampling i.i.d.\ orientations for all the $\CLE_{\kappa'}$ loops (clockwise and counterclockwise with probability $(1-\beta)/2$ and $(1+\beta)/2$, respectively), we deterministically associate to each $s\in I(E,\ell_L)$ a CPI with asymmetry parameter $\beta$ from $w_0^s$ to $w_\infty^s$ where $w_\infty^s$ is chosen so that the clockwise boundary segment from $w_0^s$ to $w_\infty^s$ has $\gamma$-LQG length $L(E,\ell_L)_s$ with respect to the field $h_s$; formally, this yields a decorated generalized $\gamma$-LQG surface where the surfaces corresponding to the loops connecting the (generalized) boundary points corresponding to $0$ and $\ell_L$ are decorated by a collection of loops together with a curve and all other surfaces are only decorated by a collection of loops. We parametrize the CPI by the $\gamma$-LQG length of the CPI in the generalized quantum disk, i.e., it is the sum of all the $\gamma$-LQG lengths of the CPIs in the regular $\gamma$-LQG disks. Moreover, we let $\zeta$ be the total $\gamma$-LQG length of the CPI.

For $t\in [0,\zeta)$, we (informally) define a generalized $\gamma$-LQG surface $S_t$ as follows (see again \cite{msw-non-simple} for details). Let us consider the CPI stopped at time $t$. For $s<\ell$ with $\Delta E_s\neq 0$, let $A^s_t$ be the set of points in $\D$ such that the corresponding point in the $[(\D,h_s)]$ (which is a part of $S$) can be connected within $S$ to the generalized boundary point associated to $l$ along a path which does not intersect the CPI stopped at time $t$ and which does not intersect the interior of any of the CLE loops intersecting the CPI stopped at time $t$. This is illustrated in the right-hand side of Figure \ref{fig:msw-review}, where the purple (resp.\ white) points are contained in $A_t^s$ (resp.\ $\D\setminus A_t^s$).

It can be argued that the collection of sets $(A^s_t\colon s<\ell,\;\Delta E_s\neq 0)$ defines a generalized $\gamma$-LQG disk $S_t$ with two marked points; the point visited by the CPI at time $t$ and the terminal point of the CPI (the latter of which is equal to the  boundary point of the original generalized disk corresponding to time $\ell_L$ for $E$).
For $t<\zeta$, the clockwise (resp.\ counterclockwise) generalized $\gamma$-LQG length from the first to the second one of these generalized boundary points is called $L_t$ (resp.\ $R_t$). If $t\ge \zeta$, we simply set $L_t=R_t=0$. As in the simple case, it turns out that $(L,R)$ has càdlàg version $((L,R)\colon t\ge 0)$ which takes values in $(0,\infty)^2\cup\{(0,0)\}$ and we also make the definition $X=L+R$.

Whenever $t\in(0,\zeta)$ is such that $\Delta X_t\neq 0$, the collection of sets $(\Delta A^s_t\colon s \in(0,l+r),\;\Delta E_s\neq 0)$ with $\Delta A^s_t = (\cap_{t'<t} A^s_{t'})\setminus A^s_t$ also defines a generalized $\gamma$-LQG surface $\Delta S_t$ (called the cut out generalized quantum surfaces at time $t$). As in the simple case, if $\Delta X_t>0$ then the CPI discovers a CLE loop at time $t$
and $\Delta S_t$ is the generalized $\gamma$-LQG surface given by the interior of the CLE loop. If $\Delta X_t<0$, this corresponds to the CPI hitting its past hull and cutting out what we call a trunk component.

\begin{thm}[\cite{msw-non-simple}]
	\label{thm:msw-non-simple}
	There exists a constant $v_\kappa^\beta \in (0,\infty)$ such that
	\begin{align*}
		(L,R) \stackrel{d}{=} ((L_{v_\kappa^\beta t}',R'_{v_\kappa^\beta t})\colon t\ge 0)
	\end{align*}
	where $(L',R')$ is defined in Proposition \ref{prop:nonsimple-msw-process}. Furthermore, conditionally on $(L,R)$ the cut out (generalized) quantum surfaces which correspond to jump times $t\ge 0$ with $\Delta X_t\neq 0$ are independent generalized $\gamma$-LQG disks with boundary length $|\Delta X_t|$.
\end{thm}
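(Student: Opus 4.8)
This is the $\kappa'\in(4,8)$ counterpart of Theorem~\ref{thm:msw-simple} and is the main result of \cite{msw-non-simple}; here I sketch the strategy one would follow to establish it. The plan is to reduce the exploration of the decorated generalized disk to a family of chordal explorations inside the individual regular $\gamma$-LQG disks that make up the looptree, to analyze each of these via conformal welding of quantum surfaces along $\SLE_\kappa$-type curves (where $\kappa=16/\kappa'\in(8/3,4)$ is the law of the trunk of the CPI identified in Section~\ref{sec:cle-explorations}), and then to reassemble the pieces along the looptree encoded by the time-reversed stable Lévy excursion $E$.

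First I would make precise the decomposition induced by the clockwise geodesic: only the regular disks glued into loops of the looptree meeting the geodesic from $0$ to $\ell_L$ carry a portion of the CPI, and inside each such disk the CPI is a chordal CPI between two boundary points with prescribed boundary-arc lengths given by $L(E,\ell_L)$. Combining the Markov property of the CPI (Section~\ref{sec:cle-explorations}) with the spatial Markov property of the GFF and conformal covariance \eqref{eq:cc}, one gets that, at any time, the future of the exploration depends on the past only through the current total boundary lengths $(L_t,R_t)$ and the positions within the looptree; this yields the strong Markov property of $(L,R)$.

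Next comes the single-disk computation. Inside a regular $\gamma$-LQG disk the CPI is an $\SLE_\kappa(\rho;\kappa-6-\rho)$ curve with boundary-touching weights; the loops it swallows hang off it and the full exploration path is a full $\SLE_{\kappa'}^\beta(\kappa'-6)$. The key analytic input is the conformal welding of quantum disks along such curves, i.e.\ the quantum zipper for disks in the spirit of \cite{wedges,ahs-disk,ahs-sle}: drawing the curve cuts the disk into quantum surfaces that are conditionally independent given all their boundary lengths, and the left/right boundary-length processes --- which pick up negative jumps from the curve hitting its past hull (trunk components) and positive jumps from swallowed $\CLE_{\kappa'}$ loops --- evolve as a time change of the Lévy-type process with jump measure $\propto|h|^{-1-4/\kappa'}$ of Proposition~\ref{prop:nonsimple-msw-process}. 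The exponent $4/\kappa'$ is precisely the scaling exponent relating the $\gamma$-LQG area inside a loop to its boundary length, and the reweighting by $w_{\kappa'}(l,r)=(l+r)^{-1-4/\kappa'}$ is forced by requiring that the future surface at each time again be a generalized quantum disk of the correct total boundary length. A trunk component is itself a generalized $\gamma$-LQG disk because cutting a regular disk with the simple $\SLE_\kappa$ trunk produces forested quantum wedges, and the interior of a $\CLE_{\kappa'}$ loop on the surface is a generalized $\gamma$-LQG disk by the non-simple analogue of Figure~\ref{fig:cle-lqg-warmup}.

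To finish, I would parametrize the CPI by its $\gamma$-LQG length (the sum of the lengths of its parts inside the individual disks) and concatenate the per-disk boundary-length processes along the excursion $E$; this exhibits $(L,R)$ as the process $(L',R')$ of Proposition~\ref{prop:nonsimple-msw-process} run at a single deterministic speed $v_\kappa^\beta$, the latter being pinned down by matching the generators (Theorem~\ref{thm:nonsimple-weighted-levy} with $A=\emptyset$, $W^\emptyset\equiv1$). Conditional independence of the cut-out generalized surfaces given $(L,R)$ then follows from the independence in the welding statement together with the independence of the CLE and CPI decorations across distinct loops of the looptree. I expect the main obstacle to be the rigor of the looptree gluing in the presence of pinch points: showing that the boundary-length process is genuinely càdlàg with values in $(0,\infty)^2\cup\{(0,0)\}$, that loops and geodesic/CPI intersections do not accumulate pathologically, and that the per-disk welding results patch together consistently --- this is the technical heart of \cite{msw-non-simple}.
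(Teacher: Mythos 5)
This theorem is imported verbatim from \cite{msw-non-simple} and the present paper gives no proof of it: Section~\ref{sec:background-msw} is explicitly a review section, and the result is stated as background input (credited in the theorem header) alongside its simple-CLE analogue Theorem~\ref{thm:msw-simple}, to be used later in Sections~\ref{sec:ssw} and \ref{sec:explorations}. So there is no ``paper's own proof'' to compare your sketch against, and your attempt to outline the argument of the cited reference goes beyond what the paper does here.

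That said, your sketch is a plausible high-level description of the strategy in \cite{msw-non-simple}: decompose via the looptree encoded by the time-reversed Lévy excursion, reduce to chordal explorations in the regular disk components, identify the trunk law as $\SLE_\kappa(\rho,\kappa-6-\rho)$ with $\kappa=16/\kappa'$, and characterize $(L,R)$ via conformal welding and a generator/speed-matching argument. A couple of caveats: your appeal to Theorem~\ref{thm:nonsimple-weighted-levy} for the generator matching is anachronistic --- that is a result of the present paper and would not be available to \cite{msw-non-simple}; the generator identification there is done from scratch. Also, the assertion that ``cutting a regular disk with the simple $\SLE_\kappa$ trunk produces forested quantum wedges'' looks off --- cutting a regular disk by a simple curve yields regular disk pieces, and the reason trunk components are \emph{generalized} disks is that they consist of such a regular piece together with the subtrees of the looptree hanging off its boundary, not because of any forestation induced by the trunk itself. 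These are detail-level issues in a sketch whose overall shape is reasonable, but since the theorem is not proved in this paper you should not be expected to reconstruct that proof here.
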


\begin{cor}
	\label{cor:msw-nonsimple-full}
	Let $\Psi$ be a measurable function on the space of generalized $\gamma$-LQG surfaces with values in $[0,\infty]$. Consider a generalized $\gamma$-LQG disk $S$ with (generalized) boundary length $\ell>0$ with each of its (regular) components decorated by an independent non-nested $\CLE_{\kappa'}$. Let $(S_n\colon n\ge 1)$ be the (generalized) $\gamma$-LQG surfaces cut out by the CLE loops. Then
	\begin{gather*}
		\E_\ell\left(\sum_{n\ge 1} \Psi(S_n)\right)=\sum_{k\ge 0} f_k(\ell)\quad\text{where}\quad  f_0(\ell) :=  \E_{(\ell/2,\ell/2)}\left(\sum_{t\in(0,\zeta)\colon \Delta X_t>0} \Psi(\Delta S_t) \right)
		 \\
		 \text{and}\quad f_k(\ell) := \E_{(\ell/2,\ell/2)}\left( \sum_{t\in (0,\zeta)\colon \Delta X_t<0} f_{k-1}(|\Delta X_t|) \right)
	\end{gather*}
	for $k\ge 1$. The subscript $\ell$ on the left-hand side of the statement indicates the total boundary length of the $\gamma$-LQG disk $S$ and a subscript of the form $(\ell_L,\ell_R)$ indicates that the CPI exploring the LQG disk is started with boundary lengths $(\ell_L,\ell_R)$.
\end{cor}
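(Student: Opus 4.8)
\textbf{Proof proposal for Corollary \ref{cor:msw-nonsimple-full}.}

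The plan is to mirror exactly the argument used for the simple case in Corollary \ref{cor:msw-simple-full}, substituting the non-simple analogues of every ingredient. The underlying scheme is an iterative exploration procedure: in the generalized $\gamma$-LQG disk $S$ (whose regular components each carry an independent non-nested $\CLE_{\kappa'}$), first pick a boundary point uniformly from the $\gamma$-LQG boundary measure and the point at generalized distance $\ell/2$ away from it, run a CPI with asymmetry parameter $\beta$ between these two points, and record the cut out generalized $\gamma$-LQG surfaces $\Delta S_t$. By Theorem \ref{thm:msw-non-simple} the cut out surfaces with $\Delta X_t>0$ are precisely the insides of the CLE loops intersecting the CPI, so these contribute the $f_0(\ell)$ term, while the surfaces with $\Delta X_t<0$ (the trunk components) are independent generalized $\gamma$-LQG disks of boundary length $|\Delta X_t|$, in which we recurse. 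Iterating within the trunk components produces a subcollection $(S'_n\colon n\ge 1)$ of $(S_n\colon n\ge 1)$, and by construction $\E_\ell(\sum_n \Psi(S'_n)) = \sum_{k\ge 1} f_k(\ell)$.

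The only substantive point, exactly as in the simple case, is to show that the explored subcollection $(S'_n)$ actually exhausts $(S_n)$, i.e.\ that no CLE loop inside $S$ is missed by the iterated exploration. Following the proof of Corollary \ref{cor:msw-simple-full}, it suffices to verify the mass identity
\begin{align*}
	\E_\ell\left(\sum_{n\ge 1} A(S'_n)\right) = \E_\ell\left(\sum_{n\ge 1} A(S_n)\right),
\end{align*}
where $A$ denotes total $\gamma$-LQG area. The right side equals $\E_1(A(S))\,\ell^{8/\kappa'}$ since (modulo the boundary looptree structure, which carries no area) $S$ is the disjoint union of the $S_n$, and the area of a generalized disk scales with generalized boundary length to the power $8/\kappa'$ by the discussion in Section \ref{sss:gen-disk}. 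For the left side we use the recursion: with $\theta = 2$ in Proposition \ref{prop:nonsimple-power-theta} one gets $f_0(\ell) = A_+\,\E_1(A(S))\,\ell^{8/\kappa'}$ and inductively $f_k(\ell) = A_+ A_-^k\,\E_1(A(S))\,\ell^{8/\kappa'}$, whence $\sum_{k\ge 1} f_k(\ell) = \frac{A_+}{1-A_-}\,\E_1(A(S))\,\ell^{8/\kappa'} = \E_1(A(S))\,\ell^{8/\kappa'}$, using $A_+/(1-A_-) = \cos(4\pi/\kappa')/\cos(4\pi/\kappa' - 2\pi) = 1$ from Proposition \ref{prop:nonsimple-power-theta} (valid since $\theta = 2 \in (4/\kappa' + 1/2, 4/\kappa' + 3/2)$ for $\kappa'\in(4,8)$). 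This matches the right side, so $(S'_n)$ and $(S_n)$ agree and the formula follows.

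I expect the main obstacle to be purely bookkeeping rather than conceptual: one must make sure that the scaling exponent is $8/\kappa'$ rather than $2$ (this is where the generalized disk genuinely differs from the regular one), that the stable exponent in Proposition \ref{prop:nonsimple-power-theta} is $4/\kappa'$ so that $\theta=2$ lands in the required window, and that the role of $\zeta$, $(L,R)$, $X$ and $\Delta S_t$ is the one set up in the non-simple part of Section \ref{sec:background-msw} (in particular that $(L,R)\stackrel{d}{=}((L'_{v^\beta_\kappa t}, R'_{v^\beta_\kappa t}))$ with $(L',R')$ from Proposition \ref{prop:nonsimple-msw-process}, and that the time-change by $v^\beta_\kappa$ does not affect any of the expectations above since they are sums over jump times). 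Since the excerpt explicitly states that all proofs in Section \ref{subsec:nonsimple-fragmentations} and the surrounding non-simple constructions are "exactly analogous" to the simple ones, this corollary's proof is essentially a transcription of that of Corollary \ref{cor:msw-simple-full} with these substitutions, so I would simply write: \emph{the proof is identical to that of Corollary \ref{cor:msw-simple-full}, using Theorem \ref{thm:msw-non-simple} in place of Theorem \ref{thm:msw-simple}, Proposition \ref{prop:nonsimple-power-theta} in place of Proposition \ref{prop:power-theta}, and the scaling exponent $8/\kappa'$ in place of $2$.}
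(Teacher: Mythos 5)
Your overall strategy is exactly the paper's: iterate the CPI exploration inside trunk components, reduce the exhaustiveness claim to an $L^1$ identity for total area, and evaluate both sides using the scaling of the generalized disk and the explicit ratio $A_+/(1-A_-)$ from Proposition \ref{prop:nonsimple-power-theta}. The paper's proof is indeed a one-line pointer back to Corollary \ref{cor:msw-simple-full}.

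However, there is an internal inconsistency in the exponent bookkeeping, and it is precisely the point you yourself flag as the main pitfall. You correctly identify that $\E_\ell(A(S)) = \ell^{8/\kappa'}\,\E_1(A(S))$ for a generalized disk, so that
\begin{align*}
	f_0(\ell) = \E_1(A(S))\,\E_{(\ell/2,\ell/2)}\left(\sum_{t<\zeta\colon \Delta X_t>0} |\Delta X_t|^{8/\kappa'}\right)\;.
\end{align*}
This means the exponent fed into Proposition \ref{prop:nonsimple-power-theta} must be $\theta = 8/\kappa' = \gamma^2/2$, not $\theta = 2$. With $\theta = 2$ you would instead get $\E_{(\ell/2,\ell/2)}(\sum_{\Delta X_t>0}|\Delta X_t|^2) = A_+\ell^2$, which does not match the claimed $A_+\,\E_1(A(S))\,\ell^{8/\kappa'}$ — so the line \enquote{with $\theta=2$ \dots one gets $f_0(\ell) = A_+\E_1(A(S))\ell^{8/\kappa'}$} is not consistent with the proposition you are invoking. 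That you still reach the right numerical conclusion is a coincidence of trigonometry: $\cos(4\pi/\kappa'-2\pi) = \cos(4\pi/\kappa')$ and $\cos(4\pi/\kappa'-8\pi/\kappa') = \cos(4\pi/\kappa')$ both yield $A_+/(1-A_-)=1$. Replacing every instance of $\theta=2$ by $\theta = \gamma^2/2 = 8/\kappa'$ (which does lie in the admissible window $(4/\kappa'+1/2,4/\kappa'+3/2)$ for $\kappa'\in(4,8)$) fixes the argument and makes it identical to the paper's.
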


\begin{proof}
	The proof is entirely analogous to that of Corollary \ref{cor:msw-simple-full} except that we now use Proposition \ref{prop:nonsimple-power-theta} with $\theta = \gamma^2/2$ instead of Proposition \ref{prop:power-theta} with $\theta=2$.
\end{proof}

\begin{remark}
	Corollary \ref{cor:msw-simple-full-more} also has an analogous version for the theory of generalized $\gamma$-LQG disks discussed above (with only the word ``simple'' replaced by ``generalized'' and Corollary \ref{cor:msw-nonsimple-full} in place of Corollary \ref{cor:msw-simple-full}).
\end{remark}

\section{Law of the CLE conformal radius}
\label{sec:ssw}

In this section, we will give a new proof of Theorem \ref{thm:confradlaw} when $\kappa\neq 4$ using techniques from Liouville quantum gravity. We also present a new proof using similar ideas in the $\kappa=4$ case. The cases of simple and non-simple CLEs will be considered separately although the proofs are essentially identical and will rely on the papers \cite{msw-simple} and \cite{msw-non-simple}, respectively (see Section \ref{sec:background-msw} for a review).

\begin{thm}
	\label{thm:ssw-simple-lqg}
	Let $\kappa\in(8/3,4)$ and $\alpha\in (2/\gamma+\gamma/4,2)$ where $\gamma=\sqrt{\kappa}$. Let $\Gamma$ be a non-nested $\CLE_\kappa$ in $\D$ and let $\eta_0$ be the loop in $\Gamma$ that surrounds $0$. Then
	\begin{align*}
		c:=\E( R(0,\eta^o_0)^{\alpha(Q_\gamma - Q_\alpha)} ) = \frac{\cos(4\pi/\kappa)}{\cos(4\pi/\kappa-\pi\cdot 2\alpha/\gamma)}\;.
	\end{align*}
\end{thm}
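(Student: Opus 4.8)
The plan is to compute a single LQG quantity in two ways. Write $\rho:=\alpha(Q_\gamma-Q_\alpha)$ and $\theta:=2\alpha/\gamma$ (so $\theta=2\alpha/\sqrt\kappa$); the hypotheses $\alpha>2/\gamma+\gamma/4$ and $\alpha<2$ give exactly $\theta\in(4/\kappa+\tfrac12,4/\kappa+\tfrac32)$ (the upper bound from $\alpha<2$ uses $4/\sqrt\kappa<4/\kappa+\tfrac32$, i.e. $3\kappa-8\sqrt\kappa+8>0$), so Proposition~\ref{prop:power-theta} applies with this $\theta$, and the claimed value of $c$ is precisely $\cos(4\pi/\kappa)/\cos(4\pi/\kappa-\pi\theta)=A_+/(1-A_-)$ appearing there. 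Throughout we work with a $\gamma$-LQG disk of boundary length $\ell>0$ decorated by an independent non-nested $\CLE_\kappa$, embedded as $(\D,k)$, and write $\eta_z$ for the a.s.\ unique loop surrounding a point $z\in\D$.

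First I would verify that
\[
Q_\ell:=\E_\ell\left(\int_\D\mu_k^\alpha(dz)\,R(z,\eta_z^o)^{\rho}\right)
\]
depends only on the decorated $\gamma$-LQG surface. This is where the exponent $\rho=\alpha(Q_\gamma-Q_\alpha)$ is forced: under a conformal map $\varphi$ one has $\mu_{k\circ\varphi+Q_\gamma\log|\varphi'|}^\alpha(dz')=|\varphi'(z')|^{\alpha(Q_\gamma-Q_\alpha)}\,\varphi^*\mu_k^\alpha(dz')$ while $R(z',\varphi^{-1}(U))=|\varphi'(z')|^{-1}R(\varphi(z'),U)$, so the two powers of $|\varphi'|$ cancel in the integrand $\mu_k^\alpha(dz)R(z,\eta_z^o)^\rho$. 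By the same cancellation, $[(D,k)]\mapsto\Psi([(D,k)]):=\int_D\mu_k^\alpha(dz)R(z,D)^\rho\in[0,\infty]$ is a well-defined functional on regular $\gamma$-LQG surfaces, and adding the constant $\tfrac2\gamma\log\ell$ to the field of a unit disk (which produces the boundary-length $\ell$ disk) scales $\mu^\alpha$ by $\ell^{2\alpha/\gamma}=\ell^\theta$, so $\E_\ell(\Psi)=\ell^\theta D$ with $D:=\E_1(\Psi)\in(0,\infty)$ (finiteness below). Since a fixed interior point is a.s.\ surrounded by a loop of the non-nested CLE, a Fubini argument shows $\mu_k^\alpha$ a.s.\ charges no part of the CLE carpet; combined with the locality of $\mu^\alpha$ this gives $\int_\D\mu_k^\alpha(dz)R(z,\eta_z^o)^\rho=\sum_n\Psi\bigl([(\eta_n^o,k|_{\eta_n^o})]\bigr)$, summed over the CLE loops $\eta_n$, so $Q_\ell=\E_\ell\bigl(\sum_n\Psi(S_n)\bigr)$ with $S_n$ the surfaces cut out by the CLE loops.

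For the first evaluation, apply Corollary~\ref{cor:msw-simple-full} with this $\Psi$. By Theorem~\ref{thm:msw-simple}, conditionally on the boundary-length process $(L,R)$ the surfaces $\Delta S_t$ cut out at the jumps of $X=L+R$ are independent $\gamma$-LQG disks of boundary lengths $|\Delta X_t|$, so $\E(\Psi(\Delta S_t)\mid(L,R))=|\Delta X_t|^\theta D$; using the time change $(L,R)\stackrel d=(L',R')_{v_\kappa^\beta\,\cdot}$ and the stable scaling of $(L',R')$, one gets $f_0(\ell)=DA_+\ell^\theta$ and inductively $f_k(\ell)=DA_+A_-^k\ell^\theta$, whence $Q_\ell=\sum_kf_k(\ell)=DA_+\ell^\theta/(1-A_-)=D\ell^\theta\,\cos(4\pi/\kappa)/\cos(4\pi/\kappa-\pi\theta)$ by Proposition~\ref{prop:power-theta}. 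For the second evaluation, condition on $k$ and $z$: since the CLE is independent of the field and conformally invariant, picking $\varphi:\D\to\D$ conformal with $\varphi(0)=z$ gives $\eta_z^o\stackrel d=\varphi(\eta_0^o)$, so $\E(R(z,\eta_z^o)^\rho\mid k,z)=|\varphi'(0)|^\rho\,\E(R(0,\eta_0^o)^\rho)=R(z,\D)^\rho\,c$; integrating, $Q_\ell=c\,\E_\ell\bigl(\int_\D\mu_k^\alpha(dz)R(z,\D)^\rho\bigr)=c\,\E_\ell(\Psi)=c\,\ell^\theta D$. Comparing the two expressions for $Q_\ell$ and dividing by $\ell^\theta D$ yields $c=\cos(4\pi/\kappa)/\cos(4\pi/\kappa-\pi\cdot 2\alpha/\gamma)$; in particular $c<\infty$, which is not assumed but follows.

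The step I expect to be the main obstacle is the finiteness and positivity of $D=\E_1(\Psi)$, needed to carry out the division and to know $Q_\ell<\infty$. Positivity is immediate; for finiteness I would invoke Proposition~\ref{prop:resample-lcft-point}: its second display (with $\Lambda=0$, $\ell=1$, $f\equiv1$) identifies $Z^{(-,-),((\gamma,\gamma,\gamma),\bm w)}_{0,1}\,D$ with $\int_\D dz\,R(z,\D)^{\rho}Z^{(\alpha,z),((\gamma,\gamma,\gamma),\bm w)}_{0,1}$, and the first display identifies this last integral with a positive multiple of $Z^{(\alpha,0),(\gamma,1)}_{0,1}$, which is finite because the Seiberg bounds~\eqref{eq:seiberg} hold for the data $(\alpha,0),(\gamma,1)$ precisely under $\alpha\in(2/\gamma+\gamma/4,2)$ (one checks $\alpha<Q_\gamma$, $\gamma<Q_\gamma$, and $Q_\gamma-\alpha-\gamma/2=2/\gamma-\alpha<(2/\gamma)\wedge(Q_\gamma-\gamma)$, the last reducing to $\alpha>\gamma/2$). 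The remaining items—locality of $\mu^\alpha$ under restriction, that $z\sim\mu_k^\alpha$ lies in exactly one loop, and measurability of $\Psi$—are routine, and the non-simple range $\kappa'\in(4,8)$ (the counterpart of Theorem~\ref{thm:ssw-simple-lqg} for generalized disks) is handled identically using Theorem~\ref{thm:msw-non-simple}, Corollary~\ref{cor:msw-nonsimple-full}, and Proposition~\ref{prop:nonsimple-power-theta}.
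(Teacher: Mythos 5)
Your proof is correct and follows essentially the same route as the paper's: define the functional $\Psi$, verify it is a well-defined $\gamma$-LQG surface observable, evaluate $\E_\ell\bigl(\int_\D\mu_k^\alpha(dz)R(z,\eta_z^o)^\rho\bigr)$ once via Corollary~\ref{cor:msw-simple-full}, Theorem~\ref{thm:msw-simple}, and Proposition~\ref{prop:power-theta}, and once via conformal invariance of CLE, then divide by $D=\E_1(\Psi)$, whose finiteness comes from Proposition~\ref{prop:resample-lcft-point}. One small point in your favor: the paper's proof writes ``Proposition~\ref{prop:power-theta} with $\theta=\alpha\gamma/2$,'' but the scaling exponent appearing in $f_0$ and the claimed constant $\cos(4\pi/\kappa)/\cos(4\pi/\kappa-\pi\cdot2\alpha/\gamma)$ both require $\theta=2\alpha/\gamma$, which is what you use (and which is what lands in the admissible window $(4/\kappa+1/2,4/\kappa+3/2)$ under the hypotheses on $\alpha$); the paper's $\alpha\gamma/2$ appears to be a typo.
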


\begin{proof}
	The key idea of the proof is the following. To a regular $\gamma$-LQG surface $[(D,k)]$ we associate a value
	\begin{align*}
		\Psi([(D,k)]) = \int_D R(z,D)^{\alpha(Q_\gamma-Q_\alpha)}\mu^\alpha_k(dz) \in [0,\infty]
	\end{align*}
	provided the measure $\mu^\alpha_k$ is defined (and $\Psi([(D,k)])=0$ otherwise, say). It is not hard to check that $\Psi$ is well-defined (i.e., does not depend on the representative $(D,k)$ of its equivalence class) by using the definition of a regular $\gamma$-LQG surface (Definition \ref{def:lqg-surface}) and how $\mu^\alpha_k$ transforms under the application of a conformal map (see \eqref{eq:meas-inv}).
	
	Let $(\D,h)$ be an embedding of a $\gamma$-LQG disk with boundary length $1$ into the unit disk $\D$ which is independent of $\Gamma$,  i.e., $h$ and $\Gamma$ are independent. Then
	\begin{align*}
		\E\left( \Psi([(\D,h)]) \right) =\E\left( \int_\D R(z,\D)^{\alpha(Q_\gamma-Q_\alpha)}\mu^\alpha_h(dz) \right) <\infty\;,
	\end{align*}
	where the finiteness is a consequence of the definition of $h$ as a $\gamma$-LQG disk and the second part of Proposition \ref{prop:resample-lcft-point} with $f=1$ and $\Lambda=0$ (note that $\alpha>\gamma/2$ so that the Liouville partition functions appearing on both sides of the second equation in the proposition are finite, see Remark \ref{rmk:seiberg}). Moreover, $[(\D,h+2/\gamma\cdot \log \ell)]$ is a regular $\gamma$-LQG disk with boundary length $\ell$ by Definition \ref{def:normal-unit-disk} and
	\begin{align}
		\E\left( \Psi([(\D,h+2/\gamma\cdot \log \ell)]) \right) = \ell^{2\alpha/\gamma} \,\E\left( \Psi([(\D,h)]) \right)\;.
		\label{eq:psi-scaling}
	\end{align}
	For any $z\in \D$, let $\eta_z$ be the loop around $z$ (if no such loop exists, which is a probability zero event for any fixed $z$, we take $\eta_z$ to be the curve tracing $\partial \D$, say). Recalling that $c=\E(R(0,\eta_0^o)^{\alpha(Q_\alpha - Q_\gamma)})\in (0,\infty]$ and using conformal invariance of CLE, we get
	\begin{align*}
		\E(R(z,\eta_z^o)^{\alpha(Q_\alpha - Q_\gamma)}) = c \cdot R(z,\D)^{\alpha(Q_\alpha - Q_\gamma)}\quad\text{for all $z\in \D$}\;.
	\end{align*}
	We now first use the previous equation, exchange integration with expectation and rewrite the expression within the expectation to obtain
	\begin{align*}
		c\cdot\E\left( \Psi([(\D,h)]) \right) &= \E\left( \int_\D \E(R(z,\eta_z^o)^{\alpha(Q_\gamma-Q_\alpha)})\mu^\alpha_h(dz) \right) = \E\left( \int_\D R(z,\eta_z^o)^{\alpha(Q_\gamma-Q_\alpha)}\mu^\alpha_h(dz) \right) \\
		&= \E\left(\sum_{\eta\in \Gamma} \int_{\eta^o} R(z,\eta^o)^{\alpha(Q_\gamma-Q_\alpha)}\mu^\alpha_h(dz) \right) = \E\left(\sum_{n\ge 1}  \Psi(S_n) \right)
	\end{align*}
	where $(S_n)$ denotes the quantum surfaces cut out of $[(\D,h)]$ by $\Gamma$, noting that we are taking the expectation over both the LQG field and the CLE.
	
	Let $(L',R')$ and $X'=L'+R'$ be as in Proposition \ref{prop:simple-msw-process} and conditionally on $(L',R')$ let $S'_t$ be independent regular $\gamma$-LQG surfaces with boundary length $|\Delta X'_t|$ whenever $\Delta X'_t\neq 0$. By Corollary \ref{cor:msw-simple-full} and Theorem \ref{thm:msw-simple} we have
	\begin{align*}
		\E\left(\sum_{n\ge 1}  \Psi(S_n) \right) = \sum_{k\ge 0} f_k(1)
	\end{align*}
	where
	\begin{align*}
	f_0(\ell) &= \E_{(\ell/2,\ell/2)}\left( \sum_{t\in(0,\zeta')\colon \Delta X'_t>0} \Psi(S'_t) \right) \\
		&=  \E\left( \Psi([(\D,h)]) \right)\ell^{2\alpha/\gamma}\,\E_{(1/2,1/2)}\left( \sum_{t\in(0,\zeta')\colon \Delta X'_t>0}  (\Delta X'_t)^{2\alpha/\gamma} \right),\\
	f_k(\ell) &= \E_{(\ell/2,\ell/2)}\left( \sum_{t\in (0,\zeta)\colon \Delta X'_t<0} f_{k-1}(|\Delta X'_t|) \right),
	\end{align*}
	where we used \eqref{eq:psi-scaling} to get the second equality. Note that in the first of the previous two displays, we are taking an expectation over the random variable $(S_n)$ while in the second one we are taking one over $(L',R',X',(S_t'))$. By Proposition \ref{prop:power-theta} with $\theta=\alpha\gamma/2$ we see by induction (and using the notation of the aforementioned proposition) that for all $k\ge 1$, $f_k(\ell) = \E\left( \Psi([(\D,h)]) \right)\ell^{2\alpha/\gamma} A_+ A_-^k$. Using this, we obtain that
	\begin{align*}
		\sum_{k\ge 1} f_k(1) =  \frac{\cos(4\pi/\kappa)}{\cos(4\pi/\kappa-\pi\cdot 2\alpha/\gamma)}\,\E\left( \Psi([(\D,h)]) \right)
	\end{align*}
	and the claim follows.
\end{proof}

\begin{thm}
	\label{thm:ssw-nonsimple-lqg}
	Let $\kappa'\in(4,8)$ and $\alpha\in (1/\gamma+\gamma/2,2)$ with $\gamma=4/\sqrt{\kappa'}$. Let $\Gamma$ be a non-nested $\CLE_{\kappa'}$ in $\D$ and let $\eta_0$ be the loop in $\Gamma$ that surrounds $0$. Then
	\begin{align*}
		c := \E( R(0,\eta^o_0)^{\alpha(Q_\gamma - Q_\alpha)} ) = \frac{\cos(4\pi/\kappa')}{\cos(4\pi/\kappa'-\pi\cdot \alpha\gamma/2)}\;.
	\end{align*}
\end{thm}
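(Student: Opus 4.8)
The plan is to repeat, essentially verbatim, the proof of Theorem~\ref{thm:ssw-simple-lqg}, with the regular $\gamma$-LQG disk replaced by the generalized $\gamma$-LQG disk (Definition~\ref{def:gen-disk-no-points}) and with Corollary~\ref{cor:msw-simple-full}, Theorem~\ref{thm:msw-simple} and Proposition~\ref{prop:power-theta} replaced by their non-simple counterparts Corollary~\ref{cor:msw-nonsimple-full}, Theorem~\ref{thm:msw-non-simple} and Proposition~\ref{prop:nonsimple-power-theta}. Concretely, I would fix $\gamma=4/\sqrt{\kappa'}\in(\sqrt2,2)$, define on regular $\gamma$-LQG surfaces the functional $\Psi([(D,k)])=\int_D R(z,D)^{\alpha(Q_\gamma-Q_\alpha)}\mu^\alpha_k(dz)$ (well-defined by \eqref{eq:meas-inv} and conformal invariance of $R(\cdot,\cdot)$, since $\alpha<2$ guarantees $\mu^\alpha$ exists), and extend it to generalized $\gamma$-LQG surfaces $S=(e,\{(t,S_t):\Delta e_t\neq0\})$ by summing over the regular components, $\Psi(S)=\sum_{t:\Delta e_t\neq0}\Psi(S_t)$.

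First I would verify finiteness and pin down the correct scaling exponent. Let $S$ be a generalized $\gamma$-LQG disk of boundary length $1$, independent of $\Gamma$, with regular components $[(\D,h_t)]$ and driving excursion $E$ of exponent $\nu=4/\gamma^2=\kappa'/4$. Conditionally on $E$ the components are independent regular disks of boundary lengths $|\Delta E_t|$, so $\E(\Psi(S))=\E\big(\sum_t|\Delta E_t|^{2\alpha/\gamma}\big)\cdot\E\big(\Psi(\text{regular disk of b.l.\ }1)\big)$; the second factor is finite by Proposition~\ref{prop:resample-lcft-point} with $f\equiv1$ (using $\alpha>\gamma/2$, see Remark~\ref{rmk:seiberg}), and the first is finite because the exponent $2\alpha/\gamma$ exceeds $\nu$ — which holds since $\alpha>1/\gamma+\gamma/2>2/\gamma$ (here $\gamma>\sqrt2$) — via the Lévy-excursion estimates of Section~\ref{sec:levy-exc}. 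Stable scaling of the excursion then yields the key identity $\E\big(\Psi(\text{generalized disk of b.l.\ }\ell)\big)=\ell^{\alpha\gamma/2}\,\E(\Psi(S))$; note the relevant exponent is $\alpha\gamma/2$ rather than $2\alpha/\gamma$, because under this scaling the component boundary lengths scale like $\ell^{1/\nu}=\ell^{\gamma^2/4}$ and $(2\alpha/\gamma)(\gamma^2/4)=\alpha\gamma/2$.

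Next, decorate each regular component $[(\D,h_t)]$ of $S$ with an independent non-nested $\CLE_{\kappa'}$ in $\D$; for $z\in\D$ in the $t$-th component let $\eta_z$ be the loop surrounding $z$. By conformal invariance of $\CLE_{\kappa'}$, $\E\big(R(z,\eta_z^o)^{\alpha(Q_\gamma-Q_\alpha)}\big)=c\,R(z,\D)^{\alpha(Q_\gamma-Q_\alpha)}$, and since the $\CLE_{\kappa'}$ carpet has zero $\mu^\alpha$-measure one obtains
\[
c\,\E(\Psi(S))=\E\Big(\sum_t\sum_{\eta}\int_{\eta^o}R(z,\eta^o)^{\alpha(Q_\gamma-Q_\alpha)}\mu^\alpha_{h_t}(dz)\Big)=\E\Big(\sum_{n\ge1}\Psi(S_n)\Big),
\]
where $(S_n)$ are the (generalized) quantum surfaces cut out by the CLE loops (using that, conditionally on its boundary length, the surface inside a loop is a generalized disk and that $\Psi$ of it agrees with $\int_{\eta^o}R(z,\eta^o)^{\alpha(Q_\gamma-Q_\alpha)}\mu^\alpha(dz)$ in expectation). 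Applying Corollary~\ref{cor:msw-nonsimple-full} to the functional $\Psi$, together with Theorem~\ref{thm:msw-non-simple} and the scaling identity, exactly as in the proof of Theorem~\ref{thm:ssw-simple-lqg}, gives $\E(\sum_n\Psi(S_n))=\sum_{k\ge0}f_k(1)$ with $f_k(1)=\E(\Psi(S))\,A_+A_-^k$, where $A_\pm$ are as in Proposition~\ref{prop:nonsimple-power-theta} with $\theta=\alpha\gamma/2$ (and indeed $\theta\in(4/\kappa'+1/2,4/\kappa'+3/2)$ precisely when $\alpha\in(1/\gamma+\gamma/2,2)$). Summing the geometric series, the right side above is finite, which forces $c<\infty$, and invoking Proposition~\ref{prop:nonsimple-power-theta} yields $c=A_+/(1-A_-)=\cos(4\pi/\kappa')/\cos(4\pi/\kappa'-\pi\alpha\gamma/2)$.

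I expect the main obstacle to be the correct interpretation of $\Psi$ on generalized quantum surfaces and the justification of the step $\E(\sum_t\sum_\eta\int_{\eta^o}\cdots)=\E(\sum_n\Psi(S_n))$: in the non-simple regime the surface inside a CLE loop is a generalized (not regular) disk, so one must check that summing $\Psi$ over its regular components reproduces the integral $\int_{\eta^o}R(z,\eta^o)^{\alpha(Q_\gamma-Q_\alpha)}\mu^\alpha(dz)$ — at least after conditioning on boundary lengths and taking expectations — and that this is consistent with the scaling exponent $\alpha\gamma/2$. Once the observable is set up correctly, finiteness (Proposition~\ref{prop:resample-lcft-point} together with the excursion estimate) and the geometric-series computation are routine transcriptions of the simple case.
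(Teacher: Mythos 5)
Your proposal is correct and reproduces the paper's argument: define $\Psi$ on generalized surfaces by summing the regular-component integrals, use the scaling $\E(\Psi(S_\ell))=\ell^{\alpha\gamma/2}\E(\Psi(S))$ coming from the Lévy excursion, derive $c\,\E(\Psi(S))=\E(\sum_n\Psi(S_n))$ from conformal invariance of $\CLE_{\kappa'}$, and conclude via Corollary~\ref{cor:msw-nonsimple-full}, Theorem~\ref{thm:msw-non-simple}, and Proposition~\ref{prop:nonsimple-power-theta} with $\theta=\alpha\gamma/2$. One small nit: the claim that $\theta\in(4/\kappa'+1/2,4/\kappa'+3/2)$ holds \emph{precisely} when $\alpha\in(1/\gamma+\gamma/2,2)$ is an overstatement (the former allows $\alpha$ up to $\gamma/2+3/\gamma>2$), but the implication you actually use — the theorem's range of $\alpha$ implies the required range of $\theta$ — is correct, so the argument stands.
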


\begin{proof}
	The proof is very similar to the proof of Theorem \ref{thm:ssw-simple-lqg}. We define the following functional on the space of generalized $\gamma$-LQG surfaces
	\begin{align*}
		\Psi(e,\{ (t, [(D_t,k_t)]) \colon t\ge 0, \Delta e_t\neq 0 \}) = \sum_{t\ge 0\colon \Delta e_t\neq 0} \int_{D_t} R(z,D_t)^{\alpha(Q_\gamma-Q_\alpha)}\mu^\alpha_{k_t}(dz) \in [0,\infty]
	\end{align*}
	provided the measure $\mu^\alpha_{k_t}$ is defined for all $t$ and define the functional to be $0$ otherwise. Again, the integrals do not depend on the particular equivalence class of the $\gamma$-LQG surfaces $[(D_t,k_t)]$. Let $(E_t\colon t\in[0,1])$ be the time reversal of a spectrally positive stable Lévy excursion with exponent $4/\gamma^2$ and duration $1$ and conditionally on $E$, consider independent regular $\gamma$-LQG quantum disks $[(\D,h_t)]$ for any $t\in [0,1]$ with $\Delta E_t\neq 0$. Then by stable scaling
	\begin{align*}
		S_\ell = (\ell^{\gamma^2/4}E_{\cdot/\ell},\{(t,[(\D,h_t+2/\gamma \cdot \log (\ell^{\gamma^2/4}|\Delta E_{t/\ell}|))]\colon t<\ell, \Delta E_{t/\ell}\neq 0)\})
	\end{align*}
	is a generalized $\gamma$-LQG disk with boundary length $\ell$. By definition, we therefore obtain
	\begin{align*}
		\E\left(\Psi(S_\ell) \right) &= \E\left(\sum_{t<\ell\colon \Delta E_{t/\ell}\neq 0} (\ell^{\gamma^2/4}|\Delta E_{t/\ell}|)^{2\alpha/\gamma} \right)\,\E\left( \int_\D R(z,\D)^{\alpha(Q_\gamma-Q_\alpha)}\mu^\alpha_{h}(dz) \right) \\
		&= \ell^{\alpha\gamma/2} \,\E\left(\sum_{t<1\colon \Delta E_{t}\neq 0} |\Delta E_{t}|^{2\alpha/\gamma} \right)\,\E\left( \int_\D R(z,\D)^{\alpha(Q_\gamma-Q_\alpha)}\mu^\alpha_{h}(dz) \right)
	\end{align*}
	noting that the two expectations on the right-hand side are finite -- for the first expectation, this is a consequence of Theorem \ref{thm:levy-excursions} and for the second expectation this follows from Proposition \ref{prop:resample-lcft-point} with $f=1$. If $S$ is a generalized $\gamma$-LQG disk with boundary length $1$ then we can decorate this surface with an independent $\CLE_{\kappa'}$ on each of its regular $\gamma$-LQG disks components. By a similar argument as in the proof of Theorem \ref{thm:ssw-simple-lqg} we get
	\begin{align*}
		c\cdot \E\left(\Psi(S) \right) = \E\left(\sum_{n\ge 1}  \Psi(S_n) \right)
	\end{align*}
	where $(S_n)$ denotes the collection of generalized $\gamma$-LQG surfaces cut out by the $\CLE_{\kappa'}$ decoration. The conclusion now follows similarly as in the proof of Theorem \ref{thm:ssw-simple-lqg}, now using Corollary \ref{cor:msw-nonsimple-full}, Theorem \ref{thm:msw-non-simple} and Proposition \ref{prop:nonsimple-power-theta}.
\end{proof}

The $\kappa=4$ case is actually the one which is understood best due to the level-line coupling with the Gaussian free field by Miller and Sheffield (see e.g.\ \cite{asw-btls} for a self-contained presentation). For the sake of completeness, we also give a proof of the computation of the law of the conformal radius in a similar spirit as in the $\kappa \neq 4$ case. Note however that the CLE decoration is not independent of the field but instead determined by it. While the proof is interesting, let us remark that the law of the conformal radius can also be read off more directly in the proof of the level-line coupling (see again \cite{asw-btls}) or one could attempt to take the limit $\kappa\to 4$ in the theorems above.

\begin{prop}
	\label{prop:ssw-critical-levelline}
	Consider $\Gamma\sim\CLE_4$ (non-nested) in $\D$ and let $\eta_0$ be the loop in $\Gamma$ that surrounds $0$. Then for any $\gamma\in (0,2)$,
	\begin{align*}
		\E\left( R(0,\eta^o_0)^{\gamma^2/2}\right)=1/\cosh(\pi\gamma)\;.
	\end{align*}
\end{prop}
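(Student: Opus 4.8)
The plan is to follow the same strategy used for Theorems \ref{thm:ssw-simple-lqg} and \ref{thm:ssw-nonsimple-lqg}, but now exploiting the level-line coupling of $\CLE_4$ with the Gaussian free field in place of the Miller--Sheffield--Werner exploration results for $\kappa \neq 4$. Fix $\gamma \in (0,2)$ and set $\alpha = \gamma$ (so that the relevant singularity exponent matches the $\gamma$-LQG area measure). The key functional is again $\Psi([(D,k)]) = \int_D R(z,D)^{\gamma^2/2}\,\mu^\gamma_k(dz)$, which is well-defined on $\gamma$-LQG surfaces by conformal covariance of $R(\cdot, D)$ and of $\mu^\gamma_k$ (recall \eqref{eq:meas-inv} and \eqref{eq:gff-annealed}). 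Write $c = \E(R(0,\eta_0^o)^{\gamma^2/2})$; by conformal invariance of $\CLE_4$, for any fixed $z \in \D$ the loop $\eta_z$ around $z$ satisfies $\E(R(z,\eta_z^o)^{\gamma^2/2}) = c\,R(z,\D)^{\gamma^2/2}$.

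First I would take $(\D,h)$ to be an embedding of a $\gamma$-LQG disk of boundary length $1$, but now \emph{coupled} to the $\CLE_4$ via the level lines rather than independent — this is the crucial structural difference from the $\kappa \neq 4$ proofs, and it is exactly the point flagged in the remark preceding the statement. In the level-line coupling, $\Gamma$ is a deterministic function of $h$ (up to the auxiliary randomness of the exploration), and the surfaces $S_n$ cut out by the outermost $\CLE_4$ loops are still conditionally independent $\gamma$-LQG disks given their boundary lengths, with boundary lengths governed by the boundary-length process of the $\CLE_4$ exploration (which is the $\kappa=4$, no-marked-point case of the process $(L,R)$, or equivalently a suitable limit/analogue of Proposition \ref{prop:simple-msw-process}). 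Integrating $\Psi$ against the $\gamma$-LQG area measure and splitting over loops gives, exactly as in the proof of Theorem \ref{thm:ssw-simple-lqg},
\begin{align*}
	c \cdot \E(\Psi([(\D,h)])) = \E\Big( \int_\D R(z,\eta_z^o)^{\gamma^2/2}\,\mu^\gamma_h(dz) \Big) = \E\Big( \sum_{n \ge 1} \Psi(S_n) \Big),
\end{align*}
where $\E(\Psi([(\D,h)])) \in (0,\infty)$ by Proposition \ref{prop:resample-lcft-point} with $f=1$ (note $\alpha = \gamma > \gamma/2$, so the Seiberg bounds in Remark \ref{rmk:seiberg} hold), and $\Psi$ scales as $\E(\Psi([(\D,h + (2/\gamma)\log\ell)])) = \ell^{2}\,\E(\Psi([(\D,h)]))$ since $2\alpha/\gamma = 2$ here.

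Next I would evaluate $\E(\sum_n \Psi(S_n))$ via the $\kappa=4$ analogue of Corollary \ref{cor:msw-simple-full}: it equals $\sum_{k \ge 0} f_k(1)$ with $f_0(\ell) = \E_{(\ell/2,\ell/2)}(\sum_{t: \Delta X_t > 0} \Psi(S_t))$ and $f_k(\ell) = \E_{(\ell/2,\ell/2)}(\sum_{t: \Delta X_t < 0} f_{k-1}(|\Delta X_t|))$, where $(L,R)$, $X = L+R$ is the boundary-length process of the $\CLE_4$ exploration. Using the scaling $f_0(\ell) = \E(\Psi([(\D,h)]))\,\ell^2\,A_+$ and inductively $f_k(\ell) = \E(\Psi([(\D,h)]))\,\ell^2\,A_+ A_-^k$, where $A_\pm = \E_{(1/2,1/2)}(\sum_{t: \pm\Delta X_t > 0} |\Delta X_t|^{2})$ for the $\kappa=4$ process, we get $c = A_+/(1 - A_-)$. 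So the remaining task is to show $A_+/(1-A_-) = 1/\cosh(\pi\gamma)$. This is the $\kappa=4$ version of Proposition \ref{prop:power-theta} with $\theta = \alpha\gamma/2 = \gamma^2/2$: taking $\kappa \to 4$ in the formula $\cos(4\pi/\kappa)/\cos(4\pi/\kappa - \pi\theta)$ and continuing analytically across the pole one expects $\lim_{\kappa\to 4}\cos(4\pi/\kappa)/\cos(4\pi/\kappa-\pi\theta) = \cos(\pi)/\cos(\pi - \pi\gamma^2/2)$; but $\theta = \gamma^2/2$ is not the right substitution — rather one should directly run the martingale computation of Proposition \ref{prop:power-theta}'s proof for the $\kappa=4$ exploration process, with the test function $f(l',r') = (l'+r')^{\gamma^2/2}$, verifying $\mathcal{G}_{4,\beta}f = -g$ for the appropriate $g$ and concluding $e^\sigma A_+ + A_- = 1$ with $e^{-\sigma} = 1/\cosh(\pi\gamma)$ (using $\cosh$ rather than $\cos$ because the relevant Beta-type integrals at $\kappa=4$ produce hyperbolic functions; see the $\nu\to$ boundary-case computations around Lemma \ref{lem:residue-bessel-computation}).

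The main obstacle is setting up the $\kappa=4$ boundary-length process and its cut-out surfaces rigorously in the level-line framework: unlike the $\kappa \neq 4$ case, there is no off-the-shelf statement like Theorem \ref{thm:msw-simple} to cite, so one must either (a) invoke the level-line coupling of $\CLE_4$ with the GFF (as in \cite{asw-btls}) together with the mating-of-trees-type description of the boundary length along the exploration, or (b) argue by analytic continuation / taking $\kappa \uparrow 4$ in Theorem \ref{thm:ssw-simple-lqg}, controlling the limit of both sides. Option (a) gives the cleaner conceptual proof and is the one I would pursue; the technical heart is then checking that the $\CLE_4$ exploration's boundary-length increments and the conditional independence of the cut-out $\gamma$-LQG disks hold, after which the computation $A_+/(1-A_-) = 1/\cosh(\pi\gamma)$ is a direct (if slightly delicate, because of the hyperbolic vs.\ trigonometric integral identities) martingale calculation in the spirit of Proposition \ref{prop:power-theta}. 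I expect that most of the work can be imported essentially verbatim from Section \ref{sec:fragmentation-main} once the correct jump measure for the $\kappa=4$ process is identified.
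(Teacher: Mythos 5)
Your proposal takes a genuinely different and substantially more complicated route than the paper, and it has a real gap.

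The paper's proof does not use the $\gamma$-LQG disk, the boundary-length process, or any $\kappa=4$ analogue of Theorem~\ref{thm:msw-simple}, Corollary~\ref{cor:msw-simple-full}, or Proposition~\ref{prop:power-theta}. It works directly with a Dirichlet GFF $h_{\text{D}}$ in $\D$ and the Miller--Sheffield level-line decomposition $h_{\text{D}} \stackrel{d}{=} \sum_{i\geq 1}(\pi\sigma_i + h_i)1_{\wt\eta_i^o}$, where $\wt\eta_i$ are the outermost $\CLE_4$ loops, $(h_i)$ are conditionally i.i.d.\ Dirichlet GFFs in $\wt\eta_i^o$, and $(\sigma_i)$ are i.i.d.\ fair signs. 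Computing $\E(\mu^\gamma_{h_{\text{D}}}(\D))$ twice --- once directly via \eqref{eq:gff-annealed}, and once loop-by-loop after conditioning on $\Gamma$ --- produces the factor $\cosh(\gamma\pi)$ from $\E(e^{\gamma\pi\sigma_i})$ and yields $c=1/\cosh(\gamma\pi)$ in a few lines. The hyperbolic cosine is the height gap $\pm\pi$ averaged over orientations, not a jump-process martingale computation.

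The gap in your proposal is at the very first display. The identity $c\cdot\E(\Psi([(\D,h)])) = \E\bigl(\int_\D R(z,\eta_z^o)^{\gamma^2/2}\,\mu^\gamma_h(dz)\bigr)$ was justified in the proof of Theorem~\ref{thm:ssw-simple-lqg} by the \emph{independence} of the field $h$ and the CLE $\Gamma$: one integrates over $z$, uses independence to replace $R(z,\eta_z^o)^{\gamma^2/2}$ by its expectation $c\,R(z,\D)^{\gamma^2/2}$, and matches against the definition of $\Psi$. In the level-line coupling $\Gamma$ is (essentially) a deterministic function of the field, so this step is no longer valid; $\eta_z$ and the area measure near $z$ are strongly correlated. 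You acknowledge that the coupling is different but treat it as a harmless structural change, whereas it breaks exactly this equality. Similarly, the cut-out surfaces $S_n$ are not independent $\gamma$-LQG disks given the boundary lengths in this coupling: conditionally on the outermost loops, the fields inside are Dirichlet GFFs shifted by $\pm\pi$, and it is precisely this shift --- not any exploration boundary-length process --- that produces $\cosh(\gamma\pi)$. The remaining two suggestions (build the $\kappa=4$ boundary-length machinery from scratch, or analytically continue $\kappa\uparrow 4$ in Theorem~\ref{thm:ssw-simple-lqg}) are in principle plausible directions but involve far more work than the actual argument, which is a half-page Fubini computation.
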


\begin{proof}
	Let us recall the level-line coupling of a $\CLE_4$ with a Dirichlet GFF due to Miller and Sheffield: Let $h$ be a Dirichlet GFF in $\D$ as introduced in Section \ref{sec:gff-lqg}, let us write $\Gamma=\{\widetilde{\eta}_i\colon i\ge 1\}$ and conditionally on $\Gamma$, let $(h_i\colon i\ge 1)$ be i.i.d.\ Dirichlet GFFs in $\widetilde{\eta}_i^o$ and $(\sigma_i\colon i\ge 1)$ be independent i.i.d.\ and uniform on $\{\pm 1\}$; then
	\begin{align*}
		h \stackrel{d}{=} \sum_{i\ge 1} (\pi\sigma_i+h_i)1_{\widetilde{\eta}_i^o}\;.
	\end{align*}
	It follows that for $\gamma\in (0,2)$,
	\begin{align}
		\label{eq:ssw-gff-key}
		\E\left(\,\int_\D \mu^\gamma_h(dz)\right)=\E\left( \sum_{i\ge 1} \int_{\widetilde{\eta}_i^o} \mu^\gamma_{\pi\sigma_i+h_i}(dz)\right) = \E\left( \sum_{i\ge 1} \E\left(\int_{\widetilde{\eta}_i^o} \mu^\gamma_{\pi\sigma_i+h_i}(dz)\mid \Gamma\right)\right)\;.
	\end{align}
	By \eqref{eq:gff-annealed} we get
	\begin{align*}
		\E\left(\,\int_\D \mu^\gamma_h(dz)\right) &= \int_{\D} R(z,\D)^{\gamma^2/2}\,dz\;,\\
		\E\left(\int_{\widetilde{\eta}_i^o} \mu^\gamma_{\pi\sigma_i+h_i}(dz)\mid \Gamma\right) &= \cosh(\gamma\pi) \int_{\widetilde{\eta}_i^o} R(z,\widetilde{\eta}_i^o)^{\gamma^2/2}\,dz\quad\text{a.s.,}
	\end{align*}
	where we use for the second identity that $\E(e^{\gamma\pi\sigma_i})=\cosh(\gamma\pi)$.
	For $z\in \D$, let $\eta_z$ be the a.s.\ unique loop in $\Gamma$ surrounding the point $z$ taking $\eta_z$ to be the curve tracing $\partial \D$ if no such loop exists. Then $\E(R(z,\eta_z^o)^{\gamma^2/2})=R(z,\D)^{\gamma^2/2}\,\E(R(0,\eta_0)^{\gamma^2/2})$ for $z\in \D$ by conformal invariance of $\Gamma$  and thus by \eqref{eq:ssw-gff-key},
	\begin{align*}
		\cosh(\gamma\pi)^{-1}\int_{\D} R(z,\D)^{\gamma^2/2}\,dz &= \E\left(\sum_{i\ge 1}\int_{\widetilde{\eta}_i^o} R(z,\widetilde{\eta}_i^o)^{\gamma^2/2}\,dz\right) = \E\left( \int_\D R(z,\eta_z^o)^{\gamma^2/2}\,dz \right) \\
		&= \int_\D \E(R(z,\eta_z^o)^{\gamma^2/2})\,dz =\E(R(0,\eta_0^o)^{\gamma^2/2})\int_\D R(z,\D)^{\gamma^2/2}\,dz\;.
	\end{align*}
	The claim follows.
\end{proof}

Combining the results above we conclude the proof of Theorem \ref{thm:confradlaw}.

\begin{proof}[Proof of Theorem \ref{thm:confradlaw}]
	Let us first consider $\kappa\in(8/3,4)$ and $\gamma=\sqrt{\kappa}$. If $\alpha\in (2/\gamma+\gamma/4,2)$, let $\rho = \alpha(Q_\gamma-Q_\alpha)\in (-1+2/\kappa+3\kappa/32,\sqrt{\kappa}+4/\sqrt{\kappa}-4)$. Since
	\begin{align*}
		\sqrt{(1-4/\kappa)^2-8\rho/\kappa} = 4/\kappa -2\alpha/\kappa + 1
	\end{align*}
	by Theorem \ref{thm:ssw-simple-lqg} we get
	\begin{align*}
		\E\left(R(0,\eta_0^o)^\rho \right) &= \E\left(R(0,\eta_0^o)^{\alpha(Q_\gamma - Q_\alpha)} \right) = \frac{\cos(4\pi/\kappa)}{\cos(4\pi/\kappa-\pi\cdot 2\alpha/\gamma)} 
		= \frac{-\cos(4\pi/\kappa)}{\cos\left( \pi\sqrt{(1-4/\kappa)^2 - 8\rho/\kappa} \right)}
	\end{align*}
	when $\rho\in (-1+2/\kappa+3\kappa/32,\sqrt{\kappa}+4/\sqrt{\kappa}-4)$. In particular, the two functions
	\begin{align*}
		\rho\mapsto \E(R(o,\eta_0^o)^\rho)\quad\text{and}\quad \rho\mapsto \frac{-\cos(4\pi/\kappa)}{\cos\left( \pi\sqrt{(1-4/\kappa)^2 - 8\rho/\kappa} \right)}
	\end{align*}
	are both analytic functions on $\{\rho\in \C\colon \Re \rho>-1+2/\kappa+3\kappa/32\}$ (for the second function this statement is obvious) and agree on a set containing an accumulation point. The two functions therefore are identical on $\{\rho\in \C\colon \Re \rho>-1+2/\kappa+3\kappa/32\}$ as required.

	In the $\kappa \in(4,8)$ case we let $\gamma=4/\sqrt{\kappa}$, consider $\alpha\in (1/\gamma+\gamma/2,2)$ and define $\rho = \alpha(Q_\gamma-Q_\alpha)\in(-1+2/\kappa+3\kappa/32,\sqrt{\kappa}+4/\sqrt{\kappa}-4)$. Then we have
	\begin{align*}
		\sqrt{(1-4/\kappa)^2-8\rho/\kappa}= 4/\kappa-\alpha\gamma/2+1
	\end{align*}
	and the result follows for $\rho\in (-1+2/\kappa+3\kappa/32,\sqrt{\kappa}+4/\sqrt{\kappa}-4)$ by Theorem \ref{thm:ssw-nonsimple-lqg} and the same reasoning as in the $\kappa<4$ case. We again get the full range of $\rho$ values by analytic continuation.

	Finally, we consider the $\kappa=4$ case. Proposition \ref{prop:ssw-critical-levelline} yields the claim for $\rho\in (0,2)$. The moment generating function of the random variable $-\log R(0,\eta_0^o)$ characterizes its law and we see that the moment generating function of $-\log R(0,\eta_0^o)$ agrees with that of the random variable $\tau_{\pm 1}=\inf\{t\ge 0\colon |B_t|=\pi\}$ where $B$ is a standard Brownian motion. The moment generating function of $\tau_{\pm 1}$ is explicit on the entire range where it is finite.
\end{proof}

\section{CLE weighted by nesting statistics}
\label{sec:cle}

In this section we prove Theorem \ref{thm:loopexpconv}, which justifies that the CLE weighting considered in the introduction is natural, since, given a tuple of points, it corresponds in some sense to reweighting the law of a CLE coupled to an LQG surface by the number of CLE loops surrounding any subset of these points. See Section \ref{sss:regular-disk} for a description of this weighting in the case of a regular LQG disk. A key input is Theorem \ref{thm:cle-indep}, which is a spatial independence property for CLE that is necessary in order to establish finiteness of the (claimed) limit in Theorem \ref{thm:loopexpconv}. Theorem \ref{thm:cle-indep} will be proved in Section \ref{sec:cle-k-small} for $\kappa\in(8/3,4]$ and in Section \ref{sec:cle-k-big} for $\kappa\in(4,8)$.

\subsection{Weighting CLE by nesting statistics: Proof of Theorem \ref{thm:loopexpconv}}

As discussed in the introduction, we want to mimic in the continuum the definition of the $O(n)$ model weighted by the number of loops surrounding points. Due to the fact that the number of loops in a nested CLE surrounding a single point is almost surely infinite, a renormalization is needed to weight a CLE by the exponential of the number of loops surrounding points. Making sense of this rigorously will be the main achievement of this section.
\begin{thm}
	Let $\delta>0$ and let $z_1,\dots,z_n\in \D$ be such that
	\begin{align*}
		|z_i-z_j|>\delta\quad\text{and}\quad|z_i|<1-\delta\qquad\forall i,j\le n\,,\,i\neq j\;.
	\end{align*}
	Let $\Gamma$ be a nested $\CLE_\kappa$ in $\D$ for $\kappa\in(8/3,8)$ and define $\eta_i$ to be the outermost loop surrounding $z_i$ which does not surround $z_j$ for all $j\neq i$. Suppose $\rho_i>-1+2/\kappa+3\kappa/32$ for all $i=1,\dots,n$. Then there is a constant $C>0$ depending only on $\delta,\rho_1,\dots,\rho_n$ such that
	\begin{align*}
	\E\left(\, \prod_{i=1}^n R(z_i,\eta_i^o)^{\rho_i} \right) \le C\;.
	\end{align*}
	\label{thm:cle-indep}
\end{thm}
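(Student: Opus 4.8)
The plan is to bound the expectation by conditioning on increasingly fine information about $\Gamma$ and exploiting the renewal/Markovian structure of nested CLE together with the moment bound from Theorem \ref{thm:confradlaw}. The key point is that the events ``$\eta_i$ is the outermost loop surrounding $z_i$ but no other $z_j$'' become decoupled once we have explored the CLE down to a scale where the points $z_1,\dots,z_n$ are separated into distinct complementary components. First I would set up an exploration of the nested CLE from the outside in: peel off outermost loops, and whenever a loop surrounds more than one of the $z_i$ continue inside it, stopping the exploration along each ``branch'' at the first loop $\eta$ that surrounds exactly one of the remaining marked points (this is precisely $\eta_i$ for that point, by the definition in the statement — note that $\eta_i$ in Theorem \ref{thm:cle-indep} is the same object as in Theorem \ref{thm:loopexpconv}). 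Since there are only $n$ points, this exploration terminates after boundedly many ``splitting'' loops, and one obtains a finite (random) tree of nested domains whose leaves each contain a single $z_i$.

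\textbf{Key steps.} (1) Reduce to the single-point estimate: by the conformal Markov property of CLE, conditionally on the domain $U_i$ that results from the exploration just before $\eta_i$ is discovered (so $U_i$ is a complementary component of the explored loops containing $z_i$ but no other marked point), the conditional law of $\eta_i$ is that of the outermost loop surrounding $z_i$ in a fresh $\CLE_\kappa$ in $U_i$. Hence $\E(R(z_i,\eta_i^o)^{\rho_i}\mid U_i) = R(z_i,U_i)^{\rho_i}\cdot \E(R(0,\eta^o)^{\rho_i})$ by conformal invariance, and the last factor is the explicit finite constant from Theorem \ref{thm:confradlaw} (finite precisely because $\rho_i>-1+2/\kappa+3\kappa/32$). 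So the problem reduces to bounding $\E(\prod_i R(z_i,U_i)^{\rho_i})$. (2) Control $R(z_i,U_i)$: the conformal radius $R(z_i,U_i)$ is at most the conformal radius of $z_i$ in the largest explored loop that still surrounds $z_i$, and in fact $R(z_i,U_i)\le 2\,\mathrm{dist}(z_i,\partial U_i)\le 2\,\mathrm{diam}(U_i)$; the positive powers ($\rho_i\ge 0$) are then bounded by a constant depending on $\delta$ via the Koebe quarter theorem and the a priori bound $U_i\subseteq\D$. The negative powers are the delicate ones — for those one needs that $R(z_i,U_i)$ is not too small, i.e. that the exploration does not pinch $z_i$ too tightly before isolating it from the other points. (3) Handle the negative powers via a spatial independence / separation argument: because $|z_i-z_j|>\delta$ and $|z_i|<1-\delta$, with overwhelming probability (controlled by CLE estimates) the loops that separate $z_i$ from $z_j$ and from $\partial\D$ do so while staying at macroscopic distance $\gtrsim\delta$ from $z_i$, so $R(z_i,U_i)\gtrsim_\delta 1$ on a high-probability event, and on the complementary small-probability events one absorbs the (possibly large) negative moments using the fact that $R(z_i,U_i)\ge R(z_i,\D')$ for the innermost explored domain and a crude union bound over the boundedly many splitting loops, together with Hölder's inequality against the single-point moment bounds of Theorem \ref{thm:confradlaw} at slightly shifted exponents.

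\textbf{Main obstacle.} The hard part will be step (3): making precise and quantitative the claim that the exploration isolating $z_i$ from the other marked points does not simultaneously bring the boundary of the exploration domain very close to $z_i$ — in other words, controlling the joint behaviour of the separating loops near several points at once. This is exactly the strong spatial independence property for CLE referenced in the paper, and in the actual proof it is handled by Theorem \ref{thm:cle-indep} being proved directly (in Sections \ref{sec:cle-k-small} and \ref{sec:cle-k-big}) rather than reduced to Theorem \ref{thm:loopexpconv}; so in my plan I would either invoke such a separation lemma or, more honestly, prove the key quantitative input by hand: show that for any fixed pair $z_i,z_j$ and threshold $r<\delta/4$, the probability that some CLE loop surrounds $z_i$ but not $z_j$ while passing within distance $r$ of $z_i$ decays as a power of $r$ (using the known near-deterministic behaviour of conformal radii of CLE loops and a Borel--Cantelli-type summation over dyadic scales), which then lets one integrate the negative moment $\E(R(z_i,U_i)^{\rho_i})\le C\int_0^{\delta/4} r^{\rho_i-1}\P(\cdots)\,dr + C_\delta$ and obtain a finite bound depending only on $\delta$ and the $\rho_i$.
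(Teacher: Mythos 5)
Your step (1) reduction is correct but essentially circular: since $U_i\supseteq\eta_i^o$ and $\rho_i<0$ (the hard case), the quantity $\E\bigl(\prod_i R(z_i,U_i)^{\rho_i}\bigr)$ you reduce to is an upper bound for the original expectation, so you have not gained anything beyond pushing the problem one nesting level out. The real gap is in your decoupling across $i$ in step (3). You propose to use H\"older to separate the $n$ factors, but that forces you to control marginal moments $\E\bigl(R(z_i,\cdot)^{n\rho_i}\bigr)$. When $\rho_i$ is close to the critical value $-1+2/\kappa+3\kappa/32$, the exponent $n\rho_i$ lies far below the threshold in Theorem \ref{thm:confradlaw}, so those individual moments are infinite and the bound is vacuous. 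No tail estimate on a single $R(z_i,U_i)$ can repair this, because the obstruction is the $n$-fold H\"older loss, not the behaviour of one marginal; this is exactly why the theorem is nontrivial. Your ``Borel--Cantelli over dyadic scales'' idea is therefore aimed at the wrong quantity.

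The paper's proof avoids paying any H\"older price across the marked points by establishing \emph{exact} independence via an auxiliary construction of CLE. For $\kappa\in(8/3,4]$ (Section \ref{sec:cle-k-small}) it uses the Brownian loop soup: the CLEs built from loop-soup loops contained in disjoint balls $B_{\delta/2}(z_i)$ are genuinely independent across $i$, and the $k$th nested loop around $z_i$ in the CLE-in-the-ball is contained in the $k$th nested loop of the full CLE, giving a pointwise comparison at each nesting level. For $\kappa\in(4,8)$ (Section \ref{sec:cle-k-big}) it instead uses the GFF/imaginary-geometry coupling and the spatial Markov property of the GFF, with the local pockets of the branching $\SLE_{\kappa}$ playing the role of the loop-soup clusters. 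With exact independence across $i$, the only H\"older step required is the mild one separating the random common nesting depth $N$ (which has exponential tails of arbitrarily large rate by Lemma \ref{lem:superexp-cle-tail}) from the i.i.d.\ conformal-radius decrements, and that only needs $p\rho_i>-1+2/\kappa+3\kappa/32$ for some $p>1$ arbitrarily close to $1$, which is always available under the hypotheses. If you want to salvage a direct exploration argument, you need a mechanism giving near-independence of the conformal radii across $i$ without an $n$-dependent loss of exponent --- this is precisely what the loop-soup and imaginary-geometry inputs supply, and what your sketch is missing.
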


The proof of the theorem is deferred to Sections \ref{sec:cle-k-small} and \ref{sec:cle-k-big} in the simple and non-simple case, respectively. The proof of this result will make strong use of a quasi-independence property of the CLE: the conformal radii of the outermost loops only surrounding one particular point do not correlate strongly with each other; indeed the proofs in Sections \ref{sec:cle-k-small} and \ref{sec:cle-k-big} will exploit two different constructions of CLE from which this quasi-independence can be deduced. Before we can establish Theorem \ref{thm:loopexpconv} we will also need two lemmas on CLE. Related results also appear in \cite{mww-extremes}, however, both the finiteness and convergence of exponential moments of nesting statistics presented here is new.

\begin{lemma}
	\label{lem:clelocalasymptotic}
	Let $\Gamma$ be a nested $\CLE_\kappa$ in $\D$ where $\kappa\in (8/3,8)$. Moreover, whenever $\epsilon>0$ we write $N_\epsilon$ for the number of loops $\eta\in \Gamma$ surrounding $0$ and with $R(0,\eta^o)>\epsilon$. Then
	\begin{align*}
		\E(e^{\sigma N_\epsilon})<\infty \quad\text{for all $\epsilon>0$, $\sigma\in \R$}\quad
		\text{and}\quad \epsilon^{\rho^\kappa_\sigma} \,\E(e^{\sigma N_\epsilon})\to c^\kappa_\sigma\quad\text{as $\epsilon\downarrow 0$}
	\end{align*}
	for some explicit constant $c^\kappa_\sigma \in (0,\infty)$.
\end{lemma}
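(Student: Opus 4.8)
\textbf{Proof proposal for Lemma \ref{lem:clelocalasymptotic}.}

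The plan is to use the renewal/multiplicative structure of the nested $\CLE_\kappa$ at the point $0$. Recall that if $\eta^{(1)}\supseteq\eta^{(2)}\supseteq\cdots$ denotes the sequence of loops of $\Gamma$ surrounding $0$, ordered from outermost to innermost, then by the Markov property of CLE the successive conformal radius ratios $U_k := \log R(0,(\eta^{(k-1)})^o) - \log R(0,(\eta^{(k)})^o)$ (with the convention $R(0,(\eta^{(0)})^o)=R(0,\D)=1$) are i.i.d.\ strictly positive random variables, and $-\log R(0,(\eta^{(k)})^o) = U_1 + \cdots + U_k$. Thus $N_\epsilon = \#\{k\ge 1 : U_1+\cdots+U_k \le \log(1/\epsilon)\}$ is precisely the number of renewals before time $\log(1/\epsilon)$ for the renewal process with step law $U_1$. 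Moreover, Theorem \ref{thm:confradlaw} identifies the law of $U_1 = -\log R(0,\eta^o)$ explicitly: it has the moment generating function $\E(e^{\rho U_1}) = -\cos(4\pi/\kappa)/\cos(\pi\sqrt{(1-4/\kappa)^2 - 8\rho/\kappa})$ for $\rho > -1+2/\kappa+3\kappa/32$, together with the explicit density given there.

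First I would reduce everything to a statement about the renewal process $(S_k = U_1+\cdots+U_k)_{k\ge 0}$. Fix $\sigma\in\R$. Conditioning on the number of renewals and using independence of the steps, one computes
\begin{align*}
	\E(e^{\sigma N_\epsilon}) = \sum_{k\ge 0} e^{\sigma k}\,\P(S_k \le \log(1/\epsilon) < S_{k+1}).
\end{align*}
To handle $\sigma>0$ (the case $\sigma\le 0$ being immediate since then $e^{\sigma N_\epsilon}\le 1$), I would tilt the step law: since $\E(e^{\rho U_1})<\infty$ for all $\rho > \rho_{\min} := -1+2/\kappa+3\kappa/32$ and $\E(e^{\rho U_1})\to\infty$ as $\rho\uparrow\infty$ (the $\cos$ in the denominator hits zero), there is a unique $\rho_\sigma^\kappa > \rho_{\min}$ with $\E(e^{\rho_\sigma^\kappa U_1}) = e^{\sigma}$; and from the explicit formula for the MGF one checks this $\rho_\sigma^\kappa$ is exactly the one in Definition \ref{def:main-params}, because $e^{-\sigma} = \E(e^{\rho_\sigma^\kappa U_1})^{-1} = -\cos(4\pi/\kappa - \pi\cdot 2\rho_\sigma^\kappa\cdots)$, matching the defining relation there (here one must carry out the short computation identifying $\sqrt{(1-4/\kappa)^2-8\rho/\kappa}$ with the argument in Definition \ref{def:main-params}; this is the same algebra as in the proof of Theorem \ref{thm:confradlaw}). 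Under the Esscher-tilted measure $\widetilde\P$ with $d\widetilde\P/d\P = e^{\rho_\sigma^\kappa U_1}/e^\sigma$ on each step, the process $e^{\sigma N_\epsilon}$ rewrites as $\E(e^{\sigma N_\epsilon}) = \epsilon^{-\rho_\sigma^\kappa}\,\widetilde\E\big(e^{\rho_\sigma^\kappa\,\mathrm{overshoot}}\big)$ where the overshoot is $S_{N_\epsilon+1} - \log(1/\epsilon)$. Finiteness of $\E(e^{\sigma N_\epsilon})$ for every $\epsilon>0$ then follows once one knows the tilted step law is non-lattice with a finite exponential moment of order slightly above $\rho_\sigma^\kappa$ — again available from the explicit MGF, which is analytic and finite in a neighbourhood of $\rho_\sigma^\kappa$ in $\{\Re\rho > \rho_{\min}\}$.

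The asymptotic $\epsilon^{\rho_\sigma^\kappa}\E(e^{\sigma N_\epsilon}) \to c_\sigma^\kappa$ is then a renewal-theorem statement about the convergence of the overshoot distribution: under $\widetilde\P$, the overshoot at a large level converges in distribution to the stationary (size-biased) overshoot law $\widetilde\mu_\infty(dx) = \widetilde\P(U_1 > x)\,dx/\widetilde\E(U_1)$, and one needs the additional uniform integrability $\widetilde\E(e^{\rho_\sigma^\kappa\,\mathrm{overshoot}}) \to \int e^{\rho_\sigma^\kappa x}\,\widetilde\mu_\infty(dx)$, which gives $c_\sigma^\kappa = \int_0^\infty e^{\rho_\sigma^\kappa x}\,\widetilde\P(U_1>x)\,dx\,/\,\widetilde\E(U_1) \in (0,\infty)$, an explicit (if unwieldy) constant since the law of $U_1$ is explicit. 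The main obstacle — and the reason the paper advertises "a result on renewal processes of independent interest in Appendix \ref{app:renewal}" — is precisely this uniform-integrability-strengthened renewal limit theorem for a tilted, heavy-one-sided-tail step law: ordinary Blackwell/key renewal theorems give convergence in distribution of the overshoot but not automatically convergence of exponential functionals of it, so one must verify a tail bound on the overshoot under $\widetilde\P$ uniform in the level (e.g.\ via the explicit density of $U_1$ from Theorem \ref{thm:confradlaw}, which decays fast enough at $+\infty$ that $U_1$ has exponential moments of all orders above $\rho_{\min}$, so the tilted law still has an exponential moment strictly above $\rho_\sigma^\kappa$). I would therefore invoke the appendix's renewal result as the key lemma and devote the bulk of the proof to (i) setting up the renewal representation of $N_\epsilon$ via the Markov property of nested CLE, (ii) identifying $\rho_\sigma^\kappa$ via Theorem \ref{thm:confradlaw} and Definition \ref{def:main-params}, and (iii) checking the exponential-moment hypothesis of the appendix lemma from the explicit conformal-radius density.
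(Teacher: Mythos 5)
Your proposal follows the paper's proof exactly: use the spatial Markov property of nested $\CLE_\kappa$ to reduce $N_\epsilon$ to a renewal process for the i.i.d.\ log-conformal-radius increments, read the step density off Theorem~\ref{thm:confradlaw}, identify $\rho^\kappa_\sigma$ via Definition~\ref{def:main-params}, and invoke the appendix renewal result (Proposition~\ref{prop:renewalresult}).

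One persistent sign-convention slip in your interstitial discussion is worth noting, because it would produce the wrong $\rho^\kappa_\sigma$ if followed literally. Theorem~\ref{thm:confradlaw} says that $\E\bigl(R(0,\eta^o)^\rho\bigr) = \E(e^{-\rho U_1}) = \mathcal{L}f(\rho)$ is finite and explicit for $\rho > -1+2/\kappa+3\kappa/32$; this is the Laplace transform of $U_1$, not the moment generating function $\E(e^{\rho U_1})$, which is finite only for $\rho < 1 - 2/\kappa - 3\kappa/32$. Correspondingly, the identification must read $\mathcal{L}f(\rho^\kappa_\sigma) = e^{-\sigma}$, i.e.\ $\sigma = -\log \mathcal{L}f(\rho^\kappa_\sigma)$, matching $\E(R(0,\eta^o)^{\rho^\kappa_\sigma}) = e^{-\sigma}$ from Definition~\ref{def:main-params}; the Esscher tilt you introduce should have step-density ratio $e^{-\rho^\kappa_\sigma U_1}/e^{-\sigma}$, not $e^{\rho^\kappa_\sigma U_1}/e^{\sigma}$. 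These flips are straightforward to fix and do not change your final strategy of invoking Proposition~\ref{prop:renewalresult} and verifying its hypotheses (hazard-rate bound, minorization, and non-lattice/moment conditions) from the explicit density in Theorem~\ref{thm:confradlaw}, which is precisely the paper's argument.
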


\begin{proof}
	Let $(\eta_i\colon i\ge 0)$ be the loops in $\Gamma$ surrounding $0$ ordered according to their nesting structure (i.e., $\eta_i$ surrounds $\eta_{i+1}$ for $i\ge 0$) with the convention that $\eta_0$ parametrizes $\partial \D$. Then by the spatial Markov property of $\CLE_\kappa$
	\begin{align*}
		X_i = -\log R(0,\eta_i^o) + \log R(0,\eta_{i-1}^o)\;,\quad i\ge 1
	\end{align*}
	are i.i.d. and their density $f$ and Laplace transform $\mathcal{L}f$ are given in Theorem \ref{thm:confradlaw}. Also note that $N_\epsilon = \sup\{n\ge 0\colon X_1+\cdots + X_n \le \log(1/\epsilon)\}$ for $\epsilon>0$. Also  by Definition \ref{def:main-params} we have $\sigma = -\log \mathcal{L}f(\rho^\kappa_\sigma)$. The claim now follows from Proposition \ref{prop:renewalresult} where the conditions can be verified using Theorem \ref{thm:confradlaw}.
\end{proof}

\begin{lemma}
	\label{lem:superexp-cle-tail}
	Let $z\in \D\setminus \{0\}$ and let $\Gamma$ be a nested $\CLE_\kappa$ in $\D$ with $\kappa\in (8/3,8)$. We also let $N_{0z}$ be the number of loops in $\Gamma$ surrounding both $0$ and $z$. Then for all $\delta\in (0,1)$ and $\theta\ge 0$ there is a constant $C>0$ such that $\E(\exp (\theta N_{0z}))\le C$ for $|z|\ge \delta$.
\end{lemma}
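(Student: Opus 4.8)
The statement to prove is Lemma~\ref{lem:superexp-cle-tail}: for fixed $z\in\D\setminus\{0\}$ with $|z|\ge\delta$, the number $N_{0z}$ of loops in a nested $\CLE_\kappa$ surrounding both $0$ and $z$ has $\E(e^{\theta N_{0z}})\le C$ uniformly in $|z|\ge\delta$. The key point is that a loop surrounding both $0$ and $z$ must have diameter at least $\delta$ (more precisely, at least $|z|\ge\delta$), so such loops are ``macroscopic'' and hence scarce in a way that makes $N_{0z}$ have at most geometric tails with a rate that can be controlled uniformly over the compact set $\{\delta\le|z|, |z|<1\}$.

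The plan is to set up an iterative exploration of the loops surrounding $0$, as in the proof of Lemma~\ref{lem:clelocalasymptotic}. Write $(\eta_i\colon i\ge 0)$ for the loops of $\Gamma$ surrounding $0$, ordered by nesting ($\eta_0=\partial\D$, and $\eta_i$ surrounds $\eta_{i+1}$), and let $D_i=\eta_i^o$ be the domain surrounded by $\eta_i$. By the spatial Markov property of $\CLE_\kappa$, conditionally on $\eta_0,\dots,\eta_i$, the law of the loops inside $D_i$ is that of a nested $\CLE_\kappa$ in $D_i$. Now $\eta_{i+1}$ surrounds $z$ only if $z\in D_i$ \emph{and} the outermost loop of the $\CLE_\kappa$ in $D_i$ around $0$ also surrounds $z$. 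The crucial estimate is: there exists $q<1$, depending only on $\kappa$, such that for \emph{any} simply connected domain $D\ni 0$ and any point $w\in D$ with $R(0,D)\le 2\,\mathrm{dist}(0,w)$ (say), the probability that the outermost $\CLE_\kappa$ loop in $D$ around $0$ also surrounds $w$ is at most $q$. This follows by conformal invariance: mapping $D$ to $\D$ sending $0\mapsto 0$, the image $w'$ of $w$ satisfies $|w'|\ge c_0>0$ for a universal $c_0$ (by the Koebe quarter theorem / Schwarz lemma, since $\mathrm{dist}(0,w)\gtrsim R(0,D)$), and the event that the outermost $\CLE_\kappa$ loop around $0$ in $\D$ also surrounds a point at distance $\ge c_0$ from $0$ has probability bounded away from $1$ uniformly --- because with positive probability the outermost loop around $0$ has conformal radius less than $c_0/4$ and then (by Koebe) cannot surround $w'$.

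With this in hand, I would argue as follows. Let $T=\inf\{i\ge 0\colon R(0,D_i)< 2\delta\}$; once $R(0,D_i)<2\delta$ the loop $\eta_i$ can no longer surround $z$ (since any loop surrounding both $0$ and $z$ encloses a domain of conformal radius $\ge c\,|z|\ge c\delta$ by Koebe, for an absolute constant $c$ --- adjust the threshold accordingly), so $N_{0z}\le \#\{i\le T\colon \eta_i \text{ surrounds } z\}$. First, $T$ itself has exponential tails uniform in everything: $R(0,D_i)$ decreases by independent multiplicative factors with law given by Theorem~\ref{thm:confradlaw}, so $-\log R(0,D_i)$ is a renewal sum with positive mean drift, giving $\P(T\ge k)\le C_1 e^{-c_1 k}$. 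Second, on the event that the exploration has not yet stopped, each time we reveal a new loop $\eta_{i+1}$, the conditional probability that it surrounds $z$ is at most $q$ by the estimate above (applied in $D_i$, noting $R(0,D_i)\ge 2\delta$ and $\mathrm{dist}(0,z)\ge \delta$ forces the relevant geometric condition up to constants; if not one can still bound the probability away from $1$). Hence $N_{0z}$ is stochastically dominated by a sum $\sum_{i=1}^{T}\mathbf{1}(B_i)$ where, conditionally on $T$ and the past, each $B_i$ has probability $\le q$; combining with the exponential tail of $T$ gives $\P(N_{0z}\ge k)\le C_2 e^{-c_2 k}$ for constants depending only on $\kappa$ and $\delta$, and choosing the geometry carefully one can make $c_2$ as large as needed --- more directly, since both $T$ and the ``success'' indicators have the stated tails, $N_{0z}$ has an exponential tail with a rate that can be taken arbitrarily small by iterating the argument over blocks (or simply: $N_{0z}$ is dominated by a negative-binomial-type variable), so $\E(e^{\theta N_{0z}})<\infty$ for every $\theta$, with a bound uniform in $|z|\ge\delta$.

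\textbf{Main obstacle.} The delicate step is the uniform estimate that the outermost $\CLE_\kappa$ loop around $0$ in a domain $D$ surrounds a given ``far-away'' point with probability bounded away from $1$ --- i.e. controlling the geometry after conformal mapping so that the bound is genuinely uniform over all relevant $(D,z)$ pairs, and in particular ensuring that the decrease of $R(0,D_i)$ is fast enough relative to the rate at which we must avoid $z$. One must be careful that $R(0,D_i)$ can be much larger than $\mathrm{dist}(0,z)$ early in the exploration (so surrounding $z$ is then likely), but this only happens for a number of steps with exponential tails, which is exactly why the bound on $T$ is needed and why the final tail estimate must combine the two sources of decay. An alternative, perhaps cleaner, route is to invoke directly the results on CLE nesting statistics (e.g. the asymptotics of the number of loops around a point vs.\ around two points, cf.\ the discussion around \cite{mww-extremes, on-nesting-statistics}) together with the renewal structure from Theorem~\ref{thm:confradlaw}, but the self-contained exploration argument above avoids importing heavier machinery.
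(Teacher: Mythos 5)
There is a genuine gap in the geometric claim that drives your stopping rule. You assert that once $R(0,D_i)<2\delta$ the loop $\eta_i$ cannot surround $z$, ``since any loop surrounding both $0$ and $z$ encloses a domain of conformal radius $\ge c|z|$ by Koebe,'' and the same assertion underlies your claim that a loop of small conformal radius around $0$ in $\D$ ``cannot surround $w'$.'' Both are false. Koebe gives $R(0,D)\le 4\op{dist}(0,\partial D)$, so small $R(0,D)$ forces $\partial D$ close to $0$, but it places no constraint on how far $D$ reaches: a thin tube of width $\epsilon$ from $0$ to $z$ is simply connected, contains both points, and has $R(0,\cdot)\asymp\epsilon$, which can be made as small as you like. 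Consequently your stopping time $T$ does not cap the indices of loops surrounding $z$, the claimed domination $N_{0z}\le\#\{i\le T\colon\eta_i\text{ surrounds }z\}$ fails, and the argument breaks at its first step.

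The correct quantity to track is not $R(0,D_i)$ but the modulus of the conformal preimage $|\phi_i^{-1}(z)|$ under a uniformization $\phi_i\colon\D\to C_i$ with $\phi_i(0)=0$, $\phi_i'(0)>0$, where $C_i$ is the component of $\eta_i^o$ containing $0$. Unlike the pair $(R(0,D_i),|z|)$, this has the right one-sided behavior: the Schwarz lemma gives $|\phi_{i+1}^{-1}(z)|\ge|\phi_i^{-1}(z)|$, so once the preimage of $z$ enters $\{|w|\ge 1-\epsilon\}$ it stays there; and by the Markov property the conditional probability that $\eta_{i+1}$ surrounds $z$ equals $\P(A_w)$ evaluated at $w=\phi_i^{-1}(z)$, where $A_w$ is the event that the outermost loop around $0$ of a fresh nested $\CLE_\kappa$ in $\D$ surrounds $w$. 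Since $\P(A_w)\to 0$ as $|w|\to 1$, one first fixes $\psi>\theta$ and then $\epsilon$ so that $\P(A_w)\le e^{-\psi}$ on $\{|w|\ge 1-\epsilon\}$; this gives a geometric tail of rate $e^{-\psi}<e^{-\theta}$ outright, whereas your fixed $q<1$ only yields $\E(e^{\theta N_{0z}})<\infty$ for $\theta<-\log q$ and the ``iterate over blocks'' remark does not repair this. The time $\tau_z$ to reach $\{|w|\ge 1-\epsilon\}$ is controlled via the growth theorem by the number of steps $M_z$ for which $R(0,\eta_i^o)\ge|z|\epsilon^2$, and $M_z$ has finite exponential moments uniformly in $|z|\ge\delta$ by Lemma~\ref{lem:clelocalasymptotic} --- this is the part of your sketch that is sound. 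Finally, the paper works with $N'_{0z}:=\max\{i\colon\eta_i\text{ surrounds }z\}$ and takes $\eta_i$ to be the $2i$th loop around $0$ (so that $\eta_1\subseteq\D$ even when $\kappa>4$), recovering $N_{0z}$ via $N'_{0z}\le 2N_{0z}+1$. Your two-phase outline is the right shape, but the stopping rule must be phrased in the conformal picture rather than in terms of the raw conformal radius.
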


\begin{proof}
	Let $\eta_i$ be the $2i$-th loop surrounding $0$ (counted from the boundary of $\D$) with the convention that $\eta_0$ parametrizes $\partial \D$ and let us write $A_w$ for the event that $\eta_1$ surrounds $w$. Then $\P(A_w)\to 0$ as $|w|\to 1$; indeed, if this was not the case then there would exist $(w_n)$ with $|w_n|\to 1$ and $\limsup_{n\to\infty} \P(A_{w_n})>0$ and hence $\P(A_{w_n}\ \text{i.o.})>0$ which is impossible since $\eta_1\subseteq \D$. Let $N'_{0z}$ be the largest $i$ such that $\eta_i$ surrounds $z$ so that in particular
	\begin{align}
		\label{eq:only-second-bound}
		N'_{0z}\le 2N_{0z}+1\;.
	\end{align}
	Let $\phi_i\colon \D\to C_i$ be conformal transformations with $\phi_i(0)=0$ and $\phi_i'(0)>0$ where $C_i$ is the connected component of $\eta_i^o$ containing $0$. Consider $\psi>\theta$, let $\epsilon>0$ be such that $\P(A_w)\le e^{-\psi}$ for all $|w|\ge 1-\epsilon$, and define $\tau_z = \inf\{i\ge 0\colon |\phi^{-1}_i(z)|\ge 1-\epsilon\}$ where by convention $\phi^{-1}_i(z)=1$ if $z\notin C_i$. By the Markov property of nested $\CLE_\kappa$,
	\begin{align*}
		\E\left( e^{\theta N'_{0z}}\right) = \E\left( e^{\theta\tau_z}\left.\E\left(e^{\theta N'_{0w}}\right)\right|_{w=\phi^{-1}_{\tau_z}(z)}\right) \le \E\left( e^{\theta\tau_z}\right) \sup_{|w|\ge 1-\epsilon}\E\left( e^{\theta N'_{0w}}\right).
	\end{align*}
	Let us treat the two factors separately. First of all, by the growth theorem for univalent functions \cite[Theorem 3.21]{lawler-book} applied to $\varphi_i=\phi_i/\phi_i'(0)$ at the value $\phi^{-1}(z)$ whenever $z\in C_i$,
	\begin{align*}
		|\varphi_i(\phi^{-1}_i(z))| \le \frac{|\phi^{-1}_i(z)|}{(1-|\phi^{-1}_i(z)|)^2} \le \frac{1}{(1-|\phi^{-1}_i(z)|)^2}.
	\end{align*}
	Since $|\varphi_i(\phi^{-1}_i(z))| = |z|/R(0,\eta_i^o)$, we obtain that
	\begin{align*}
		|\phi^{-1}_i(z)| \ge 1 - \sqrt{\frac{R(0,\eta_i^o)}{|z|}}\quad\text{whenever $z\in C_i$}\;.
	\end{align*}
	Let $M_z$ be the number of $i\ge 1$ such that $R(0,\eta_i^o)\ge |z|\epsilon^2$. Then by the above estimate, $\tau_z \le M_z + 1$ and therefore by Lemma \ref{lem:clelocalasymptotic}, $\E(e^{\theta \tau_z})\le e^\theta \,\E(e^{\theta M_z})\le C$ for some constant $C<\infty$ only depending on $\delta$. Let us now consider the second factor. For $n\ge 0$ and $|w|\ge 1-\epsilon$, by the Markov property for nested $\CLE_\kappa$,
	\begin{align*}
		\P(N'_{0w}\ge n+1)=\E\left( \left. \P(A_v)\right|_{v=\phi^{-1}_n(w)}; N'_{0w}\ge n, z\in C_n \right)\;.
	\end{align*}
	By the Schwarz lemma, $|\phi^{-1}_n(w)|\ge |w|$ and thus $\P(N'_{0w}\ge n+1) \le e^{-\psi}\, \P(N_{0w}\ge n)$ for $|w|\ge 1-\epsilon$. Therefore $\P(N'_{0w}\ge n) \le \exp(-\psi n)$ for all $n\ge 0$ and $|w|\ge 1-\epsilon$. The claim follows by combining everything and using \eqref{eq:only-second-bound}.
\end{proof}

We can now combine the above to deduce the main result.

\begin{proof}[Proof of Theorem \ref{thm:loopexpconv}]
	Without loss of generality $A=[n]=\{1,\dots,n\}$.
	Let $C_i$ be the connected component of $\eta_i^o$ containing $z_i$ for $i\le n$ (recall that $\eta_i$ denotes the outermost loop in $\Gamma$ surrounding $z_i$ but not $z_j$ for $j \neq i$). We explore the nested CLE $\Gamma$ from the outside until we discover the loops $(\eta_1,\dots,\eta_n)$. By the strong Markov property of the CLE and since $R(z_i,\eta_i^o)=R(z_i,C_i)$, we get
	\begin{align*}
		&\prod_{i=1}^n \epsilon^{\rho_{\sigma_i}^\kappa}\, \E\left(e^{\sum_{B\subseteq [n]\colon \#B\ge 1} \sigma_B N^\epsilon_B(\z)} \right) \\
		&\quad = \E\left(\,\prod_{i=1}^n e^{\sigma_i1(R(z_i,C_i)\ge \epsilon)}\,\epsilon^{\rho_{\sigma_i}^\kappa} \left.\! \E\left( e^{\sigma_iN^{\epsilon_i}}\right)\right|_{\epsilon_i = \epsilon/R(z_i,C_i)}\cdot e^{ \sum_{B\subseteq [n]\colon \#B> 1} \sigma_B N^\epsilon_B(\z)} \right)
	\end{align*}
	where $N^\epsilon$ is the number of loops $\eta$ in a $\CLE_\kappa$ $\wt{\Gamma}$ in $\D$ surrounding $0$ with $R(0,\eta^o)\ge \epsilon$. From Lemma \ref{lem:clelocalasymptotic} it follows that
	\begin{align}
		\label{eq:approx-cle-proof}
		\begin{split}
		&\prod_{i=1}^n e^{\sigma_i1(R(z_i,\eta_i^o)\ge \epsilon)}\,\epsilon^{\rho_{\sigma_i}^\kappa} \left.\! \E\left( e^{\sigma_iN^{\epsilon_i}}\right)\right|_{\epsilon_i = \epsilon/R(z_i,\eta_i^o)}\cdot e^{ \sum_{B\subseteq [n]\colon \#B> 1} \sigma_B N^\epsilon_B(\z)} \\
		&\quad\to \prod_{i=1}^n e^{\sigma_i}c_{\sigma_i}^\kappa R(z_i,\eta_i^o)^{\rho^\kappa_{\sigma_i}}\cdot e^{ \sum_{B\subseteq [n]\colon \#B> 1} \sigma_B N^0_B(\z)}\quad\text{a.s. as $\epsilon\downarrow 0$}\;.
		\end{split}
	\end{align}
	Also by Lemma \ref{lem:clelocalasymptotic}, there exists a constant $c>1$ such that for all $\epsilon\le 1$
	\begin{align*}
		\epsilon^{\rho_{\sigma_i}^\kappa} \left.\! \E\left( e^{\sigma_iN^{\epsilon_i}}\right)\right|_{\epsilon_i = \epsilon/R(z_i,\eta_i^o)} &\le c R(z_i,\eta_i^o)^{\rho_{\sigma_i}^\kappa} 1(\epsilon \le R(z_i,\eta_i^o)) + \epsilon^{\rho_{\sigma_i}^\kappa} 1(\epsilon > R(z_i,\eta_i^o)) \\
		&\le 2c R(z_i,\eta_i^o)^{\rho_{\sigma_i}^\kappa\wedge 0}\quad\text{for all $i\le n$}\;.
	\end{align*}
	In the first line of the above display, we split into the cases $\epsilon \le R(z_i,\eta_i^o)$ (where Lemma \ref{lem:clelocalasymptotic} is applicable) and $\epsilon > R(z_i,\eta_i^o)$ where the expectation equals on the left-hand side equals $1$.
	Thus we can deduce the first part of the theorem using dominated convergence for the sequence in \eqref{eq:approx-cle-proof} and using the following dominating function
	\begin{align*}
		\prod_{i=1}^n 2c\, e^{\sigma_i\vee 0} R(z_i,\eta_i^o)^{\rho_i\wedge 0}\cdot \prod_{B\subseteq [n]\colon \#B> 1} e^{\sigma_B N^\epsilon_B(\z)} \;.
	\end{align*}
	The fact that this expression has finite expectation follows from Theorem \ref{thm:cle-indep} and Lemma \ref{lem:superexp-cle-tail} after using Hölder's inequality with exponents $p$ and $(q_B\colon B\subseteq [n], \#B>1)$, taking the exponent $p$ sufficiently close to $1$ so that $(\rho_i\wedge 0)p > -1+2/\kappa+3\kappa/32$ for all $i\le n$ (noting that $-1+2/\kappa+3\kappa/32<0$).
	
	For the universal upper bound at the end of the statement of the theorem we use Hölder's inequality with the above exponents to get
	\begin{align*}
		&\E\left(\prod_{i\in [n]} R(z_i,\eta_i^o)^{\rho_{\sigma_i}^\kappa} \exp\left( \sum_{B\subseteq [n]\colon \#B> 1} \sigma_B N^0_B(\z) \right) \right) \\
		&\quad \le \E\left( \prod_{i\in [n]} e^{\sigma_i}R(z_i,\eta_i^o)^{p\rho_{\sigma_i}^\kappa}\right)^{1/p} \prod_{B\subseteq [n]\colon \#B> 1} \E\left(e^{q_B\sigma_B N^0_B(\z)}\right)^{1/q_B}\;.
	\end{align*}
	The result now follows from Theorem \ref{thm:cle-indep} and Lemma \ref{lem:superexp-cle-tail}.

	It remains to establish the universal lower bound. Let $\widetilde{\eta}_\epsilon$ be the outermost loop in $\widetilde{\Gamma}$ surrounding $0$ and contained in $\epsilon\D$. For each $i$, let $\phi_i\colon \D\to D$ be a conformal map with $\phi_i(0)=z_i$. Then by compactness of $K$ and since $\delta>0$ there exists $\epsilon_0=\epsilon_0(D,K,\delta)\in (0,1)$ such that $\phi_i(\epsilon_0\D)\cap\{z_1,\dots,z_n\}=\{z_i\}$ for all $i$. Thus by conformal invariance of CLE, for any $c>0$,
	\begin{align*}
		\P(R(z_i,\eta_i^o)>c)\ge \P( |\phi'_i(0)|R(0,\widetilde{\eta}_{\epsilon_0}) > c).
	\end{align*}
	Moreover, $|\phi'(0)|\ge \delta_0$ for some $\delta_0=\delta_0(D,K,\delta)$ (again, this follows from the compactness of $K$ and the fact that $\delta>0$). Hence, by a union bound,
	\begin{align*}
		\P(R(z_i,\eta_i^o)>c\ \forall i) \ge 1- n\cdot \P( R(0,\widetilde{\eta}_{\epsilon_0}) \le c/\delta_0) \ge 1/2
	\end{align*}
	for $c=c(D,K,\delta,n)>0$ sufficiently small and by decreasing it further, we can also ensure that $R(z_i,D)<1/c$ for all $i$. By Hölder's inequality,
	\begin{align*}
		\frac{1}{2}\prod_{i\in [n]} c^{|\rho_i|/2} &\le \E\left( \prod_{i\in [n]} R(z_i,\eta_i^o)^{\rho^\kappa_{\sigma_i}/2} \right) \\
		&\le \E\left(\prod_{i\in [n]} R(z_i,\eta_i^o)^{\rho_{\sigma_i}^\kappa} \prod_{B\subseteq [n]\colon \#B> 1}e^{ \sigma_B N^0_B(\z)}\right)^{1/2} \E\left(\prod_{B\subseteq [n]\colon \#B> 1}e^{ - \sigma_B N^0_B(\z)}\right)^{1/2}.
	\end{align*}
	The claim now follows since the second term on the right-hand side can be bounded from above by a constant that only depends on $D$, $K$, $\delta$, $n$ and $\bm{\sigma}$.
\end{proof}

The next lemma will not be used until Section \ref{sec:cle-k-big}. However, since the flavor of the proof is similar to other proofs given in this section, we state and prove it here.

\begin{lemma}
	Let $\Gamma$ be a $\CLE_\kappa$ in $\D$ with $\kappa\in (8/3,8)$ and let $r\in(0,1)$. If $\wt{\eta}_r$ is the outermost loop surrounding $0$ that is contained in $r\D$, then $\E(R(0,\wt{\eta}_r^o)^\rho )<\infty$ if and only if $\rho>-1+2/\kappa+3\kappa/32$. Furthermore, if $\bar\eta_r$ is the outermost loop surrounding 0 that is surrounded by $\wt\eta_r$ then we have $\E(R(0,\bar{\eta}_r^o)^\rho )<\infty$.
	\label{prop:moment-loops-smaller-domain}
\end{lemma}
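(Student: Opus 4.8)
The plan is to work with the nested enumeration of the loops of $\Gamma$ surrounding $0$ and reduce the statement to the explicit conformal radius law of Theorem~\ref{thm:confradlaw}. Write $\eta_0=\partial\D,\eta_1,\eta_2,\dots$ for the loops of $\Gamma$ surrounding $0$, ordered so that $\eta_i$ is surrounded by $\eta_{i-1}$, let $C_i$ be the connected component of $\eta_i^o$ containing $0$, let $R_i=R(0,C_i)$, and let $(\mathcal F_i)$ be the filtration generated by $\eta_1,\dots,\eta_i$. By the spatial Markov property of $\CLE_\kappa$ (as in the proof of Lemma~\ref{lem:clelocalasymptotic}) the ratios $\rho_i:=R_i/R_{i-1}\in(0,1]$ are i.i.d.\ with the law of $R(0,\eta^o)$ for the loop $\eta$ around $0$ in a $\CLE_\kappa$ in $\D$, so with $c_\rho:=\E(R(0,\eta_1^o)^\rho)$ we have $c_\rho<\infty$ iff $\rho>\rho_{\min}:=-1+2/\kappa+3\kappa/32$ (Theorem~\ref{thm:confradlaw}), and by Lemma~\ref{lem:clelocalasymptotic} the number $N_\epsilon$ of loops around $0$ with conformal radius at least $\epsilon$ has $\E(e^{\sigma N_\epsilon})<\infty$ for every $\sigma\in\R$ and $\epsilon>0$. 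Since $\CLE_\kappa$ is locally finite the sets $C_i$ shrink to $\{0\}$, so $\tau:=\min\{i\ge 1:C_i\subseteq r\D\}$ is a.s.\ finite, and $\wt\eta_r=\eta_\tau$, $\bar\eta_r=\eta_{\tau+1}$.

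The direction ``$\Rightarrow$'' is easy: $C_\tau\subseteq C_1$, hence $R(0,\wt\eta_r^o)=R_\tau\le R_1=R(0,\eta_1^o)$, so for $\rho\le\rho_{\min}$ (which forces $\rho<0$) we get $\E(R(0,\wt\eta_r^o)^\rho)\ge\E(R_1^\rho)=\infty$ by Theorem~\ref{thm:confradlaw}. For ``$\Leftarrow$'' with $\rho\ge 0$ it suffices that $R(0,\wt\eta_r^o)\le r$, giving $R(0,\wt\eta_r^o)^\rho\le r^\rho$. The substantive case is $\rho\in(\rho_{\min},0)$, where $R(0,\wt\eta_r^o)^\rho=\prod_{j=1}^\tau\rho_j^\rho$ and one must bound its expectation. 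I would fix a small $\epsilon=\epsilon(r)>0$, set $\sigma:=\min\{i:R_i<\epsilon\}=N_\epsilon+1$, and split the product at $\sigma$. The factors with $j\le\sigma$ contribute a finite amount for every $\epsilon>0$: since $R_{\sigma-1}\ge\epsilon$ one has $R_\sigma^\rho\le\epsilon^\rho\rho_\sigma^\rho$, and $\E(\rho_\sigma^\rho)\le c_\rho$ because $\rho>\rho_{\min}$ (bounding $\E(\rho_i^\rho\,1(\sigma=i))\le c_\rho\,\P(\sigma=i)$ by conditioning on $\mathcal F_{i-1}$). For the factors with $j>\sigma$ one uses the growth theorem for univalent maps applied to the Riemann map $\phi_{i-1}\colon\D\to C_{i-1}$, for which $|\phi_{i-1}'(0)|=R_{i-1}<\epsilon$: if the rescaled fresh loop $\phi_{i-1}^{-1}(C_i)$ has diameter at most $s$ (hence lies in $\{|z|\le s\}$ as it contains $0$), then $C_i\subseteq\big(R_{i-1}\,s/(1-s)^2\big)\D\subseteq r\D$ as soon as $\epsilon$ is chosen small enough given $r$ and $1-s$, so $\tau\le i$. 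Thus beyond time $\sigma$ each step terminates the exploration unless the fresh loop has diameter $>s$, and iterated conditioning using $\E\big(\rho_i^\rho\,1(\text{no termination at }i)\mid\mathcal F_{i-1}\big)\le\E\big(R(0,\eta_1^o)^\rho\,1(\operatorname{diam}(C_1)>s)\big)=:\beta_s(\rho)$ bounds the tail contribution by $c_\rho/(1-\beta_s(\rho))$, a geometric series with ratio $\beta_s(\rho)$.

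The main obstacle is to make this last step effective, i.e.\ to prove that $\beta_s(\rho)<1$ for every $\rho\in(\rho_{\min},0)$ once $s<1$ is taken close enough to $1$ (the threshold $\epsilon$ then depending on $s$ and $r$; crucially $N_\epsilon$ still has all exponential moments, so the $j\le\sigma$ part is unaffected). Since $\beta_s$ increases as $\rho\downarrow\rho_{\min}$, this amounts to $\lim_{\rho\downarrow\rho_{\min}}\beta_s(\rho)=\E\big(R(0,\eta_1^o)^{\rho_{\min}}\,1(\operatorname{diam}(C_1)>s)\big)<1$ for $s$ near $1$ — a quantitative regularity statement saying that the outermost $\CLE_\kappa$ loop around $0$ cannot simultaneously have macroscopic diameter and the extremal small conformal radius behaviour responsible for the threshold $\rho_{\min}$. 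I would establish this by the same kind of Markov-property-plus-distortion argument as in Lemma~\ref{lem:superexp-cle-tail} and in Sections~\ref{sec:cle-k-small}--\ref{sec:cle-k-big}, showing that $\P(\operatorname{diam}(C_1)>s,\;R_1<t)$ decays faster than any fixed power of $t$ as $t\downarrow0$, with a constant tending to $0$ as $s\uparrow1$; this is the only place where one needs more than the explicit conformal radius law.

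Finally, the statement for $\bar\eta_r=\eta_{\tau+1}$ is immediate from the first part and the strong Markov property of nested $\CLE_\kappa$: conditionally on $\mathcal F_\tau$, the Riemann map $\phi_\tau\colon\D\to C_\tau$ pulls $\eta_{\tau+1}$ back to a fresh outermost $\CLE_\kappa$ loop, so $R(0,\bar\eta_r^o)=R(0,\wt\eta_r^o)\cdot\rho'$ with $\rho'$ independent of $\mathcal F_\tau$ and distributed as $R(0,\eta_1^o)$; hence $\E(R(0,\bar\eta_r^o)^\rho)=\E(R(0,\wt\eta_r^o)^\rho)\,c_\rho<\infty$ for $\rho>\rho_{\min}$.
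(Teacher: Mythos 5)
Your setup — i.i.d.\ increments $\rho_j=R_j/R_{j-1}$, the stopping time $\tau$, and the reduction of $\bar\eta_r$ to $\wt\eta_r$ via the spatial Markov property — is sound, and your two easy directions are correct. But the positive direction for $\rho\in(\rho_{\min},0)$ hinges on $\beta_s(\rho):=\E\big(R(0,\eta_1^o)^\rho\,1(\operatorname{diam}(C_1)>s)\big)<1$ for some $s<1$ (forced to be $<1$ by the factor $(1-s)^{-2}$ in the growth theorem), uniformly over $\rho\in(\rho_{\min},0)$, and this is a real gap that you defer rather than prove. It is not a routine fill-in: for any fixed $s<1$, $\P(\operatorname{diam}(C_1)>s)$ is bounded away from $0$, since the outermost loop around $0$ has diameter exceeding $1$ with positive probability, so you cannot make $\beta_s$ small just by pushing $s\uparrow1$. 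What you need is $\E\big(R(0,\eta_1^o)^{\rho_{\min}}1(\operatorname{diam}(C_1)>s)\big)<1$, a finiteness-and-smallness statement \emph{at the critical exponent}, i.e.\ that restricting to macroscopic diameter strictly improves the polynomial rate of $\P(R(0,\eta_1^o)<t)$ near the threshold and makes the resulting moment less than $1$. Nothing in the paper gives a joint conformal-radius/diameter tail for a single loop; Lemma~\ref{lem:superexp-cle-tail}, which you cite as a model, controls nesting counts around two points, not loop geometry. (Also a minor fixable issue in the $j\le\sigma$ piece: $\{\sigma=i\}$ is not $\mathcal F_{i-1}$-measurable since $R_i<\epsilon$ depends on $\rho_i$; the conditioning gives $\E(\rho_i^\rho 1(\sigma=i))\le c_\rho\,\P(\sigma\ge i)$, hence $\E(\rho_\sigma^\rho)\le c_\rho\,\E(\sigma)<\infty$, which still suffices.)

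The paper's proof takes a genuinely different route that never approaches the critical exponent. It works with every second nested loop (so its $\eta_1$ is the second loop around $0$, which a.s.\ does not touch $\partial\D$), sets $p_r:=\P(\phi_0^{-1}(\eta_1)\subseteq r\D)$ — which tends to $1$ as $r\uparrow1$ by a soft topological argument, no quantitative CLE input — defines events $A_n$ on which $n$ rescaled attempts fail and the $(n+1)$st succeeds, and then applies H\"older's inequality to bound $\E(R(0,\wt\eta_r^o)^\rho)$ by $\sum_n\E(R(0,\eta_1^o)^{p\rho})^{(n+1)/p}(1-p_r)^{n/q}p_r^{1/q}$, a geometric series whose ratio is $<1$ once $p>1$ is close enough to $1$ that $p\rho>\rho_{\min}$ and then $r$ is taken close enough to $1$. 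Two structural choices make this work where your scheme struggles: the termination test is $\phi_n^{-1}(\eta_{n+1})\subseteq r\D$ combined with the \emph{Schwarz lemma} ($\phi_n(r\D)\subseteq r\D$ unconditionally since $\phi_n:\D\to C_n\subseteq\D$ fixes $0$), so no smallness of $R_n$ is needed and the stopping time $\sigma$ disappears; and the H\"older decoupling separates the conformal radius moment (computed slightly away from $\rho_{\min}$) from the failure probability, which is exactly the role your unproven bound $\beta_s(\rho)<1$ would have to play. The paper then passes from $r$ near $1$ to all $r$ by the submultiplicativity $\E(R(0,\wt\eta_{r^k}^o)^\rho)\le\E(R(0,\wt\eta_r^o)^\rho)^k$ (Schwarz lemma again) — a step you do not need, since you tune $\epsilon$ to $r$.
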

\begin{proof}
	The second assertion of the lemma is an immediate consequence of the first assertion by the Markov property of $\CLE_\kappa$ and Theorem \ref{thm:confradlaw}.
	When $\rho\le -1+2/\kappa+3\kappa/32$ or $\rho\ge 0$, the claim also follows directly from Theorem \ref{thm:confradlaw}, so suppose that $\rho\in (-1+2/\kappa+3\kappa/32,0)$. We first prove the result for a single $r\in (0,1)$ (sufficiently close to $1$) and will at the end deduce the general case. Let $\eta_i$ be the $2i$-th loop surrounding $0$ (counted from the boundary of $\D$) with the convention that $\eta_0$ parametrizes $\partial \D$. Since $\eta_1\subseteq \D$ a.s.\ (i.e., the loop $\eta_1$ a.s.\ does not touch the boundary of the domain),
	\begin{align}
		\label{eq:secondloopboundary}
		p_r := \P(\eta_1 \subseteq r\D)\to 1\quad\text{as $r\uparrow 1$}\quad\text{for $n\ge 0$}\;.
	\end{align}
	Let $C_i$ be the connected component of $\eta_i^o$ containing $0$ and let $\phi_i\colon \D\to C_i$ be the unique conformal transformation with $\phi_i(0)=0$ and $\phi'_i(0)>0$. For each $n\in\N$ define an event $A_n$ as follows
	\begin{align*}
		A_n =\left\{ \phi_i(\eta_{i+1})\not\subseteq r\D \text{ for $i=0,\dots,n-1$ and } \phi_n(\eta_{n+1})\subseteq r\D \right\}\;.
	\end{align*}
	By the spatial Markov property of $\CLE_\kappa$, $\P(A_n)=(1-p_r)^n p_r$. In particular, almost surely $A_n$ occurs for some $n\ge 0$ when $p_r>0$. Moreover, by the Schwarz lemma, on the event $A_n$ we have the inclusion $\wt{\eta}_r^o \supseteq \eta^o_{n+1}$. Hence when $1/p+1/q=1$,
	\begin{align*}
		\E(R(0,\wt{\eta}_r^o)^\rho)&\le \sum_{n\ge 0} \E(R(0,\eta_{n+1}^o)^\rho;A_n) \le \sum_{n\ge 0} \E(R(0,\eta_{n+1})^{\rho p})^{1/p} \P(A_n)^{1/q} \\
		&= \sum_{n\ge 0} \E(R(0,\eta_1)^{p\rho})^{(n+1)/p} (1-p_r)^{n/q} p_r^{1/q}\;.
	\end{align*}
	By Theorem \ref{thm:confradlaw}, we can take $p>1$ sufficiently close to $1$ such that $\E(R(0,\eta_1)^{p\rho})<\infty$ and then by \eqref{eq:secondloopboundary} we take $r\in (0,1)$ sufficiently close to $1$ such that
	\begin{align*}
		\E(R(0,\eta_1)^{p\rho})^{1/p} (1-p_r)^{1/q} < 1\;.
	\end{align*}

	Thus $\E(R(0,\wt{\eta}_r^o)^\rho)<\infty$ for $r$ sufficiently close to $1$. We now prove that $\E(R(0,\wt{\eta}_{r^k}^o)^\rho)\le \E(R(0,\wt{\eta}_r^o)^\rho)^k$ for $k\ge 1$ by induction from which the general statement directly follows. The $k=1$ case is trivial. For the induction step, let $C_r$ be the connected component of $\wt{\eta}_r$ containing $0$ and let $\psi_r\colon \D\to C_r$ be the unique conformal transformation with $\psi_r(0)=0$ and $\psi_r'(0)>0$. By the Schwarz lemma $|\psi(z)|\le r|z|$ for $z\in \D$ so that $\psi_r(r^{k-1}\D) \subseteq r^k\D$ and hence by the spatial Markov property of $\CLE_\kappa$,
	\begin{align*}
		\E(R(0,\wt{\eta}_{r^k}^o)^\rho) \le \E(R(0,\wt{\eta}_{r^{k-1}}^o)^\rho)\E(R(0,\wt{\eta}_{r}^o)^\rho)
	\end{align*}
	since $\rho\le 0$. The claim follows.
\end{proof}

\subsection{Properties of the nesting statistic}
\label{sec:cle-nesting-stat}

In this short section, we list some key properties that the function $\Phi^{{\sigmab},\kappa}_{D}$ introduced in Theorem \ref{thm:loopexpconv} satisfies. In the later sections on LQG, these properties (rather than the precise definition) of the function $\Phi^{{\sigmab},\kappa}_{D}$ will be used. In this section, we will make use of the terminology from Section \ref{sec:cle-explorations}, in particular the notion of a CPI. There are completely analogous statements for the two lemmas in the $\kappa=4$ case which we omit since they will not be used in this paper and the description of the CPIs is slightly different in that case.

\begin{lemma}
	\label{lem:phi-axioms}
	Fix $\kappa\in (8/3,8)$, $\bm{\sigma}=(\sigma_B\colon \emptyset \neq B\subseteq A)$ and consider a conformal transformation $\psi\colon D\to D'$ for $D$ and $D'$ simply connected strict subsets of $\C$. Let $\Gamma$ be a nested $\CLE_\kappa$ in $D$ and write $\Gamma_0$ for the collection of its outermost loops. For $\bm{z}\in D^A$ (with $z_i\neq z_j$ for $i\neq j$) and $U\subseteq D$, we write $\mathcal{I}(U)=\{i\in A\colon z_i\in U\}$. Then
	\begin{align}
		\label{eq:phi-axioms-no-cpi}
		\begin{split}
			\Phi^{{\sigmab},\kappa}_{D'}(\psi(z_i)\,:\,i\in A) &= \prod_{i\in A} |\psi'(z_i)|^{\rho^\kappa_{\sigma_i}}\cdot \Phi^{{\sigmab},\kappa}_D(\bm{z})\;,\\
			\Phi^{{\sigmab},\kappa}_D(\bm{z}) &= \E\left(\, \prod_{\eta\in \Gamma_0} e^{\sigma_{\mathcal{I}(\eta^o)}} \prod_{U\in \mathfrak{U}(\eta^o)} \Phi^{{\sigmab}|_{\mathcal{I}(U)},\kappa}_{U}(\bm{z}|_{\mathcal{I}(U)}) \right)
		\end{split}
	\end{align}
	where $\mathfrak{U}(V)$ denotes the set of connected components of an open set $V$.
	
	Now suppose that $\kappa\neq 4$ and consider a CPI $\lambda$ (with some asymmetry parameter) in $\Gamma$ from one prime end $w_0$ to another prime end $w_\infty$ of $D$. Let $L_\lambda$ be the collection of all loops in $\Gamma_0$ that $\lambda$ intersects and let $C_\lambda$ be the collection of all open complementary components of the union of $\lambda$ with all loops in $L_\lambda$. Then
	\begin{align}
		\label{eq:phi-axiom-cpi}
		\Phi^{{\sigmab},\kappa}_D(\bm{z}) &= \E\left(\, \prod_{\eta\in L_\lambda} e^{{\sigma}_{\mathcal{I}(\eta^o) } } \cdot \prod_{U\in C_\lambda} \Phi^{{\sigmab}|_{\mathcal{I}(U)},\kappa}_{U}(\bm{z}|_{\mathcal{I}(U)}) \right)\;.
	\end{align}
\end{lemma}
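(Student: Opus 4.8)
\textbf{Proof plan for Lemma \ref{lem:phi-axioms}.}

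The plan is to deduce all three identities directly from the definition of $\Phi^{\sigmab,\kappa}_D$ as the limit in Theorem \ref{thm:loopexpconv}, using the conformal covariance and Markov property of $\CLE_\kappa$ together with dominated convergence justified by the uniform bounds at the end of Theorem \ref{thm:loopexpconv}. For the first (conformal covariance) identity, I would fix the approximating quantities: for $\z\in D^A$ and $\epsilon>0$ let $N^\epsilon_B(\z)$ be as in Theorem \ref{thm:loopexpconv}. Pushing a nested $\CLE_\kappa$ in $D$ forward under $\psi$ gives a nested $\CLE_\kappa$ in $D'$, and a loop $\eta$ surrounds $z_i$ iff $\psi(\eta)$ surrounds $\psi(z_i)$, so the combinatorial event ``$\eta$ surrounds exactly the points indexed by $B$'' is preserved. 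The only subtlety is the threshold condition $R(z_i,\eta^o)\geq\epsilon$: under $\psi$ the conformal radius transforms as $R(\psi(z_i),\psi(\eta)^o)=|\psi'(z_i)|R(z_i,\eta^o)(1+o(1))$ as the loop shrinks to $z_i$, but the error is not uniform. The clean way around this is to not compare at fixed $\epsilon$ but instead apply the theorem on both sides: $\prod_i\epsilon^{\rho_{\sigma_i}}/c^\kappa_{\sigma_i}\cdot\E(\exp(\sum_B\sigma_B N^\epsilon_B(\psi(\z))))$ computed in $D'$ converges to $\Phi^{\sigmab,\kappa}_{D'}(\psi(\z))$, while the same expression is, via the pushforward, $\prod_i\epsilon^{\rho_{\sigma_i}}/c^\kappa_{\sigma_i}\cdot\E(\exp(\sum_B\sigma_B \widetilde N^\epsilon_B(\z)))$ where $\widetilde N^\epsilon_B$ counts loops in $D$ with $R(\psi(z_i),\psi(\eta)^o)\geq\epsilon$. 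Rescaling $\epsilon\mapsto|\psi'(z_i)|\epsilon$ inside each factor (using that the threshold is applied to each point separately) converts $\widetilde N^\epsilon_B$ into the standard $N^\epsilon_B$ up to a multiplicative error in the conformal radii that vanishes; tracking the powers of $\epsilon$ produces exactly the factor $\prod_i|\psi'(z_i)|^{\rho_{\sigma_i}}$. A cleaner alternative, which I would prefer to write, is to recall that the right-hand side limit in \eqref{eq:phi-limit} is $\E(\prod_i e^{\sigma_i}R(z_i,\eta_i^o)^{\rho_{\sigma_i}}\exp(\sum_{\#B>1}\sigma_B N^0_B(\z)))$, and here $N^0_B$ (for $\#B>1$) is genuinely conformally invariant while $R(z_i,\eta_i^o)$ transforms by $|\psi'(z_i)|$ exactly; this gives the first identity with no limiting argument at all once one knows the two expressions for $\Phi$ agree.

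For the second identity, the idea is the Markov/restriction property of nested $\CLE_\kappa$: conditionally on the outermost loops $\Gamma_0$, inside each complementary component $U$ of $\Gamma_0$ we have an independent nested $\CLE_\kappa$. A loop $\eta\in\Gamma$ either is one of the outermost loops or lies strictly inside one of them. I would organize the approximating sum $\sum_B\sigma_B N^\epsilon_B(\z)$ according to this split: a loop that surrounds $z_i$ must, if it is not outermost, live inside the outermost loop surrounding $z_i$. Decomposing $N^\epsilon_B(\z)=\sum_{\eta\in\Gamma_0, \mathcal I(\eta^o)\supseteq B}(\text{contribution from inside }\eta)+\mathbf 1(\text{one of the outermost loops has }\mathcal I(\eta^o)=B)$ and taking $\epsilon\downarrow 0$, the outermost-loop indicator contributes the $\prod_{\eta\in\Gamma_0}e^{\sigma_{\mathcal I(\eta^o)}}$ factor (with the convention $\sigma_\emptyset=0$ killing the loops surrounding no marked points), while conditionally on $\Gamma_0$ the inside-$U$ contributions are, by the Markov property and the definition of $\Phi^{\sigmab|_{\mathcal I(U)},\kappa}_U$, exactly $\prod_{U\in\mathfrak U(\eta^o)}\Phi^{\sigmab|_{\mathcal I(U)},\kappa}_U(\z|_{\mathcal I(U)})$ after the normalization by $\epsilon^{\rho_{\sigma_i}}/c^\kappa_{\sigma_i}$ is absorbed. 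Here one must be careful that only the innermost relevant threshold matters: I would condition on $\Gamma_0$ first, apply Theorem \ref{thm:loopexpconv} inside each $U$ (the points $z_i$ with $i\in\mathcal I(U)$ are automatically separated by a positive distance depending on the configuration, so the uniform bounds apply), and use dominated convergence for the outer expectation — the dominating function being a product over $\eta\in\Gamma_0$ of $e^{\sigma_{\mathcal I(\eta^o)}\vee 0}$ times the universal bounds on $\Phi_U$, whose integrability against the law of $\Gamma_0$ follows from the same Hölder-plus-Lemma \ref{lem:superexp-cle-tail} argument used in the proof of Theorem \ref{thm:loopexpconv}.

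The third identity \eqref{eq:phi-axiom-cpi} is the same argument with the outermost loops replaced by the ``trunk'' of a CPI $\lambda$: by the defining property (iii) of a CPI in Section \ref{sec:cle-explorations}, conditionally on $\lambda$ together with all loops $L_\lambda$ of $\Gamma_0$ that $\lambda$ intersects, the conditional law of $\Gamma$ inside each complementary component $U\in C_\lambda$ is an independent nested $\CLE_\kappa$ (for $U$ a component of $D\setminus(\lambda\cup\bigcup L_\lambda)$ whose interior does not contain part of $\lambda$) — and the loops of $\Gamma_0$ not in $L_\lambda$, i.e.\ those entirely inside some $U$, together with everything nested below, reconstitute nested CLEs in the $U$'s. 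Since $\lambda$ does not enter the interior of any loop, a point $z_i$ lies in exactly one $U\in C_\lambda$, so $\mathcal I$ is well-defined on $C_\lambda$. Replaying the decomposition of $\sum_B\sigma_B N^\epsilon_B(\z)$ — outermost loops now split as $L_\lambda$ (contributing $e^{\sigma_{\mathcal I(\eta^o)}}$) versus loops inside the $U$'s (which contribute via the nested CLE in $U$ and hence via $\Phi^{\sigmab|_{\mathcal I(U)},\kappa}_U$) — and taking $\epsilon\downarrow 0$ with the same domination gives the claimed formula. The main obstacle is bookkeeping: making the ``decompose the loop count according to which outermost loop / CPI-component contains it'' step fully rigorous while simultaneously interchanging the $\epsilon\downarrow 0$ limit with the outer expectation, i.e.\ pinning down a dominating function and verifying its integrability uniformly over the random $\Gamma_0$ (resp.\ $(\lambda, L_\lambda)$). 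This is exactly the kind of estimate already carried out in the proof of Theorem \ref{thm:loopexpconv}, so I would set it up to cite that argument rather than repeat it, and keep the present proof at the level of ``apply Markov property, apply Theorem \ref{thm:loopexpconv} inside each piece, dominate, pass to the limit.''
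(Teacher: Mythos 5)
Your proof plan is correct and largely parallels the paper's, but takes a slightly longer detour for the second and third identities. For the first identity your ``cleaner alternative'' is exactly what the paper does: the right-hand side of \eqref{eq:phi-limit} is $\E\bigl(\prod_{i}e^{\sigma_i}R(z_i,\eta_i^o)^{\rho_{\sigma_i}^\kappa}\exp(\sum_{\#B>1}\sigma_B N^0_B(\z))\bigr)$, in which $N^0_B$ for $\#B>1$ is conformally invariant and $R(z_i,\eta_i^o)$ transforms exactly by $|\psi'(z_i)|$, so no limit passage is needed.

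For the second and third identities, however, you revert to the $\epsilon$-approximations $N^\epsilon_B$, decompose the weighted loop count according to outermost loops (resp.\ CPI components), and then pass $\epsilon\downarrow 0$ inside the conditional expectation with a dominated convergence argument whose dominating function must be re-derived via H\"older and Lemma \ref{lem:superexp-cle-tail}. This works, but the paper instead stays with the already-established closed-form limit expression and simply computes conditional expectations. Concretely, for the second identity the paper conditions the closed-form expression on $\Gamma_0$: the factors $e^{\sigma_i}R(z_i,\eta_i^o)^{\rho^\kappa_{\sigma_i}}$ for $i$ with $\eta_i\in\Gamma_0$ are $\Gamma_0$-measurable, and the remaining factors are handled by the Markov property of nested $\CLE_\kappa$ in each complementary component $U$, where Theorem \ref{thm:loopexpconv} already identifies the conditional expectation as $\Phi^{\sigmab|_{\mathcal I(U)},\kappa}_U$. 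Collecting the one-point and multi-point cases gives $\prod_{\eta\in\Gamma_0}e^{\sigma_{\mathcal I(\eta^o)}}\prod_{U\in\mathfrak U(\eta^o)}\Phi^{\sigmab|_{\mathcal I(U)},\kappa}_U(\z|_{\mathcal I(U)})$ as the conditional expectation, and taking unconditional expectations finishes the claim; no $\epsilon$-limit or re-domination is needed. For the third identity the paper conditions the same display on $(\lambda,\Gamma_0)$ (which equals conditioning on $\Gamma_0$ by conditional independence), applies the tower property to condition on $(\lambda,L_\lambda)$, and uses the conditional law of $\Gamma_0$ given $(\lambda,L_\lambda)$ together with the already-proved second identity. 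So your ``decompose $N^\epsilon_B$ and re-dominate'' step, while valid, is effectively re-proving part of Theorem \ref{thm:loopexpconv}; the shorter route is to use its conclusion (the closed-form expression is finite and equal to $\Phi$) and never leave the $\epsilon=0$ picture.
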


\begin{proof}
	The first equality of \eqref{eq:phi-axioms-no-cpi} is satisfied by conformal invariance of $\CLE_\kappa$ and since we have $R(z_i,U)|\psi'(z_i)|=R(\psi(z_i),\psi(U))$ for any set $U\subseteq D$ which is a disjoint union of simply connected open sets (this follows directly from the definition of conformal radii). The second equality will follow by conditioning on the outermost loops $\Gamma_0$ of the nested $\CLE_\kappa$. In the following, we use the notation appearing in Theorem \ref{thm:loopexpconv} and  we will split the outermost CLE loops into those surrounding exactly one point and those surrounding at least two points. We obtain
	\begin{align}
		\label{eq:phi-outer-gamma}
		\begin{split}
		&\E\left( \prod_{i\in A} e^{\sigma_i}R(z_i,\eta_i^o)^{\rho_{\sigma_i}^\kappa} \prod_{B\subseteq A\colon \#B> 1}  e^{ \sigma_B N^0_B(\z)} \mid \Gamma_0 \right) \\
		&\qquad=\prod_{i\in A\colon \eta_i\in \Gamma_0} e^{\sigma_i} R(z_i,\eta_i^o)^{\rho^\kappa_{\sigma_i}} \cdot \left(\, \prod_{\eta\in \Gamma_0\setminus \{\eta_i\colon i\in A\}} e^{\sigma_{\mathcal{I}(\eta^o)}} \prod_{U\in \mathfrak{U}(\eta^o)} \Phi^{{\sigmab}|_{\mathcal{I}(U)},\kappa}_{U}(\bm{z}|_{\mathcal{I}(U)}) \right) \\
		&\qquad= \prod_{\eta\in \Gamma_0} e^{\sigma_{\mathcal{I}(\eta^o)}} \prod_{U\in \mathfrak{U}(\eta^o)} \Phi^{{\sigmab}|_{\mathcal{I}(U)},\kappa}_{U}(\bm{z}|_{\mathcal{I}(U)})
		\end{split}
	\end{align}
	where the second equality follows from the definitions. The second equality in \eqref{eq:phi-axioms-no-cpi} thus follows by taking expectations on both sides and by using the definition of $\Phi^{\bm{\sigma},\kappa}_D(\bm{z})$.

	The statement \eqref{eq:phi-axiom-cpi} is obtained as follows. First of all, note that $\Gamma$ and $\lambda$ are conditionally independent given $\Gamma_0$. Therefore
	\begin{align*}
		\E\left( \prod_{i\in A} e^{\sigma_i}R(z_i,\eta_i^o)^{\rho_{\sigma_i}^\kappa} \prod_{B\subseteq A\colon \#B> 1}  e^{ \sigma_B N^0_B(\z)} \mid (\lambda,\Gamma_0) \right) = \prod_{\eta\in \Gamma_0} e^{\sigma_{\mathcal{I}(\eta^o)}} \prod_{U\in \mathfrak{U}(\eta^o)} \Phi^{{\sigmab}|_{\mathcal{I}(U)},\kappa}_{U}(\bm{z}|_{\mathcal{I}(U)})
	\end{align*}
	by \eqref{eq:phi-outer-gamma}. By the tower property we get
	\begin{align*}
		&\E\left( \prod_{i\in A} e^{\sigma_i}R(z_i,\eta_i^o)^{\rho_{\sigma_i}^\kappa} \prod_{B\subseteq A\colon \#B> 1}  e^{ \sigma_B N^0_B(\z)} \mid (\lambda,L_\lambda) \right) \\
		&\qquad= \E\left( \prod_{\eta\in \Gamma_0} e^{\sigma_{\mathcal{I}(\eta^o)}} \prod_{U\in \mathfrak{U}(\eta^o)} \Phi^{{\sigmab}|_{\mathcal{I}(U)},\kappa}_{U}(\bm{z}|_{\mathcal{I}(U)}) \mid (\lambda,L_\lambda) \right)\;.
	\end{align*}
	By Section \ref{sec:cle-explorations}, the conditional law of $\Gamma_0$ given $(\lambda,L_\lambda)$ is obtained by taking the union of $L_\lambda$ together with an independent non-nested $\CLE_\kappa$ $\Gamma_U$ within each $U\in C_\lambda':= C_\lambda\setminus \cup_{\eta \in L_\lambda} \mathfrak{U}(\eta^o)$. We deduce that
	\begin{align*}
		&\E\left(\prod_{i\in A} e^{\sigma_i}R(z_i,\eta_i^o)^{\rho_{\sigma_i}^\kappa} \prod_{B\subseteq A\colon \#B> 1}  e^{ \sigma_B N^0_B(\z)} \mid (\lambda,L_\lambda) \right) \\
		&\ = \prod_{\eta\in L_\lambda} e^{\sigma_{\mathcal{I}(\eta^o)}}  \prod_{U\in \mathfrak{U}(\eta^o)} \Phi^{{\sigmab}|_{\mathcal{I}(U)},\kappa}_{U}(\bm{z}|_{\mathcal{I}(U)}) \cdot \prod_{U\in C_\lambda'} \mathbb{E}\left( \prod_{\eta \in \Gamma_U} e^{\sigma_{\mathcal{I}(\eta^o)}} \prod_{V\in \mathfrak{U}(\eta^o)} \Phi^{{\sigmab}|_{\mathcal{I}(V)},\kappa}_{V}(\bm{z}|_{\mathcal{I}(V)}) \mid \Gamma_U\right) \\
		&\ = \prod_{\eta\in L_\lambda} e^{{\sigma}_{\mathcal{I}(\eta^o) } } \cdot \prod_{U\in C_\lambda} \Phi^{{\sigmab}|_{\mathcal{I}(U)},\kappa}_{U}(\bm{z}|_{\mathcal{I}(U)})
	\end{align*}
	where the final equality follows from the second part of \eqref{eq:phi-axioms-no-cpi} applied to $\Phi^{{\sigmab}|_{\mathcal{I}(U)},\kappa}_{U}(\bm{z}|_{\mathcal{I}(U)})$. The claim \eqref{eq:phi-axiom-cpi} now follows by taking the expectation on both sides.
\end{proof}

\begin{lemma}
	Consider $\kappa\in (8/3,8)\setminus\{4\}$, $\beta\in [-1,1]$ and a simply connected domain $D\subsetneq \C$. We also fix two distinct prime ends $w_0$ and $w_\infty$. Let $(\Gamma,\lambda)$ be a coupling of a $\CLE_\kappa$ in $D$ and a CPI with asymmetry parameter $\beta$ from $w_0$ to $w_\infty$ in $D$.
	
	Similarly, we let $(\Gamma',\lambda')$ be the coupling of a $\CLE_\kappa^{\bm{\sigma}}(\z)$ with a corresponding CPI from $w_0$ to $w_\infty$ in $D$ (see Definition \ref{def:cle}).
	Recall the notation of $(L_\lambda,C_\lambda)$ from Lemma \ref{lem:phi-axioms} and define $(L_{\lambda'},C_{\lambda'})$ analogously. Also recall the other notation from this lemma. Then the Radon-Nikodym derivative of $(\lambda',L_{\lambda'})$ with respect to $(\lambda,L_\lambda)$ is given by
	\begin{align*}
	\frac{1}{\Phi^{{\sigmab},\kappa}_{D}(\z)}\, \prod_{\eta\in L_\lambda} e^{{\sigma}_{\mathcal{I}(\eta^o) } }  \cdot \prod_{U\in C_\lambda} \Phi^{{\sigmab}|_{\mathcal{I}(U)},\kappa}_{U}(\bm{z}|_{\mathcal{I}(U)}) \;.
	\end{align*}
	The conditional law of $\Gamma'$ given $(\lambda',L_{\lambda'})$ is given by taking an independent $\CLE^{\bm{\sigma}|_{\mathcal{I}(U)}}_\kappa(z|_{\mathcal{I}(U)})$ for each $U\in C_{\lambda'}$ together with the loops $L_{\lambda'}$.
	\label{lem:xi-law}
\end{lemma}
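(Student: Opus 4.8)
The plan is to prove both assertions of Lemma \ref{lem:xi-law} by reducing them to the definition of $\CLE_\kappa^{\bm{\sigma}}(\z)$ (Definition \ref{def:cle}) together with the Markov/conformal properties of the nesting statistic collected in Lemma \ref{lem:phi-axioms}, and the description of CPIs within $\CLE_\kappa$ from Section \ref{sec:cle-explorations}. The overall strategy mirrors the proof of \eqref{eq:phi-axiom-cpi}: we first express the law of $(\lambda,L_\lambda)$ under $\CLE_\kappa$ in terms of the law of the outermost loops $\Gamma_0$ of $\CLE_\kappa$, then use the Radon-Nikodym derivative of Definition \ref{def:cle}(i) to pass to $\CLE_\kappa^{\bm{\sigma}}(\z)$, and finally identify the conditional law of the decorated picture.

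First I would set up notation: by Definition \ref{def:cle}(iii), the CPI $\lambda'$ within $\Gamma'\sim\CLE_\kappa^{\bm\sigma}(\z)$ is characterized by the requirement that its conditional law given the outermost loops $\Gamma_0'$ of $\Gamma'$ agrees with the conditional law of a CPI within $\CLE_\kappa$ given its outermost loops. Since the law of the non-nested $\CLE_\kappa^{\bm\sigma}(\z)$ (i.e.\ the law of $\Gamma_0'$) is absolutely continuous with respect to that of a non-nested $\CLE_\kappa$ (i.e.\ $\Gamma_0$) with Radon-Nikodym derivative
\[
\frac{1}{\Phi^{\bm\sigma,\kappa}_D(\z)}\prod_{\eta\in\Gamma_0}e^{\sigma_{\mathcal I(\eta^o)}}\prod_{U\in\mathfrak U(\eta^o)}\Phi^{\bm\sigma|_{\mathcal I(U)},\kappa}_U(\z|_{\mathcal I(U)}),
\]
and since the conditional law of the CPI given the outermost loops is the same in both models, it follows that the joint law of $(\Gamma_0',\lambda')$ is absolutely continuous with respect to that of $(\Gamma_0,\lambda)$ with the \emph{same} Radon-Nikodym derivative (which is a function of $\Gamma_0$ alone). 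Now $(L_{\lambda'},C_{\lambda'})$ is a measurable function of $(\Gamma_0',\lambda')$, and likewise $(L_\lambda,C_\lambda)$ of $(\Gamma_0,\lambda)$; hence the law of $(\lambda',L_{\lambda'})$ is absolutely continuous with respect to the law of $(\lambda,L_\lambda)$ with Radon-Nikodym derivative equal to the conditional expectation of the above derivative given $(\lambda,L_\lambda)$. That conditional expectation is computed exactly as in the displayed computation in the proof of Lemma \ref{lem:phi-axioms}: conditionally on $(\lambda,L_\lambda)$, the law of $\Gamma_0$ is obtained by adjoining to $L_\lambda$ an independent non-nested $\CLE_\kappa$ in each $U\in C_\lambda':=C_\lambda\setminus\bigcup_{\eta\in L_\lambda}\mathfrak U(\eta^o)$, and applying the second part of \eqref{eq:phi-axioms-no-cpi} inside each such $U$ turns the expectation into $\prod_{\eta\in L_\lambda}e^{\sigma_{\mathcal I(\eta^o)}}\prod_{U\in C_\lambda}\Phi^{\bm\sigma|_{\mathcal I(U)},\kappa}_U(\z|_{\mathcal I(U)})$, divided by $\Phi^{\bm\sigma,\kappa}_D(\z)$. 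This gives the first assertion.

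For the second assertion — the conditional law of $\Gamma'$ given $(\lambda',L_{\lambda'})$ — I would argue as follows. Under $\CLE_\kappa$, the conditional law of $\Gamma$ given $(\lambda,L_\lambda)$ is: adjoin $L_\lambda$, sample an independent non-nested $\CLE_\kappa$ in each $U\in C_\lambda'$, and then fill in the nested structure inside every complementary component of the resulting outermost loop collection by independent nested $\CLE_\kappa$'s (Section \ref{sec:cle-explorations}). Passing to $\CLE_\kappa^{\bm\sigma}(\z)$ via the Radon-Nikodym derivative above, and using that this derivative factorizes over $\Gamma_0$ in the manner of Definition \ref{def:cle}(i) (the loops in $L_{\lambda'}$ and the components $U\in C_{\lambda'}$ decouple), one reads off that given $(\lambda',L_{\lambda'})$ one must: adjoin $L_{\lambda'}$, sample an independent non-nested $\CLE_\kappa^{\bm\sigma|_{\mathcal I(U)}}(\z|_{\mathcal I(U)})$ in each $U\in C_{\lambda'}$, and then iterate the nested construction of Definition \ref{def:cle}(ii) inside the complementary components — which is precisely an independent nested $\CLE_\kappa^{\bm\sigma|_{\mathcal I(U)}}(\z|_{\mathcal I(U)})$ in each $U\in C_{\lambda'}$. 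Some care is needed because a component $U\in C_{\lambda'}$ may be a complementary component of a loop $\eta\in L_{\lambda'}$ (so $U\in\mathfrak U(\eta^o)$) rather than a component of $C_{\lambda'}'$; in that case the nested $\CLE_\kappa^{\bm\sigma}$ structure in $U$ is already part of the definition of the loop ensemble, and the claim still holds by Definition \ref{def:cle}(ii).

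The main obstacle, I expect, is the bookkeeping in the second assertion: carefully disentangling the roles of the outermost loop collection $\Gamma_0'$, the loops $L_{\lambda'}$ that $\lambda'$ touches, and the various complementary components (those of the $L_{\lambda'}$ loops versus the genuinely new components $C_{\lambda'}'$ opened up by the trunk of $\lambda'$), and checking that the product structure of the Radon-Nikodym derivative in Definition \ref{def:cle}(i) is compatible with the way Section \ref{sec:cle-explorations} resamples $\Gamma_0$ given $(\lambda,L_\lambda)$. Once it is clear that the derivative is a product over the pieces $\{L_{\lambda'}\text{-loops}\}\cup C_{\lambda'}$ and that the resampling is likewise a product over these pieces, the identification is immediate; but verifying this compatibility — essentially that the nesting statistic weighting is "local" enough to commute with the CPI exploration — is where the argument needs to be spelled out. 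The conformal covariance statement \eqref{eq:phi-axioms-no-cpi} is used throughout to transfer everything to the disk $\D$ and to match $R(z_i,\cdot)$ factors under the conformal maps implicit in the definitions of the non-nested $\CLE_\kappa^{\bm\sigma}$ in subdomains.
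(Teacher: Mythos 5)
Your proof is correct and follows essentially the same route as the paper's: you derive the Radon-Nikodym derivative of $(\Gamma_0',\lambda')$ with respect to $(\Gamma_0,\lambda)$ from Definition \ref{def:cle}(i) and (iii), marginalize by taking the conditional expectation given $(\lambda,L_\lambda)$, and evaluate that expectation exactly as in the proof of \eqref{eq:phi-axiom-cpi}; for the second assertion you likewise reweight the conditional law of $\Gamma_0$ given $(\lambda,L_\lambda)$ from Section \ref{sec:cle-explorations}. Your remark about distinguishing $U\in\mathfrak U(\eta^o)$ for $\eta\in L_{\lambda'}$ from $U\in C_{\lambda'}'$ is a useful clarification of a point the paper leaves implicit.
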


\begin{proof}
	By definition, the Radon-Nikodym derivative of $\Gamma'_0$ with respect to $\Gamma_0$ is given by
	\begin{align*}
		\frac{1}{\Phi^{\bm{\sigma},\kappa}_D(\bm{z})}\, \prod_{\eta\in \Gamma_0} e^{\sigma_{\mathcal{I}(\eta^o)}} \prod_{U\in \mathfrak{U}(\eta^o)} \Phi^{{\sigmab}|_{\mathcal{I}(U)},\kappa}_{U}(\bm{z}|_{\mathcal{I}(U)}).
	\end{align*}
	Therefore the Radon-Nikodym derivative of $(\lambda',L_{\lambda'})$ with respect to $(\lambda,L_\lambda)$ equals
	\begin{align}
		\label{eq34}
		\begin{split}
		&\frac{1}{\Phi^{\bm{\sigma},\kappa}_D(\bm{z})}\,\E\left( \prod_{\eta\in \Gamma_0} e^{\sigma_{\mathcal{I}(\eta^o)}} \prod_{U\in \mathfrak{U}(\eta^o)} \Phi^{{\sigmab}|_{\mathcal{I}(U)},\kappa}_{U}(\bm{z}|_{\mathcal{I}(U)}) \mid (\lambda,L_\lambda) \right) \\
		&\qquad= \frac{1}{\Phi^{{\sigmab},\kappa}_{D}(\z)}\, \prod_{\eta\in L_\lambda} e^{{\sigma}_{\mathcal{I}(\eta^o) } } \cdot \prod_{U\in C_\lambda} \Phi^{{\sigmab}|_{\mathcal{I}(U)},\kappa}_{U}(\bm{z}|_{\mathcal{I}(U)})
		\end{split}
	\end{align}
	where the equality follows from Section \ref{sec:cle-explorations} as in Lemma \ref{lem:phi-axioms}. To see the final part of the lemma, by the definition of a nested $\CLE_\kappa^{\bm{\sigma}}(\bm{z})$, it suffices to describe the conditional law of $\Gamma'_0$ given the process $(\lambda',L_{\lambda'})$. The statement in this case is a consequence again of reweighting the conditional law of $\Gamma_0$ given $(\lambda,L_\lambda)$ described in Section \ref{sec:cle-explorations}.
\end{proof}

\subsection{Spatial independence for simple CLE}
\label{sec:cle-k-small}

The aim of this section will be to establish Theorem \ref{thm:cle-indep} in the case $\kappa\in (8/3,4]$. For this, we rely on the construction of nested $\CLE_\kappa$ using the Brownian loop soup. We thus begin by quickly reviewing this construction. Consider $D\subseteq \C$ and define the following infinite and non-atomic measure  on $C([0,\infty),\C)$ by
\begin{align*}
	\mu_D = \int_D dz \int_0^\infty dt\,\frac{p_t(z,z)}{t}\, P^t_z(\,\cdot\,; \omega([0,t])\subseteq D)\;.
\end{align*}
Here $p_t(z,w)= \exp(-|z-w|^2/(2t))/(2\pi t)$ is the heat kernel, $P^t_z$ is the law of a Brownian bridge in $\C$ from to $z$ to $z$ of duration $t$, i.e., it is the law of $z+B_{\,\cdot\,\wedge\, t} - (1\wedge (\cdot/t) )\cdot B_t$ where $B$ is a two-dimensional standard Brownian motion and $\omega$ is the coordinate process on $C([0,\infty),\C)$. The measure $\mu_D$ is called the \emph{Brownian loop measure} and was introduced by Lawler and Werner in \cite{lawler-werner-soup}.

 Two key properties of the Brownian loop measure are its conformal invariance and its restriction property. To state them properly, let $\sim$ be an equivalence relation on $C(\partial \D,\C)$ such that $\eta\sim\wt\eta$ if there is an orientation-preserving homeomorphism $\psi:\partial\D\to\partial\D$ such that $\eta(s)=\wt\eta(\psi(s))$ for all $s\in\partial\D$. Then $C(\partial \D,\C)/\!\sim$ is the set of loops on $\C$ viewed modulo reparametrization of time. The measure $\mu_D$ induces a measure $\wt{\mu}_D$ on $C(\partial \D,\C)/\!\sim$. Conformal invariance means that $([\ell]\mapsto [\phi\circ \ell])_*\wt{\mu}_D = \wt{\mu}_{D'}$ for every conformal transformation $\phi\colon D\to D'$, and the restriction property means that $\wt{\mu}_D$ restricted to the loops that are contained in $D'$ is the measure $\wt{\mu}_{D'}$ whenever $D'\subseteq D\subseteq \C$. The conformal invariance can be found in \cite{lawler-werner-soup} while the restriction property is immediate from the definition.

Let $\mathcal{L}_c$ be a Poisson point process with intensity $c\mu_D$ (called the Brownian loop soup of intensity $c$ in $D$). $\mathcal{L}_c$ is a.s.\ an infinite collection of loops and we say that two loops $\ell,\ell'\in \mathcal{L}_c$ are in the same cluster if there exist loops $\ell_0,\dots,\ell_n\in \mathcal{L}_c$ such that $\ell_0=\ell$, $\ell_n=\ell'$, and $\ell_i$ intersects $\ell_{i+1}$ for $i=0,\dots,n-1$. For each cluster, we can consider the closure of the union of all the loops within it; whenever the outer boundary of a cluster is a simple curve, we add it to a set $\mathcal{C}_c$ (all curves are viewed up to reparametrization). In fact, it turns out that almost surely, all clusters have a simple outer boundary. The remarkable result due to Sheffield and Werner \cite[Theorem 1.6]{shef-werner-cle} is that $\mathcal{C}_c$ is a CLE.

\begin{thm}[\cite{shef-werner-cle}]
	\label{thm:cle-werner-sheffield}
	When $c\in (0,1]$, $\mathcal{C}_c$ is almost surely an infinite collection of disjoint simple loops and has the law of
	a non-nested $\CLE_\kappa$ in $D$ with $\kappa\in (8/3,4]$ where $\kappa$ and $c$ are related via $c= (3\kappa-8)(6-\kappa)/(2\kappa)$.
\end{thm}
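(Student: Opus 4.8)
The plan is to invoke the Markovian characterization of (non-nested) simple conformal loop ensembles and to verify that $\mathcal{C}_c$ satisfies its hypotheses, with the identification $c=(3\kappa-8)(6-\kappa)/(2\kappa)$ extracted at the end by matching one explicit conformally invariant quantity on the two sides. Recall that for each simply connected $D\subsetneq\C$ there is a one-parameter family of laws on countable collections of disjoint simple loops, the simple $\CLE_\kappa$ with $\kappa\in(8/3,4]$, characterized (see \cite{shef-cle,shef-werner-cle}) by (i) conformal invariance under conformal maps between domains and (ii) a conformal Markov/restriction property: after discovering the loops that intersect a given neighbourhood of the boundary (equivalently, exploring from a boundary point), the remaining loops are conditionally independent copies of the same ensemble in the unexplored connected components. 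Thus it suffices to check that $\mathcal{C}_c$ is a non-degenerate collection of disjoint simple loops satisfying (i) and (ii), and then to pin down the parameter.

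First I would establish the structural facts: that $\mathcal{L}_c$ a.s.\ has infinitely many clusters, that each cluster has a \emph{simple} curve as its outer boundary, that distinct clusters have disjoint outer boundaries, and that these boundaries form a locally finite family $\mathcal{C}_c$ exhausting $D$ in the appropriate sense. Simplicity of the outer boundary of a loop-soup cluster when $c\le 1$ is the substantive input here; it is proved via second-moment estimates on Brownian loops which bound how many mesoscopic loops of a single cluster can accumulate near a fixed point and rule out pinch points, and it is exactly in the regime $c\le 1$ that the gasket is non-degenerate and the boundaries do not touch — the borderline $c=1$ matching $\kappa=4$. Conformal invariance of the law of $\mathcal{C}_c$ is then immediate from the conformal invariance of the Brownian loop measure $\mu_D$ \cite{lawler-werner-soup}: clusters, and hence their outer boundaries, are measurable functions of the loop configuration that intertwine with the pushforward by a conformal map.

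The core is property (ii). The input is the restriction property of $\mu_D$: for $D'\subseteq D$, the loops of $\mathcal{L}_c$ contained in $D'$ form, independently of the remaining loops, a loop soup $\mathcal{L}_c^{D'}$ in $D'$. To convert this into (ii) for $\mathcal{C}_c$ one explores from the boundary, revealing the loops of $\mathcal{L}_c$ meeting a growing hull and freezing a connected component once it has been closed off by a completed cluster boundary; the delicate point, handled as in \cite{shef-werner-cle}, is that a cluster can straddle the exploration frontier, so one must check that the revealed information is \emph{exactly} the configuration of loops intersecting the hull, whence what remains inside each frozen component is, by the restriction property and independence, a fresh loop soup and its cluster boundaries are a fresh copy of $\mathcal{C}_c$. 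Together with non-degeneracy this forces $\mathcal{C}_c$ to be a simple $\CLE_\kappa$ for some $\kappa=\kappa(c)\in(8/3,4]$, with $\kappa$ monotone in $c$ by the superposition coupling of loop soups.

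It remains to compute the map $c\mapsto\kappa$. Both sides carry an explicit conformally invariant observable — e.g.\ a boundary arm/disconnection exponent, or the leading exponent in the law of the conformal radius of the outermost loop around a fixed interior point (the quantity governed by Theorem~\ref{thm:confradlaw} and entering Lemma~\ref{lem:clelocalasymptotic}) — which is known in closed form for $\CLE_\kappa$ from SLE/CLE computations and can also be evaluated directly for the loop soup using Brownian intersection and loop-soup exponents; equating them yields the central-charge relation $c=(3\kappa-8)(6-\kappa)/(2\kappa)$. The main obstacle throughout is the pair of facts underlying everything else: that the cluster outer boundaries are simple and pairwise disjoint for all $c\le 1$, and that the conformal Markov property can be extracted cleanly from the restriction property of the loop measure despite clusters crossing the exploration interface — these constitute the technical heart of \cite{shef-werner-cle}, and a self-contained argument would have to reproduce them.
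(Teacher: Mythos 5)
The paper does not prove this statement at all: it is imported verbatim as a cited black box from Sheffield--Werner \cite{shef-werner-cle}, so there is no internal proof to compare your argument against. Your sketch is a reasonable high-level summary of the strategy in that reference (simplicity and disjointness of cluster boundaries for $c\le 1$, conformal invariance inherited from the loop measure, the conformal Markov property extracted from the restriction property of the loop soup via a boundary exploration, and a final identification of the parameter). One inaccuracy worth flagging: in \cite{shef-werner-cle} the map $c\mapsto\kappa$ is not obtained by equating a conformal-radius exponent on the two sides, but rather by identifying the exploration/boundary-touching process with a specific $\SLE_\kappa(\kappa-6)$-type curve and using the known conformal restriction exponent of the loop soup, which forces the central-charge relation $c=(3\kappa-8)(6-\kappa)/(2\kappa)$; matching a single conformally invariant observable as you suggest could also work in principle, but is not the route actually taken. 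Since your proposal is an acknowledged sketch of a famously technical theorem rather than a self-contained argument, it should be read as an outline of the citation, not a substitute for it.
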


It is also true that $\mathcal{L}_c$ encodes an entire nested $\CLE_\kappa$. Indeed, let $\mathcal{C}^1_c = \mathcal{C}_c$ and inductively define $\mathcal{C}^{n+1}_c$ from $\mathcal{C}^n_c$ and $\mathcal{L}_c$ as follows: Let $\mathcal{L}^{n+1}_c$ be all the loops in $\mathcal{L}_c$ that are surrounded by a loop in $\mathcal{C}^n_c$ but do not intersect any loop in $\mathcal{C}^n_c$. The collection $\mathcal{C}^{n+1}_c$ is then obtained from $\mathcal{L}^{n+1}_c$ as its set of outer cluster boundaries (analogous to the construction of $\mathcal{C}_c$ from $\mathcal{L}_c$). Finally, let $\mathcal{C}^\infty_c = \cup_{n\ge 1} \mathcal{C}^n_c$.

\begin{prop}
	\label{prop:nestedcle-loopsoup}
	When $c\in (0,1]$, $\mathcal{C}^\infty_c$ is a nested $\CLE_\kappa$ in $D$ with $\kappa\in (8/3,4]$ where $\kappa$ and $c$ are related by $c= (3\kappa-8)(6-\kappa)/(2\kappa)$.
\end{prop}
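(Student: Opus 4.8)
The plan is to bootstrap from Theorem \ref{thm:cle-werner-sheffield} (the non-nested case $n=1$) to the nested case by iterating the loop-soup construction level by level, using the conformal invariance and restriction properties of the Brownian loop measure $\mu_D$ together with the spatial Markov property of CLE. First I would recall the precise statement: $\mathcal{C}^\infty_c = \cup_{n\geq 1}\mathcal{C}^n_c$ where $\mathcal{C}^1_c$ is the set of outer cluster boundaries of $\mathcal{L}_c$, and $\mathcal{C}^{n+1}_c$ is built from $\mathcal{L}^{n+1}_c$, the loops of $\mathcal{L}_c$ surrounded by some loop of $\mathcal{C}^n_c$ but not intersecting any loop of $\mathcal{C}^n_c$. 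The key structural fact is that, conditionally on $\mathcal{C}^1_c,\dots,\mathcal{C}^n_c$, the loops of $\mathcal{L}_c$ that lie strictly inside a given loop $\eta \in \mathcal{C}^n_c$ (meaning surrounded by $\eta$ and disjoint from all of $\mathcal{C}^n_c$, equivalently contained in the open region bounded by $\eta$ together with the removal of the already-discovered clusters) form, by the restriction property of the loop soup, an independent Brownian loop soup of intensity $c$ in that inner domain. This is the heart of the matter.

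The main steps in order: (1) Establish the conditional independence and restriction statement above — namely that given the first $n$ levels, the undiscovered loop soup in each complementary region is an independent loop soup of intensity $c$ in that region. This follows from the Poissonian structure of $\mathcal{L}_c$ (thinning/restriction of a PPP is a PPP) together with the fact that the event "a loop is discovered by level $n$" is measurable with respect to $\mathcal{L}_c$ restricted to the already-explored set, so the unexplored loops are conditionally a PPP with the restricted intensity; then invoke the restriction property $\wt\mu_D|_{\{\ell\subseteq D'\}} = \wt\mu_{D'}$. (2) Apply Theorem \ref{thm:cle-werner-sheffield} in each such region: conditionally, $\mathcal{C}^{n+1}_c$ restricted to a given region $U$ bounded by $\eta\in\mathcal{C}^n_c$ has the law of a non-nested $\CLE_\kappa$ in $U$, independently across regions. (3) Compare this iterative description with the definition of a nested $\CLE_\kappa$ — which is precisely: sample a non-nested $\CLE_\kappa$, then inside each complementary component sample an independent non-nested $\CLE_\kappa$, and iterate. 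By induction on $n$, steps (1)–(2) show $\mathcal{C}^\infty_c$ satisfies exactly this recursive description, hence has the law of a nested $\CLE_\kappa$. One should also note conformal invariance is used to identify the conditional law in each region $U$ with a $\CLE_\kappa$ in $U$ itself (transport from $\D$ via a uniformizing map), and that the $\kappa\leftrightarrow c$ relation is inherited from Theorem \ref{thm:cle-werner-sheffield}.

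A technical point worth addressing carefully is that the complementary regions at level $n$ need not be simply connected as subsets of $\C$ in the naive sense if one removed the clusters rather than their outer boundaries; but since $\mathcal{C}^n_c$ consists of the \emph{outer} boundaries of clusters, which are a.s. simple loops, and the relevant inner domain is the region surrounded by $\eta$, this is a genuine simply connected domain, and the restriction property of $\mu$ applies to it directly. One should also confirm that a.s. every cluster at every level has simple outer boundary — this is part of what Sheffield--Werner prove and can be quoted, but one must make sure it holds level-by-level, which again follows since each level's undiscovered soup is a genuine intensity-$c$ loop soup in its region by step (1).

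The hard part will be step (1): making rigorous the claim that, conditionally on the first $n$ levels, the undiscovered loops form a fresh independent loop soup in each region. The subtlety is that the $\sigma$-algebra generated by $\mathcal{C}^1_c,\dots,\mathcal{C}^n_c$ is not simply the $\sigma$-algebra of loops touching the explored set — one must argue that the decomposition of $\mathcal{L}_c$ into (loops determining the first $n$ levels) and (the rest) is a measurable splitting into independent parts, using that membership of a loop in the "explored" part is determined by an increasing exploration that only looks at loops intersecting already-revealed clusters. This is morally the standard cluster-exploration argument for loop soups, but stating it cleanly — ideally by a monotone/martingale exploration or by directly describing the explored set as a deterministic functional and invoking the restriction property — requires some care. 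Once this is in place, the rest is a routine induction matching the recursive definition of nested CLE.
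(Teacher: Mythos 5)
Your overall route is the same as the paper's, but the crucial step is left as a genuine gap. The paper proves Proposition \ref{prop:nestedcle-loopsoup} by citing Theorem 1 of Qian--Werner (\emph{Decomposition of Brownian loop-soup clusters}), which is precisely the statement you call step (1): conditionally on the outer boundaries of the outermost clusters, the loops of $\mathcal{L}_c$ inside each bounded component form independent Brownian loop soups in those domains. That is a published theorem with a real proof, not a corollary of ``thinning a PPP plus the restriction property.''

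The specific reason your step (1) does not reduce to elementary PPP facts is the following, and you do half-acknowledge it: the set of loops you need to ``remove'' to produce the inner soups is the union of the outermost clusters, and this set is \emph{not} the set of loops contained in any deterministic (or even $\mathcal{C}^1_c$-measurable) region. An outermost cluster with outer boundary $\eta$ contains many loops that lie strictly inside $\eta^o$; conversely, the loops surviving to the next level are the loops inside $\eta^o$ that happen not to connect, via chains of intersecting loops, out to the boundary $\eta$. So the partition of $\mathcal{L}_c$ into (outermost-cluster loops) and (the rest) depends on the entire connectivity structure of the soup, and whether a given loop belongs to an outermost cluster is not measurable with respect to the loops hitting $\D\setminus\cup_\eta\eta^o$. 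This is exactly why the restriction property of $\widetilde\mu_D$ applied naively does not give the conditional independence you want. You write that ``the decomposition of $\mathcal{L}_c$ into loops determining the first $n$ levels and the rest is a measurable splitting into independent parts'' and that this ``requires some care,'' but the suggested fixes (monotone exploration, describing the explored set as a deterministic functional) are not carried out, and making one of them rigorous is substantially harder than the rest of your argument. If step (1) is granted, your steps (2)--(3) and the simply-connectedness remark are fine and give the result by the standard recursive characterization of nested CLE; the honest move is therefore to either cite Qian--Werner at step (1), as the paper does, or to commit to a full proof of that conditional-soup decomposition, which would be the bulk of the work.
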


\begin{proof}
	This is an immediate consequence of \cite[Theorem 1]{werner-qian-loopclusters} and Theorem \ref{thm:cle-werner-sheffield}.
\end{proof}

\begin{figure}
	\centering
	\def\svgwidth{0.8\columnwidth}
\begingroup%
  \makeatletter%
  \providecommand\color[2][]{%
    \errmessage{(Inkscape) Color is used for the text in Inkscape, but the package 'color.sty' is not loaded}%
    \renewcommand\color[2][]{}%
  }%
  \providecommand\transparent[1]{%
    \errmessage{(Inkscape) Transparency is used (non-zero) for the text in Inkscape, but the package 'transparent.sty' is not loaded}%
    \renewcommand\transparent[1]{}%
  }%
  \providecommand\rotatebox[2]{#2}%
  \newcommand*\fsize{\dimexpr\f@size pt\relax}%
  \newcommand*\lineheight[1]{\fontsize{\fsize}{#1\fsize}\selectfont}%
  \ifx\svgwidth\undefined%
    \setlength{\unitlength}{298.02017941bp}%
    \ifx\svgscale\undefined%
      \relax%
    \else%
      \setlength{\unitlength}{\unitlength * \real{\svgscale}}%
    \fi%
  \else%
    \setlength{\unitlength}{\svgwidth}%
  \fi%
  \global\let\svgwidth\undefined%
  \global\let\svgscale\undefined%
  \makeatother%
  \begin{picture}(1,0.42492784)%
    \lineheight{1}%
    \setlength\tabcolsep{0pt}%
    \put(0,0){\includegraphics[width=\unitlength,page=1]{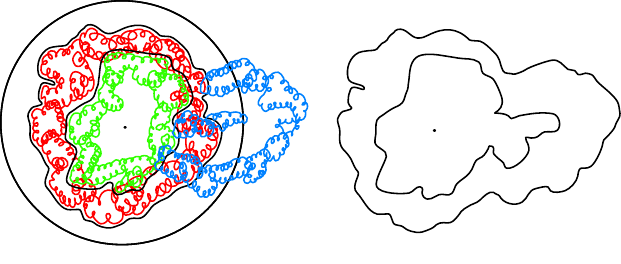}}%
    \put(0.37056993,0.37925312){\color[rgb]{0,0,0}\makebox(0,0)[lt]{\lineheight{1.25}\smash{\begin{tabular}[t]{l}$\partial B_{\delta/2}(z_i)$\end{tabular}}}}%
    \put(0.30982934,0.00447891){\color[rgb]{0,0,0}\makebox(0,0)[lt]{\lineheight{1.25}\smash{\begin{tabular}[t]{l}$(\eta^-_{i,k}\colon k\ge 1)$\end{tabular}}}}%
    \put(0.73078004,0.00395678){\color[rgb]{0,0,0}\makebox(0,0)[lt]{\lineheight{1.25}\smash{\begin{tabular}[t]{l}$(\eta^+_{i,k}\colon k\ge 1)$\end{tabular}}}}%
  \end{picture}%
\endgroup%

	\caption{Illustration of the proof of Theorem \ref{thm:cle-indep} for $\kappa\in(8/3,4]$. \emph{Left.} The clusters of the loop soup restricted to the ball $B_{\delta/2}(z_i)$ that touch the outermost and second outermost cluster are drawn in red and green, respectively. The blue loops are not contained in $B_{\delta/2}(z_i)$ and we therefore
	obtain larger clusters. \emph{Right.} These are the (nested) clusters of the whole CLE. Note that some of the red loops on the left no longer touch the boundary of the new outermost cluster and hence now contribute to the boundary of the second outermost cluster.}
	\label{fig:loopsoup}
\end{figure}

We can now establish Theorem \ref{thm:cle-indep} in the case where the CLE is simple.

\begin{proof}[Proof of Theorem \ref{thm:cle-indep} for {$\kappa\in(8/3,4]$}]
	Without loss of generality $D=\D$ and $\rho_1,\dots,\rho_n\le 0$. The proof will exploit the coupling in Proposition \ref{prop:nestedcle-loopsoup} of a nested $\CLE_\kappa$ $\mathcal{C}^\infty_c$ with a Brownian loop soup $\mathcal{L}_c$. Note that the balls $B_{\delta/2}(z_i)\subseteq \D$ for all $i=1,\dots,n$ are pairwise disjoint. For each $i\le n$, if $\mathcal{L}_c(i)$ denotes the set of loops in $\mathcal{L}_c$ that are contained in $B_{\delta/2}(z_i)$ then $\mathcal{L}_c(i)$ has the law of a Brownian loop soup in $B_{\delta/2}(z_i)$ and hence defines a nested $\CLE_\kappa$ $\mathcal{C}_c^\infty(i)$ in $B_{\delta/2}(z_i)$ via the outer boundaries of its clusters. For $i\le n$, let
	\begin{align*}
		(\eta^-_{i,k}\colon k\ge 1)\quad\text{and}\quad (\eta^+_{i,k}\colon k\ge 1)
	\end{align*}
	denote the loops surrounding $z_i$ in $\mathcal{C}_c^\infty(i)$ and $\mathcal{C}^\infty_c$, respectively, such that both collections of loops are ordered according to their nesting structure.

	Fix $i\le n$. The key property we need is that $(\eta^-_{i,k})^o\subseteq (\eta^+_{i,k})^o$ for all $k\ge 1$. We prove this key property by
	induction on $k$. See Figure \ref{fig:loopsoup}. The induction hypothesis $k=1$ is clear. For the induction step, we assume that the claim holds for $k$ and we wish to deduce it for $k+1$. This follows from the simple observation that a Brownian loop that is strictly contained in $(\eta^-_{i,k})^o$ is also strictly contained in $(\eta^+_{i,k})^o$.

	In the notation of the theorem, let $N$ be the minimal integer such that
	\begin{align*}
		(\eta^+_{i,N})^o \subseteq \eta_i^o\quad\text{for all $i\le n$}\;.
	\end{align*}
	Combining this with the inclusion $(\eta^-_{i,k})^o\subseteq (\eta^+_{i,k})^o$ whenever $i,k\ge 1$ and applying Hölder's inequality with exponents $p,q>1$ such that $1/p+1/q=1$ yields
	\begin{align*}
		&\E\left(\, \prod_{i=1}^n R(z_i,\eta_i^o)^{\rho_i} \right) \le \sum_{k\ge 1} \E\left(\, \prod_{i=1}^n R(z_i,(\eta_{i,k}^-)^o)^{\rho_i} ;N=k\right) \\
		&\le \sum_{k\ge 1} \E\left(\, \prod_{i=1}^n R(z_i,(\eta_{i,k}^-)^o)^{\rho_i p}\right)^{1/p} \P(N=k)^{1/q} = \sum_{k\ge 1} \prod_{i=1}^n \E\left(R(z_i,(\eta_{i,k}^-)^o)^{\rho_i p}\right)^{1/p} \P(N=k)^{1/q}\\
		&= \sum_{k\ge 1} \prod_{i=1}^n (\delta/2)^{\rho_i} \E\left(R(0,\wt{\eta}^o)^{\rho_i p}\right)^{k/p} \P(N=k)^{1/q}
	\end{align*}
	where $\wt{\eta}$ is the outermost loop in $\mathcal{C}^\infty_c$ surrounding $0$, and we used independence of the CLEs $(\mathcal{C}^\infty_c(i)\colon i\le n)$ and the conformal invariance of CLE. We take $p>1$ sufficiently close to $1$ such that $\rho_ip>-1+2/\kappa+3\kappa/32$ for all $i\le n$. Observe that
	\begin{align*}
		N \le \max\{N_{ij}\colon 1\le i<j\le n\}\le \sum_{1\le i<j\le n} N_{ij}
	\end{align*}
	where $N_{ij}$ is the number of loops in $\mathcal{C}^\infty_c$ surrounding both $z_i$ and $z_j$. Now take $\theta>0$ such that
	\begin{align*}
		\prod_{i=1}^n \E\left(R(0,\wt{\eta}^o)^{\rho_i p}\right)^{1/p} < e^{\theta/q}\;.
	\end{align*}
	By Markov's inequality, Hölder's inequality and Lemma \ref{lem:superexp-cle-tail},
	\begin{align*}
		\P(N=k)&\le e^{-\theta k}\,\E\left(e^{\theta N}\right) \le e^{-\theta k}\,\E\left(\prod_{1\le i<j\le n} e^{\theta N_{ij}}\right) \\
		&\le e^{-\theta k}\prod_{1\le i<j\le n}\E\left(e^{n(n-1)/2\cdot \theta N_{ij}}\right)^{2/(n(n-1))} \le C\cdot e^{-\theta k}
	\end{align*}
	for some constant $C>0$ where $C$ only depends on $\delta$, $\theta$ and the number of points $n$. The result is now immediate.
\end{proof}

\subsection{Spatial independence for non-simple CLE}
\label{sec:cle-k-big}

In this section we will prove Theorem \ref{thm:cle-indep} for $\kappa'\in(4,8)$. The idea of the proof is to use a coupling between $\CLE_{\kappa'}$ and a GFF $h$ based on imaginary geometry and then use spatial independence properties of the GFF. Throughout the section we set $\kappa=16/\kappa'$. 

We say that $h$ is a GFF with \emph{clockwise (resp.\ counterclockwise) boundary data} on $(\D,-i)$ if it is a Dirichlet GFF in $\D$ with boundary data given by
\begin{align*}
	-\pi/\sqrt{{\kappa'}}+\chi\pi/2+\chi\op{arg}(z) \quad\text{and}\quad \pi/\sqrt{{\kappa'}}-3\chi\pi/2+\chi\op{arg}(z)\;,
\end{align*}
respectively, where $\chi=2/\sqrt{{\kappa}}-\sqrt{{\kappa}}/2$ and $\op{arg}(z)\in[-\pi/2,3\pi/2)$. For $D$ an arbitrary simply connected domain with a distinguished prime end $z_0$ we say that $h$ is a GFF on $(D,z_0)$ with clockwise (resp.\ counterclockwise) boundary data if $h\circ\phi-\chi\op{arg}(\phi')$ is a GFF with clockwise (resp.\ counterclockwise) boundary data in $\D$ for a conformal map $\phi:\D\to D$ satisfying $\phi(-i)=z_0$. We use the name clockwise (resp.\ counterclockwise) since in the coupling between the GFF and the branching $\SLE_{\kappa'}$ described in (i) and (ii) right below, if the branching $\SLE_{\kappa'}$ encloses a domain $D$ by tracing its boundary in clockwise (resp.\ counterclockwise) direction, then $h|_D$ will have clockwise (resp.\ counterclockwise) boundary data.

\begin{lemma}\label{prop:harm-ext}
	Let $D\subseteq\C$ be a domain, let $\delta,p>0$, let $z\in D$ be such that $\op{dist}(z,\C\setminus D)>\delta$, and let $h$ be a GFF in $D$ with clockwise or counterclockwise boundary data. Let $\frk h$ denote the harmonic extension  of $h|_{D\setminus B}$ to $B$ where $B=B_{\delta/2}(z)$. Then there is a constant $c=c(D,\delta,p)\in(0,\delta/4)$ such that for all $r<c$ we have
	\begin{align*}
		\E \left(\exp \left(p\|\frk h|_{B_r(z)}\|^2_\nabla \right)\right)<c^{-1} \;.
	\end{align*}
\end{lemma}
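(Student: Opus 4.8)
The statement is about the harmonic extension $\frk h$ of the boundary data of a GFF $h$ across a small ball, and the key point is that $\frk h|_{B_r(z)}$ is a Gaussian field whose Dirichlet energy becomes arbitrarily small (in a suitable stochastic sense) as the radius $r$ shrinks. The plan is to reduce the problem to the unit disk via conformal invariance, then exploit the fact that $\frk h$ restricted to the smaller ball $B_r(z)$ is a smooth Gaussian field whose covariance structure can be controlled uniformly, so that $\|\frk h|_{B_r(z)}\|_\nabla^2$ has a Gaussian tail with variance proportional to a power of $r$; choosing $r$ small then makes $p\|\frk h|_{B_r(z)}\|_\nabla^2$ have finite exponential moment.

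First I would recall the spatial Markov decomposition: writing $h|_B = h_B + \frk h$ where $h_B$ is a zero-boundary GFF in $B$ independent of $\frk h$, and $\frk h$ is the harmonic extension of $h|_{D\setminus B}$. The field $h$ here is a Dirichlet GFF in $D$ plus a deterministic bounded harmonic function (the clockwise/counterclockwise boundary data), so $\frk h$ is the sum of (i) the harmonic extension of a zero-boundary GFF in $D$ restricted to $D\setminus B$, which is a centered Gaussian field that is smooth and harmonic in $B$, and (ii) a deterministic bounded harmonic function. Since $\op{dist}(z,\C\setminus D)>\delta$ and $B=B_{\delta/2}(z)$, the distance from $B$ to $\partial D$ is at least $\delta/2$, so all these harmonic functions are smooth on $B$ with derivative bounds depending only on $D,\delta$; in particular the deterministic part contributes at most $O(r^2)$ to the Dirichlet energy on $B_r(z)$ uniformly, and I would absorb it by a trivial bound.

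The main work is the Gaussian part. I would use the explicit Green's function: for $z_1,z_2\in B_{\delta/4}(z)$ the covariance of the harmonic part of the GFF, $\E(\frk h(z_1)\frk h(z_2))$, equals $G_D(z_1,z_2) + \log|z_1-z_2|$ up to the deterministic harmonic correction, which is a smooth function of $(z_1,z_2)$ on $B_{\delta/4}(z)^2$ with all derivatives bounded by constants depending only on $D$ and $\delta$. Consequently $\nabla \frk h$ restricted to $B_r(z)$ is a centered Gaussian field with uniformly bounded covariance kernel, and $\|\frk h|_{B_r(z)}\|_\nabla^2 = \frac{1}{2\pi}\int_{B_r(z)} |\nabla\frk h(w)|^2\,dw$ is a quadratic functional of this Gaussian field. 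By Fubini and the covariance bound, $\E(\|\frk h|_{B_r(z)}\|_\nabla^2) \le C(D,\delta)\, r^2$ for $r<\delta/4$. Moreover $\|\frk h|_{B_r(z)}\|_\nabla^2$, being $\|T(\frk h)\|^2$ for the bounded linear map $T = \frac{1}{\sqrt{2\pi}}\nabla(\cdot)|_{B_r(z)}$ into $L^2(B_r(z);\C^2)$, is (distributionally) a sum $\sum_j \lambda_j N_j^2$ of independent squared Gaussians where $\sum_j \lambda_j = \E(\|\frk h|_{B_r(z)}\|_\nabla^2) \le C r^2$ and each $\lambda_j \le \op{Op-norm}$ of the covariance operator, which I would bound by $C' r^2$ as well using the uniform kernel bound and that the domain $B_r(z)$ has area $\pi r^2$ (a Schur-test / Hilbert–Schmidt estimate). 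For such a sum, $\E(\exp(p\sum_j\lambda_j N_j^2)) = \prod_j (1-2p\lambda_j)^{-1/2}$ is finite and bounded by $\exp(C'' p \sum_j\lambda_j) \le \exp(C''' p r^2)$ provided $2p\lambda_j < 1/2$ for all $j$, i.e. provided $r$ is small enough depending on $p,D,\delta$. Picking $c=c(D,\delta,p)\in(0,\delta/4)$ small enough that both $2p\cdot C' r^2 < 1/2$ and $\exp(C''' p r^2) < c^{-1}$ hold for all $r<c$ (the latter is automatic once $c<1$ since the left side tends to $1$) concludes the proof.

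\textbf{Main obstacle.} The delicate step is making the covariance/operator-norm bound on $\nabla\frk h$ over $B_r(z)$ genuinely uniform in $r$ with the right power $r^2$ — one needs that the harmonic-part covariance kernel $\E(\frk h(z_1)\frk h(z_2))$ and its mixed derivatives $\partial_{z_1}\partial_{z_2}$ are bounded on $B_{\delta/4}(z)^2$ by a constant depending only on $D$ and $\delta$, which follows from the fact that $G_D(z_1,z_2)+\log|z_1-z_2|$ is harmonic (hence smooth) in each variable separately away from $\partial D$, together with interior gradient estimates for harmonic functions applied at distance $\ge \delta/4$ from the boundary. Once that uniform smoothness is in hand, the rest is the standard Laplace-transform computation for quadratic forms in Gaussians; I would not belabor those calculations.
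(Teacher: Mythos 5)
Your proof is correct and reaches the same conclusion, but it takes a noticeably different route from the paper's. The paper keeps everything pointwise and scalar: it applies the interior gradient estimate for harmonic functions to get
\begin{align*}
	|\nabla\frk h(w)|\leq c_0\,\op{ave}\bigl(|\frk h|;\partial B_\delta(z)\bigr)/\delta\quad\text{for }w\in B_{\delta/2}(z)\;,
\end{align*}
then uses Jensen's inequality twice (once to bound $\|\frk h|_{B_r(z)}\|_\nabla^2$ by $\pi c_0^2(r/\delta)^2\op{ave}(\frk h^2;\partial B_\delta(z))$, and once on the exponential of an average), so that after Fubini the only probabilistic input needed is that $\frk h(w)$ is a Gaussian random variable whose mean and variance are bounded uniformly in $w$ by constants depending only on $D$ and $\delta$. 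Your argument instead works with the covariance kernel and operator-level machinery: you observe that the covariance of $\frk h$ and its mixed derivatives are uniformly bounded on $B_{\delta/4}(z)^2$, apply a Schur test to bound the operator norm of the covariance of $\nabla\frk h$ on $L^2(B_r(z);\C^2)$ by $O(r^2)$, and then use the Karhunen--Lo\`eve expansion $\|\frk h|_{B_r(z)}\|_\nabla^2\overset{d}{=}\sum_j\lambda_j N_j^2$ plus the explicit product formula for the Laplace transform of a quadratic form in Gaussians. Both approaches hinge on the same underlying fact (interior regularity of the harmonic part, giving uniform bounds on $B_{\delta/4}(z)$) and on picking $r$ small so that the relevant Gaussian exponent is subcritical; the paper's route is more elementary in that it never touches eigenvalues or operator norms, while yours packages the estimate in a more abstract but also standard way. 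Two small points you gloss over but correctly flag: the deterministic harmonic mean (coming from the clockwise/counterclockwise boundary data) needs to be absorbed either by Cauchy--Schwarz or by the noncentral chi-squared Laplace transform, and one must take the mixed second derivative $\partial_{w}\partial_{w'}\op{Cov}(\frk h(w),\frk h(w'))$ (not just the scalar covariance) when running the Schur test; neither is a gap, but both deserve a sentence in a polished write-up.
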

\begin{proof}
	Using e.g.\ \cite[Section 2.2, Theorem 7]{evans-book} (and the harmonicity of the function $\mathfrak{h}$) we can see that there is a universal constant $c_0>0$ such that for all $w\in B_{\delta/2}(z)$,
	\begin{align*}
	\begin{split}
		|\nabla\frk h(w)|
		\leq c_0\op{ave}(|\frk h|; \partial B_\delta(z) ) /\delta\;,
	\end{split}
	\end{align*}
	where $\op{ave}(|\frk h|; \partial B_\delta(z) )$ denotes the average of $|\frk h|$ over $\partial B_\delta(z)$.
	Therefore for $r\in (0,\delta/2)$ and by Jensen's inequality,
	\begin{align}
	\|\frk h|_{B_r(z)}\|_\nabla^2
	\leq \pi c_0^2 (r/\delta)^2\,\op{ave}(\frk h^2; \partial B_{\delta}(z) ) \;.
	\label{eq7}
	\end{align} 
	Again using Jensen's inequality implies
	\begin{align}
	\exp\left(\pi pc^2_0 (r/\delta)^2\op{ave}(\frk h^2; \partial B_\delta(z) )\right)
	\le \op{ave} \left( e^{\pi pc^2_0 (r/\delta)^2 \frk h^2}  ; \partial B_\delta(z) \right)\;.
	\label{eq8}
	\end{align}
	By combining \eqref{eq7} and \eqref{eq8} we get
	\begin{align*}
	\begin{split}
		\E \left(\exp(p\|\frk h|_{B_r(z)}\|^2_\nabla)\right)
		&\le
		\E \left( \exp\left(\pi pc^2_0 (r/\delta)^2 \op{ave}(\frk h^2; \partial B_\delta(z) )\right) \right) \\
		&\le \op{ave} \left( \E\left( e^{\pi pc^2_0 (r/\delta)^2 \frk h^2} \right) ; \partial B_\delta(z) \right).
	\end{split}
	\end{align*}
	The lemma now follows since the random variables $\frk h(w)$ for $w\in \partial B_\delta(z)$ are
	Gaussian with variance and expectation bounded above by a constant depending only on $D$ and $\delta$. 
\end{proof}
 
We will now recall that there exists a coupling between the following three random objects, such that each object determines the other three objects:
\begin{itemize}
	\item[(i)] a GFF $h$ on $(\D,-i)$ with clockwise boundary data,
	\item[(ii)] a branching $\SLE_{\kappa'}$ $\cL=(\lambda_z\colon z\in\cQ)$ in $\D$ started from $-i$ with force point at $(-i)^-$ (where $\cQ$ is the set of rational points in $\D$), 
	\item[(iii)] a $\CLE_{\kappa'}$ $\Gamma$ in $\D$.
\end{itemize}
There is also a coupling with counterclockwise instead of clockwise boundary data in (i) and with $(-i)^+$ instead of $(-i)^-$ in (ii). We will focus on the case of clockwise boundary data and force point at $(-i)^-$ and only point out the cases where the coupling is different for  
counterclockwise boundary data and force point at $(-i)^+$. We refer to Figure \ref{fig:cle-nonsimple} for an illustration of the coupling.

A branching $\SLE_{\kappa'}$ in $\D$ started from $-i$ with force point at $(-i)^-$ is a set of curves $\cL=(\lambda_z\colon z\in\cQ)$ such that for each $z\in\cQ$, the curve $\lambda_z$ has the law of a radial $\SLE_{\kappa'}({\kappa'}-6)$ from $-i$ to $z$ with the following force point behavior: The force point is initially located infinitesimally to the left (i.e., clockwise) of the starting point $-i$; the force point flips to the other side whenever the curve completes a clockwise (resp.\ counterclockwise) loop around $z$ and before the flip, the force point lies right (resp.\ left) of the tip of the curve.
Furthermore, for any $z,w\in\cQ$, $\lambda_z$ and $\lambda_w$ are identical until $z$ and $w$ lie in different complementary components of the curve, and afterwards the two curves evolve independently.

The coupling between the objects in (i) and (ii) is based on the theory of imaginary geometry \cite{ig1,ig4}. 
Recall from \cite{ig4} that for each $z\in\cQ$, one can define the following curves:
\begin{itemize}
	\item An east-going flow line $\xi^{\op{E}}_z: [0,\infty) \to \ol\D$ for $h$ started at $z$ which by convention merges into the curve $\partial\D$ upon hitting it and traces a segment of $\partial\D$ in counterclockwise direction until hitting $-i$.
	\item A west-going flow line $\xi^{\op{W}}_z: [0,\infty) \to \ol\D$ for $h$ started at $z$ which terminates at $-i$ but does not merge into $\partial\D$ before that.
\end{itemize}

The flow lines are determined locally by $h$ in the sense that if $z\in U\subseteq\D$ for some fixed domain $U$ then the curves $\xi^{\op{E}}_z, \xi^{\op{W}}_z$ stopped when they first leave $U$ are a.s.\ determined by $h|_U$. Furthermore, if an east-going (resp.\ west-going) flow line hits another east-going (resp.\ west-going) flow line, then these two flow lines merge, while if an east-going (resp.\ west-going) flow line hits a west-going (resp.\ east-going) flow line, then it will bounce off without crossing the west-going (resp.\ east-going) flow line.

We next review some properties of the coupling between (i) and (ii) here and refer the reader to \cite[Section 4]{ig4} for a complete description. For any $z\in\cQ$ the curve $\lambda_z$ can be constructed by first sampling its left and right boundaries (viewed as curves in the universal cover of $\D\setminus\{z \}$), which are given by $\xi^{\op{W}}_z$ and $\xi^{\op{E}}_z$, respectively. Suppose $V\subseteq\D$ is a complementary component of $\D\setminus (\xi^{\op{W}}_z\cup \xi^{\op{E}}_z)$ which lies between $\xi^{\op{W}}_z$ and $\xi^{\op{E}}_z$, and let $x\in\partial V$ (resp.\ $y\in\partial V$) denote the point of intersection between $\xi^{\op{W}}_z$ and $\xi^{\op{E}}_z$ on $\partial V$ which is ordered last (resp.\ first).

We obtain $\lambda_z$ restricted to $V$ as follows: To the field $h|_V$ we associate the counterflow line in $V$ from $x$ to $y$ (see \cite[Theorem 1.1]{ig1}), which has the marginal law of a chordal $\SLE_{\kappa'}(\kappa'/2-4;\kappa'/2-4)$ in $V$ (from $x$ to $y$); this is a boundary filling SLE with force points. If the GFF has clockwise boundary data and the force point of $\cL$ is at $(-i)^+$ (i.e., counterclockwise or to the right of $-i$) then the coupling described above is identical.
	
	For $z\in\cQ$ let $V$ be a complementary component of $\lambda_z$, i.e., $V$ is a connected component of the set $\D\setminus\lambda_z$. For a connected set $I\subseteq \partial V\cap \lambda_z$ we say that $I$ is oriented clockwise (resp.\ counterclockwise) if the points of $I$ are visited in clockwise (resp.\ counterclockwise) order by $\lambda_z$. In particular, this definition can be applied to the full boundary $\partial V$ (i.e., the case $I=\partial V$) when $\partial V\subseteq\lambda_z$. If $\partial V\cap\partial\D\neq\emptyset$ then we say that $\partial V$ is oriented clockwise (resp.\ counterclockwise) if $\partial V\cap\lambda_z$ is oriented clockwise (resp.\ counterclockwise) and the boundary data of $h$ are clockwise (resp.\ counterclockwise). The crucial property of the coupling with $h$ is the following: If $\partial V$ is ordered clockwise (resp.\ counterclockwise) then the conditional law of $h|_V$ given $\lambda_z$ is that of a GFF with clockwise (resp.\ counterclockwise) boundary data and the fields in the different complementary components of $\lambda_z$ are conditionally independent given $\lambda_z$.
	
	We define a function $\Upsilon_{\op{GFF}\to \SLE}$ which takes as input a distribution on some simply connected domain $D$ and outputs a collection of curves on that domain, such that in the coupling above we have $\cL=\Upsilon_{\op{GFF}\to \SLE}(h)$ almost surely.

	\begin{figure}
		\centering
		\def\svgwidth{.95\columnwidth}
		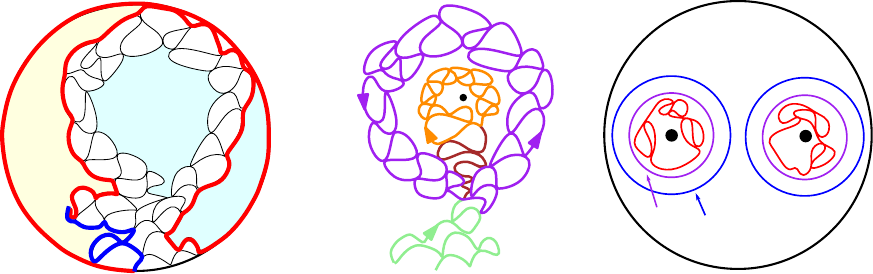
		\caption{
		\emph{Left.} Illustration of the coupling between (i) and (ii) and the definition of a pocket (see Definition \ref{def:pocket}). The figure shows $\lambda_{x_1}$ for some $x_1\in\cQ$ (black), $\xi^{\op{W}}_{x_1}$ (blue), and $\xi^{\op{E}}_{x_1}$ (red). The two domains in light blue are pockets while the domain in light yellow is not a pocket since its red boundary arc is ordered clockwise and its blue boundary arc is ordered counterclockwise. 
		{\emph{Center.}} Illustration of the coupling between (ii) and (iii). The path obtained by concatenating the green, purple, brown, and orange path segments is $\lambda_z|_{[0,\tau_2]}$, while the path segments $\lambda_z|_{[\sigma_1,\tau_1]}$ (purple) and $\lambda_z|_{[\sigma_2,\tau_2]}$ (orange) are contained in the outermost and in the second outermost CLE loop surrounding $z$, respectively. 
		\emph{Right.} Illustration of the proof of Theorem \ref{thm:cle-indep} for ${\kappa'}\in(4,8)$. The relative size of the various circles is not to scale.}
	\label{fig:cle-nonsimple}
	\end{figure}

	\begin{figure}
		\centering
		\def\svgwidth{.95\columnwidth}
		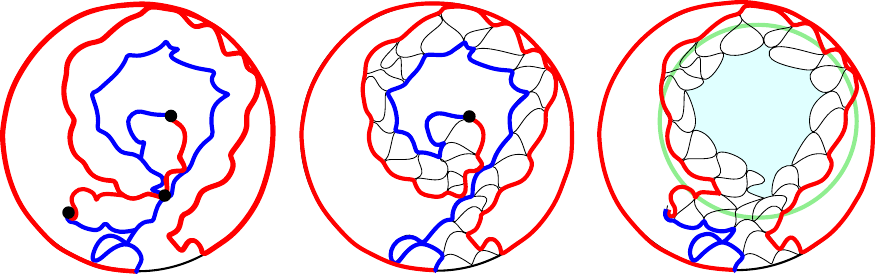
		\caption{Illustration of the proof of Lemma \ref{prop:i-ii}.
		{\emph{Left.}} $\wh\xi\,^{\op{W}}_{\!w}$, $\smash{\xi^{\op{W}}_{w_1}}$, $\smash{\xi^{\op{W}}_{x_1}}$ (blue), $\wh\xi\,^{\op{E}}_{\!w}$, $\xi^{\op{E}}_{w_1}$, and $\xi^{\op{E}}_{x_1}$ (red).
		{\emph{Center.}} $\lambda_{w_1}$ (black), $\xi^{\op{W}}_{w_1}$ (blue), and $\xi^{\op{E}}_{w_1}$ (red).
		{\emph{Right.}} $\lambda_{x_1}$ (black), $\xi^{\op{W}}_{x_1}$ (blue), $\xi^{\op{E}}_{x_1}$ (red), and a pocket $V$ (light blue). The pocket $V$ is a pocket in $U$, where $U$ is the disk with boundary $\partial U$ (light green).
		}
	\label{fig:cle-nonsimple-proof}
\end{figure}

	\begin{defn}
		Consider a coupling of the objects in (i) and (ii) above. A domain $V\subseteq \D$ is called a pocket if it is a complementary component of $\lambda_z$ for some $z\in\cQ$ and if $\partial V$ is  ordered either clockwise or counterclockwise. For $U\subseteq \D$ we say that $V$ is a pocket in $U$ if $V$ is a pocket and the closure of $V$ is contained in $U$.
		\label{def:pocket}
	\end{defn}
	If $z,w\in\cQ$, $V$ is a complementary component of $\lambda_z$, and $w\in V$, then $\lambda_w$ can be written as the concatenation of $\lambda_z$ until this curve encloses $V$ and a curve $\lambda'$ contained in $\ol V$. Let $\cL|_V$ denote the collection of paths consisting of these curves $\lambda'$ for all $w\in\cQ\cap V$.

	\begin{lemma}\label{prop:i-ii}
		The set of pockets $V$ in $U$ is a measurable function of $h|_U$, and for any pocket $V\subseteq U$ it holds a.s.\ that $\Upsilon_{\op{GFF}\to \SLE}(h|_V)$ is well-defined and equal to $\cL|_V$.
	\end{lemma}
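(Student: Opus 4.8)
The plan is to exploit two locality features established above: first, flow lines of a GFF stopped upon exiting a subdomain $U$ are determined by $h|_U$; second, the branching $\SLE_{\kappa'}$ inside a pocket is a deterministic function of the GFF restricted to that pocket (via the counterflow-line construction). I would proceed in three steps. \emph{Step 1: measurability of the pockets in $U$.} Fix a countable dense set $\cQ$. For each $z\in\cQ$, the curve $\lambda_z$ is built from its left/right boundaries $\xi^{\op{W}}_z,\xi^{\op{E}}_z$ together with boundary-filling counterflow lines in the intermediate complementary components. A pocket $V$ with $\ol V\subseteq U$ has the property that its entire boundary is traced by $\lambda_z$ before $\lambda_z$ first exits $U$ (here one uses that a pocket is closed off at the moment $\lambda_z$ has finished tracing $\partial V$, and if $\ol V\subseteq U$ then this happens strictly before $\lambda_z$ leaves $U$; this requires a short topological argument that the curve cannot ``escape and return'', which follows from the fact that $\lambda_z$ never crosses itself and $V$ is a bounded complementary component). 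Therefore the portion of $\lambda_z$ needed to identify $V$ as a pocket is the portion before exiting $U$, which is measurable with respect to $h|_U$. Taking the (countable) union over $z\in\cQ$ and over a countable exhaustion of $U$ by compactly contained subdomains gives that the collection of pockets in $U$ is a measurable function of $h|_U$.

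\emph{Step 2: the conditional law of $h|_V$.} By the crucial property of the GFF--$\SLE$ coupling recalled just above Definition~\ref{def:pocket}, given $\lambda_z$ the field $h|_V$ is a GFF with clockwise (resp.\ counterclockwise) boundary data according to the orientation of $\partial V$, and the fields in distinct complementary components are conditionally independent. In particular $h|_V$ has exactly the law of a GFF on $(V, z_0)$ with clockwise or counterclockwise boundary data for an appropriate marked prime end $z_0$ (the point where $\lambda_z$ enters and exits $V$), so that the hypothesis needed to apply $\Upsilon_{\op{GFF}\to\SLE}$ to $h|_V$ is satisfied a.s. This gives the first half of the claim: $\Upsilon_{\op{GFF}\to\SLE}(h|_V)$ is a.s.\ well-defined.

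\emph{Step 3: identifying $\Upsilon_{\op{GFF}\to\SLE}(h|_V)$ with $\cL|_V$.} For $w\in\cQ\cap V$, the curve $\lambda_w$ agrees with $\lambda_z$ until the latter encloses $V$, after which $\lambda_w$ continues inside $\ol V$ as the curve $\lambda'$. By the target-independence (commutation) of branching $\SLE_{\kappa'}$ recalled above, conditionally on $\lambda_z$ up to the time it encloses $V$, the family $(\lambda')_{w\in\cQ\cap V}$ has the law of a branching $\SLE_{\kappa'}$ in $V$ started from $z_0$ with the appropriate force-point side. On the other hand, applying the GFF--$\SLE$ coupling directly to the GFF $h|_V$ (which by Step~2 has the right law) produces, by definition of $\Upsilon_{\op{GFF}\to\SLE}$, precisely this branching $\SLE_{\kappa'}$ in $V$ \emph{and} couples it to $h|_V$ in the canonical way. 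So it remains to check that the two couplings of $(h|_V, \text{branching }\SLE\text{ in }V)$ coincide a.s., i.e.\ that $\cL|_V$ is a.s.\ determined by $h|_V$ in the same way that $\Upsilon_{\op{GFF}\to\SLE}$ prescribes. This follows from the flow-line description: the left and right boundaries of each $\lambda'$ are flow lines of $h$ emanating from $w$, which inside $V$ agree with the flow lines of $h|_V$ with the same angles (flow lines are locally determined by the field), and the boundary-filling counterflow-line pieces filling the complementary components between these flow lines are likewise determined by $h|_V$ restricted to those components. Hence $\cL|_V = \Upsilon_{\op{GFF}\to\SLE}(h|_V)$ a.s., as desired.

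\emph{Main obstacle.} I expect the technical heart to be Step~1, specifically the topological claim that a pocket $V$ with $\ol V \subseteq U$ is completely ``resolved'' by the portion of the relevant branching $\SLE$ curve run until it first exits $U$ — one must rule out configurations where the curve touches $\partial U$, re-enters, and only then closes off $V$, which would make $V$ depend on $h$ outside $U$. Handling this cleanly requires care with the boundary-filling (non-simple) nature of $\SLE_{\kappa'}$ and with prime-end bookkeeping, and is the step where one cannot simply quote earlier results verbatim.
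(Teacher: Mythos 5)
Your Steps 2 and 3 are essentially the paper's argument: they rely on the facts that $h|_V$ is a GFF with the appropriate boundary data, and that $\cL|_V$ is determined by flow lines of $h|_V$, which are local. Step 1, however, has a genuine gap and takes a different (and incorrect) route from the paper.

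In Step 1 you try to show measurability of pockets by arguing that the relevant portion of the counterflow line $\lambda_z$ — the portion that traces $\partial V$ — is determined by $h|_U$ because the curve does not exit $U$ before closing off $V$. You yourself flag this as the ``main obstacle'' but the proposed topological claim is in fact false: $\lambda_z$ starts at $-i\in\partial\D$ (so typically outside $U$), is a boundary-filling non-simple curve, and can exit and re-enter $U$ many times before completing its trace of $\partial V$. Whether $V$ ends up as a complementary component of $\lambda_z$ is a global statement about the curve and is \emph{not} a measurable function of $h|_U$ just by ``running the curve until first exit.'' The paper sidesteps this entirely: it never characterizes pockets via $\lambda_z$ directly, but instead gives an if-and-only-if criterion in terms of the flow lines $\xi^{\op{W}}_{w_j},\xi^{\op{E}}_{w_j}$ started from points $w_j\in V\cap\cQ$ and their merging behavior near $\partial V$. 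Because flow lines (unlike counterflow lines emanating from $-i$) are locally determined by the field, and because the conditions in the characterization only probe the flow lines near $\ol V\subseteq U$, the criterion is measurable with respect to $h|_U$. Your Step 1 should be replaced by such a flow-line characterization; the counterflow-line topology you propose cannot be made local.
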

	
	\begin{proof}
		We encourage the reader to look at Figure \ref{fig:cle-nonsimple-proof} while reading the proof.
		We will argue that with probability 1, a domain $V\subseteq\D$ with simple boundary is a pocket if and only if we can find a sequence of points $(w_j)\subseteq V\cap\cQ$ converging to some $w\in\D$ and curves $\wh\xi\,^{\op{W}}_{\!w},\wh\xi\,^{\op{E}}_{\!w}$ starting at $w$ and ending at $-i$ such that the following holds (or the following with the role of left/right and E/W swapped):
		\begin{itemize}
			\item The boundary of $V$ is equal to an initial segment of $\wh\xi\,^{\op{W}}_{\!w}$.
			\item The curves $\xi^{\op{W}}_{w_j},\xi^{\op{E}}_{w_j}$ merge into $\wh\xi\,^{\op{W}}_{\!w},\wh\xi\,^{\op{E}}_{\!w}$, respectively, such that $\xi^{\op{W}}_{w_j}\Delta \wh\xi\,^{\op{W}}_{\!w}$ and $\xi^{\op{E}}_{w_j}\Delta \wh\xi\,^{\op{E}}_{\!w}$ have diameters converging to $0$ as $j\to \infty$.
			\item The curve $\xi^{\op{E}}_{w_j}$ contains $\wh\xi\,^{\op{E}}_{\!w}$, $\wh\xi\,^{\op{W}}_{\!w}\cap V=\emptyset$ and $\wh\xi\,^{\op{E}}_{\!w}\cap V=\emptyset$.
		\end{itemize}
		The first assertion of the lemma then follows since flow lines are locally determined by the field to which they are coupled. 
		
		If $V$ is a pocket, pick $x_1\in\cQ$ such that $V$ is a complementary component of $\lambda_{x_1}$ and let $\tau$ denote the time at which $\lambda_{x_1}$ encloses $V$ (i.e., the time at which $\lambda_{x_1}$ finises tracing $\partial V$). Existence of appropriate points $\smash{(w_j)}$ is now immediate since we can let $\smash{(w_j)\subseteq V\cap\cQ}$
		be points converging to $\lambda_{x_1}(\tau)$ and we can let $\wh\xi\,^{\op{W}}_{\!w}$ and $\wh\xi\,^{\op{E}}_{\!w}$ be the left and right, respectively, boundary of $\lambda_{x_1}$ infinitesimally before time $\tau$.
		
		Conversely, suppose we are given points $(w_j)$ and curves $\wh\xi\,^{\op{W}}_{\!w},\wh\xi\,^{\op{E}}_{\!w}$ satisfying the given properties and assume without loss of generality that $\partial V$ is equal to an initial segment of $\wh\xi\,^{\op{W}}_{\!w}$ and not of $\wh\xi\,^{\op{E}}_{\!w}$.
		We claim that $\wh\xi\,^{\op{W}}_{\!w}$ and $\wh\xi\,^{\op{E}}_{\!w}$ are the left and right, respectively, boundaries of $\lambda_{z_1}$ infinitesimally before this curve encloses $V$. If the claim was not true then $\lambda_{z_1}$ would enter $V$ at some point $\wh z\in\partial V\setminus\{w \}$ before finishing to trace $\partial V$, so (by definition of $\cL$) all the curves $\lambda_{z_j}$ would enter $V$ at $\wh z$. This contradicts the assumption that the left and right boundaries of $\smash{\lambda_{z_j}}$ converge to the curves $\wh\xi\,^{\op{W}}_{\!w}$ and $\wh\xi\,^{\op{E}}_{\!w}$ so the claim is correct. Furthermore, by letting $x_1\in V^c\cap\cQ$ be a point which is \emph{not} lying between $\xi^{\op{E}}_{z_1}$ and $\xi^{\op{W}}_{z_1}$ and which is in the same complementary component of $\lambda_{z_1}$ infinitesimally before this curve finishes tracing $\partial V$, we get
		 that $\lambda_{x_1}\cap\lambda_{z_1}$ is equal to $\lambda_{x_1}$ stopped upon enclosing $V$ and that $V$ is a complementary component of $x_1$. 	
		Also observe that $\partial V$ is oriented counterclockwise, while it would have been oriented clockwise if we had assumed that $\partial V$ is equal to an initial segment of $\wh\xi\,^{\op{E}}_{\!w}$ instead of $\wh\xi\,^{\op{W}}_{\!w}$.
		
		We have that $\Upsilon_{\op{GFF}\to \SLE}(h|_V)$ is a.s.\ well-defined for all pockets $V$ since $h|_V$ has the law of a GFF with clockwise boundary data (if $\partial V$ is oriented clockwise) or counterclockwise boundary data (if $\partial V$ is oriented counterclockwise). We get further that $\Upsilon_{\op{GFF}\to \SLE}(h|_V)=\cL|_V$ a.s.\ since by the construction of $\cL$ in terms of $h$ we have that $\cL|_V$ is determined a.s.\ by the flow lines of the field $h|_V$.
\end{proof} 

The coupling between the objects in (ii) and (iii) is from \cite{shef-cle} and is also reviewed in e.g.\ \cite{msw-gasket} and \cite[Section 3.6.3 and Remark 3.28]{ghs-mating-survey}. We will not give a complete description of the construction but recall some key properties. 

We say that $\lambda_z$ \emph{completes a counterclockwise turn around $z$} at time $\tau$ if $\lambda_z$ hits its left boundary at time $\tau$. We say that $\lambda_z$ makes a counterclockwise turn around $z$ during $[\sigma,\tau]$ if it completes a clockwise turn around $z$ at time $\tau$ and $\sigma=\sup\{t\in(0,\tau)\,:\,\lambda_z(t)=\lambda_z(\tau) \}$. Define clockwise turns in the same manner with right instead of left. Let $[\sigma_1,\tau_1]$ be the first interval during which $\lambda_z$ makes a counterclockwise turn around $z$. Iteratively, for each even (resp.\ odd) $k\in\N$ let $[\sigma_k,\tau_k]$ denote the first interval after $[\sigma_{k-1},\tau_{k-1}]$ during which $\lambda_z$ makes a clockwise (resp.\ counterclockwise) turn around $z$. The coupling between the objects in (ii) and (iii) is such that $\lambda_z([\sigma_k,\tau_k])\subseteq\eta_k$, where $\eta_k$ is the $k$-th outermost loop of $\Gamma$ surrounding $z$. Furthermore, we can find a sequence of $z_j\in\cQ$ converging to $\lambda_z(\sigma^z_k)$ such that $\eta_k$ is the limit of $\lambda_{z_j}([\sigma^{z_j}_k,\tau^{z_j}_k])$ for (say) the Hausdorff topology as $j\to\infty$.

If the GFF in (i) has counterclockwise boundary data instead of clockwise boundary data so that $\cL$ has a force point at $(-i)^+$ instead, then the exact same holds, except that the role of clockwise and counterclockwise loops is swapped. Let $\Upsilon_{\SLE\to\CLE}$ be a function such that $\Gamma=\Upsilon_{\SLE\to \CLE}(\cL)$ a.s. 

For a pocket $V$, let $\Gamma|_V:=\{\eta\in\Gamma\,:\,\eta\subseteq\ol V \}$ denote the loops of $\Gamma$ which are contained in $\ol V$. A key property of the coupling between (ii) and (iii) is that if $V$ is a pocket  then $\Gamma|_V=\Upsilon_{\SLE\to \CLE}(\cL|_V)$ almost surely. This can be seen by observing that if $\partial V$ is oriented clockwise (resp.\ counterclockwise) then the outermost loop in $V$ around a point $z\in V\cap\cQ$ corresponds to the first counterclockwise (resp.\ clockwise) turn made by $\lambda_z$ around $z$ after entering $V$. 

For $U\subseteq D$ let $\Gamma^U$ denote the collection of loops $\eta$ for which there exists a pocket $V$ in $U$ such that $\eta\subseteq V$.
\begin{lemma}\label{prop:ii-iii}
	For any fixed domain $U\subseteq\D$ the collection of loops $\Gamma^U$ is a measurable function of $h|_U$. Furthermore, if $\eta,\eta'\in\Gamma|_U$ and $\eta'$ is surrounded by $\eta$ then $\eta'\in\Gamma^U$. 
\end{lemma}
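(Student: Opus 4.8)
The plan is to deduce Lemma \ref{prop:ii-iii} from Lemma \ref{prop:i-ii} together with the coupling between the objects in (ii) and (iii). First I would recall that by Lemma \ref{prop:i-ii} the collection of pockets in $U$ is a measurable function of $h|_U$, and for each such pocket $V$ we have $\Upsilon_{\op{GFF}\to\SLE}(h|_V)=\cL|_V$ a.s.; in particular $\cL|_V$ is a measurable function of $h|_V$, hence of $h|_U$. Next I would invoke the key property of the coupling between (ii) and (iii) recalled just above the statement of the lemma, namely that for a pocket $V$ one has $\Gamma|_V=\Upsilon_{\SLE\to\CLE}(\cL|_V)$ almost surely (this follows by matching the first counterclockwise, resp.\ clockwise, turn of $\lambda_z$ around $z$ after entering $V$ with the outermost loop of $\Gamma|_V$ around $z$, and then iterating inside the nested pockets). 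Combining these two facts, for every pocket $V$ in $U$ the collection $\Gamma|_V$ is a measurable function of $h|_U$. Since $\Gamma^U=\bigcup_{V}\Gamma|_V$ where the union is over pockets $V$ in $U$ (indeed a loop $\eta\subseteq V$ with $\ol V\subseteq U$ is contained in $\ol V$, so lies in $\Gamma|_V$, and conversely), and both the set of such $V$ and each $\Gamma|_V$ are measurable functions of $h|_U$, the first assertion follows.

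For the second assertion I would argue as follows. Suppose $\eta,\eta'\in\Gamma|_U$, i.e.\ $\eta,\eta'\subseteq\ol U$, and $\eta'$ is surrounded by $\eta$. Pick a rational point $z\in\cQ$ in the region surrounded by $\eta'$ (hence also surrounded by $\eta$). Consider the branch $\lambda_z$ of the branching $\SLE_{\kappa'}$ targeting $z$; by the coupling between (ii) and (iii), there are intervals $[\sigma_k,\tau_k]\subseteq[\sigma_{k+1},\tau_{k+1}]^c$ (ordered by nesting) with $\lambda_z([\sigma_k,\tau_k])\subseteq\eta_k$, where $\eta_k$ is the $k$th outermost loop of $\Gamma$ surrounding $z$, and $\eta=\eta_k$ and $\eta'=\eta_{k'}$ for some $k<k'$. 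The complementary component $V$ of $\lambda_z$ whose boundary is traced out by $\lambda_z$ during the turn completing $\eta_k$ (equivalently, the pocket enclosed at time $\tau_k$) satisfies $\eta'\subseteq\ol V\subseteq \ol{\eta_k^o}$. I would then observe that since $\eta=\eta_k\subseteq\ol U$ and $\ol V$ is contained in the bounded component of $\C\setminus\eta$ (it is the pocket surrounded by $\eta$ in the SLE picture), we have $\ol V\subseteq U$ provided $U$ contains the region surrounded by its loop $\eta$; but this needs care, so the cleanest route is: $\eta\in\Gamma|_U$ means $\eta\subseteq\ol U$, and as $\eta$ is a loop in $\Gamma$ its filling $\eta^o$ is a complementary component of $\ol U$-contained data, so the closed pocket $\ol V$ with $\partial V$ a piece of $\lambda_z$ enclosing a subset of $\eta^o$ has $\ol V\subseteq\eta^o\cup\eta\subseteq U$ (using that $U$ is a domain and $\eta^o$ is a connected open subset of it bounded by $\eta\subseteq\ol U$; since $\eta\subseteq\ol U$ and $\eta^o$ is one of the complementary components, one has $\eta^o\subseteq U$, whence $\ol{V}\subseteq\ol{\eta^o}\subseteq U$). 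Hence $V$ is a pocket in $U$ containing $\eta'$, so $\eta'\in\Gamma^U$ by definition.

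The main obstacle I expect is the topological bookkeeping in the second assertion: carefully justifying that the closed pocket $\ol V$ enclosing $\eta'$ (produced from a branch of the branching $\SLE$) really does lie in the \emph{open} set $U$, rather than merely in $\ol U$. This hinges on the fact that a loop $\eta\in\Gamma|_U$ contained in $\ol U$ has its surrounded region $\eta^o$ contained in $U$ (since $U$ is simply connected—being a subdomain of $\D$ in the relevant applications—so that $\ol U\setminus\eta$ decomposes into components, of which exactly one, $\eta^o$, is bounded and lies in $U$), together with the inclusion $\ol V\subseteq\ol{\eta^o}=\eta^o\cup\eta$ coming from the coupling. One should also double-check the measurability claim handles the countable union cleanly: the map $h|_U\mapsto\{\text{pockets in }U\}$ is measurable by Lemma \ref{prop:i-ii}, and $h|_U\mapsto(V,\cL|_V)\mapsto(V,\Upsilon_{\SLE\to\CLE}(\cL|_V))$ is measurable, so $h|_U\mapsto\bigcup_V\Gamma|_V=\Gamma^U$ is measurable as a countable-type union indexed by a measurably-varying family of pockets; making this precise may require enumerating pockets via their defining rational base points $z\in\cQ$ and the associated stopping times, which is routine but must be stated. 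The rest is a direct assembly of results already established in the excerpt.
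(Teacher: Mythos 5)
Your overall approach matches the paper's: combine Lemma \ref{prop:i-ii} with the identity $\Gamma|_V=\Upsilon_{\op{SLE}\to\CLE}(\cL|_V)$ for the measurability claim, and for the second assertion exhibit a pocket in $U$ that contains $\eta'$. The first assertion is fine, and your measurability discussion (enumerating pockets over rational base points and invoking local determinacy of flow lines) is a reasonable unpacking of what the paper treats as routine.

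However, the second assertion has a genuine gap in the way you identify the pocket. You take $z\in\cQ$ inside $\eta'$ and propose $V$ as ``the complementary component of $\lambda_z$ whose boundary is traced out during the turn completing $\eta_k$, equivalently the pocket enclosed at time $\tau_k$.'' These two descriptions are \emph{not} equivalent, and neither gives what you want. After time $\tau_k$ the curve $\lambda_z$ proceeds \emph{into} the region $V_0$ enclosed at time $\tau_k$ (to trace $\eta_{k+1},\eta_{k+2},\dots$), so $V_0$ is a complementary component only of the stopped curve $\lambda_z|_{[0,\tau_k]}$, not of the full $\lambda_z$, hence it is not a pocket via $\lambda_z$. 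Meanwhile the true complementary components of $\lambda_z$ whose boundaries are traced during $[\sigma_k,\tau_k]$ are the regions \emph{swallowed} by the boundary-filling curve during that turn — the sibling components of $\eta_k^o$ not containing $z$ — and none of those contains $\eta'$. The correct statement (and this is the content of the paper's one-sentence proof ``the complementary component of $\eta$ which contains $\eta'$ is a pocket'') is that $V_0$ is a complementary component of $\lambda_{z'}$ for $z'\in\cQ$ chosen in a \emph{different} connected component of $\eta^o$ (such a component exists since $\kappa'\in(4,8)$ makes $\eta$ non-simple). That branch swallows $V_0$ while tracing $\eta$ and never enters it, and its boundary is oriented CW or CCW by the turn convention. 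Your proof, as written, does not establish that $V_0$ is a complementary component of any single full branch.

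Separately, your topological argument that $\ol V\subseteq U$ is not quite closed. You conclude $\eta^o\cup\eta\subseteq U$ from $\eta\subseteq\ol U$, but $\eta\subseteq\ol U$ does not give $\eta\subseteq U$; you need $\eta$ to avoid $\partial U$. This is a real (if minor) point the paper also leaves implicit; it is harmless in the intended application where $U$ is a fixed ball and one effectively works with loops strictly inside $U$, but you should either state the stronger hypothesis $\eta\subseteq U$ (and $U$ simply connected) or flag the a.s.\ non-intersection with $\partial U$ as an assumption, rather than derive $\eta\subseteq U$ from $\eta\subseteq\ol U$.
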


\begin{proof}
	The first assertion is immediate by the last assertion of Lemma \ref{prop:i-ii} and since, as stated above the lemma, $\Gamma|_V=\Upsilon_{\op{SLE}\to \CLE}(\cL|_V)$  for each pocket $V$. We get the second assertion by observing that the complementary component of $\eta$ which contains $\eta'$ is a pocket.
\end{proof}

Note that $\Gamma^U\subseteq\{\eta\in\Gamma\,:\,\eta\in U \}$ and that Lemma \ref{prop:ii-iii} does not hold with the set on the right-hand side instead of $\Gamma^U$, i.e., the set of loops contained in $U$ is not a measurable function of $h|_U$.
 
\begin{proof}[Proof of Theorem \ref{thm:cle-indep} for ${\kappa'}\in(4,8)$]
	It is sufficient to consider the case when $\rho_i<0$ for all $i$ since this implies the general case. Pick $p,q>1$ such that $p\rho_i >-1 + 2/\kappa' + 3\kappa'/32$ and $1/p+1/q=1$. Then in the notation of Lemma \ref{prop:harm-ext}, pick $r\in(0,c(\D,\delta,n(q-1)/2 ))$.

	Consider the coupling between a Dirichlet GFF $h$ with clockwise boundary conditions and a $\CLE_{\kappa'}$ in $\D$ described in (i), (ii) and (iii) above. Define $B'_i:=B_r(z_i)$. Let $\eta_{i,r}$ denote the outermost loop in $\Gamma^{B'_i}$ containing $z_i$ and define $I_i=R(z_i,\eta_{i,r})^{\rho_i}$. Note that $I_i\geq R(z_i,\eta_i^o)^{\rho_i}$ since $\eta_{i,r}$ is either equal to or surrounded by $\eta_i$. Define $B_i:=B_{\delta/3}(z_i)$. 

	Let $\cF$ denote the $\sigma$-algebra generated by $h$ restricted to $\D\setminus \cup_i B_i$, let $\wt h_i$ be a GFF in $B_i$ with clockwise boundary conditions, define $\wt I_i$ just as $I_i$ but with the field $\wt h_i$ instead of $h$, and let $X_i$ denote the Radon-Nikodym derivative of $h|_{B'_i}$ given $\cF$ with respect to $\wt h_i|_{B'_i}$. We have
	\begin{align}
	\begin{split}
		\E\left(\prod_i I_i\right)
		&= \E\left(\E\left(\prod_i I_i \,|\, \cF\right)\right)
		=  \E\left(\prod_i\E(I_i \,|\, \cF)\right)
		=  \E\left( \prod_i\E(\widetilde{I}_i X_i \,|\, \cF)\right) \\
		&\leq  \E\left( \prod_i\E\left(\widetilde{I}_i^{\,p} \,|\, \cF\right)^{1/p} \E(X_i^q \,|\, \cF)^{1/q} \right) 
		= \prod_i \E\left(\widetilde{I}_i^{\,p}\right)^{1/p} 
		\E\left( \prod_i\E(X_i^q \,|\, \cF)^{1/q} \right) \\
		&\leq \prod_i \E\left(\widetilde{I}_i^{\,p}\right)^{1/p} 
		\E\left( \E(X_i^{q} \,|\, \cF)^{n/q}\right)^{1/n}.
	\end{split}
	\label{eq1}
	\end{align}
	By the last assertion of Lemma \ref{prop:ii-iii}, the second outermost loop in $B'_i$ surrounding $z_i$ is either equal to or surrounded by $\eta_{i,r}$. Using this and Lemma \ref{prop:moment-loops-smaller-domain} we get
	\begin{align*}
		\prod_i \E\left(\wt I_i^{\,p}\right)<\infty\;.
	\end{align*}
	For fixed $i$
	let $\frk h$ denote the harmonic extension of $h$ from $\partial B_i$ to $B_i$, minus the harmonic extension of the boundary data of $\wt h_i$ in $B_i$. We have
	\begin{align*}
		X_i = \exp\left( (\wt h_i|_{B'_i},\frk h|_{B'_i})_\nabla-\|\frk h|_{B'_i}\|_\nabla^2/2 \right)
	\end{align*}
	and further
	\begin{align*}
	\begin{split}
		\E\left(\E(X_i^{q}\,|\,\cF)^{n/q}\right)
		&= \E\left( \E(\exp( q(\wt h_i|_{B'_i},\frk h_i|_{B'_i})_\nabla-q\|\frk h|_{B'_i}\|_\nabla^2/2)\,|\,\cF)^{n/q}\right) \\
		&=\E\left( \exp((n/q)\cdot (q^2-q)\|\frk h|_{B'_i}\|_\nabla^2/2)\right)\;.
	\end{split}
	\end{align*}
	This is finite by Lemma \ref{prop:harm-ext}, which implies that the right-hand side of \eqref{eq1} is bounded by a finite constant depending only on $\delta$.
\end{proof}

\section{LQG disks weighted by CLE nesting statistics}
\label{sec:disk-cle-nesting}

\subsection{Definition of the LQG disk weighted by CLE nesting statistics}
\label{sec:disk}

In this section we introduce the LQG surfaces that are studied in the paper. We proceed in two steps: First, we construct a measure on fields on the unit disk (Definition \ref{def:disk}) and then use this to build the weighted LQG surface and weighted generalized LQG surface in the simple and non-simple cases (i.e., without and with pinch points), respectively (Definitions \ref{def:simple-disk} and \ref{def:gen-disk}). Then we derive explicit formulas for the partition functions of the disks in the case of zero or one marked bulk points, and finally we explain the relationship between the partition functions and LCFT correlation functions.

In the following definition we use the notation from Sections \ref{sec:gff-lqg} and \ref{sec:normal-lqg-disks} and Definition \ref{def:main-params}. Let us point out that the measure defined below is not a probability measure.

\begin{defn}
	Suppose that $A,B\subseteq \N$ are finite with $\#B=3$ and that $w_j\in \partial \D$ for $j\in B$ are distinct points. Also, we consider $\kappa\in (8/3,8)\setminus \{4\}$, $\sigma\in \mathfrak{S}_\kappa^A$, $\ell>0$ and $\Lambda\ge 0$. Let $\alpha_i=\alpha^\kappa_{\sigma_i}$ for $i\in A$ and $\gamma=\sqrt{\kappa}\wedge 4/\sqrt{\kappa}$.
	We can now define a measure $\op{M}^{{\sigmab},\kappa}_{\Lambda,\ell}$ on the space $H^{-1}(D)\times \D^A$ by
	\begin{align*}
		\op{M}^{{\sigmab},\kappa}_{\Lambda,\ell}(X)
		& = \frac{\ell^{2/\gamma \sum\alpha_i}}{Z^{(-,-),((\gamma,\gamma,\gamma),\bm{w})}_{0,\ell}} \int_{\D^A}\,d\bm{z}\, \Phi^{{\sigmab},\kappa}_\D(\bm{z}) Z^{(\bm{\alpha},\bm{z}),((\gamma,\gamma,\gamma),\bm{w})}_{\Lambda,\ell}\int P^{(\bm{\alpha},\bm{z}),((\gamma,\gamma,\gamma),\bm{w})}_{\Lambda,\ell}(dk) 1_X(k,\bm{z})
	\end{align*}
	for $X\subseteq H^{-1}(\D)\times \D^A$ measurable.
	We write $\op{M}^{{\sigmab},\kappa}_{\bm{w},\Lambda,\ell}$ when we want to emphasize the dependence on $\bm{w}$ and we write $-$ for the empty tuple.
	\label{def:disk}
\end{defn}

\begin{remark}
	By Definition \ref{def:main-params}, when $\kappa<4$ then the condition $\sigma_i<-\log(\cos(4\pi/\kappa))$ translates to $\alpha^\kappa_{\sigma_i}>2/\gamma+\gamma/4$ and when $\kappa>4$ it translates to $\alpha^\kappa_{\sigma_i}>1/\gamma+\gamma/2$. These constraints appear also in Section \ref{sec:ssw} for the same reason. Indeed, in both cases this threshold arises because we need to assume finiteness of CLE conformal radius moments.
\end{remark}

\begin{remark}
	Note that the constant prefactor in the definition of the measure is chosen such that $|\op{M}^{-,\kappa}_{0,\ell}|=1$ and $|\op{M}^{{\sigmab},\kappa}_{0,\ell}|=\ell^{2/\gamma \sum \alpha_i}|\op{M}^{{\sigmab},\kappa}_{0,1}|$ for $\ell>0$. These conventions are the most convenient ones when stating results such as Proposition \ref{prop:chordal-zipper}.
\end{remark}

The following change of coordinates result implies that the law of the $\gamma$-LQG surface associated to the field in the definition above does not depend on $\w$.

\begin{lemma}
	\label{lem:coordinate-change-field}
	Let us consider the setting of Definition \ref{def:disk} with $B=\{a,b,c\}$ and consider another tuple $(w'_a,w'_b,w'_c)$ of distinct points. Let $\phi\colon \D\to \D$ be the unique conformal or anticonformal transformation mapping $(w_a,w_b,w_c)$ to $(w'_a,w'_b,w'_c)$. Then
	\begin{align*}
		\int \op{M}^{{\sigmab},\kappa}_{\bm{w'},\Lambda,\ell}(dk',d\bm{z'}) f(k'\circ \phi+Q_\gamma \log |\phi'|,\phi^{-1}(\bm{z'})) = \int \op{M}^{{\sigmab},\kappa}_{\bm{w},\Lambda,\ell}(dk,d\bm{z}) f(k,\bm{z})\;.
	\end{align*}
\end{lemma}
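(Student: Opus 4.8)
\textbf{Proof plan for Lemma \ref{lem:coordinate-change-field}.} The plan is to unwind the definition of $\op{M}^{\sigmab,\kappa}_{\bm w,\Lambda,\ell}$ and reduce the claim to two facts which are already available: the change of coordinates formula for Liouville fields \cite[Theorem 3.5]{hrv-disk} (i.e. the invariance of the pair $(Z^{(\bm\alpha,\bm z),((\gamma,\gamma,\gamma),\bm w)}_{\Lambda,\ell},P^{(\bm\alpha,\bm z),((\gamma,\gamma,\gamma),\bm w)}_{\Lambda,\ell})$ under pushing forward by $k\mapsto k\circ\phi+Q_\gamma\log|\phi'|$ together with the corresponding relabelling of the marked points), and the conformal covariance of $\Phi^{\sigmab,\kappa}_\D$ from the first line of \eqref{eq:phi-axioms-no-cpi} in Lemma \ref{lem:phi-axioms}. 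Concretely, since $\phi$ maps $(w_a,w_b,w_c)$ to $(w'_a,w'_b,w'_c)$, applying the change of coordinates formula to the boundary-three-pointed Liouville field with an additional bulk insertion $(\bm\alpha,\bm z)$ gives, for every measurable $f$,
\begin{align*}
	Z^{(\bm\alpha,\bm z'),((\gamma,\gamma,\gamma),\bm w')}_{\Lambda,\ell}\int P^{(\bm\alpha,\bm z'),((\gamma,\gamma,\gamma),\bm w')}_{\Lambda,\ell}(dk')\,f(k'\circ\phi+Q_\gamma\log|\phi'|,\bm z)\\
	=\prod_{j\in\{a,b,c\}}|\phi'(\phi^{-1}(w'_j))|^{-\beta_j^2/4+\cdots}\;\cdot\;(\text{bulk prefactors})\;\cdot\;Z^{(\bm\alpha,\phi^{-1}(\bm z')),((\gamma,\gamma,\gamma),\bm w)}_{\Lambda,\ell}\int P^{(\bm\alpha,\phi^{-1}(\bm z')),((\gamma,\gamma,\gamma),\bm w)}_{\Lambda,\ell}(dk)\,f(k,\phi^{-1}(\bm z'));
\end{align*}
I would extract the exact Jacobian factors from \cite[Theorem 3.5]{hrv-disk} rather than guess them here, the point being only that they match precisely the conformal covariance factor $\prod_i|\phi'(z_i)|^{\rho^\kappa_{\sigma_i}}$ of $\Phi^{\sigmab,\kappa}_\D$ under the substitution $z_i=\phi^{-1}(z'_i)$ (this is exactly where the defining relation $\rho^\kappa_{\sigma_i}=\alpha_i(Q_\gamma-Q_{\alpha_i})$ of Definition \ref{def:main-params} is used to make the bulk weight exponent agree with the Liouville interior-insertion exponent), together with the factor coming from the three boundary $\gamma$-insertions in the normalizing denominator $Z^{(-,-),((\gamma,\gamma,\gamma),\bm w)}_{0,\ell}$.

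First I would write out $\int\op{M}^{\sigmab,\kappa}_{\bm w',\Lambda,\ell}(dk',d\bm z')\,f(k'\circ\phi+Q_\gamma\log|\phi'|,\phi^{-1}(\bm z'))$ using Definition \ref{def:disk}, then perform the change of variables $\bm z'=\phi(\bm z)$ in the integral over $\D^A$, picking up the Jacobian $\prod_i|\phi'(z_i)|^2$. Next I would substitute the change of coordinates identity for the Liouville data displayed above and the covariance identity $\Phi^{\sigmab,\kappa}_\D(\phi(\bm z))=\prod_i|\phi'(z_i)|^{\rho^\kappa_{\sigma_i}}\Phi^{\sigmab,\kappa}_\D(\bm z)$. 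Finally I would bookkeep: the boundary insertions produce a factor depending only on $\bm w,\bm w'$ which is identical to the ratio $Z^{(-,-),((\gamma,\gamma,\gamma),\bm w')}_{0,\ell}/Z^{(-,-),((\gamma,\gamma,\gamma),\bm w)}_{0,\ell}$ appearing through the normalizing constants (again by \cite[Theorem 3.5]{hrv-disk} applied with $A=\emptyset$), so it cancels; and all the bulk Jacobian and insertion factors combine into the single power of $|\phi'(z_i)|$ that is absorbed by the covariance of $\Phi^{\sigmab,\kappa}_\D$ together with the $|\phi'(z_i)|^2$ from the change of variables, matching what the $\alpha_i$-insertion contributes to $Z^{(\bm\alpha,\bm z),\dots}_{\Lambda,\ell}$ under the coordinate change. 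What remains is exactly $\int\op{M}^{\sigmab,\kappa}_{\bm w,\Lambda,\ell}(dk,d\bm z)\,f(k,\bm z)$.

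One technical point to be careful about: $\phi$ may be anticonformal, so throughout one should use $|\phi'|$ (which is well-defined for anticonformal maps) and check that the change of coordinates formula of \cite{hrv-disk} and the covariance of $\Phi$ are stated, or trivially extend, to the anticonformal case — both are, since all the relevant quantities (LQG measures, conformal radii, Green's functions of the form $-\log|z-w||1-z\bar w|$) are invariant under complex conjugation. A second point is measurability and $\sigma$-finiteness of $\op{M}^{\sigmab,\kappa}_{\bm w,\Lambda,\ell}$, which one needs in order to apply Fubini/change of variables under the integral; this follows from the integrability bounds on $\Phi^{\sigmab,\kappa}_\D$ in Theorem \ref{thm:loopexpconv} and the properties of the Liouville measures, and I would cite these rather than reprove them. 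The main obstacle is purely organizational rather than conceptual: carefully matching the three separate sources of $|\phi'|$-powers — the change-of-variables Jacobian on $\D^A$, the Liouville interior-insertion transformation rule, and the conformal covariance exponent $\rho^\kappa_{\sigma_i}$ of $\Phi$ — and verifying that the boundary-insertion factors cancel against the normalization, with no residual prefactor. Once the exponents are lined up via $\rho^\kappa_{\sigma_i}=\alpha_i(Q_\gamma-Q_{\alpha_i})$, the identity drops out.
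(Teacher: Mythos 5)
Your plan is correct and follows the same route as the paper's proof: unwind Definition \ref{def:disk}, apply the Liouville change of coordinates formula \cite[Theorem 3.5]{hrv-disk} (once with the bulk insertions $(\bm\alpha,\bm z)$ and once with only the boundary $\gamma$-insertions to handle the normalizing constant), change variables $\bm z'=\phi(\bm z)$, and use the conformal covariance of $\Phi^{\sigmab,\kappa}_\D$ from Lemma \ref{lem:phi-axioms}. Your bookkeeping of the three sources of $|\phi'|$-powers and the role of $\rho^\kappa_{\sigma_i}=\alpha_i(Q_\gamma-Q_{\alpha_i})$ in reconciling them is exactly the computation the paper carries out.
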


\begin{proof}
	By unpacking the definitions and subsequently using the change of coordinates formula \cite[Theorem 3.5]{hrv-disk} (see also Section \ref{sec:gff-lqg} where the relevant Liouville conformal field theory notions were introduced) we get
	\begin{align*}
		&\int \op{M}^{{\sigmab},\kappa}_{\bm{w'},\Lambda,\ell}(dk',d\bm{z'}) f(k'\circ \phi+Q_\gamma \log |\phi'|,\phi^{-1}(\bm{z'})) \\
		&= \frac{\ell^{2/\gamma \sum\alpha_i}}{Z^{(-,-),((\gamma,\gamma,\gamma),\bm{w'})}_{0,\ell}} \int_{\D^A}\,d\bm{z'}\, \Phi^{{\sigmab},\kappa}_\D(\bm{z'}) \\
		&\qquad\qquad\quad \cdot Z^{(\bm{\alpha},\bm{z'}),((\gamma,\gamma,\gamma),\bm{w'})}_{\Lambda,\ell}\int P^{(\bm{\alpha},\bm{z'}),((\gamma,\gamma,\gamma),\bm{w'})}_{\Lambda,\ell}(dk') f(k'\circ \phi+Q_\gamma \log |\phi'|,\phi^{-1}(\bm{z'})) \\
		&= \frac{\ell^{2/\gamma \sum\alpha_i}}{Z^{(-,-),((\gamma,\gamma,\gamma),\bm{w'})}_{0,\ell}} \int_{\D^A}\,d\bm{z'}\, \Phi^{{\sigmab},\kappa}_\D(\bm{z'}) \cdot \prod_{i\in A} |(\phi^{-1})'(z'_i)|^{\alpha^\kappa_{\sigma_i}(Q_\gamma-\alpha^\kappa_{\sigma_i}/2)} \prod_{j\in B} |(\phi^{-1})'(w'_j)| \\
		&\qquad\qquad\quad \cdot Z^{(\bm{\alpha},\phi^{-1}(\bm{z'})),((\gamma,\gamma,\gamma),\bm{w})}_{\Lambda,\ell}\int P^{(\bm{\alpha},\phi^{-1}(\bm{z'})),((\gamma,\gamma,\gamma),\bm{w})}_{\Lambda,\ell}(dk) f(k,\phi^{-1}(\bm{z'}))\;.
	\end{align*}
	Moreover, again by \cite[Theorem 3.5]{hrv-disk},
	\begin{align*}
		Z^{(-,-),((\gamma,\gamma,\gamma),\bm{w'})}_{0,\ell} = Z^{(-,-),((\gamma,\gamma,\gamma),\bm{w})}_{0,\ell} \prod_{j\in B} |(\phi^{-1})'(w'_j)|
	\end{align*}
	and the claim is then immediate after making the change of coordinates $\bm{z'}=\phi(\bm{z})$ and using the behavior of $\Phi^{{\sigmab},\kappa}_\D$ under conformal transformations as stated in Lemma \ref{lem:phi-axioms}.
\end{proof}

The previous lemma implies that we can define a $\gamma$-LQG surface with marked points in the following way.

\begin{defn}
	\label{def:simple-disk}
	Let $\kappa\in (8/3,4)$, $\ell>0$, $\Lambda\ge 0$ and $\sigmab\in \mathfrak{S}^A_\kappa$ for finite $A\subseteq \N$. Then we define a measure $\bar{\op{M}}^{{\sigmab},\kappa}_{\Lambda,\ell}$ on the space of regular $\gamma$-LQG surfaces by
	\begin{align*}
		\bar{\op{M}}^{{\sigmab},\kappa}_{\Lambda,\ell}(X) := \int \op{M}^{{\sigmab},\kappa}_{\bm{w},\Lambda,\ell}(dk,d\bm{z}) 1([(\D,k,\bm{z})]\in X)
	\end{align*}
	for each measurable set $X$. We also let
	\begin{align*}
		W^{{\sigmab},\kappa}_{\Lambda,\ell} := |\op{\bar{M}}^{{\sigmab},\kappa}_{\Lambda,\ell}| = |\op{M}^{{\sigmab},\kappa}_{\Lambda,\ell}|\;.
	\end{align*}
	If $W^{{\sigmab},\kappa}_{\Lambda,\ell}<\infty$ we say that $({\sigmab},\kappa,\Lambda,\ell)$ is admissible and call $\op{\bar{M}}^{{\sigmab},\kappa}_{\Lambda,\ell} / W^{{\sigmab},\kappa}_{\Lambda,\ell}$ the regular $\gamma$-LQG disk with boundary length $\ell$, cosmological constant $\Lambda$ and weights ${\sigmab}$.
\end{defn}

\begin{remark}
	We consider fields with three marked boundary points since this guarantees that \eqref{eq:seiberg} is satisfied, see Remark \ref{rmk:seiberg}. Furthermore, having three marked boundary points gives a natural way to fix the embedding of the disk since we can map the three marked boundary points to three fixed points on the boundary of the disk.
\end{remark}

Next, we will make an analogous definition in the case when $\kappa'\in(4,8)$. In the following definition, recall the definition of a Lévy excursion from Section \ref{sec:normal-lqg-disks} and the definition of generalized disk without marked points from Definition \ref{def:gen-disk-no-points}. In the definition below and throughout the text we write ${\sigmab}|_B = (\sigma_C\colon C\subseteq B)$ whenever $B\subseteq A$.

\begin{defn}
	\label{def:gen-disk}
	Let $\kappa'\in(4,8)$, $A\subseteq\N$ finite, $\sigmab\in\frk S_{\kappa'}^A$, $\Lambda\geq 0$ and $\ell>0$.
	We now define $\bar{\op{M}}^{{\sigmab},\kappa'}_{\Lambda,\ell}$ to be a measure on the space of generalized $\gamma$-LQG surfaces as follows. Let $E$ be the time reversal of a length $\ell$ spectrally positive Lévy excursion with exponent $4/\gamma^2$. Then we set 
	\begin{align*}
		\bar{\op{M}}^{{\sigmab},\kappa'}_{\Lambda,\ell}(X) &:= \E\Biggl( \,\sum_{Q\in \Pi(E,A)} \int \prod_{t<\ell\colon \Delta E_t\neq 0} \op{M}^{{\sigmab}|_{Q_{t-}\setminus Q_t},\kappa'}_{\w,\Lambda,|\Delta E_t|}(dk_t,d\bm{z}_t) \\
		&\qquad\qquad\qquad \cdot 1\left(((E,\{(t,[(\D,k_t,\bm{z}_t,w_a)])\colon t<\ell, \Delta e_t\neq 0\})\in X\right) \Biggr)
	\end{align*}
	for each measurable set $X$ where $\Pi(E,A)$ denotes the set of non-increasing càdlàg functions $Q\colon [0,\ell]\to \mathcal{P}(A)$ such that $Q_0=A$, $Q_{\ell-}=\emptyset$ and $\Delta Q_t:=Q_{t-}\setminus Q_t=\emptyset$ whenever $\Delta E_t=0$. We also let
	\begin{align*}
		W^{{\sigmab},\kappa'}_{\Lambda,\ell} := |\op{\bar{M}}^{{\sigmab},\kappa'}_{\Lambda,\ell}|\;.
	\end{align*}
	If $W^{{\sigmab},\kappa'}_{\Lambda,\ell}<\infty$ we say that $({\sigmab},\kappa',\Lambda,\ell)$ is admissible and call $\op{\bar{M}}^{{\sigmab},\kappa'}_{\Lambda,\ell} / W^{{\sigmab},\kappa'}_{\Lambda,\ell}$ the generalized $\gamma$-LQG disk with boundary length $\ell$, cosmological constant $\Lambda$ and weights ${\sigmab}$.
\end{defn}
We do not include $\w$ in the notation $\bar{\op{M}}^{{\sigmab},\kappa'}_{\Lambda,\ell}$ since this measure does not depend on the choice of $\w$ (by Lemma \ref{lem:coordinate-change-field}).

A key question is when admissibility holds and what the values for the partition functions are. Here, we will answer this question in the case of a single bulk singularity i.e.\ $\#A=1$ or no bulk singularities i.e.\ $\#A=0$ both in the simple and the non-simple case. The case of more than one singularity will be studied in Section \ref{sec:explorations}. By making use of Proposition \ref{prop:resample-lcft-point} the proof will be quite short. For the following results, recall the definition of modified Bessel functions of the second kind (see the beginning of Section \ref{sec:levy-exc}) and define $\bar{K}_\nu$ as in \eqref{eq:Kbar}.

\begin{lemma}
	\label{lem:measure-finite-import}
	Suppose that $\kappa\in (8/3,8)\setminus \{4\}$ and let $\gamma=\sqrt{\kappa}\wedge 4/\sqrt{\kappa}$. Assume that $A=\{i\}$ and let $\alpha=\alpha^\kappa_{\sigma_i}$. Then $|\op{M}^{-,\kappa}_{\Lambda,\ell}|, |\op{M}^{{\sigmab},\kappa}_{\Lambda,\ell}|<\infty$ for all $\ell>0$ and $\Lambda\ge 0$, and
	\begin{align*}
		|\op{M}^{-,\kappa}_{\Lambda,\ell}| = \bar{K}_{4/\gamma^2}\left(\ell\cdot\sqrt{\frac{\Lambda}{\sin(\pi\gamma^2/4)}}\,\right)\;, \quad
		\frac{|\op{M}^{{\sigmab},\kappa}_{\Lambda,\ell}|}{|\op{M}^{{\sigmab},\kappa}_{0,1}|} = \ell^{2\alpha/\gamma}\bar{K}_{2/\gamma\cdot(Q_\gamma-\alpha)}\left(\ell\cdot\sqrt{\frac{\Lambda}{\sin(\pi\gamma^2/4)}}\,\right)\;.
	\end{align*}
\end{lemma}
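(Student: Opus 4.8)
\textbf{Proof strategy for Lemma \ref{lem:measure-finite-import}.}
The plan is to reduce everything to the Liouville conformal field theory partition functions and then apply Proposition \ref{prop:resample-lcft-point}. First consider the case $\kappa \in (8/3,4)$, so $\gamma = \sqrt{\kappa}$ and we are in the setting of the regular disk. By definition of $\op{M}^{\sigmab,\kappa}_{\Lambda,\ell}$ with $A = \{i\}$, we have
\begin{align*}
	|\op{M}^{\sigmab,\kappa}_{\Lambda,\ell}|
	= \frac{\ell^{2\alpha/\gamma}}{Z^{(-,-),((\gamma,\gamma,\gamma),\bm{w})}_{0,\ell}}
	\int_{\D} d z\, \Phi^{\sigmab,\kappa}_\D(z)\, Z^{(\alpha,z),((\gamma,\gamma,\gamma),\bm{w})}_{\Lambda,\ell}\;,
\end{align*}
and in the case $A = \emptyset$ the integral and the $\Phi$ factor disappear. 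By the conformal covariance in Lemma \ref{lem:phi-axioms}, $\Phi^{\sigmab,\kappa}_\D(z)$ depends on $z$ only through $R(z,\D)$, and by the explicit formula in Theorem \ref{thm:confradlaw} (as used to build Definition \ref{def:main-params}) one has $\Phi^{\sigmab,\kappa}_\D(z) = e^{\sigma_i} c^{\kappa}_{\sigma_i}\, R(z,\D)^{\rho^\kappa_{\sigma_i}}$ for the single-point function, where $\rho^\kappa_{\sigma_i} = \alpha(Q_\gamma - Q_\alpha)$. Thus the $z$-integral is exactly $\int_\D R(z,\D)^{\alpha(Q_\gamma - Q_\alpha)} Z^{(\alpha,z),((\gamma,\gamma,\gamma),\bm{w})}_{\Lambda,\ell}\, dz$ up to the constant $e^{\sigma_i} c^\kappa_{\sigma_i}$, and the second part of Proposition \ref{prop:resample-lcft-point} (with $f \equiv 1$; note $\alpha < 2$ is guaranteed here since we are in the parameter range of the introduction) rewrites this as $Z^{(-,-),((\gamma,\gamma,\gamma),\bm{w})}_{\Lambda,\ell}\, \E( \mu^\alpha_k(\D))$ where $k \sim P^{(-,-),((\gamma,\gamma,\gamma),\bm{w})}_{\Lambda,\ell}$. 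So in all cases $|\op{M}^{\sigmab,\kappa}_{\Lambda,\ell}|$ is (a constant times) a ratio of fixed boundary length LCFT partition functions $Z^{(\alpha,z),((\gamma,\gamma,\gamma),\bm{w})}_{\Lambda,\ell} / Z^{(-,-),((\gamma,\gamma,\gamma),\bm{w})}_{0,\ell}$, with the interior insertion moved to a boundary average.

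Second, I would invoke the explicit evaluation of these LCFT disk partition functions. This is where the modified Bessel functions enter: as remarked right after Theorem \ref{thm:singlepoint-finite}, the formulas for $W^{-,\kappa}_{\Lambda,\ell}$ and $W^{\sigmab,\kappa}_{\Lambda,\ell}$ follow from \cite{ars-fzz} (the FZZ-type formula for the $\gamma$-LQG disk with cosmological constant), which gives the dependence on $\Lambda$ and $\ell$ through exactly $\bar K_\nu$ with the indicated indices — $\nu = 4/\gamma^2$ for the no-singularity case and $\nu = (2/\gamma)(Q_\gamma - \alpha)$ for the one-singularity case — and the scaling in $\ell$ coming from \eqref{eq22}/\eqref{eq:psi-scaling}. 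Concretely, $\bar K_{4/\gamma^2}(\cdot)$ appears because the quantum area of the $\gamma$-LQG disk with fixed boundary length has an inverse-Gamma / stable-type law whose Laplace transform in $\Lambda$ is a $\bar K$ function; the argument $\ell\sqrt{\Lambda/\sin(\pi\gamma^2/4)}$ is dictated by the scaling \eqref{eq22} together with the normalization $|\op{M}^{-,\kappa}_{0,\ell}| = 1$. The factor $\ell^{2\alpha/\gamma}$ in the one-point formula is precisely the prefactor already built into Definition \ref{def:disk} and recorded in the remark after it. Matching the index of the Bessel function requires only the Seiberg-type arithmetic $2Q_\gamma/\gamma - 2\alpha/\gamma = (2/\gamma)(Q_\gamma - \alpha) \cdot 1 + \dots$, which is a short computation.

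Third, for the non-simple case $\kappa' \in (4,8)$, $\gamma = 4/\sqrt{\kappa'}$, I would combine the previous step with the Lévy-excursion computation of Theorem \ref{thm:levy-excursions}. By Definition \ref{def:gen-disk}, with $\#A \le 1$ the set $\Pi(E,A)$ forces a single jump of $E$ to "carry" the marked point (or none at all when $A = \emptyset$), so
\begin{align*}
	|\op{M}^{\sigmab,\kappa'}_{\Lambda,\ell}|
	= \E\left( \sum_{t < \ell\colon \Delta E_t \neq 0} |\op{M}^{\sigmab,\kappa'}_{\Lambda,|\Delta E_t|}| \prod_{s \neq t} |\op{M}^{-,\kappa'}_{\Lambda,|\Delta E_s|}| \right)
\end{align*}
in the one-point case, and $|\op{M}^{-,\kappa'}_{\Lambda,\ell}| = \E(\prod_{t < \ell} |\op{M}^{-,\kappa'}_{\Lambda,|\Delta E_t|}|)$ in the no-point case. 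Here the excursion has exponent $\nu = 4/\gamma^2 \in (1,2)$. Now apply Theorem \ref{thm:levy-excursions} with $g(x) = |\op{M}^{-,\kappa'}_{\Lambda,x}|$ — which by the simple-case computation is $\bar K_{4/\gamma^2}(cx)$ for the appropriate $c = c(\Lambda,\gamma)$, i.e.\ exactly of the form $\frac{2(cx/2)^\nu}{\Gamma(\nu)} K_\nu(cx)$ with $\nu = 4/\gamma^2$ — and $f(x) = |\op{M}^{\sigmab,\kappa'}_{\Lambda,x}|$, which is $x^{2\alpha/\gamma} \bar K_{(2/\gamma)(Q_\gamma - \alpha)}(cx)$, of the form $x^{1+\nu} K_{1+\nu - \theta\nu}(cx)$ for the right choice of $\theta$ (one checks $1 + \nu - \theta\nu = (2/\gamma)(Q_\gamma - \alpha)$ and $2\alpha/\gamma = 1 + \nu - (\text{something})$, i.e.\ solves a linear equation for $\theta$, and verifies $\theta \in (1, 1+1/\nu)$ using $\alpha \in (1/\gamma + \gamma/2, Q_\gamma)$). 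The theorem then outputs $|\op{M}^{-,\kappa'}_{\Lambda,\ell}| = \bar K_{1/\nu}(c'\ell)$ and $|\op{M}^{\sigmab,\kappa'}_{\Lambda,\ell}|/|\op{M}^{\sigmab,\kappa'}_{0,1}| = \ell^{2\alpha/\gamma}\bar K_{1/\nu \cdot(\dots)}(c'\ell)$ with $1/\nu = \gamma^2/4$ and $c' = c^\nu 2^{1-\nu}$ tracking the $\sqrt{\Lambda/\sin(\pi\gamma^2/4)}$ normalization; the change of index from $4/\gamma^2$ to $\gamma^2/4 = 1/\nu$ is precisely what Theorem \ref{thm:levy-excursions} produces under the excursion operation, which is the crucial point that makes the final formula take the same shape as in the simple case.

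\textbf{Main obstacle.} The conceptual core is already handled by the cited results; the friction will be bookkeeping of constants and indices. Specifically: (a) extracting the precise statement "$W^{-,\kappa}_{\Lambda,\ell} = \bar K_{4/\kappa}(\dots)$" from \cite{ars-fzz} with the exact argument $2\ell(\Lambda/(4\sin(\pi\gamma^2/4)))^{(\kappa/8)\vee(1/2)}$ — i.e.\ pinning down the constant $c$ so that the normalization $W^{-,\kappa}_{0,\ell} = 1$ holds, using $\bar K_\nu(0) = 1$; and (b) verifying that the parameter $\theta$ needed to feed Theorem \ref{thm:levy-excursions} really lies in the admissible window $(1, 1+1/\nu)$ under the standing assumption $\sigma_i < -\log(-\cos(4\pi/\kappa))$, equivalently $\alpha > 1/\gamma + \gamma/2$ (this is the reason that threshold is imposed, and it is exactly the analogue of the constraint $\theta \in (4/\kappa'+1/2, 4/\kappa'+3/2)$ appearing in Proposition \ref{prop:nonsimple-power-theta}). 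Neither is deep, but both require care to get the exponents to line up, and the fact that the two parameter ranges $\kappa < 4$ and $\kappa > 4$ yield formally identical formulas — despite genuinely different proofs — should be highlighted as a consistency check rather than taken for granted.
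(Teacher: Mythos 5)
Your Steps 1 and 2 are essentially the paper's argument, and — this is the crucial point you miss — they work \emph{uniformly} in $\kappa\in(8/3,8)\setminus\{4\}$. The measure $\op{M}^{\sigmab,\kappa}_{\Lambda,\ell}$ of Definition \ref{def:disk} is a measure on $H^{-1}(\D)\times\D^A$ for all such $\kappa$; it carries no generalized-disk or looptree structure, and the only dependence on whether $\kappa$ is below or above $4$ is through $\gamma=\sqrt{\kappa}\wedge 4/\sqrt{\kappa}$. The reduction to $Z^{(\alpha,0),(\gamma,1)}_{\Lambda,\ell}$ via Proposition \ref{prop:resample-lcft-point}, followed by the inverse Gamma law of the quantum area from \cite{ag-disk,ars-fzz}, therefore already covers the $\kappa\in(4,8)$ case verbatim with $\gamma=4/\sqrt{\kappa}$.

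Your Step 3 is consequently both unnecessary and misdirected. The identity you write,
\begin{align*}
	|\op{M}^{\sigmab,\kappa'}_{\Lambda,\ell}|
	= \E\left(\,\sum_{t<\ell\colon\Delta E_t\neq 0} |\op{M}^{\sigmab,\kappa'}_{\Lambda,|\Delta E_t|}|\prod_{s\neq t}|\op{M}^{-,\kappa'}_{\Lambda,|\Delta E_s|}|\right),
\end{align*}
is false as stated: the right side equals $W^{\sigmab,\kappa'}_{\Lambda,\ell}=|\bar{\op{M}}^{\sigmab,\kappa'}_{\Lambda,\ell}|$ from Definition \ref{def:gen-disk}, not $|\op{M}^{\sigmab,\kappa'}_{\Lambda,\ell}|$. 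Even had you meant $W$, the reasoning would be circular, since to feed Theorem \ref{thm:levy-excursions} with $g(x)=|\op{M}^{-,\kappa'}_{\Lambda,x}|$ and $f(x)=|\op{M}^{\sigmab,\kappa'}_{\Lambda,x}|$ you need the explicit Bessel form of these two quantities for $\kappa'>4$, which is precisely what the lemma asserts. The Lévy-excursion step you describe is the proof of Theorem \ref{thm:singlepoint-finite} in the $\kappa>4$ case, which takes Lemma \ref{lem:measure-finite-import} as input. You have conflated $\op{M}$ with $\bar{\op{M}}$ and thus the lemma with the theorem.

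Two smaller points. First, for a single interior point the nesting statistic simplifies to $\Phi^{\sigmab,\kappa}_\D(z)=R(z,\D)^{\rho^\kappa_{\sigma_i}}$, with no prefactor $e^{\sigma_i}c^\kappa_{\sigma_i}$: the constant $c^\kappa_{\sigma_i}$ appears only in the $\epsilon$-approximation on the left of \eqref{eq:phi-limit}, not in the limit, and the $e^{\sigma_i}$ inside the limit cancels exactly against $\E(R(0,\eta_0^o)^{\rho^\kappa_{\sigma_i}})=e^{-\sigma_i}$ by the very choice of $\rho^\kappa_\sigma$ in Definition \ref{def:main-params} and Theorem \ref{thm:confradlaw}. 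This does not affect the final ratio but should be stated correctly. Second, the paper's proof uses only the first identity of Proposition \ref{prop:resample-lcft-point}, which does not need $\alpha<2$; the constraint $\alpha<2$ is needed only for the second identity of that proposition (the rewriting as an $\mu^\alpha_k$-integral), which this proof does not invoke.
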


\begin{proof}
	By Definition \ref{def:disk} (with $\bm{w}$ as in that definition) we get 
	\begin{align*}
		|\op{M}^{-,\kappa}_{\Lambda,\ell}|=\frac{Z^{(-,-),((\gamma,\gamma,\gamma),\bm{w})}_{\Lambda,\ell}}{Z^{(-,-),((\gamma,\gamma,\gamma),\bm{w})}_{0,\ell}} = \int  e^{-\Lambda\mu^\gamma_k(\D)}\,P^{(-,-),((\gamma,\gamma,\gamma),\bm{w})}_{0,\ell}(dk)\;.
	\end{align*}
	The two results \cite[Theorem 1.2]{ag-disk} and \cite[Theorem 1.3]{ars-fzz} show that the law of the area of the unit boundary length $\gamma$-LQG disk as defined in Definition \ref{def:normal-unit-disk} is an inverse Gamma distribution with shape parameter $4/\gamma^2$ and scale parameter $1/(4\sin(\pi\gamma^2/4))$. Thus
	\begin{align*}
		\int  e^{-\Lambda\mu^\gamma_k(\D)}\,P^{(-,-),((\gamma,\gamma,\gamma),\bm{w})}_{0,\ell}(dk) &= \frac{1}{\Gamma(4/\gamma^2)(4\sin(\pi\gamma^2/4))^{4/\gamma^2}}\int_0^\infty e^{-\Lambda\ell^2 t}\, \frac{e^{-1/(4\sin(\pi\gamma^2/4)t)}\,dt}{t^{1+4/\gamma^2}}\\
		&= \frac{1}{\Gamma(4/\gamma^2)}\int_0^\infty e^{-\Lambda\ell^2 x/(4\sin(\pi\gamma^2/4))-1/x}\,\frac{dx}{x^{1+4/\gamma^2}}
	\end{align*}
	and the first claim of the lemma follows from the definition of the modified Bessel functions as given at the beginning of Section \ref{sec:levy-exc}.

	If $\#A=\{i\}$ then $\Phi^{{\sigmab},\kappa}_\D(\bm{z})=R(z_i,\D)^{\rho}$ where $\rho:=\rho^\kappa_{\sigma_i}=\alpha(Q_\gamma-Q_\alpha)$. By Proposition \ref{prop:resample-lcft-point} therefore
	\begin{align*}
		|\op{M}^{{\sigmab},\kappa}_{\Lambda,\ell}| = \frac{\ell^{2\alpha/\gamma}}{Z^{(-,-),((\gamma,\gamma,\gamma),\bm{w})}_{0,\ell}}\, \frac{1}{2}\, e^{(G(w_a,w_b)+G(w_b,w_c)+G(w_c,w_a))/2}Z^{(\alpha,0),(\gamma,1)}_{\Lambda,\ell}<\infty
	\end{align*}
	where the finiteness of the partition functions on the right-hand side is recalled in Definition \ref{def:lcft-field}. Therefore,
	\begin{align*}
		\ell^{-2\alpha/\gamma}\,\frac{|\op{M}^{{\sigmab},\kappa}_{\Lambda,\ell}|}{|\op{M}^{{\sigmab},\kappa}_{0,1}|} = \frac{Z^{(\alpha,0),(\gamma,1)}_{\Lambda,\ell}}{Z^{(\alpha,0),(\gamma,1)}_{0,\ell}} = \frac{Z^{(\alpha,0),(\gamma,1)}_{\Lambda\ell^2,1}}{Z^{(\alpha,0),(\gamma,1)}_{0,1}} = \int e^{-\Lambda\ell^2 \mu^\gamma_k(\D)}\,P^{(\alpha,0),(\gamma,1)}_{0,1}(dk)\;.
	\end{align*}
	As stated in \cite[Theorem 1.2]{ars-fzz}, the law of $\mu^\gamma_k(\D)$ when $k\sim P^{(\alpha,0),(\gamma,1)}_{0,1}$ is an inverse Gamma distribution with shape parameter $2/\gamma\cdot(Q_\gamma-\alpha)$ and scale parameter $1/(4\sin(\pi\gamma^2/4))$; note that in their setup the boundary $\gamma$ singularity is mapped to a point that is uniformly sampled from $\partial\D$ which of course does not affect the law of the total area. The second claim now follows from a computation similar to the one for the first claim.
\end{proof}

We now deduce the corresponding results for the total masses of the measures on the space of (generalized) LQG surfaces and conclude the proof of Theorem \ref{thm:singlepoint-finite}. One thing worth remarking is that in the definition of a Lévy excursion we introduced an arbitrary convention: If $b$ is a unit length Lévy excursion constructed from a Lévy process $B$ (see Section \ref{sec:levy-exc}) and $c>0$ a constant then $c\cdot b$ is the unit length Lévy excursion associated to the Lévy process $c\cdot B$. This arbitrary choice in multiplicative normalization is reflected in the appearance of the $\sin(\pi\gamma^2/4)$ term below; by changing the normalization of the Lévy excursion in the $\kappa>4$ case, we could also replace it by another arbitrary constant depending on $\gamma$ and $\kappa$, e.g.\ $\sin(\pi\kappa/4)$.

\begin{proof}[Proof of Theorem \ref{thm:singlepoint-finite}]
	The case $\kappa\in(8/3,4)$ is immediate from Lemma \ref{lem:measure-finite-import} and Definition \ref{def:simple-disk}. For the $\kappa\in(4,8)$ case let
	\begin{align*}
		c=\sqrt{\frac{\Lambda}{\sin(\pi\gamma^2/4)}}\;,\quad \nu=4/\gamma^2\;,\quad \theta = \alpha\gamma/2\;,\quad c'=c^\nu2^{1-\nu}=2\left(\frac{\Lambda}{4\sin(\pi\gamma^2/4)}\right)^{2/\gamma^2}\;.
	\end{align*}
	We only consider the $\Lambda>0$ case; the $\Lambda=0$ case then follows by monotone convergence. By Definition \ref{def:gen-disk} we have
	\begin{align*}
		W^{-,\kappa}_{\Lambda,\ell} &= \E\left(\prod_{t<\ell\colon \Delta E_t\neq 0} |\M^{-,\kappa}_{\Lambda,|\Delta E_t|}|\right)\;,\\
		W^{{\sigmab},\kappa}_{\Lambda,\ell} &= \E\left(\sum_{t<\ell\colon \Delta E_t\neq 0} |\M^{{\sigmab},\kappa}_{\Lambda,|\Delta E_t|}|\prod_{s\neq t\colon \Delta E_s\neq 0} |\M^{-,\kappa}_{\Lambda,|\Delta E_s|}| \right)
	\end{align*}
	where $E$ is a spectrally positive Lévy excursion of exponent $4/\gamma^2$ and duration $\ell$ (since we are only using the sequence of jump heights, we do not have to consider the time reversal). Thus by Theorem \ref{thm:levy-excursions} and Lemma \ref{lem:measure-finite-import}, we get
	\begin{align*}
		W^{-,\kappa}_{\Lambda,\ell} &= \E\left(\prod_{t<\ell\colon \Delta E_t\neq 0} \bar{K}_\nu(c\Delta E_t)\right) = \bar{K}_{1/\nu}(c'\ell)
	\end{align*}
	from which the first claim follows. Also
	\begin{align*}
		\frac{W^{{\sigmab},\kappa}_{\Lambda,\ell}}{|\op{M}^{{\sigmab},\kappa}_{0,1}|}&=\frac{2(c/2)^{1+\nu-\theta\nu}}{\Gamma(1+\nu-\theta\nu)}\,\E\left(\,\sum_{t<\ell\colon \Delta E_t\neq 0} (\Delta E_t)^{1+\nu}K_{1+\nu-\theta\nu}(c\Delta E_t) \prod_{s\neq t\colon \Delta E_s\neq 0} \bar{K}_\nu(c\Delta E_s) \right) \\
		&= \frac{2(c/2)^{1+\nu-\theta\nu}}{\Gamma(1+\nu-\theta\nu)}\cdot \frac{\sqrt{1/\nu}\,\Gamma(-1/\nu)\sin(\pi(1+1/\nu-\theta))}{\sqrt{\nu}\,\Gamma(-\nu)\,\sin(\pi(1+\nu-\theta\nu))}\,\ell^{1+1/\nu}K_{1+1/\nu-\theta}(c'\ell) \\
		&= c'' \ell^{\alpha\gamma/2} \bar{K}_{\gamma/2\cdot(Q_\gamma-\alpha)}\left(2\ell\left(\frac{\Lambda}{4\sin(\pi\gamma^2/4)}\right)^{2/\gamma^2}\,\right)
	\end{align*}
	for a constant $c''>0$ which does not depend on $\Lambda$ and $\ell$. The claim follows.
\end{proof}

For future reference, let us also record the following corollary which is a direct consequence of properties of modified Bessel functions and will be applied in Section \ref{sec:explorations}.
\begin{cor}
	\label{cor:single-point-properties}
	Suppose that $\kappa\in (8/3,8)\setminus \{4\}$, $A=\{i\}$ and $\Lambda>\Lambda'>0$. Then there exists a constant $C=C(\sigma_i,\kappa,\Lambda,\Lambda')$ such that $W^{{\sigmab},\kappa}_{\Lambda,\ell} \le C\,W^{-,\kappa}_{\Lambda',\ell}$ for all $\ell>0$. Moreover there is $C_\kappa>0$ such that for all $\Lambda\ge 0$ and $\ell>0$,
	\begin{align*}
		1-W^{-,\kappa}_{\Lambda,\ell} \le C_\kappa \Lambda \ell^{2\wedge (8/\kappa)}\;.
	\end{align*}
\end{cor}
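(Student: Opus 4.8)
The plan is to read off both claims from the explicit formulas for $W^{-,\kappa}_{\Lambda,\ell}$ and $W^{\sigmab,\kappa}_{\Lambda,\ell}$ established in Theorem \ref{thm:singlepoint-finite}, together with standard asymptotics of the modified Bessel function $K_\nu$ (equivalently of the normalized function $\bar K_\nu$ from \eqref{eq:Kbar}). Recall that $\bar K_\nu(0)=1$, that $\bar K_\nu$ is smooth and strictly decreasing on $[0,\infty)$ with $\bar K_\nu(x)\sim c_\nu x^{\nu}e^{-x}$ as $x\to\infty$ for $\nu>0$, and that $1-\bar K_\nu(x)\asymp x^{2\wedge 2\nu}$ as $x\to 0$ (with a logarithmic correction exactly at $\nu=1$, which does not occur here since $\nu=4/\kappa\neq 1$ for $\kappa\in(8/3,8)\setminus\{4\}$ — actually $\nu=1$ would need $\kappa=4$, excluded). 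Write $b(\Lambda)=2\left(\Lambda/(4\sin(\pi\gamma^2/4))\right)^{(\kappa/8)\vee(1/2)}$ for the argument scaling appearing in Theorem \ref{thm:singlepoint-finite}, so that $W^{-,\kappa}_{\Lambda,\ell}=\bar K_{4/\kappa}(b(\Lambda)\ell)$ and $W^{\sigmab,\kappa}_{\Lambda,\ell}/W^{\sigmab,\kappa}_{0,1}=\ell^{2\alpha/\sqrt\kappa}\bar K_{(2/\sqrt\kappa)(Q_\gamma-\alpha)}(b(\Lambda)\ell)$.

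For the second assertion, set $\nu=4/\kappa$ and note that $8/\kappa = 2\nu$ when $\kappa\le 4$, while $2 = 2\cdot 1 \le 2\nu$ when $\kappa>4$ (since then $\nu>1$), so in both cases $2\wedge(8/\kappa) = \min(2,2\nu)$, which is exactly the exponent governing $1-\bar K_\nu$ near $0$. Moreover $b(\Lambda)^{2\wedge(8/\kappa)}$ is a constant multiple of $\Lambda$: indeed $b(\Lambda)^{2\nu} = $ const $\cdot\,\Lambda^{2\nu\cdot(\kappa/8)} = $ const $\cdot\,\Lambda$ when $\kappa\le 4$, and $b(\Lambda)^2 = $ const $\cdot\,\Lambda^{2\cdot(1/2)} = $ const $\cdot\,\Lambda$ when $\kappa>4$. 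Since $\bar K_\nu$ is decreasing, $0\le 1-\bar K_\nu(x)\le 1$ for all $x\ge 0$, so combining the near-zero bound $1-\bar K_\nu(x)\le C'_\nu x^{\min(2,2\nu)}$ (valid on, say, $x\le 1$) with the trivial bound $1-\bar K_\nu(x)\le 1\le x^{\min(2,2\nu)}$ for $x\ge 1$, we get $1-\bar K_\nu(x)\le (C'_\nu\vee 1)\,x^{\min(2,2\nu)}$ for all $x\ge 0$. Applying this with $x=b(\Lambda)\ell$ yields $1-W^{-,\kappa}_{\Lambda,\ell}\le C_\kappa\,\Lambda\,\ell^{2\wedge(8/\kappa)}$ for a constant $C_\kappa$ depending only on $\kappa$, as desired.

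For the first assertion, fix $\Lambda>\Lambda'>0$ and set $\nu=4/\kappa$, $\mu=(2/\sqrt\kappa)(Q_\gamma-\alpha)$; note $\mu>0$ since $\alpha<Q_\gamma$. Using the scaling \eqref{eq22} (or directly Theorem \ref{thm:singlepoint-finite}), $W^{\sigmab,\kappa}_{\Lambda,\ell}/W^{-,\kappa}_{\Lambda',\ell} = W^{\sigmab,\kappa}_{0,1}\,\ell^{2\alpha/\sqrt\kappa}\,\bar K_\mu(b(\Lambda)\ell)/\bar K_\nu(b(\Lambda')\ell)$. The ratio $\bar K_\mu(b(\Lambda)\ell)/\bar K_\nu(b(\Lambda')\ell)$ is a continuous positive function of $\ell\in(0,\infty)$, so it suffices to control it as $\ell\to 0$ and $\ell\to\infty$, and to absorb the polynomial prefactor $\ell^{2\alpha/\sqrt\kappa}$. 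As $\ell\to 0$, both Bessel factors tend to $1$, so the whole expression behaves like $W^{\sigmab,\kappa}_{0,1}\ell^{2\alpha/\sqrt\kappa}\to 0$ (here $\alpha>0$ follows from $\sigma_i<-\log(-\cos(4\pi/\kappa))$ via Definition \ref{def:main-params}, since $\alpha^\kappa_{\sigma_i}>Q_\gamma-\sqrt\kappa/4>0$; even if one does not want this, the prefactor is bounded near $0$ anyway). As $\ell\to\infty$, $\bar K_\mu(b(\Lambda)\ell)\sim c\,(b(\Lambda)\ell)^\mu e^{-b(\Lambda)\ell}$ and $\bar K_\nu(b(\Lambda')\ell)\sim c'\,(b(\Lambda')\ell)^\nu e^{-b(\Lambda')\ell}$, so the ratio is asymptotic to a constant times $\ell^{\mu-\nu}e^{-(b(\Lambda)-b(\Lambda'))\ell}$; since $b$ is strictly increasing, $b(\Lambda)-b(\Lambda')>0$, and the exponential decay beats both $\ell^{\mu-\nu}$ and the extra polynomial factor $\ell^{2\alpha/\sqrt\kappa}$, so the full expression $\to 0$. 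Being continuous on $(0,\infty)$ and tending to finite limits ($0$) at both ends, it is bounded by some finite $C=C(\sigma_i,\kappa,\Lambda,\Lambda')$, which is exactly the claim.

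The only mild obstacle is having the Bessel asymptotics at hand in the precise normalized form; all of it is classical (e.g. DLMF 10.25, 10.30, or the integral representation given just before Theorem \ref{thm:levy-excursions}), and the case $\kappa=4$ — where $\nu=1$ and the near-zero expansion of $1-\bar K_1$ carries a logarithm — is excluded by hypothesis, so no special care is needed there. In practice one would either cite these standard facts or derive the two needed estimates ($\bar K_\nu(0)=1$, monotonicity, the $x\to 0$ order $\min(2,2\nu)$, and the $x\to\infty$ exponential rate) directly from the integral formula $K_\nu(x)=\tfrac12(x/2)^{-\nu}\int_0^\infty e^{-(x/2)^2 y - 1/y}\,y^{-1-\nu}\,dy$.
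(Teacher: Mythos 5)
Your proof is correct and follows the same route as the paper's: read off both claims from the explicit Bessel-function formulas of Theorem~\ref{thm:singlepoint-finite}, using the small-argument expansion of $1-\bar K_\nu$ (order $x^{\min(2,2\nu)}$) for the second assertion and the exponential decay of $\bar K_\nu(x)$ as $x\to\infty$ for the first. One bookkeeping slip: when $\kappa>4$ you have $\nu=4/\kappa<1$, not $\nu>1$ as written, so the cases in your aside are swapped (for $\kappa\le 4$ the exponent in $b(\Lambda)$ is $1/2$ and one raises to the power $2$; for $\kappa>4$ the exponent is $\kappa/8$ and one raises to the power $8/\kappa$) — but since $8/\kappa=2\nu$ identically and the product of the two exponents equals $1$ in either case, the conclusion $b(\Lambda)^{2\wedge(8/\kappa)}\propto\Lambda$ and hence the final bound are unaffected.
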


\begin{proof}
	Let us first recall the following asymptotic for modified Bessel functions of the second kind (see \cite[Equations (9.6.2), (9.6.10)]{as-handbook72}):
	\begin{align*}
		K_\nu(x)&= \frac{\pi}{2\sin (\pi\nu)}\left( 
		\frac{1}{\Gamma(-\nu+1)} \left( \frac{x}{2} \right)^{-\nu} 
		+ \frac{1}{\Gamma(-\nu+2)} \left( \frac{x}{2} \right)^{-\nu+2}
		+ \frac{1}{\Gamma(\nu+1)} \left( \frac{x}{2} \right)^{\nu}
		\right)\\
		& \qquad +O(x^{-\nu+4}+x^{\nu+2})\quad\text{as $x\to 0$}
	\end{align*}
	for all $\nu>0$. Therefore, by Euler's reflection formula and the definition of $\bar{K}_\nu$,
	\begin{align}
		\label{eq:bessel-mod-0}
		1-\bar{K}_\nu(x)&=  \frac{1}{\nu-1} \left( \frac{x}{2} \right)^{2}
		+ \frac{\pi}{\sin(-\pi\nu)\Gamma(\nu)\Gamma(\nu+1)} \left( \frac{x}{2} \right)^{2\nu}  +O(x^{4}+x^{2\nu+2})\quad\text{as $x\to 0$}\;.
	\end{align}
	In particular we obtain $\bar{K}_\nu(x)\to 1$ as $x\to 0$ and this combined with the following Bessel function asymptotic (see \cite[Equation (9.7.1)]{as-handbook72})
	\begin{align*}
		K_\nu(x) &\sim \sqrt{\frac{\pi}{2x}}\,e^{-x}\quad\text{as $x\to\infty$}
	\end{align*}
	and Theorem \ref{thm:singlepoint-finite} readily yields the first claim. For the second claim, we observe that \eqref{eq:bessel-mod-0} implies that for $\nu\in (0,2)$ there is $C'_\nu>0$ such that
	\begin{align*}
		1-\bar{K}_\nu(x)\le C'_\nu x^{2\wedge (2\nu)}\quad\text{for all $x>0$}\;.
	\end{align*}
	The claim then follows again directly from Theorem \ref{thm:singlepoint-finite}.
\end{proof}

In the $\kappa\in (8/3,4)$ case, by considering the total mass of the measure in Definition \ref{def:disk} we get
\begin{align}
	\label{eq:lcft-lqg-partition}
	W^{{\sigmab},\kappa}_{\Lambda,\ell}
	& = \frac{\ell^{2/\gamma \sum\alpha_i}}{Z^{(-,-),((\gamma,\gamma,\gamma),\bm{w})}_{0,\ell}} \int_{\D^A}\,d\bm{z}\, \Phi^{{\sigmab},\kappa}_\D(\bm{z}) Z^{(\bm{\alpha},\bm{z}),((\gamma,\gamma,\gamma),\bm{w})}_{\Lambda,\ell}\;.
\end{align}
Let us now consider the setting of Definition \ref{def:lcft-field} and define $s:=\sum_i\alpha_i+1/2\sum_j \beta_j-Q_\gamma$.  In the context of Liouville conformal field theory, the constants
\begin{align*}
	Z^{\,\op{LCFT}}_{\Lambda, \ell,  (\bm{\alpha},\z),( \bm{\beta},\w )} := \frac{2\ell^{2s/\gamma-1}}{\gamma}\,e^{-\sum_j \beta^2_j/8} Z^{(\bm{\alpha},\bm{z}),(\bm{\beta},\bm{w})}_{\Lambda,\ell}
\end{align*}
are the fixed boundary length LCFT partition functions (with certain boundary and bulk singularities) -- the multiplicative prefactor is an arbitrary convention which we include to make our presentation consistent with the work \cite{hrv-disk}. We believe the relation \eqref{eq:lcft-lqg-partition} to be possibly useful for the following reason. This work explains how the expressions $W^{{\sigmab},\kappa}_{\Lambda,\ell}$ can in principle be iteratively computed (see Remark \ref{rmk:bootstrap}) so if the fixed boundary length LCFT partition functions could also be determined as a function of the position of the singularities and the cosmological constant, then this could be used to obtain information about the CLE observable $\Phi^{\sigmab,\kappa}_\D$. 

We remark that the constant $Z^{(-,-),((\gamma,\gamma,\gamma),\bm{w})}_{0,\ell}$ appearing in  \eqref{eq:lcft-lqg-partition} can be determined explicitly using results of \cite{remy-zhu-boundary}. We also remark that the fixed boundary length LCFT partition functions are closely related to the LCFT correlation functions; indeed, if $h$ is as in Definition \ref{def:lcft-field} then the LCFT correlation function is given by
\begin{gather*}
	Z^{\,\op{LCFT}}_{\Lambda,  (\bm{\alpha},\z),( \bm{\beta},\w )} = e^{-\sum_j \beta^2_j/8}C^{(\bm{\alpha},\bm{z})}_{(\bm{\beta},\bm{w})}
	\int \E(e^{-\Lambda \mu_{h+c}(\D)}) e^{cs}\,dc\
	=	\int Z^{\,\op{LCFT}}_{\Lambda, \ell,  (\bm{\alpha},\z),( \bm{\beta},\w )} \,d\ell\;,
\end{gather*}
where the second equality in this display is obtained by making the change of coordinates $c=2/\gamma \cdot (\log\ell - \log \nu_{h}(\partial\D))$.
Note however that the LCFT correlation functions (in contrast to the fixed boundary length LCFT partition functions) will be insufficient to extract information about the CLE observable $\Phi^{\sigma,\kappa}_\D$ since the LCFT correlation function is a polynomial in the cosmological constant $\Lambda$; this can be seen by making a change of coordinates $\ell'=\ell\sqrt{\Lambda}$ in the above display concerning LCFT correlation functions.

\subsection{The LQG disk weighted by CLE nesting statistics as a limit}
\label{sec:disk-conv}

This section is devoted to the proof of the following proposition. It says that the field describing a $\gamma$-LQG disk (or a single component of such a disk in the generalized case) with zero interior marked points (see Section \ref{sec:normal-lqg-disks}) converges to the field of a disk with singularities $(\alpha_i \colon i\in A)$ (see Section \ref{sec:disk}), upon a reweighting by the expression
\begin{align*}
	\prod_{i\in A} \left(\epsilon^{\alpha_i^2/2} e^{\alpha_i h_\epsilon(z_i)}\right),
\end{align*}
where the $z_i$ points are sampled independently from the Lebesgue measure and $h_\epsilon(z_i)$ is an appropriate approximation to $h$ evaluated at $z_i$. In other words, we are showing how to obtain the measures in Section \ref{sec:disk} using a limiting procedure which is going to be crucial in the proofs in Section \ref{sec:disk-conv-exploration} in order to transfer results from Section \ref{sec:background-msw}, i.e. the case without marked points, to our setting with marked points. In Section \ref{sec:disk-conv-exploration} we will be in a scenario where the mollifier we use to define $h_\epsilon(z_i)$ is not entirely independent of the field $h$ (because these mollifiers will depend on mapping out functions defined via the field $h$) and for this reason we prove the result below in this generality. The reason for including the space $E$ in the result below is that we want to allow for sampling other geometric objects conditionally on the field $h$, for example we will sample a CPI where its target point is sampled from the LQG boundary measure associated to $h$.

\begin{prop}\label{prop:limit-eps}
	Let $\kappa\in(8/3,8)\setminus\{4 \}$, $\ell>0$, $r\in(0,1/4)$, $A\subseteq\N$ finite, $\Lambda\geq 0$, and define  $K\subseteq\D^A$ by
	\begin{align*}
		K=\{ \z=(z_i\,:\,i\in A)\in B_{1-2r}(0)^A\,:\, B_{2r}(z_i)\cap B_{2r}(z_j)=\emptyset\,\forall i\neq j \}.
	\end{align*}
	Consider some measurable space $E$. Also let $B=\{a,b,c \}$, $(\beta_a,\beta_b,\beta_c)=(\gamma,\gamma,\gamma)$, and let $\w=(w_a,w_b,w_c)$ be three distinct boundary points of $\partial \D$ ordered counterclockwise. We consider a probability kernel $\mu$ from $H^{-1}(\D)$ to $C(\D\times(0,r/2)\times\D)\times E$ (so $\mu_k$ is a probability measure on $C(\D\times(0,r/2)\times\D)\times E$ for each $k\in H^{-1}(\D)$) where we denote elements $\theta$ of $C(\D\times(0,r/2)\times\D)$ by $(z,\epsilon,w)\mapsto \theta^z_\epsilon(w)$.
	Let $G_{B_r(z)}$ denote the Dirichlet Green's function in $B_r(z)$. We assume that for all $k \in H^{-1}(\D)$, $((\theta,x)\mapsto \theta)_* \mu_k$ is supported on functions $\theta$ with the property that for all $z\in B_{1-2r}(0)$ and $\epsilon \in (0,r/2)$ we have
	\begin{align}
		\label{eq:theta}
		\begin{split}
		&\int_\D \theta^z_\epsilon(w)\,dw = 1+o_\epsilon(1)\;,\quad \op{supp}(\theta^z_\epsilon) \subseteq B_{2\epsilon}(z)\;,\\
		&\int_{\D^2} G(w,w')\theta^{z}_\epsilon(w)\theta^{z}_\epsilon(w')\,dw\,dw' = \log\frac{1}{R(z,\D)\epsilon} +o_\epsilon(1)\;,\\
		&\int_{B_r(z)^2} G_{B_r(z)}(w,w')\theta^{z}_\epsilon(w)\theta^{z}_\epsilon(w')\,dw\,dw' = \log\frac{r}{\epsilon} +o_\epsilon(1)
		\end{split}
	\end{align}
	as $\epsilon\to 0$ where the $o_\epsilon(1)$ is uniform in $z\in B_{1-2r}(0)$, in $\theta$ and in $k$.\footnote{\,When $\theta^z_\epsilon(w)\,dw$ is replaced by the probability measure $\sigma^z_{\epsilon}$ with uniform mass on the circle $\partial B_\epsilon(z)$ then \eqref{eq:theta} is also satisfied. One can make sense of the function $(z,\epsilon)\mapsto k(\sigma^z_\epsilon)$ in the setting of the proposition and the random process is called the circle average process associated to $k$. However, for technical reasons, we work with smooth bump functions instead.}
	Lastly, we suppose that $f: H^{-1}(\D)\times\D^A\times E\to[0,\infty)$ is measurable, bounded and satisfies $f(k,\z,x)=0$ for $\z\not\in K$. Moreover, almost every $k$  sampled from $\M^{-,\kappa}_{\w,0,\ell}$ satisfies $\lim_{n\to\infty}f(k+g^\z_n,\z,x)= f(k,\z,x)$ if $(g^\z_n)$ is a sequence of continuous functions with compact support in $\cup_i B_{2r}(z_i)$ which converge uniformly to $0$ away from the points of $\z$ such that $\lim_{n\to\infty}\|g_n^\z\|_{H^{-1}(\D)}=0$. Then
	\begin{align}
	\begin{split}
		&\int f(k,\z,x)\Phi^{{\sigmab},\kappa}_{\D}(\z) \prod_{i\in A} \left(\epsilon^{\alpha_i^2/2} e^{\alpha_i k(\theta_\epsilon^{z_i})} \right)
		e^{-\Lambda \mu^\gamma_{k}(\D)}
		\,\mu_{k^{\perp\z}}(d\theta,dx)\,d\z\,\M^{-,\kappa}_{\w,0,\ell}(dk) \\
		&\to \int f(k,\z,x) \,\mu_{k^{\perp\z}}(d\theta,dx)\,\M^{{\sigmab},\kappa}_{\w,\Lambda,\ell}(dk,d\z)
	\end{split}
	\label{eq16}
	\end{align}
	as $\epsilon\to 0$ where $d\z$ denotes Lebesgue measure on tuples $\z=(z_i\in\D\,:\,i\in A)$ and $k^{\perp\z}$ denotes the harmonic extension of $k$ restricted to the complement of $\cup_i B_r(z_i)$ (see Section \ref{sec:gff-lqg}).
\end{prop}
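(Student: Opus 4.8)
\textbf{Proof plan for Proposition \ref{prop:limit-eps}.}

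The plan is to reduce the statement to the case where the mollifier family $\theta$ is the fixed circle-average family $\sigma^z_\epsilon$ (or a smooth approximation thereof) and then use Girsanov's theorem to move the logarithmic singularities into the field one at a time. First I would expand the definition of $\M^{-,\kappa}_{\w,0,\ell}$ via Definition \ref{def:disk}: it is a finite multiple of $Z^{(-,-),((\gamma,\gamma,\gamma),\w)}_{0,\ell}\int P^{(-,-),((\gamma,\gamma,\gamma),\w)}_{0,\ell}(dk)$. By Definition \ref{def:lcft-field} the probability measure $P^{(-,-),((\gamma,\gamma,\gamma),\w)}_{0,\ell}$ is an explicit reweighting of the law of $h=h^0+\sum_j(\gamma/2)G(\cdot,w_j)$, where $h^0$ is a Neumann GFF with zero mean on $\partial\D$; so modulo the (bounded, explicit) density in \eqref{eq:reweighted-measure} and a rescaling by $\nu^\gamma_h(\partial\D)$ we are computing a Gaussian expectation against $h$. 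Next, for each fixed $\z\in K$, I would apply Girsanov's theorem: the reweighting factor $\prod_i(\epsilon^{\alpha_i^2/2}e^{\alpha_i k(\theta^{z_i}_\epsilon)})$, integrated against the Gaussian law of $h^0$, shifts the field by $\sum_i\alpha_i\,\E(h^0(\cdot)\,h^0(\theta^{z_i}_\epsilon))=\sum_i\alpha_i\int G(\cdot,w)\theta^{z_i}_\epsilon(w)\,dw$, which by the first two lines of \eqref{eq:theta} converges (uniformly away from $\z$, and in $H^{-1}$) to $\sum_i\alpha_i G(\cdot,z_i)$ as $\epsilon\to 0$, while the Gaussian normalization produces the prefactor $\prod_i(\epsilon^{\alpha_i^2/2}\cdot \epsilon^{-\alpha_i^2/2}R(z_i,\D)^{-\alpha_i^2/2})=\prod_i R(z_i,\D)^{-\alpha_i^2/2}$ plus the cross terms $\prod_{i<i'}e^{\alpha_i\alpha_{i'}G(z_i,z_{i'})}$ (up to $o_\epsilon(1)$, using that $\z\in K$ so the singularities stay separated). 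These are precisely the constants assembled into $C^{(\bm\alpha,\z)}_{(\bm\beta,\w)}$ in Definition \ref{def:lcft-field}; one checks that after Girsanov the remaining Gaussian expectation, together with the $\nu^\gamma$ and $e^{-\Lambda\mu^\gamma}$ factors, reorganizes exactly into $Z^{(\bm\alpha,\z),((\gamma,\gamma,\gamma),\w)}_{\Lambda,\ell}\int P^{(\bm\alpha,\z),((\gamma,\gamma,\gamma),\w)}_{\Lambda,\ell}(dk)$, which is the integrand of $\M^{\sigmab,\kappa}_{\w,\Lambda,\ell}$ in Definition \ref{def:disk}. The continuity hypothesis on $f$ is exactly what is needed so that $f$ evaluated at the Girsanov-shifted field converges to $f$ evaluated at the limiting field; the hypothesis on the mollifier ($\op{supp}\theta^z_\epsilon\subseteq B_{2\epsilon}(z)$) guarantees the shift $\sum_i\alpha_i\int G(\cdot,w)\theta^{z_i}_\epsilon(w)\,dw - \sum_i\alpha_i G(\cdot,z_i)$ is supported in $\cup_i B_{2r}(z_i)$ for $\epsilon$ small, matching the form $g^\z_n$ required by that hypothesis.

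The subtlety is the presence of the kernel $\mu_{k^{\perp\z}}$, which depends on the field only through its harmonic extension off $\cup_i B_r(z_i)$. The key observation is that the Girsanov shift, by the support property, modifies $k$ only inside $\cup_i B_{2\epsilon}(z_i)\subseteq\cup_i B_r(z_i)$, so $k^{\perp\z}$ is \emph{unchanged} under the shift once $\epsilon<r/2$: thus $\mu_{k^{\perp\z}}$ passes through the Girsanov transformation untouched and appears on both sides verbatim. This is the reason the proposition is stated with $k^{\perp\z}$ rather than $k$ itself, and it is what allows the argument in Section \ref{sec:disk-conv-exploration} where $\mu$ encodes the continuation of a CPI exploration. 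To make this rigorous I would first condition on $k^{\perp\z}$ (equivalently, use the Markov decomposition of the Neumann GFF from Section \ref{sec:gff-lqg}: $h^0|_{\cup_i B_r(z_i)}$ splits as independent Dirichlet GFFs in the $B_r(z_i)$ plus a harmonic part measurable w.r.t.\ $k^{\perp\z}$), do the Girsanov shift on the Dirichlet pieces — here the third line of \eqref{eq:theta} supplies the correct Dirichlet-Green normalization so that the $\epsilon$-powers cancel as above — and only afterwards integrate over $k^{\perp\z}$ and over $\z\in K$ against Lebesgue measure.

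The main obstacle will be establishing the convergence \eqref{eq16} with a \emph{uniform} integrable dominating function, so that we may pass the limit through the integral over $\z$ and over the field. The pointwise-in-$\z$ Girsanov computation is routine, but to apply dominated convergence I expect to need: (i) a uniform-in-$\epsilon$ bound on the Girsanov densities, which follows from the uniformity of the $o_\epsilon(1)$ terms in \eqref{eq:theta} together with the fact that $\z$ ranges over the compact set $K$ on which all the Green's-function cross terms are bounded; (ii) a uniform upper bound on $\Phi^{\sigmab,\kappa}_\D(\z)$ for $\z\in K$, which is exactly the $\sup$ bound in Theorem \ref{thm:loopexpconv}; and (iii) control of the $\nu^\gamma_h(\partial\D)$ moments appearing in \eqref{eq:reweighted-measure}, which holds by Remark \ref{rmk:seiberg} since $\alpha_i<Q_\gamma$ and the Seiberg bound \eqref{eq:seiberg} is satisfied for the configuration $((z_i,\alpha_i)),((w_j,\gamma))$ with three boundary $\gamma$-insertions — this is the structural reason three boundary marked points were built into Definition \ref{def:disk}. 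Assembling (i)–(iii), boundedness of $f$, and the continuity hypothesis on $f$, dominated convergence finishes the proof; the $\Lambda\geq 0$ case presents no extra difficulty since $e^{-\Lambda\mu^\gamma_k(\D)}\leq 1$ and $\mu^\gamma_k(\D)<\infty$ a.s.\ (using $\alpha_i<Q_\gamma$), and this factor is continuous in $k$ for the relevant topology.
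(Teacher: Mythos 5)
Your plan reproduces the paper's proof: unpack $\M^{-,\kappa}_{\w,0,\ell}$ via Definition \ref{def:disk}, use the Markov decomposition $h^0 = h^\z + \mathfrak{h}^\z$ onto the balls $B_r(z_i)$, observe that $\nu^\gamma_h(\partial\D)$, the kernel $\mu$, and $h^{\perp\z}_*$ are all $\mathfrak{h}^\z$-measurable, apply Girsanov to the independent Dirichlet pieces $h^\z$ with the normalization from the third line of \eqref{eq:theta}, and then pass to the limit by dominated convergence using the uniform $\Phi$-bounds of Theorem \ref{thm:loopexpconv}, negative-moment control of $\nu^\gamma$, boundedness of $f$, and the continuity hypothesis on $f$. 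The only inaccuracy is your claim that ``the Girsanov shift modifies $k$ only inside $\cup_i B_{2\epsilon}(z_i)$'': the support hypothesis $\op{supp}\theta^z_\epsilon\subseteq B_{2\epsilon}(z)$ controls the \emph{test function} but not the shift function itself, which is $\sum_i\alpha_i\int G^\z(\cdot,w)\theta^{z_i}_\epsilon(w)\,dw$ and is supported throughout $\cup_i B_r(z_i)$, not merely $\cup_i B_{2\epsilon}(z_i)$ (and if one instead shifts by $\int G(\cdot,w)\theta^{z_i}_\epsilon(w)\,dw$ as in your first paragraph, the shift is nonzero everywhere in $\D$). The correct reason $k^{\perp\z}$ is unaffected is that after conditioning on $\mathfrak{h}^\z$ one does Girsanov only on the Dirichlet piece $h^\z$, so the resulting shift vanishes identically off $\cup_i B_r(z_i)$; your proposal does make this point in the next sentence, so the structure of the argument is sound.
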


\begin{proof}
	For each $\z\in K^A$, we decompose $h^0$ (a Neumann GFF in $\D$) as $h^0=h^\z + \mathfrak{h}^\z$ into its projection onto $H^{-1}(\cup_i B_r(z_i))$ (called $h^\z$) and the harmonic extension of $h^0$ restricted to the complement of $\cup_i B_r(z_i)$ (called $\mathfrak{h}^\z$). Crucially, $h^\z$ and $\mathfrak{h}^\z$ are independent and $h^\z$ has the law of an independent Dirichlet GFF on each ball $B_r(z_i)$ for $i\in A$. Let $G^{\z}:\D\times\D\to\R$ (resp.\ $\mathcal{G}^{\z}:\D\times\D\to\R$) denote the function describing the covariance of $h^{\z}$ (resp.\ $\mathfrak{h}^{\z}$), and note that $G=G^{\z}+\mathcal{G}^{\z}$. Recall that (using that $h^0=\mathfrak{h}^\z + h^\z$),
	\begin{align*}
		h = \mathfrak{h}^\z + h^\z + \sum_{j\in B}\frac{\gamma}{2} G(\cdot,w_j)\;, \quad h_{*} = h -\frac{2}{\gamma}\log\nu_{h}^\gamma(\partial\D) + \frac{2}{\gamma}\log \ell\;.
	\end{align*}
	As pointed out in Section \ref{sec:gff-lqg} we have that $\nu_{h}^\gamma(\partial\D)= \nu_{h-h^\z}^\gamma(\partial\D)$ and we see that this random variable
	is measurable with respect to $\mathfrak{h}^\z$. Furthermore,
	\begin{align*}
		h_*^{\perp \z} = \mathfrak{h}^\z + \sum_{j\in B}\frac{\gamma}{2} G(\cdot,w_j) -\frac{2}{\gamma}\log\nu_{h}^\gamma(\partial\D) + \frac{2}{\gamma}\log \ell
	\end{align*}
	which is hence also measurable with respect to $\mathfrak{h}^\z$ (this is the crucial observation which will momentarily allow us to apply Girsanov's theorem). By unpacking the definitions and using the notation (with no marked points in the bulk, $\w$ as the marked points on the boundary and boundary length $\ell$) appearing in Definition \ref{def:lcft-field}, we obtain that the left-hand side of \eqref{eq16} equals $c_0\cdot \int_K A_\epsilon(\z)\,d\z$ where
	\begin{align*}
		A_\epsilon(\z) := \E\left( \int \nu_h^\gamma(\partial\D)^{4/\gamma^2-2} e^{-\Lambda \mu_{h_*}(\D)} f(h_*,\z,x) \Phi^{\sigmab,\kappa}_\D(\z) \prod_{i\in A} \epsilon^{\alpha_i^2/2}e^{\alpha_i h_*(\theta^{z_i}_\epsilon)} \,\mu_{h_*^{\perp\z}}(d\theta,dx)\right)
	\end{align*}
	and $c_0 = \E( \nu_h^\gamma(\partial\D)^{4/\gamma^2-2})^{-1}$. For $\z\in K$ we now condition on $\mathfrak{h}^z$, fix a realization of $\theta$ and apply Girsanov's theorem in the setting obtained after conditioning on the field $\mathfrak{h}^z$ by considering the reweighting
	\begin{align*}
		\prod_{i\in A} \epsilon^{\alpha_i^2/2} e^{\alpha_i h_*(\theta^{z_i}_\epsilon)} &= e^{h^\z(\phi)-\frac 12\int \phi(w)\phi(w')G^\z(w,w')\,dw\,dw'}\\
		&\quad \cdot\prod_{i\in A} \left(\epsilon^{\alpha_i^2/2} e^{\alpha_i^2/2\cdot \int_{B_r(z_i)^2} G^\z(w,w') \theta^{z_i}_\epsilon(w)\theta^{z_i}_\epsilon(w')\,dw\,dw'} \right) \\
		&\quad\cdot\prod_{i\in A} \left( e^{\alpha_i(\mathfrak{h}^\z + \gamma/2 \sum_{j\in B} G(\cdot,w_j))(\theta^{z_i}_\epsilon)} \nu_h(\partial\D)^{-2\alpha_i/\gamma}\ell^{2\alpha_i/\gamma} \right)
	\end{align*}
	where $\phi = \sum_{i\in A} \alpha_i \theta^{z_i}_\epsilon$ and we used that $G^\z(w,w')=0$ if $w\in B_r(z_i)$ and $w'\in B_r(z_j)$ for $i\neq j$. Note that we have written the expression in the display above so that the first line on the right-hand side
	has expectation $1$ and is the term we are reweighting by when applying Girsanov's theorem. Indeed, the Girsanov weighting $\exp(h^\z(\phi)-\frac 12\int \phi(w)\phi(w')G^\z(w,w')\,dw\,dw')$ induces a shift of the field $h^\z$ by
	\begin{align*}
		\sum_{i\in A} \alpha_i \int G^\z(\cdot,y) \theta^{z_i}_\epsilon (y)\,dy\;.
	\end{align*}
	Formally, for $\z \in K$ the field $h_{z,\epsilon}$ has the same law as that of $h$ weighted by $\exp(h^\z(\phi)-\frac 12\int \phi(w)\phi(w')G^\z(w,w')\,dw\,dw')$ where we make the definitions
	\begin{align*}
		G^{\z}_{\epsilon}(z,w) &= \int G^{\z}(z,y) \theta^{w}_{\epsilon}(y) \,dy \;,\\
		h_{\z,\epsilon} &= \mathfrak{h}^\z + h^\z + \sum_{i\in A} \alpha_i G^{\z}_\epsilon(\cdot,z_i) + \sum_{j\in B}\frac{\gamma}{2} G(\cdot,w_j)\;, \\
		h_{*\z,\epsilon} &= h_{\z,\epsilon}-\frac{2}{\gamma}\log\nu_{h_{\z,\epsilon}}^\gamma(\partial\D) + \frac{2}{\gamma}\log \ell \;.
	\end{align*}
	For later reference we also let $(h_{\z,0},h_{*\z,0})$ and $(h_\z,h_{*\z})$ be defined analogously, only with $G^\z$ and $G$, respectively, in place of $G^\z_\epsilon$ (compare the latter definition with Definition \ref{def:lcft-field}). Girsanov's theorem yields
	\begin{align*}
		A_\epsilon(\z) &= \ell^{2/\gamma \sum_{i\in A} \alpha_i}\,\E\Biggl( \int B_\epsilon(\z,\theta,x) \,\mu_{h_{*}^{\perp\z}}(d\theta,dx)\Biggr)\quad\text{where}\\
		B_\epsilon(\z,\theta,x) &:= \nu_{h_{\z,\epsilon}}^\gamma(\partial\D)^{4/\gamma^2-2-2/\gamma \sum_{i\in A} \alpha_i} e^{-\Lambda \mu_{h_{*\z,\epsilon}}(\D)} f(h_{*\z,\epsilon},\z,x) \Phi^{\sigmab,\kappa}_\D(\z) \\
		&\qquad\cdot\prod_{i\in A} \left(\epsilon^{\alpha_i^2/2} e^{\alpha_i^2/2\cdot \int_{B_r(z_i)^2} G^\z(w,w')\theta^{z_i}_\epsilon(w)\theta^{z_i}_\epsilon(w')\,dw\,dw'} e^{\alpha_i(\mathfrak{h}^\z + \gamma/2\sum_{j\in B} G(\cdot,w_j))(\theta^{z_i}_\epsilon)} \right)\;.
	\end{align*}
	By our assumptions (noting that $G^\z=G_{B_r(z_i)}$ on $B_r(z_i)^2$ by definition), we see that
	\begin{align*}
		 B_\epsilon(\z,\theta,x) &\to \nu_{h_{\z,0}}^\gamma(\partial\D)^{4/\gamma^2-2-2/\gamma \sum_{i\in A} \alpha_i} e^{-\Lambda \mu_{h_{*\z,0}}(\D)} f(h_{*\z,0},\z,x) \Phi^{\sigmab,\kappa}_\D(\z) \\
		 &\qquad\cdot\prod_{i\in A} \left(r^{\alpha_i^2/2}  e^{\alpha_i(\mathfrak{h}^\z(z_i) + \gamma/2\sum_{j\in B} G(z_i,w_j))} \right)
	\end{align*}
	as $\epsilon\to 0$. Note that the right-hand side does not depend on $\theta$. Moreover, since $4/\gamma^2-2-2/\gamma \sum_{i\in A} \alpha_i<0$ and using our assumptions, we may bound
	\begin{align*}
		B_\epsilon(\z,\theta,x) \lesssim \nu_{\mathfrak{h}^\z + h^\z}^\gamma(\partial\D)^{4/\gamma^2-2-2/\gamma \sum_{i\in A} \alpha_i}  \prod_{i\in A}e^{\alpha_i\sup_{B_{2r/3}(z_i)}\mathfrak{h}^\z} =: C_\epsilon(\z)
	\end{align*}
	where the implicit constant does not depend on $\z\in K$ and also not on $\theta$; to upper bound $\Phi^{\sigmab,\kappa}_\D(\z)$ by a deterministic constant for $\z\in K$ we used Theorem \ref{thm:loopexpconv}.
	By the mean value property of harmonic functions, we see that $\sup_{B_{2r/3}(z_i)}\mathfrak{h}^\z \le C\mathfrak{h}^\z(z_i)$ uniformly in $\z\in K$ for some constant $C<\infty$. Using Hölder's inequality and the fact that the total mass $\nu^\gamma_{\mathfrak{h}^\z + h^\z}(\partial \D)$ has all negative moments and that these are bounded uniformly in $\z\in K$ (see \cite[Theorem 2.12]{rhodes-vargas-review}) we see that $\sup_{\z\in K} \E(C_\epsilon(\z))<\infty$. Thus by dominated convergence, we obtain
	\begin{align*}
		\int_K A_\epsilon(\z)\,d\z \to\, &\ell^{2/\gamma \sum_{i\in A} \alpha_i} \int \E\Biggl( \nu_{h_{\z,0}}^\gamma(\partial\D)^{4/\gamma^2-2-2/\gamma \sum_{i\in A} \alpha_i} e^{-\Lambda \mu_{h_{*\z,0}}(\D)} f(h_{*\z,0},\z,x)  \\
		&\qquad \Phi^{\sigmab,\kappa}_\D(\z)  \cdot\prod_{i\in A} \left(r^{\alpha_i^2/2}  e^{\alpha_i(\mathfrak{h}^\z(z_i) + \gamma/2\sum_{j\in B} G(z_i,w_j))} \right) \Biggr)\,\mu_{k^{\perp\z}}(d\theta,dx)\,d\z
	\end{align*}
	as $\epsilon\to 0$. The right-hand side can be rewritten as follows. Note that since $\mathcal{G}^\z = G-G^\z$ and by the continuity of $\mathcal{G}^\z$ we have $\mathcal{G}^\z(z_i,z_i)=-\log R(z_i,\D)-\log(r)$ and furthermore $\mathcal{G}^\z(z_i,z_j)=G(z_i,z_j)$ for $i\neq j$. We perform a Girsanov shift by
	\begin{align*}
		e^{\sum_{i\in A} \alpha_i \mathfrak{h}^\z(z_i)} = e^{\sum_{i\in A} \alpha_i \mathfrak{h}^\z(z_i)-1/2\sum_{i,j\in A}\alpha_i\alpha_j \mathcal{G}^\z(z_i,z_j)} \prod_{i\in A} (rR(z_i,\D))^{-\alpha_i^2/2} \prod_{\substack{i,i'\in A\\i<i'}} e^{\alpha_i\alpha_{i'}G(z_i,z_j)}
	\end{align*}
	which shifts the field $\mathfrak{h}^\z$ up by $\sum_{i\in A} \alpha_i \mathcal{G}^\z(\cdot,z_i)$. Rearranging the resulting expression (which now involves the fields $h_\z$ and $h_{*\z}$) yields the claim.
\end{proof}

\subsection{Chordal exploration of LQG disk weighted by CLE nesting statistics}
\label{sec:disk-conv-exploration}

This section is devoted to the proof of Proposition \ref{prop:chordal-zipper} below. The proposition says that if we consider an LQG disk with an independent CLE, weighted by the CLE nesting statistics (as in Definitions \ref{def:simple-disk}, \ref{def:gen-disk}, and \ref{def:cle}), then the boundary length process of the CPI has a law which is reweighted in an explicit way as compared to the case with no marked points, and the loop-decorated LQG surfaces in the complementary components of the CPI have the law of independent CLE decorated LQG disks reweighted by CLE nesting statistics, conditioned on their boundary lengths and marked points.

The idea of the proof is to start with the analogous result for disks without marked points (proved in \cite{msw-simple,msw-non-simple}) and then do a particular reweighting of the measures. The reweighted measure gives us the surface and loop ensemble in Definition \ref{def:simple-disk} (or, in the generalized case, Definition \ref{def:gen-disk}) and Definition \ref{def:cle}, respectively, via an application of the Girsanov theorem. A closely related strategy was used to prove a conformal welding result for a surface with an $\alpha$ singularity on its boundary in \cite{ahs-sle}. However, the case considered here is substantially more technically challenging, e.g.\ since 
there are multiple marked points the locations of which are random and
the considered mollifiers and curves are not always independent of the field (even if we condition on the location of the marked points).

We borrow the following terminology from probability theory in the setting of (finite or infinite) non-probability measures, i.e., measures whose total mass is different from 1. 
Suppose that $E$, $E'$ and $E''$ are  measurable spaces, $\mu$ is measure on $E$, $f\colon E\times E'\to E''$ is measurable, and $\nu$ is a probability kernel from $E$ to $E'$, i.e. $\nu = (\nu_e \colon e \in E)$ is a collection of probability measures on $E'$ such that the map $e \mapsto \nu_e (A)$ from $E$ to $\R$ is measurable for any measurable $A \subseteq E'$. Let $(\mu\otimes\nu)(de,de')=\mu(de)\nu_{e}(de')$ and define $\zeta = f_*(\mu\otimes \nu)$ to be the pushforward of $\mu\otimes \nu$ along $f$. In words we will describe the construction of $\zeta$ as follows: sample $e\sim \mu$, conditionally on $e$ sample $e'$ according to $\nu_e$, and let $\zeta$ denote the law of $f(e,e')$. It is often more convenient to introduce $\zeta$ in words as in the previous sentence since the precise formula for $f$ is rather involved.

In order to state Proposition \ref{prop:chordal-zipper} precisely, we first need to introduce some more notation and definitions.
For $\kappa\in(8/3,8)\setminus\{4 \}$ and a finite and non-empty set $A\subseteq\N$ let ${\sigmab}\in\frk S_\kappa^A$. Also let $\gamma=\sqrt{\kappa}\wedge 4/\sqrt{\kappa} $, $\Lambda\geq 0$, $\ell_L,\ell_R>0$, and $\beta\in[-1,1]$. Let $\ell=\ell_L+\ell_R$.

We define three measures $\bar{\ML}^{{\sigmab},\kappa}_{\Lambda,\ell_L,\ell_R}$, $\bar{\ML}^{{\sigmab},\kappa}_{\Lambda,\ell}$ and $\bar{\MC}^{{\sigmab},\kappa}_{\Lambda,\ell_L,\ell_R}$ on the space of decorated regular $\gamma$-LQG surfaces. We first consider the $\kappa\in (8/3,4)$ case.
\begin{itemize}
	\item Let $(h,\z)\sim \op{M}^{{\sigmab},\kappa}_{\Lambda,\ell_{L}+\ell_{R}}$ (see Definition \ref{def:disk} with $\w$ given by $w_0 = -i$ and $w_{\pm 1}=\pm 1$). Let $w_\infty\in\partial\D$ be the point such that the clockwise (resp.\ counterclockwise) arc from $w_0$ to $w_\infty$ has length $\ell_L$ (resp.\ $\ell_R$) from $w_0$ to $w_\infty$. Let $\Gamma\sim\CLE_\kappa^{\sigmab}(\z)$ and then sample an associated CPI $\lambda$ with parameter $\beta$ (see Definition \ref{def:cle} and Section \ref{sec:cle-explorations}). We obtain a decorated $\gamma$-LQG surface $[(\D,h,\z,\Gamma,\lambda)]$ and we call its law $\bar{\ML}^{{\sigmab},\kappa}_{\Lambda,\ell_L,\ell_R}$.
	\item If $[(\D,h,\z,\Gamma,\lambda)] \sim \bar{\ML}^{{\sigmab},\kappa}_{\Lambda,\ell_L,\ell_R}$ we write $\bar{\ML}^{{\sigmab},\kappa}_{\Lambda,\ell}$ for the law of $[(\D,h,\z,\Gamma)]$.
	\item Let $\Gamma_0$ denote the loops in $\Gamma$ intersecting $\lambda$. If $[(\D,h,\z,\Gamma,\lambda)] \sim \bar{\ML}^{{\sigmab},\kappa}_{\Lambda,\ell_L,\ell_R}$ then $\bar{\MC}^{{\sigmab},\kappa}_{\Lambda,\ell_L,\ell_R}$ denotes the law of $[(h,\z,\Gamma_0,\lambda)]$.
\end{itemize}
As in Section \ref{sec:background-msw}, we can associate to $S=[(\D,h,\z,\Gamma,\lambda)]$ the following objects: the $\gamma$-LQG length $\zeta$ of the CPI, for each $t<\zeta$ the future part of the disk $S_t$ (this is now a loop decorated $\gamma$-LQG surface with marked points) and boundary length processes $L$ and $R$ (writing $X=L+R$). Let $J(X):=\{t<\zeta\colon \Delta X_t\neq 0\}$. Then for each $t\in J(X)$, the cut out $\gamma$-LQG surfaces $\Delta S_t$ is now a loop decorated $\gamma$-LQG surface with marked points, more precisely $\Delta S_t = [(\Delta D_t,h|_{\Delta D_t},\z|_{\mathcal{I}_{\z}(\Delta D_t)},\{\eta\in \Gamma\colon \eta^o\subsetneq \Delta D_t\})]$ where $\mathcal{I}_{\z}(U):=\{i\in A\colon z_i\in U\}$ whenever $U$ is open. Here, $\Delta D_t$ is one of the complementary components that the CPI with attached CLE loops cuts out. We write $P_t=\mathcal{I}_{\z}(D_t)$ for the indices of the points which are in the future of part $S_t$ of the disk. Then $\Delta P_t = P_{t-}\setminus P_{t}$ for $t\in J(X)$ are the indices of the marked points on $\Delta S_t$. The process $P$ is a set-valued càdlàg process and we summarize the processes as $\bm{Y}=(L,R,P)$.

If $S=[(\D,h,\z,\Gamma_0,\lambda)]$, we can again associate to it $\zeta$, $L$, $R$, $X$, $S_t$, $\Delta S_t$, $P$ and $\bm{Y}$, the only difference being that $S_t$ and $\Delta S_t$ are now $\gamma$-LQG disks with marked points but without a loop decoration. As in Section \ref{sec:background-msw}, we can write the cut out $\gamma$-LQG surface as $\Delta S_t=[(\Delta D_t,h|_{\Delta D_t},\z|_{\mathcal{I}_{\z}(\Delta D_t)})]$.

In the case $\kappa\in (4,8)$ we define these measures analogously. We refer the reader to Section \ref{sec:background-msw} for information on the relevant constructions in this case.

In the following, we will leave the dependence of the boundary length processes and cut out quantum surfaces on the underlying decorated $\gamma$-LQG surface implicit. For example, the process $\bm{Y}$ on the left-hand side of the first indented equation below depends on the integration variable $S$ and we leave this dependence implicit in the notation.
\begin{prop}
	In the setting above, for any measurable function $f$ with values in $[0,1]$,
	\begin{align*}
		\begin{split}
			&\int f(\bm{Y}, \{(t,\Delta S_t)\colon t\in J(X)\} )\,\bar{\ML}^{{\sigmab},\kappa}_{\Lambda,\ell_L,\ell_R}(dS)  \\
			&= \E\left( \sum_{Q\in \Pi(X^0,A)} \int f((L^0,R^0,Q),\{(t,S'_t)\colon t\in J(X^0) \}) \prod_{t\in J(X^0)} e^{\sigma_{\Delta Q_t} 1(\Delta X^0_t>0)}\,\bar{\ML}^{{\sigmab}|_{\Delta Q_t},\kappa}_{\Lambda, |\Delta X_t|}(dS'_t) \right)\
		\end{split}
	\end{align*}
	where the expectation on the right-hand side is taken with respect to the random process $(L^0,R^0)$ from Section \ref{sec:background-msw} (corresponding to $A=\emptyset$) with $X^0=L^0+R^0$.
	In particular, the case $f\equiv 1$ yields
	\begin{align*}
		W^{{\sigmab},\kappa}_{\Lambda,\ell}
		= \E\left(\, \sum_{Q\in \Pi(X^0,A)} \int \prod_{t\in J(X^0)} e^{\sigma_{\Delta Q_t} 1(\Delta X^0_t>0)}  W^{{\sigmab}|_{\Delta Q_t},\kappa}_{\Lambda, |\Delta X^0_t|} \right)\;.
	\end{align*}
	\label{prop:chordal-zipper}
\end{prop}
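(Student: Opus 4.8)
\textbf{Proof plan for Proposition \ref{prop:chordal-zipper}.}

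The plan is to obtain the result by transferring the known statement for the case with no marked points (Theorems \ref{thm:msw-simple} and \ref{thm:msw-non-simple} together with the structure of $\bar{\ML}^{-,\kappa}_{\Lambda,\ell}$) to the weighted setting via a Girsanov-type change of measure. The starting point is that $\M^{{\sigmab},\kappa}_{\Lambda,\ell}$ is, by Definition \ref{def:disk} and Proposition \ref{prop:limit-eps}, a limit of the no-marked-point measure $\M^{-,\kappa}_{\w,0,\ell}$ reweighted by $\Phi^{{\sigmab},\kappa}_\D(\z)\prod_{i\in A}(\epsilon^{\alpha_i^2/2}e^{\alpha_i k(\theta^{z_i}_\epsilon)})e^{-\Lambda\mu^\gamma_k(\D)}$ and integrated against $d\z$. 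The first step is therefore to run the whole argument at the level of these $\epsilon$-regularized reweighted measures: sample $[(\D,h,\Gamma,\lambda)]\sim\bar{\ML}^{-,\kappa}_{\Lambda=0,\ell_L,\ell_R}$ (really the field-plus-decoration version without cosmological constant), sample $\z$ from Lebesgue measure, and reweight by the above product together with the CLE nesting weight. Here one must be careful that the CLE $\Gamma$ and CPI $\lambda$ are, conditionally on $\z$, \emph{independent} of $h$, so that the reweighting factors cleanly into (a) the $\Phi$-times-$\CLE$ factor, which by Lemma \ref{lem:xi-law} and Definition \ref{def:cle} turns the non-nested $\CLE_\kappa$ plus CPI into a $\CLE_\kappa^{\sigmab}(\z)$ plus its CPI while producing exactly the products $\prod e^{\sigma_{\Delta Q_t}1(\Delta X_t>0)}$ and the $\Phi^{{\sigmab}|_{\mathcal I(U)},\kappa}_U$ factors in each complementary component, and (b) the $\prod_i(\epsilon^{\alpha_i^2/2}e^{\alpha_i h(\theta^{z_i}_\epsilon)})e^{-\Lambda\mu_h^\gamma(\D)}$ factor, which is a functional of the field alone.

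The second step is to apply the no-marked-point welding result. By Theorem \ref{thm:msw-simple} (resp.\ Theorem \ref{thm:msw-non-simple}) and Corollary \ref{cor:msw-simple-full}-type reasoning, under $\bar{\ML}^{-,\kappa}_{\Lambda=0,\ell_L,\ell_R}$ the boundary length process $(L^0,R^0)$ is the process of Proposition \ref{prop:simple-msw-process} (time-changed) and, conditionally on it, the cut-out surfaces $\Delta S_t$ (for $t\in J(X^0)$) are independent $\gamma$-LQG disks of the prescribed boundary lengths, each carrying an independent non-nested $\CLE_\kappa$ — and the field $h$ is recovered as the conformal welding of these pieces. The locations $z_i$ each land in exactly one cut-out component; summing over which component each lands in is precisely the sum over $Q\in\Pi(X^0,A)$, the function $Q$ recording the set of still-undiscovered marked points as a function of time. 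The nesting weight factor $e^{\sigma_{\Delta Q_t}1(\Delta X_t>0)}$ is attached to the discovery event at time $t$ exactly as in Lemma \ref{lem:phi-axioms}, equation \eqref{eq:phi-axiom-cpi}. Restricting the reweighting factor (b) and the $\Phi$-factor to a single cut-out component $\Delta D_t$ with marked points indexed by $\Delta Q_t$, and comparing with Definition \ref{def:disk} / Proposition \ref{prop:limit-eps} applied in that component, identifies each factor as (the regularization of) $\bar{\ML}^{{\sigmab}|_{\Delta Q_t},\kappa}_{\Lambda,|\Delta X_t|}(dS'_t)$ — including the $e^{-\Lambda\mu^\gamma(\D)}$ piece, which factorizes over components since $\mu^\gamma_h(\D)=\sum_t\mu^\gamma_{h|_{\Delta D_t}}(\Delta D_t)$ modulo the welding interfaces, which have zero area.

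The third step is to pass to the limit $\epsilon\to 0$. This is where Proposition \ref{prop:limit-eps} does the real work: one must verify its hypotheses with the mollifiers $\theta^{z_i}_\epsilon$ chosen to be supported inside the appropriate cut-out component of the CPI (so that, conditionally on $(\lambda,\Gamma_0,\z)$, they are built from the field restricted to that component, which is the setting — mollifier not independent of the whole field — that Proposition \ref{prop:limit-eps} was stated to handle), and check the continuity/boundedness requirement on the test functional $f$, which reduces to the fact that $f$ depends on the surfaces only through their $\gamma$-LQG-surface equivalence classes and on $\bm Y$, which are continuous under the required $H^{-1}$-perturbations away from the marked points. The bookkeeping needed to make the multi-component, random-location version of Proposition \ref{prop:limit-eps} applicable — in particular handling the sum over $\Pi(X^0,A)$ uniformly and ruling out pathological behavior of the CPI near the marked points (e.g.\ the CPI or a discovered loop passing through a $z_i$, which has measure zero) — is the main obstacle; the CLE and Girsanov inputs are, by contrast, essentially mechanical given Lemmas \ref{lem:xi-law} and \ref{lem:phi-axioms}. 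The final displayed identity for $W^{{\sigmab},\kappa}_{\Lambda,\ell}$ is then the special case $f\equiv 1$, using $W^{{\sigmab}|_{\Delta Q_t},\kappa}_{\Lambda,|\Delta X^0_t|}=|\bar{\ML}^{{\sigmab}|_{\Delta Q_t},\kappa}_{\Lambda,|\Delta X^0_t|}|$ together with $\M^{-,\kappa}$ having unit mass at $\Lambda=0$ so that the expectation over $(L^0,R^0)$ is against a genuine (sub)probability law.
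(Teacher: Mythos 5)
Your proposal is correct and follows essentially the same route as the paper: start from the $n=0$ chordal exploration result, introduce $\epsilon$-regularized $\alpha$-singularities with the $\Phi$ weight, factor the CLE reweighting via Lemma \ref{lem:xi-law}, and pass to the limit with Proposition \ref{prop:limit-eps}, exploiting its allowance for mollifiers and decorations that are measurable functions of $h$ restricted to the complement of $\cup_i B_r(z_i)$ — the key point you correctly flagged. The paper additionally routes through an intermediate version of the statement for the measure decorated only by the CPI and the loops it hits (Lemma \ref{prop:chordal-zipper0}), restoring the full nested CLE afterwards via the final part of Lemma \ref{lem:xi-law}, and applies Proposition \ref{prop:limit-eps} separately to both sides of a single regularized identity (once with fixed mollifiers on the cut-out disks, once with mollifiers pushed forward through the random mapping-out maps $\psi_t$), but these are organizational choices rather than different ideas.
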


We will also use a version of the proposition in the case where we decorate a $\gamma$-LQG disk with marked points with a (nested) weighted CLE and consider the collection of components which are cut out by the outermost CLE loops. 

\begin{prop}
	\label{prop:full-cle-marked-cut}
	Let $(S_n\colon n\ge 1)$ and $(\ell_n\colon n\ge 1)$ be the $\gamma$-LQG surfaces cut out by the collection of outermost loops on $S$ and the associated boundary lengths (both sequences ordered according to decreasing boundary length). Furthermore, let $(\ell^{\,0}_n\colon n\ge 1)$ be the sequence of boundary lengths of a non-nested $\CLE_\kappa$ decorating a (regular) $\gamma$-LQG disk (also ordered according to decreasing boundary length). Then, for any measurable function $f$ with values in $[0,1]$,
	\begin{align*}
		&\int f(\{(\ell_n,S_n)\colon n\ge 1\}) \,\bar{\ML}^{{\sigmab},\kappa}_{\Lambda,\ell}(dS)  \\
		&= \E\left( \sum_{\cup_n B_n = A} \int f(\{(\ell^{\,0}_n,S'_n)\colon n\ge 1\})\,\prod_{n\ge 1} e^{\sigma_{B_n}} \,\bar{\ML}^{{\sigmab}|_{B_n},\kappa}_{\Lambda, \ell^{\,0}_n}(dS'_n) \right)
	\end{align*}
	where the sum on the right-hand side is over all partitions of $A$ into disjoint sets.
\end{prop}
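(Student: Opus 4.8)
\textbf{Proof proposal for Proposition \ref{prop:full-cle-marked-cut}.}

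The plan is to deduce the statement from Proposition \ref{prop:chordal-zipper} together with the structure of the CPI exploration, in the same way that Corollary \ref{cor:msw-simple-full} is deduced from Theorem \ref{thm:msw-simple}, but now keeping track of the reweighting by CLE nesting statistics and of the marked points. The starting point is that the CPI $\lambda$ coupled to the CLE $\Gamma$ (whose outermost loops are $\Gamma_0$) discovers, as it runs, a subcollection of the outermost loops — namely the loops $L_\lambda$ it intersects — while separating off the complementary components $C_\lambda$; by the CLE exploration picture reviewed in Section \ref{sec:cle-explorations} (and encoded in Lemma \ref{lem:phi-axioms} and Lemma \ref{lem:xi-law}), conditionally on $(\lambda,L_\lambda)$ the remaining outermost loops are obtained by sampling independent non-nested $\CLE_\kappa^{{\sigmab}|_{\cI(U)}}(\z|_{\cI(U)})$ in each $U\in C_\lambda$. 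Thus the collection of components cut out by all outermost loops of $\Gamma$ decomposes as: the components cut out by the loops in $L_\lambda$ (these are exactly the $\Delta S_t$ with $\Delta X_t>0$, each decorated by a $\CLE_\kappa^{{\sigmab}|_{\Delta P_t}}$ on the inside — and carrying the factor $e^{\sigma_{\Delta P_t}}$), together with, recursively, the components cut out by outermost loops inside each trunk component $\Delta S_t$ (the $\Delta S_t$ with $\Delta X_t<0$, each a weighted CLE-decorated disk of the appropriate type). This recursion terminates, and iterating it is precisely what Proposition \ref{prop:chordal-zipper} linearizes: after integrating out the exploration, the joint law of (boundary lengths, cut-out surfaces, partition of $A$) becomes a sum over $Q\in\Pi(X^0,A)$ with the Lévy process $(L^0,R^0)$ replacing $(L,R)$ and the weights $e^{\sigma_{\Delta Q_t}1(\Delta X^0_t>0)}$ appearing.

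Concretely, first I would fix the measurable bounded $f\ge 0$ and observe that, on the event structure above, one can write $f$ applied to the collection of all outermost-loop components as a sum, indexed by which components arise directly from $L_\lambda$-loops (the positive jumps of $X$) and which arise inside the trunk components (the negative jumps), of products of $f$-type quantities evaluated on the sub-collections; this is exactly the bookkeeping in the proof of Corollary \ref{cor:msw-simple-full} (and Corollary \ref{cor:msw-simple-full-more}), with the extra feature that each positive jump $t$ with $\Delta X_t>0$ contributes a marked CLE-decorated disk of boundary length $|\Delta X_t|$ carrying marked-point index set $\Delta P_t$ and a multiplicative weight $e^{\sigma_{\Delta P_t}}$. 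Next I would apply Proposition \ref{prop:chordal-zipper} to the functional
\[
 S\mapsto f\big(\{(|\Delta X_t|, S_t^{\mathrm{dec}})\colon t\in J(X), \Delta X_t>0\}\ \text{completed recursively over}\ \Delta X_t<0\big),
\]
which turns the exploration of $\bar{\ML}^{{\sigmab},\kappa}_{\Lambda,\ell}$ into the advertised sum over $Q\in\Pi(X^0,A)$ against the Lévy process $(L^0,R^0)$, with each positive-jump time $t$ producing an independent $\bar{\ML}^{{\sigmab}|_{\Delta Q_t},\kappa}_{\Lambda,|\Delta X^0_t|}$-surface and the weight $e^{\sigma_{\Delta Q_t}1(\Delta X^0_t>0)}$, and each negative-jump time producing (recursively) a copy of the same object with a sub-index-set. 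Finally I would recognize the resulting recursion — positive jumps give a marked component with a factor $e^{\sigma_{B}}$, negative jumps branch into further explorations — as exactly the expansion of $\sum_{\cup_n B_n = A}\int f(\cdots)\prod_n e^{\sigma_{B_n}}\,\bar{\ML}^{{\sigmab}|_{B_n},\kappa}_{\Lambda,\ell^{\,0}_n}(dS_n')$, where $(\ell^{\,0}_n)$ are the boundary lengths of a non-nested $\CLE_\kappa$-decorated quantum disk: the iterated chordal explorations, summed over how many are needed to realize each outermost loop, reconstruct the non-nested CLE on the disk (this is the content of the identity between $(S_n)$ and the $(S_n')$ in Corollary \ref{cor:msw-simple-full}, now weighted), and the product over $n$ of the $e^{\sigma_{B_n}}$ factors collects the per-loop nesting weights. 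The verification that the sum over $Q\in\Pi(X^0,A)$ with negative-jump branching reorganizes into the sum over unordered partitions $\cup_n B_n=A$ of the outermost-loop components is a combinatorial unwinding identical in spirit to Corollary \ref{cor:msw-simple-full-more}. One technical point to handle, as in Corollary \ref{cor:msw-simple-full}, is that the directly-discovered-plus-recursively-discovered components exhaust all outermost-loop components with no loss: this is established there via the area identity using Proposition \ref{prop:power-theta} (resp.\ Proposition \ref{prop:nonsimple-power-theta} in the $\kappa\in(4,8)$ case), and the same argument applies here since adding the marked-point decoration and the bounded weights $e^{\sigma_{B_n}}\le$ const does not affect the a.s.\ exhaustion statement — one runs the comparison for $f$ replaced by the total-area functional.

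The main obstacle is the bookkeeping needed to pass from the exploration description to the clean statement: one must check carefully that, after applying Proposition \ref{prop:chordal-zipper}, the decorations on the positive-jump components are the $\CLE_\kappa^{{\sigmab}|_{\Delta Q_t}}(\z|\cdots)$-decorated disks $\bar{\ML}^{{\sigmab}|_{\Delta Q_t},\kappa}_{\Lambda,|\Delta X^0_t|}$ rather than some partially-explored variant, and that the recursive branching over negative jumps, when resummed, does not over- or under-count the outermost loops — equivalently, that the map from (chordal-exploration data over all iterations) to (the set of outermost-loop components with their marked-point labels) is a measure-preserving bijection onto the full non-nested weighted CLE picture. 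Since the absolute continuity of non-nested $\CLE_\kappa^{{\sigmab}}(\z)$ with respect to non-nested $\CLE_\kappa$ (noted after Definition \ref{def:cle}) lets one transfer the unweighted identity of Corollary \ref{cor:msw-simple-full}/\ref{cor:msw-simple-full-more} and then reinsert the Radon--Nikodym weights via Lemma \ref{lem:xi-law}, I expect this obstacle to be surmountable but to require a few pages of careful indexing. The generalized ($\kappa'\in(4,8)$) case is entirely analogous, using the non-simple versions of Proposition \ref{prop:chordal-zipper} and Corollary \ref{cor:msw-nonsimple-full} and the remark following the latter.
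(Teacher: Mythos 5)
You propose to deduce Proposition \ref{prop:full-cle-marked-cut} from Proposition \ref{prop:chordal-zipper} by iterating the CPI exploration and invoking the exhaustion argument of Corollaries \ref{cor:msw-simple-full} and \ref{cor:msw-simple-full-more}. This is genuinely different from what the paper does: the paper's proof is \emph{not} a reduction to Proposition \ref{prop:chordal-zipper} but a direct Girsanov argument parallel to the proof of Lemma \ref{prop:chordal-zipper0} --- one starts from the unweighted statement that a $\CLE_\kappa$ decorating a $\gamma$-LQG disk cuts it into conditionally independent quantum disks (the result illustrated in Figure \ref{fig:cle-lqg-warmup}), and then adds the bulk marked points via the limiting argument of Proposition \ref{prop:limit-eps}. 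As the paper notes, this route is \emph{easier} than Proposition \ref{prop:chordal-zipper} because no CPI is needed, whereas your route reintroduces the CPI and adds an extra layer (iteration plus resummation) on top.

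More importantly, your route has a genuine gap around the exhaustion statement: to even start, you need that for $\bar{\ML}^{\sigmab,\kappa}_{\Lambda,\ell}$-a.e.\ surface $S$, the iterated CPI explorations discover \emph{all} the outermost-loop components $(S_n)$, so that $f(\{(\ell_n,S_n)\})$ can be rewritten as a function of the exploration data before Proposition \ref{prop:chordal-zipper} is applicable. Neither of your two proposed fixes closes this. The absolute continuity noted after Definition \ref{def:cle} concerns the \emph{outermost loops of the CLE}, but the exhaustion event also depends on the field $h$ through the quantum boundary measures used to choose the CPI endpoints at every iteration level; the law of $h$ under $\op{M}^{\sigmab,\kappa}_{\Lambda,\ell}$ has $\alpha_i$ log-singularities at random bulk locations $\z$ and is therefore singular with respect to the $A=\emptyset$ law, so the a.s.\ exhaustion established in Corollary \ref{cor:msw-simple-full} (which is an a.s.\ statement under the \emph{joint} unweighted law of $(h,\Gamma,\text{CPI})$) does not transfer. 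Your other suggestion, ``run the comparison for $f$ replaced by the total-area functional,'' would require a weighted analogue of the identity $A_+/(1-A_-)=1$ from Proposition \ref{prop:power-theta}; but that identity is specific to the unweighted Lévy process $(L^0,R^0)$, whereas the boundary-length process for the weighted disk is the process $\bm{Y}$ of Theorem \ref{thm:markov-simple}, whose jump rates carry the ratios of partition functions $Z^{\sigmab|_{B},\kappa}$, so the required cancellation is no longer a consequence of Proposition \ref{prop:power-theta}. One could presumably establish the weighted exhaustion with additional work (e.g.\ by conditioning on $(h,\z)$, using that the conditional law of outermost loops together with CPI is absolutely continuous with respect to the unweighted one, and arguing the exhaustion depends on $h$ only through boundary measures that can be treated as a fixed parameter), but this would make your route substantially longer than the paper's one-line appeal to the proof technique of Lemma \ref{prop:chordal-zipper0} with the CPI removed.
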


\begin{proof}
	The proof is omitted since it is identical to that of Proposition \ref{prop:chordal-zipper}. In fact, the proof is slightly easier than that of Proposition \ref{prop:chordal-zipper} since we do not need to consider the CPI.
\end{proof}

The next lemma is a variant of Proposition \ref{prop:chordal-zipper} where we consider surfaces decorated by $(\Gamma_0,\lambda)$ instead of $(\Gamma,\lambda)$. The statement therefore only differs by the fact that we are not considering the decoration by loops inside the cut out surfaces.

\begin{lemma}
	In the setting above, for any measurable function $f$ with values in $[0,1]$,
	\begin{align*}
	\begin{split}
		&\int f(\bm{Y}, \{(t,\Delta S_t)\colon t\in J(X)\} ) \,\bar{\MC}^{{\sigmab},\kappa}_{\Lambda,\ell_L,\ell_R}(dS)  \\
		&= \E\left( \sum_{Q\in \Pi(X^0,A)} \int f( (L^0,R^0,Q),\{(t,S'_t)\colon t\in J(X^0) \}) \prod_{t\in J(X^0)} e^{\sigma_{\Delta Q_t} 1(\Delta X^0_t>0)}\,\bar{\M}^{{\sigmab}|_{\Delta Q_t},\kappa}_{\Lambda, |\Delta X^0_t|}(dS'_t) \right)
	\end{split}
	\end{align*}
	where the expectation on the right-hand side is taken with respect to the random process $(L^0,R^0)$ from Section \ref{sec:background-msw} (corresponding to $A=\emptyset$) with $X^0=L^0+R^0$.
	\label{prop:chordal-zipper0}
\end{lemma}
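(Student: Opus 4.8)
The plan is to deduce Lemma \ref{prop:chordal-zipper0} from Proposition \ref{prop:chordal-zipper} by ``forgetting'' the loop decoration inside the cut-out surfaces, exactly as Proposition \ref{prop:full-cle-marked-cut} does for the outermost-loop decomposition. Recall that $\bar{\MC}^{{\sigmab},\kappa}_{\Lambda,\ell_L,\ell_R}$ was defined as the law of $[(h,\z,\Gamma_0,\lambda)]$ when $[(\D,h,\z,\Gamma,\lambda)]\sim\bar{\ML}^{{\sigmab},\kappa}_{\Lambda,\ell_L,\ell_R}$, where $\Gamma_0$ is the set of loops of $\Gamma$ that intersect $\lambda$. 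Thus $\bar{\MC}^{{\sigmab},\kappa}_{\Lambda,\ell_L,\ell_R}$ is the pushforward of $\bar{\ML}^{{\sigmab},\kappa}_{\Lambda,\ell_L,\ell_R}$ under the map that deletes from $\Gamma$ all loops contained in one of the complementary components $\Delta D_t$. The key observation is that under this forgetting map, the boundary length processes $\bm Y=(L,R,P)$ and the boundary lengths and marked points of the cut-out surfaces $\Delta S_t$ are unchanged; only the internal loop decoration of each $\Delta S_t$ is discarded. Consequently, for a functional $f$ of $\bm Y$ and of $\{(t,\Delta S_t)\colon t\in J(X)\}$ evaluated on the \emph{undecorated} cut-out surfaces, integrating against $\bar{\MC}^{{\sigmab},\kappa}_{\Lambda,\ell_L,\ell_R}$ is the same as integrating the same $f$ (now viewed as ignoring the loop decoration of each $\Delta S_t$) against $\bar{\ML}^{{\sigmab},\kappa}_{\Lambda,\ell_L,\ell_R}$.

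First I would make this precise: given a functional $f$ on $\bm Y$ together with a sequence of undecorated $\gamma$-LQG surfaces with marked points, let $\tilde f$ be the functional on $\bm Y$ together with a sequence of loop-decorated such surfaces obtained by precomposing $f$ with the map that forgets each surface's loop decoration. Then $\tilde f$ is measurable with values in $[0,1]$, and by the definition of $\bar{\MC}$ as a pushforward,
\begin{align*}
	\int f(\bm Y,\{(t,\Delta S_t)\colon t\in J(X)\})\,\bar{\MC}^{{\sigmab},\kappa}_{\Lambda,\ell_L,\ell_R}(dS) = \int \tilde f(\bm Y,\{(t,\Delta S_t)\colon t\in J(X)\})\,\bar{\ML}^{{\sigmab},\kappa}_{\Lambda,\ell_L,\ell_R}(dS)\,.
\end{align*}
Now apply Proposition \ref{prop:chordal-zipper} to the functional $\tilde f$. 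This yields
\begin{align*}
	\int \tilde f(\cdots)\,\bar{\ML}^{{\sigmab},\kappa}_{\Lambda,\ell_L,\ell_R}(dS) = \E\Bigl( \sum_{Q\in\Pi(X^0,A)} \int \tilde f\bigl((L^0,R^0,Q),\{(t,S'_t)\colon t\in J(X^0)\}\bigr) \prod_{t\in J(X^0)} e^{\sigma_{\Delta Q_t}1(\Delta X^0_t>0)}\,\bar{\ML}^{{\sigmab}|_{\Delta Q_t},\kappa}_{\Lambda,|\Delta X^0_t|}(dS'_t)\Bigr)\,.
\end{align*}
It remains to replace, inside the product integral, each $\bar{\ML}^{{\sigmab}|_{\Delta Q_t},\kappa}_{\Lambda,|\Delta X^0_t|}$ by $\bar{\M}^{{\sigmab}|_{\Delta Q_t},\kappa}_{\Lambda,|\Delta X^0_t|}$ while simultaneously replacing $\tilde f$ by $f$. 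Since $\tilde f$ ignores the loop decoration of each $S'_t$, and since pushing forward $\bar{\ML}^{{\sigmab}|_{\Delta Q_t},\kappa}_{\Lambda,|\Delta X^0_t|}$ under the loop-forgetting map gives precisely $\bar{\M}^{{\sigmab}|_{\Delta Q_t},\kappa}_{\Lambda,|\Delta X^0_t|}$ (this is immediate from Definition \ref{def:simple-disk}, resp.\ Definition \ref{def:gen-disk}, together with Definition \ref{def:cle} — the weighted CLE measure integrated out leaves exactly the weighted disk measure), each factor in the product collapses as desired. Carrying this out term by term, using Fubini/Tonelli to justify interchanging the product-integral with the loop-forgetting pushforward (all integrands are nonnegative and the total masses are the finite $W^{{\sigmab}|_{\Delta Q_t},\kappa}_{\Lambda,|\Delta X^0_t|}$), gives the claimed identity.

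The main obstacle is purely bookkeeping rather than conceptual: one must check carefully that the loop-forgetting map is measurable on the space of decorated $\gamma$-LQG surfaces (in the generalized case, this map must act componentwise on the tree of regular surfaces, and one should confirm that the looptree structure $e$ and the marked boundary points are untouched), and that the map indeed commutes with cutting out the surfaces $\Delta S_t$ — i.e., that the loops of $\Gamma$ intersecting $\lambda$ are exactly those \emph{not} strictly contained in any $\Delta D_t$, so that discarding internal loops of the cut-out pieces is the same operation as keeping only $\Gamma_0$. This is true by construction of the CPI and of $\Delta D_t$, but it is the one place where the geometry of the exploration (as reviewed in Section \ref{sec:background-msw}) is genuinely used. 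Given that, the argument is a one-line reduction to Proposition \ref{prop:chordal-zipper}, which is why the authors state that the proof is ``essentially the same.''
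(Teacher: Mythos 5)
Your proposal has a fatal circularity. You propose to deduce Lemma~\ref{prop:chordal-zipper0} from Proposition~\ref{prop:chordal-zipper}, but in the paper the logical arrow runs in the opposite direction: the paragraph labelled \emph{Proof of Proposition~\ref{prop:chordal-zipper} given Lemma~\ref{prop:chordal-zipper0}} derives the proposition \emph{from} the lemma (via the final part of Lemma~\ref{lem:xi-law}, which gives the conditional law of the nested $\CLE_\kappa^{\sigmab}$ given the CPI and the loops it touches). Proposition~\ref{prop:chordal-zipper} is therefore not available as an input to the lemma's proof; using it as you do assumes what must be proved. Your reference to Proposition~\ref{prop:full-cle-marked-cut} as a precedent for this kind of ``forgetting'' reduction is also a misreading: that proposition is a separate outermost-loop analogue of Proposition~\ref{prop:chordal-zipper} and is proved by the same technique as Lemma~\ref{prop:chordal-zipper0}, not by forgetting loop decorations.

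Setting circularity aside, the mechanical part of your argument (that $\bar{\MC}^{{\sigmab},\kappa}_{\Lambda,\ell_L,\ell_R}$ is the image of $\bar{\ML}^{{\sigmab},\kappa}_{\Lambda,\ell_L,\ell_R}$ under discarding the loops interior to the cut-out components, and that $\bar{\M}^{{\sigmab}|_{\Delta Q_t},\kappa}_{\Lambda,|\Delta X^0_t|}$ is the image of $\bar{\ML}^{{\sigmab}|_{\Delta Q_t},\kappa}_{\Lambda,|\Delta X^0_t|}$ under discarding all loops) is sound and is exactly what the paper's \emph{Proof of Proposition~\ref{prop:chordal-zipper} given Lemma~\ref{prop:chordal-zipper0}} uses, but read the other way. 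What is genuinely missing from your proposal is the entire content of the lemma's actual proof: starting from the $A=\emptyset$ result of Miller--Sheffield--Werner (Theorem~\ref{thm:msw-simple} or \ref{thm:msw-non-simple}, equation~\eqref{eq31}), inserting marked points by integrating over $\lambda_\D^{\otimes A}$ after a change of variables through the uniformizing maps $\psi_t$, weighting by $\prod_i \epsilon^{\alpha_i^2/2} e^{\alpha_i (\wt h_t)_\epsilon(\wt z_i^{\,t})}$ together with the nesting observable $\Phi^{\sigmab,\kappa}_\D$, and passing to the $\epsilon\to 0$ limit on both sides of~\eqref{eq24} via Proposition~\ref{prop:limit-eps}. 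That Girsanov-type reweighting and limit argument is the technical bulk of the proof; it cannot be replaced by the one-line pushforward reduction you sketch.
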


\begin{proof}[Proof of Proposition \ref{prop:chordal-zipper} given Lemma \ref{prop:chordal-zipper0}]
	We only need to prove the first statement since the second one is immediate. The difference between the proposition and the lemma is that in the latter case we are only considering the CLE loops that are intersecting the CPI while in the former case we are considering a full nested CLE. The result is then a consequence of the definitions and the final part of Lemma \ref{lem:xi-law}.
\end{proof}

It remains to prove Lemma \ref{prop:chordal-zipper0}. We will first do the proof for $\kappa\in(8/3,4)$.

\begin{proof}[Proof of Lemma \ref{prop:chordal-zipper0} for $\kappa\in (8/3,4)$]
We start by introducing some notation. Sample the tuple $(h,\z)\sim \M^{{\sigmab},\kappa}_{0,\ell}$ (Definition \ref{def:disk}), and define $w_\infty$, $\Gamma$, $\lambda$, $\Gamma_0$, $\bm{Y}=(L,R,P)$, $X$, $J(X)$, $\mathcal{I}_{\z}(U)$, and $(\Delta D_t)_{t\in J(X)}$ as above. 

Let $w_t^0$ denote the marked point on $\partial\Delta D_t$ which is equal to the point on $\partial \Delta D_t$ that is hit first by the CPI $\lambda$.
Then let $w_t^1$ and $w_t^{-1}$ be sampled independently according to the (normalized) $\gamma$-LQG boundary length measure with respect to $h$ on $\partial\Delta D_t$ conditioned on $\w_t:=(w_t^0,w_t^1,w_t^{-1})$ being ordered counterclockwise. Let $\psi_t:\D\to \Delta D_t$ be the unique conformal map which is sending $\w_t$ to $(-i,1,-1)$ and define
\begin{align}
\label{eq55}
\begin{split}
\wt h_t = h\circ\psi_t+Q_\gamma\log&|\psi'_t|\;,
\quad
\wt z^{\,t}_{i}=\psi_{t}^{-1}(z_i)\quad\text{ for $t\in J(X)$ and $i\in \Delta P_t$}\;,\\
\quad
&\widetilde{\z}^{\,t}=(\wt z^{\,t}_{i}\colon  t\in J(X)\,,\ i\in \Delta P_t)
\end{split}
\end{align}
whenever $\z\in \D^A$. We also let
\begin{align*}
	 \wt{\M}_{L,R,(\wt h_t)_t}\quad\text{be the conditional law of $(\psi_t)_{t\in J(X)}$ given $L,R,(\wt h_t)_{t\in J(X)}$}
\end{align*}
in the setting of $A=\emptyset$.\footnote{\,In fact, it is stated in \cite[Remark 3.6]{msw-simple} that $(\psi_t)_{\in J(X)}$ is almost determined by $L,R,(\wt h_t)_{t\in J(X)}$ in the sense that $L,R,(\wt h_t)_{t\in J(X)}$ determine a collection of conformal transformations $(\wt\psi_t)_{t\in J(X)}$ such $\wt\psi_t=\phi\circ\psi_t$ for some conformal map $\phi:\D\to\D$. We do not rely on this result.} We denote the law of $(h,\Gamma_0,\lambda,(\w_t)_{t\in J(X)} )$ by $\MC^{-,\kappa}_{0,\ell_{L},\ell_{R}}$. For any non-negative measurable function $f$, by \cite{msw-simple} (see Section \ref{sec:background-msw} for a review),
\begin{align}
\begin{split}
	&\int f(L,R, (\wt h_t,\psi_t)_{t} ) \,d\MC^{-,\kappa}_{0,\ell_{L},\ell_{R}}(h,\Gamma_0,\lambda,(\w_t)_{t\in J(X)} )\\
	&= \E\left(\int f(L^0,R^0, (\wt h_t,\psi_t)_{t} ) \,\wt{\M}_{L^0,R^0,(\wt h_t)_{t\in J(X^0)}}( d(\psi_t)_{t\in J(X^0)} )\, \prod_{t\in J(X^0)}
	\M^{-,\kappa}_{0,|\Delta X^0_t|}(d\wt h_t) \right) \;.
\end{split}
\label{eq31}
\end{align}
Note that we are abusing notation slightly since $\widetilde{h}_t$ and $\psi_t$ are functions of the integration variables on the left-hand side while they are integration variables on the right-hand side.

Given a tuple $\z\in \D^A$ we write $P_t=\{i\in A\colon z_i\in D_t\}$, leaving the dependence on $\z$ implicit. In the following, we will write $\lambda_{U}$ for the Lebesgue measure on $U$ where $U\subseteq \C$ is open. The product measure on $U^B$ for $B\subseteq A$ is denoted by $\lambda_U^{\otimes B}$ with the convention that $\lambda_U^{\otimes\emptyset}$ is a Dirac delta of mass $1$ on the empty tuple.

For any $t\in J(X)$, the pushforward of $\lambda_{\D}^{\otimes B}$ along $\psi_t^{-1}$ is given by $(|\psi_t'\circ \psi_t^{-1}|^{-2}\lambda_{\Delta D_t})^{\otimes B}$. Using \eqref{eq31}, we obtain that for $f$ non-negative and measurable, 
\begin{align}
\begin{split}
	&\int f((L,R,P), (\wt h_t,\widetilde{\z}^{\,t},\psi_t)_{t} ) \,d\MC^{-,\kappa}_{0,\ell_{L},\ell_{R}}(h,\Gamma_0,\lambda,(\w_t)_{t\in J(X)} )\\
	&\qquad\quad \prod_{t\in J(X)}\prod_{i\in \Delta P_t} |\psi_t'(\widetilde{z}^{\,t}_{i})|^{-2}\,\lambda_\D^{\otimes A}(d\z)\\
	&= \E\Biggl( \int \sum_{Q\in \Pi(X,A)} f((L^0,R^0,Q), (\wt h_t,\widetilde{\z}^{\,t},\psi_t)_{t} )\, \wt{\M}_{L^0,R^0,(\wt h_t)_{t}}( d(\psi_t)_{t\in J(X)} )\\
	&\qquad\quad \prod_{t\in J(X^0)}
	\M^{-,\kappa}_{0,|\Delta X^0_t|}(d\wt h_t)\,\lambda_{\D}^{\otimes \Delta Q_t}(d\widetilde{\z}^{\,t}) \Biggr)\;.
\end{split}
\label{eq24}
\end{align}
By the same type of abuse of notation as before $\widetilde{\z}^{\,t}$ has different meanings on the right and left-hand side of the equation, namely, on the left-hand side it is defined by \eqref{eq55} while on the right-hand side it is the integration variable. Let $\theta:\C\to[0,\infty)$ be a smooth radially symmetric bump function satisfying 
\begin{align*}
\int_{\C} \theta = 1,\qquad
\op{supp}(\theta)\subseteq \D\,,\qquad
\iint \theta(z)\theta(z')\log(|z|\vee|z'|)^{-1}\,dz\,dz'=0.
\end{align*}
For $\epsilon\in(0,1)$ and $z\in\D$ set $\wt\theta_\epsilon^z(\wt z)=\epsilon^{-2}\theta( \epsilon^{-1}(\wt z-z) )$ and for a distribution $\wt h$ define $\wt h_\epsilon(z):=\wt h(\wt\theta_\epsilon^z)$. Fix $r\in (0,1/16)$. Consider a function $f=f_\epsilon$ of the form
\begin{align}
\begin{split}
	&f_\epsilon(\bm{Y}, (\wt h_t,\widetilde{\z}^{\,t},\psi_t)_{t})=
	\wt f(\bm{Y}, (\wt h_{t_1},\widetilde{\z}^{\,t_1},\psi_{t_1}),\dots,(\wt h_{t_N},\widetilde{\z}^{\,t_N},\psi_{t_N}))
	\\
	&\qquad\qquad\qquad\qquad\qquad 
	\prod_{t\in J(X)} \Phi^{{\sigmab|_{\Delta P_t}},\kappa}_{\D}(\widetilde{\z}^{\,t})
	e^{\sigma_{\Delta P_t} 1(\Delta X_t>0)}
	 e^{-\Lambda \mu^\gamma_{\,\wt h_t}(\D)} \prod_{i\in \Delta P_t} \epsilon^{\alpha^2_i/2} e^{\alpha_i (\wt h_t)_\epsilon(\wt z^{\,t}_{i} )}
\end{split}
\label{eq23}
\end{align}
where $t_1<\cdots<t_N$ are all the times satisfying $|\Delta X_{t_i}|>r$ and where $\widetilde{f}$ is assumed to be measurable, take values in $[0,1]$ and have the following properties:
\begin{itemize}
	\item The function $\wt f$ is continuous in each of the fields with respect to the topology of $H^{-1}(\D)$.
	\item If $n\le N$ and $\widetilde{\z}^{\,t_n}=(\widetilde{z}^{\,t_n}_i\colon i\in B)$ then $\widetilde{f}$ takes the value $0$ whenever 
	\begin{align*}
		&\min_{i\in B}\ 
		(1-|\wt z^{\,t_n}_{i}|)\wedge 
		(1-|\psi_{t_n}(\wt z^{\,t_n}_{i})|)\wedge |\psi_{t_n}'(\widetilde{z}^{\,t_n}_i)| \\
		&\qquad\wedge
		\min_{\substack{i,j\in B\\i\neq j}} |\wt z^{\,t_n}_{i}-\wt z^{\,t_n}_{j}|
		\wedge |\psi_{t_n}(\wt z^{\,t_n}_{i})-\psi_{t_n}(\wt z^{\,t_n}_{j})| \le 4\sqrt{r}\;.
	\end{align*}
	\item The function $\widetilde{f}$ takes the value $0$ if $\Delta P_{t_1}\cup\cdots \cup \Delta P_{t_N}\neq A$.
\end{itemize}
Inserting \eqref{eq23} into \eqref{eq24}, we will show that the left-hand side of \eqref{eq24} converges, as $\epsilon\to 0$, to
\begin{align}
\begin{split}
	\int \wt f(\bm{Y}, (\wt h_{t_n},\widetilde{\z}^{\,t_n},\psi_{t_n})_{n}) \,d\MC^{{\sigmab},\kappa}_{\Lambda,\ell_{L},\ell_{R}}(h,\z,\Gamma_0,\lambda,(\w_t)_{t\in J(X)} )
\end{split}
\label{eq25:lhs}
\end{align}
and that the right-hand side of \eqref{eq24} converges, as $\epsilon\to 0$, to
\begin{align}
\begin{split}
	&\E\Biggr( \int \sum_{Q\in \Pi(X^0,A)} \wt f((L^0,R^0,Q), (\wt h_{t_n},\widetilde{\z}^{\,t_n},\psi_{t_n})_{n} )  \prod_{t\in J(X^0)} e^{\sigma_{\Delta Q_t}1(\Delta X^0_t>0)} \,\\
	&\qquad\qquad \wt{\M}_{L^0,R^0,(\wt h_t)_{t}}( d(\psi_t)_{t\in J(X^0)})\, \prod_{t\in J(X^0)}
	\M^{{\sigmab}|_{\Delta Q_t},\kappa}_{\Lambda,|\Delta X^0_t|}(d\wt h_t,d\widetilde{\z}^{\,t}) \Biggl)
\end{split}
\label{eq25:rhs}
\end{align}
so \eqref{eq25:lhs} is equal to \eqref{eq25:rhs}. Before proving the convergence results, we claim that Lemma \ref{prop:chordal-zipper0} follows from the fact that \eqref{eq25:lhs} and \eqref{eq25:rhs} are equal. The identification of \eqref{eq25:lhs} and \eqref{eq25:rhs} gives by a monotone class argument and using the fact that the constant $r$ was arbitrary that for any non-negative, bounded, and measurable function $\wt f$,
\begin{align}
\begin{split}
	&\int \wt f(\bm{Y}, (\wt h_t,\widetilde{\z}^{\,t})_{t})  \,d\MC^{{\sigmab},\kappa}_{\Lambda,\ell_{L},\ell_{R}}(h,\z,\Gamma_0,\lambda,(\w_t)_{t\in J(X)} )\\
	&=\E\left(\int \sum_{Q\in \Pi(X^0,A)} \wt f((L^0,R^0,Q), (\wt h_t,\widetilde{\z}^{\,t})_{t} )  \prod_{t\in J(X^0)} e^{\sigma_{\Delta Q_t}1(\Delta X^0_t>0)} \,
	\M^{{\sigmab}|_{\Delta Q_t},\kappa}_{\Lambda,|\Delta X^0_t|}(d\wt h_t,d\widetilde{\z}^{\,t}) \right).
\end{split}
\label{eq-after-weight}
\end{align}
The lemma therefore follows by combining the following two observations (both of which follow directly from the definitions):
\begin{itemize}
	\item If we sample $(h,\z,\Gamma_0,\lambda,(\w_t)_{t\in J(X)} )\sim \MC^{{\sigmab},\kappa}_{\Lambda,\ell_{L},\ell_{R}}$ then the law of $[(\D,h,\z,\Gamma_0,\lambda)]$ is $\bar{\MC}^{{\sigmab},\kappa}_{\Lambda,\ell_{L},\ell_{R}}$.
	\item If $(\wt h_t,\widetilde{\z}^{\,t})\sim \M^{{\sigmab}|_{\Delta Q_t},\kappa}_{\Lambda,|\Delta X_t|}$ then the law of $[(\D,\wt h_t,\widetilde{\z}^{\,t})]$ is $\bar{\M}^{{\sigmab}|_{\Delta Q_t},\kappa}_{\Lambda,|\Delta X_t|}$.
\end{itemize}
It remains to prove that the left- and right-hand sides of \eqref{eq24} converge to \eqref{eq25:lhs} and \eqref{eq25:rhs}, respectively. Convergence of the right-hand side of \eqref{eq24} to \eqref{eq25:rhs} is immediate by Proposition \ref{prop:limit-eps} since we can check that the functions $(\wt\theta^z_\epsilon)_{z,\epsilon}$ satisfy \eqref{eq:theta} with 0 instead of $o_\epsilon(1)$ and $\wt\theta^z_\epsilon$ instead of $\theta^z_\epsilon$. It therefore remains to prove the following:
\begin{align}
\text{The left-hand side of \eqref{eq24} converges to \eqref{eq25:lhs} as $\epsilon\to 0$.}
\label{eq33}
\end{align}

We will now prove \eqref{eq33}. To this end, we suppose that $\widetilde{f}(\bm{Y}, (\wt h_{t_n},\widetilde{\z}^{\,t_n},\psi_{t_n})_{n}) >0$. For each $i\in A$ with $i\in \Delta P_t$, let
\begin{align*}
	\delta_i=\epsilon|\psi'_{t}(\wt z^{\,t}_{i})|\;,\quad
	\theta^{z_i}_{\delta_i}=|\psi_{t}'\circ\psi_{t}^{-1}|^{-2} (\wt\theta^{\,\widetilde{z}^{\,t}_i}_{\epsilon}\circ \psi_{t}^{-1}).
\end{align*}
Varying $\epsilon$ this defines $\theta^{z_i}_{\delta}$ for $\delta \in (0,r/2)$. Extend $\theta^z_\delta$ for $z\in\D\setminus\{z_i\,:\,i\in A \}$ such that they satisfy \eqref{eq:theta} (with $\delta$ instead of $\epsilon$) and set $h_{\delta}(z)=h( \theta^{z}_{\delta})$.

We first argue that $\theta^{z}_{\delta}$ satisfies the condition \eqref{eq:theta} (with $\delta$ instead of $\epsilon$)  of Proposition \ref{prop:limit-eps}. Indeed, this follows from \cite[Theorem 3.20]{lawler-book}, which implies that 
\begin{align*}
	|(\psi_{t}^{-1})'(z)-(\psi_{t}^{-1})'(\wt z^{\,t}_i)|/|(\psi_{t}^{-1})'(\wt z^{\,t}_i)|=o_{\delta_i}(1)\quad\text{for $|z-\wt z^{\,t}_i|<\delta_i$}
\end{align*}
as $\delta_i\to 0$ with the $o_{\delta_i}(1)$ deterministic.
The first identity of Lemma \ref{lem:phi-axioms}, $\wt h_t=h\circ\psi_t+Q_\gamma\log|\psi'_t|$, and the fact that $\log|\psi'_{t}|$ is harmonic yields that
\begin{align*}
	&\prod_{t\in J(X)} \Phi^{{\sigmab}|_{\Delta P_t},\kappa}_{\D}(\widetilde{\z}^{\,t})
	\prod_{i\in \Delta P_t} \epsilon^{\alpha^2_i/2} e^{\alpha_i (\wt h_t)_\epsilon(\wt z^{\,t}_{i} )}|\psi'_{t}(\wt z^{\,t}_{i})|^{-2} \\
	&\quad =\prod_{t\in J(X)} \Phi^{{\sigmab}|_{\Delta P_t},\kappa}_{\Delta D_t}(\z|_{\Delta P_t})
	\prod_{i\in \Delta P_t} \delta_i^{\alpha_i^2/2}
	e^{\alpha_i
		h_{\delta_i}(z_i)} \;.
\end{align*}
Let $\M_h$ denote the law of $(\Gamma_0,\lambda,(\w_t)_{t\in J(X)})$ given $h$ if $(h,\Gamma_0,\lambda,(\w_t)_{t\in J(X)})\sim \MC^{-,\kappa}_{\Lambda,\ell_{L},\ell_{R}}$. Note that $\M_h$ can be made sense of as soon as the $\gamma$-LQG boundary measure on the boundary of the domain and on the boundary of each of the components $\Delta D_t$ is well-defined. We will use Proposition \ref{prop:limit-eps} to argue that as $\epsilon\to 0$,
\begin{align}
\begin{split}
	&\int f_\epsilon(\bm{Y}, (\wt h_{t_n},\widetilde{\z}^{\,t_n},\psi_{t_n})_{n} )
	\,d\M_{h}(\Gamma_0,\lambda, (\w_t)_{t\in J(X)} )
	\,\M^{-,\kappa}_{\Lambda,\ell}(dh )\,d\z\\
	&\to\int \wt f(\bm{Y}, (\wt h_{t_n},\widetilde{\z}^{\,t_n},\psi_{t_n})_{n} )
	\frac{\prod_{t\in J(X)} \Phi^{{\sigmab}|_{\Delta P_t},\kappa}_{D_t}(\z|_{\Delta P_t}) e^{\sigma_{\Delta P_t}1(\Delta X_t>0)}}{\Phi^{{\sigmab},\kappa}_{\D}(\z)}   \\
	&\qquad\qquad\qquad\qquad\,d\M_{h}(\Gamma_0,\lambda, (\w_t)_{t\in J(X)} )
	\,\M^{{\sigmab},\kappa}_{\Lambda,\ell}(dh,d\z),
\end{split}
\label{eq32}
\end{align}
which is sufficient to conclude the proof of \eqref{eq33} since the 	 of \eqref{eq32} is equal to the left-hand side of \eqref{eq24} and the right-hand side of \eqref{eq32} is equal to \eqref{eq25:lhs} by Lemma \ref{lem:xi-law}. To this end, define $F$ by
\begin{align*}
\begin{split}
	&F(h,\z,\Gamma_0,\lambda, (\w_t)_{t\in J(X)}) = \wt f(\bm{Y}, (\wt h_{t_n},\widetilde{\z}^{\,t_n},\psi_{t_n})_{n} ) \frac{\prod_{t\in J(X)} \Phi^{{\sigmab}|_{\Delta P_t},\kappa}_{D_t}(\z|_{\Delta P_t}) e^{\sigma_{\Delta P_t}1(\Delta X_t>0)}}{\Phi^{{\sigmab},\kappa}_{\D}(\z)}\;.
\end{split}
\end{align*}
We now want to apply Proposition \ref{prop:limit-eps} to this functional. In the proof above, we described the probability kernel used to sample $(\Gamma_0,\lambda, (\w_t)_{t\in J(X)})$ and $(\theta^z_\delta)$ in \eqref{eq32} conditionally on $(h,\z)$. The key observation is that this probability kernel only depends on $(h,\z)$ through $\z$ and the restriction of $h$ to the complement of $\cup_i B_r(z_i)$. This is indeed true since the target point of the CPI $w_\infty$ is sampled from the $\gamma$-LQG boundary measure with respect to $h$ which is measurable with respect to the restriction of $h$ to the complement of $\cup_i B_r(z_i)$. Similarly, the points $(\w_t)_{t\in J(X)}$ are sampled from the $\gamma$-LQG boundary measures of the cut out components which are also measurable with respect to the restriction of $h$ to the complement of $\cup_i B_r(z_i)$.

In order to prove \eqref{eq32} it is therefore sufficient to argue that the function $F$ satisfies the assumptions appearing in Proposition \ref{prop:limit-eps}.

Measurability of $F$ is immediate, and boundedness follows from boundedness of $\wt f$ and the upper and lower bounds at the end of  Theorem \ref{thm:loopexpconv}. To argue that $F$ satisfies the required continuity property, we consider a sequence $(g^\z_m)$ as in Proposition \ref{prop:limit-eps}. By the support condition on $(g^\z_m)$, adding the functions to the field does not affect the process $\bm{Y}$ and the boundary lengths of the cut out components. Thus
\begin{align*}
	F(h+g^\z_m,\z,\Gamma_0,\lambda, (\w_t)_{t\in J(X)}) &= \wt f(\bm{Y}, (\wt h_{t_n}+g^\z_m\circ \psi_{t_n},\widetilde{\z}^{\,t_n},\psi_{t_n})_{n} ) \\
	&\qquad\quad\cdot \frac{\prod_{t\in J(X)} \Phi^{{\sigmab}|_{\Delta P_t},\kappa}_{D_t}(\z|_{\Delta P_t}) e^{\sigma_{\Delta P_t}1(\Delta X_t>0)}}{\Phi^{{\sigmab},\kappa}_{\D}(\z)} \\
	&\to F(h,\z,\Gamma_0,\lambda, (\w_t)_{t\in J(X)})
\end{align*}
as $m\to \infty$ as required in Proposition \ref{prop:limit-eps}.
\end{proof}

\begin{proof}[Proof of Lemma \ref{prop:chordal-zipper0} for $\kappa\in(4,8)$] The proof for generalized disks is identical to the case of regular disks, except that a surface is represented by a countable collection of fields rather than a single field due to the difference in topology between the disks. We will therefore just state the counterparts of \eqref{eq31} and \eqref{eq-after-weight} in the setting of generalized disks and leave the details of the proof to the reader. In the following, we will use the index $u$ to denote the time parameter of the excursion that encodes the generalized $\gamma$-LQG disk on which we have a CPI,
we will use $t$ to denote the time parameter of the exploration process, and 
we will use $s$ to denote the time parameter of the excursions encoding the cut-out generalized $\gamma$-LQG disks. We write $\kappa'$ instead of $\kappa$ below.
	
We start by defining a measure $\MC^{{\sigmab},\kappa'}_{\Lambda,\ell_L,\ell_R}$ on tuples $(e,(h_u,\z_u,\Gamma^0_u,\lambda_u),(\bm{w}_s^t)_{t,s} )$.
\begin{itemize}
	\item First sample $(e,(h_u,\z_u)_{u\in J(e)} )$ as in Definition \ref{def:gen-disk}. Note that Definition \ref{def:gen-disk} gives a measure on (generalized) $\gamma$-LQG surfaces rather than fields but it can be immediately adapted to give a measure on fields.
	\item Then sample an independent copy $\Gamma_u$ of $\CLE_{\kappa'}^{\sigmab|_B}(\z_u)$ for each $u\in J(e)$ if $\z_u$ is indexed by the set $B$. Consider the CPI and write $\lambda_u$ for the segment of the CPI if it goes through the component of the generalized $\gamma$-LQG disk corresponding to the index $u$ (and $\lambda_u=-$ otherwise). Let $\Gamma^0_u$ denote the loops in $\Gamma_u$ intersecting $\lambda_u$.
	\item Define $\bm{Y}=(L,R,Q)$ and $X=L+R$ as in Section \ref{sec:background-msw}. For $t\in J(X)$, let $e^t$ be the excursion encoding the cut out generalized $\gamma$-LQG disk, sample boundary points $\w_s^t$ and consider the mapping out functions $\psi^t_s$ as in the simple case whenever $t\in J(X)$ and $s\in J(e^t)$. Define $\wt{h}^t_s$ and $\wt{\z}^{\,t}_s$ analogously to the simple case.
\end{itemize}
In the case $A=\emptyset$ we write $(L^0,R^0,X^0,(E^t))$ instead of $(L,R,X,(e^t))$. The counterpart of \eqref{eq31} in the case of generalized disks, which follows from \cite{msw-non-simple} (see again Section \ref{sec:background-msw}) is then
\begin{align*}
\begin{split}
	&\int f(L,R,(e^t)_{t}, (\wt{h}^t_s,\psi^t_s)_{t,s} )
	 \,d\MC^{-,\kappa'}_{\Lambda,\ell_L,\ell_R}
	(e,(h_u,\Gamma_u,\lambda_u)_u,(\bm{w}_s^t)_{t,s} )\\
	&= \E\left( \int f(L^0,R^0,(E^t)_{t}, (\wt{h}^t_s,\psi^t_s)_{t,s} )
	\,\wt{\M}_{L^0,R^0,(\wt{h}^t_s)_{t,s}}( d(\psi^t_s)_{t,s} )\prod_{t,s}
	\M^{-,\kappa'}_{\Lambda,|\Delta E^t_s|}(d\wt{h}^t_s) \right)
\end{split}
\end{align*}
where, every time we index over $t,s,u$, we mean $t\in J(X),s\in J(e^t),u\in J(e)$ on the left-hand side and $t\in J(X^0), s\in J(E^t)$ on the right-hand side.

Adding marked points via a limiting procedure as in the case of regular disks we get the following, which is the counterpart of \eqref{eq-after-weight} in the generalized case:
\begin{align*}
\begin{split}
	&\int f(\bm{Y},(e^t)_{t}, (\wt{h}^t_s, \wt{\z}^{\,t}_s)) \,d\MC^{{\sigmab},\kappa'}_{\Lambda,\ell_L,\ell_R}
	(e, (h_u,\z_u,\Gamma_u,\lambda_u)_u,(\bm{w}_s^t)_{t,s} )\\
	&\qquad= \E\Biggr( \int \sum_{Q,(Q^t)} f((L^0,R^0,Q),(E^t)_{t}, (\wt{h}^t_s,\wt{\z}^{\,t}_s)_{t,s} )
	\,\wt{\M}_{L^0,R^0,(\wt{h}^t_s)_{t,s}}( d(\psi^t_s)_{t,s} ) \\
	&\qquad\qquad\qquad\qquad\qquad\qquad\qquad \prod_{t,s}\M^{{\sigmab}|_{\Delta Q^t_s},\kappa'}_{\Lambda,|\Delta E^t_s|}(d\wt{h}^t_s,d\wt{\z}^{\,t}_s) \Biggl)
\end{split}
\end{align*}
where the sum is over $Q\in \Pi(X^0,A)$ and $Q^t\in \Pi(E^t,\Delta Q_t)$ for all $t$. By Definition \ref{def:gen-disk} we deduce the result.
\end{proof}

\section{Recursive formula and finiteness of partition functions}
\label{sec:explorations}

In this section, we will put the result together from previous sections and conclude the proofs of Theorems \ref{thm:markov-simple},  \ref{thm:bootstrap}, and \ref{thm:part-fcn-finite}.
Recall the definition of the weight functions in Definitions \ref{def:simple-disk} and \ref{def:gen-disk}, see also the equations \eqref{eq:part-fcn} and \eqref{eq:part-fcn-gen}. One simple property that we will use frequently is the following scaling property of the weight functions for marked disks. For $\kappa\in (8/3,8)\setminus \{4\}$, $\Lambda\ge 0$, $\ell>0$ and $\sigmab\in \mathfrak{S}_\kappa^A$ we have
\begin{align}
	W^{\sigmab,\kappa}_{\Lambda,\ell}
	=\ell^{\frac{2}{\sqrt{\kappa}}\sum_{i\in A}\alpha_i}\, W^{\sigmab,\kappa}_{\Lambda\ell^{2\wedge (8/\kappa)},1} \;.
	\label{eq:W-scaling}
\end{align}
Moreover, throughout the following sections, to simplify notation we will introduce the shorthand notation
\begin{align}
	W^B_\Lambda(\ell) = W^{\sigmab|_B,\kappa}_{\Lambda,\ell}
	\label{eq57}
\end{align}
for $B\subseteq A$ when $\sigmab$ and $\kappa$ are clear from the context.

\subsection{Recursive partition function formula and law of boundary length process}

In this subsection we conclude the proofs of Theorems \ref{thm:markov-simple} and  \ref{thm:bootstrap}. The following is an extended version of Theorem \ref{thm:markov-simple}.
\begin{thm}
	The assertions of Theorem \ref{thm:markov-simple} hold. Additionally, the generator of $\bf Y$ is given by $\mathcal{G}_{\kappa,\beta}^W$ from Theorem \ref{thm:weighted-levy} in the $\kappa<4$ case and Theorem \ref{thm:nonsimple-weighted-levy} in the $\kappa>4$ case, where the weights $(W^B(\ell)\colon B\subseteq A,\ell>0)$ of the latter theorems are given by $(W^{\sigmab,\kappa}_{\Lambda,\ell}\colon B\subseteq A,\ell>0)$.
	\label{thm:markov-full}
\end{thm}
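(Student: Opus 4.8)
The strategy is to combine Proposition \ref{prop:chordal-zipper} (the reweighting of the CPI boundary length process for disks with marked points) with the results of Section \ref{sec:fragmentation-main} on reweighted Lévy processes. Concretely, in the $\kappa\in(8/3,4)$ case, recall from Theorem \ref{thm:msw-simple} that for disks without marked points the process $(L,R)$ has the law (up to a deterministic time change by $v^\kappa_\beta$) of the process $(L',R')$ from Proposition \ref{prop:simple-msw-process}, and that conditionally on $(L,R)$ the cut out quantum surfaces are independent $\gamma$-LQG disks with the prescribed boundary lengths. The idea is then to set $W^B := W^{\sigmab|_B,\kappa}_{\Lambda,\cdot}$ and verify that these functions satisfy the hypotheses \eqref{eq:conditions-simple} of Theorem \ref{thm:weighted-levy}: $W^\emptyset_\Lambda = W^{-,\kappa}_{\Lambda,\cdot}$ is non-increasing, bounded by $1$, and satisfies $1-W^{-,\kappa}_{\Lambda,\ell}\lesssim \ell^{2}$ by Corollary \ref{cor:single-point-properties}; the comparability condition $W^B(x)\lesssim W^B(y)$ for $y\le x\le 2y$ follows from the scaling property \eqref{eq:W-scaling} together with continuity of $\ell\mapsto W^{\sigmab|_B,\kappa}_{\Lambda,\ell}$, using admissibility (the theorem is only stated under the standing assumption that $W^{\sigmab,\kappa}_{\Lambda,\ell}<\infty$, which via Theorem \ref{thm:part-fcn-finite}(iii) gives finiteness of all $W^{\sigmab|_B,\kappa}_{\Lambda,\ell}$). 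One must also check the normalization hypothesis in Theorem \ref{thm:weighted-levy}, namely that the right hand side of \eqref{eq:weighted-levy} equals $1$ when $g=1$; but this is precisely the content of the $f\equiv 1$ case of Proposition \ref{prop:chordal-zipper}, after unwinding the definition of $W^{\sigmab,\kappa}_{\Lambda,\ell}$ as $|\op{M}^{\sigmab,\kappa}_{\Lambda,\ell}|$ and using the scaling \eqref{eq:W-scaling} to account for the $w_\kappa$ factors and the $\gamma$-LQG area reweighting $e^{-\Lambda\mu_h^\gamma(\D)}$.

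With these verifications in hand, the first step is to identify the process $\bm Y = (L,R,P)$ with the process $\bm Y^W = (L^W,R^W,P^W)$ of Theorem \ref{thm:weighted-levy} (time-changed by $v^\kappa_\beta$). This is the heart of the argument and follows from Proposition \ref{prop:chordal-zipper}: the right hand side of the displayed identity there, when we take $f$ to depend only on the boundary length process and the indices $P_t$ of the marked points remaining in the future part of the disk, matches exactly the defining formula \eqref{eq:weighted-levy} for $\bm Y^W$, because the sum over $Q\in\Pi(X^0,A)$ there is the sum over $Q\in\Pi(X',A)$ appearing in \eqref{eq:weighted-levy} and the product of $e^{\sigma_{\Delta Q_t}1(\Delta X^0_t>0)}\bar{\ML}^{\sigmab|_{\Delta Q_t},\kappa}_{\Lambda,|\Delta X_t|}$-masses collapses to the product of $e^{\sigma_{Q_{s-}\setminus Q_s}1(\Delta X'_s>0)}W^{Q_{s-}\setminus Q_s}(\Delta X'_s)$ weights (with the $W^\emptyset$ factors on the remaining jumps coming from the $\gamma$-LQG area reweighting and the $W^{-,\kappa}_{\Lambda,\cdot}$ masses of the surfaces cut off that do not contain marked points). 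Once this identification is made, the strong Markov property of $\bm Y$ is inherited from Lemma \ref{lem:smp-fragmentation}, the generator formula is read off from the conclusion of Theorem \ref{thm:weighted-levy}, and the jump rates \eqref{eq56} are obtained by substituting the explicit jump measures $\mu_L,\mu_R$ (which carry the factors $-\cos(4\pi/\kappa)(1\mp\beta)/2$) and $w_\kappa(l,r)=(l+r)^{-1-4/\kappa}$ into the last two sums in the expression for $\mathcal{G}^W_{\kappa,\beta}f$, using $Z^{\sigmab|_B,\kappa}_{\Lambda,\ell}=\ell^{-1-4/\kappa}W^{\sigmab|_B,\kappa}_{\Lambda,\ell}$ to convert the ratios of $w_\kappa\cdot W$ into ratios of $Z$'s and absorbing the speed constant $v^\kappa_\beta$. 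The final statement of Theorem \ref{thm:markov-simple} — that conditionally on $\bm Y$ the cut out quantum surfaces are independent regular $\gamma$-LQG disks with marked points — is again immediate from Proposition \ref{prop:chordal-zipper}, since on the right hand side there the cut out surfaces are sampled from the measures $\bar{\ML}^{\sigmab|_{\Delta Q_t},\kappa}_{\Lambda,|\Delta X_t|}$ conditionally independently given the process, and forgetting the loop and CPI decoration gives the regular disk with marked points.

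The $\kappa\in(4,8)$ case is entirely parallel: one uses Theorem \ref{thm:msw-non-simple} and Proposition \ref{prop:nonsimple-msw-process} in place of Theorem \ref{thm:msw-simple} and Proposition \ref{prop:simple-msw-process}, the version of Proposition \ref{prop:chordal-zipper} for generalized disks (whose proof is sketched at the end of Section \ref{sec:disk-conv-exploration}), and Theorem \ref{thm:nonsimple-weighted-levy} in place of Theorem \ref{thm:weighted-levy}; the relevant regularity estimate $1-W^{-,\kappa'}_{\Lambda,\ell}\lesssim \ell^{8/\kappa'}$ needed to verify \eqref{eq:conditions-nonsimple} is again supplied by Corollary \ref{cor:single-point-properties}, and the comparability of $W^B$ via \eqref{eq:W-scaling}. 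The factor $8/\kappa'$ rather than $2$ in the Lévy compensation hypothesis is exactly why the two cases of Section \ref{sec:fragmentation-main} were set up separately, and everything else goes through verbatim.

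The main obstacle I anticipate is the bookkeeping needed to match the combinatorics of the two formulas — that is, to see that the sum over partitions $Q\in\Pi(X^0,A)$ with weights $e^{\sigma_{\Delta Q_t}1(\Delta X^0_t>0)}$ and cut-out-surface masses in Proposition \ref{prop:chordal-zipper} is literally the defining expression \eqref{eq:weighted-levy}, once one accounts carefully for (i) the time change by $v^\kappa_\beta$ relating $(L,R)$ to $(L',R')$, (ii) the reweighting by $e^{-\Lambda\mu_h^\gamma(\D)}$ of the underlying disk, which distributes over the cut out surfaces and produces exactly the $W^\emptyset(|\Delta X'_s|)=W^{-,\kappa}_{\Lambda,|\Delta X'_s|}$-type factors appearing in \eqref{eq:weighted-levy}, and (iii) the conversion between the ``weight'' normalization $W$ (total mass of $\op{M}$) and the $w_\kappa$-reweighted process $(L',R')$, which is where the power $\ell^{-1-4/\kappa}$ in the definition of $Z$ enters. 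None of this is conceptually difficult, but it requires threading the definitions in Sections \ref{sec:fragmentation-main}, \ref{sec:disk-cle-nesting} together carefully; I would organize the proof around the single equation ``$\text{RHS of Proposition \ref{prop:chordal-zipper}} = \text{RHS of \eqref{eq:weighted-levy}}$ after time change'' and then cite Theorem \ref{thm:weighted-levy} (resp.\ Theorem \ref{thm:nonsimple-weighted-levy}) as a black box for the generator and strong Markov property.
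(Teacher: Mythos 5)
Your proposal matches the paper's proof essentially step for step: you verify the hypotheses \eqref{eq:conditions-simple} (resp.\ \eqref{eq:conditions-nonsimple}) for the weights $W^B$, identify $\bm Y$ with $\bm Y^W$ via Proposition \ref{prop:chordal-zipper} and Theorem \ref{thm:msw-simple} (resp.\ Theorem \ref{thm:msw-non-simple}), and then cite Theorem \ref{thm:weighted-levy} (resp.\ Theorem \ref{thm:nonsimple-weighted-levy}) for the strong Markov property and generator, with the final assertion and jump rates read off directly. One small correction: for the comparability $W^B(x)\lesssim W^B(y)$ when $0<y\le x\le 2y$ you invoke continuity of $\ell\mapsto W^{\sigmab|_B,\kappa}_{\Lambda,\ell}$, but continuity alone does not yield a bound uniform over all $y>0$; what the paper uses — and what actually works — is the monotonicity of $\Lambda'\mapsto W^{\sigmab|_B,\kappa}_{\Lambda',1}$ together with \eqref{eq:W-scaling}, namely
\begin{align*}
W^B_\Lambda(x)=x^{a}\,W^B_{\Lambda x^{2\wedge(8/\kappa)}}(1)\le x^{a}\,W^B_{\Lambda y^{2\wedge(8/\kappa)}}(1)\lesssim y^{a}\,W^B_{\Lambda y^{2\wedge(8/\kappa)}}(1)=W^B_\Lambda(y),
\end{align*}
where $a=\tfrac{2}{\sqrt\kappa}\sum_{i\in B}\alpha_i$.
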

\begin{proof}
	We first treat the case $\kappa\in(8/3,4)$ and start by verifying that the weights $W^B_{\Lambda}(\ell)$ satisfy \eqref{eq:conditions-simple}. Let us first mention that we have a monotonicity property, namely that $\Lambda\mapsto W^B_{\Lambda}(\ell)$ is decreasing. This and \eqref{eq:W-scaling} immediately imply the first assertion of \eqref{eq:conditions-simple} (monotonicity). The  second assertion of \eqref{eq:conditions-simple} (boundedness by $1$) follows from the monotonicity property and since $W^{\emptyset}_0(\ell)=1$. The third assertion of \eqref{eq:conditions-simple} is given as the second part of Corollary \ref{cor:single-point-properties}.
	The final assertion of \eqref{eq:conditions-simple} follows from \eqref{eq:W-scaling} and the monotonicity property
	\begin{align*}
		W^B_{\Lambda}(x)
		&=
		x^{\frac{2}{\sqrt{\kappa}}\sum_{i\in A}\alpha_i}\, W^B_{\Lambda x^2}(1)
		\leq x^{\frac{2}{\sqrt{\kappa}}\sum_{i\in A}\alpha_i}\, W^B_{\Lambda y^2}(1)
		\\
		&\lesssim y^{\frac{2}{\sqrt{\kappa}}\sum_{i\in A}\alpha_i}\, W^B_{\Lambda y^2}(1)
		= W^B_{\Lambda}(y)\quad\text{for $0<y\le x\le 2y$}\;.
	\end{align*}
	By Proposition \ref{prop:chordal-zipper} and Theorem \ref{thm:msw-simple} we know that the process $\bf Y$ in Theorem \ref{thm:markov-simple} is equal to the process ${\bf Y}^W$ in Theorem \ref{thm:weighted-levy} since it satisfies the defining property \eqref{eq:weighted-levy} of ${\bf Y}^W$. It is therefore immediate by the latter theorem and \eqref{eq:conditions-simple} that the generator of $\bf Y$ is given by $\mathcal{G}_{\kappa,\beta}^W$. The assertion about the jump rates of $\bf Y$ in Theorem \ref{thm:weighted-levy} is now immediate from the definition of $\mathcal{G}_{\kappa,\beta}^W$. The final assertion of Theorem \ref{thm:markov-simple} follows from Proposition \ref{prop:chordal-zipper}.
	
	The proof in the case $\kappa\in (4,8)$ is the same, except that we consider \eqref{eq:conditions-nonsimple}, Theorem \ref{thm:nonsimple-weighted-levy} and Theorem \ref{thm:msw-non-simple} instead of \eqref{eq:conditions-simple}, Theorem \ref{thm:weighted-levy} and Theorem \ref{thm:msw-simple}. When verifying \eqref{eq:conditions-nonsimple} we again use \eqref{eq:W-scaling} and the fact that the function $\Lambda\mapsto W^B_{\Lambda}(\ell)$ is decreasing, but the asymptotics of $W^\emptyset_{\Lambda}(\ell)$ as $\ell\to 0$ is different and given as part of Corollary \ref{cor:single-point-properties}.
\end{proof}

We will now prove Theorem \ref{thm:bootstrap}. The intuition behind the result is explained below the statement of the theorem in the introduction. One technical obstacle with turning this intuition into a proof is that one needs to exclude the possibility that the scenario where all the marked points lie in the same complementary component of the CPI in all the infinitely many iteration steps has positive mass. If we however consider a nested weighted CLE and discover the nested loops layer by layer, then it is clear that the points will be split after finitely many iterations. The key of the proof below is relating this CLE exploration back to the CPI exploration.

\begin{proof}[Proof of Theorem \ref{thm:bootstrap}]	
	We prove the result in the case $\kappa \in(8/3,4)$. The non-simple case where $\kappa\in(4,8)$ is exactly the same. Let $(\ell_n\colon n\ge 1)$ denote the $\gamma$-LQG lengths of a non-nested $\CLE_\kappa$ which decorates a (regular) $\gamma$-LQG disk and let $(L,R)$ be the boundary length process associated to a CPI in a $\gamma$-LQG disk. For notational convenience, let us define an operator
	\begin{align*}
		\mathcal{A}_\pm h(\ell) := \E_{(\ell/2,\ell/2)}\left( \,\sum_{t\in (0,\zeta)\colon \pm \Delta X_t>0} h(|\Delta X_t|)\prod_{s\in (0,\zeta)\setminus \{t\}\colon \Delta X_s\neq 0} W^\emptyset_\Lambda(|\Delta X_s|)\right) \;.
	\end{align*}
	By Proposition \ref{prop:chordal-zipper} we have that for any $B\subseteq A$,
	\begin{align}
		\label{eq:bootstrap-bad-cpi}
		W^B_\Lambda(\ell) &= \E_{(\ell/2,\ell/2)} \left( \,\sum_{Q\in \Pi(X,B)} \prod_{t<\zeta} e^{\sigma_{\Delta Q_t}1(\Delta X_t>0)} W^{\Delta Q_t}_\Lambda(|\Delta X_t|) \right)\;.
	\end{align}
	Moreover, by Proposition \ref{prop:full-cle-marked-cut} we get
	\begin{align}
		\label{eq:bootstrap-bad}
		W^B_\Lambda(\ell) &= \E_\ell \left( \sum_{m\ge 1} \sum_{B_1\cup \cdots \cup B_m = B} \sum_{n_1<\cdots < n_m} \prod_{i=1}^m e^{\sigma_{B_i}}W^{B_i}_\Lambda(\ell_{n_i}) \prod_{n\notin \{n_1,\dots, n_m\}} W^\emptyset_\Lambda(\ell_n) \right)
	\end{align}
	where the second sum in the display ranges over all ordered partitions of $B$ into $m$ non-empty sets (this includes the case where $m=1$ and there is just one element in the partition).
	
	We now consider the (possibly infinite) measure $\bar{\op{M}}^{\sigmab,\kappa}_{\Lambda,\ell}$ and decorate it with an independent nested $\CLE_\kappa^{\sigmab}$. Since this measure of $\gamma$-LQG disks with marked points carries zero mass on configurations where not all of the marked points are distinct, we can consider the first nesting level $j+1$ for some $j\ge 0$ in the nested $\CLE_\kappa^{\sigmab}$ where not all of the marked points lie in the same CLE loop. Splitting into all of the possible values for $j$ and then applying Proposition \ref{prop:full-cle-marked-cut} yields
	\begin{align*}
		W^A_\Lambda(\ell) &= \sum_{j\ge 0} g_j(\ell), \\
		g_0(\ell) &= \E_\ell\left(\sum_{m>1}\sum_{B_1\cup\cdots\cup B_m=A} \,\sum_{n_1<\cdots<n_m} \prod_{i=1}^m e^{\sigma_{B_i}}W^{B_i}_\Lambda(\ell_{n_i}) \prod_{n\notin \{n_1,\dots,n_m\}} W^\emptyset_\Lambda(\ell_n) \right),\\
		g_{j}(\ell) &= \E_\ell\left( \sum_{n\ge 0} e^{\sigma_A} g_{j-1}(\ell_n)\prod_{n'\neq n} W^\emptyset_\Lambda(\ell_{n'}) \right)\;.
	\end{align*}
	Note that here the sum over partitions does not include summing over the trivial partition which contains only one element. By Corollary \ref{cor:msw-simple-full-more} we have for each $j\ge 1$ and $\ell>0$ that
	\begin{align*}
		g_j(\ell) = \sum_{k_j\ge 0} e^{\sigma_A} \mathcal{A}_-^{k_j}\mathcal{A}_+ g_{j-1}(\ell)\;.
	\end{align*}
	By using Corollary \ref{cor:msw-simple-full-more} together with \eqref{eq:bootstrap-bad} we obtain
	\begin{align*}
		g_0(\ell) &= \sum_{k_0\ge 0} \mathcal{A}_-^{k_0} \widetilde{g}_0(\ell)\quad\text{where} \\
		\widetilde{g}_0(\ell) &= \E_{(\ell/2,\ell/2)} \left( \,\sum_{\substack{Q\in \Pi(X,A)\\ \Delta Q_t\subsetneq A\; \forall t<\zeta}} \prod_{t<\zeta} e^{\sigma_{\Delta Q_t}1(\Delta X_t>0)} W^{\Delta Q_t}_\Lambda(|\Delta X_t|) \right) \;,
	\end{align*}
	where $k_0$ represents the number of CPI iterations one needs to do before not all marked points lie in the same complementary component.
	Putting everything together yields
	\begin{align}
		\label{eq:recursion-proof-wa}
		\begin{split}
		W^A_\Lambda(\ell) &= \sum_{j\ge 0}\sum_{k_0,\dots,k_j\ge 0} e^{\sigma_A j} \mathcal{A}_-^{k_j}\mathcal{A}_+ \cdots \mathcal{A}_-^{k_1}\mathcal{A}_+ \mathcal{A}_-^{k_0} \widetilde{g}_0(\ell) \\
		&= \sum_{m\ge 0} \sum_{j\ge 0}\sum_{k_0+\cdots+k_j+j = m} e^{\sigma_A j} \mathcal{A}_-^{k_j}\mathcal{A}_+ \cdots \mathcal{A}_-^{k_1}\mathcal{A}_+ \mathcal{A}_-^{k_0} \widetilde{g}_0(\ell) \\
		&= \sum_{m\ge 0} (\mathcal{A}_-+e^{\sigma_A}\mathcal{A}_+)^m \widetilde{g}_0(\ell)\;.
		\end{split}
	\end{align}
	We will now show that $\widetilde{g}_0(\ell)=V^0_{\Lambda,\ell}$ for all $\ell>0$. To see this, we first sum over all possibilities for the first jump time $t$ and the associated jump $\Delta Q_t=B$ and then combine this with the Markov property of $(L,R)$ together with \eqref{eq:bootstrap-bad-cpi}:
	\begin{align*}
		\widetilde{g}_0(\ell) &= \sum_{\emptyset \neq B\subsetneq A} \E_{(\ell/2,\ell/2)} \Biggl(\, \sum_{t<\zeta\colon \Delta X_t\neq 0} \prod_{s<t} W^\emptyset_\Lambda(|\Delta X_t|) e^{\sigma_B 1(\Delta X_t>0)}W^B_\Lambda(|\Delta X_t|) \\
		&\qquad\qquad\qquad\qquad\qquad \cdot \sum_{\substack{Q\in \Pi(X,A\setminus B)\\Q_t=A\setminus B}} \prod_{s\in (t,\zeta)} e^{\sigma_{\Delta Q_s}1(\Delta X_s>0)} W^{\Delta Q_s}_\Lambda(|\Delta X_s|) \Biggr)\\
		&=  \sum_{\emptyset \neq B\subsetneq A} \E_{(\ell/2,\ell/2)} \left(\, \sum_{t<\zeta\colon \Delta X_t\neq 0} \prod_{s<t} W^\emptyset_\Lambda(|\Delta X_t|) e^{\sigma_B 1(\Delta X_t>0)}W^B_\Lambda(|\Delta X_t|)W^{A\setminus B}_\Lambda(X_t) \right)\\
		&= V^0_{\Lambda,\ell}\;.
	\end{align*}
	Let us comment in slightly more detail on
	the second equality above. 
	One first turns the sum over the $t$ values into an integral with the use of the final statement in Lemma \ref{lem:msw-palm-result}, then one applies \eqref{eq:bootstrap-bad-cpi} and finally, one again uses Lemma \ref{lem:msw-palm-result} to turn the integration back into a sum over jump times. At an intuitive level, we are applying the Markov property at the time $t$, however we need to reason slightly differently since the time $t$ appears in a sum within the expectation. We leave the details to the reader.
	
	A similar reasoning as the one in the previous paragraph yields that
	\begin{align*}
		\mathcal{A}_\pm h(\ell) := \E_{(\ell/2,\ell/2)}\left( \,\sum_{t\in (0,\zeta)\colon \pm \Delta X_t>0} h(|\Delta X_t|)\prod_{s<t\colon \Delta X_s\neq 0} W^\emptyset_\Lambda(|\Delta X_s|)\cdot W^\emptyset_\Lambda(X_t) \right) 
	\end{align*}
	and the statement then follows from \eqref{eq:recursion-proof-wa} and the result that $\widetilde{g}_0(\ell)=V^0_{\Lambda,\ell}$ for all $\ell>0$.
\end{proof}

\subsection{Finiteness of the partition function: Proof of Theorem \ref{thm:part-fcn-finite}}

The goal of this subsection is to prove Theorem \ref{thm:part-fcn-finite}, which gives a partial characterization of which parameters are admissible.  The theorem is split up into the Lemmas \ref{prop:part-fcn-finite1} and \ref{prop:part-fcn-finite2} below.

Our proof is based on the recursive expression for the partition function in Theorem \ref{thm:bootstrap}, along with  basic monotonicity, scaling and continuity properties of the partition function. We also use Propositions \ref{prop:power-theta} and \ref{prop:nonsimple-power-theta} in the proof of (ii). The proof of (iv) in Lemma \ref{prop:part-fcn-finite2} will be particularly involved; see the paragraph before it for a brief outline.
For the proof of Lemma \ref{prop:part-fcn-finite1} the following direct consequence of Theorem \ref{thm:bootstrap} will suffice.
\begin{lemma}
	Suppose that $\#A>1$. For any $({\sigmab},\Lambda)\in\mathfrak{S}_\kappa^A$,
	\begin{align}
		W^A_\Lambda(\ell) &= \sum_{\emptyset\neq B\subseteq A} \E_{(\ell/2,\ell/2)}\left( \sum_{t<\zeta} e^{\sigma_B 1(\Delta X_t>0)}\prod_{s<t}
		W^\emptyset_{\Lambda}(|\Delta X_s|)
		W^{B}_{\Lambda}(|\Delta X_t|)
		W^{B\setminus A}_{\Lambda}(X_t) \right)\;.
		\label{eq85}
	\end{align}
	In particular, each term on the right-hand side is strictly smaller than the left-hand side.
	\label{prop:chordal-points-together}
\end{lemma}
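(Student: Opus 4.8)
The plan is to derive the identity \eqref{eq85} directly from Theorem \ref{thm:bootstrap} by a resummation argument, and to deduce the ``in particular'' statement from positivity of all terms involved. Recall that Theorem \ref{thm:bootstrap} gives $W^{\sigmab,\kappa}_{\Lambda,\ell}=\sum_{m\in\N_0}V^m_{\Lambda,\ell}$, where (in the shorthand \eqref{eq57}, and with $X=L+R$ from Theorem \ref{thm:markov-simple} for $A=\emptyset$, $\Lambda=0$, but rescaled as in the statement of Theorem \ref{thm:bootstrap}) one has
\begin{align*}
	V^0_{\Lambda,\ell} &= \sum_{\emptyset\neq B\subsetneq A}\E_{(\ell/2,\ell/2)}\Biggl(\sum_{t<\zeta}e^{\sigma_B 1(\Delta X_t>0)}\prod_{s<t}W^\emptyset_\Lambda(|\Delta X_s|)\,W^{B}_\Lambda(|\Delta X_t|)\,W^{A\setminus B}_\Lambda(X_t)\Biggr),\\
	V^{m+1}_{\Lambda,\ell} &= \E_{(\ell/2,\ell/2)}\Biggl(\sum_{t<\zeta}\prod_{s<t}W^\emptyset_\Lambda(|\Delta X_s|)\,e^{\sigma_A 1(\Delta X_t>0)}\,V^m_{\Lambda,|\Delta X_t|}\,W^\emptyset_\Lambda(X_t)\Biggr).
\end{align*}
(Here I use that the notation $W^{B\setminus A}$ in the statement of Theorem \ref{thm:bootstrap} is a typo for $W^{A\setminus B}$, consistent with the Markov property in Theorem \ref{thm:markov-simple}.) The plan is: first, recognize that the operator appearing in the recursion for $V^{m+1}$ is exactly the operator $\mathcal{A}_-+e^{\sigma_A}\mathcal{A}_+$ from the proof of Theorem \ref{thm:bootstrap}, acting on the previous term; indeed, $V^m_{\Lambda,\cdot}$ plays the role of $\widetilde g_0$ and each application of the recursion is one application of $\mathcal{A}_-+e^{\sigma_A}\mathcal{A}_+$ (splitting the jump at time $t$ into the case $\Delta X_t<0$, contributing $\mathcal{A}_-$, and $\Delta X_t>0$, contributing $e^{\sigma_A}\mathcal{A}_+$, and using the Markov property of $(L,R)$ as in \eqref{eq:recursion-proof-wa}). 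Second, observe that $\sum_{m\ge 0}V^m_{\Lambda,\ell}=\sum_{m\ge 0}(\mathcal{A}_-+e^{\sigma_A}\mathcal{A}_+)^mV^0_{\Lambda,\cdot}(\ell)$, which is precisely $W^A_\Lambda(\ell)$ by \eqref{eq:recursion-proof-wa} with $\widetilde g_0=V^0_{\Lambda,\cdot}$.

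Given this, the cleanest route to \eqref{eq85} is to re-run the final computation in the proof of Theorem \ref{thm:bootstrap} ``in reverse'': there it is shown that $\widetilde g_0(\ell)=V^0_{\Lambda,\ell}$ equals the right-hand side of \eqref{eq85} with the extra constraint $\emptyset\neq B\subsetneq A$, while the $B=A$ term of \eqref{eq85} is, by the same Markov-property manipulation (first jump at time $t$, apply \eqref{eq:bootstrap-bad-cpi} to the remainder), equal to $\sum_{m\ge 1}(\text{contributions with a full collapse somewhere})$ — more precisely, isolating in $W^A_\Lambda(\ell)$ the event that all marked points stay together until the first jump and grouping, one gets that the $B=A$ term of \eqref{eq85} accounts exactly for $W^A_\Lambda(\ell)-V^0_{\Lambda,\ell}=\sum_{m\ge 1}V^m_{\Lambda,\ell}$. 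Adding the two pieces gives \eqref{eq85}. Concretely, I would argue: by Proposition \ref{prop:chordal-zipper} (equivalently \eqref{eq:bootstrap-bad-cpi}), for every $B\subseteq A$,
\begin{align*}
	W^A_\Lambda(\ell)=\E_{(\ell/2,\ell/2)}\Biggl(\sum_{Q\in\Pi(X,A)}\prod_{t<\zeta}e^{\sigma_{\Delta Q_t}1(\Delta X_t>0)}W^{\Delta Q_t}_\Lambda(|\Delta X_t|)\Biggr),
\end{align*}
and then split the sum over $Q$ according to the value $B:=\Delta Q_{t_*}$ at the \emph{first} jump time $t_*$ with $\Delta Q_{t_*}\neq\emptyset$; on this event the jumps before $t_*$ all have $\Delta Q=\emptyset$ and hence contribute $\prod_{s<t_*}W^\emptyset_\Lambda(|\Delta X_s|)$, the jump at $t_*$ contributes $e^{\sigma_B 1(\Delta X_{t_*}>0)}W^B_\Lambda(|\Delta X_{t_*}|)$, and the future (restricting $Q$ to $\Pi(X_{t_*+\cdot},A\setminus B)$) contributes, by the Markov property of $(L,R)$ and \eqref{eq:bootstrap-bad-cpi} applied with $A\setminus B$ in place of $A$, the factor $W^{A\setminus B}_\Lambda(X_{t_*})$. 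Turning the sum over $t_*$ into an integral via Lemma \ref{lem:msw-palm-result}, reinserting, and turning back into a sum over jump times — exactly as in the cited $\widetilde g_0=V^0$ computation — yields \eqref{eq85}.

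Finally, the ``in particular'' clause: each summand on the right of \eqref{eq85} is a non-negative quantity (all the $W$-factors are non-negative, being total masses of measures, and the exponentials and indicators are non-negative), and for $\#A>1$ and $\Lambda,\ell$ with $W^A_\Lambda(\ell)<\infty$ none of the terms is trivially $0$ (a.s.\ there are jumps of $X$, with positive probability the first jump with $\Delta Q\neq\emptyset$ has $\Delta Q=B$ for any prescribed $\emptyset\neq B\subseteq A$, etc.), so each term is strictly less than the full sum $W^A_\Lambda(\ell)$. When $W^A_\Lambda(\ell)=\infty$ the statement is vacuous. The main obstacle I anticipate is purely bookkeeping: making the ``split at the first nontrivial jump time and apply the Markov property inside the sum'' step rigorous, since $t_*$ is itself a random time appearing inside an expectation of a sum — but this is handled exactly as in the proof of Theorem \ref{thm:bootstrap} (via the Palm-type formulas of Lemma \ref{lem:msw-palm-result}, converting the sum over jump times to a time integral, applying the Markov property of $(L,R)$ at the integration time, and converting back), so I would simply invoke that argument rather than redo it. A secondary subtlety is confirming that the operator recursion $V^{m+1}=(\mathcal{A}_-+e^{\sigma_A}\mathcal{A}_+)V^m$ matches the display for $V^{m+1}$ in Theorem \ref{thm:bootstrap} including the trailing $W^\emptyset_\Lambda(X_t)$ factor, which is exactly the discrepancy addressed in the last paragraph of the proof of Theorem \ref{thm:bootstrap} and so causes no trouble.
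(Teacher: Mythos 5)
Your proposal is correct and takes essentially the same route as the paper: the paper's proof is exactly your first observation, namely that the $\emptyset\neq B\subsetneq A$ terms of \eqref{eq85} are $V^0_{\Lambda,\ell}$ by definition, and that the $B=A$ term equals $\sum_{m\geq 1}V^m_{\Lambda,\ell}$ by substituting $W^A_\Lambda(|\Delta X_t|)=\sum_{m\geq 0}V^m_{\Lambda,|\Delta X_t|}$ into the recursive definition of $V^{m+1}$. Your second, ``concrete'' derivation — decomposing the partition sum in Proposition \ref{prop:chordal-zipper} by the first nontrivial jump of $Q$ and invoking the Markov property via Lemma \ref{lem:msw-palm-result} — is a legitimate direct route but amounts to re-running the $\widetilde g_0 = V^0$ computation from the proof of Theorem \ref{thm:bootstrap} applied to the full sum over $\Pi(X,A)$ rather than the restricted one, so it is the same argument seen from a slightly different angle. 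Your reading of $W^{B\setminus A}$ as a typo for $W^{A\setminus B}$ is correct and consistent with Theorem \ref{thm:markov-simple}.
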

\begin{proof}
	This identity follows immediately by using the expression for $W^A_\Lambda(\ell)$ in Theorem \ref{thm:bootstrap}. To be more precise, note that it is sufficient to argue that the term on the right-hand side of \eqref{eq85} corresponding to $B=A$ is equal to $\sum_{m=1}^\infty V^m_\Lambda(|\Delta X_t|)$, and we get this by replacing $W^A_\Lambda(|\Delta X_t|)$ in the former expression by $\sum_{m=0}^\infty V^m_\Lambda(|\Delta X_t|)$ and then using the recursive definition of $V^m_\Lambda(|\Delta X_t|)$ in Theorem \ref{thm:bootstrap}.
\end{proof}

The following lemma gives Theorem \ref{thm:part-fcn-finite} except assertion (iv). 
\begin{lemma}
	Assertions (i), (ii), (iii), (v), (vi) and (vii) of Theorem \ref{thm:part-fcn-finite} hold.
	\label{prop:part-fcn-finite1}
\end{lemma}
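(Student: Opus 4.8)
The plan is to prove each of the assertions (i), (ii), (iii), (v), (vi), (vii) of Theorem \ref{thm:part-fcn-finite} in turn, relying on the explicit single-point formula (Theorem \ref{thm:singlepoint-finite}), the recursive structure (Theorem \ref{thm:bootstrap} and Lemma \ref{prop:chordal-points-together}), the scaling property \eqref{eq:W-scaling}, and the monotonicity $\Lambda\mapsto W^{\sigmab,\kappa}_{\Lambda,\ell}$ being non-increasing. Throughout I write $W^B_\Lambda(\ell)$ for $W^{\sigmab|_B,\kappa}_{\Lambda,\ell}$ as in \eqref{eq57}, and recall $Z^{\sigmab,\kappa}_{\Lambda,1}<\infty$ iff $W^{\sigmab,\kappa}_{\Lambda,1}<\infty$ by \eqref{eq:part-fcn}--\eqref{eq:part-fcn-gen}.

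\textbf{Assertions (i), (iii), (v), (vi), (vii).} Assertion (i) ($\#A=1$ implies admissibility) is immediate from Theorem \ref{thm:singlepoint-finite}, which gives an explicit finite formula for $W^{\sigmab,\kappa}_{\Lambda,\ell}$. For (iii), observe that by iterating Lemma \ref{prop:chordal-points-together} (or directly from Theorem \ref{thm:bootstrap}, picking out a single subset in the sum), for any $\emptyset\neq B\subseteq A$ the term with that $B$ is bounded above by $W^A_\Lambda(\ell)$; more precisely, decomposing the exploration so that $B$ is split off first and the points in $A\setminus B$ stay together shows $W^{A\setminus B}_\Lambda$ and $W^B_\Lambda$ both appear as factors inside a convergent integral whose total is $W^A_\Lambda(\ell)<\infty$, forcing $W^{B'}_\Lambda(1)<\infty$ for every $B'\subsetneq A$; induction on $\#A$ then gives $(\sigmab|_B,\Lambda)\in\cA_\kappa$ for all $B\subseteq A$. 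For (v) and (vii), I use the scaling relation \eqref{eq:W-scaling}: $W^{\sigmab,\kappa}_{\Lambda,1}=\Lambda^{-\frac{1}{2\wedge(8/\kappa)}\cdot\frac{2}{\sqrt\kappa}\sum\alpha_i}\,W^{\sigmab,\kappa}_{1,\Lambda^{-(2\wedge(8/\kappa))/2}}$ (reading off from \eqref{eq:W-scaling} with the roles of $\Lambda$ and $\ell$ exchanged), so finiteness of $W^{\sigmab,\kappa}_{\Lambda,1}$ for one $\Lambda>0$ is equivalent to finiteness for all $\Lambda>0$ — this is (v). For (vi), since the $\sigma'_B\le\sigma_B$ for $\#B\ge2$ and $\sigma'_i=\sigma_i$ (so the $\alpha_i$ are unchanged), every factor $e^{\sigma'_{\Delta Q_t}1(\Delta X_t>0)}$ with $\#\Delta Q_t\ge 2$ in the expansion of $W^{\sigmab',A}_\Lambda(\ell)$ from Theorem \ref{thm:bootstrap} is $\le$ the corresponding factor for $\sigmab$ (as $\Delta X_t>0$ only adds loops), while the single-point factors and the $\emptyset$-factors agree; a term-by-term comparison of the (non-negative) recursive expressions gives $W^{\sigmab',A}_\Lambda(\ell)\le W^{\sigmab,A}_\Lambda(\ell)<\infty$. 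For (vii), fix all $\sigma_B$ with $B\neq B'$ and consider the term in \eqref{eq85} (or an analogous term in a deeper expansion that isolates the jump where $B'$ is cut off as a discovered loop, contributing $e^{\sigma'_{B'}}$); this term grows like $e^{\sigma'_{B'}}$ times a strictly positive $\Lambda,\ell,\sigmab|_{B\neq B'}$-dependent constant, so for $\sigma'_{B'}$ large enough it exceeds any finite bound, contradicting admissibility.

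\textbf{Assertion (ii).} Here $\#A\ge 2$, $\Lambda=0$, and I must show $W^A_0(1)=\infty$. Use the recursion of Theorem \ref{thm:bootstrap} with $\Lambda=0$, so $W^\emptyset_0(\ell)=1$ identically. Pick $\emptyset\neq B\subsetneq A$ and look at the term
\[
\E_{(1/2,1/2)}\left(\sum_{t<\zeta} e^{\sigma_B 1(\Delta X_t>0)}\,W^{B}_{0}(|\Delta X_t|)\,W^{A\setminus B}_{0}(X_t)\right)\le W^A_0(1).
\]
By \eqref{eq:W-scaling} with $\Lambda=0$ we have $W^B_0(\ell)=\ell^{\frac{2}{\sqrt\kappa}\sum_{i\in B}\alpha_i}W^B_0(1)$ and similarly for $A\setminus B$, with $W^B_0(1),W^{A\setminus B}_0(1)\in(0,\infty)$ by (i) (if $\#B=1$) or by induction on $\#A$; note also $W^B_0(1)>0$ since the measures are non-trivial. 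Since $\alpha_i>0$ for all $i$ (as $\alpha^\kappa_\sigma$ ranges in an interval of positive reals by Definition \ref{def:main-params}) and $X_t\le X_{0}=1\ge|\Delta X_t|$, the integrand is $\gtrsim \sum_{t<\zeta}|\Delta X_t|^{\frac2{\sqrt\kappa}\sum_{i\in B}\alpha_i}\,X_t^{\frac2{\sqrt\kappa}\sum_{i\in A\setminus B}\alpha_i}\ge c\sum_{t<\zeta}|\Delta X_t|^\theta$ for $\theta=\frac2{\sqrt\kappa}\sum_{i\in A}\alpha_i$ (bounding $X_t\ge$ its value just before absorption is not available, but since all jumps have $|\Delta X_t|\le 1$ we keep $X_t^{(\cdots)}$ on the event $X_t\ge 1/2$, which has positive probability of containing the relevant jump). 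The key point is the threshold in Proposition \ref{prop:power-theta} (resp. \ref{prop:nonsimple-power-theta}): $A_+=\infty$ whenever $\theta\notin(4/\kappa,1+8/\kappa)$, and I claim that $\theta=\frac{2}{\sqrt\kappa}\sum_{i\in A}\alpha_i\ge 1+8/\kappa$ once $\#A\ge 2$. Indeed each $\alpha_i>Q_\gamma-\sqrt\kappa/4$ in the range where $\sigma_i<-\log(-\cos(4\pi/\kappa))$ (Definition \ref{def:main-params}); a short computation with $Q_\gamma=\gamma/2+2/\gamma$, $\gamma=\sqrt\kappa\wedge4/\sqrt\kappa$, shows $\frac{2}{\sqrt\kappa}(Q_\gamma-\sqrt\kappa/4)$ is such that twice it already reaches or exceeds $1+8/\kappa$; and in any case $\alpha_i$ can be taken arbitrarily close to $Q_\gamma$ making $\theta$ arbitrarily large, but I need it for \emph{all} admissible $\sigma_i$, so the honest argument is: $\sum_{i\in A}$ over $\#A\ge 2$ terms each $>Q_\gamma-\sqrt\kappa/4$ gives $\theta>\frac4{\sqrt\kappa}(Q_\gamma-\sqrt\kappa/4)=\frac4{\sqrt\kappa}Q_\gamma-1$, and one checks $\frac4{\sqrt\kappa}Q_\gamma-1\ge 1+8/\kappa$, i.e. $\frac4{\sqrt\kappa}Q_\gamma\ge 2+8/\kappa$. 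This last inequality I verify case-by-case ($\kappa<4$: $\gamma=\sqrt\kappa$, $Q_\gamma=\sqrt\kappa/2+2/\sqrt\kappa$, LHS $=2+8/\kappa$, equality; $\kappa>4$: $\gamma=4/\sqrt\kappa$, $Q_\gamma=2/\sqrt\kappa+\sqrt\kappa/2$, LHS $=8/\kappa+2$, equality) — so $\theta>1+8/\kappa$ strictly when $\#A\ge 2$, hence $A_+=\infty$ by Proposition \ref{prop:power-theta}/\ref{prop:nonsimple-power-theta}, forcing $W^A_0(1)=\infty$.

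\textbf{Main obstacle.} The bookkeeping in assertions (iii), (vi), (vii) — relating a single term of the (multiply-nested) recursive expansion in Theorem \ref{thm:bootstrap} back to the full partition function, and justifying the term-by-term inequalities — requires care that the expansions genuinely consist of non-negative terms and that isolating one term is legitimate (it is, since everything is a sum/integral of non-negative quantities, so dropping terms only decreases the value). The more delicate obstacle is making the threshold computation in (ii) clean: I must be sure the exponent $\theta$ landing outside $(4/\kappa,1+8/\kappa)$ is robust over the whole admissible parameter range and both regimes $\kappa\lessgtr 4$, and that the reduction to the functional $A_+$ of Proposition \ref{prop:power-theta}/\ref{prop:nonsimple-power-theta} correctly handles the restriction to $X_t\ge 1/2$ (equivalently, one replaces the test function by $x^\theta 1(x>1/2)$ and notes this is still $\infty$ because $A_+$ diverges at the jump measure's behavior near $x\to\infty$, not near $0$ — precisely the regime the Proposition identifies as divergent when $\theta\ge 1+8/\kappa$). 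I would double-check the inequality $\frac{4}{\sqrt\kappa}Q_\gamma\ge 2+8/\kappa$ reduces to equality in both regimes, so strictness of $\theta>1+8/\kappa$ comes entirely from having $\#A\ge 2$ summands each strictly above the threshold $Q_\gamma-\sqrt\kappa/4$; this is the crux and must be stated carefully.
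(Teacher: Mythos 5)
Assertions (i), (ii), and (vi) are handled essentially correctly, though (ii) and (vi) are more roundabout than necessary: for (ii) the paper picks $B=A$ directly in Lemma~\ref{prop:chordal-points-together} so that $W^\emptyset_0(X_t)=1$ and one immediately gets $W^A_0(1)>e^{\sigma_A}W^A_0(1)A_+$ with $A_+=\infty$, avoiding any restriction to the event $X_t\geq 1/2$; for (vi) the paper observes monotonicity of $W^{\sigmab,\kappa}_{\Lambda,\ell}$ in $\sigma_B$ ($\#B\ge 2$) directly from the factor $\Phi^{\sigmab,\kappa}_\D(\z)$ in Definition~\ref{def:disk}, which is visibly increasing in each such $\sigma_B$.

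Your argument for (v) has a genuine gap: the scaling relation~\eqref{eq:W-scaling} gives $W^{\sigmab,\kappa}_{\Lambda,\ell}=\ell^{(2/\sqrt{\kappa})\sum\alpha_i}\,W^{\sigmab,\kappa}_{\Lambda\ell^{2\wedge(8/\kappa)},1}$, which relates finiteness of $W^{\sigmab,\kappa}_{\Lambda,\ell}$ only along orbits $\Lambda\ell^{2\wedge(8/\kappa)}=\text{const}$. It does \emph{not} relate different values of $\Lambda$ at the fixed boundary length $\ell=1$: the quantity $\Lambda\mapsto W^{\sigmab,\kappa}_{\Lambda,1}$ could a priori change from finite to infinite at some threshold $\Lambda_0$, and ``exchanging the roles of $\Lambda$ and $\ell$'' cannot rule this out since you would only be moving along one scaling orbit. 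The paper instead argues by contradiction through Lemma~\ref{prop:chordal-points-together}: if a threshold $\Lambda_0>0$ existed, then for $\Lambda>\Lambda_0$ one can pick, with probability one, a jump time $t$ with $\Lambda|\Delta X_t|^{2\wedge(8/\kappa)}<\Lambda_0$, which forces $W^A_{\Lambda}(|\Delta X_t|)=\infty$ for that term of the expansion, hence $W^A_\Lambda(1)=\infty$, a contradiction. This argument is essential and cannot be replaced by scaling.

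Since your (iii) relies implicitly on the $\Lambda$-independence of admissibility (you infer finiteness of $W^{B'}_\Lambda(1)$ from the integral over jump sizes $|\Delta X_t|$, which ranges over an interval of boundary lengths and hence an interval of effective cosmological constants), the gap in your (v) propagates: you would need to know that finiteness at one effective $\Lambda$ implies finiteness at all of them. The paper resolves this by proving (v) first and then deriving (iii) as a contrapositive: if $(\sigmab|_B,\Lambda)\notin\cA_\kappa$ then by (v) $W^B_\Lambda(\ell)=\infty$ for all $\ell>0$, so the $B$-term of the right side of~\eqref{eq85} is identically infinite, forcing $W^A_\Lambda(\ell)=\infty$. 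Similarly your sketch of (vii) is too vague; the paper's argument sets $B'=A$ (reducing via (iii)), keeps only jumps with $\Delta X_t\in(1/2,1)$, and uses the scaling bound $W^{\sigmab',\kappa}_{\Lambda,\ell}>2^{-(2/\sqrt{\kappa})\sum\alpha_i}W^{\sigmab',\kappa}_{\Lambda,1}$ for $\ell\in(1/2,1)$ to obtain a self-referential inequality $W^{\sigmab',\kappa}_{\Lambda,1}>c\,e^{\sigma'_A}W^{\sigmab',\kappa}_{\Lambda,1}$ with $c>0$ independent of $\sigma'_A$, which forces $W^{\sigmab',\kappa}_{\Lambda,1}=\infty$ when $c\,e^{\sigma'_A}>1$. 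Your version does not set up this self-referential structure, and ``a strictly positive constant times $e^{\sigma'_{B'}}$ appearing inside the integral'' does not by itself contradict admissibility, since the right side also depends on $\sigma'_{B'}$.
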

\begin{proof}
	Assertion (i) of the proposition is immediate from Theorem \ref{thm:singlepoint-finite} and assertion (vi) is immediate from Definition \ref{def:disk} since $W^{{\sigmab},\kappa}_{\Lambda,\ell}$ is monotone in $\sigma_B$ for $\#B\geq 2$.

	We continue by proving (ii), i.e., if $\#A\geq 2$ and $\Lambda=0$ then $({\sigmab},\Lambda)\not\in\cA_\kappa$. By the final assertion of Lemma \ref{prop:chordal-points-together}, the fact that $W^\emptyset_{0}(\ell)=1$ and 
	\begin{align*}
		W^A_{0}(\ell)=\ell^{\frac{2}{\sqrt{\kappa}} \sum_{i\in A}\alpha_i} W^A_{0}(1)
	\end{align*}
	we have that
	\begin{align}
		W^{A}_{\Lambda}(1) > e^{\sigma_A}
		W^{A}_{\Lambda}(1)
		\E_{(1/2,1/2)}\left( \sum_{t<\zeta\colon \Delta X_t>0}  |\Delta X_t|^{\frac{2}{\sqrt{\kappa}}\sum_i \alpha_i} \right).
		\label{eq35}
	\end{align}
	By Definition \ref{def:main-params}, $\alpha_i\cdot 2/\sqrt{\kappa}>4/\kappa+1/2$, so $(2/\sqrt{\kappa})\cdot \sum_i \alpha_i>1+8/\kappa$. This implies via an application of Proposition \ref{prop:power-theta} (when $\kappa\in(8/3,4)$) and of Proposition \ref{prop:nonsimple-power-theta} (when $\kappa\in(4,8)$) that the right-hand side of \eqref{eq35} is infinite, which gives (ii).
	
	Next we prove (v), i.e., if $({\sigmab},\Lambda)\in\cA_\kappa$ for $\Lambda>0$ then $({\sigmab},\Lambda')\in\cA_\kappa$ for any $\Lambda'>0$. It is immediate from Definition \ref{def:disk} that $W^{A}_{\Lambda}(1)$ is decreasing in $\Lambda$, so if (v) was not true then there would be some $\Lambda_0\in(0,\infty)$ such that $W^{A}_{\Lambda}(1)<\infty$ for all $\Lambda>\Lambda_0$ and $W^{A}_{\Lambda}(1)=\infty$ for all $\Lambda<\Lambda_0$. We suppose such a $\Lambda_0$ exists and want to derive a contradiction. Suppose $\Lambda>\Lambda_0$. Then the right-hand side of \eqref{eq85} is infinite since it holds with probability 1 that there is a $t\in(0,\zeta)$ for which $\Lambda|\Delta X_t|^{2\wedge (8/\kappa)}<\Lambda_0$ and since we have
	\begin{align*}
		W^{A}_\Lambda(|\Delta X_t|)=|\Delta X_t|^{\frac{2}{\sqrt{\kappa}} \sum_i \alpha_i} W^{A}_{\Lambda |\Delta X_t|^{2\wedge (8/\kappa)}}(1)=\infty
	\end{align*}
	for such $t\in(0,\zeta)$. Therefore $W^A_\Lambda(1)=\infty$, which is a contradiction to $\Lambda>\Lambda_0$, and we conclude that (v) holds.
	
	Now we prove (iii), i.e., if $({\sigmab},\Lambda)\in\cA_\kappa$ and  $B\subseteq A$ then $({\sigmab}|_B,\Lambda)\in\cA_\kappa$. We will prove the contrapositive, namely that 
	$({\sigmab}|_B,\Lambda)\not\in\cA_\kappa$
	implies 
	$({\sigmab},\Lambda)\not\in\cA_\kappa$.
	By (v) and the scaling property we have that for each $\ell>0$,
	\begin{align*}
		W^{B}_\Lambda(\ell)<\infty\quad\text{if and only if}\quad ({\sigmab}|_B,\Lambda)\in\cA_\kappa \;.
	\end{align*}
	Therefore the right-hand side of 
	\eqref{eq85} is infinite if
	$({\sigmab}|_B,\Lambda)\not\in\cA_\kappa$, so
	\eqref{eq85} gives that $({\sigmab},\Lambda)\not\in\cA_\kappa$. This concludes the proof of (iii).
	
	Finally, we prove (vii), i.e., if ${\sigmab}'$ and $B'\subseteq A$ are such that $\#B'\geq 2$ and $\sigma'_B=\sigma_B$ for all $B\subseteq A,B\neq B'$ then by choosing $\sigma'_{B'}$ sufficiently large we have $({\sigmab}',\Lambda)\not\in\cA_\kappa$. In the following we will not use the simplified notation for the weight functions from \eqref{eq57} since the dependence on $\sigmab$ is important. By (iii) it is sufficient to consider the case $B'=A$. The case $({\sigmab},\Lambda)\not\in\cA_\kappa$ is immediate since $W^{\sigmab',\kappa}_{\Lambda,\ell}$ is increasing in $\sigma'_B$, so suppose $({\sigmab},\Lambda)\in\cA_\kappa$. Since $W^{{\sigmab'},\kappa}_{\Lambda,1}$ increases as $\Lambda$ decreases, 
	for all $\ell\in(1/2,1)$, 
	\begin{align*}
	W^{{\sigmab'},\kappa}_{\Lambda,\ell}
	= \ell^{\frac{2}{\sqrt{\kappa}}\sum_i \alpha_i} W^{{\sigmab'},\kappa}_{\Lambda \ell^{2\wedge (8/\kappa)},1} > 2^{-\frac{2}{\sqrt{\kappa}}\sum_i \alpha_i} W^{{\sigmab'},\kappa}_{\Lambda,1} \;.
	\end{align*}
	Applying Lemma \ref{prop:chordal-points-together} and this estimate we get 
	\begin{align*}
		W^{{\sigmab}',\kappa}_{\Lambda,1}
		&>
		e^{\sigma'_A} \E_{(1/2,1/2)}\left( \sum_{t<\zeta\colon\Delta X_t\in(1/2,1)} 
		\prod_{s<t} W^{-,\kappa}_{\Lambda ,|\Delta X_s|} W^{{\sigmab}',\kappa}_{\Lambda,|\Delta X_t|}
		W^{-,\kappa}_{\Lambda, X_t} \right)\\
		&> 2^{-\frac{2}{\sqrt{\kappa}}\sum_i \alpha_i} W^{{\sigmab}',\kappa}_{\Lambda,1} e^{\sigma'_A} \E_{(1/2,1/2)}\left( \sum_{t<\zeta\colon\Delta X_t\in(1/2,1)} 
		\prod_{s<t} W^{-,\kappa}_{\Lambda ,|\Delta X_s|} 
		W^{-,\kappa}_{\Lambda,X_t} \right).
	\end{align*}
	It follows that $({\sigmab}',\Lambda)\not\in\cA_\kappa$ if 
	\begin{align*}
	2^{-\frac{2}{\sqrt{\kappa}}\sum_i \alpha_i} e^{\sigma'_A} \E_{(1/2,1/2)}\left( \sum_{t<\zeta\colon \Delta X_t\in(1/2,1)} 
	\prod_{s<t} W^{-,\kappa}_{\Lambda ,|\Delta X_s|} 
	W^{-,\kappa}_{\Lambda,X_t} \right)>1 \;,
	\end{align*}
	which we can obtain by choosing $\sigma'_A$ sufficiently large since the expectation on the left-hand side is positive and does not depend on $\sigma'_A$. This implies (vii).
\end{proof}

It remains to prove assertion (iv) of Theorem \ref{thm:part-fcn-finite}, which will be done in Lemma \ref{prop:part-fcn-finite2}. The idea of the proof is to split the $\gamma$-LQG disk with marked points into components containing just one marked point each (by decorating it with a weighted CLE) using Theorem \ref{thm:bootstrap} and then decoupling the individual disks at the cost of decreasing the cosmological constant $\Lambda$.

\begin{lemma}
	Assertion (iv) of Theorem \ref{thm:part-fcn-finite} holds, i.e., if $\sigma_B<\sigma_i$ whenever $i\in B\subseteq A$ and $\#B>1$ then $({\sigmab},\Lambda)\in\cA_\kappa$ for all $\Lambda>0$.
	\label{prop:part-fcn-finite2}
\end{lemma}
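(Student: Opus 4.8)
The plan is to argue by induction on $n=\#A$, with the base case $n=1$ being assertion (i) of Theorem \ref{thm:part-fcn-finite} (equivalently Theorem \ref{thm:singlepoint-finite}), and with the recursive formula of Theorem \ref{thm:bootstrap} as the engine of the inductive step. So suppose $n\ge 2$. The hypothesis $\sigma_B<\sigma_i$ (for $i\in B\subseteq A$, $\#B>1$) is inherited by $\sigmab|_C$ for every $C\subseteq A$, so by the induction hypothesis $W^{\sigmab|_C,\kappa}_{\Lambda,\ell}<\infty$ for all $\Lambda>0$, all $\ell>0$ and all proper $C\subsetneq A$. Applying the hypothesis to $B=A$ gives $\sigma_A<\sigma_i<-\log(-\cos(4\pi/\kappa))$, hence $e^{\sigma_A}<-1/\cos(4\pi/\kappa)$; this is the only quantitative consequence of the assumption that will be used. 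By assertion (v) (Lemma \ref{prop:part-fcn-finite1}) it suffices to exhibit a single $\Lambda>0$ with $W^{\sigmab,\kappa}_{\Lambda,1}<\infty$, and since the weights decrease in $\Lambda$ we will in fact take $\Lambda$ large. I treat $\kappa\in(8/3,4)$; the case $\kappa\in(4,8)$ is identical after replacing Proposition \ref{prop:power-theta} and \eqref{eq:conditions-simple} by Proposition \ref{prop:nonsimple-power-theta} and \eqref{eq:conditions-nonsimple}, since the relevant formulas take the same form.

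By Theorem \ref{thm:bootstrap}, $W^{\sigmab,\kappa}_{\Lambda,1}=\sum_{m\ge 0}V^m_{\Lambda,1}$ with $V^{m+1}_{\Lambda,\cdot}=\mathcal T V^m_{\Lambda,\cdot}$, where (using the shorthand \eqref{eq57})
\[
\mathcal T g(\ell)=\E_{(\ell/2,\ell/2)}\Bigl(\sum_{t<\zeta}\prod_{s<t}W^\emptyset_\Lambda(|\Delta X_s|)\,e^{\sigma_A 1(\Delta X_t>0)}\,g(|\Delta X_t|)\,W^\emptyset_\Lambda(X_t)\Bigr)
\]
and $(L,R)$, $X=L+R$ is the $\Lambda=0$, $A=\emptyset$ boundary length process. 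Thus it suffices to prove (a) that $V^0_{\Lambda,\cdot}$ lies in a suitable weighted sup-norm space, and (b) that $\mathcal T$ is a strict contraction of it, so the series converges. The reference function is $\psi(\ell):=\ell^{\theta^\ast}W^\emptyset_{\Lambda'}(\ell)$ for a fixed small $\Lambda'\in(0,\Lambda)$ and the critical exponent $\theta^\ast:=1+4/\kappa$, with norm $\|g\|:=\sup_{\ell>0}g(\ell)/\psi(\ell)$. Two facts pin down $\theta^\ast$: first, $\theta^\ast\in(4/\kappa+1/2,4/\kappa+3/2)$, so Proposition \ref{prop:power-theta} applies and gives, with $A_\pm$ as there for $\theta=\theta^\ast$, that $A_+/(1-A_-)=\cos(4\pi/\kappa)/\cos(4\pi/\kappa-\pi\theta^\ast)=-\cos(4\pi/\kappa)$, whence $e^{\sigma_A}A_++A_-<1$ \emph{precisely} because $e^{\sigma_A}<-1/\cos(4\pi/\kappa)$; second, $\theta^\ast<\tfrac 2{\sqrt\kappa}\sum_{i\in A}\alpha_i=:\theta_A$, since $\alpha_i>2/\gamma+\gamma/4$ forces $\tfrac2{\sqrt\kappa}\alpha_i>4/\kappa+1/2$ and $\#A\ge 2$, and $\theta_A$ is the scaling exponent of both $W^{\sigmab,\kappa}_{\Lambda,\cdot}$ and $V^0_{\Lambda,\cdot}$ coming from \eqref{eq:W-scaling}.

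For (a): the scaling of the Lévy process together with \eqref{eq:W-scaling} gives $V^0_{\Lambda,\ell}=\ell^{\theta_A}V^0_{\Lambda\ell^{2\wedge(8/\kappa)},1}$, so $\|V^0\|=\sup_\ell \ell^{\theta_A-\theta^\ast}V^0_{\Lambda\ell^{2\wedge(8/\kappa)},1}/W^\emptyset_{\Lambda'}(\ell)$; since $\theta_A>\theta^\ast$ this is finite near $\ell=0$ (the only growth of $V^0_{\Lambda'',1}$ as $\Lambda''\to 0$ comes from the induction-hypothesis factors $W^{\sigmab|_B}_{\Lambda''}$ with $\emptyset\ne B\subsetneq A$, which are controlled via $\prod_{s<t}W^\emptyset_{\Lambda''}(|\Delta X_s|)\le 1$, Palm's formula (Lemma \ref{lem:msw-palm-result}) to turn the jump sum into a time integral, and the integrability $\sum_s|\Delta X_s|^{2\wedge(8/\kappa)}<\infty$; this growth is slower than $\ell^{-(\theta_A-\theta^\ast)}$), and it is finite near $\ell=\infty$ once $\Lambda$ is large enough relative to $\Lambda'$, because then $W^\emptyset_{\Lambda\ell^{2\wedge(8/\kappa)},1}$ decays in $\ell$ faster than $\ell^{-(\theta_A-\theta^\ast)}W^\emptyset_{\Lambda'}(\ell)$ by Corollary \ref{cor:single-point-properties} and the exponential decay of the modified Bessel functions. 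For (b): estimate $\mathcal T\psi$ by splitting the jump sum into small and large jumps; for small $\ell$, $\mathcal T\psi(\ell)\sim\ell^{\theta^\ast}(e^{\sigma_A}A_++A_-)<\ell^{\theta^\ast}$ by the Lévy scaling and the $\theta^\ast$-identity above, while for large $\ell$ the factors $\prod_{s<t}W^\emptyset_\Lambda(|\Delta X_s|)$ and $W^\emptyset_\Lambda(X_t)$ (using $1-W^\emptyset_\Lambda(h)\lesssim\Lambda h^{2\wedge(8/\kappa)}$ and the decay of $W^\emptyset_\Lambda$ from Corollary \ref{cor:single-point-properties}) produce decay faster than $W^\emptyset_{\Lambda'}$ once $\Lambda\gg\Lambda'$; altogether $\mathcal T\psi\le r\psi$ with $r:=e^{\sigma_A}A_++A_-+\varepsilon<1$ for $\Lambda$ large. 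Since $\mathcal T$ is a positive, hence monotone, operator, $\|V^m\|\le r^m\|V^0\|$, so $W^{\sigmab,\kappa}_{\Lambda,\ell}=\sum_m V^m_{\Lambda,\ell}\le\tfrac{\|V^0\|}{1-r}\,\psi(\ell)<\infty$, which closes the induction (Proposition \ref{prop:chordal-zipper} and Proposition \ref{prop:full-cle-marked-cut} may be used in place of Theorem \ref{thm:bootstrap} if one prefers to organize the expansion as a fragmentation over CLE layers rather than CPI explorations — the combinatorics are the same).

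The step I expect to be the main obstacle is the large-$\ell$ half of the contraction estimate (b): one must show quantitatively that, for $\Lambda$ large relative to $\Lambda'$, the truncation factors $\prod_{s<t}W^\emptyset_\Lambda(|\Delta X_s|)$ and $W^\emptyset_\Lambda(X_t)$ genuinely dominate the growth of $\psi(|\Delta X_t|)$ against $\psi(\ell)$ uniformly in $\ell$. This is where positivity of $\Lambda$ is indispensable (in line with assertion (ii)) and where the precise Bessel asymptotics behind Corollary \ref{cor:single-point-properties} enter; everything else is bookkeeping around the exact small-$\ell$ identity $A_+/(1-A_-)=-\cos(4\pi/\kappa)$ at $\theta^\ast=1+4/\kappa$, which is the arithmetic coincidence that matches the hypothesis $e^{\sigma_A}<-1/\cos(4\pi/\kappa)$.
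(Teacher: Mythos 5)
Your high-level route is the same as the paper's: induction on $\#A$ with $\#A=1$ as the base case, Theorem~\ref{thm:bootstrap} as the engine, and a geometric series estimate as the conclusion. Where you diverge is the contraction mechanism, and that is exactly where the gaps you acknowledge become serious.

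The paper's key move is to discard the truncation factors $W^\emptyset_\Lambda(\cdot)\le 1$ entirely inside the recursion and compare $V^A_{\Lambda,m}$ not to a mixed function like $\psi(\ell)=\ell^{\theta^*}W^\emptyset_{\Lambda'}(\ell)$ but to the \emph{pure power law} $W^{\{i\}}_0(\ell)=\ell^{2\alpha_i/\sqrt\kappa}\,W^{\{i\}}_0(1)$ for each $i\in B$. This turns the recursion step into an exact scaling identity: $U^{B,i}_{n+1}(\ell)=\alpha_{B,i}\,U^{B,i}_n(\ell)$ with $\alpha_{B,i}=e^{\sigma_B}A_++A_-$ at $\theta=2\alpha_i/\sqrt\kappa$. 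The one-point case of Lemma~\ref{prop:chordal-points-together} at $\Lambda=0$ gives the exact identity $e^{\sigma_i}A_++A_-=1$ at that exponent, so $\alpha_{B,i}<1$ is \emph{precisely} the hypothesis $\sigma_B<\sigma_i$, used as a strict inequality. There is no small-$\ell$/large-$\ell$ dichotomy and no Bessel asymptotics; the $\Lambda>0$ is needed only to make the initial term $V^A_{\Lambda,0}$ finite (which the induction hypothesis supplies), not to produce the contraction. Your choice $\theta^*=1+4/\kappa$ with $A_+/(1-A_-)=-\cos(4\pi/\kappa)$ is correct arithmetic, but it reduces the hypothesis to $\sigma_A<-\log(-\cos(4\pi/\kappa))$ alone — a strictly weaker statement than (iv). That should be a warning sign: if the contraction genuinely closed with only that input, the theorem would have been stated with the weaker hypothesis.

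The concrete gaps: your step (a) asserting $\|V^0\|<\infty$ is not established — since $\theta_A>1+8/\kappa$, the $\Lambda''\to 0$ behavior of $V^0_{\Lambda'',1}$ is exactly where Palm integrability fails (this is assertion (ii)), and dominating that divergence by $\ell^{-(\theta_A-\theta^*)}$ needs a quantitative estimate you do not give. The large-$\ell$ half of (b) you yourself flag as the obstacle: to get $\mathcal T\psi\le r\psi$ uniformly you would have to show the $W^\emptyset_\Lambda$ factors win against the ratio $\psi(|\Delta X_t|)/\psi(\ell)$, and this requires precise tail control of a Palm integral against the Bessel decay of $W^\emptyset_{\Lambda'}$ with $\Lambda\gg\Lambda'$, none of which is carried out. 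And the small-$\ell$ ``$\sim$'' is not an inequality: with $\Lambda>0$ and $\psi$ non-homogeneous, the Palm computation gives neither the exact product form nor a clean upper bound of the form $r\,\ell^{\theta^*}$ without further work. All three of these vanish in the paper's argument because the reference function is homogeneous and the $W^\emptyset_\Lambda$ factors are thrown away before the recursion is analyzed.
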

\begin{proof}
	The proof will proceed via induction on $\#B$. We will show that there are constants $\rho_{B,\Lambda}<\infty$ and $0<\Lambda_{B,\Lambda}<\Lambda$ such that for all $\ell>0$, we have
	\begin{align}
		W^B_{\Lambda}(\ell)\le \rho_{B,\Lambda} \sum_{i\in B} W^{\{i\}}_{\Lambda_{B,\Lambda}}(\ell)\quad\text{and}\quad W^B_{\Lambda}(\ell)\le \rho_{B,\Lambda} W^\emptyset_{\Lambda_{B,\Lambda}}(\ell).
		\label{eq39}
	\end{align}
	This implies the lemma since the right-hand sides are finite by assertion (i) of Theorem \ref{thm:part-fcn-finite}.
	
	We start by verifying the induction hypothesis $\#B=1$. The first identity of \eqref{eq39} is trivial in this case and the second one appears as the first statement of Corollary \ref{cor:single-point-properties}.
			
	For the induction step, we use the induction hypothesis to get the following for $\Lambda_{B,\Lambda} = \min\{\Lambda_{C,\Lambda}\colon \emptyset\neq C\subsetneq B\}$ and $V^B_{\Lambda,m}(\ell)$ just as $V^m_{\Lambda,\ell}$ in Theorem \ref{thm:bootstrap} but with $A$ replaced by $B\subseteq A$,
	\begin{align*}
		V^B_{\Lambda,0}(\ell) &\le e^{\sigma^*} \sum_{\emptyset\neq C\subsetneq B} \E_{(\ell/2,\ell/2)}\left( \sum_{t<\zeta} \prod_{s<t} W^\emptyset_\Lambda (|\Delta X_s|)W^{C}_\Lambda(|\Delta X_t|)W^{B\setminus C}_\Lambda(X_t) \right) \\
		&\le e^{\sigma^*}\sum_{\emptyset\neq C\subsetneq B} \rho_{B\setminus C,\Lambda}\E_{(\ell/2,\ell/2)}\left( \sum_{t<\zeta} \prod_{s<t} W^\emptyset_\Lambda (|\Delta X_s|)W^{C}_\Lambda(|\Delta X_t|)W^\emptyset_{\Lambda_{B\setminus C,\Lambda}}(X_t) \right) \\
		&\le e^{\sigma^*}\sum_{\emptyset\neq C\subsetneq B} \rho_{B\setminus C,\Lambda} e^{\sigma^*}W^C_{\Lambda_{B,\Lambda}}(\ell)
	\end{align*}
	where $\sigma^*=\max\{|\sigma_C|\colon C\subseteq A\}$.
	By the induction hypothesis \eqref{eq39}, we therefore get that there exists $\rho'_{B,\Lambda}<\infty$ such that
	\begin{align}
		V^B_{\Lambda,0}(\ell) \le \rho'_{B,\Lambda}  \sum_{i\in B} W^{\{i\}}_{\Lambda_{B,\Lambda}}(\ell).
		\label{eq54}
	\end{align}
	For $i\in B$ we now define
	\begin{align*}
		U_0^{B,i}(\ell) := W_0^{\{i\}}(\ell)\;,\qquad
		U_{n+1}^{B,i}(\ell) := \E_{(\ell/2,\ell/2)}\left(\sum_{t<\zeta} e^{\sigma_B 1(\Delta X_t>0)}U_n^{B,i}(|\Delta X_t|)\right).
	\end{align*}
	Since $\sigma_B<\sigma_i$ for $i\in B$ with $\#B>1$ we get
	\begin{align*}
		U^{B,i}_{1}(1)
		&< \E_{(1/2,1/2)}\left(\sum_{t<\zeta} e^{\sigma_i 1(\Delta X_t>0)} W^{\{i\}}_0(|\Delta X_t|) \right) = W^{\{i\}}_0(1)\;.
	\end{align*}
	We can therefore define $\alpha_{B,i} := W^{\{i\}}_0(1)/U^{B,i}_{1}(1) \in (0,1)$. By scaling, for $\ell>0$,
	\begin{align*}
		\E_{(\ell/2,\ell/2)}\left(\sum_{t<\zeta} e^{\sigma_B 1(\Delta X_t>0)}W^{\{i\}}_0(|\Delta X_t|)\right) 
		= 
		U^{B,i}_{1}(\ell)
		=
		\alpha_{B,i} W^{\{i\}}_0(\ell)\;.
	\end{align*}
	To proceed, we will argue two statements, each of which is established using induction on $n$. Note that the induction on $\#B$ above is not complete, and the induction step of that proof requires the further inductions on $n$ that we do right below.

	First we argue by induction on $n$ that 
	\begin{equation}
	U_{n}^{B,i}(\ell)= (\alpha_{B,i})^n W_0^{\{i\}}(\ell)
	\label{eq52}
    \end{equation}
	 for all $\ell>0$ and $n\ge 0$. The cases $n=0,1$ are immediate, and assuming the assertion is true for $n-1$ (with $n\geq 2$) we get that it also holds for $n$ since by first applying the definition of $U_{n}^{B,i}(\ell)$, then using the induction hypothesis, and then using \eqref{eq51}
	\begin{align}
	U_{n}^{B,i}(\ell) &= \E_{(\ell/2,\ell/2)}\left(\sum_{t<\zeta} e^{\sigma_B 1(\Delta X_t>0)} U^{B,i}_{n-1}(|\Delta X_t|) \right)\\
	&= (\alpha_{B,i})^{n-1}\E_{(\ell/2,\ell/2)}\left(\sum_{t<\zeta} e^{\sigma_i 1(\Delta X_t>0)} W_0^{\{i\}}(|\Delta X_t|) \right) = (\alpha_{B,i})^{n} W_0^{\{i\}}(\ell),
	\label{eq51}
	\end{align}
	which concludes the proof of \eqref{eq52} by induction.

	Next we argue via yet another induction on $n$ that 
	\begin{equation}
	V_{\Lambda,n}^B(\ell)\leq \rho'_{B,\Lambda}\sum_{i\in B} U_n^{B,i}(\ell)
	\label{eq53}
    \end{equation}
	 for $n\geq 0,\ell> 0$. The case $n=0$ is \eqref{eq54}. Assuming the induction hypothesis has been proved for $n$ we see that it also holds for $n+1$ since  
	\begin{align*}
		V^B_{\Lambda,n+1}(\ell) &= \E_{(\ell/2,\ell/2)}\left( \sum_{t<\zeta} \prod_{s\neq t} W^\emptyset_\Lambda(|\Delta X_s|)e^{\sigma_B 1(\Delta X_t>0)} V_{\Lambda,n}^B(|\Delta X_t|) \right)\\
		&\leq \sum_{i\in B} \rho'_{B,\Lambda} \E_{(\ell/2,\ell/2)}\left( \sum_{t<\zeta} e^{\sigma_B 1(\Delta X_t>0)} U_n^{B,i}(|\Delta X_t|) \right) = \rho_{B,\Lambda}'\sum_{i\in B} U_{n+1}^{B,i}(\ell),
	\end{align*}
	where we used $W^\emptyset_\Lambda(|\Delta X_s|)\leq 1$. This completes the proof of \eqref{eq53} by induction. 
	
	Combining the above, we get
	\begin{align*}
		W^{{\sigmab}|_B}_\Lambda(\ell) 
		= \sum_{n\geq 0} V_{\Lambda,n}^B(\ell)
		\le \rho'_{B,\Lambda} \sum_{i\in B} \sum_{n\ge 0} U_n^{B,i}(\ell) 
		=\rho'_{B,\Lambda} \sum_{i\in B} \sum_{n\ge 0} (\alpha_{B,i})^n W_0^{\{i\}}(\ell)< \infty
\end{align*}
and the induction step of the main induction (where we prove \eqref{eq39}) is done by setting $\rho_{B,\Lambda} = \rho'_{B,\Lambda} \sum_{i\in B} 1/(1-\alpha_{B,i})$.
\end{proof}

\appendix

\section{A result on renewal processes}
\label{app:renewal}

This section establishes an asymptotic result on exponentials of renewal processes, namely Proposition \ref{prop:renewalresult} right below. While we think that this addresses a very natural question which has likely appeared earlier in other contexts, we were not able to locate similar statements in existing literature. For the statement below, recall the definition $\mathcal{L}f(\lambda) = \int_0^\infty e^{-\lambda t}f(t)\,dt$ of the Laplace transform of a function $f:\R_+\to\R$.

\begin{prop}
	\label{prop:renewalresult}
	Let $(X_i)$ be an i.i.d. sequence of random variables in $(0,\infty)$ with density $f$ and cumulative distribution function $F$ and suppose that
	\begin{align*}
		\sup_{t\ge 0} \frac{f(t)}{1-F(t)}<\infty \quad\text{and}\quad \inf_{t\ge 0} \frac{F(t+c) - F(t)}{1-F(t)}>0\quad\text{for some $c>0$}\;.
	\end{align*}
	Suppose that $\mathcal{L}f(\rho_0)$ exists for some $\rho_0\in\R$. Moreover, assume $\rho>\rho_0$ with $\rho\neq 0$ is such that $\mathcal{L}f(\rho+iy)\neq \mathcal{L}f(\rho)$ for all $y\in \R\setminus\{0\}$. Let $\theta = -\log \mathcal{L}f(\rho)\in  \R$ and define $N$ to be the renewal process associated to $X$, i.e.\ $N_t = \sup\{n\ge 0\colon X_1+\cdots+X_n\le t\}$ for $t\ge 0$. Then
	\begin{align*}
		\beta(t) :=\E\left( e^{\theta N_t - \rho t}\right) \to \frac{e^{-\theta}(1-e^{-\theta})/\rho}{-(\mathcal{L}f)'(\rho)} \quad\text{as $t\to\infty$}
	\end{align*}
	and $\beta(t)<\infty$ for all $t\ge 0$.
\end{prop}

The proof of the proposition uses as an essential ingredient a Tauberian result which relates the asymptotic behavior of a function to the asymptotic behavior of its Laplace transform. A proof of the theorem below is given in \cite[Chapter III, Theorem 7.1]{tauberian-theory}.

\begin{thm}[\cite{tauberian-theory}]
	\label{thm:tauberian}
	Let $\alpha\colon [0,\infty)\to \R$ be measurable and bounded so that the Laplace transform $\mathcal{L}\alpha$ exists on $\{z\in \D\colon \Re z > 0\}$ and suppose that $\mathcal{L}\alpha$ has an analytic extension to a neighborhood of each point on the imaginary axis which we denote by $\mathcal{L}\alpha$ too. Then
	\begin{align*}
		\int_0^t \alpha(s)\,ds \to \mathcal{L}\alpha(0)\quad\text{as}\quad t\to \infty\;.
	\end{align*}
\end{thm}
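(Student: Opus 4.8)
The plan is to prove Theorem \ref{thm:tauberian} by Newman's contour-integral argument (the analytic Tauberian theorem behind the prime number theorem). Write $g$ for the analytic continuation of $\mathcal{L}\alpha$, which by hypothesis is holomorphic on $\{\Re z>0\}$ together with a neighbourhood of each point of the imaginary axis, and set $B=\sup_{t\ge 0}|\alpha(t)|<\infty$. For $T>0$ let $g_T(z)=\int_0^T\alpha(t)e^{-zt}\,dt$, which is entire (differentiate under the integral over the compact interval $[0,T]$) and satisfies $g_T(0)=\int_0^T\alpha(t)\,dt$. Since $g(0)$ is precisely the value $\mathcal{L}\alpha(0)$ appearing in the theorem, it suffices to show $g_T(0)\to g(0)$ as $T\to\infty$.

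Fix $R>0$. First I would produce, from compactness of the segment $\{iy:|y|\le R\}$ together with the hypothesis that $\mathcal{L}\alpha$ extends holomorphically near each of its points, a number $\delta=\delta(R)\in(0,R)$ such that $g$ is holomorphic on an open neighbourhood of the closed region bounded by the counterclockwise contour $\Gamma=\partial\big(\{|z|<R\}\cap\{\Re z>-\delta\}\big)$; this needs a Lebesgue-number-type argument, so that $g$ is holomorphic on all of $\{|z|\le R,\ |\Re z|\le\delta\}$ and not merely near each point separately. The function $z\mapsto(g(z)-g_T(z))\,e^{zT}(1+z^2/R^2)/z$ is then holomorphic inside $\Gamma$ except for a simple pole at $z=0$ with residue $g(0)-g_T(0)$, so the residue theorem gives
\begin{align*}
	g(0)-g_T(0)=\frac{1}{2\pi i}\oint_\Gamma\big(g(z)-g_T(z)\big)\,e^{zT}\left(1+\frac{z^2}{R^2}\right)\frac{dz}{z}.
\end{align*}
The rest of the proof is estimating this, uniformly in $T$, using the key identity $\frac{1}{z}+\frac{z}{R^2}=\frac{2\Re z}{R^2}$ valid on $|z|=R$.

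I would split $\Gamma$ into $\Gamma_+=\Gamma\cap\{\Re z>0\}$ (an arc of $|z|=R$) and $\Gamma_-=\Gamma\setminus\Gamma_+$ (two arcs of $|z|=R$ with $-\delta<\Re z<0$ and the chord at $\Re z=-\delta$). On $\Gamma_+$ one has $|g(z)-g_T(z)|=\big|\int_T^\infty\alpha(t)e^{-zt}\,dt\big|\le Be^{-T\Re z}/\Re z$ while $\big|e^{zT}(1+z^2/R^2)/z\big|=e^{T\Re z}\,2\Re z/R^2$, so the integrand is at most $2B/R^2$ and that piece is at most $B/R$. On $\Gamma_-$ the term with $g_T$ is treated by deforming $\Gamma_-$ to the left semicircle $\{|z|=R,\ \Re z\le 0\}$ (which has the same endpoints $\pm iR$, and the region swept avoids the origin so $g_T(z)e^{zT}(1+z^2/R^2)/z$ being holomorphic there makes the deformation legitimate), where $|g_T(z)|\le Be^{-T\Re z}/|\Re z|$ yields again the $T$-uniform bound $2B/R^2$ and hence a contribution at most $B/R$. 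The term with $g$ on $\Gamma_-$ is dominated by the fixed integrable function $\big|g(z)(1+z^2/R^2)/z\big|$ (since $|e^{zT}|\le 1$ on $\Gamma_-$, where $\Re z\le 0$) and tends to $0$ pointwise for arc-length-a.e.\ $z\in\Gamma_-$ (all $z$ except the endpoints $\pm iR$), so dominated convergence sends it to $0$ as $T\to\infty$. Combining, $\limsup_{T\to\infty}|g(0)-g_T(0)|\le 2B/R$, and since $R$ is arbitrary this gives $g_T(0)\to g(0)$, i.e.\ $\int_0^t\alpha(s)\,ds\to\mathcal{L}\alpha(0)$.

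The main obstacle is bookkeeping rather than any isolated estimate: making the compactness argument genuinely yield one $\delta(R)>0$ uniformly along $\{iy:|y|\le R\}$, justifying the contour deformation of the $g_T$-piece from $\Gamma_-$ to the left semicircle, and checking that every bound produced is truly independent of $T$ so that letting $R\to\infty$ is legitimate. The one substantive idea — the Newman kernel $e^{zT}(1+z^2/R^2)/z$ that forces the cancellations on $|z|=R$ — is classical, and I would simply invoke it.
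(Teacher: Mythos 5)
Your proof is correct: it is Newman's contour-integral argument with the kernel $e^{zT}(1+z^2/R^2)/z$, and the estimates all check out — the $2B/R^2$ bound on the right arc from the cancellation $1/z+z/R^2=2\Re z/R^2$ on $|z|=R$, the deformation of the $g_T$-piece from $\Gamma_-$ to the left semicircle (legitimate since the swept region lies in $\{\Re z\le -\delta\}$ and so avoids the pole), and the dominated-convergence treatment of the $g$-piece on $\Gamma_-$ — while the only delicate point is the one you flag, namely obtaining a single $\delta(R)>0$ by compactness along $\{iy:|y|\le R\}$ (plus the identity-theorem gluing of the local extensions, which overlap in sets meeting $\{\Re z>0\}$). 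The paper does not prove this statement itself but cites \cite[Chapter III, Theorem 7.1]{tauberian-theory}, whose proof is essentially this same Newman-type argument, so your route coincides with the cited one.
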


\begin{proof}[Proof of Proposition \ref{prop:renewalresult}]
	The idea of the proof will be to observe the process $N$ at a random independent exponentially distributed time $T_\lambda\sim \text{Exp}(\lambda)$ for $\lambda>0$ and deduce the result using the Tauberian theorem stated in Theorem \ref{thm:tauberian}. We begin by observing that for $\lambda+\rho\neq 0$ we have
	\begin{align}
		\label{eq:momentasymptotic}
		\begin{split}
		\E\left( e^{\theta N_{T_\lambda} - \rho T_\lambda}\right)
		&= \sum_{n\ge 0} e^{\theta n}\, \E(e^{-\rho T_\lambda}; T_\lambda \in [X_1+\cdots + X_n, X_1+\cdots + X_{n+1})) \\
		&= \frac{\lambda}{\lambda + \rho} \sum_{n\ge 0} e^{\theta n}\,\E\left( e^{-(\lambda + \rho)(X_1 + \cdots + X_n)} - e^{-(\lambda + \rho)(X_1 + \cdots + X_{n+1})} \right)\\
		&= \frac{\lambda}{\lambda + \rho}\sum_{n\ge 0} (1-\mathcal{L}f(\lambda+\rho)) (e^\theta \mathcal{L}f(\lambda+\rho))^n \\
		&= \frac{\lambda}{\lambda + \rho}\,\frac{1-\mathcal{L}f(\lambda+\rho)}{1-e^\theta \mathcal{L}f(\lambda+\rho)}
		\to \frac{e^{-\theta}(1-e^{-\theta})/\rho}{-(\mathcal{L}f)'(\rho)}\quad\text{as $\lambda\to 0$}
		\end{split}
	\end{align}
	noting that $\mathcal{L}f(\lambda+\rho) < \mathcal{L}f(\rho)=e^{-\theta}$. If $\lambda=-\rho$ we get by an analogous computation that
	\begin{align*}
		\E\left( e^{\theta N_{T_\lambda} - \rho T_\lambda}\right) = \frac{-\lambda (\mathcal{L}f)'(0)}{1-e^\theta}\;.
	\end{align*}

	Our goal is to show that the limit in \eqref{eq:momentasymptotic} (as $\lambda\to 0$)
	is the same as the limit of $\beta(t)$ as $t\to\infty$; this is intuitive since $T_\lambda\to\infty$ in distribution as $\lambda\to 0$. In the last step, we will apply Theorem \ref{thm:tauberian} with $\alpha=\beta'$ to conclude the proof.

	The technical bulk of the proof will be establishing the boundedness of $\beta$ and its derivative $\beta'$. Fix $t\ge 0$. By the memoryless property of $T_\lambda$, we can bound
    \begin{align}
        \label{eq:firstrenewalbound}
        \begin{split}
            \E\left( e^{\theta N_{T_\lambda} - \rho T_\lambda}\right) &\ge \E\left( e^{\theta N_{T_\lambda} - \rho T_\lambda}; T_\lambda \ge t+c \right) \\
            &= \E\left( e^{\theta N_{T_\lambda+(t+c)} - \rho (T_\lambda+(t+c))}\right)\,\P(T_\lambda \ge t+c)\;.
        \end{split}
    \end{align}
    To simplify notation, let us write $J_n=X_1+\cdots+X_n$ for the jump times. The key observation which follows directly from the definition is that the conditional law of $J_{N_t+1}$ given $(N_t,J_{N_t})$ has density $f(\cdot - J_{N_t})1_{(t,\infty)}/(1-F(t-J_{N_t}))$, so by the renewal property of $N$ at time $J_{N_t+1}$, for any $\tau\ge 0$,
    \begin{align*}
      \E\left(e^{\theta N_{\tau + (t+c)}}\right) &\ge \E\left( \frac{e^{\theta N_t}}{1-F(t-J_{N_t})}\, \int_{t}^{\tau + t+c} f(s-J_{N_t})\, e^\theta\, \E(e^{\theta N_{\tau + t + c - s}})\,ds \right)\\
      &\ge \E\left( \frac{e^{\theta N_t}}{1-F(t-J_{N_t})}\, \int_{t}^{t+c} f(s-J_{N_t})\, e^\theta\, \E(e^{\theta N_{\tau + t + c - s}})\,ds \right)\\
      &\ge e^\theta\,\E(e^{\theta N_\tau})\,\E\left( e^{\theta N_t}\,\frac{F(t+c-J_{N_t})-F(t-J_{N_t})}{1-F(t-J_{N_t})}\right)\\
      &\ge C\cdot \E(e^{\theta N_\tau})\,\E( e^{\theta N_t})
    \end{align*}
	using the lower bound from the assumptions where $C$ is a constant depending only on $\theta$, $f$ and $c$. Therefore \eqref{eq:firstrenewalbound} and the above imply that
	\begin{align*}
		\E\left( e^{\theta N_{T_\lambda} - \rho T_\lambda}\right) &\ge \E\left( \left.\E\left(e^{\theta N_{\tau + (t+c)}}\right)\right|_{\tau=T_\lambda}\, e^{-\rho(T_\lambda + (t+c))}\right)\,\P(T_\lambda \ge t+c) \\
		&\ge Ce^{-\rho c}\, \E\left( e^{\theta N_{T_\lambda} - \rho T_\lambda}\right)\,\E\left( e^{\theta N_t - \rho t}\right)\,\P(T_\lambda \ge t+c) \;.
	\end{align*}
	Since $\P(T_\lambda \ge t+c)=e^{-1}$ for $\lambda = 1/(t+c)$ and the left-hand side of the above estimate is finite, we can cancel the two terms to obtain the required uniform bound on $\beta$. To establish the existence and find a uniform bound for $\beta'$, let us first write
	\begin{align*}
		\E\left( e^{\theta N_t} \right) = \sum_{n\ge 0} e^{\theta n}\,\P(t\in [J_n, J_{n+1})) = \sum_{n\ge 1} e^{\theta n} \int_t^\infty (1-F(t-s))\,f^{*n}(s)\,ds+ 1 - F(t)
	\end{align*}
	where $f^{*n}$ is the density of $J_n$. Note that each of the summands is differentiable, indeed a straightforward computation shows that
	\begin{align*}
		\frac{d}{dt}\,\int_t^\infty (1-F(t-s))\,f^{*n}(s)\,ds = f^{*(n+1)}(t)-f^{*n}(t)\;.
	\end{align*}
	Moreover, by assumption there is $C'<\infty$ such that
	\begin{align*}
		\sum_{n\ge 0} e^{\theta n}\,f^{*(n+1)}(t) &= \sum_{n\ge 0} e^{\theta n}\,\E\left( f(t-J_n);J_n\ge t\right)
		= \sum_{n\ge 0} e^{\theta n}\, \E\left( \frac{f(t-J_n)}{1-F(t-J_n)};t\in [J_n,J_{n+1})\right)\\
		&\le C'\,\sum_{n\ge 0} e^{\theta n}\, \P(t\in [J_n,J_{n+1}))
		= C'\,\E(e^{\theta N_t})\;.
	\end{align*}
	From this, it follows directly that $\beta$ is differentiable and $|\beta'|$ is bounded.

	We therefore infer that for any $\lambda>0$,
	\begin{align*}
		\mathcal{L}(\beta')(\lambda)&=\int_0^\infty e^{-\lambda t} \beta'(t)\,dt = \int_0^\infty \lambda e^{-\lambda t}\,\beta(t)\,dt-1 = \E\left( e^{\theta N_{T_\lambda} -\rho T_\lambda}\right)-1 \\
		&=  -1+\frac{\lambda}{1-e^\theta \mathcal{L}f(\lambda+\rho)} \cdot \begin{cases}
			\frac{1-\mathcal{L}f(\lambda+\rho)}{\lambda+\rho} &\colon \lambda+\rho\neq 0\;,\\
			-(\mathcal{L}f)'(0) &\colon \lambda+\rho = 0\;.
		\end{cases}
	\end{align*}
	We will show that the right-hand side extends to an analytic function on a neighborhood of $\{\lambda\in \C\colon \Re \lambda \ge 0\}$ by defining the extension at $0$ to be
	\begin{align*}
		\mathcal{L}(\beta')(0) = -1 + \frac{e^{-\theta}(1-e^{-\theta})/\rho}{-(\mathcal{L}f)'(\rho)}\;.
	\end{align*}
	Our strategy is to show that the right-hand side of the identity above defines an analytic function on a set of the form $U\setminus \{0\}$ where $U$ is an open neighborhood of $\{\lambda\in \C\colon \Re \lambda \ge 0\}$. This will yield the claim since by \eqref{eq:momentasymptotic} the function has a removable singularity at $0$.

	To this end, note that since $\rho>\rho_0$, the function $\mathcal{L}f(\cdot+\rho)$ is an analytic function on a neighborhood of $\{\lambda\in \C\colon \Re \lambda \ge 0\}$. In order to conclude, it is now sufficient to argue that $e^\theta \mathcal{L}f(\lambda+\rho)\neq 1$ for all $\lambda\in U$ for some set $U$ as above. This property is satisfied since for all $\lambda\in \C$ with $\Re \lambda>0$ we have $|\mathcal{L}f(\lambda+\rho)|<\mathcal{L}f(\rho)=e^{-\theta}$ as $f$ is a probability density function from $(0,\infty)$ to $[0,\infty)$ and moreover we may use the assumption that $\mathcal{L}f(\rho+\lambda)\neq \mathcal{L}f(\rho)$ for each $\lambda\neq 0$ with $\Re \lambda =0$.

	The result now follows since
	\begin{align*}
		\beta(t)=\int_0^t \beta'(s)\,ds + 1 \to \mathcal{L}(\beta')(0)+1 = \frac{e^{-\theta}(1-e^{-\theta})/\rho}{-(\mathcal{L}f)'(\rho)} \quad\text{as $t\to\infty$}
	\end{align*}
	where the convergence part follows from Theorem \ref{thm:tauberian} applied with $\alpha=\beta'$ and where $ \mathcal{L}(\beta')(0)$ denotes the analytic continuation of $ \mathcal{L}(\beta')$ to $0$.
\end{proof}

\section{Integral computations}

This paper, and in particular the study of Lévy processes here, relies on the explicit computations of various complicated integrals. These computations could not be located by us in the literature and have therefore been included in this appendix. Lemma \ref{lem:residue-bessel-computation} is the key computation that will be used in Section \ref{sec:levy-exc} to explicitly compute functionals of Lévy excursions and Lemmas \ref{lem:simple-integrals-comp} and \ref{lem:nonsimple-integrals-comp} are used in Sections \ref{subsec:simple-fragmentation} and \ref{subsec:nonsimple-fragmentations}, respectively, to evaluate functionals associated to the spatially inhomogeneous Markov processes discussed in these sections. 

The integrals are rather involved and it is not clear if there exist explicit closed-form antiderivatives of the integrands; the computations rely on the fact that we consider indefinite integrals. Several computations involve the integral of sums of terms which are not integrable individually. A key trick we will use multiple times is to evaluate such integrals via integration by parts. In Lemma \ref{lem:residue-bessel-computation} we further rely on Cauchy's residue theorem.

We start with the computation of certain Bessel integrals. Recall the definition of the modified Bessel functions of the second kind:
\begin{align}
	\label{eq:mod-bessel-def}
	K_\nu(x)=\int_0^\infty e^{-x \cosh(t)}\,\cosh(\nu t)\,dt = \frac{1}{2(x/2)^\nu} \int_0^\infty e^{-(x/2)^2 y-1/y}\,\frac{dy}{y^{1+\nu}}\;.
\end{align}
It is easy to see that $2(x/2)^\nu K_\nu(x)\to \Gamma(\nu)$ as $x\downarrow 0$. Also, the derivative of $(x/2)^\nu K_\nu(x)$ (a smooth function) with respect to $x$ is $-(x/2)^\nu K_{\nu-1}(x)$.

\begin{lemma}
	\label{lem:residue-bessel-computation}
	Suppose that $\alpha\in (0,1)$, $\beta \in (0,2)\setminus \{1\}$ and $\rho>-1$. We write $\rho=\cos(\theta)$ for $\theta \in [0,\pi)$ when $\rho\in (-1,1]$ and $\rho=\cosh(\theta)$ for $\theta>0$ when $\rho>1$. Then
	\begin{align*}
		\int_0^\infty \frac{\cosh(\alpha t)\,dt}{\cosh(t)+\rho} &= \frac{\pi}{\sin(\pi\alpha)}\cdot \begin{cases}
			\sin(\alpha\theta)/\sin(\theta) &\colon \rho\in (-1,1)\;, \\
			\alpha  &\colon \rho = 1\;, \\
			\sinh(\alpha\theta)/\sinh(\theta) &\colon \rho>1\;,
		\end{cases} \\
		\int_0^\infty \frac{\cosh(\beta t)\,dt}{(\cosh(t)+\rho)^2} &= \frac{\pi}{\sin(\pi\beta)} \begin{cases}
			(\beta\cos(\beta\theta)-\cot(\theta)\sin(\beta\theta))/\sin^2(\theta) &\colon \rho \in (-1,1) \;,\\
			\beta(1-\beta^2)/3 &\colon \rho = 1\;,\\
			(\coth(\theta)\sinh(\beta\theta)-\beta\cosh(\beta\theta))/\sinh^2(\theta) &\colon \rho>1\;.
		\end{cases}
	\end{align*}
	Moreover for $\nu\in (1,2)$, $\nu'\in (0,1)$ we have
	\begin{align*}
		\int_0^\infty \frac{dh}{h^{1+\nu}}\left(e^{-\rho h}\frac{2(h/2)^\nu}{\Gamma(\nu)}\,K_\nu(h) - 1 + \rho h\right) &= \frac{2^{1-\nu}\pi/\sin(-\pi\nu)}{\Gamma(1+\nu)}\cdot \begin{cases}
			\cos(\nu\theta) &\colon \rho \in (-1,1]\;, \\
			\cosh(\nu\theta) &\colon \rho>1\;,
		\end{cases} \\
		\int_0^\infty \frac{dh}{h^{1+\nu'}}\left(e^{-\rho h}\frac{2(h/2)^{\nu'}}{\Gamma(\nu')}\,K_{\nu'}(h) - 1 \right) &= \frac{-2^{1-\nu'}\pi/\sin(\pi\nu')}{\Gamma(1+\nu')}\cdot \begin{cases}
			\cos(\nu'\theta) &\colon \rho \in (-1,1]\;, \\
			\cosh(\nu'\theta) &\colon \rho>1\;,
		\end{cases} \\
		\int_0^\infty dh\, e^{-\rho h} K_{\nu'}(h) &= \int_0^\infty \frac{\cosh(\nu' t)\,dt}{\cosh(t)+\rho}\;.
	\end{align*}
\end{lemma}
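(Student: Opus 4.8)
\textbf{Proof proposal for Lemma \ref{lem:residue-bessel-computation}.}

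The plan is to establish the four families of identities in sequence, reusing earlier computations as much as possible. For the first two integrals (involving $\cosh(\alpha t)/(\cosh t+\rho)$ and its squared-denominator analogue), I would substitute $u=e^{-t}$ or $z=e^{t}$ to turn each into a contour integral of a rational function; writing $\cosh t+\rho$ in terms of its two roots and applying Cauchy's residue theorem gives the trigonometric/hyperbolic forms, with the three cases $\rho\in(-1,1)$, $\rho=1$, $\rho>1$ corresponding to whether the roots are on the unit circle, coincide, or are real. The case $\rho=1$ is best obtained as a limit $\theta\to 0$ of the $\rho\in(-1,1)$ formula (Taylor-expanding $\sin(\alpha\theta)/\sin\theta$ and $(\beta\cos\beta\theta-\cot\theta\sin\beta\theta)/\sin^2\theta$), which also serves as a consistency check. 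The second integral can alternatively be derived from the first by differentiating in $\rho$, since $\partial_\rho(\cosh t+\rho)^{-1}=-(\cosh t+\rho)^{-2}$; this avoids a second residue computation and I would use it, differentiating the closed-form right-hand side and matching $\cosh(\beta t)$ against a combination of $\cosh(\alpha t)$ with $\alpha$ near $\beta$.

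For the third statement, the key is the Bessel integral representation \eqref{eq:mod-bessel-def}: writing $\frac{2(h/2)^\nu}{\Gamma(\nu)}K_\nu(h)=\frac{1}{\Gamma(\nu)}\int_0^\infty e^{-(h/2)^2 y - 1/y}\,y^{-1-\nu}\,dy$, one substitutes this in and exchanges the order of integration. The subtraction of $1-\rho h$ (resp.\ $1$ in the $\nu'<1$ case) is precisely what renders the $h$-integral convergent at $h=0$; I would handle this by integrating by parts twice (resp.\ once) in $h$ to move the regularizing polynomial onto the exponential, leaving a genuinely convergent double integral. After the swap, the inner $h$-integral is elementary (a Gaussian-type integral producing powers and an exponential in $y$), and the remaining $y$-integral is again of modified-Bessel type or reduces, after one more substitution, to the first integral of the lemma evaluated at the appropriate index $\nu$. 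Euler's reflection formula $\Gamma(\nu)\Gamma(1-\nu)=\pi/\sin(\pi\nu)$ then converts the Gamma prefactors into the stated $\pi/\sin(\pm\pi\nu)$ form. The $\nu'\in(0,1)$ identity is entirely parallel with one fewer integration by parts, and I would note that formally it is the analytic continuation in $\nu$ of the first formula (the polynomial correction $-\rho h$ simply disappears because $\int_0^\infty \rho h\cdot h^{-1-\nu'}\,dh$ would diverge and is not needed for convergence when $\nu'<1$).

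The fourth (last) identity $\int_0^\infty e^{-\rho h}K_{\nu'}(h)\,dh = \int_0^\infty \frac{\cosh(\nu' t)\,dt}{\cosh t+\rho}$ is the cleanest: substitute the first representation in \eqref{eq:mod-bessel-def}, namely $K_{\nu'}(h)=\int_0^\infty e^{-h\cosh t}\cosh(\nu' t)\,dt$, exchange the order of integration (justified by positivity of all integrands and Tonelli, using $\rho>-1$ so that $\rho+\cosh t>0$), and perform the trivial $h$-integral $\int_0^\infty e^{-h(\rho+\cosh t)}\,dh = (\rho+\cosh t)^{-1}$. This immediately yields the right-hand side, and combined with the first integral of the lemma it also gives a fully explicit closed form. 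I expect the main obstacle to be the third statement: keeping track of the boundary terms in the repeated integration by parts in $h$ (showing they vanish, which uses the small-$h$ asymptotic $2(h/2)^\nu K_\nu(h)\to\Gamma(\nu)$ and its first-order correction, as well as exponential decay as $h\to\infty$) and then correctly matching the resulting Gamma-function prefactors to $\pi/\sin(-\pi\nu)$ via Euler reflection. Everything else is bookkeeping with standard substitutions and residues.
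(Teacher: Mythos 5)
Your proposal is on track for two of the four families of identities but has a genuine gap in the third and fourth ones (the Bessel integrals).

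For the first two integrals, your plan (substitute $z=e^{t}$, turn the integrand into a rational function, apply the residue theorem) is correct and is just a reparametrisation of the paper's argument, which integrates $u(z)=e^{\alpha z}(\cosh z+\rho)^{-1}$ around the rectangle $(-R,R)+i(0,2\pi)$. Your additional observations — obtaining the second integral by differentiating the first in $\rho$, and getting $\rho=1$ as the $\theta\to 0$ limit — are sound and even somewhat more economical. The last identity $\int_0^\infty e^{-\rho h}K_{\nu'}(h)\,dh = \int_0^\infty \cosh(\nu' t)(\cosh t+\rho)^{-1}\,dt$ you handle exactly as the paper does: substitute the hyperbolic-cosine representation of $K_{\nu'}$, swap by Tonelli, and integrate out $h$.

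The gap is in the third (and fourth) statement. You commit to the Gaussian-type representation $\frac{2(h/2)^\nu}{\Gamma(\nu)}K_\nu(h)=\frac{1}{\Gamma(\nu)}\int_0^\infty e^{-(h/2)^2 y-1/y}\,y^{-1-\nu}\,dy$ and then claim that, after swapping the order of integration, the inner $h$-integral is "elementary (a Gaussian-type integral producing powers and an exponential in $y$)." That inner integral is of the form
$\int_0^\infty h^{-1-\nu}\left(e^{-\rho h-h^2 y/4}-1+\rho h\right)dh$,
and because the exponent mixes a linear term $-\rho h$ with a quadratic term $-h^2 y/4$, this is not a pure Gamma-type integral; for $\rho\neq 0$ it is a Mellin transform of a Gaussian shifted by a linear drift, i.e.\ a parabolic cylinder (or confluent hypergeometric) function of $\rho/\sqrt{y}$, not a simple power of $y$ times an exponential. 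So the claimed reduction of the remaining $y$-integral "to the first integral of the lemma" does not go through. The paper avoids this by pairing the same integration-by-parts step you describe (using $h^{-1-\nu}=\frac{d^2}{dh^2}\bigl(\frac{h^{1-\nu}}{\nu(\nu-1)}\bigr)$, which does indeed kill the $-1+\rho h$ correction) with the \emph{other} representation in \eqref{eq:mod-bessel-def}, namely $K_\mu(h)=\int_0^\infty e^{-h\cosh t}\cosh(\mu t)\,dt$. After two integrations by parts the integrand becomes a combination of $h\,K_{\nu-2}(h)$, $h\,K_{\nu-1}(h)$ and $K_{\nu-1}(h)$ weighted by $e^{-\rho h}$; plugging in the hyperbolic-cosine representation makes the $h$-integral a pure exponential Gamma integral $\int_0^\infty h^a e^{-(\rho+\cosh t)h}\,dh$, and the resulting $t$-integrals are exactly the first two integrals of the lemma. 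The $\nu'$ case is the same with one fewer integration by parts, as you correctly anticipated. So the fix is simply to use the hyperbolic-cosine representation rather than the Gaussian one when attacking the Bessel integrals; your identification of the boundary-term bookkeeping and the Euler-reflection manipulation as the remaining work is accurate once that swap is made.
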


\begin{proof}
	The first two integrals are applications of Cauchy's residue theorem. In particular, we carefully choose a rectangular contour such that the asymptotic contribution of the left and right segments vanish and the limit of the top and bottom segments can be related to the integral in the lemma. We only explain the first computation, the second one is analogous. Let us consider the contour integral of $u(z)= e^{\alpha z}(\cosh(z)+\rho)^{-1}$ along the boundary of $(-R,R)+i(0,2\pi)$ (oriented counterclockwise) and send $R\to \infty$ to obtain the identity
	\begin{align*}
		2\pi i \sum_{z\in \R+i(0,2\pi)} \text{Res}(u,z) &= \int_\R \frac{e^{\alpha t}}{\cosh(t)+\rho}\,dt - \int_\R \frac{e^{\alpha (t+2\pi i)}}{\cosh(t+2\pi i)+\rho}\,dt \;.\\
		&= 2(1-e^{2\pi i\alpha})\int_0^\infty \frac{\cosh(\alpha t)}{\cosh(t)+\rho}\,dt\;.
	\end{align*}
	Computing the residues yields the result. Let us consider the integrals involving Bessel functions. For the third integral in the statement, we write $h^{-\nu-1}$ as the second derivative of $h^{1-\nu}/(\nu(\nu-1))$ and integrate by parts twice to get
	\begin{align*}
		&\int_0^\infty \frac{dh}{h^{1+\nu}}\left(e^{-\rho h}\frac{2(h/2)^\nu}{\Gamma(\nu)}\,K_\nu(h) - 1 + \rho h\right) \\
		&\quad = \frac{2^{1-\nu}}{\nu(\nu -1)\Gamma(\nu)}\int_0^\infty e^{-\rho h}\left( (1+\rho^2)h K_{\nu-2}(h) + (2\rho h + (2\rho^2(\nu-1)-1))K_{\nu-1}(h) \right) dh \\
		&\quad = \frac{2^{1-\nu}}{\nu(\nu -1)\Gamma(\nu)}\int_0^\infty \frac{(1+\rho^2)\cosh((2-\nu)t)+2\rho\cosh((\nu-1)t)}{(\cosh(t)+\rho)^2}\,dt \\
		&\qquad\quad+ \frac{2^{1-\nu}(2\rho^2(\nu-1)-1)}{\nu(\nu -1)\Gamma(\nu)} \int_0^\infty\frac{\cosh((\nu-1)t)}{\cosh(t)+\rho}\,dt
	\end{align*}
	by the definition \eqref{eq:mod-bessel-def}. Using the first part of the lemma and applying trigonometric identities yields the result. The fourth integral can be computed similarly, except that only one integration by parts is needed. For the final integral, equation \eqref{eq:mod-bessel-def} can be used directly.
\end{proof}

In the proof of Lemmas \ref{lem:simple-integrals-comp} and \ref{lem:nonsimple-integrals-comp} we will need the Beta function and the incomplete Beta function, which are defined by the following respective formulas
	\begin{gather*}
		B(a,b)=\int_0^1 t^{a-1}(1-t)^{b-1}\,dt = \int_0^\infty t^{a-1}(1+t)^{-a-b}\,dt = \frac{\Gamma(a)\Gamma(b)}{\Gamma(a+b)}\text{\,\,for\,\,}a,b>0\;,\\
		B_x(a,b) = \int_0^x t^{a-1} (1-t)^{b-1}\,dt\text{\,\,for\,\,}a>0,b\in\R,x\in[0,1)\;.
	\end{gather*}
The next lemma recalls some basic properties of these functions.
\begin{lemma}
	\label{lem:beta-results}
	For $a,b>0$ and $x\in(0,1)$ we have $B_x(a,b)+B_{1-x}(b,a)=B(a,b)$ and, for $a>0,b\in\R,x\in[0,1)$,
	\begin{align*}
		B_x(a,b)=\frac{a+b}{b}B_x(a,b+1) - \frac{x^a(1-x)^b}{b}.
	\end{align*}
	Moreover, $\Gamma(a)\Gamma(1-a)=\pi/\sin(\pi a)$ for $a\notin \Z$.
\end{lemma}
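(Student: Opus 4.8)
The final statement to prove is Lemma~\ref{lem:beta-results}, which collects three elementary facts about the Beta function, the incomplete Beta function, and the reflection formula for the Gamma function.

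\medskip

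The plan is to verify each of the three assertions directly from the definitions, since all are classical. First, for the identity $B_x(a,b)+B_{1-x}(b,a)=B(a,b)$ with $a,b>0$, I would start from $B_{1-x}(b,a)=\int_0^{1-x} t^{b-1}(1-t)^{a-1}\,dt$ and apply the substitution $t\mapsto 1-t$, which maps the interval $[0,1-x]$ to $[x,1]$ and turns the integrand into $(1-t)^{b-1}t^{a-1}$. Hence $B_{1-x}(b,a)=\int_x^1 t^{a-1}(1-t)^{b-1}\,dt$, and adding $B_x(a,b)=\int_0^x t^{a-1}(1-t)^{b-1}\,dt$ gives $\int_0^1 t^{a-1}(1-t)^{b-1}\,dt=B(a,b)$, as desired. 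Both integrals converge since $a,b>0$.

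\medskip

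Second, for the recursion $B_x(a,b)=\frac{a+b}{b}B_x(a,b+1)-\frac{x^a(1-x)^b}{b}$ valid for $a>0$, $b\in\R$, $x\in[0,1)$, the natural approach is integration by parts. Write $B_x(a,b+1)=\int_0^x t^{a-1}(1-t)^b\,dt$ and differentiate the product $t^a(1-t)^b$: one has $\frac{d}{dt}\big(t^a(1-t)^b\big)=a t^{a-1}(1-t)^b-b t^a(1-t)^{b-1}$. Integrating this identity over $[0,x]$ gives $x^a(1-x)^b = a B_x(a,b+1)-b\int_0^x t^a(1-t)^{b-1}\,dt$. Now observe that $\int_0^x t^a(1-t)^{b-1}\,dt=\int_0^x t^{a-1}(1-t)^{b-1}\,dt-\int_0^x t^{a-1}(1-t)^{b-1}(1-t)\cdot\frac{?}{}$; more cleanly, use $t^a(1-t)^{b-1} = t^{a-1}(1-t)^{b-1} - t^{a-1}(1-t)^b$, so that $\int_0^x t^a(1-t)^{b-1}\,dt = B_x(a,b)-B_x(a,b+1)$. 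Substituting back yields $x^a(1-x)^b = a B_x(a,b+1) - b\big(B_x(a,b)-B_x(a,b+1)\big) = (a+b)B_x(a,b+1)-b B_x(a,b)$, and rearranging gives the claimed formula. One should note that the boundary term at $t=0$ vanishes because $a>0$, and all integrals are finite since the integrand near $t=0$ behaves like $t^{a-1}$ with $a>0$ and near $t=x<1$ is bounded.

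\medskip

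Third, the reflection formula $\Gamma(a)\Gamma(1-a)=\pi/\sin(\pi a)$ for $a\notin\Z$ is a standard result (Euler's reflection formula), which I would simply cite from a standard reference rather than reprove. There is no real obstacle in this lemma: all three parts are routine manipulations or classical facts. If anything, the only point requiring a modicum of care is tracking the convergence conditions (in particular that $b$ is allowed to be any real number in the recursion, which is fine since the recursion is an algebraic identity between incomplete Beta integrals all of which converge thanks to $a>0$ and $x<1$), but this is straightforward.
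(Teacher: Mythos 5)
Your proof is correct and follows essentially the same elementary route as the paper: the paper's proof of the recursion (``holds at $x=0$ and the derivatives agree'') is just your integration-by-parts computation packaged as the fundamental theorem of calculus, and the substitution $t\mapsto 1-t$ for the first identity and the citation of Euler's reflection formula for the third match the paper's (terse) treatment. The only blemish is the garbled placeholder $\frac{?}{}$ before you give the clean decomposition $t^a(1-t)^{b-1}=t^{a-1}(1-t)^{b-1}-t^{a-1}(1-t)^b$, which you should simply delete.
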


\begin{proof}
	The first claim is trivial. The second one follows since it holds at $x=0$ and the derivatives of both sides agree for all $x>0$. The third claim is Euler's reflection formula.
\end{proof}

\begin{lemma}
    \label{lem:simple-integrals-comp}
  Let $\kappa\in (8/3,4)$. For $\theta<4/\kappa$ and $u\in (0,1)$ define 
    \begin{align*}
        &A^\theta_u=-\cos(4\pi/\kappa)\int_0^\infty \frac{dh}{h^{1+4/\kappa}}\left( (1+h)^\theta - 1 - \theta h \right) + \int_0^\infty \frac{dh}{2h^{1+4/\kappa}}\left( (1-h)^\theta 1(h<u) - 1 + \theta h\right) \\
	&\qquad\qquad + \int_0^\infty \frac{dh}{2h^{1+4/\kappa}}\left( (1-h)^\theta 1(h<1-u) - 1 + \theta h\right).
    \end{align*}
    Then $A^{-1-4/\kappa}_u = 0$ for all $u\in (0,1)$ and, for all $\theta\in (-1,4/\kappa)$ and $u\in (0,1)$,
    \begin{align*}
        A^\theta_u + \int_0^u \frac{h^\theta\,dh}{2(1-h)^{1+4/\kappa}} + \int_0^{1-u} \frac{h^\theta\,dh}{2(1-h)^{1+4/\kappa}} = -\cos(\pi\theta) B(\theta+1,4/\kappa-\theta)\;.
    \end{align*}
\end{lemma}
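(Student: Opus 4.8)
The plan is to compute the various divergent-at-zero integrals making up $A^\theta_u$ by regularising them consistently and reassembling the pieces into Beta functions. First I would introduce, for $\epsilon>0$, the truncated integrals $\int_\epsilon^\infty$ of each of the three integrands defining $A^\theta_u$, expand the polynomial corrections $1+\theta h$ (resp.\ $1-\theta h$) so that each truncated integral is a combination of explicitly integrable terms of the form $\int_\epsilon^1 h^{a}(1-h)^{b}\,dh$, $\int_\epsilon^\infty h^{-1-4/\kappa}(1+h)^\theta\,dh$, and pure powers $\int_\epsilon^\infty h^{-1-4/\kappa}\,dh$, $\int_\epsilon^\infty h^{-4/\kappa}\,dh$. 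The pure-power pieces carry the $\epsilon\to0$ divergences; the point of the compensation terms $-1-\theta h$ (and $-1+\theta h$) in the definition of $A^\theta_u$ is precisely that these divergences cancel when the three integrals are summed, so after collecting terms the limit $\epsilon\downarrow0$ exists and is finite. I would carry this cancellation out carefully, using $4/\kappa\in(1,3/2)$ so that the leading divergence is $h^{-1-4/\kappa}$ (order-$\epsilon^{-4/\kappa}$) and the subleading one is $\theta h\cdot h^{-1-4/\kappa}=\theta h^{-4/\kappa}$ (order-$\epsilon^{1-4/\kappa}$), both of which must be matched across the three terms.

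Next, the convergent remainders are expressed via the (incomplete) Beta function. The integral $\int_0^\infty h^{-1-4/\kappa}((1+h)^\theta - 1-\theta h)\,dh$ equals, after the same $\epsilon$-regularisation and using $\int_0^\infty h^{a-1}(1+h)^{-a-b}\,dh=B(a,b)$ together with the recursion $B(a,b)=\frac{a+b}{b}\,\text{something}$ style shifts from Lemma~\ref{lem:beta-results} to absorb the polynomial subtractions, a multiple of $B(-4/\kappa,4/\kappa-\theta)$, which by Lemma~\ref{lem:beta-results} (analytic continuation of the Beta function in its first argument via $B_x(a,b)=\frac{a+b}{b}B_x(a,b+1)-\frac{x^a(1-x)^b}{b}$, iterated twice to push $-4/\kappa$ up to a positive argument) can be rewritten in terms of $B(\theta+1,4/\kappa-\theta)$ times a ratio of trigonometric factors coming from Euler's reflection formula $\Gamma(a)\Gamma(1-a)=\pi/\sin(\pi a)$. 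Similarly, $\int_0^u h^{-1-4/\kappa}((1-h)^\theta-1+\theta h)\,dh + \int_0^u \frac{h^\theta}{(1-h)^{1+4/\kappa}}\,dh$ — note the $\frac12$-weighted $(1-h)^{1+4/\kappa}$ term that is added in the final statement is designed to pair with exactly this — combines into $B_u(-4/\kappa,\theta+1)+B_u(\theta+1,-4/\kappa)$ type expressions; using $B_x(a,b)+B_{1-x}(b,a)=B(a,b)$ the $u$-dependence drops out when the $u$ and $1-u$ contributions are added, leaving only $B(\theta+1,4/\kappa-\theta)$-type terms. I would organise this so that all $u$-dependence visibly cancels (it must, since the claimed answer is $u$-independent).

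The special case $\theta=-1-4/\kappa$ giving $A^\theta_u=0$ I would handle as a consequence of the general computation: plugging $\theta=-1-4/\kappa$ into the reassembled Beta-function expression, one checks the trigonometric prefactor $\cos(\pi\theta)=\cos(\pi(1+4/\kappa))=-\cos(4\pi/\kappa)$ conspires with $B(-4/\kappa,1+8/\kappa)$ and the $\int_0^u h^\theta(1-h)^{-1-4/\kappa}\,dh$ pieces (which at this value of $\theta$ are themselves divergent at $0$, so one must keep them inside the $\epsilon$-regularisation and cancel against $A^\theta_u$'s own divergences) to produce $0$; alternatively, and more cleanly, I would verify $A^{-1-4/\kappa}_u=0$ directly by the substitution $h\mapsto h/(1+h)$ (or $h\mapsto 1-h$) in the first integral, which maps $(1+h)^{-1-4/\kappa}h^{-1-4/\kappa}\,dh$ onto the boundary-type integrands and exhibits the cancellation term by term — this is essentially the statement that $w_\kappa(l,r)=(l+r)^{-1-4/\kappa}$ is annihilated by $\mathcal{L}_{\kappa,\beta}$, as used in Proposition~\ref{prop:simple-msw-process}. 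I expect the main obstacle to be bookkeeping: keeping the $\epsilon$-divergences aligned across the three integrals and through the two iterations of the Beta recursion without sign errors, and making sure the analytic continuation of $B(-4/\kappa,\cdot)$ is applied with the correct shifted arguments so that the final trigonometric factor is exactly $-\cos(\pi\theta)$ and not some other phase. The computation itself is elementary but unforgiving; a good check at the end is to specialise to $\theta=0$ (where $A^0_u$ should reduce to something directly computable) and to $\theta=-1-4/\kappa$.
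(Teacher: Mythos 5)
Your strategy is sound and would lead to a correct proof, but it takes a genuinely different route from the paper. Where you propose to truncate each integral at $\epsilon$, split the integrand into individually divergent pieces (the $(1\pm h)^\theta$ part, the $-1$, and the $\mp\theta h$), compute each via $\int_\epsilon^\infty h^{-1-4/\kappa}\,dh$, $\int_\epsilon^\infty h^{-4/\kappa}\,dh$, and Beta-type integrals, and then cancel the $\epsilon^{-4/\kappa}$ and $\epsilon^{1-4/\kappa}$ divergences before letting $\epsilon\downarrow 0$, the paper instead writes $h^{-1-4/\kappa}=\frac{d^2}{dh^2}\bigl(\tfrac{h^{1-4/\kappa}}{(4/\kappa)(4/\kappa-1)}\bigr)$ and integrates by parts twice. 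This eliminates the polynomial correction terms automatically (they have vanishing second derivative) and produces manifestly convergent integrals at every step, so there is no $\epsilon$-bookkeeping at all; the integration-by-parts route lands directly on closed-form expressions like $\frac{\theta(\theta-1)}{(4/\kappa)(4/\kappa-1)}B(2-4/\kappa,4/\kappa-\theta)$ and $\frac{\theta(\theta-1)}{(4/\kappa)(4/\kappa-1)}B_u(2-4/\kappa,\theta-1)$ plus boundary terms. Both approaches then converge on the same identities from Lemma~\ref{lem:beta-results}: the $B_x(a,b)+B_{1-x}(b,a)=B(a,b)$ identity to kill the $u$-dependence, and Euler's reflection to extract the $-\cos(\pi\theta)$ factor. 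Your approach is more elementary but, as you acknowledge, far more exposed to sign and bookkeeping errors; the paper's approach buys cleanliness at the modest cost of the integration-by-parts observation.

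Two spots in your treatment of the $\theta=-1-4/\kappa$ case are underdeveloped. First, you cannot simply ``plug $\theta=-1-4/\kappa$ into the reassembled Beta-function expression'' at the end, because the final identity of the lemma is only valid on $\theta\in(-1,4/\kappa)$ — the two extra integrals $\int_0^u h^\theta(1-h)^{-1-4/\kappa}\,dh$ diverge at $0$ once $\theta\le -1$, and $B(\theta+1,4/\kappa-\theta)$ itself has a pole, so the first claim is genuinely a separate statement about the intermediate Beta-recursion identity (the paper handles it by evaluating the expression for $A^\theta_u$ in terms of $B_u(2-4/\kappa,\theta-1)$ and applying the recursion of Lemma~\ref{lem:beta-results} four times). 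You do flag the divergence issue, but the remark that it ``can be handled inside the $\epsilon$-regularisation'' is where the actual work is. Second, the ``cleaner'' alternative you propose — the substitution $h\mapsto h/(1+h)$ in the first integral to exhibit the cancellation directly — does not obviously produce the claimed matching with the boundary-type integrands, and your parenthetical observation that ``this is essentially the statement that $w_\kappa$ is annihilated by $\mathcal{L}_{\kappa,\beta}$'' is circular in context: that annihilation statement in Proposition~\ref{prop:simple-msw-process} is itself a consequence of $A^{-1-4/\kappa}_u=0$, not an independent input one can lean on. Neither of these gaps is fatal, since your primary ($\epsilon$-regularisation plus Beta-recursion) route does go through, but the $\theta=-1-4/\kappa$ case needs to be treated by computation rather than by appeal to formulas derived on a disjoint parameter range.
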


\begin{proof}
    We write
    \begin{align*}
        \frac{1}{h^{1+4/\kappa}} = \frac{d^2}{dh^2}\left( \frac{h^{1-4/\kappa}}{4/\kappa \cdot (4/\kappa -1)} \right)\quad\text{and}\quad \frac{1}{h^{4/\kappa}} = \frac{d}{dh} \left( \frac{h^{1-4/\kappa}}{1-4/\kappa} \right)\;.
    \end{align*}
    Applying integration by parts multiple times thus allows us to establish the following integral identities. For $u\in (0,1)$ and $\theta <4/\kappa$ we have
    \begin{align*}
        &\int_0^\infty \frac{dh}{h^{1+4/\kappa}}\left( (1+h)^\theta - 1 - \theta h \right) = \frac{\theta(\theta-1)}{4/\kappa\cdot (4/\kappa - 1)}\, B(2-4/\kappa,4/\kappa-\theta)\;,\\
        &\int_0^\infty \frac{dh}{h^{1+4/\kappa}}\left( (1-h)^\theta 1(h<u) - 1 + \theta h\right)\\
        &\qquad =  \frac{\theta(\theta-1)}{4/\kappa\cdot(4/\kappa-1)}\,B_u(2-4/\kappa,\theta-1)+\frac{\theta u^{1-4/\kappa}(1-u)^{\theta-1}}{4/\kappa\cdot (4/\kappa-1)} -\frac{u^{-4/\kappa}(1-u)^\theta}{4/\kappa}\;.
    \end{align*}
    For the $\theta = -1-4/\kappa$ case, applying the indented equation in Lemma \ref{lem:beta-results} four times allows us to express $B_u(2-4/\kappa,-2-4/\kappa)$ in terms of an expression involving $B_u(2-4/\kappa,2-4/\kappa)$. One then verifies that indeed $A^\theta_u=0$ for $\theta=-1-4/\kappa$. 
    
    Suppose now that $\theta\in (-1,4/\kappa)$. Then
    \begin{align*}
        &\int_0^\infty \frac{dh}{h^{1+4/\kappa}}\left( (1-h)^\theta 1(h<u) - 1 + \theta h\right) + \int_0^{1-u} \frac{h^\theta\,dh}{(1-h)^{1+4/\kappa}}\\
        &\quad =\int_0^\infty \frac{dh}{h^{1+4/\kappa}}\left( (1-h)^\theta 1(h<1-u) - 1 + \theta h\right) + \int_0^{u} \frac{h^\theta\,dh}{(1-h)^{1+4/\kappa}}\\
        &\quad = \int_0^\infty \frac{dh}{h^{1+4/\kappa}}\left( (1-h)^\theta 1(h<1) - 1 + \theta h\right) \\
        &\quad = \frac{(1-4/\kappa+\theta)(2-4/\kappa+\theta)}{4/\kappa\cdot(4/\kappa-1)}\, B(2-4/\kappa,\theta+1)\;,
    \end{align*}
where the second equality is obtained by writing it as the $u\uparrow 1$ limit in the second equality of the second display above, and then applying the indented equation of Lemma \ref{lem:beta-results} twice to determine the limit.
Replacing $u$ by $1-u$ we also get
 \begin{align*}
	&\int_0^\infty \frac{dh}{h^{1+4/\kappa}}\left( (1-h)^\theta 1(h<1-u) - 1 + \theta h\right) + \int_0^{u} \frac{h^\theta\,dh}{(1-h)^{1+4/\kappa}}\\ 
	&\qquad= \frac{(1-4/\kappa+\theta)(2-4/\kappa+\theta)}{4/\kappa\cdot(4/\kappa-1)}\, B(2-4/\kappa,\theta+1)\;,
\end{align*}
  Combining the previous displays,
    \begin{align*}
        &A^\theta_u + \int_0^u \frac{h^\theta\,dh}{2(1-h)^{1+4/\kappa}} + \int_0^{1-u} \frac{h^\theta\,dh}{2(1-h)^{1+4/\kappa}} \\
        &\quad = -\cos(4\pi/\kappa)\frac{\theta(\theta-1)}{4/\kappa\cdot (4/\kappa - 1)}\, B(2-4/\kappa,4/\kappa-\theta) \\
        &\quad\qquad + \frac{(1-4/\kappa+\theta)(2-4/\kappa+\theta)}{4/\kappa\cdot(4/\kappa-1)}\, B(2-4/\kappa,\theta+1)\;.\\
        &\quad = -\cos(\pi\theta) B(\theta+1,4/\kappa-\theta)
    \end{align*}
    where the final equality follows by writing the Beta functions in terms of Gamma functions and using Euler's reflection formula.
\end{proof}

We now establish a similar result which will appear in the context of Lévy processes without compensation. Note that the actual formula is formally the same as in the previous lemma.
\begin{lemma}
    \label{lem:nonsimple-integrals-comp}
    Suppose that $\kappa'\in (4,8)$ and define $A^\theta_u$ to be
    \begin{align*}
        &-\cos(4\pi/\kappa')\int_0^\infty \frac{dh}{h^{1+4/\kappa'}}\left( (1+h)^\theta - 1 \right) + \int_0^\infty \frac{dh}{2h^{1+4/\kappa'}}\left( (1-h)^\theta 1(h<u) - 1\right) \\
        &\qquad + \int_0^\infty \frac{dh}{2h^{1+4/\kappa'}}\left( (1-h)^\theta 1(h<1-u) - 1\right)
    \end{align*}
    whenever $\theta<4/\kappa'$ and $u\in (0,1)$. Then $A^{-1-4/\kappa'}_u=0$ for all $u\in (0,1)$ and
    \begin{align*}
        A^\theta_u + \int_0^u \frac{h^\theta\,dh}{2(1-h)^{1+4/\kappa'}} + \int_0^{1-u} \frac{h^\theta\,dh}{2(1-h)^{1+4/\kappa'}} = -\cos(\pi\theta) B(\theta+1,4/\kappa'-\theta)\;.
    \end{align*}
    for all $\theta\in (-1,4/\kappa')$ and $u\in (0,1)$.
\end{lemma}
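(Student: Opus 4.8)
The plan is to follow exactly the same strategy as in the proof of Lemma~\ref{lem:simple-integrals-comp}, with only the routine differences stemming from the absence of the Lévy compensation term (so $(1+h)^\theta-1$ and $(1-h)^\theta 1(h<u)-1$ replace the compensated expressions $(1+h)^\theta-1-\theta h$ and $(1-h)^\theta1(h<u)-1+\theta h$). First I would write $h^{-1-4/\kappa'}=\frac{d^2}{dh^2}\bigl(h^{1-4/\kappa'}/(4/\kappa'\cdot(4/\kappa'-1))\bigr)$, but since there is no linear compensation term, only \emph{one} integration by parts is needed to absorb the singularity at $h=0$ (the subtracted constant $1$ already kills the leading term, and $4/\kappa'\in(1/2,1)$ so a single application suffices). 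This gives the two base integral identities: for $\theta<4/\kappa'$,
\begin{align*}
\int_0^\infty \frac{dh}{h^{1+4/\kappa'}}\left((1+h)^\theta-1\right) &= \frac{\theta}{4/\kappa'}\,B(1-4/\kappa',4/\kappa'-\theta)\,, \\
\int_0^\infty \frac{dh}{h^{1+4/\kappa'}}\left((1-h)^\theta 1(h<u)-1\right) &= \frac{\theta}{4/\kappa'}\,B_u(1-4/\kappa',\theta) - \frac{u^{-4/\kappa'}(1-u)^\theta}{4/\kappa'}\,.
\end{align*}
These are verified by checking the $u\downarrow 0$ (resp.\ $h\to\infty$) boundary behaviour and differentiating, exactly as in Lemma~\ref{lem:simple-integrals-comp}.

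Next I would handle the special value $\theta=-1-4/\kappa'$. Here one uses the recursion $B_u(a,b)=\frac{a+b}{b}B_u(a,b+1)-\frac{u^a(1-u)^b}{b}$ from Lemma~\ref{lem:beta-results}, applied the appropriate number of times, to re-express $B_u(1-4/\kappa',-1-4/\kappa')$ in terms of $B_u(1-4/\kappa',1-4/\kappa')$ plus explicit algebraic terms; substituting into the formula for $A^\theta_u$ (together with the identity $\cos(4\pi/\kappa')$ factor) one checks by direct algebra that $A^{-1-4/\kappa'}_u=0$ for every $u\in(0,1)$. This is a finite computation with no conceptual content; the only care needed is bookkeeping of the powers of $u$ and $1-u$, and noting that the terms involving $u$ must cancel identically since the claimed answer is $u$-independent.

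Then for general $\theta\in(-1,4/\kappa')$ I would combine the integral $\int_0^{1-u} h^\theta(1-h)^{-1-4/\kappa'}\,dh$ with $\int_0^\infty h^{-1-4/\kappa'}((1-h)^\theta1(h<u)-1)\,dh$, observing (as in the simple case) that adding these gives the $u$-symmetric quantity $\int_0^\infty h^{-1-4/\kappa'}((1-h)^\theta1(h<1)-1)\,dh$, which equals $\frac{1-4/\kappa'+\theta}{4/\kappa'}\,B(1-4/\kappa',\theta+1)$ after a single integration by parts and one application of the Lemma~\ref{lem:beta-results} recursion to evaluate the $u\uparrow1$ limit of the incomplete Beta function. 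Doing the same with $u$ replaced by $1-u$ and assembling all pieces, the right-hand side becomes
\begin{align*}
-\cos(4\pi/\kappa')\,\frac{\theta}{4/\kappa'}\,B(1-4/\kappa',4/\kappa'-\theta) + \frac{1-4/\kappa'+\theta}{4/\kappa'}\,B(1-4/\kappa',\theta+1)\,,
\end{align*}
and converting the Beta functions to Gamma functions and invoking Euler's reflection formula $\Gamma(a)\Gamma(1-a)=\pi/\sin(\pi a)$ collapses this to $-\cos(\pi\theta)\,B(\theta+1,4/\kappa'-\theta)$, as claimed.

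I do not expect any genuine obstacle here: the proof is a near-verbatim transcription of Lemma~\ref{lem:simple-integrals-comp}, and the text already signals this by saying "the actual formula is formally the same." The one place that deserves a moment's attention is the counting of integrations by parts: in the compensated case two subtracted terms ($1$ and $\theta h$) require two integrations by parts, whereas here the single subtracted constant needs only one, so the intermediate Beta-function identities have slightly different parameters (e.g.\ $B(1-4/\kappa',\cdot)$ rather than $B(2-4/\kappa',\cdot)$). Everything else—the $u\leftrightarrow 1-u$ symmetrization trick, the use of the incomplete-Beta recursion to pass to boundary limits, and the final Gamma-function/reflection-formula simplification—carries over unchanged, so I would simply present the two base identities, the $\theta=-1-4/\kappa'$ verification, and the assembly for general $\theta$, leaving the routine manipulations to the reader exactly as the authors do in the simple case.
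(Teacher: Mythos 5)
Your overall strategy is exactly the one the paper follows — single integration by parts (rather than two as in Lemma~\ref{lem:simple-integrals-comp}), the incomplete Beta function recursion from Lemma~\ref{lem:beta-results}, the $u\leftrightarrow 1-u$ symmetrization trick, and the final reduction via Gamma functions and Euler's reflection formula. However, there is a genuine sign error in your second base identity, and it propagates all the way to your assembled expression, which would not collapse to $-\cos(\pi\theta)\,B(\theta+1,4/\kappa'-\theta)$.

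The correct formula (as in the paper) is
\begin{align*}
\int_0^\infty \frac{dh}{h^{1+4/\kappa'}}\left((1-h)^\theta 1(h<u)-1\right) = -\frac{\theta}{4/\kappa'}\,B_u(1-4/\kappa',\theta) - \frac{u^{-4/\kappa'}(1-u)^\theta}{4/\kappa'}\,,
\end{align*}
with a \emph{minus} sign on the $B_u$ term, not a plus sign. You can check this directly by differentiating in $u$: the left side has $u$-derivative $u^{-1-4/\kappa'}(1-u)^\theta$, and only with the minus sign does the right side produce the same (the $\frac{\theta}{\nu}u^{-\nu}(1-u)^{\theta-1}$ pieces must cancel, not add). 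Your sign error then carries into the claim that the combined $u$-independent quantity equals $+\frac{1-4/\kappa'+\theta}{4/\kappa'}\,B(1-4/\kappa',\theta+1)$, whereas the correct value is $-\frac{1-4/\kappa'+\theta}{4/\kappa'}\,B(1-4/\kappa',\theta+1)$. Consequently, the expression you assemble for the right-hand side,
\begin{align*}
-\cos(4\pi/\kappa')\,\frac{\theta}{4/\kappa'}\,B(1-4/\kappa',4/\kappa'-\theta) + \frac{1-4/\kappa'+\theta}{4/\kappa'}\,B(1-4/\kappa',\theta+1)\,,
\end{align*}
is wrong: after converting Beta functions to Gammas one is left needing the trigonometric identity $\cos(\pi\nu)\sin(\pi\theta)+\sin(\pi(\nu-\theta)) = \sin(\pi\nu)\cos(\pi\theta)$ (with $\nu=4/\kappa'$), which holds for the paper's signs but not for yours. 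With the plus sign one instead gets $\sin(\pi\nu)\cos(\pi\theta)-2\cos(\pi\nu)\sin(\pi\theta)$ in the numerator, which does not give $-\cos(\pi\theta)$. So the last step of your argument as written would fail. The fix is simply to replace the erroneous $+\frac{\theta}{4/\kappa'}\,B_u$ by $-\frac{\theta}{4/\kappa'}\,B_u$ throughout, after which your proof is correct and identical in structure to the paper's.
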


\begin{proof}
    The proof of this result is completely analogous to the proof of Lemma \ref{lem:simple-integrals-comp}. We observe that by integration by parts,
    \begin{align*}
        \int_0^\infty \frac{dh}{h^{1+4/\kappa'}} \left( (1+h)^\theta - 1\right) &= \frac{\theta}{4/\kappa'}\,B(1-4/\kappa', 4/\kappa' - \theta)\;, \\
        \int_0^\infty \frac{dh}{h^{1+4/\kappa'}} \left( (1-h)^\theta 1(h<u) -1 \right) &= -\frac{\theta}{4/\kappa'}\,B_u(1-4/\kappa',\theta) - \frac{u^{-4/\kappa'}(1-u)^\theta}{4/\kappa'}\;.
    \end{align*}
    The result that $A^{-1-4/\kappa'}_u=0$ for $u\in (0,1)$ then follows from a lengthy calculation -- see Lemma \ref{lem:simple-integrals-comp} for some more detail on this. Suppose now that $\theta \in (-1,4/\kappa')$. Then
    \begin{align*}
        &\int_0^\infty \frac{dh}{h^{1+4/\kappa'}}\left( (1-h)^\theta 1(h<u) - 1\right) + \int_0^{1-u} \frac{h^\theta\,dh}{(1-h)^{1+4/\kappa}}\\
        &\quad = \int_0^\infty \frac{dh}{h^{1+4/\kappa'}}\left( (1-h)^\theta 1(h<1) - 1 \right) \\
        &\quad = - \frac{1-4/\kappa'+\theta}{4/\kappa'}\,B(1-4/\kappa',\theta+1)\;.
    \end{align*}
    The results now follows by putting everything together and simplifying the expressions, making use of Euler's reflection formula.
\end{proof}

\bibliographystyle{hmralphaabbrv}
\bibliography{lqg-cle-weighted}

\end{document}